\newcommand{\Z}{\mathbb{Z}}
\newcommand{\R}{\mathbb{R}}
\newcommand{\N}{\mathbb{N}}
\newcommand{\Q}{\mathbb{Q}}
\newcommand{\SL}{{\rm SL}}
\newcommand{\SO}{{\rm SO}}
\newcommand{\A}{\mathcal{A}}
\newcommand{\D}{\mathcal{D}}
\newcommand{\RP}{\mathbb{RP}}
\newcommand{\supp}{{\rm supp}}
\newcommand{\diam}{{\rm diam}}
\newcommand{\B}{\mathcal{B}}
\newcommand{\I}{\mathcal{I}}
\newtheorem{prop}{Proposition}[section]
\newtheorem{thm}[prop]{Theorem}
\newtheorem{cor}[prop]{Corollary}
\newtheorem{lem}[prop]{Lemma}
\newtheorem{prob}[prop]{Problem}
\newtheorem*{claim}{Claim}
\theoremstyle{definition}
\newtheorem{dfn}{Definition}[section]
\newtheorem*{rem}{Remark}
\title{On the $L^q$ dimension of stationary measures for Möbius iterated function systems}
\author{Shunsuke Usuki}
\begin{document}
	
\maketitle
	
\begin{abstract}
We study the $L^q$ dimension $D(\nu,q)\ (q>1)$ of stationary measures $\nu$ for Möbius iterated function systems on $\R$ satisfying the strongly Diophantine condition, and try the extension of Shmerkin's result \cite[Theorem 6.6]{Shm19}. As the result, we show that there is the dichotomy: the $L^q$ spectrum $\tau(\nu,q)=(q-1)D(\nu,q)$ is equal to the desired value $\min\{\widetilde{\tau}(\nu,q),q-1\}$ for any $q>1$, where $\widetilde{\tau}(\nu,q)$ is the zero of the canonical pressure function, or there exist $q_0>1$ and $0<\alpha<1$ such that $\tau(\nu,q)=\min\{\widetilde{\tau}(\nu,q),q-1\}$ for $1<q<q_0$ and $\tau(\nu,q)=\alpha q$ for $q\geq q_0$. In addition, we give examples of Möbius iterated function systems which show the latter case by giving an affirmative answer to Solomyak's question \cite[Question 2]{Sol24}.
\end{abstract}
	
\tableofcontents

\section{Introduction}\label{section_introduction}

\subsection{Background on the dimension of stationary measures for IFSs}\label{section_background}

When $U\subset\R$ is a non-empty compact interval and $\Phi=\{\varphi_i\}_{i\in\I}\ (|\I|<\infty)$\footnote{In general, when we write $\Phi=\{\varphi_i\}_{i\in\I}$, we permit the case that $\varphi_i=\varphi_j$ for some $i\neq j$. However, separation conditions which will be introduced later will rule this out.} is a non-empty finite family of $C^{1+\gamma}$-maps ($\gamma>0$) on $U$, $\varphi_i: U\to U$, such that $0<|\varphi_i'(x)|<1$ for any $x\in U$, we call $\Phi$ an {\it iterated function system (IFS)} on $U$. Then, there is a unique non-empty compact subset $K\subset U$ such that
\begin{equation*}
K=\bigcup_{i\in\I}\varphi_i(K).
\end{equation*}
This $K$ is called the {\it attractor} (or the self-conformal set) of $\Phi$.
Furthermore, for a probability vector $p=(p_i)_{i\in\I}$, there is a unique Borel probability measure $\nu$ on $U$ such that
\begin{equation*}
\nu=\sum_{i\in\I}p_i\varphi_i\nu.
\end{equation*}
We have $\nu(K)=1$ and call this $\nu$ the {\it stationary measure} (or the self-conformal measure) of $\Phi$ and $p$. These are central objects of interest in the field of fractal geometry. In particular, the dimension of attractors and stationary measures for IFSs has been actively studied for a long time.

If an IFS $\Phi=\{\varphi_i\}_{i\in\I}$ satisfies the {\it strong separation condition (SSC)}, that is, if the images of the attractor $K$: $\varphi_i(K)\ (i\in\I)$ are pairwise disjoint, the dimension of $K$ and the stationary measure $\nu$ is well-understood.
Before 2010's, little was known beyond the cases of the SSC or some variants of the SSC (e.g., the open set condition), and hence the dimension related to IFSs {\it with overlaps}, that is, IFSs such that $\varphi_i(K)\ (i\in\I)$ have significant overlaps, remained unclear.

However, this area has been dramatically progressed in the last decade. The series of the progress began from Hochman's groundbreaking work \cite{Hoc14} for linear IFSs, that is, IFSs $\Phi=\{\varphi_i\}_{i\in\I}$ such that all $\varphi_i\ (i\in\I)$ are contracting affine maps on $\R$: $\varphi_i(x)=\lambda_ix+a_i\ (0<|\lambda_i|<1, a_i\in\R)$, and the Hausdorff dimension.
In the following, for $i=(i_1,\dots,i_n)\in\I^*=\bigcup_{n\in\N}\I^n$, we write $\varphi_i=\varphi_{i_1}\circ\cdots\circ \varphi_{i_n}$.
We define the distance $d(\varphi_1,\varphi_2)$ between two affine maps $\varphi_1(x)=\lambda_1x+a_1$ and $\varphi_2(x)=\lambda_2x+a_2$ by
\begin{equation*}
d(\varphi_1,\varphi_2)=
\begin{cases}1&\text{if }\lambda_1\neq\lambda_2,\\
|a_1-a_2|&\text{if }\lambda_1=\lambda_2.
\end{cases}
\end{equation*}
\begin{dfn}[Exponential separation condition along a subsequence for linear IFSs]
For a linear IFS $\Phi=\{\varphi_i(x)=\lambda_ix+a_i\}_{i\in\I}$ on $\R$, we say that $\Phi$ satisfies the {\it exponential separation condition along a subsequence} if
there is $c>0$ such that
\begin{equation*}
	i,j\in\I^n, i\neq j \implies d(\varphi_i,\varphi_j)\geq c^n
\end{equation*}
holds for infinitely many $n\in\N$.
\end{dfn}
We write $\dim_H$ for the Hausdorff dimension of a set or a Borel probability measure\footnote{In this paper, we define $\dim_H\nu$ for a Borel probability measure $\nu$ by $\dim_H\nu=\inf\{\dim_H E\left|\ E\subset\R: \text{Borel measurable}, \nu(E)>0\right.\}$. But, if $\nu$ is a stationary measure for an IFS, it is known that $\nu$ is exact dimensional, and hence $\dim_H\nu$ coincides with the pointwise dimension at $\nu$-almost every point and $\inf\{\dim_H E\left|\ E\subset\R: \text{Borel measurable}, \nu(\R\setminus E)=0\right.\}$.} on $\R$.

\begin{thm}[{\cite[Theorem 1.1 and Corollary 1.2]{Hoc14}}]\label{dim_linear_IFS_with_exponental_separation}
Let $\Phi=\{\varphi_i(x)=\lambda_ix+a_i\}_{i\in\I}$ be a linear IFS on $\R$ and assume that $\Phi$ satisfies the exponential separation condition along a subsequence. 
Then, if $p=(p_i)_{i\in\I}$ is a probability vector and we write $\nu$ for the stationary measure (self-similar measure) of $\Phi$ and $p$, we have
\begin{equation*}
\dim_H\nu=\min\left\{\frac{H(p)}{\chi(\Phi,p)},1\right\},
\end{equation*}
where $H(p)=-\sum_{i\in\I}p_i\log p_i$ is the entropy of $p$ and $\chi(\Phi,p)=-\sum_{i\in\I}p_i\log|\lambda_i|$ is the Lyapunov exponent of $\Phi$ and $p$.
Furthermore, for the attractor $K$ of $\Phi$, we have
\begin{equation*}
\dim_HK=\min\left\{s(\Phi),1\right\},
\end{equation*}
where $s(\Phi)$ is the unique $s\geq0$ such that
\begin{equation*}
\sum_{i\in\I}|\lambda_i|^s=1.
\end{equation*}
\end{thm}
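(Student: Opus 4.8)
This is Hochman's theorem, and I would reconstruct the argument as follows.

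\emph{Reductions and the easy inequalities.} A self-similar measure on $\R$ is exact dimensional (Feng--Hu), so, with $\mathcal{D}_n$ the dyadic partition of mesh $2^{-n}$ and logarithms to base $2$, one has $\dim_H\nu=\lim_{n\to\infty}\frac1nH(\nu,\mathcal{D}_n)=:\dim\nu$. The bound $\dim\nu\le1$ is automatic, and $\dim\nu\le H(p)/\chi(\Phi,p)$ follows by pushing the Bernoulli measure $p^{\N}$ on $\I^{\N}$ forward under the coding map and combining the Shannon--McMillan--Breiman theorem with the law of large numbers $-\frac1n\log|\lambda_i|\to\chi(\Phi,p)$ (valid $p^{(n)}$-a.e., where $p^{(n)}_i=p_{i_1}\cdots p_{i_n}$ for $i=(i_1,\dots,i_n)$). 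For the attractor, covering $K$ by the cylinders $\varphi_i(U)$, $i\in\I^n$, gives $\sum_{i\in\I^n}(\diam\varphi_i(U))^{s(\Phi)}=(\diam U)^{s(\Phi)}$, hence $\dim_HK\le\min\{s(\Phi),1\}$, while the reverse inequality reduces to the measure case via $\dim_HK\ge\dim_H\nu$: if $s(\Phi)\le1$, take $p_i=|\lambda_i|^{s(\Phi)}$, so that $H(p)=s(\Phi)\chi(\Phi,p)$ and $\dim_H\nu=s(\Phi)$; if $s(\Phi)>1$, a continuity argument on the probability simplex yields $p$ with $H(p)=\chi(\Phi,p)$, whence $\dim_H\nu=1$. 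So everything reduces to proving $\dim\nu\ge\min\{H(p)/\chi(\Phi,p),1\}$ under exponential separation along a subsequence.

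\emph{Self-similar entropy structure and the dichotomy.} Fix a base point $x_0\in U$ and set $\nu^{(n)}=\sum_{i\in\I^n}p^{(n)}_i\delta_{\varphi_i(x_0)}$. Since for $i\in\I^n$ the measure $\varphi_i\nu$ is a translate of a $\lambda_i$-scaled copy of $\nu$, with $|\lambda_i|\approx 2^{-\chi(\Phi,p)n}$ off a set of negligible $p^{(n)}$-measure, the identity $\nu=\sum_{i\in\I^n}p^{(n)}_i\varphi_i\nu$ exhibits $\nu$, up to controlled error, as a convolution of $\nu^{(n)}$ with a rescaled copy of $\nu$; iterating the identity also displays $\nu^{(n)}$ itself as a skew self-convolution governed by the affine action. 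The plan is the standard bootstrap: either at a positive density of scales convolving by these small copies raises normalized entropy by a fixed $\delta>0$, in which case iterating the self-similar relation forces $\dim\nu=\min\{H(p)/\chi(\Phi,p),1\}$ and we are done; or, assuming $\dim\nu=\alpha<\min\{H(p)/\chi(\Phi,p),1\}$, there is essentially no entropy increase under these convolutions at almost every scale, and a contradiction must be reached.

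\emph{The inverse theorem (the heart).} In the latter case I would invoke the inverse theorem for the entropy of convolutions on $\R$: for every $\varepsilon>0$ there is $\delta>0$ such that, for all large $N$, any probability measures $\mu,\theta$ with $H(\mu*\theta,\mathcal{D}_N)\le H(\mu,\mathcal{D}_N)+\delta N$ satisfy the following for all but an $\varepsilon$-fraction of levels $1\le k\le N$: either the level-$k$ components of $\mu$ are $\varepsilon$-close to uniform or those of $\theta$ are $\varepsilon$-close to a point mass. Apply this with $\mu=\nu^{(n)}$ (rescaled to unit size, $N$ a fixed multiple of $n$) and $\theta$ a small copy of $\nu$: since $\alpha<1$, $\theta$ is not near-uniform at almost all levels, so $\nu^{(n)}$ is near-atomic at a $(1-O(\varepsilon))$-density set of levels $k\le N$, and the homogeneity of $\nu$ self-improves this to $H(\nu^{(n)},\mathcal{D}_N)\le C\varepsilon N$ for an absolute constant $C$. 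On the other hand, for $n$ in the separation subsequence the maps $\{\varphi_i:i\in\I^n\}$ are pairwise $d$-separated by $c^n$, which (working, as in \cite{Hoc14}, with the induced measure on the affine group) forces the $n$-step measure to resolve $\approx 2^{nH(p)}$ near-atoms of comparable weight by level $(\log_2 c^{-1})n$, hence $H(\nu^{(n)},\mathcal{D}_N)\ge nH(p)-O(1)$; since $H(p)>0$ (the case $H(p)=0$ being trivial), these are incompatible once $\varepsilon$ is small. This contradiction gives the lower bound, and with the first paragraph the theorem. (The passage from the form of \cite[Theorem 1.1]{Hoc14} phrased with the random walk entropy to the present statement uses that exponential separation makes $i\mapsto\varphi_i$ injective on $\I^n$, so the law of $\varphi_{i_1}\circ\cdots\circ\varphi_{i_n}$ has entropy $nH(p)$.)

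\emph{Main obstacle.} The decisive and genuinely hard ingredient is the inverse theorem for the entropy of convolutions; its proof is a delicate multiscale analysis of Freiman- and Balog--Szemer\'edi--Gowers-type for measures, run across all scales at once, and there is no shortcut. Apart from it, care is needed to make the approximation of $\nu$ by $\nu^{(n)}*(\text{rescaled }\nu)$ rigorous --- immediate here since the maps are affine, but demanding bounded-distortion control in the Möbius setting of the present paper --- and to run cleanly the bookkeeping that turns ``components near-atomic at many scales'' into ``super-exponential coincidence of cylinders,'' which is exactly where the exponential separation hypothesis enters, and where a subsequence suffices because only the value of the limit $\lim_n\frac1nH(\nu,\mathcal{D}_n)$ is at issue.
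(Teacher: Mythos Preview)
The paper does not give its own proof of this statement: Theorem~\ref{dim_linear_IFS_with_exponental_separation} is quoted as background from \cite{Hoc14} and is not reproved anywhere in the text. So there is no ``paper's proof'' to compare against; your proposal is a sketch of Hochman's original argument, not of anything the present paper supplies.

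As a sketch of \cite{Hoc14} your outline is broadly on target --- the reduction to the lower bound for $\dim\nu$, the self-convolution structure, the inverse theorem for entropy as the decisive ingredient, and the use of exponential separation to rule out super-exponential concentration --- but one step is stated too loosely. You write that exponential separation forces $H(\nu^{(n)},\mathcal{D}_N)\ge nH(p)-O(1)$ for the discrete measure $\nu^{(n)}=\sum_i p^{(n)}_i\delta_{\varphi_i(x_0)}$ on $\R$. This is not what the separation hypothesis gives: two words $i\ne j$ with $\lambda_i\ne\lambda_j$ have $d(\varphi_i,\varphi_j)=1$ regardless of how close $\varphi_i(x_0)$ and $\varphi_j(x_0)$ are in $\R$, so the atoms of $\nu^{(n)}$ on $\R$ need not be $c^n$-separated. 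Hochman's actual argument (which you gesture at parenthetically) runs the entropy analysis for the random walk on the affine group, where the separation genuinely applies, and only afterwards projects to $\R$; the inverse theorem is applied in a form adapted to that setting. Your sketch would need to make this passage through the affine group explicit rather than treat it as a side remark, since the claimed entropy lower bound on $\R$ is otherwise unjustified.
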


Theorem \ref{dim_linear_IFS_with_exponental_separation} is today extended in several ways within the range of linear IFSs (see \cite{Hoc15}, \cite{Var19}, \cite{Rap22}, \cite{RV24} and \cite{FF24}).

The natural problem is the extension of Theorem \ref{dim_linear_IFS_with_exponental_separation} to non-linear IFSs. In \cite{HS17}, Hochman and Solomyak showed that Theorem \ref{dim_linear_IFS_with_exponental_separation} is extended to {\it Möbius iterated function systems}, that is, IFSs $\Phi=\{\varphi_i\}_{\in\I}$ such that all $\varphi_i\ (i\in\I)$ are Möbius transformations on $\R\sqcup\{\infty\}$.
As we will see in Section \ref{section_main_problem}, Möbius IFSs can be seen as the natural actions of some kind of finite families $\A=\{A_i\}_{i\in\I}$ of elements of $\SL_2(\R)$ on the one-dimensional real projective space $\RP^1$, and the result in \cite{HS17} is indeed more general, regarding more general $\SL_2(\R)$ random matrix products. The corresponding to the exponential separation condition for linear IFSs in this case is the following {\it strongly Diophantine condition}.
We equip a left-invariant Riemannian metric on the Lie group $G=\SL_2(\R)$ and write $d_G$ for the metric determined by this Riemannian metric.
For $\A=\{A_i\}_{i\in\I}$ and $i=(i_1,\dots,i_n)\in\I^*$, we write $A_i=A_{i_1}\cdots A_{i_n}$.

\begin{dfn}\label{dfn_strongly_Diophantine_condition}
Let $\A=\{A_i\}_{i\in\I}$ be a non-empty finite family of elements of $\SL_2(\R)$.
Then, we say that $\A$ is {\it strongly Diophantine} if there exists a constant $c>0$ such that, for each $n\in\N$, we have
\begin{equation*}
	i,i'\in\I^n,i\neq i'\implies d_G(A_i,A_{i'})\geq c^n.
\end{equation*}
\end{dfn}

We can see that this condition is independent of the choice of a left-invariant Riemannian metric on $\SL_2(\R)$ (see \cite[Section 2.3]{HS17}).

\begin{thm}[{\cite[Theorem 1.1]{HS17}}]\label{Hausdorff_dim_of_Furstenberg_measures}
Let $\A$ be a non-empty finite family of elements of $\SL_2(\R)$ and assume that $\A$ generates an unbounded subgroup in $\SL_2(\R)$ and is totally irreducible\footnote{We say that $\A$ is totally irreducible if $\A$ does not preserve any non-empty finite subset of $\RP^1$.}. We take a probability measure $\mu$ on $\SL_2(\R)$ with $\supp\ \mu=\A$ and write $\nu$ for the unique stationary measure of $\mu$ (called the Furstenberg measure) on $\RP^1$. If $\A$ is strongly Diophantine, we have
\begin{equation*}
\dim_H\nu=\min\left\{\frac{H(\mu)}{2\chi(\mu)},1\right\},
\end{equation*}
where $H(\mu)$ is the entropy of $\mu$ and $\chi(\mu)>0$ is the Lyapunov exponent of $\mu$.
\end{thm}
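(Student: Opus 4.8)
The plan is to adapt Hochman's entropy-increase method (\cite{Hoc14}; cf.\ Theorem \ref{dim_linear_IFS_with_exponental_separation}) from affine to Möbius dynamics. The upper bound is the routine direction. By the Furstenberg large deviation estimates, for $\mu^{*n}$-typical $i\in\I^n$ one has $\log\|A_i\|=n\chi(\mu)+o(n)$ and the push-forward $(A_i)_*\nu$ is concentrated in an interval of length $e^{-2n\chi(\mu)+o(n)}$ about the attracting fixed point of $A_i$; since the strongly Diophantine condition forces distinct words over $\I$ to give distinct matrices (so that $\mu^{*n}(\{A_i\})=p_{i_1}\cdots p_{i_n}$), the Shannon--McMillan--Breiman theorem provides $\approx e^{nH(\mu)}$ words typical in both senses, and covering $\supp\nu$ by the corresponding intervals gives $\dim_H\nu\le H(\mu)/(2\chi(\mu))$, while $\dim_H\nu\le1$ is trivial. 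The content is the matching lower bound. Since the Furstenberg measure $\nu$ is exact dimensional, $\dim_H\nu=\lim_{m\to\infty}H(\nu,\D_m)/(m\log2)$, and it suffices to bound this limit from below.

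The structural input is that $\nu$ is self-similar at every depth: $\nu=\sum_{i\in\I^n}p_{i_1}\cdots p_{i_n}\,(A_i)_*\nu$, presenting $\nu$ at scale $\delta_n:=e^{-2n\chi(\mu)}$ as a convex combination of $\approx e^{nH(\mu)}$ pieces each of diameter $\approx\delta_n$. To reduce this to the affine situation, one would write $A_i=K_iD_iK_i'$ (Cartan decomposition) with $K_i,K_i'\in\SO(2)$ and $D_i$ diagonal of operator norm $e^{n\chi(\mu)+o(n)}$: in suitable charts $D_i$ acts on $\RP^1$ as an affine contraction by $e^{-2n\chi(\mu)+o(n)}$, so Möbius maps have uniformly bounded distortion on balls of a fixed radius, and iterating this linearisation across scales makes each $(A_i)_*\nu$, up to controlled distortion, an affine image of $\nu$ with ratio $\approx\delta_n$. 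This is exactly the configuration to which Hochman's machinery applies: comparing $H(\nu,\D_m)$ for $2^{-m}\approx\delta_n$ with the symbolic entropy via the conditional-entropy and concavity inequalities shows that the entropy of $\nu$ at scale $\delta_n$ is at least $(1-o(1))\min\{nH(\mu),\,2n\chi(\mu)\}$ minus a loss term measuring the spread in $G$ of the matrices $\{A_i\}_{i\in\I^n}$ at scale $\delta_n$. Combined with exact dimensionality this yields the dichotomy: either $\dim_H\nu=\min\{H(\mu)/(2\chi(\mu)),1\}$, or, in the case $\dim_H\nu<\min\{H(\mu)/(2\chi(\mu)),1\}$, the loss term is deficient by a fixed linear amount $\gtrsim\varepsilon n$ for all large $n$.

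In the deficit case, Hochman's inverse theorem — transplanted to $\RP^1$ via the linearisation above — propagates the deficit through the self-similarity at all depths and forces the family $\{(A_i)_*\nu\}_{i\in\I^n}$ to be \emph{super-exponentially concentrated}: for every $\rho>0$ and infinitely many $n$, an exponentially large proportion of distinct pairs $i,i'\in\I^n$ has $(A_i)_*\nu$ and $(A_{i'})_*\nu$ within $\rho^{\,n}$ in a transportation metric on $\RP^1$. The rigidity of the Furstenberg measure — a consequence of total irreducibility and unboundedness, which make $\nu$ non-atomic and deprive it of the near-symmetries that would obstruct the implication — translates this into $d_G(A_i,A_{i'})\le{\rho'}^{\,n}$ for distinct $i,i'\in\I^n$, for every $\rho'>0$ and infinitely many $n$. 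Choosing $\rho'=c/2$ with $c$ the constant of Definition \ref{dfn_strongly_Diophantine_condition} contradicts the hypothesis that $\A$ is strongly Diophantine. Hence the deficit case is impossible and $\dim_H\nu=\min\{H(\mu)/(2\chi(\mu)),1\}$.

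The main obstacle is this entire nonlinear transplantation. It requires: (i) uniform control of the distortion of the maps $A_i$, and of their restrictions to the scale-$\delta_n$ neighbourhoods of their attracting fixed points, \emph{simultaneously across all the intermediate scales at which the inverse theorem is invoked} — this is where the real-analyticity of Möbius maps and the Cartan decomposition enter; (ii) the Furstenberg-type large deviation and non-degeneracy estimates guaranteeing that long typical words behave as in the conformal picture (norm growth $e^{n\chi(\mu)}$, well-separated attracting and repelling data); and (iii) the dictionary between closeness of the push-forwards $(A_i)_*\nu$ in a transportation metric and closeness of the matrices $A_i$ in $d_G$, which rests on the non-atomicity and rigidity of $\nu$ supplied by the total irreducibility and unboundedness of $\A$. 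Once these ingredients are assembled, the deduction of the dimension formula from the strongly Diophantine condition follows Hochman's scheme.
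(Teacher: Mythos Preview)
This theorem is not proved in the present paper: it is quoted from \cite{HS17} as background, and no proof appears here. What the paper does contain (Section \ref{section_L^q_norm_porosity}) is a summary of the Hochman--Solomyak strategy, which is the relevant point of comparison.

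Your outline is broadly in the right spirit---upper bound by covering, lower bound by an entropy-increase/inverse-theorem argument transplanted from the affine setting via linearisation---and this is indeed the architecture of \cite{HS17}. But the mechanism you describe for step (iii) is not how the argument actually runs, and as stated it has a gap. You phrase the endgame as: super-exponential closeness of the push-forwards $(A_i)_*\nu$ in a transportation metric on $\RP^1$, plus ``rigidity of $\nu$'', forces $d_G(A_i,A_{i'})$ to be small. This is not straightforward: a single push-forward $A\mapsto A_*\nu$ does not separate elements of $G$ effectively, and the ``rigidity'' you invoke is not a clean implication. What \cite{HS17} actually does---and what the paper sketches---is work with the measure $\mu^{*n}$ \emph{on $G$} throughout: one proves an entropy growth theorem for the convolution $\theta{\bm .}\nu$ (\cite[Theorem 5.13]{HS17}), established via the local entropy average formulae (\ref{local_entropy_average})--(\ref{local_entropy_average_convolution}), entropy porosity of $\nu$, and linearisation at small scales so that Hochman's inverse theorem for linear convolutions applies. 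The contradiction with the Diophantine hypothesis then comes from comparing the entropy of $\mu^{*n}$ at scale $c^n$ on $G$ directly, not from a transportation-metric argument on $\RP^1$. The bridge between $\RP^1$ and $G$ that you need is the three-point bi-Lipschitz estimate (Proposition \ref{bi_Lipschitz_of_the_action_HS17} in this paper, from \cite[Lemma 2.5]{HS17}): the action map $g\mapsto(gx_1,gx_2,gx_3)$ is locally bi-Lipschitz when $x_1,x_2,x_3$ are separated, and this is what lets one pass between scales on $G$ and scales on $\RP^1$. Your Cartan-decomposition picture is a reasonable heuristic for the distortion control, but the actual linearisation in \cite{HS17} is the second-order Taylor expansion recorded here as Proposition \ref{linear_approximation_of_G_action_proof_flattening}.
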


In addition, the result on the Hausdorff dimension of the attractors of Möbius IFSs satisfying the strongly Diophantine condition was obtained in \cite{ST21}.

Recently, Rapaport showed in \cite{Rap24} the complete extension of Theorem \ref{dim_linear_IFS_with_exponental_separation} to real analytic IFSs, that is, IFSs $\Phi=\{\varphi_i\}_{i\in\I}$ such that all $\varphi_i\ (i\in\I)$ are real analytic.

We notice that the above extensions of Theorem \ref{dim_linear_IFS_with_exponental_separation}
are about the Hausdorff dimension. However, as another way of extending Theorem \ref{dim_linear_IFS_with_exponental_separation}, we can consider replacing the Hausdorff dimension with more strong notion of dimension. Here, we consider the {\it $L^q$ dimension} for stationary measures. To define this,
we take the $2^{-m}$ dyadic partition $\D_m=\left\{[2^{-m}k,2^{-m}(k+1))\right\}_{k\in\Z}$ of $\R$ for $m\in\N$. In the following and throughout this paper, the base of $\log$ is $2$.

\begin{dfn}\label{dfn_L^d_dimension}
For $q>1$ and a Borel probability measure $\nu$ on $\R$ with the compact support and $m\in\N$, the {\it $2^{-m}$ $L^q$ norm} of $\nu$ is
\begin{equation*}
\|\nu^{(m)}\|_q=\left(\sum_{I\in\D_m}\nu(I)^q\right)^{1/q}.
\end{equation*}
The {\it $L^q$ spectrum} of $\nu$ is\footnote{The last inclusion follows from Hölder's inequality.}
\begin{equation*}
\tau(q)=\tau(\nu,q)=\liminf_{m\to\infty}\left(-\frac{1}{m}\log\|\nu^{(m)}\|_q^q\right)=\liminf_{m\to\infty}\left(-\frac{1}{m}\log\sum_{I\in\D_m}\nu(I)^q\right)\in[0,q-1],
\end{equation*}
and the {\it $L^q$ dimension} of $\nu$ is
\begin{equation*}
	D(q)=D(\nu,q)=\frac{\tau(\nu,q)}{q-1}\in[0,1].
\end{equation*}
\end{dfn}

We can see that the $L^q$ norm measures the “smoothness” of $\nu$: the smaller the value $\|\nu^{(m)}\|_q$ is, the more uniform the distribution $\nu(I)\ (I\in\D_m)$ is, and the closer to $1$ the value $\|\nu^{(m)}\|_q$ is, the more concentrated $\nu$ is on a single interval of $\D_m$.

It can be easily seen from Hölder's inequality that $\tau(q)\ (q>1)$ is non-decreasing, concave, and hence continuous in $q$.
It holds that $D(q)\ (q>1)$ is non-increasing in $q$, and $\lim_{q\searrow1}D(q)\leq \dim_H\nu$ (see \cite{FLR02}).
In particular, if $\nu$ is a stationary measure for an IFS, it is shown in \cite{PS00} that the limit in the definition of the $L^q$ spectrum exists. We will see this fact in Section \ref{subsection_stationary_measures} for our case of Möbius IFSs. Furthermore, we have
\begin{equation*}
\lim_{q\searrow1}D(q)=\dim_H\nu
\end{equation*}
(see \cite[Theorem 5.1 and Remark 5.2]{SS16}).

The $L^q$ dimension is also important in terms of “local information” of measures which it tells.
If $\nu$ is a stationary measure for an IFS, it is known (\cite{FH09}) that $\nu$ is exact dimensional. Hence, if $s<\dim_H\nu$, we have $\nu(B_r(s))\leq r^s$ for $\nu$-a.e. $x$ and sufficiently small $r>0$ depending on $x$\footnote{Here, we write $B_r(x)$ for the open ball in $\R$ of radius $r$ and center $x$.}. However, what the $L^q$ dimension tells is more strong: if $s<D(q)$, it is easily seen from the definition of $D(q)$ that $\nu(B_r(x))\leq r^{(1-1/q)s}$ holds for all $x$ and sufficiently small $r>0$ independent of $x$ (which is stated as Lemma 1.7 in \cite{Shm19}).

Classically, the $L^q$ spectrum is studied regarding {\it multifractal formalism}. This is the heuristic principle saying that, for a “natural” measure $\nu$, the dimension spectrum $\dim_H\left\{x\in\R\right|$ $\left.\lim_{r\searrow0}\log\nu(B_r(x))/\log r=\alpha\right\}\ (\alpha\in[0,1])$ is given by the Legendre transform $\tau^*(\alpha)$ of the $L^q$ spectrum $\tau(q)$ of $\nu$. For this topic, e.g., see \cite{LN99}, \cite{Pat97}, \cite{Fen07} and references therein. Actually, the heuristic idea of multifractal formalism is important in the argument in \cite{Shm19} and this paper, and we will notice it again in Section \ref{subsection_multifractal_structure}.

In the landmark paper \cite{Shm19}, Shmerkin proved the following extension of Theorem \ref{dim_linear_IFS_with_exponental_separation} to the $L^q$ dimension.

\begin{thm}[{\cite[Theorem 6.6]{Shm19}}]\label{L^q_dim_of_self_similar_measures}
Let $\Phi=\left\{\varphi_i(x)=\lambda_ix+a_i\right\}_{i\in\mathcal{I}}\ (0<|\lambda_i|<1)$ be a linear IFS on $\R$ satisfying the exponential separation condition along a subsequence and $p=(p_i)_{i\in\mathcal{I}}$ be a probability vector. We write $\nu$ for the stationary measure (self-similar measure) of $\Phi$ and $p$.
Then, for $q>1$, we have $D(\nu,q)=\min\{\widetilde{\tau}(q)/(q-1),1\}$, where $\widetilde{\tau}(q)$ is the unique solution to
\begin{equation}\label{L^q_Bowen_formula_linear_IFS}
\sum_{i\in\mathcal{I}}p_i^q|\lambda_i|^{-\widetilde{\tau}(q)}=1.
\end{equation}
\end{thm}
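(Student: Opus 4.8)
The plan is to follow Shmerkin's strategy, isolating an essentially free upper bound from the substantive lower bound. For the upper bound, $\tau(\nu,q)\le q-1$ is immediate from H\"older's inequality, and $\tau(\nu,q)\le\widetilde{\tau}(q)$ comes from feeding the self-similar identity $\nu=\sum_{i\in\I}p_i\varphi_i\nu$ into a submultiplicativity estimate for $(\|\nu^{(m)}\|_q^q)_m$: since each $\varphi_i\nu$ is, at dyadic scale $2^{-m}$, a rescaled copy of $\nu$ at scale $\approx 2^{-m}|\lambda_i|^{-1}$, one gets $\|\nu^{(m)}\|_q^q\lesssim\sum_{i\in\I}p_i^q\|\nu^{(\lceil m-\log|\lambda_i|^{-1}\rceil)}\|_q^q$, and comparing this recursion with the defining equation \eqref{L^q_Bowen_formula_linear_IFS} yields the bound (the same estimate shows that the $\liminf$ in Definition~\ref{dfn_L^d_dimension} is a genuine limit, cf.\ \cite{PS00}). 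Thus $D(\nu,q)\le\min\{\widetilde{\tau}(q)/(q-1),1\}$ costs nothing, and all the content lies in the reverse inequality.

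For the lower bound I would argue by contradiction, assuming $\tau(\nu,q)<s<\min\{\widetilde{\tau}(q),q-1\}$. The first step is to unfold the self-similar structure into an approximate convolution. Using a stopping-time cut of the coding tree at a large scale parameter $R$, I would produce a finite antichain $\Lambda_R\subset\I^*$ with $|\lambda_i|\asymp 2^{-R}$ for all $i\in\Lambda_R$, so that $\nu=\sum_{i\in\Lambda_R}p_i\varphi_i\nu$; iterating and rescaling, $\nu$ at scale $2^{-kR}$ becomes comparable to a $k$-fold convolution $\eta_R\ast S_R\eta_R\ast\cdots\ast S_{(k-1)R}\eta_R$, where $\eta_R=\sum_{i\in\Lambda_R}p_i\delta_{\varphi_i(x_0)}$ is the branching measure ($x_0$ a fixed base point) and $S_t$ denotes scaling by $2^{-t}$. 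The exponential separation along a subsequence is used precisely here: it guarantees that for infinitely many $R$ the atoms of $\eta_R$ are $\gtrsim c^{R}$-separated, hence $\eta_R$ stays genuinely non-atomic well past scale $2^{-R}$ and $\|\eta_R^{(R)}\|_q^q\approx\sum_{i\in\Lambda_R}p_i^q\approx 2^{-R\widetilde{\tau}(q)}$ (the atoms lying in essentially distinct dyadic cells), the last estimate again by \eqref{L^q_Bowen_formula_linear_IFS} and the stopping-time bound $\sum_{i\in\Lambda_R}p_i^q|\lambda_i|^{-\widetilde{\tau}(q)}\asymp 1$. Fix one large such $R$.

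Next I would track the $L^q$ norm along the convolution. Since convolving by a probability measure cannot increase the $L^q$ norm, writing $\tau(\nu,q)=\lim_{k}\bigl(-\tfrac{1}{kR}\log\|\nu^{(kR)}\|_q^q\bigr)$ expresses the limit as an average of the non-negative per-step decreases; the assumption $\tau(\nu,q)<s<\widetilde{\tau}(q)$ then forces a positive proportion of the steps $j$ to flatten the $L^q$ norm by much less than the expected $2^{-R\widetilde{\tau}(q)}$ factor. At this point I would invoke Shmerkin's inverse theorem for $L^q$ norms of convolutions: a near-failure to flatten at step $j$ forces, at all but a $\delta(\varepsilon)$-proportion of the intermediate dyadic scales, the new factor $S_{(j-1)R}\eta_R$ to be essentially concentrated on $2^{o(R)}$ dyadic cells while the accumulated factor is nearly uniform there. (The hypothesis $s<q-1$ is what guarantees $\nu$ is not already essentially uniform at scale $2^{-kR}$ — which it would have to be were $\tau(\nu,q)\ge q-1$ — so the flattening mechanism has something to act on; equivalently, $q-1$ is the unconditional floor for the $L^q$ norm of a probability measure on an interval, which is exactly why it enters the $\min$.) The resulting near-atomicity of $\eta_R$ at a positive density of scales contradicts the $\gtrsim c^{R}$-separation of its atoms once $R$ is large enough, and this contradiction yields $\tau(\nu,q)\ge\min\{\widetilde{\tau}(q),q-1\}$, hence $D(\nu,q)=\tau(\nu,q)/(q-1)=\min\{\widetilde{\tau}(q)/(q-1),1\}$.

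The hard part — in fact essentially the whole proof — is the inverse theorem for $L^q$ norms of convolutions together with the multi-scale bookkeeping around it: making precise the scale-by-scale notions of ``almost atomic'' and ``almost uniform'', propagating these structures through $k$ convolution steps while keeping the proportion of exceptional scales under control, and absorbing the ways in which $\nu$ is only an approximate convolution (different contraction ratios within $\Lambda_R$, orientation reversals, the scale slack in the stopping-time cut). The combinatorial engine underneath the inverse theorem — an $L^q$ version of the discretized sum--product / asymmetric Balog--Szemer\'edi--Gowers circle of ideas — is where the genuine difficulty sits; everything else is the translation between the dynamical self-similar picture and that combinatorial input.
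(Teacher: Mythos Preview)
The paper does not give its own proof of this statement: Theorem~\ref{L^q_dim_of_self_similar_measures} is quoted from \cite[Theorem~6.6]{Shm19} as background, and the paper's contribution is the Möbius-IFS analogue (Theorem~\ref{the_main_theorem_L^q_dim_Mobius_IFS}). So there is no paper-proof to compare against directly; the relevant comparison is with the paper's description of Shmerkin's strategy in Sections~\ref{section_L^q_norm_flattening}--\ref{section_L^q_norm_porosity} and its realization in Sections~\ref{section_proof_of_L^q_norm_flattening_theorem}--\ref{section_proof_of_the_main_theorem}.

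Your sketch is broadly faithful to Shmerkin's approach, but it reorganizes the architecture in a way that blurs the key intermediate object. In \cite{Shm19} (and in this paper's adaptation) one does \emph{not} iterate $\nu\approx\eta_R*S_R\eta_R*\cdots$ and track the $L^q$ norm step by step through a $k$-fold convolution. Instead, the inverse theorem is used once to prove the $L^q$ \emph{flattening} theorem (Theorem~\ref{L^q_norm_flattening_theorem_for_self_similar_measure} here, \cite[Theorem~5.1]{Shm19}): for a single convolution $\rho*\nu$ with $\rho$ having mildly small $L^q$ norm, the $L^q$ norm of $\nu$ drops strictly below $2^{-\tau(q)m}$. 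This is then fed into an identity of the form $\nu=\mu_m{\bm.}\nu$ (no iteration) to show that $\lim_m(-m^{-1}\log\|\mu_m^{(Rm)}\|_q^q)=\tau(q)$ for all $R$ (the analogue is Proposition~\ref{the_L^q_morm_of_mum_at_a_finer_scale_proof_of_the_main_theorem}); exponential separation then says the atoms of $\mu_m$ are already separated at scale $2^{-Rm}$ for suitable $R$, so $\|\mu_m^{(Rm)}\|_q^q=\sum_{i\in\Omega_m}p_i^q$ and the dimension formula drops out via Lemma~\ref{another_description_of_widetildetau_proof_of_the_main_theorem}. The multi-scale telescoping you describe does occur, but it lives \emph{inside} the proof of the flattening theorem (combining the inverse theorem with the local $L^q$-norm/multifractal lemmas of Section~\ref{subsection_multifractal_structure}), not at the outer level. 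Your upper-bound paragraph and your identification of the inverse theorem as the combinatorial core are accurate; what is missing is the clean separation into ``flattening theorem'' $\Rightarrow$ ``$L^q$ norm of $\mu_m$ at finer scale'' $\Rightarrow$ ``dimension formula'', which is exactly the modular structure the present paper reuses.
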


This result has an application to Bernoulli convolutions about the regularity of its density (\cite[Theorem 1.3]{Shm19}). Shmerkin's result is indeed more general, regarding wider class of measures called dynamically-driven self-similar measures. Remarkably, the corresponding result to Theorem \ref{L^q_dim_of_self_similar_measures} for some dynamically-driven self-similar measures is applied to prove famous Furstenberg's $\times p,\times q$ intersection conjecture (\cite[Theorem 1.2]{Shm19})\footnote{The solution of this conjecture is simultaneous and independent achievement by Shmerkin and Wu (\cite{Wu19}), and their strategies are quite different. Today, the short proof of this conjecture is known (\cite{Aus22}).}.

Recently, Shmerkin's result was extended to higher dimensional linear cases by Corso and Shmerkin (\cite{CS24}).

\subsection{M\"{o}bius IFSs and the main problem}\label{section_main_problem}

{\it The extension of Theorem \ref{L^q_dim_of_self_similar_measures} to non-linear IFSs} is a natural problem. In this paper, we consider this problem, more precisely, {\it the extension of Theorem \ref{L^q_dim_of_self_similar_measures} to Möbius IFSs}.

First of all, we formalize the problem. We consider a Möbius IFS $\Phi=\{\varphi_i\}_{i\in\I}$, that is, a non-empty finite family of Möbius transformations $\varphi_i\ (i\in\I)$ on $\R\sqcup\{\infty\}$ restricted to an invariant compact interval $U\subset\R$ such that $0<|\varphi_i'(x)|<1$ for any $x\in U$ and $i\in\I$. However, as we mentioned in Section \ref{section_background}, such an IFS can be treated as the natural action of a non-empty finite family $\A=\{A_i\}_{i\in\I}$ of elements of $\SL_2(\R)$ on $\RP^1$ with the property called {\it uniform hyperbolicity}. Here, the natural action of $\SL_2(\R)$ on $\RP^1$ is
\begin{equation*}
\begin{array}{ccc}
\SL_2(\R)\times \RP^1&\longrightarrow&\RP^1\\
\rotatebox{90}{$\in$}&&\rotatebox{90}{$\in$}\\
(A,[v])&\longmapsto&[Av],
\end{array}
\end{equation*}
where, for $v\in\R^2\setminus\{0\}$, $[v]$ is the image of $v$ in $\RP^1$. We first see this correspondence of a Möbius IFS $\Phi$ to a uniformly hyperbolic family $\A\subset\SL_2(\R)$.

We write $G=\SL_2(\R)$ and take a non-empty finite family $\A=\left\{A_i\right\}_{i\in\mathcal{I}}$ of elements of $G$.
Let $\|A\|=\max_{v\in\R^2,\|v\|\leq1}\|Av\|$\footnote{In this paper, we take the Euclidean norm $\|^{t}(v_1,v_2)\|=\sqrt{v_1^2+v_2^2}$ on $\R^2$.} be the operator norm of $A\in {\rm M}_2(\R)$.

\begin{dfn}[Uniform hyperbolicity]\label{definition_uniformly_hyperbolic}
	We say that $\A$ is {\it uniformly hyperbolic} if there exist constants $c>0$ and $r>1$ such that
	\begin{equation*}
		\|A_{i_1}\cdots A_{i_n}\|\geq cr^n\quad\text{for any }n\in\N, (i_1,\dots,i_n)\in\I^n.
	\end{equation*}
\end{dfn}

Then, the following fact holds.

\begin{prop}[{\cite[Theorem 2.2]{ABY10}}]\label{proposition_Möbius_uniformly_hyperbolic_SL_2(R)}
A non-empty finite family $\A$ of elements of $G$ is uniformly hyperbolic if and only if there exists a non-empty open subset $U\subsetneq\RP^1$ with finite connected components having disjoint closures such that
\begin{equation*}
A\overline{U}\subset U\quad\text{for any}\ A\in\A.
\end{equation*}
\end{prop}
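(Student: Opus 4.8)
The statement is the classical ``cone criterion'' for uniform hyperbolicity of an $\SL_2(\R)$-valued cocycle; I would prove the two implications separately, after translating everything into the dynamics of the induced Möbius maps on $\RP^1$. Write $f_A$ for the Möbius map on $\RP^1$ induced by $A\in G$, and parametrise $\RP^1$ by $\theta\mapsto[\cos\theta:\sin\theta]$. A one-line computation using $\det A=1$ gives $|f_A'(\theta)|=\|Au_\theta\|^{-2}$ with $u_\theta=(\cos\theta,\sin\theta)$, and hence, for every word $i\in\I^*$,
\[
\|A_i\|^{2}=\max_\theta\|A_iu_\theta\|^{2}=\Bigl(\min_{x\in\RP^1}|f_{A_i}'(x)|\Bigr)^{-1}\ \ge\ |f_{A_i}'(x_0)|^{-1}\quad\text{for any fixed }x_0\in\RP^1 .
\]
Thus $\A$ is uniformly hyperbolic if and only if there are $C>0$ and $\rho\in(0,1)$ with $|f_{A_i}'(x_0)|\le C\rho^{|i|}$ for all $i$, and this holds for one basepoint $x_0$ exactly when it holds for all of them (one direction uses the displayed inequality, the other the trivial bound $\min_x|f_{A_i}'(x)|\le|f_{A_i}'(x_0)|$). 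I would run the whole proof through this equivalence.

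($\Leftarrow$) Given such a $U$, set $C=\overline U$: a finite union of closed arcs with pairwise disjoint closures and $f_A(C)\subset U\subset C$ for every $A\in\A$. Put $a_n=\sup\{|f_{A_i}'(x)|:i\in\I^n,\ x\in C\}$. The chain rule together with the invariance $f_{A_j}(C)\subset C$ gives $a_{m+n}\le a_ma_n$, so $\lambda:=\lim_na_n^{1/n}=\inf_na_n^{1/n}$ exists, and it remains to show $a_N<1$ for some $N$. Here I would invoke compactness: realising the cone in $\R^2$ over each component of $U$ as a convex wedge, each $A\in\A$ maps each such wedge strictly inside another wedge of the family (connectedness of $f_A(U_k)$ and $f_A(\overline{U_k})\subset U$ pin down the target and the strictness), so by Birkhoff's projective-contraction theorem $f_A$ contracts the corresponding Hilbert metric by a uniform factor $\rho_0<1$; iterating, and using that the Hilbert and spherical metrics are bi-Lipschitz on the compact set $C$ with uniform constants, gives $a_n\le C_1\rho_0^{\,n}$. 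Then $\|A_i\|^{2}\ge|f_{A_i}'(x_0)|^{-1}\ge C_1^{-1}\rho_0^{-|i|}$ for any $x_0\in C$, i.e.\ $\A$ is uniformly hyperbolic.

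($\Rightarrow$) Assume $\A$ uniformly hyperbolic; here I would build the multicone from the ``unstable'' direction bundle. For $\omega=(i_1,i_2,\dots)\in\I^\N$ let $p_n(\omega)\in\RP^1$ be the most expanded (top singular) direction of $A_{i_1}\cdots A_{i_n}$, which is well defined once $n$ is large since its singular-value ratio equals $\|A_{i_1}\cdots A_{i_n}\|^{2}\ge c^2r^{2n}\to\infty$. The standard perturbation estimate $d(p(BA'),p(B))\le(\text{const depending only on }A')\cdot\|B\|^{-2}$, applied with $B=A_{i_1}\cdots A_{i_n}$ and $A'=A_{i_{n+1}}$, combined with $\|A_{i_1}\cdots A_{i_n}\|^{-2}\le c^{-2}r^{-2n}$, shows $(p_n(\omega))_n$ is uniformly Cauchy; its limit is a continuous, equivariant map $E^u\colon\I^\N\to\RP^1$ with $A_aE^u(\omega)=E^u(a\omega)$. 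Let $W=\overline{\{E^u(\omega):\omega\in\I^\N\}}$, a compact set with $f_{A_a}(W)\subset W$ for all $a$; applying the same construction to the inverses $\{A_i^{-1}\}$ (also uniformly hyperbolic, since $\|A^{-1}\|=\|A\|$ in $\SL_2(\R)$) produces a closed ``stable'' set $S$, and uniform hyperbolicity forces $\mathrm{dist}(W,S)$ to stay bounded below, so $W\ne\RP^1$. Since the length-$N$ products $f_{A_i}$ are genuine spherical contractions on a neighbourhood of $W$ once $N$ is large, a contraction-mapping argument first yields a multicone for $\{A_i:i\in\I^N\}$, which one then promotes to an open $U\supset W$ with $U\subsetneq\RP^1$, finitely many components with disjoint closures, and $A_a\overline U\subset U$ for every $a\in\I$, by thickening it slightly and saturating over the (finitely many) words of length $<N$.

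The main obstacle is the forward direction: establishing the existence and continuity of the invariant unstable bundle $E^u$ and the uniform gap $\mathrm{dist}(W,S)>0$. This is the heart of the theory of uniformly hyperbolic $\SL_2(\R)$ cocycles — equivalently, the existence of a continuous dominated splitting over the full shift $\I^\N$ — and it is where the hypothesis of uniform exponential growth of norms is really used. The concluding step, converting the closed set $W$ together with the per-letter contraction into an \emph{open} multicone meeting the ``finitely many components with disjoint closures'' requirement, is elementary but must be done with a little care.
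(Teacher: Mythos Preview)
The paper does not prove this proposition; it quotes it from \cite{ABY10} and uses it as a black box. So there is no argument in the paper to compare yours against, and your outline follows the standard route taken in \cite{ABY10}.

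There is one genuine gap in your $(\Rightarrow)$ direction. You define $p_n(\omega)$ as the ``most expanded (top singular) direction'' of $B_n=A_{i_1}\cdots A_{i_n}$, i.e.\ $u_{B_n}^+$, and invoke the estimate $d(u_{BA'}^+,u_B^+)\le C_{A'}\|B\|^{-2}$. This estimate is false for right multiplication: take $B=\operatorname{diag}(\lambda,\lambda^{-1})$ with $\lambda$ large and $A'$ the rotation by $\pi/2$; then $u_B^+=e_1$ while $u_{BA'}^+=e_2$, regardless of $\lambda$. What is true is the corresponding estimate for the \emph{left} singular vectors,
\[
d\bigl(v_{BA'}^+,v_B^+\bigr)\le C_{A'}\|B\|^{-2},
\]
or, equivalently, $d(u_{A'B}^+,u_B^+)\le C_{A'}\|B\|^{-2}$. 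If you set $p_n(\omega)=v_{B_n}^+$ instead, both the Cauchy property and the equivariance $A_aE^u(\omega)=E^u(a\omega)$ under prepending go through, and the limit set $W$ is exactly the attractor $K$ of the paper. With this fix the remainder of your sketch (the gap $\operatorname{dist}(W,S)>0$ and the passage from the closed invariant set to an open multicone with finitely many components having disjoint closures) is the actual content of \cite[Theorem~2.2]{ABY10}, and you are right to flag those steps as the substantive ones rather than claim them outright.
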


If $\Phi=\{\varphi_i\}_{i\in\I}$ is a Möbius IFS, then, for each $i\in\I$, there is $A_i\in G$ such that
\begin{equation*}
\varphi_i(x)=\frac{a_ix+b_i}{c_ix+d_i}\quad\text{for }x\in\R\sqcup\{\infty\},\quad\text{where }A_i=
\begin{pmatrix}
a_i&b_i\\c_i&d_i
\end{pmatrix},
\end{equation*}
and $\varphi_i:\R\sqcup\{\infty\}\to\R\sqcup\{\infty\}$ and the action of $A_i$ on $\RP^1$ are conjugate via the identification $F:\R\sqcup\{\infty\}\stackrel{\sim}{\longrightarrow}\RP^1$ defined by $F(x)=[x:1]$ for $x\in\R$ and $F(\infty)=[1:0]$. We can take a bounded open interval $U\subset\R$ such that $\varphi_i(\overline{U})\subset U$ for any $i\in\I$. Then, the family $\A=\{A_i\}_{i\in\I}$ satisfies the condition of Proposition \ref{proposition_Möbius_uniformly_hyperbolic_SL_2(R)} for $F(U)\subsetneq\RP^1$, and hence $\A$ is uniformly hyperbolic.
In the following and throughout this paper, we treat non-empty, finite and uniformly hyperbolic families of elements of $G$ and their natural actions on $\RP^1$, instead of Möbius IFSs on $\R$ themselves\footnote{This is because (of the author's taste and) we want to cite the argument directly from \cite{HS17}, in which the action of $G$ on $\RP^1$ is treated, rather than mathematical reasons.}.

We take a non-empty finite family $\A=\{A_i\}_{i\in\I}\subset G$ which is uniformly hyperbolic. Let $U\subsetneq \RP^1$ be an open subset as in Proposition \ref{proposition_Möbius_uniformly_hyperbolic_SL_2(R)}. Then, the following holds as same as IFSs on $\R$ by the contraction property of the action of $\A$ which will be seen in Section \ref{subsection_action_of_uniformly_hyperbolic_subset}.
There exists a unique non-empty compact subset $K\subset U$, called the attractor of $\A$, such that
\begin{equation*}
	K=\bigcup_{i\in\I}A_iK.
\end{equation*}
The attractor $K$ is independent of the choice of $U$.

For finite Borel measures $\mu$ and $\nu$ on $G$ and $\RP^1$ respectively, we write $\mu{\bm .}\nu$ for the push-forward measure on $\RP^1$ of $\mu\times\nu$ by the action $G\times\RP^1\ni(g,x)\mapsto gx\in\RP^1$.
We take a non-degenerate probability vector $p=(p_i)_{i\in\I}$ and define the probability measure 
\begin{equation*}
	\mu=\sum_{i\in\I}p_i\delta_{A_i}
\end{equation*}
on $G$ such that $\supp\ \mu=\A$.
Then, there exists a unique stationary measure $\nu$ of $\mu$ on $\RP^1$, that is, a Borel probability measure on $\RP^1$ with
\begin{equation*}
	\nu=\mu{\bm .}\nu=\sum_{i\in\I}p_iA_i\nu,
\end{equation*}
such that $\nu(U)=1$. Furthermore, we have $\supp\ \nu=K$ and $\nu$ is independent of the choice of $U$.

For a Borel probability measure $\nu$ on $\RP^1$ and $q>1$, the $2^{-m}$ $L^q$ norm $\|\nu^{(m)}\|_q$, the $L^q$ spectrum $\tau(\nu,q)$ and the $L^q$ dimension $D(\nu,q)$ are also defined in the same way as Borel probability measures on $\R$ with the compact supports, by just replacing the $2^{-m}$ dyadic partition of $\R$ with that of $\RP^1$ (which will be defined in Section \ref{subsection_stationary_measures}).

If $\A=\{A_i\}_{i\in\I}$ corresponds to a Möbius IFS $\Phi=\{\varphi_i\}_{i\in\I}$, then, of course, these attractor and stationary measure for $\A$ coincide to those for $\Phi$ introduced in Section \ref{section_background}, via the identification $F:\R\sqcup\{\infty\}\stackrel{\sim}{\longrightarrow}\RP^1$ as above. Moreover, the values of the $L^q$ spectrum and the $L^q$ dimension are also equal (by Lemma \ref{L^q_norms_of_two_partitions}).

To formalize our problem, we need to define the natural corresponding things of the notions in Theorem \ref{L^q_dim_of_self_similar_measures} for a finite uniformly hyperbolic family of elements of $G$. The corresponding of the exponential separation condition along a subsequence is the strongly Diophantine condition (Definition \ref{dfn_strongly_Diophantine_condition})\footnote{It may be more natural to consider the strongly Diophantine condition “along a subsequence”, but we consider this one in this paper.}.
Here, we consider the corresponding of (\ref{L^q_Bowen_formula_linear_IFS}) for the non-empty finite and uniformly hyperbolic family $\A=\{A_i\}_{i\in\I}\subset G$ and a non-degenerate probability vector $p=(p_i)_{i\in\I}$.

Since the case $|\I|=1$ is trivial, we assume $|\I|\geq2$.
We recall that, for $i=(i_1,\dots,i_n)\in\I^*$, we write $A_i=A_{i_1}\cdots A_{i_n}$, and we also write $p_i=p_{i_1}\cdots p_{i_n}$.
For $q>1$, we define the pressure function $\Psi_q:\R_{>0}\rightarrow\R$ by
\begin{equation*}
	\Psi_q(s)=\lim_{n\to\infty}\frac{1}{n}\log\sum_{i\in\I^n}p_i^q\|A_i\|^{2s}.
\end{equation*}
The existence of the limit is due to the subadditivity.\footnote{The reason why $\|A_i\|^2$ appears is that, as seen in Section \ref{subsection_G_action_on_RP^1}, the “essential contraction ratio” of the action of $A\in G$ on $\RP^1$ is $\|A\|^{-2}$.}
It is easily seen that $\Psi_q$ is non-decreasing and convex, and hence continuous\footnote{Actually, we will see in Section \ref{subsection_incompatibility} that $\Psi_q(s)$ can be defined for $(q,s)\in\R^2$ and is analytic on $\R^2$.}. By the uniform hyperbolicity of $\A$, we have $\Psi_q(s)\to\infty$ as $s\to\infty$. Furthermore, if $s>0$ is so small that $\|A_i\|^{2s}<p_i^{1-q}$ for each $i\in\I$ (we notice that $0<p_i<1$ for each $i\in\I$, because $p$ is non-degenerate and $|\I|\geq 2$), we have $\Psi_q(s)<0$. From these facts, we can see that $\Psi_q$ has the unique zero on $\R_{>0}$.
We write $\widetilde{\tau}(q)>0$ for this zero.\footnote{The definition of $\widetilde{\tau}(q)$ is more involved than that in (\ref{L^q_Bowen_formula_linear_IFS}). This is because, unlike linear IFSs, the (“essential”) contraction ratio $\|A\|^{-2}$ of the action of $A\in G$ on $\RP^1$ is not commutative under compositions.}
We notice a non-trivial (but maybe well-known) fact on $\widetilde{\tau}(q)$.

\begin{prop}\label{prop_analyticity_of_zero_of_pressure_function}
The function $\widetilde{\tau}:\R_{>0}\to\R$ is analytic.
\end{prop}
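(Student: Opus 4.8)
The plan is to derive the analyticity of $\widetilde\tau$ from the real-analytic implicit function theorem applied to the relation $\Psi_q(s)=0$. This needs two ingredients: (i) joint real-analyticity of $(q,s)\mapsto\Psi_q(s)$ near the graph $\{(q,\widetilde\tau(q))\}$, and (ii) non-vanishing of $\partial\Psi_q/\partial s$ at $s=\widetilde\tau(q)$. I will in fact establish the sharper bound $\partial\Psi_q/\partial s\geq 2\log r>0$ at every point, where $r>1$ is the constant from uniform hyperbolicity.

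For ingredient (ii), fix $q$ and $t>0$. By uniform hyperbolicity there are $c>0$ and $r>1$ with $\|A_i\|\geq cr^n$ for every $n$ and every $i\in\I^n$, and since $x\mapsto x^{2t}$ is increasing on $(0,\infty)$ this gives $\|A_i\|^{2t}\geq(cr^n)^{2t}$, whence
\begin{equation*}
\sum_{i\in\I^n}p_i^q\|A_i\|^{2(s+t)}\;\geq\;(cr^n)^{2t}\sum_{i\in\I^n}p_i^q\|A_i\|^{2s}.
\end{equation*}
Taking $\tfrac1n\log$ and letting $n\to\infty$ yields $\Psi_q(s+t)-\Psi_q(s)\geq 2t\log r$, so $\Psi_q$ is strictly increasing in $s$ with all its forward difference quotients, hence (once differentiability is known) its derivative, bounded below by $2\log r$. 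The same kind of elementary estimate, combined with the trivial bound $\sum_{i\in\I^n}p_i^q\|A_i\|^{2s}\leq|\I|^{n}(cr^n)^{2s}\max_{i\in\I^n}p_i^q$ valid for $s<0$, also shows $\Psi_q(s)\to-\infty$ as $s\to-\infty$, which (together with $\Psi_q(s)\to\infty$ as $s\to\infty$) re-confirms that the zero $\widetilde\tau(q)$ is unambiguously defined on the whole range of $q$ under consideration.

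For ingredient (i) I would invoke the fact, to be established in Section~\ref{subsection_incompatibility}, that $\Psi_q(s)$ extends to a real-analytic function on all of $\R^2$. (Heuristically this is the thermodynamic-formalism statement that the pressure of a Hölder potential over the full shift $\I^{\N}$ depends analytically on the potential: using uniform hyperbolicity one has $\|A_i v\|\asymp\|A_i\|$ uniformly as $v$ ranges over a suitable forward-invariant family of directions, so the subadditive quantity $\log\|A_i\|$ agrees up to a bounded additive error with a genuine additive cocycle over $\I^{\N}$; consequently $\sum_{i\in\I^n}p_i^q\|A_i\|^{2s}$ is, up to bounded multiplicative constants, a standard dynamical partition function, and the analyticity of its pressure follows from Ruelle's analytic perturbation theory for transfer operators.) Granting this, fix any $q_0$ in the domain: since $\Psi$ is analytic near $(q_0,\widetilde\tau(q_0))$ and $\partial_s\Psi(q_0,\widetilde\tau(q_0))\geq 2\log r\neq 0$, the real-analytic implicit function theorem produces an analytic function $q\mapsto\sigma(q)$ on a neighbourhood of $q_0$ with $\sigma(q_0)=\widetilde\tau(q_0)$ and $\Psi_q(\sigma(q))=0$; by the uniqueness of the zero of $\Psi_q$ in $s$ this forces $\sigma\equiv\widetilde\tau$ near $q_0$, so $\widetilde\tau$ is analytic near $q_0$, and since $q_0$ was arbitrary, $\widetilde\tau$ is analytic. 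The only step with genuine content is the \emph{joint} analyticity of $\Psi_q(s)$ in ingredient (i)—everything else is elementary—which is exactly why it is convenient to postpone that argument to Section~\ref{subsection_incompatibility}, where $\Psi_q(s)$ is studied as a function on $\R^2$.
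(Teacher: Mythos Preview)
Your proposal is correct and follows the same overall architecture as the paper: joint analyticity of $\Psi$ from thermodynamic formalism (deferred to Section~\ref{subsection_incompatibility}), a positive lower bound on $\partial_s\Psi$, and the real-analytic implicit function theorem. The one notable difference is in how you verify $\partial_s\Psi_q(s)>0$: the paper invokes the variational/equilibrium-measure formula $\partial_s P(q\varphi+s\psi)=\int\psi\,dP_{q\varphi+s\psi}$ and then bounds the integral from below via ergodic sums and uniform hyperbolicity, arriving at the bound $2\log r$; you instead bound the difference quotient directly from the definition of $\Psi_q$ using $\|A_i\|\geq cr^n$, obtaining the same bound $2\log r$ without ever mentioning equilibrium states. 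Your route is strictly more elementary for this step (it needs only that $\Psi$ is differentiable, which already follows from analyticity), while the paper's route has the advantage of identifying the derivative exactly rather than just bounding it. Either way the argument goes through.
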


For the stationary measure $\nu$ of $\mu=\sum_{i\in\I}p_i\delta_{A_i}$, it always holds that
\begin{equation*}
D(\nu,q)\leq\min\left\{\frac{\widetilde{\tau}(q)}{q-1},1\right\},
\end{equation*}
and, as we will see in Section \ref{subsection_essential_part_main_theorem}, the proof is not so hard. It is well-known that, if the action of $\A$ on $\RP^1$ satisfies the strong separation condition, we have $D(\nu,q)=\widetilde{\tau}(q)/(q-1)$ (see \cite[Lemma 5.1]{Pat97}).

We point out one more thing for our problem. We should assume that the attractor $K$ is not a singleton.
If otherwise, the stationary measure $\nu$ is a one-point mass and hence $D(\nu,q)=0$ trivially.\footnote{In the case of linear IFSs satisfying the exponential separation condition along a subsequence, this assumption trivially holds. It is because, if two affine maps share a fixed point, then they are commutative, so the separation condition ensures it. }
In some important works for non-linear IFSs (for example, \cite{HS17}, \cite{ST21} and \cite{Rap24}), this assumption is naturally assumed.

Here, we state our problem: the natural extension of of Theorem \ref{L^q_dim_of_self_similar_measures} to actions of finite and uniformly hyperbolic families of elements of $G$ on $\RP^1$.

\begin{prob}\label{main_problem_L^q_dim_of_Furstenberg_measures}
Let $\A=\{A_i\}_{i\in\I}\ (|\I|\geq 2)$ be a non-empty finite family of elements of $G$ and assume that $\A$ is uniformly hyperbolic and strongly Diophantine and the attractor $K$ of $\A$ is not a singleton. We take a probability measure $\mu=\sum_{i\in\I}p_i\delta_{A_i}$ on $G$ such that $\supp\ \mu=\A$ and write $\nu$ for the stationary measure of $\mu$. Then, for $q>1$, does it hold that
\begin{equation*}
D(\nu,q)=\min\left\{\frac{\widetilde{\tau}(q)}{q-1},1\right\}\quad\text{?}
\end{equation*}
\end{prob}

However, as we will see just below, {\it THE ANSWER IS NO} in general.

\subsection{The main theorem}\label{section_main_theorem}

In this section, we state the main theorem in this paper. As we stated above, the natural extension of Theorem \ref{L^q_dim_of_self_similar_measures} does not hold in general.
In fact, the following theorem tells us that a problem occurs when two distinct elements of $\A$ share a common fixed point.
We notice that this case is ruled out for linear IFSs satisfying exponential separation condition along a subsequence, because two affine maps sharing a fixed point must commute.

\begin{thm}[Counterexamples to Problem \ref{main_problem_L^q_dim_of_Furstenberg_measures}]\label{counterexample_to_natural_extension}
Let $\A=\{A_i\}_{i\in\I}\subset G$ be a family satisfying the conditions of Problem \ref{main_problem_L^q_dim_of_Furstenberg_measures}. Furthermore, we assume that there are distinct $i_0,j_0\in\I$ such that $A_{i_0}$ and $A_{j_0}$ share a common fixed point in $K$.
If we take a non-degenerate probability vector $p=(p_i)_{i\in\I}$ so that $p_{i_0}=p_{j_0}=p_0$ and $0<p_0<1/2$ is sufficiently close to $1/2$ in terms of $\A$, then the stationary measure $\nu$ of $\mu=\sum_{i\in\I}p_i\delta_{A_i}$ satisfies
\begin{equation*}
D(\nu,q)<\min\left\{\frac{\widetilde{\tau}(q)}{q-1},1\right\}
\end{equation*}
for sufficiently large $q>1$.
Examples of such $\A$ indeed exist.
\end{thm}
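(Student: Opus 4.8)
The plan is to exhibit concrete obstructions to the optimal bound $\widetilde{\tau}(q)/(q-1)$ coming from two maps sharing a fixed point, and to show this obstruction bites for large $q$ once the two probabilities $p_{i_0}=p_{j_0}=p_0$ are pushed close to $1/2$. The heuristic is that when $A_{i_0}$ and $A_{j_0}$ fix a common point $x_0\in K$, the composite words $A_{i_0}^k$ and $A_{j_0}^k$ (and more generally all words in $\{i_0,j_0\}^k$) pile up near $x_0$ at a rate governed by the \emph{common} derivative $\log|\varphi_{i_0}'(x_0)|$ and $\log|\varphi_{j_0}'(x_0)|$ there, so the measure $\nu$ has a large atom-like concentration near $x_0$ at scale comparable to these contraction ratios. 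Concretely, $\nu$ restricted to a small neighborhood of $x_0$ looks like a Bernoulli-type measure on the two symbols $i_0,j_0$ with weights $(1/2,1/2)$ pushed forward by the one-parameter semigroup fixing $x_0$; its local $L^q$ spectrum at $x_0$ is then governed not by $\widetilde{\tau}(q)$ but by a quantity like $\alpha q$ where $\alpha$ is the local dimension at $x_0$, which is $H(p_0,p_0,\dots)/\chi$-type but crucially pinned by the shared fixed point so that for $p_0$ near $1/2$ the relevant exponent is close to $0$ relative to what $\widetilde{\tau}$ would demand.

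The key steps, in order, are as follows. First I would set up the local model: choose coordinates so $x_0=0$ is the common fixed point, and write $\varphi_{i_0},\varphi_{j_0}$ in their linearizing coordinates near $0$, so that the sub-IFS $\{\varphi_{i_0},\varphi_{j_0}\}$ is conjugate (to high order) to a linear IFS with contraction ratios $\lambda_{i_0}=\varphi_{i_0}'(0)$, $\lambda_{j_0}=\varphi_{j_0}'(0)$ and both fixed points at $0$ — but since both fixed points coincide, this is a \emph{degenerate} linear IFS, and its attractor is the single point $0$. Second, I would produce the lower bound obstruction: $\nu$ dominates a constant multiple of the stationary measure $\nu_0$ of the sub-system $\{\varphi_{i_0},\varphi_{j_0}\}$ with weights $(p_0,p_0)$ renormalized, pushed forward into a neighborhood of $0$ by the remaining dynamics; estimate $\|\nu_0^{(m)}\|_q^q$ from below, getting $\tau(\nu,q)\le\tau(\nu_0,q)$, and compute $\tau(\nu_0,q)$ for the degenerate system — here the relevant rate is $\alpha q$ with $\alpha$ an explicit function of $p_0$ and the two derivatives, tending to the "small" value as $p_0\to 1/2$. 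Third, I would show that for $q$ large, $\min\{\widetilde{\tau}(q)/(q-1),1\}=1$ (since $\widetilde{\tau}(q)/(q-1)\to$ some limit $\ge$ something bounded below, or more simply $\widetilde\tau(q)$ grows like $q\cdot(\text{something})$ so the ratio stays bounded away from $0$), while the obstruction forces $D(\nu,q)\le\alpha<1$; comparing, $D(\nu,q)<1=\min\{\dots\}$ for $q$ large. Fourth, I would check that all hypotheses (uniform hyperbolicity, strong Diophantine property, non-singleton attractor) are compatible with having two maps share a fixed point, and exhibit an explicit pair in $\SL_2(\Z)$ or with algebraic entries — e.g.\ two parabolic or two hyperbolic elements with a common eigenvector, plus a third generator to make $K$ non-trivial and to arrange the Diophantine condition — verifying strong Diophantineness via an algebraicity/transcendence argument as in \cite{HS17}.

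The main obstacle I expect is twofold. The quantitative one is making the "near $x_0$ the measure behaves like a degenerate self-similar measure" comparison rigorous with the right dependence on $p_0$: one must control the push-forward of the local degenerate measure by arbitrarily long suffix words without losing the estimate, which requires bounded-distortion estimates for the Möbius maps uniform over all words — standard but delicate, and the constant "sufficiently close to $1/2$ in terms of $\A$" must be extracted from this. The structural one is verifying that one can \emph{simultaneously} satisfy the strong Diophantine condition (a genericity/Diophantine-approximation statement about the group generated) and the shared-fixed-point condition (an algebraic coincidence): one wants an explicit family where these do not conflict, which I would handle by taking $A_{i_0},A_{j_0}$ sharing an eigenvector with the other entries chosen algebraically and independent so that the only coincidences among words $A_i=A_{i'}$ are those forced by the shared fixed point, and then arguing — as in Hochman–Solomyak — that algebraic parameters outside an explicit exceptional variety give the strong Diophantine bound $d_G(A_i,A_{i'})\ge c^n$. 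A secondary subtlety is ensuring the shared fixed point actually lies \emph{in the attractor} $K$ (so that the obstruction sees positive $\nu$-mass nearby), which follows automatically since fixed points of the $\varphi_i$ are always in $K$.
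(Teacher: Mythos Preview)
Your heuristic is right --- the shared fixed point forces mass concentration near $x_0$, which depresses $\tau(q)$ linearly in $q$ --- but your execution is substantially more complicated than needed, and one step has a genuine gap.

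\textbf{Overcomplication.} The paper's argument avoids linearizing coordinates, sub-IFS stationary measures $\nu_0$, and any comparison of $\nu$ with a push-forward of $\nu_0$. Instead it directly estimates $\nu(B_{\pi 2^{-m}}(x_0))$: iterate $\nu=\mu{\bm.}\nu$ exactly $n$ times, restrict the sum to words in $\{i_0,j_0\}^n$, and use only the uniform contraction bound (Corollary~\ref{contraction_on_U_by_A_preliminaries}) together with $A_i x_0 = x_0$ for every $i\in\{i_0,j_0\}^n$ to get $K\subset A_i^{-1}B_{\pi 2^{-m}}(x_0)$ once $n$ is chosen so that $r^{2n}\sim 2^m$ (with $r$ the hyperbolicity constant). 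This gives $\nu(B_{\pi 2^{-m}}(x_0))\ge (2p_0)^n$ in two lines, hence $\tau(q)\le -\frac{\log(2p_0)}{2\log r}\, q$. No bounded-distortion analysis, no local model.

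\textbf{The gap.} Your step~3 asserts that $\min\{\widetilde\tau(q)/(q-1),1\}=1$ for large $q$, i.e.\ $\widetilde\tau(q)\ge q-1$ eventually. Your justification (``$\widetilde\tau(q)$ grows like $q\cdot(\text{something})$ so the ratio stays bounded away from $0$'') does not give this; bounded away from $0$ is far from $\ge 1$, and in fact $\widetilde\tau(q)/(q-1)$ need not reach $1$. The paper does not attempt this. Instead it shows \emph{separately} that the slope $-\log(2p_0)/(2\log r)$ is strictly below $1$ (trivially, since $p_0<1/2$) and that $\Psi_q\bigl(-\frac{\log(2p_0)}{2\log r}\, q\bigr)<0$ for large $q$ (a short computation using $\max_i p_i = p_0 < (2p_0)^{L/\log r}$ when $p_0$ is close enough to $1/2$), which forces $-\frac{\log(2p_0)}{2\log r}\, q<\widetilde\tau(q)$. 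Both comparisons are needed.

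\textbf{Existence of examples.} Your plan here is too vague. ``Algebraic entries plus a Hochman--Solomyak-style genericity argument'' reduces the problem to showing the semigroup is \emph{free}, but for a system with two generators sharing a fixed point this freeness is not automatic and was in fact Solomyak's open Question~2. The paper resolves it with a specific triple $A,B,C_t\in\SL_2(\mathbb{Q})$ (with $A,B$ sharing the fixed point $0$) and a concrete mod-$4$ argument to prove freeness when $t=9n$; the algebraicity then gives strong Diophantineness via \cite[Lemma~6.1]{HS17}. You should expect this part to require a genuine construction, not just a parameter count.
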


This fact is remarkable because we know that Theorem \ref{dim_linear_IFS_with_exponental_separation} for linear IFSs regarding the Hausdorff dimension is completely extended to Möbius IFSs (Theorem \ref{Hausdorff_dim_of_Furstenberg_measures}). Hence, it can be said that the difference of properties about dimension between linear and Möbius IFSs appears in the $L^q$ dimension for the first time.
In Section \ref{section_counterexamples}, we will see the proof of Theorem \ref{counterexample_to_natural_extension}.
The proof of the first statement is not difficult, and the examples are given by resolving Solomyak's question \cite[Question 2]{Sol24} (Theorem \ref{freeness_of_Solomyak_example}).

Although Problem \ref{main_problem_L^q_dim_of_Furstenberg_measures} is not true in general, {\it there is a strong dichotomy on the shape of the graph of the $L^q$ spectrum $\tau(\nu,q)\ (q>1)$}. The following is the main theorem of this paper.

\begin{thm}[The main theorem]\label{the_main_theorem_L^q_dim_Mobius_IFS}
Let $\A=\{A_i\}_{i\in\I}\ (|\I|\geq2)$ be a non-empty finite family of elements of $G$ and assume that $\A$ is uniformly hyperbolic and strongly Diophantine and the attractor $K$ of $\A$ is not a singleton. We take a probability measure $\mu$ on $G$ such that $\supp\ \mu=\A$ and write $\nu$ for the stationary measure of $\mu$.
Then, either the following (I) or (II), only one of them, holds:
\begin{enumerate}
\renewcommand{\labelenumi}{(\Roman{enumi})}
\item we have
\begin{equation*}
\tau(\nu,q)=\min\left\{\widetilde{\tau}(q),q-1\right\}\quad\text{for any }q>1,
\end{equation*}
\item there exist $q_0>1$ and $0<\alpha<1$ such that
\begin{equation*}
\tau(\nu,q)=
\begin{cases}
\min\left\{\widetilde{\tau}(q),q-1\right\}&\text{if }1<q<q_0,\\
\alpha q&\text{if }q\geq q_0.
\end{cases}
\end{equation*}
\end{enumerate}
\end{thm}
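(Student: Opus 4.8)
The plan is to split the statement into an elementary upper bound, a ``core identity'' carrying the real $L^q$‑theoretic content, and a soft deduction of the dichotomy from that identity. Throughout put $\tau_c(q):=\min\{\widetilde\tau(q),q-1\}$ for the conjectured value and $\alpha:=\lim_{q\to\infty}D(\nu,q)$ (the limit exists since $D(\nu,\cdot)$ is non‑increasing). The first step is to record $\tau(\nu,q)\le\min\{\tau_c(q),\alpha q\}$ for all $q>1$: the bound $\tau(\nu,q)\le\tau_c(q)$ is the easy inequality already quoted (Section \ref{subsection_essential_part_main_theorem}, via covering $K$ by a generation of cylinders $A_iK$ and comparing with $\Psi_q$), while $\tau(\nu,q)\le\alpha q$ is immediate from $\sum_{I\in\D_m}\nu(I)^q\ge(\max_{I\in\D_m}\nu(I))^q$, which yields $\tau(\nu,q)\le q\cdot\liminf_{m\to\infty}\bigl(-\tfrac1m\log\max_{I\in\D_m}\nu(I)\bigr)$, the last quantity equalling $\lim_{q\to\infty}\tau(\nu,q)/q=\alpha$. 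One also checks $0<\alpha\le1$: $\alpha\le\lim_{q\searrow1}D(\nu,q)=\dim_H\nu\le1$, and $\alpha>0$ because under the present hypotheses $\nu$ is uniformly thin, $\sup_x\nu(B_{2^{-m}}(x))\le2^{-cm}$ for some $c>0$. The target is then the core identity $\tau(\nu,q)=\min\{\tau_c(q),\alpha q\}$ for every $q>1$.

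The heart of the matter is the reverse inequality $\tau(\nu,q)\ge\min\{\tau_c(q),\alpha q\}$, which I would obtain by transplanting Shmerkin's $L^q$ machinery (\cite{Shm19}) to the Möbius setting. (i) Use the self‑conformal recursion $\nu=\sum_{i\in\I^n}p_iA_i\nu$ and, at each dyadic scale $2^{-m}$, linearize each piece $A_i\nu$ as an affine image of $\nu$ of ratio $\asymp\|A_i\|^{-2}$; the price is the non‑commutativity of the cocycle $i\mapsto\|A_i\|$, which is precisely why the governing pressure is the sub‑additive $\Psi_q$ rather than a product formula. (ii) Apply Shmerkin's $L^q$ inverse theorem scale by scale: if, for a positive density of scales $m$, the $2^{-m}$ $L^q$ norm of $\nu$ exceeds the value predicted by $\Psi_q$, then at those scales $\nu$ branches into boundedly many children. (iii) Propagate this ``small‑branching'' structure through the recursion and use the strongly Diophantine condition to reach a contradiction, \emph{except} when the concentration is carried by a sub‑family of $\A$ whose elements share a common fixed point in $K$ — exactly the obstruction isolated in Theorem \ref{counterexample_to_natural_extension}. (iv) In that remaining case, analyze $\nu$ near such a point: in a coordinate centered there the generating maps are, to leading order, commuting linear contractions, so $\nu$ restricted to a small neighbourhood behaves like an (unnormalized) self‑similar measure whose mass on the dyadic interval around the point decays at the minimal rate $2^{-m\alpha}$; show that the resulting local contribution to $\|\nu^{(m)}\|_q^q$ is $\asymp2^{-m\alpha q}$ — linear in $q$, with no curvature — and that no value strictly between $\tau_c(q)$ and $\alpha q$ can arise, which gives the identity.

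Granting the core identity, the dichotomy is pure concavity. The map $(q,s)\mapsto\Psi_q(s)$ is a limit of log‑sum‑exponentials of affine functions, hence jointly convex, so its zero locus $\{\Psi_q(s)=0\}$ has concave upper boundary $s=\widetilde\tau(q)$; thus $\tau_c=\min\{\widetilde\tau(q),q-1\}$ is concave, and $0\le\tau_c(q)\le q-1$ forces $\tau_c(q)\to0$ as $q\searrow1$. Set $g(q):=\tau_c(q)-\alpha q$ on $(1,\infty)$; then $g$ is concave and $g(q)<0$ for $q$ near $1$. From $\widetilde\tau(q)\ge\tau(\nu,q)=D(\nu,q)(q-1)\ge\alpha(q-1)$ we get $\lim_{q\to\infty}\widetilde\tau(q)/(q-1)\ge\alpha$, and combined with $\alpha\le1$ this gives $\lim_{q\to\infty}\tau_c(q)/q=\min\{\lim_{q\to\infty}\widetilde\tau(q)/(q-1),1\}\ge\alpha$, i.e.\ $\liminf_{q\to\infty}g(q)/q\ge0$. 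A concave function on $(1,\infty)$ whose right derivative has limit $\ge0$ at infinity is non‑decreasing (the right derivative is non‑increasing, hence everywhere at least that limit), so $\{q>1:g(q)>0\}$ is either empty or a ray $(q_0,\infty)$ with $q_0>1$ and $g(q_0)=0$. In the first case $\tau(\nu,q)=\tau_c(q)$ for all $q>1$, which is alternative (I); in the second, $\tau(\nu,q)=\tau_c(q)$ for $1<q\le q_0$ and $\tau(\nu,q)=\alpha q$ for $q\ge q_0$, with $\alpha<1$ (if $\alpha=1$ then $\alpha q=q>q-1\ge\tau_c(q)$, so $g\le0$ everywhere, a contradiction), which is alternative (II); the two are manifestly mutually exclusive.

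The main obstacle is step (iv): establishing that every departure from $\tau_c$ is \emph{exactly} of the form $\alpha q$ — i.e.\ that the local structure at a coincident fixed point is self‑similar‑like with no multifractal curvature, and that the relevant exponent coincides with $\lim_{q\to\infty}D(\nu,q)$ — while also making the inverse‑theorem argument of (i)--(iii) robust against the linearization errors and the non‑commutativity of the norm cocycle. Everything downstream, including the shape deduction above, is comparatively routine.
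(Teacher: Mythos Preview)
Your architecture is right in outline, but the mechanism you propose for the lower bound is not the one that works, and step (iv) is a genuine gap. The paper never passes through any geometric analysis of shared fixed points; the connection between fixed points and case (II) runs only one way (Theorem \ref{counterexample_to_natural_extension} shows shared fixed points with suitable weights \emph{force} (II), but the converse is nowhere claimed and the paper explicitly remarks it cannot decide which case a given system falls into). The organizing condition is instead purely multifractal: whether $\tau^*(\alpha)>0$ for $\alpha=\tau'(q)$. This is exactly the hypothesis under which the $L^q$ flattening theorem (Theorem \ref{L^q_norm_flattening_theorem}) can be proved --- it enters through the $L^q$ porosity Lemma \ref{L^q_norm_porosity}, where positivity of $\tau^*(\alpha)$ is what guarantees a definite proportion of well-separated ``nice'' intervals at small scales (Lemma \ref{key_2^{-iD}_components_are_blanching_proof_of_porosity}), which in turn is what allows the linearized inverse-theorem argument to bite. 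With the flattening theorem in hand, the strongly Diophantine condition is used only to say that atoms of $\mu_m$ are $a'^m$-separated (Lemma \ref{strongly_Diophantine_Omegam_proof_of_the_main_theorem}), so that at a sufficiently fine dyadic scale on $G$ one has $\|\mu_m^{(Rm)}\|_q^q=\sum_{i\in\Omega_m}p_i^q$, and Proposition \ref{the_L^q_morm_of_mum_at_a_finer_scale_proof_of_the_main_theorem} then yields $\tau(q)=\widetilde\tau(q)$. When instead $\tau^*(\tau'(q_0))=0$, no local analysis is needed at all: since the tangent $t=\alpha q$ to $\tau$ at $q_0$ passes through the origin, concavity together with $\tau^*(\tau'(q))\ge0$ for $q>q_0$ pins $\tau$ to that line (Lemma \ref{tau(q)_coincides_the_line_through_the_origin_proof_of_the_main_theorem}). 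So the ``exception'' in your step (iii) should be ``except when $\tau^*(\tau'(q))=0$'', and your step (iv) is then replaced by a one-line concavity argument; your proposed local self-similar analysis near a fixed point is neither available nor needed.

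A smaller gap: your claim that (I) and (II) are ``manifestly mutually exclusive'' does not follow from your dichotomy on $g$. In your case $\{g>0\}=\emptyset$ you conclude $\tau=\tau_c$, but nothing prevents $\tau_c(q)=\alpha q$ on a ray $[q_0,\infty)$, in which case (I) and (II) would both hold. The paper rules this out by a separate argument (Section \ref{subsection_incompatibility}): $\Psi_q(s)$ is real-analytic in $(q,s)$ by thermodynamic formalism (Proposition \ref{the_pressure_function_is_analytic_on_R^2_proof_of_the_main_theorem}), so $\widetilde\tau(q)=\alpha q$ on a ray forces $\Psi_q(\alpha q)\equiv0$ for all $q\in\R$, and evaluating at $q=0$ gives $\log|\I|=0$, contradicting $|\I|\ge2$.
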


As a corollary of Theorem \ref{the_main_theorem_L^q_dim_Mobius_IFS}, we can understand the shape of the graph of $\tau(\nu,q)$ for a stationary measure $\nu$ as in Theorem \ref{counterexample_to_natural_extension}.

\begin{cor}\label{shape_of_L^q_spectrum_of_counterexamples}
Let $\nu$ be a stationary measure as in Theorem \ref{counterexample_to_natural_extension}. Then, the case (II) of Theorem \ref{the_main_theorem_L^q_dim_Mobius_IFS} holds for $\nu$.
\end{cor}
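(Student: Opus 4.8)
The plan is to combine the two preceding results: Theorem \ref{counterexample_to_natural_extension} tells us that for a stationary measure $\nu$ of the special type described there (with $p_{i_0}=p_{j_0}=p_0$ close to $1/2$), we have the strict inequality $D(\nu,q)<\min\{\widetilde{\tau}(q)/(q-1),1\}$ for all sufficiently large $q$, equivalently $\tau(\nu,q)<\min\{\widetilde{\tau}(q),q-1\}$ for $q$ large; and Theorem \ref{the_main_theorem_L^q_dim_Mobius_IFS} tells us the dichotomy, that exactly one of (I) or (II) holds. Since the hypotheses of Theorem \ref{counterexample_to_natural_extension} include those of Theorem \ref{the_main_theorem_L^q_dim_Mobius_IFS} (uniform hyperbolicity, strong Diophantine condition, attractor not a singleton), the dichotomy applies to $\nu$. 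The strategy is then simply to rule out alternative (I).

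First I would observe that if alternative (I) held for $\nu$, then by definition $\tau(\nu,q)=\min\{\widetilde{\tau}(q),q-1\}$ for \emph{every} $q>1$, in particular for all large $q$. But Theorem \ref{counterexample_to_natural_extension} gives $\tau(\nu,q)=(q-1)D(\nu,q)<(q-1)\min\{\widetilde{\tau}(q)/(q-1),1\}=\min\{\widetilde{\tau}(q),q-1\}$ for all sufficiently large $q$, a direct contradiction. Hence alternative (I) fails, and since the dichotomy asserts that exactly one of (I), (II) holds, alternative (II) must hold for $\nu$. This is the entire argument; there is essentially no obstacle, as the corollary is a formal consequence of the two theorems it cites, and the only point requiring a line of care is translating the statement about $D(\nu,q)$ in Theorem \ref{counterexample_to_natural_extension} into the statement about $\tau(\nu,q)=(q-1)D(\nu,q)$ used in the dichotomy, which is immediate from Definition \ref{dfn_L^d_dimension}.

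\begin{proof}
Let $\nu$ be a stationary measure as in Theorem \ref{counterexample_to_natural_extension}; thus $\nu$ is the stationary measure of $\mu=\sum_{i\in\I}p_i\delta_{A_i}$ for a family $\A=\{A_i\}_{i\in\I}$ and a probability vector $p$ satisfying the hypotheses stated there. In particular, $\A$ is uniformly hyperbolic and strongly Diophantine, and the attractor $K$ of $\A$ is not a singleton, so the hypotheses of Theorem \ref{the_main_theorem_L^q_dim_Mobius_IFS} are satisfied and exactly one of the alternatives (I), (II) holds for $\nu$.

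Suppose, for contradiction, that alternative (I) holds, i.e. $\tau(\nu,q)=\min\{\widetilde{\tau}(q),q-1\}$ for every $q>1$. On the other hand, by Theorem \ref{counterexample_to_natural_extension} there is $q_1>1$ such that for every $q\geq q_1$ we have
\begin{equation*}
D(\nu,q)<\min\left\{\frac{\widetilde{\tau}(q)}{q-1},1\right\}.
\end{equation*}
Multiplying by $q-1>0$ and using $\tau(\nu,q)=(q-1)D(\nu,q)$ from Definition \ref{dfn_L^d_dimension}, we obtain
\begin{equation*}
\tau(\nu,q)<(q-1)\min\left\{\frac{\widetilde{\tau}(q)}{q-1},1\right\}=\min\left\{\widetilde{\tau}(q),q-1\right\}
\end{equation*}
for every $q\geq q_1$. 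This contradicts alternative (I). Therefore alternative (I) does not hold, and since exactly one of (I), (II) holds, alternative (II) of Theorem \ref{the_main_theorem_L^q_dim_Mobius_IFS} holds for $\nu$.
\end{proof}
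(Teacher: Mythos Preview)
Your proof is correct and is exactly the intended argument: the paper does not give an explicit proof of this corollary, treating it as an immediate consequence of the dichotomy in Theorem \ref{the_main_theorem_L^q_dim_Mobius_IFS} together with the strict inequality from Theorem \ref{counterexample_to_natural_extension}, which is precisely what you have written out.
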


To the author's knowledge, this is the first example of a non-trivial stationary measure for an IFS whose $L^q$ spectrum has such the form.

We can think a stationary measure $\nu$ of the case (II) of Theorem \ref{the_main_theorem_L^q_dim_Mobius_IFS} is “singular” in the following sense.\footnote{Here, we write $B_r(x)$ for the open ball in $\RP^1$ of radius $r$ and center $x$. The metric on $\RP^1$ will be introduced in Section \ref{subsection_G_action_on_RP^1}. We also recall that $\D_m$ is the $2^{-m}$ dyadic partition of $\RP^1$, which will be defined in Section \ref{subsection_stationary_measures}.}

\begin{prop}\label{justification_heuristic_singularity}
Let $\nu$ be a stationary measure as in Theorem \ref{the_main_theorem_L^q_dim_Mobius_IFS} and assume that the case (II) holds for $\nu$. Then, the following holds.
\begin{enumerate}
\renewcommand{\labelenumi}{(\roman{enumi})}
\item We have
\begin{equation*}
\alpha=\min\left\{\alpha'>0\left|\ \text{there exists some $x\in K$ such that }\liminf_{r\searrow 0}\right.\frac{\log\nu(B_r(x))}{\log r}\leq\alpha'\right\}.
\end{equation*}
		\item If we write
		\begin{equation*}
			E_\alpha=\left\{x\in K\left|\ \lim_{r\searrow0}\frac{\log\nu(B_r(x))}{\log r}=\alpha\right.\right\},
		\end{equation*}
		then $E_\alpha$ is dense in $K$ and satisfies
		\begin{equation*}
			\dim_HE_\alpha=0.
		\end{equation*}
		\item For any $q\geq q_0$ and $\varepsilon>0$, there exists $\delta=\delta(\mu, q,\varepsilon)>0$ such that, for sufficiently large $m\in\N$ in terms of $\mu,q,\varepsilon,\delta$, we have
		\begin{equation*}
			\left|\left\{I\in\D_m\left|\ 2^{-(\alpha+\varepsilon)m}\leq\nu(I)\leq 2^{-(\alpha-\varepsilon/q)m}\right.\right\}\right|=
			\left|\left\{I\in\D_m\left|\ 2^{-(\alpha+\varepsilon)m}\leq\nu(I)\right.\right\}\right|\leq 2^{(q+1)\varepsilon m}
		\end{equation*}
		and
		\begin{equation*}
			\sum_{I\in\D_m,\ \nu(I)<2^{-(\alpha+\varepsilon)m}}\nu(I)^q\leq 2^{-\delta m}\sum_{I\in\D_m}\nu(I)^q.
		\end{equation*}
	\end{enumerate}
\end{prop}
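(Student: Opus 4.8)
The plan is to extract all three statements from the identity $\tau(\nu,q)=\alpha q$ valid for $q\ge q_0$, combined with the exact-dimensionality of $\nu$ and the heuristic of multifractal formalism made rigorous via the $L^q$ spectrum. First I would record the Legendre-duality input: since $\tau(\nu,\cdot)$ is concave and agrees with the line $\alpha q$ on $[q_0,\infty)$ and with $\min\{\widetilde\tau(q),q-1\}$ (which has slope strictly greater than $\alpha$ at $q_0^-$, by concavity and the change of behaviour) on $(1,q_0)$, the graph of $\tau$ lies on or below every supporting line; in particular for every $q\ge q_0$ and every $I\in\D_m$ one has, for large $m$, $\nu(I)\ge 2^{-(\alpha+\varepsilon)m}$ forcing the contribution $\sum\nu(I)^q$ over such $I$ to be controlled. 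Concretely, $\|\nu^{(m)}\|_q^q=\sum_{I\in\D_m}\nu(I)^q \asymp 2^{-\tau(\nu,q)m}=2^{-\alpha q m}$ up to subexponential factors (here I use that the limit defining $\tau(\nu,q)$ exists for stationary measures, as noted in the excerpt), and this is the engine for everything.

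For part (iii): fix $q\ge q_0$ and $\varepsilon>0$. The count of $I$ with $\nu(I)\ge 2^{-(\alpha+\varepsilon)m}$ is at most $\|\nu^{(m)}\|_q^q\cdot 2^{(\alpha+\varepsilon)mq}\le 2^{-\alpha q m + o(m)}\cdot 2^{(\alpha+\varepsilon)qm}=2^{(\varepsilon q+o(1))m}\le 2^{(q+1)\varepsilon m}$ for large $m$, which gives the first display once I also note that no $I$ can have $\nu(I)$ exceeding $2^{-(\alpha-\varepsilon/q)m}$: that would make $\sum\nu(I)^q\ge 2^{-(\alpha-\varepsilon/q)qm}=2^{-\alpha qm+\varepsilon m}$, contradicting $\sum\nu(I)^q\le 2^{-\alpha qm+o(m)}$ for $m$ large. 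For the tail bound, I want $\sum_{\nu(I)<2^{-(\alpha+\varepsilon)m}}\nu(I)^q\le 2^{-\delta m}\|\nu^{(m)}\|_q^q$. The point is that the full sum is $\asymp 2^{-\alpha qm}$ and is \emph{dominated} by the cells at scale $\nu(I)\approx 2^{-\alpha m}$; cells with $\nu(I)<2^{-(\alpha+\varepsilon)m}$ are smaller, but there could be many of them, so I need a genuine input. Here I would invoke the stronger structural information behind the case-(II) dichotomy in Theorem \ref{the_main_theorem_L^q_dim_Mobius_IFS} — specifically that for $q>q_0$ the derivative $\tau'(\nu,q)=\alpha$ is \emph{strictly} less than the critical slope, so by concavity $\tau(\nu,q')>\alpha q'$ strictly for $q'<q$ with $q'\ge q_0$ is impossible; rather the relevant fact is that for $q_1\in(q_0,q)$ one still has $\tau(\nu,q_1)=\alpha q_1$, and comparing the $q$ and $q_1$ sums: $\sum_{\nu(I)<2^{-(\alpha+\varepsilon)m}}\nu(I)^q\le 2^{-(q-q_1)(\alpha+\varepsilon)m}\sum_I\nu(I)^{q_1}\le 2^{-(q-q_1)(\alpha+\varepsilon)m}2^{-\alpha q_1 m+o(m)}=2^{-\alpha qm-(q-q_1)\varepsilon m+o(m)}$, so $\delta=(q-q_1)\varepsilon/2$ works for $m$ large. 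This is the cleanest route and the main obstacle is just being careful that the $o(m)$ error from $\tau(\nu,q_1)=\alpha q_1$ is beaten by the gain $(q-q_1)\varepsilon m$, which it is once $q_1<q$ is fixed.

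For part (i): the lower bound $\liminf_r \log\nu(B_r(x))/\log r\ge\alpha$ for every $x\in K$ follows from the elementary implication recalled in the excerpt (Lemma 1.7 of \cite{Shm19}): since $D(\nu,q)=\tau(\nu,q)/(q-1)=\alpha q/(q-1)$ for $q\ge q_0$, letting $q\to\infty$ gives $\nu(B_r(x))\le r^{\alpha-\varepsilon}$ for all $x$ and all small $r$; hence $\inf$ over $x$ of the $\liminf$ is $\ge\alpha$. The reverse — that the infimum is attained and equals $\alpha$ — is where multifractal formalism must be run: I would show there exists $x$ with $\liminf_r\log\nu(B_r(x))/\log r\le\alpha$ by a Frostman-type / pigeonhole argument. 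From part (iii), at scale $2^{-m}$ the number of cells with $\nu(I)\ge 2^{-(\alpha+\varepsilon)m}$ is at most $2^{(q+1)\varepsilon m}$ but their total mass is close to $1$ (the complementary cells carry mass at most $2^{-\delta m/q}$ by the tail bound and Hölder/counting), so there is a cell with $\nu(I)\ge 2^{-(\alpha+\varepsilon)m}$; running this down a nested sequence of scales $m_k\to\infty$ and using compactness to extract a limit point $x$ gives $\nu(B_{2^{-m_k}}(x))\ge 2^{-(\alpha+\varepsilon)m_k}$ along a subsequence for every $\varepsilon$, i.e. $\liminf_r\log\nu(B_r(x))/\log r\le\alpha$. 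Combined with the lower bound this pins the value, proving (i) and simultaneously the density claim in (ii) once the construction is done with freedom to place the nested cells near any prescribed point of $K$ (using self-conformality: $K=\bigcup_i A_iK$, so one can prepend a finite word $A_{j}$ steering the cylinder near any target).

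For part (ii): density of $E_\alpha$ follows from the construction in (i) executed so that $\lim$ (not just $\liminf$) equals $\alpha$ — one arranges the nested cells so that the mass ratios $\nu(I_{k+1})/\nu(I_k)$ are all $\approx 2^{-\alpha(m_{k+1}-m_k)}$, which the counting in (iii) permits since the ``typical'' cell at every scale has mass $\approx 2^{-\alpha m}$; prepending words moves the construction near any point of $K$. The Hausdorff-dimension statement $\dim_H E_\alpha=0$ is the other substantial point: by part (iii), for each $\varepsilon>0$ the set $E_\alpha$ is covered, at scale $2^{-m}$, by the $\le 2^{(q+1)\varepsilon m}$ cells with $\nu(I)\ge 2^{-(\alpha+\varepsilon)m}$ (points of $E_\alpha$ eventually lie in such cells since their pointwise dimension is exactly $\alpha<\alpha+\varepsilon$), so $\mathcal H^{s}_{2^{-m}}(E_\alpha)\le 2^{(q+1)\varepsilon m}2^{-sm}\to 0$ for any $s>(q+1)\varepsilon$; since $\varepsilon>0$ is arbitrary, $\dim_H E_\alpha\le (q+1)\varepsilon$ for all $\varepsilon$, hence $0$. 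The main obstacle overall is part (ii)'s covering argument: one must be sure that every $x\in E_\alpha$ does eventually sit in a ``heavy'' dyadic cell (mass $\ge 2^{-(\alpha+\varepsilon)m}$), which requires that the pointwise dimension defined via balls transfers to dyadic cells — standard, using that $B_{c2^{-m}}(x)\subset$ the cell of $\D_m$ containing $x$ up to bounded overlap, so a $\limsup$ comparison of ball-mass and cell-mass suffices, and exact-dimensionality of $\nu$ (from \cite{FH09}, cited in the excerpt) guarantees the honest limit exists $\nu$-a.e., though here we need it on the specific set $E_\alpha$, which is built into its definition.
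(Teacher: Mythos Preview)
Your treatment of part (iii) matches the paper's: the counting bound is Markov's inequality for the $L^q$ sum, the cap $\nu(I)\le 2^{-(\alpha-\varepsilon/q)m}$ follows from $\sum\nu(I)^q\le 2^{-(\alpha q-\varepsilon)m}$, and your tail estimate via comparison with an auxiliary exponent $q_1$ is exactly the content of the paper's Lemma~\ref{L^q_norm_strictly_smaller_math_than_2^-alpham}. Your covering argument for $\dim_H E_\alpha=0$ is also correct (with the standard fix of writing $E_\alpha=\bigcup_N\{x:\ \nu(B_{\pi2^{-m}}(x))\ge 2^{-(\alpha+\varepsilon)m}\text{ for all }m\ge N\}$ and using countable stability) and gives a pleasant self-contained alternative to the paper, which simply quotes Feng's multifractal formalism \cite{Fen07} to obtain $\dim_H E_\alpha=\tau^*(\alpha)=0$.

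There is, however, a genuine gap in your construction of a point with $\liminf_r\log\nu(B_r(x))/\log r\le\alpha$, which you need both for the $\min$ in (i) and for $E_\alpha\neq\emptyset$ in (ii). Your claim that the heavy cells $\{I:\nu(I)\ge 2^{-(\alpha+\varepsilon)m}\}$ carry total $\nu$-mass close to $1$ is false: in case (II) one has $\dim_H\nu=\lim_{q\searrow1}D(\nu,q)\ge D(\nu,q_0)=\alpha q_0/(q_0-1)>\alpha$, so by exact-dimensionality $\nu$-a.e.\ point has local dimension strictly larger than $\alpha$, and for $\varepsilon<\dim_H\nu-\alpha$ the heavy cells have $\nu$-mass tending to $0$. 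What (iii) gives is that the heavy cells carry almost all of the \emph{$L^q$ mass}, which is much weaker and does not by itself produce nested heavy cells across scales. The paper avoids this entirely by invoking \cite[Theorem~1.2 and Proposition~2.1]{Fen07}, which for self-conformal measures yields $E_\alpha\neq\emptyset$ and $\dim_H E_\alpha=\tau^*(\alpha)$ directly. Once $E_\alpha\neq\emptyset$ is in hand, the paper obtains density by the clean observation that $E_\alpha$ is forward-invariant under $\A$: for $x\in E_\alpha$ and $i\in\I$, the bound $\nu(B_r(A_ix))\ge p_i\,\nu(B_{c'r}(x))$ gives $\limsup\le\alpha$, while the uniform lower bound from (i) gives $\liminf\ge\alpha$, so $A_ix\in E_\alpha$; since the semigroup orbit of any point is dense in $K$, density follows. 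This is the same mechanism as your ``prepending words'' idea but stated more directly, and it too presupposes $E_\alpha\neq\emptyset$.
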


The statements (i) and (ii) tell us that $\alpha$ is the minimum value of the pointwise dimension of $\nu$ and the points at which the pointwise dimension of $\nu$ is $\alpha$ are dense in the attractor $K$, but the Hausdorff dimension is zero.
The statement (iii) tells that the $L^q$ norm of $\nu$ restricted to the $2^{-m}$ intervals whose $\nu$-masses are strictly less that $2^{-\alpha m}$ is much smaller than the entire $L^q$ norm, and hence a much small number of $2^{-m}$ intervals whose $\nu$-masses are about $2^{-\alpha m}$ gives the essential contribution to $\sum_{I\in\D_m}\nu(I)^q\sim 2^{-\alpha mq}$. These statements represent the “singularity” in some sense.
 This proposition will be discussed in Section \ref{subsection_singularity}.

In the end of this section, we give remarks on the main Theorem \ref{the_main_theorem_L^q_dim_Mobius_IFS}.

\begin{rem}
\begin{enumerate}
\renewcommand{\labelenumi}{(\roman{enumi})}
\item Theorem \ref{the_main_theorem_L^q_dim_Mobius_IFS} gives no information to determine which (I) or (II) occurs for a given stationary measure and we can't obtain general methods to do this in this paper. The author thinks that it is much meaningful to give such methods.
\item The author thinks that, for a stationary measure $\nu$, even if one can know that $\nu$ is of the case (II), it is difficult to know the exact value of $\alpha$ or $q_0$. It seems that the case (II) occurs because of complicated overlaps of the IFS. Hence, to know the value of $\alpha$ or $q_0$, we have to study such overlaps in much detail and he does not know how to do that.
\item The author expects the similar result as Theorem \ref{the_main_theorem_L^q_dim_Mobius_IFS} for general analytic IFSs on $\R$.
\end{enumerate}
\end{rem}

\subsection{About the proof: the $L^q$ norm flattening theorem}\label{section_L^q_norm_flattening}

In the following two sections, we see the essential steps of the proof of the main Theorem \ref{the_main_theorem_L^q_dim_Mobius_IFS}. At first sight, the strategy of the proof would be trying to follow the argument of \cite{Shm19}. In \cite{Shm19}, the most essential is {\it the $L^q$ norm flattening theorem} \cite[Theorem 5.1]{Shm19}\footnote{This is indeed the result for a general dynamically-driven self-similar measure, but here we state it only for a self-similar measure.}.
For two finite Borel measures $\mu$ and $\nu$ on $\R$, we write $\mu*\nu$ for the linear convolution on $\R$ of $\mu$ and $\nu$, that is, the push-forward measure of $\mu\times\nu$ by $\R\times\R\ni (x,y)\mapsto x+y\in\R$.

\begin{thm}[{\cite[Theorem 5.1]{Shm19}}]\label{L^q_norm_flattening_theorem_for_self_similar_measure}
Let $\nu$ be a stationary measure (self-similar measure) of a linear IFS $\Phi$ on $\R$ and a probability vector $p$ (not assuming the exponential separation condition along a subsequence on $\Phi$) and $\tau(q)\ (q>1)$ be the $L^q$ spectrum of $\nu$. We take $q>1$ and assume that $\tau(q)$ is differentiable at $q$ and $\tau(q)<q-1$.
Then, for any $0<\sigma<1$, there is $\varepsilon=\varepsilon(\Phi,p,q,\sigma)>0$ such that the following holds for sufficiently large $m\in\N$ in terms of $\Phi,p,q,\sigma,\varepsilon$:
for any Borel probability measure $\rho$ on $[0,1]$ such that $\|\rho^{(m)}\|_q^q\leq 2^{-\sigma m}$, we have
\begin{equation}\label{flattening_L^q_norm_for_self_similar_measure}
\|(\rho*\nu)^{(m)}\|_q^q\leq 2^{-(\tau(q)+\varepsilon)m}.
\end{equation}
\end{thm}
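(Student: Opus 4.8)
The plan is to argue by contradiction and to use Shmerkin's inverse theorem for the $L^q$ norm of convolutions as the main engine. Fix $q>1$ and $0<\sigma<1$, and assume $\tau:=\tau(\nu,q)$ is differentiable at $q$ with $\tau<q-1$. Suppose the conclusion fails: there are $m_j\to\infty$, $\varepsilon_j\searrow 0$, and probability measures $\rho_j$ on $[0,1]$ with $\|\rho_j^{(m_j)}\|_q^q\le 2^{-\sigma m_j}$ but $\|(\rho_j*\nu)^{(m_j)}\|_q^q>2^{-(\tau+\varepsilon_j)m_j}$. Since $\nu$ is self-similar, the limit $-\tfrac1m\log\|\nu^{(m)}\|_q^q\to\tau$ exists (the Peres--Solomyak fact recalled in the excerpt), and convolving with a probability measure does not increase the $L^q$ norm up to a bounded factor, $\|(\rho*\nu)^{(m)}\|_q\le C\|\nu^{(m)}\|_q$. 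Combining these, $\|(\rho_j*\nu)^{(m_j)}\|_q^q\ge 2^{-\delta m_j}\|\nu^{(m_j)}\|_q^q$ for every fixed $\delta>0$ once $j$ is large, which is precisely the non-flattening hypothesis of the inverse theorem. I would take that inverse theorem as a black box; it is a Bourgain-type statement in additive combinatorics and is the deepest ingredient.

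Next I would read off its structural conclusion: if $\|(\rho*\nu)^{(m)}\|_q^q$ is within a factor $2^{-\delta m}$ of $\|\nu^{(m)}\|_q^q$, then (up to an $\eta m$-exceptional set of scales, with $\eta\to 0$ as $\delta\to 0$) one can split $\{1,\dots,m\}=A\sqcup B$ so that at scales in $B$ the scale-$k$ components of $\rho$ are $\eta$-close to atomic, and at scales in $A$ the scale-$k$ components of $\nu$ are $\eta$-close to uniform on their dyadic cell — because convolving with a locally uniform factor at a scale where $\nu$ is not already locally uniform strictly decreases the $L^q$ norm there, and non-flattening rules this out. Now I use $\|\rho^{(m)}\|_q^q\le 2^{-\sigma m}$: each atomic-like scale of $\rho$ contributes a factor $\ge 2^{-\eta}$ to $\|\rho^{(m)}\|_q^q$ while every remaining scale contributes at best the minimal $2^{1-q}$, so $m-|B|\ge\frac{\sigma}{q-1}m-O(\eta m)$, hence $|A|\ge c_0 m$ with $c_0\approx\sigma/(q-1)>0$. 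Thus $\nu$ is $\eta$-uniform at a positive proportion of scales.

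Finally I would globalize and then contradict differentiability. Since the $\varphi_i$ are exact similarities, each component measure $\nu_{x,k}$ is, up to uniformly bounded distortion, an affine image of $\nu$ precomposed with a cylinder map, so "$\nu_{x,k}$ is $\eta$-uniform'' is really a statement about $\nu$ itself at various scales and locations; stationarity of the component process of $\nu$ then upgrades "$\eta$-uniform at $\ge c_0 m$ of the first $m$ scales'' into: $\nu$ is $\eta$-uniform on a set of scales of positive lower density, with a comparable profile at every location. The contradiction comes from differentiability of $\tau$ at $q$ with $\tau<q-1$: its multifractal meaning is that $\nu$ has a unique $L^q$-dominant local dimension $\alpha_0=\tau'(q)$ (and $\alpha_0\le\tau/(q-1)<1$ by concavity of $\tau$ through $\tau(1)=0$), equivalently the graph of $\tau$ has no corner at $q$ and $\nu$ is not $L^q$-reducible at exponent $q$; but a positive density of $\eta$-uniform scales exhibits, through the self-similar coding, a genuine sub-object of $\nu$ on which the local dimension is pinned at $1\ne\alpha_0$ while the complementary scales carry a strictly smaller exponent, i.e.\ a splitting of $\nu$ into parts with different $L^q$ dimensions at $q$, forcing a corner in $\tau$ at $q$ — a contradiction. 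I expect this last step to be the main obstacle: one has to make rigorous, within the dynamically-driven self-similar framework, the passage from the scale-local inverse-theorem structure to an honest $L^q$-reducibility of $\nu$ and then quantify the resulting non-differentiability. The remaining points are routine: reducing to $q\in(1,2]$ and interpolating for larger $q$, the bounded-distortion bookkeeping, and passing between the dyadic partition and arbitrary intervals.
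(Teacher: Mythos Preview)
Your setup is right: argue by contradiction, feed the non-flattening hypothesis into Shmerkin's inverse theorem, and use the flatness of $\rho$ to force a positive proportion of scales at which the $\nu$-side set $A$ is almost fully branching. The gap is in your last step. The contradiction in Shmerkin's proof (and in the paper's Section~\ref{subsection_Shmerkin_argument}, which follows Shmerkin verbatim after linearization) is not obtained by constructing a ``sub-object of local dimension $1$'' and arguing that $\tau$ must then have a corner at $q$. That heuristic is appealing but does not close: the inverse theorem only hands you a single regular set $A$, not a genuine $L^q$-decomposition of $\nu$, and it is not clear how to pass from ``$A$ is fully branching on a positive density of scales'' to a bona fide reducibility statement about $\nu$ itself.

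The actual mechanism is a direct counting/telescoping argument using the multifractal lemmas. From (A-i)--(A-iii) and the differentiability of $\tau$ at $q$ one gets $|A|\le 2^{(\tau^*(\alpha)+o(1))m}$ (this is Lemma~\ref{close_to_tau^*(alpha)_local_version} in the paper). Since $\tau(q)<q-1$ forces $\tau^*(\alpha)\le\alpha<1$, and since $A$ is $(1-\delta_0)$-fully branching on the set $S$ of scales with $|S|\gtrsim \tfrac{\sigma}{q-1}m$ (from (vi) and the flatness of $\rho$), the product constraint $\prod_s R'_s=|A|\le 2^{(\tau^*(\alpha)+o(1))m}$ forces a positive-density set $S_1$ of scales where $R'_s\le 2^{(\tau^*(\alpha)-\kappa')d}$ for some fixed $\kappa'>0$. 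Now apply the local $L^q$-norm lemma (Lemma~\ref{local_L^q_norm_lemma}(i)): at each scale in $S_1$ the restricted sum $\sum_{J\in\D'}\nu(J)^q$ drops by a definite factor $2^{-(\tau(q)+\eta)d}$, while at all other scales it drops by at least $2^{-(\tau(q)-\delta_0)d}$. Telescoping over $s$ yields $\|\nu^{(m)}|_A\|_q^q\le 2^{-(\tau(q)+c)m}$ for a fixed $c>0$, contradicting (A-i). No corner-in-$\tau$ argument is invoked, and differentiability enters only through the bound $|A|\lesssim 2^{\tau^*(\alpha)m}$. Your proposed reduction to $q\in(1,2]$ and interpolation is also not part of the argument; it works uniformly in $q>1$.
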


For a Borel probability measure $\nu$ on $\R$, if we take the convolution $\rho*\nu$ of $\nu$ with another Borel probability measure $\rho$ on $[0,1]$, then it is easily seen that the value of $\|(\rho*\nu)^{(m)}\|_q^q$ does not essentially increase from $\|\nu^{(m)}\|_q^q$ in general. However, (\ref{flattening_L^q_norm_for_self_similar_measure}) tells us that, if $\nu$ is a self-similar measure and $\tau(q)<q-1$\footnote{The assumption that $\tau(q)$ is differentiable at $q$ is also needed. We notice that, since $\tau(q)$ is concave, it is differentiable at every $q>1$ except for at most countable points.}, the value of $\|(\rho*\nu)^{(m)}\|_q^q$ strictly and uniformly decreases from $\|\nu^{(m)}\|_q^q$ on $\rho$ having a flat $L^q$ norm in terms of $\sigma$.

We extend Theorem \ref{L^q_norm_flattening_theorem_for_self_similar_measure} to a stationary measure for a finite and uniformly hyperbolic family $\A\subset G$ and the action of $G$ on $\RP^1$.
To state the result, we need to prepare some notions.
We use the {\it dyadic-like partitions of $G$} from \cite[Section 4.2]{HS17}. That is, we take a family of Borel partitions $\D^G_m\ (m\in\N)$ such that, for some constant $M>1$, the following holds:
\begin{enumerate}
	\renewcommand{\labelenumi}{(\roman{enumi})}
	\item $\D_{m+1}^G$ refines $\D_m^G$,
	\item each $\xi\in\D_m^G$ contains at most $M$ elements of $\D_{m+1}^G$,
	\item for each $\xi\in\D_m^G$, $\xi$ contains an open ball of radius $M^{-1}2^{-m}$ and $\diam\ \xi\leq M2^{-m}$.
\end{enumerate}
In (iii), we recall that we have equipped the metric on $G$ by the left-invariant Riemannian metric.
For a finite Borel measure $\theta$ on $G$ with the compact support, $m\in\N$ and $q>1$, we define the {\it $\D^G_m$ $L^q$ norm} of $\theta$ by
\begin{equation*}
\|\theta^{(m)}\|_q=\left(\sum_{\xi\in\D^G_m}\theta(\xi)^q\right)^{1/q}.
\end{equation*}

Let $\A$ be a non-empty finite family of elements of $G$ which is uniformly hyperbolic. Then, by Proposition \ref{proposition_Möbius_uniformly_hyperbolic_SL_2(R)} and shrinking the open subset a little, we obtain two non-empty open subsets $U_1\subset U_0\subsetneq\RP^1$ with finite connected components having disjoint closures such that
\begin{equation}\label{domain_U1_the_IFS_acts_on}
\overline{U_1}\subset U_0,\quad A\overline{U_0}\subset U_1\quad\text{for every }A\in\A.
\end{equation}
We notice that the attractor $K$ of $\A$ is in $U_1$.

These notions are naturally considered to be necessary for the extension of Theorem \ref{L^q_norm_flattening_theorem_for_self_similar_measure} to a stationary measure for $\A$ and the $G$-action on $\RP^1$.
However, to establish it, we need to assume one more thing which does not appear in Theorem \ref{L^q_norm_flattening_theorem_for_self_similar_measure}: {\it the positivity of the Legendre transform of the $L^q$ spectrum}. In fact, the necessity of this assumption is the direct reason why the main Theorem \ref{the_main_theorem_L^q_dim_Mobius_IFS} splits into two cases and the case (II) emerges.

Let $\nu$ be a stationary measure for $\A$.
Then, for the $L^q$ spectrum $\tau(q)\ (q>1)$ of $\nu$, which is non-decreasing, concave and continuous, the {\it Legendre transform $\tau^*(\alpha)$ of $\tau$} is defined by
\begin{equation*}
	\tau^*(\alpha)=\inf_{q\in(1,\infty)}\left(\alpha q-\tau(q)\right)\in[-\infty,\infty),\quad\alpha\in\R.
\end{equation*}
For $q>1$, let $\tau'^{,+}(q)$ and $\tau'^{,-}(q)$ be the right and left derivative of $\tau$ at $q$. By the concavity of $\tau$, they exist and are finite values and, if we re-write $\alpha^+=\tau'^{,+}(q)$ and $\alpha^-=\tau'^{,-}(q)$, we have
\begin{equation}\label{Legendre_transform_at_left_right_derivative}
	\tau^*(\alpha^+)=\alpha^+q-\tau(q),\quad \tau^*(\alpha^-)=\alpha^-q-\tau(q).
\end{equation}
Hence, if $\tau$ is differentiable at $q>1$ and we write $\alpha=\tau'(q)$, then
\begin{equation}\label{Legendre_transform_at_derivative}
	\tau^*(\alpha)=\alpha q-\tau(q).
\end{equation}
We notice that, by the concavity, the differentiability of $\tau$ at $q$ always holds except for at most countable $q$.
Actually, $\tau^*(\alpha)$ is non-negative, which will be seen from Corollary \ref{tau^*(alpha)_is_nonnegative} and is stated in \cite[Proposition 3.5]{LN99}. Furthermore, once we have $\tau^*(\alpha)=0$ for $q_0>1$ and $\alpha=\tau'(q_0)$, it holds that
\begin{equation*}
\tau(q)=\alpha q\quad\text{for any }q\geq q_0
\end{equation*}
(Lemma \ref{tau(q)_coincides_the_line_through_the_origin_proof_of_the_main_theorem}).

Then, we state our $L^q$ norm flattening theorem.
Here and throughout this paper, for parameters or objects $a, b,c,\dots$ and $X>0$, we write $X\gg_{a,b,c,\dots}1$ or $X\ll_{a,b,c,\dots}1$ if $X$ is sufficiently large or small only in terms of $a,b,c,\dots$, respectively.\footnote{This notation may be a little confusing, because sometimes we write $X\ll_{a,b,c,\dots}Y$ meaning that there is a constant $C>0$ determined only by $a,b,c,\dots$ such that $X\leq CY$. However, in this paper, we always take the above meaning whenever we write $X\gg_{a,b,c,\dots}1$ or $X\ll_{a,b,c,\dots}1$. When we express that $X\leq CY$ for some constant $C>0$ determined only by $a,b,c,\dots$, we write $X\leq O_{a,b,c,\dots}(1)Y$ or $X= O_{a,b,c,\dots}(1)Y$.}

\begin{thm}[The $L^q$ norm flattening theorem]\label{L^q_norm_flattening_theorem}
Let $\A$ be a non-empty finite family of elements of $G$ and assume that it is uniformly hyperbolic.
We take a probability measure $\mu$ on $G$ such that $\supp\ \mu=\A$ and write $\nu$ for the stationary measure of $\mu$. We also take an open subset $U_1\subsetneq\RP^1$ as in (\ref{domain_U1_the_IFS_acts_on}).
We write $\tau(q)\ (q>1)$ for the $L^q$ spectrum of $\nu$.
Let $q>1$ be such that $\tau(q)$ is differentiable at $q$ and assume that
\begin{equation*}
\tau(q)<q-1
\end{equation*}
and, for $\alpha=\tau'(q)$,
\begin{equation*}
\tau^*(\alpha)=\alpha q-\tau(q)>0.
\end{equation*}
Let $C,L>1$ be constants. Then, for any $\sigma>0$, there exists $\varepsilon=\varepsilon(M,\mu,q,\sigma)>0$ such that the following holds for sufficiently large $m\in\N,\ m\gg_{M,\mu,q,\sigma,\varepsilon,C,L}1$. Let $\theta$ be a Borel probability measure on $G$ and $r\in\N$ such that $\diam\ \supp\ \theta\leq L$, $C^{-1}2^r\leq\|g\|^2\leq C2^r$ and $u^-_g\notin U_1$ for every $g\in\supp\ \theta$. Assume that
\begin{equation*}
\|\theta^{(m)}\|_q^q\leq 2^{-\sigma m}.
\end{equation*}
Then, we have\footnote{The reason why $\|(\theta{\bm. }\nu)^{(m+r)}\|_q^q$ appears, not $\|(\theta{\bm .}\nu)^{(m)}\|_q^q$, is because we take the “essential contraction ratio” $\|g\|^{-2}\sim2^{-r}$ of the action of $g\in\supp\ \theta$ on $\RP^1$ into account (we will see it in Section \ref{subsection_G_action_on_RP^1}).}
\begin{equation*}
\|(\theta{\bm .}\nu)^{(m+r)}\|_q^q\leq 2^{-(\tau(q)+\varepsilon)m}.
\end{equation*}
\end{thm}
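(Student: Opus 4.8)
The plan is to reduce the M\"{o}bius case to Shmerkin's linear $L^q$ flattening theorem (Theorem \ref{L^q_norm_flattening_theorem_for_self_similar_measure}) by linearizing the action of $G$ on $\RP^1$ on small scales and exploiting the self-conformal structure of $\nu$. The starting observation is that $\theta {\bm .} \nu = \sum_{g} \theta(\{g\}) g\nu$ (or, for non-atomic $\theta$, an integral of the measures $g\nu$), and since $\|g\|^2 \sim 2^r$ with $u^-_g \notin U_1$, each $g$ contracts a neighborhood of $K$ into a tiny interval of length $\sim 2^{-r}$ and, on that scale, $g$ is well-approximated by an affine map with contraction ratio $\sim 2^{-r}$. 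The point of the hypothesis $u^-_g \notin U_1$ is exactly that it keeps $K$ uniformly away from the repelling direction of $g$, so the bounded-distortion estimate for $g|_K$ is uniform in $g \in \supp\ \theta$; this should come from the $G$-action estimates promised in Section \ref{subsection_G_action_on_RP^1}.

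First I would set up the combinatorics: partition $\supp\ \theta$ into $O_{M,C,L}(1)$-many pieces according to which connected component of $U_1$ the image $gU_0$ lands in and, within that, refine so that on each piece $g$ is (up to bounded multiplicative distortion) the affine map $x \mapsto \|g\|^{-2} x + b_g \approx 2^{-r}x + b_g$. On each such piece, $(\theta {\bm .} \nu)$ restricted to $gU_0$ is comparable, after rescaling by $2^r$, to $\rho_{\mathrm{piece}} * \nu_{\mathrm{model}}$, where $\rho_{\mathrm{piece}}$ is the renormalization of $\theta$ restricted to the piece (pushed through $g \mapsto b_g$) and $\nu_{\mathrm{model}}$ is a rescaled copy of $\nu$; here I would want to compare $\nu$ with a genuine self-similar measure, or rather invoke that $\nu$ is self-conformal and its $L^q$ spectrum agrees with that of an associated dynamically-driven self-similar measure — this is where one uses that $\nu$ is the stationary measure of $\A$, that $\tau(q)$ is its $L^q$ spectrum, and that (by the heuristic of Section \ref{subsection_multifractal_structure}) the relevant linearized model has the same $\tau(q)$. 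The hypothesis $\|\theta^{(m)}\|_q^q \le 2^{-\sigma m}$ transfers (with a controlled loss absorbed into $\sigma$) to a flatness hypothesis $\|\rho_{\mathrm{piece}}^{(m)}\|_q^q \le 2^{-\sigma' m}$ for each piece, at scale $m$, on the rescaled picture. Then the linear flattening theorem yields $\|(\rho_{\mathrm{piece}} * \nu_{\mathrm{model}})^{(m)}\|_q^q \le 2^{-(\tau(q)+\varepsilon')m}$ for each piece, and summing over the boundedly-many pieces and unwinding the rescaling $m \mapsto m+r$ gives the claimed bound $\|(\theta{\bm.}\nu)^{(m+r)}\|_q^q \le 2^{-(\tau(q)+\varepsilon)m}$, with $\varepsilon$ a fixed fraction of $\varepsilon'$.

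The role of the new hypothesis $\tau^*(\alpha) = \alpha q - \tau(q) > 0$ is more subtle and I expect it to be the main obstacle. It is not needed in Shmerkin's linear statement because there the model measure is a genuine self-similar measure with a clean symbolic coding, whereas here, when we linearize $\nu$ on a small scale and regard it as (close to) a self-similar measure, we are really dealing with the measure conditioned/restricted to a cylinder, and the relevant "model" at scale $m$ is built from the distribution of $\nu$-masses of $2^{-m}$-cells inside a $2^{-r}$-cylinder. Positivity of the Legendre transform is what guarantees that the number of $2^{-m}$-cells carrying the dominant $\nu$-mass grows exponentially (roughly $2^{\tau^*(\alpha) m}$ many cells of mass $\sim 2^{-\alpha m}$), so that there is genuine "spreading" for the convolution to flatten; if $\tau^*(\alpha) = 0$ the mass concentrates on subexponentially many cells, convolution cannot improve the $L^q$ norm, and indeed this is precisely the mechanism producing case (II) of Theorem \ref{the_main_theorem_L^q_dim_Mobius_IFS}. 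Concretely I would use $\tau^*(\alpha) > 0$ to run a pigeonholing/multifractal decomposition of $\nu^{(m)}$ into level sets of cell-mass, discard the low-mass cells (whose total $L^q$ contribution is negligible, cf. Proposition \ref{justification_heuristic_singularity}(iii)), and apply the flattening only on the "bulk" level $\alpha$, where there are exponentially many cells — which is exactly the regime the linear theorem handles. Making this decomposition quantitatively compatible with the linearization error and with the uniformity over $\theta \in \supp\ \theta$ is the delicate part; the bounded distortion of $g|_K$ (uniform because $u^-_g \notin U_1$) is what keeps all the error terms controlled by $\varepsilon$ rather than depending on $r$ or on the individual $g$.
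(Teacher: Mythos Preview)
Your reduction has a genuine gap at the step ``the hypothesis $\|\theta^{(m)}\|_q^q \le 2^{-\sigma m}$ transfers \ldots\ to a flatness hypothesis $\|\rho_{\mathrm{piece}}^{(m)}\|_q^q \le 2^{-\sigma' m}$''. The measure $\theta$ lives on the three-dimensional group $G$, while $\rho_{\mathrm{piece}}$ is its pushforward under the one-dimensional projection $g\mapsto b_g$ (equivalently $g\mapsto gx_0$). A projection from a $3$-dimensional space to a $1$-dimensional space can destroy $L^q$-flatness entirely: $\theta$ could be spread over a two-parameter family of matrices all sharing (essentially) the same $b_g$, in which case $\rho_{\mathrm{piece}}$ is close to a point mass even though $\|\theta^{(m)}\|_q^q$ is tiny. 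Nothing in your bounded-distortion set-up prevents this, so you cannot feed $\rho_{\mathrm{piece}}$ into the linear flattening theorem.

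This is precisely the obstacle the paper isolates and solves via the $L^q$ norm porosity Lemma \ref{L^q_norm_porosity}. The paper does not try to linearize $\theta$ globally; instead it argues by contradiction, and from the failure of flattening it extracts (at some intermediate scale $iD$) an atom $\xi\in\D^G_{iD}$, a component $\rho_\xi\ll\theta_\xi$, and a point $x_0\in K$ for which the \emph{projection} $f_{x_0}\rho_\xi$ itself has flat $L^q$ norm. Obtaining this upgraded flatness of the projection is the heart of the matter: the paper produces a large $\pi 2^{-\sigma'nD}$-separated family of candidate base points (Lemma \ref{key_2^{-iD}_components_are_blanching_proof_of_porosity}), and then a discretized slicing argument (Lemma \ref{discretized_slicing}), resting on the fact that three separated points of $\RP^1$ pin down an element of $G$ up to bounded error (Proposition \ref{bi_Lipschitz_of_the_action_at_varepsilon_separated_points}), forces some $x_0$ to work. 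The positivity $\tau^*(\alpha)>0$ is used exactly here, to guarantee that this separated family of intervals is exponentially large; your account of its role as ``genuine spreading of $\nu$'' misses that it is needed on the $\theta$-side to control the projection, not merely on the $\nu$-side. After this step the paper linearizes only the small piece $\rho_\xi{\bm .}\widehat{\nu_{I_0}}$ and applies Shmerkin's \emph{inverse theorem} (Theorem \ref{the_inverse_theorem_for_L^q_norms}) together with the local multifractal Lemmas \ref{local_L^q_norm_lemma} and \ref{close_to_tau^*(alpha)_local_version}, rather than invoking Theorem \ref{L^q_norm_flattening_theorem_for_self_similar_measure} for a putative self-similar model of $\nu$ as you suggest.
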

In the above, $u^-_g$ for $g\in G$ is the contracting singular direction of $g$, which will be defined in Section \ref{subsection_G_action_on_RP^1}.
We notice that, once we obtain Theorem \ref{L^q_norm_flattening_theorem}, for the proof of the main Theorem \ref{the_main_theorem_L^q_dim_Mobius_IFS}, we can adopt the similar argument as that of \cite{Shm19} to show Theorem \ref{L^q_dim_of_self_similar_measures} from Theorem \ref{L^q_norm_flattening_theorem_for_self_similar_measure}.

\subsection{About the proof: linearization and the $L^q$ norm porosity lemma}\label{section_L^q_norm_porosity}

Here, we consider how to prove the $L^q$ norm flattening Theorem \ref{L^q_norm_flattening_theorem}. We first go back to the proof in \cite{Shm19} of Theorem \ref{L^q_norm_flattening_theorem_for_self_similar_measure}. As well-known in this field, this is a great achievement by applications of tools in {\it additive combinatorics}, which follows the work of Hochman \cite{Hoc14}.
More precisely, Shmerkin showed {\it the inverse theorem for $L^q$ norms of linear convolutions} \cite[Theorem 2.1]{Shm19} (Theorem \ref{the_inverse_theorem_for_L^q_norms} in this paper) by using sophisticated tools in additive combinatorics, and deduced Theorem \ref{L^q_norm_flattening_theorem_for_self_similar_measure} from the combination of it and the “self-similarity” of a self-similar measure in terms of $L^q$ norms.

We want to prove Theorem \ref{L^q_norm_flattening_theorem} by the similar method as that of \cite{Shm19}.
However, we can't directly apply it to our case. It is because the action of $G$ on $\RP^1$ is not linear, and hence the convolution $\theta{\bm .}\nu$ of probability measures on $G$ and $\RP^1$ is a non-linear convolution. On the other hand, it is much hard (seems impossible) to extend the inverse theorem for $L^q$ norms to non-linear convolutions, because, as we remarked above, the inverse theorem is proved by additive combinatorics, for which it is essential that convolutions are linear.
Therefore, to prove Theorem \ref{L^q_norm_flattening_theorem}, we have to overcome this gap.

Then, we turn to the work of Hochman and Solomyak \cite{HS17}, and recall their result Theorem \ref{Hausdorff_dim_of_Furstenberg_measures}.
The essential for the proof of Theorem \ref{Hausdorff_dim_of_Furstenberg_measures} is “the entropy growth theorem” under the $G$-action on $\RP^1$ \cite[Theorem 5.13]{HS17}, which plays the corresponding role to the $L^q$ norm flattening theorem for entropy.
This theorem is essentially due to the inverse theorem for {\it entropy} of linear convolutions, which is shown in the groundbreaking work of Hochman \cite{Hoc14}.
However, this inverse theorem is also based on the ideas of additive combinatorics,
so it is much hard to extend itself to non-linear convolutions.

Hochman and Solomyak overcame this obstruction by {\it linearization} of the action $G\times\RP^1\ni(g,x)\mapsto gx\in\RP^1$. Linearization is the idea of restricting the action to much small components (on both $G$ and $\RP^1$) and approximating it by a linear action on each component. Their idea is roughly, for a Borel probability measures $\theta$ on $G$ and the stationary measure $\nu$, (i) decomposing $\theta{\bm .}\nu$ into the average of convolutions $\theta_{g,i}{\bm .}{\nu_{x,i}}$ of small component measures $\theta_{g,i}$ and $\nu_{x,i}$ (where $i$ represents the scale), (ii) approximating each $\theta_{g,i}{\bm .}{\nu_{x,i}}$ by a linear convolution $\theta_{g,i}{\bm.}\delta_x*S\nu_{x,i}$ (where $S$ is some scale change), (iii) applying the inverse theorem for entropy of linear convolutions to each $\theta_{g,i}{\bm.}\delta_x*S\nu_{x,i}$ and obtain some entropy growth for each of them, and (iv) obtaining the entire entropy growth of $\theta{\bm .}\nu$ as the average of growth at each component.

Hochman and Solomyak's idea was realized by some nice properties of entropy. To recover the entire entropy from entropy of small components, they used local entropy average formulae:
\begin{equation}\label{local_entropy_average}
	\frac{1}{n}H(\nu,\D_n)=\mathbb{E}_{1\leq i\leq n}\left(\frac{1}{m}H(\nu_{x,i},\D_{i+m})\right)+O\left(\frac{m}{n}\right)
\end{equation}
and
\begin{equation}\label{local_entropy_average_convolution}
	\frac{1}{n}H(\theta*\nu,\D_n)\geq \mathbb{E}_{1\leq i\leq n}\left(\frac{1}{m}H(\theta_{g,i}*\nu_{x,i},\D_{i+m})\right)-O\left(\frac{1}{m}+\frac{m}{n}\right)
\end{equation}
(see \cite[Lemmas 5.1 and 5.2]{HS17}) and the analogy of (\ref{local_entropy_average_convolution}) for non-linear convolutions (\cite[Lemma 5.3]{HS17}).
At the step (iii) above, {\it entropy porosity} of the stationary measure $\nu$ plays an important role, that is, the property that most of the small components $\nu_{i,x}$ have essentially the same entropy as the entire $\nu$. This property enables us to get entropy growth from the inverse theorem for most of the components.

Our strategy to prove Theorem \ref{L^q_norm_flattening_theorem} is to overcome the gap stated in the earlier of this section using linearization like \cite{HS17}. However, this is not straightforward. It is because, for $L^q$ norms, there are no corresponding properties to the nice ones of entropy, e.g. local entropy average formulae (\ref{local_entropy_average}) and (\ref{local_entropy_average_convolution}) or entropy porosity. Hence, we need to establish some different method to linearize the $G$-action on $\RP^1$ in a suitable manner for $L^q$ norms so that we can get the situation to which Shmerkin's inverse theorem for $L^q$ norms of linear convolutions is applicable.

{\it The} following {\it $L^q$ norm porosity lemma} enables us to realize the linearization method stated as above. We first prepare the notation.
For a Borel probability measure $\theta$ on a second countable and locally compact space and a Borel subset $E$ such that $\theta(E)>0$, we write $\theta|_E$ for the (non-normalized) restriction of $\theta$ to $E$ and $\theta_E=\theta|_E/\theta(E)$ for the component measure of $\theta$ on $E$, that is, the normalized restriction of $\theta$ to $E$.
For a Borel probability measure $\nu$ on $\RP^1$ and $I\in\D_s$ such that $\nu(I)>0$, we write
\begin{equation*}
	\widehat{\nu_I}=\frac{1}{\nu(2I)}\nu|_I=\frac{\nu(I)}{\nu(2I)}\nu_I,
\end{equation*}
where $2I$ is the $2$ times extension of the $2^{-s}$ dyadic interval $I$ in $\RP^1$ with the same center.
In addition, for $x\in\RP^1$, we define $f_x:G\rightarrow \RP^1$ by
\begin{equation*}
	f_x(g)=gx,\quad g\in G.
\end{equation*}

Here, we state the $L^q$ norm porosity lemma. We notice that it requires the assumption of the positivity of the Legendre transform (\ref{condition_tau^*(apha)_is_positive}), and this is the reason why the $L^q$ norm flattening Theorem \ref{L^q_norm_flattening_theorem} requires this assumption.

\begin{lem}[The $L^q$ norm porosity lemma]\label{L^q_norm_porosity}
Let $\A$ be a non-empty finite family of elements of $G$ and assume that $\A$ is uniformly hyperbolic. We take a probability measure $\mu$ on $G$ such that $\supp\ \mu=\A$ and write $\nu$ for the stationary measure of $\mu$. We also take an open subset $U_1\subsetneq\RP^1$ as in (\ref{domain_U1_the_IFS_acts_on}).\footnote{Since $\A$, $\nu$, $\tau$ and $U_1$ can be thought to be determined by $\mu$ (we notice thar $\supp\ \mu=\A$), in the following statement, we write $\mu$ for dependence on some of these things all together.}
We write $\tau(q)\ (q>1)$ for the $L^q$ spectrum of $\nu$.
Let $q>1$ and assume that $\tau(q)$ is differentiable at $q$. We write $\alpha=\tau'(q)$ and further assume that
\begin{equation}\label{condition_tau^*(apha)_is_positive}
\tau^*(\alpha)=\alpha q-\tau(q)>0.
\end{equation}
Let $0<\sigma<1$ be a constant and $0<\delta<\sigma, \delta\ll_{M,\mu,q,\sigma} 1$ be a sufficiently small constant. Furthermore, let $0<\varepsilon<\delta, \varepsilon\ll_{M,\mu,q,\sigma,\delta}1$ be a sufficiently small constant and  $C,L>1$ be constants.
Then, there exist a small constant $\kappa=\kappa(M,\mu,q,\sigma)>0$ (not depending on $\delta$ and $\varepsilon$) and
a constant $D=D(M,\mu,q,\sigma,\delta,\varepsilon,C)\in\N$ such that the following holds for sufficiently large $l\in\N$, $l\gg_{M,\mu,q,\sigma,\kappa,\delta,\varepsilon,D,C,L}1$.
Let $m=lD$ and $\theta$ be a Borel probability measure on $G$ and $r\in\N$ such that $\diam\ \supp\ \theta\leq L$, $C^{-1}2^r\leq\|g\|^2\leq C2^r$ and $u_g^-\notin U_1$ for every $g\in\supp\ \theta$.
Assume that
\begin{enumerate}
\renewcommand{\labelenumi}{(\roman{enumi})}
\item $\|\theta^{(m)}\|_q^q\leq 2^{-\sigma m}$,
\item $2^{-\varepsilon m}\|\nu^{(m)}\|_q^q\leq \|(\theta{\bm .}\nu)^{(m+r)}\|_q^q$.
\end{enumerate}
We write $n=\lfloor \delta l \rfloor$. Then, there exist $i\in\N$ with $n<i\leq l-n$, $\xi\in\D^G_{iD}$ with $\theta(\xi)>0$, $I_0\in\D_{iD}$ with $\nu(I_0)>0$, a Borel probability measure $\rho_\xi$ on $\xi$ with $\rho_\xi\ll\theta_\xi$ and $x_0\in I_0\cap K$ (where $K$ is the attractor of $\A$) such that
\begin{enumerate}
\renewcommand{\labelenumi}{(\Roman{enumi})}
\item $\|(f_{x_0}\rho_\xi)^{((i+n)D+r)}\|_q^q\leq 2^{-\kappa nD}$,
\item $2^{-(\tau(q)+\sqrt{\delta}/2)nD}\leq \|({\rho_\xi}{\bm .}\widehat{\nu_{I_0}})^{((i+n)D+r)}\|_q^q$.
\end{enumerate}
\end{lem}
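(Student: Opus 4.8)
The plan is to run a localization-and-pigeonhole scheme, passing from scale $m=lD$ down to an intermediate scale $iD$ with $n<i\le l-n$ at which the $G$-action on $\RP^1$ is, on cells of $\D^G_{iD}$ and with the uniform contraction ratio $\sim 2^{-r}$ forced by $\|g\|^2\asymp 2^r$, affine up to negligible error; the desired $\xi$, $I_0$, $x_0$ and $\rho_\xi$ are then read off at that scale. I will use as a black box that, $\nu$ being self-conformal, $\|\nu^{(s)}\|_q^q=2^{-(\tau(q)+o(1))s}$ as $s\to\infty$ (established in the section on stationary measures, following \cite{PS00}); together with hypothesis (ii) this rephrases the standing situation as: $\theta$ is $2^{-\sigma m}$-flat, whereas $\theta{\bm .}\nu$ refuses to flatten below the expected rate, i.e.\ $\|(\theta{\bm .}\nu)^{(m+r)}\|_q^q\ge 2^{-(\tau(q)+\varepsilon+o(1))m}$.

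\emph{Geometric localization and linearization.} The first real ingredient is to show that for $\xi\in\D^G_{iD}$ and $I\in\D_{iD}$ of positive mass, the component $\theta_\xi{\bm .}\nu_I$ — and likewise $f_x\theta_\xi$ for $x\in I\cap\supp\nu$ — is supported in a ball of diameter $O_{C,M}(2^{-(iD+r)})$, and that on this ball $\theta_\xi{\bm .}\nu_I$ agrees with a linear convolution $(\ell_\xi f_{x_0}\theta_\xi)*(\ell'_\xi\nu_I)$ for affine $\ell_\xi,\ell'_\xi$, up to a displacement of each point far smaller than $2^{-((i+n)D+r)}$. This is where uniform hyperbolicity is used, together with $C^{-1}2^r\le\|g\|^2\le C2^r$ and $u_g^-\notin U_1\supset K\supset\supp\nu$: these force every $g\in\supp\theta$ to be $\asymp 2^{-r}$-contracting with bounded distortion on a neighbourhood of $\supp\nu$, so both the spread of $\xi$ and the size of $gI$ contribute $\asymp 2^{-(iD+r)}$, while the quadratic linearization error $\lesssim 2^{-r}2^{-2iD}$ is negligible at scale $(i+n)D+r$ because $i>n$ and $D$ is large. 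Hence $\|(\theta_\xi{\bm .}\nu_I)^{((i+n)D+r)}\|_q^q$ equals, up to bounded factors, the $L^q$ norm at the same scale of a genuine linear convolution — exactly the object to which Shmerkin's inverse theorem for $L^q$ norms will later be applied.

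\emph{The pigeonhole, and conclusion (II).} Both $j\mapsto\|(\theta{\bm .}\nu)^{(jD+r)}\|_q^q$ and $j\mapsto\|\theta^{(jD)}\|_q^q$ are non-increasing with per-scale decay $O_{M,q}(1)$; the former has total decay $\le(\tau(q)+\varepsilon+o(1))m$ over $jD\in[nD,(l-n)D]$, and the latter decays by at least $\sigma m$ in total. Cutting this range into the $\asymp 1/\delta$ blocks of length $nD$, a Markov argument on the excess of the first quantity over the rate $\tau(q)$ shows all but an $O(\sqrt\delta)$-fraction of the blocks are "$\theta{\bm .}\nu$-good" (block-decay $\le(\tau(q)+\sqrt\delta)nD$), while the bounded per-scale decay of $\theta$ forces a $\gtrsim_{M,q}\sigma$-fraction of the blocks to have $\theta$-block-decay $\ge(\sigma/2)nD$ — a rate \emph{independent of $\delta$}; since $\delta\ll_{M,\mu,q,\sigma}1$, these two families of blocks intersect, and I fix such an $i$. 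For this $i$: a $q$-heavy set of $\xi\in\D^G_{iD}$ has $\|(\theta_\xi)^{((i+n)D)}\|_q^q\le 2^{-(\sigma/4)nD}$, i.e.\ $\theta_\xi$ is $G$-flat at a $\delta$-independent rate over the $nD$-gap; and, regrouping $\theta{\bm .}\nu=\sum_{\xi,I}\theta(\xi)\nu(I)\,\theta_\xi{\bm .}\nu_I$ by the $\D_{iD+r}$-cell into which each component localizes, the conditional $L^q$ norm at scale $(i+n)D+r$ splits up to bounded factors into a $q$-mass-weighted average over those cells, so some $q$-heavy cell has conditional decay $\le 2^{-(\tau(q)+c\sqrt\delta)nD}$, and inside it the triangle inequality $\|\sum p_j\eta_j\|_q\le\max_j\|\eta_j\|_q$ produces a single pair $(\xi,I_0)$ with $\|(\theta_\xi{\bm .}\nu_{I_0})^{((i+n)D+r)}\|_q^q\ge 2^{-(\tau(q)+c\sqrt\delta)nD}$; here $\tau^*(\alpha)>0$ lets me take $I_0$ among the $\asymp 2^{\tau^*(\alpha)iD}$ "$q$-typical" cells and arrange $\nu(2I_0)\asymp\nu(I_0)$. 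Choosing $x_0\in\supp(\nu|_{I_0})\subset I_0\cap K$ and replacing $\nu_{I_0}$ by $\widehat{\nu_{I_0}}$ (a bounded factor) then gives conclusion (II) with slack $\sqrt\delta/2$, once $\rho_\xi$ is fixed below.

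\emph{The crux: constructing $\rho_\xi$ and proving (I).} It remains to replace $\theta_\xi$ (which by the previous step we may take $G$-flat over the $nD$-gap) by a sub-measure $\rho_\xi\ll\theta_\xi$ whose projection $f_{x_0}\rho_\xi$ is $2^{-\kappa nD}$-flat for a $\kappa=\kappa(M,\mu,q,\sigma)>0$, without destroying (II); \textbf{I expect this to be the main obstacle}. The difficulty is intrinsic to the nonlinearity: $G$-flatness of $\theta_\xi$ does not descend to flatness of its one-dimensional image $f_{x_0}\theta_\xi$, since $\theta_\xi$ could concentrate along fibres of $f_{x_0}$ — cosets of the two-dimensional stabiliser of $x_0$ — which collapse under projection. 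The plan is a dichotomy at scale $iD+r$: either a $q$-heavy part of $\theta_\xi$ projects onto a set spread over $\gtrsim 2^{\kappa nD}$ cells of $\D_{(i+n)D+r}$ — take a Frostman-regular sub-measure of that part as $\rho_\xi$, which gives (I), and then check that enough convolution mass for (II) survives because convolving with $\widehat{\nu_{I_0}}$ averages over the fibre directions — or $f_{x_0}\theta_\xi$ is "thin". The role of $\tau^*(\alpha)>0$ is to exclude the thin alternative \emph{globally}: when $f_{x_0}\theta_\xi$ is thin, the linearized component $(\ell_\xi f_{x_0}\theta_\xi)*(\ell'_\xi\widehat{\nu_{I_0}})$ is essentially a rescaled copy of $\widehat{\nu_{I_0}}$, whose $q$-mass — since $\tau^*(\alpha)>0$ spreads the components of $\nu$ over $\asymp 2^{\tau^*(\alpha)nD}$ cells of comparable mass — sits on exponentially many cells and carries $L^q$ norm only $\asymp 2^{-\tau(q)nD}$, with no excess; summing this estimate over all thin components feeding the good block contradicts the block's non-flattening $2^{-(\tau(q)+c\sqrt\delta)nD}$ fixed above. (It is precisely this argument that collapses when $\tau^*(\alpha)=0$, which is why the hypothesis $\tau^*(\alpha)>0$ is essential and why the main theorem splits — case (II) there being the genuine obstruction, not a proof artefact.) A final bookkeeping check — $\widehat{\nu_{I_0}}\asymp\nu_{I_0}$, and restricting to a $q$-heavy part of $\theta_\xi$ changes $L^q$ norms only by a bounded power — then shows (II) survives the passage from $\theta_\xi$ to $\rho_\xi$ with the stated slack.
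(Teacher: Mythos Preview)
Your overall architecture --- pigeonhole to a good scale $i$, localize to a pair $(\xi,I_0)$, linearize --- matches the paper, and you correctly flag conclusion (I) as the crux. But your mechanism for (I) does not work. The ``thin'' branch of your dichotomy is not excluded by the argument you give: if $f_{x_0}\theta_\xi$ were a point mass, the linearized convolution would be a translate of a rescaled $\widehat{\nu_{I_0}}$, whose $L^q$ norm at the relevant scale is $\asymp 2^{-\tau(q)nD}$; this value is \emph{larger} than the lower bound $2^{-(\tau(q)+c\sqrt{\delta})nD}$ coming from non-flattening, so it \emph{satisfies} (II) rather than contradicting it. No summation over components reverses that inequality. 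That $\tau^*(\alpha)>0$ spreads $\widehat{\nu_{I_0}}$ over $\asymp 2^{\tau^*(\alpha)nD}$ cells is exactly what makes its $L^q$ norm equal to $2^{-\tau(q)nD}$ rather than smaller --- it produces no contradiction here. In short, a single good interval $I_0$ cannot by itself rule out that $f_{x_0}\theta_\xi$ is concentrated.

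The paper's route to (I) is entirely different and genuinely three-dimensional. First, $\tau^*(\alpha)>0$ is used not on one $I_0$ but --- via the local $L^q$ lemma applied down a uniformized tree of good intervals --- to produce $\ge 2^{\sigma'' nD}$ intervals in $\D_\xi$ that are pairwise $\pi 2^{-\sigma' nD}$-separated, each already satisfying (II) for the tree-uniformized component $\widetilde{\theta}_\xi$ (this is where $n=\lfloor\delta l\rfloor$ and the $\delta^{4/3}$ slack in Lemma~\ref{Lemma_many_components_which_increase_L^q_norm_under_comvolutions} earn their keep). Second, one invokes the effective bi-Lipschitz estimate of Section~\ref{subsection_effective_biLipschitz}: for three $\varepsilon$-separated points $x_1,x_2,x_3\in K$ the map $g\mapsto(gx_1,gx_2,gx_3)$ is bi-Lipschitz on $\xi$ with distortion polynomial in $\varepsilon^{-1}$, so any three ``slices'' $P_{x_j,H_j}=\{\zeta\in\D^G_{(i+n)D}(\xi):\widetilde{\theta}_\xi(\zeta\cap f_{x_j}^{-1}H_j)>0\}$ meet in at most $2^{O(\sigma')nD}$ cells. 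An inclusion-exclusion argument (Lemma~\ref{cardinality_of_union_of_Ps_proof_of_discretized_slicing}) then shows that if each of $k\asymp 2^{\sigma'' nD/2}$ separated points carried a slice of size $>2^{-\kappa nD}A$ (with $A=|\{\zeta:\widetilde{\theta}_\xi(\zeta)>0\}|$), the union of these slices would exceed $A$ --- contradicting the lower bound on $A$ forced by the $G$-flatness of $\widetilde{\theta}_\xi$. Hence some $I_0$ has \emph{every} slice $\le 2^{-\kappa nD}A$, and (I) follows with $\rho_\xi=\widetilde{\theta}_\xi$ directly; no Frostman sub-measure is needed.
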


We will deduce the $L^q$ norm flattening Theorem \ref{L^q_norm_flattening_theorem} from this Lemma \ref{L^q_norm_porosity} as follows.
If we deny the conclusion of Theorem \ref{L^q_norm_flattening_theorem}, we have a Borel probability measure $\theta$ on $G$ satisfying the assumption of Lemma \ref{L^q_norm_porosity}. Then, by this lemma, we can obtain a small component $\nu_{I_0}$ of $\nu$ and a small fraction $\rho_\xi$ of $\theta$ satisfying (I) and (II). But, the non-linear convolution $\rho_\xi{\bm .}\nu_{I_0}$ can be approximated by the linear convolution $f_{x_0}\rho_\xi*S_{2^{-r}}\nu_{I_0}$ ($S_{2^{-r}}$ is the scale change by $2^{-r}$). Hence, we can apply Shmerkin's inverse theorem for $L^q$ norms of linear convolutions to $f_{x_0}\rho_\xi*S_{2^{-r}}\nu_{I_0}$, and, by the similar method as the proof of \cite[Theorem 5.1]{Shm19}, we will be able to get a contradiction.

The name of “porosity” represents that some small component $\nu_{I_0}$ of $\nu$ and some small fraction $\rho_\xi$ of $\theta$ have essentially the same properties (I) and (II) as the properties (i) and (ii) of the entire $\nu$ and $\theta$. However, we emphasize that the property (I) is up-graded from (i). That is, not only that $\rho_\xi$ has the flat $L^q$ norm on $G$, we can also have that its “projection” on $\RP^1$ by $f_{x_0}:G\ni g\mapsto gx_0\in\RP^1$ has the flat $L^q$ norm. This will be much important for the application of Shmerkin's inverse theorem for $L^q$ norms. We notice in advance that the assumption (\ref{condition_tau^*(apha)_is_positive}) and the parametrization of $n=\lfloor \delta l\rfloor$ which may seem strange at the first sight work to get this up-grading from (i) to (I) (see (\ref{separation_of_D'xi_proof_of_porosity})). In particular, the assumption (\ref{condition_tau^*(apha)_is_positive}) ensures $\kappa$ obtained from our proof to be positive.

\subsection{Organization of the paper}\label{section_organization}

In Section \ref{section_preliminaries}, we confirm basic facts on the action of a finite uniformly hyperbolic family $\A\subset G$ on $\RP^1$ and a stationary measure for $\A$. We also see multifractal properties of a stationary measure as in \cite{Shm19}.
Many of them are from \cite{HS17} and \cite{Shm19}.

In Section \ref{section_counterexamples}, we focus on counterexamples to Problem \ref{main_problem_L^q_dim_of_Furstenberg_measures}. In particular, we show Theorem \ref{counterexample_to_natural_extension}. As we saw in Section \ref{section_main_theorem}, according to the main Theorem \ref{the_main_theorem_L^q_dim_Mobius_IFS}, such counterexamples must be “singular”, so we also show Proposition \ref{justification_heuristic_singularity} in this section.

In Section \ref{section_proof_of_L^q_norm_porosity}, we prove the $L^q$ norm porosity Lemma \ref{L^q_norm_porosity}. It can be said that the most essential novelty of this paper is in this section.

Using Lemma \ref{L^q_norm_porosity}, we prove the $L^q$ norm flattening Theorem \ref{L^q_norm_flattening_theorem} in Section \ref{section_proof_of_L^q_norm_flattening_theorem}.
We prove the main Theorem \ref{the_main_theorem_L^q_dim_Mobius_IFS} in the final Section \ref{section_proof_of_the_main_theorem}.
\subsection{Notation}\label{section_notation}

As we said in Section \ref{section_L^q_norm_flattening}, for parameters or objects $a, b,c,\dots$ and $X>0$, we write $X\gg_{a,b,c,\dots}1$ or $X\ll_{a,b,c,\dots}1$ if $X$ is sufficiently large or small only in terms of $a,b,c,\dots$, respectively.

For $X,Y>0$, we write $X=O_{a,b,c,\dots}(1)Y$ or $X\leq O_{a,b,c\dots}(1)Y$ if there is a constant $C>0$ depending only on $a,b,c,\dots$ such that $X\leq CY$. In this case, we also write $Y=\Omega_{a,b,c\dots}(1)X$ or $Y\geq\Omega_{a,b,c,\dots}(1)X$. If $X\leq O_{a,b,c,\dots}(1)Y$ and $Y\leq O_{a,b,c,\dots}(1)X$, we write $X=\Theta_{a,b,c,\dots}(1)Y$.
In the same sense, for $A>0$, we write $O_{a,b,c,\dots}(A)$ for some constant $C\in\R$ such that $|C|\leq O_{a,b,c,\dots}(1)A$ and $\Omega_{a,b,c\dots}(1)$ for some constant $\eta>0$ such that $\eta\geq \Omega_{a,b,c,\dots}(1)A$.

In the following table, we summarize main notational conventions in this paper.

\vspace{\baselineskip}
\begin{tabular}{ll}
\hline
$\N$&The set of natural numbers $\{1,2,3,\dots\}$\\
$[l]$& $\{0,1,\dots,l-1\}$\\
$\I,\I^*$&Index set ($|\I|<\infty$) and word set $\I^*=\bigcup_{n\in\N}\I^n$\\
$G,d_G$&The linear group $\SL_2(\R)$ with the metric $d_G$ on $G$ determined by a fixed\\
&left-invariant Riemannian metric\\
$B^G_r(g)$& Open ball in $G$ of radius $r$ and center $x$\\
$\|\cdot\|$& Operator norm for on $M_2(\R)$\\
$\A=\{A_i\}_{i\in\I}$&Non-empty and finite family of elements of $G$ which is uniformly hyperbolic\\
$\mu$&Probability measure on $G$ such that $\supp\ \mu=\A$\\
$p=(p_i)_{i\in\I}$&Non-degenerate probability vector such that $\mu=\sum_{i\in\I}p_i\delta_{A_i}$\\
$\nu$& The stationary measure on $\RP^1$ of $\mu$\\
$\nu|_I$& Non-normalized restriction of $\nu$ to $I$\\
$\nu_I$& Component measure of $\nu$ on $I$, $\nu_I=\nu|_I/\nu(I)$\\
$aI$& Expansion or contraction of an interval $I$ by $a>0$ with the same center\\
$\widehat{\nu_I}$&$\widehat{\nu_I}=\nu|_I/\nu(2I)$\\
$K$& The attractor of $\A$\\
$\Psi_q(s), \widetilde{\tau}(q)$& The canonical pressure function and its zero\\
$U,U_1,U_0$& Open subsets of $\RP^1$ associated to the action of $\A$ (see (\ref{domain_U1_the_IFS_acts_on}) and (\ref{domains_the_Möbius_IFS_acts_on_preliminaries}))\\
$C_1,C_2$& Constants associated to the contraction property of the action of $\A$\\
&(see Corollaries \ref{contraction_on_U_by_A_preliminaries} and \ref{Lipschitz_continuity_of_the_action})\\
$\dim_H$& Hausdorff dimension\\
$\D_m$& $2^{-m}$ dyadic partition of $\RP^1$ or $\R/\pi/Z$\\
$\D_{u,m}$&$2^{-m}$ dyadic partition of $\RP^1$ or $\R/\pi\Z$ with the base $u$\\
$\D'(E)$& Set of atoms of $\D'\subset\D_{u,m}$ intersecting a set $E$\\
$\|\nu^{(m)}\|_q$& $2^{-m}$ $L^q$ norm for a Borel probability measure $\nu$ on $\RP^1$\\
$\tau(\nu,q)=\tau(q)$& The $L^q$ spectrum of $\nu$\\
$D(\nu,q)=D(q)$& The $L^q$ dimension of $\nu$\\
$\tau^*(\alpha)$& The Legendre transform of $\tau$\\
$\D^G_m,M$& $2^{-m}$ dyadic-like partition of $G$ and its associated constant\\
$\theta$&Borel probability measure on $G$ with the compact support\\
$\theta|_\xi,\theta_\xi$& Non-normalized restriction and component measure of $\theta$ on $\xi$\\
$\|\theta^{(m)}\|_q$& $2^{-m}$ $L^q$ norm of a Borel probability measure $\theta$ on $G$\\
$f,f_x$& The maps $G\times\RP^1\ni(g,x)\mapsto gx\in\RP^1$ and $G\ni g\mapsto gx\in\RP^1$ for $x\in\RP^1$\\
$\theta{\bm .}\nu$& Push-forward measure of $\theta\times\nu$ by $f$\\
$\theta'*\nu'$& Linear convolution of two finite Borel measures $\theta',\nu'$ on $\R/\pi\Z$\\
$\angle_u(x)$&Identification of $\RP^1$ with $\R/\pi\Z$ by the angle between $x$ and $u$\\
&(by default $u=[1:0]$)\\
$\angle$&$\angle=\angle_{[1:0]}$\\
$d_{\RP^1}$&Metric on $\RP^1$ by angles\\
$B_r(x)$& Open ball in $\RP^1$ (or $\R/\pi\Z$) of radius $r$ and center $x$\\
$\lambda_g^+,\lambda_g^-$& Singular values of $g\in G$ ($\lambda_g^+\geq1\geq\lambda_g^->0$)\\
$u_g^+,u_g^-$& Singular vectors of $g$ corresponding to $\lambda_g^+,\lambda_g^-$, respectively\\
$v_g^+,v_g^-$& $v_g^+=gu_g^+/\lambda_g^+,v_g^-=gu_g^-/\lambda_g^-$\\
\hline
\end{tabular}

\begin{tabular}{ll}
\hline
$\pi$& Coding map from $\I^\N$ to $K$\\
$\Omega_m$& $2^m$ stopping word set (see Section \ref{subsection_stationary_measures})\\
$\mu_m$& $\mu_m=\sum_{i\in\Omega_m}p_i\delta_{A_i}$\\
$\pi_m$& $2^m$ stopping coding map (see Section \ref{subsection_stationary_measures})\\
$P$\qquad\qquad\qquad\quad&Bernoulli measure on $\I^\N$ associated to $p=(p_i)_{i\in\I}$\\
$F$&Identification of $\R\sqcup\{\infty\}$ with $\RP^1$\\
$\Pi$&Canonical map from $\R$ to $\R/\pi\Z$\\
$S_a$&Multiplication on $\R/\pi\Z$ by $a$\\
$T_u$&Translation on $\R/\pi\Z$ by $u$\\
\hline
\end{tabular}

\subsection*{Acknowledgements}
The author is grateful to Yuki Takahashi and Yuto Nakajima. He organized the seminar on Shmerkin's paper \cite{Shm19} with them, and then this problem was brought to him by Yuki Takahashi. Takahashi discussed this problem with him and helped him with this work.
Nakajima talked with him on both mathematical and non-mathematical things and it was a big support for him.

He is also grateful to Masayuki Asaoka, Mitsuhiro Shishikura, Peter Varjú and Johannes Jaerisch for helpful discussions, and Pablo Shmerkin and Boris Solomyak for helpful comments.

This work is supported by JSPS KAKENHI Grant Number JP23KJ1211.

\section{Preliminaries}\label{section_preliminaries}

In this section, we see basic facts on the action of a finite uniformly hyperbolic family $\A\subset G$ on $\RP^1$ and a stationary measure for $\A$, and multifractal properties of a stationary measure as in \cite{Shm19}.
We notice that many of them are from \cite{HS17} and \cite{Shm19}, but Section \ref{subsection_effective_biLipschitz} is not in them and will play an important role in the proof of the $L^q$ norm porosity Lemma \ref{L^q_norm_porosity}.

\subsection{Geometry of the $\SL_2(\R)$-action on $\RP^1$}\label{subsection_G_action_on_RP^1}

In this section, we see basic facts on geometry of the action of $G=\SL_2(\R)$ on $\RP^1$. All of the contents in this section are from \cite[Section 2]{HS17}.

We define the metric $d_{\RP^1}$ on $\RP^1$, where, for $x, y\in\RP^1$, $d_{\RP^1}(x,y)$ is the angle between the two lines corresponding to $x$ and $y$ in $\R^2$ (taking the value in $[0,\pi/2]$). For a fixed $u\in\RP^1$, we write $\angle_u(x)\ (x\in\RP^1)$ for the angle from $u$ to $x$ in the counterclockwise orientation (taking the value in $[0,\pi)$). If we consider $\angle_u$ as the $\R/\pi\Z$-valued function, this is a diffeomorphism between $\RP^1$ and $\R/\pi\Z$.
We also notice that $d_{\RP^1}(x,y)=|\angle_u(x)-\angle_u(y)|$, where the right-hand side is the canonical metric on $\R/\pi\Z$.

For a matrix $g\in G$, we write ${\lambda_g^+}^2,{\lambda_g^-}^2\ (\lambda_g^+\geq 1\geq\lambda_g^->0)$ for the two eigenvalues of the symmetric and positive definite matrix $g^*g$ (where $g^*$ is the transpose of $g$) and $u_g^+, u_g^-\in\R^2\setminus\{0\}$ for the corresponding normalized eigenvectors (determined up to multiplication by $\pm 1$). We call $\lambda_g^+,\lambda_g^-$ the {\it singular values} of $g$ and $u_g^+,u_g^-$ the {\it singular vectors} of $g$. We notice that $\|g\|=\lambda_g^+$ and $\lambda_g^-=(\lambda_g^+)^{-1}$. 
We also write $v_g^+=gu_g^+/\lambda_g^+, v_g^-=gu_g^-/\lambda_g^-$. Then $\{u_g^+,u_g^-\}$ and $\{v_g^+,v_g^-\}$ are the orthonormal bases in $\R^2$ and
\begin{equation*}
	gu_g^+=\lambda_g^+v_g^+,\quad gu_g^-=\lambda_g^-v_g^-.
\end{equation*}

Here, we see the action of $g$ on $\RP^1$ through the coordinates $\angle_{u_g^+}$ and $\angle_{v_g^+}$. We take $u_g^+,u_g^-\in\R^2\setminus\{0\}$ as $\angle_{u_g^+}(u_g^-)=\pi/2$.
Then, for $\theta\in\R/\pi\Z$, we have
\begin{align*}
	\angle_{v_g^+}\circ g\circ\angle_{u_g^+}^{-1}(\theta)=&\ \angle_{v_g^+}\circ g(\cos\theta\cdot u_g^++\sin\theta\cdot u_g^-)\\
	=&\ \angle_{v_g^+}(\lambda_g^+\cos\theta\cdot v_g^++\lambda_g^-\sin\theta\cdot v_g^-)\\
	=&\ \arctan\left(\frac{1}{{\lambda_g^+}^2}\tan\theta\right),
\end{align*}
where, in the third line, we used the fact that $g\in G=\SL_2(\R)$ does not change the orientation.
The derivative is
\begin{equation}\label{derivative_action_of_G_preliminaries}
	(\angle_{v_g^+}\circ g\circ\angle_{u_g^+}^{-1})'(\theta)=\frac{1}{{\lambda_g^+}^2\cos^2\theta+{\lambda_g^+}^{-2}\sin^2\theta}.
\end{equation}
By using this description, we can see the following. Here, $B_\varepsilon(x)=\{y\in\RP^1\left|\ d_{\RP^1}(x,y)<\varepsilon\right.\}$ is an open ball in $\RP^1$.

\begin{lem}[{\cite[Lemma 2.4]{HS17}}]\label{contraction_bounded_distortion}
	For $0<\varepsilon<\pi/2$, there exists a constant $C_\varepsilon>1$ such that, for every $g\in G$ and $x,y\in\RP^1\setminus B_\varepsilon(u_g^-)$, we have
	\begin{equation*}
		\frac{C_\varepsilon^{-1}}{{\lambda_g^+}^2}d_{\RP^1}(x,y)\leq d_{\RP^1}(gx,gy)\leq\frac{C_\varepsilon}{{\lambda_g^+}^2}d_{\RP^1}(x,y).
	\end{equation*}
\end{lem}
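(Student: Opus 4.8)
The statement to prove is Lemma~\ref{contraction_bounded_distortion} (cited from \cite[Lemma 2.4]{HS17}), which asserts the two-sided bound on $d_{\RP^1}(gx,gy)$ in terms of $d_{\RP^1}(x,y)$ and ${\lambda_g^+}^2$, uniformly over $g\in G$ once $x,y$ stay $\varepsilon$-away from the contracting singular direction $u_g^-$.

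\medskip

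\textbf{Approach.} The plan is to reduce everything to the explicit one-variable formula already derived in the excerpt, namely
\begin{equation*}
(\angle_{v_g^+}\circ g\circ\angle_{u_g^+}^{-1})'(\theta)=\frac{1}{{\lambda_g^+}^2\cos^2\theta+{\lambda_g^+}^{-2}\sin^2\theta},
\end{equation*}
together with the fact that $d_{\RP^1}$ is, in each of the two coordinates $\angle_{u_g^+}$ and $\angle_{v_g^+}$, just the canonical metric on $\R/\pi\Z$. Since $\angle_{u_g^+}$ and $\angle_{v_g^+}$ are isometries $\RP^1\to\R/\pi\Z$ (the metric was defined via angles, and changing the basepoint is a translation of $\R/\pi\Z$, which is an isometry), we have $d_{\RP^1}(gx,gy)$ equal to the $\R/\pi\Z$-distance between $\angle_{v_g^+}(gx)$ and $\angle_{v_g^+}(gy)$, and likewise $d_{\RP^1}(x,y)$ equals the $\R/\pi\Z$-distance between $\theta_x:=\angle_{u_g^+}(x)$ and $\theta_y:=\angle_{u_g^+}(y)$. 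So the lemma becomes a mean-value-theorem statement about the smooth circle map $h_g:=\angle_{v_g^+}\circ g\circ\angle_{u_g^+}^{-1}$: its derivative is bounded above and below by constant multiples of ${\lambda_g^+}^{-2}$, provided we stay away from the point $\theta=\pi/2$, which is exactly $\angle_{u_g^+}(u_g^-)$, i.e. the image of $B_\varepsilon(u_g^-)$ under $\angle_{u_g^+}$.

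\medskip

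\textbf{Key steps, in order.} First I would fix $0<\varepsilon<\pi/2$ and observe that for $\theta\in\R/\pi\Z$ with $d_{\R/\pi\Z}(\theta,\pi/2)\geq\varepsilon$ we have $\cos^2\theta\geq\sin^2\varepsilon=:c_\varepsilon>0$, hence from the derivative formula
\begin{equation*}
\frac{c_\varepsilon}{{\lambda_g^+}^2}\leq h_g'(\theta)\leq\frac{1}{{\lambda_g^+}^2\cos^2\theta+{\lambda_g^+}^{-2}\sin^2\theta}\leq\frac{1}{{\lambda_g^+}^{-2}}\cdot\frac{1}{1}
\end{equation*}
is too crude; instead note ${\lambda_g^+}^2\cos^2\theta+{\lambda_g^+}^{-2}\sin^2\theta\geq{\lambda_g^+}^2 c_\varepsilon$ (using $\lambda_g^+\geq1$ so the first term dominates), giving $c_\varepsilon{\lambda_g^+}^{-2}\le h_g'(\theta)\le c_\varepsilon^{-1}{\lambda_g^+}^{-2}$ on the complement of the $\varepsilon$-neighbourhood of $\pi/2$. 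Second, I would check that $x\notin B_\varepsilon(u_g^-)$ translates precisely to $\theta_x=\angle_{u_g^+}(x)$ lying outside the $\varepsilon$-ball around $\pi/2=\angle_{u_g^+}(u_g^-)$, using that $\angle_{u_g^+}$ is an isometry. Third — this is the one point requiring a little care — the mean value theorem applies along a path from $\theta_x$ to $\theta_y$, and I must be sure the short geodesic arc between $\theta_x,\theta_y$ in $\R/\pi\Z$ stays in the region where the derivative bound holds; if $d_{\RP^1}(x,y)<\varepsilon/2$ (the only case that matters, and the statement can be reduced to this by splitting long arcs or simply stating the constant for all pairs with a compactness/covering argument), then the arc between two points each at distance $\geq\varepsilon$ from $\pi/2$ and within $\varepsilon/2$ of each other stays at distance $\geq\varepsilon/2$ from $\pi/2$, so one gets the bound with $c_{\varepsilon/2}$ in place of $c_\varepsilon$. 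Integrating $h_g'$ along that arc then yields
\begin{equation*}
c_{\varepsilon/2}\,{\lambda_g^+}^{-2}\,d_{\RP^1}(x,y)\le d_{\RP^1}(gx,gy)\le c_{\varepsilon/2}^{-1}\,{\lambda_g^+}^{-2}\,d_{\RP^1}(x,y),
\end{equation*}
and setting $C_\varepsilon:=c_{\varepsilon/2}^{-1}$ finishes it. Finally, to handle pairs $x,y$ with $d_{\RP^1}(x,y)\geq\varepsilon/2$ (still both outside $B_\varepsilon(u_g^-)$) one notes the diameter of $\RP^1$ is $\pi/2$ and the contraction factor ${\lambda_g^+}^{-2}\le1$, so after possibly enlarging $C_\varepsilon$ by an absolute constant depending only on $\varepsilon$ the same two-sided bound persists; here one can break the arc into $O(1/\varepsilon)$ subarcs of length $<\varepsilon/2$ and add the estimates.

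\medskip

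\textbf{Main obstacle.} There is no deep difficulty — the lemma is essentially a calculus fact — but the one genuinely delicate point is the geometric bookkeeping in Step~3: making sure the geodesic between $\theta_x$ and $\theta_y$ in $\R/\pi\Z$ (not merely the endpoints) avoids the bad point $\pi/2$ by a definite amount, and handling the possibility that the two points are far apart (so that one must either restrict to close pairs, which suffices for all later uses, or subdivide the arc). Choosing the right $\varepsilon$-to-$\varepsilon/2$ shrinkage of the safe region, and verifying that the first term ${\lambda_g^+}^2\cos^2\theta$ genuinely dominates (which uses $\lambda_g^+\ge 1$, valid since $\lambda_g^+\lambda_g^-=\det g=1$ and $\lambda_g^+\ge\lambda_g^-$), is where all the actual content sits; everything else is bookkeeping with isometries and the fundamental theorem of calculus.
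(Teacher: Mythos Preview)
Your proposal is correct and matches the paper's indicated approach: the paper does not spell out a proof of this lemma (it cites \cite[Lemma 2.4]{HS17}), but its one-line pointer ``by using this description'' refers precisely to the derivative formula~(\ref{derivative_action_of_G_preliminaries}) together with the isometric identifications $\angle_{u_g^+},\angle_{v_g^+}$, which is exactly the computation you carry out. One minor simplification you could make: since ${\lambda_g^+}^2\cos^2\theta+{\lambda_g^+}^{-2}\sin^2\theta\le{\lambda_g^+}^2$ for \emph{all} $\theta$ (as $\lambda_g^+\ge1$), the lower bound $d_{\RP^1}(gx,gy)\ge{\lambda_g^+}^{-2}d_{\RP^1}(x,y)$ holds globally with constant $1$, so only the upper bound needs the restriction to $\RP^1\setminus B_\varepsilon(u_g^-)$ and the arc-subdivision bookkeeping.
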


This can be seen as a contracting property and a bounded distortion property for the $G$-action on $\RP^1$, which are important in studying fractals or measures generated by an IFS.

\subsection{The action of a uniformly hyperbolic family in $\SL_2(\R)$}\label{subsection_action_of_uniformly_hyperbolic_subset}

We take a non-empty finite family $\A=\{A\}_{i\in\I}$ of elements of $G$ which is uniformly hyperbolic. By the definition of the uniform hyperbolicity, there are constants $c>0$ and $r>1$ such that
\begin{equation}\label{def_of_uniform_hyperbolicity_of_A_preliminaries}
	\|A_{i_1}\cdots A_{i_n}\|\geq cr^n\quad\text{for every }n\in\N, (i_1,\dots,i_n)\in\I^n.
\end{equation}
We fix $\A$ throughout Section \ref{section_preliminaries} (not assuming to be strongly Diophantine). In this section, we see properties of the action of $\A$ on $\RP^1$.

The fundamental fact is Proposition \ref{proposition_Möbius_uniformly_hyperbolic_SL_2(R)}. This gives us an open subset $U_0\subsetneq \RP^1$.
As we did in (\ref{domain_U1_the_IFS_acts_on}), we take three non-empty open subsets $U\subset U_1\subset U_0\subsetneq \RP^1$ with finite connected components having disjoint closures such that
\begin{equation}\label{domains_the_Möbius_IFS_acts_on_preliminaries}
	\overline{U}\subset U_1,\quad\overline{U_1}\subset U_0,\quad \overline{AU_0}\subset U\quad\text{for every }A\in\A,
\end{equation}
and fix them for $\A$ throughout Section \ref{section_preliminaries}.
We need the following lemma to see that the action of $\A$ on $U$ is really contracting.

\begin{lem}\label{center_of_expansion_notin_U}
	There exists $n_0\in\N$ such that, for every $n\geq n_0$ and $i\in\I^n$, we have
	\begin{equation*}
		u_{A_i}^-\notin U_1.
	\end{equation*}
\end{lem}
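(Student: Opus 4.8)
The plan is to exploit the uniform hyperbolicity of $\A$ together with the contraction property from Lemma \ref{contraction_bounded_distortion} to control where the contracting singular direction $u_{A_i}^-$ can lie. First I would recall the geometric meaning of $u_g^-$: for $g\in G$ with $\|g\|=\lambda_g^+$ large, the action of $g$ on $\RP^1$ maps everything outside a tiny neighborhood of $u_g^-$ into a tiny neighborhood of $v_g^+=g u_g^+/\lambda_g^+$. Concretely, by \eqref{derivative_action_of_G_preliminaries} the point $u_g^-$ (which corresponds to $\theta=\pi/2$ in the $\angle_{u_g^+}$ coordinate) is the unique point of maximal expansion, with local expansion factor ${\lambda_g^+}^2$, while all of $\RP^1$ away from a neighborhood of $u_g^-$ is contracted by a factor $\asymp {\lambda_g^+}^{-2}$.

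The key step is to argue by contradiction. Suppose that for arbitrarily large $n$ there is $i\in\I^n$ with $u_{A_i}^-\in U_1$. Write $A_i=A_{i_1}\cdots A_{i_n}$ and fix the open sets $U\subset U_1\subset U_0$ as in \eqref{domains_the_Möbius_IFS_acts_on_preliminaries}, so that $\overline{AU_0}\subset U$ for every $A\in\A$. Then $A_i\overline{U_0}\subset U$, and in particular $A_i(\overline{U_0})$ is contained in the single set $U$. On the other hand, I want to show that if $u_{A_i}^-\in U_1$, then $A_i$ must expand some definite portion of $U_0$ to something large — contradicting $A_i(\overline{U_0})\subset U\subsetneq\RP^1$. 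The point is that $U_0$ contains a fixed-size neighborhood of $\overline{U_1}$ (since $\overline{U_1}\subset U_0$ and both have finitely many components with disjoint closures, there is $\varepsilon_0>0$ with the $\varepsilon_0$-neighborhood of $\overline{U_1}$ contained in $U_0$). So if $u_{A_i}^-\in U_1\subset U_0$, then $A_i$ restricted to the $\varepsilon_0$-ball around $u_{A_i}^-$, which lies in $U_0$, has image covering at least an arc of length $\asymp\min\{1,\varepsilon_0{\lambda_{A_i}^+}^2\}$ by the expansion statement in Lemma \ref{contraction_bounded_distortion} applied to $A_i^{-1}$ (note $u_{A_i^{-1}}^- = v_{A_i}^+$, and the inverse contracts near $v_{A_i}^+$ hence expands near... ) — more cleanly, I would invoke Lemma \ref{contraction_bounded_distortion} directly: for $x,y$ both outside $B_{\varepsilon}(u_{A_i}^-)$ we have $d_{\RP^1}(A_ix,A_iy)\ge C_\varepsilon^{-1}{\lambda_{A_i}^+}^{-2}d_{\RP^1}(x,y)$, but near $u_{A_i}^-$ the expansion is of order ${\lambda_{A_i}^+}^{2}$, so $A_i$ of a fixed $\varepsilon_0$-ball around $u_{A_i}^-$ has diameter bounded below by a constant once ${\lambda_{A_i}^+}^2\varepsilon_0^2\gtrsim 1$. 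By \eqref{def_of_uniform_hyperbolicity_of_A_preliminaries}, $\lambda_{A_i}^+=\|A_i\|\ge cr^n\to\infty$, so for $n\ge n_0$ this lower bound exceeds $\operatorname{diam}U$, contradicting $A_i(\overline{U_0})\subset U$.

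To make the expansion-near-$u_g^-$ bound rigorous I would compute directly from \eqref{derivative_action_of_G_preliminaries}: in the coordinate $\theta=\angle_{u_g^+}$, the map $g$ has derivative $({\lambda_g^+}^2\cos^2\theta+{\lambda_g^+}^{-2}\sin^2\theta)^{-1}$, which at $\theta$ near $\pi/2$ (i.e.\ near $u_g^-$) is $\asymp {\lambda_g^+}^{2}$ on an interval of $\theta$-length $\asymp {\lambda_g^+}^{-2}$, and integrating shows $g$ maps any arc of length $\delta$ centered at $u_g^-$ onto an arc of length $\gtrsim \min\{1,\delta{\lambda_g^+}^2\}$ — in fact onto a neighborhood of $u_g^-$'s image whose size is $\ge c_0$ whenever $\delta\ge {\lambda_g^+}^{-1}$, say. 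I expect the main obstacle to be bookkeeping with the three nested domains and the uniformity of the constant $\varepsilon_0$ (the separation of $\overline{U_1}$ from the boundary of $U_0$), and making sure the expansion estimate is stated for a genuinely fixed-size input ball so that the conclusion $\operatorname{diam}(A_i(\text{ball})) \gtrsim 1 > \operatorname{diam}U$ kicks in for all large $n$ simultaneously; none of this is deep, but it requires care to phrase the expansion half of Lemma \ref{contraction_bounded_distortion} (or a direct consequence of \eqref{derivative_action_of_G_preliminaries}) in the form actually needed.
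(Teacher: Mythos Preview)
Your approach is essentially the same as the paper's: argue by contradiction, use that $\overline{U_1}\subset U_0$ gives a fixed-radius ball $B_{\varepsilon_0}(u_{A_i}^-)\subset U_0$, and show that for large $n$ the image $A_i(B_{\varepsilon_0}(u_{A_i}^-))$ is too large to remain inside $U_0$. The paper executes the last step more cleanly by applying Lemma~\ref{contraction_bounded_distortion} to the \emph{complement}: since $\RP^1\setminus B_{\varepsilon_0}(u_{A_i}^-)$ maps into $B_{\delta/4}(v_{A_i}^+)$ for large $n$, the image of the ball itself is an arc whose complement has length $<\delta/2$, and such an arc cannot lie in $U_0$ once $\RP^1\setminus U_0$ contains an interval of length $\delta$. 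This avoids your direct derivative computation near $\theta=\pi/2$ entirely.

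One point to fix in your write-up: the contradiction ``this lower bound exceeds $\operatorname{diam} U$'' is not quite right, because in $\RP^1$ the diameter of any set is at most $\pi/2$, and $U$ (or $U_0$) may well attain that. What you actually need is that the image of the ball is a single \emph{arc} (since $A_i$ is a homeomorphism) of length $>\pi-\delta$, where $\delta>0$ is the length of some interval in $\RP^1\setminus U_0$; an arc that long cannot avoid this gap. Your expansion estimate gives exactly this arc-length lower bound, so the argument goes through once phrased in terms of arc length rather than diameter.
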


\begin{proof}
	We take large $n\in\N$ (specified later) and $i\in \I^n$. Then, by (\ref{def_of_uniform_hyperbolicity_of_A_preliminaries}), ${\lambda_{A_i}^+}^2=\|A_i\|^2\geq c^2r^{2n}$ and this is large. Assume that $u_{A_i}^-\in U_1$. Then, from $\overline{U_1}\subset U_0$, there is $0<\varepsilon<1$ determined by $U_1\subset U_0$ such that $\overline{B}_\varepsilon(u_{A_i}^-)=\left\{y\in\RP^1\left|\ d_{\RP^1}(u_{A_i}^-,y)\leq\varepsilon\right.\right\}\subset U_0$. If we take $0<\delta<1$ such that $\RP^1\setminus U_0$ contains an interval of the length $\delta$, then, by taking $n$ sufficiently large for $\varepsilon$ and $\delta$, we can see from Lemma \ref{contraction_bounded_distortion} that
	\begin{equation*}
		A_i\left(\RP^1\setminus B_\varepsilon(u_{A_i}^-)\right)\subset B_{\delta/4}(v_{A_i}^+).
	\end{equation*}
	This tells us that $\overline{B}_\varepsilon(u_{A_i}^-)$ is much expanded by $A_i$, and the two end points of the large interval $A_i\left(\overline{B}_\varepsilon(u_{A_i}^-)\right)$ is $\delta/2$-close. However, from $A\overline{U_0}\subset U_0$ for every $A\in\A$, we have $A_i\left(\overline{B}_\varepsilon(u_{A_i}^-)\right)\subset U_0$ and this contradicts the choice of $\delta$. So we showed that $u_{A_i}^-\notin U_1$.
\end{proof}

By Lemmas \ref{contraction_bounded_distortion} and \ref{center_of_expansion_notin_U}, we obtain the following corollary.

\begin{cor}\label{contraction_on_U_by_A_preliminaries}
	There exists a constant $C_1>1$ determined by $\A$ such that, for any $x,y\in U$ and any $g\in G$ such that $u^-_g\notin U_1$, we have
	\begin{equation*}
		\frac{C_1^{-1}}{\|g\|^2}d_{\RP^1}(x,y)\leq d_{\RP^1}(gx,gy)\leq \frac{C_1}{\|g\|^2}d_{\RP^1}(x,y).
	\end{equation*}
	In particular, for any $i\in\I^*$ and $x,y\in U$, we have
	\begin{equation*}
		\frac{C_1^{-1}}{\|A_i\|^2}d_{\RP^1}(x,y)\leq d_{\RP^1}(A_ix,A_iy)\leq\frac{C_1}{\|A_i\|^2}d_{\RP^1}(x,y).
	\end{equation*}
\end{cor}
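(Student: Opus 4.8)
The plan is to combine Lemma \ref{contraction_bounded_distortion} with Lemma \ref{center_of_expansion_notin_U}, using the gap between the two nested open sets $U\subset U_1$ to produce a uniform constant. First I would fix $\varepsilon_0 > 0$ small enough that $d_{\RP^1}(U,\RP^1\setminus U_1)\geq\varepsilon_0$; such $\varepsilon_0$ exists because $\overline U\subset U_1$ and both sets have finitely many connected components with disjoint closures, so $\overline U$ is compact and disjoint from the closed set $\RP^1\setminus U_1$. Then, for any $g\in G$ with $u_g^-\notin U_1$, we have $d_{\RP^1}(x,u_g^-)\geq\varepsilon_0$ for every $x\in U$, i.e.\ $U\subset\RP^1\setminus B_{\varepsilon_0}(u_g^-)$. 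Applying Lemma \ref{contraction_bounded_distortion} with this $\varepsilon_0$ gives a constant $C_{\varepsilon_0}>1$, depending only on $\varepsilon_0$ — hence only on $\A$ through the choice of $U$ and $U_1$ — such that for all $x,y\in U$,
\begin{equation*}
\frac{C_{\varepsilon_0}^{-1}}{\|g\|^2}\,d_{\RP^1}(x,y)\leq d_{\RP^1}(gx,gy)\leq\frac{C_{\varepsilon_0}}{\|g\|^2}\,d_{\RP^1}(x,y),
\end{equation*}
where I have used $\|g\|=\lambda_g^+$. Setting $C_1=C_{\varepsilon_0}$ proves the first claim.

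For the second claim, I would like to simply apply the first with $g=A_i$, $i\in\I^*$, but this requires $u_{A_i}^-\notin U_1$, which Lemma \ref{center_of_expansion_notin_U} guarantees only for $|i|\geq n_0$. To handle the finitely many short words $i\in\bigcup_{n<n_0}\I^n$, note that there are only finitely many such $A_i$, and each $A_i$ is a fixed diffeomorphism of $\RP^1$ that maps $\overline{U_0}$ into $U$ (iterating $\overline{A U_0}\subset U\subset U_0$ along $i$). On the compact set $\overline U$, the derivative of $A_i$ (in the angle coordinate) is bounded above and below by positive constants; comparing these with $\|A_i\|^{-2}$, which is also a fixed positive constant for each such $i$, one obtains the two-sided bound with some constant depending on the finite list $\{A_i : |i|<n_0\}$, hence on $\A$. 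Enlarging $C_1$ to also dominate this finite collection of constants yields the bound for all $i\in\I^*$.

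The only mildly delicate point is the uniformity of $C_1$ over all of $\I^*$: the long words are handled uniformly by a single application of Lemma \ref{contraction_bounded_distortion} once $u_{A_i}^-$ is pushed outside $U_1$, while the finitely many short words contribute only finitely many extra constants that can be absorbed. I do not expect a genuine obstacle here — everything reduces to compactness of $\overline U$ and finiteness of $\I$ — so the proof is essentially the two-line deduction above together with this routine bookkeeping for short words. One could even avoid the short-word discussion entirely by choosing the metric-gap argument to apply to $\A^{n_0}$ and noting Corollary \ref{contraction_on_U_by_A_preliminaries} is only needed in the form where such an adjustment is harmless, but stating it for all $i\in\I^*$ as above is cleaner.
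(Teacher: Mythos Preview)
Your proposal is correct and follows essentially the same approach as the paper: apply Lemma \ref{contraction_bounded_distortion} using the positive gap $d_{\RP^1}(U,\RP^1\setminus U_1)>0$ for the first claim, then invoke Lemma \ref{center_of_expansion_notin_U} for long words and absorb the finitely many short words by enlarging $C_1$. The paper's proof is just a terser version of exactly this argument.
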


\begin{proof}
	The first statement immediately follows from Lemma \ref{contraction_bounded_distortion} and $d_{\RP^1}(U,\RP^1\setminus U_1)>0$.
	The second statement for $i\in\bigcup_{n\geq n_0}\I^n$ follows from the first one and Lemma \ref{center_of_expansion_notin_U}. Since $\A$ is finite, by taking $C_1$ sufficiently large, we can make the second statement valid for any $i\in\I^*$.
\end{proof}

The above contraction property of the action of $\A$ enables us to apply the basic theory of contracting IFSs to this action. In particular, as stated in Section \ref{section_main_problem}, there exists a unique non-empty compact subset $K\subset U_0$, called the attractor of $\A$, such that
\begin{equation*}
	K=\bigcup_{i\in\I}A_iK.
\end{equation*}
From Lemmas \ref{contraction_bounded_distortion} and \ref{center_of_expansion_notin_U}, we can see that $K$ is independent of an open subset $U_0$ obtained from Proposition \ref{proposition_Möbius_uniformly_hyperbolic_SL_2(R)} (we omit the detail).

Let $x_0\in K$. Then we can define the map $\pi: \I^\N\rightarrow K$ by
\begin{equation*}
	\pi(i)=\lim_{n\to\infty}A_{i_1}\cdots A_{i_n}x_0,\quad i=(i_1,i_2,\dots)\in \I^\N,
\end{equation*}
which is independent of the choice of $x_0$. This $\pi$ is surjective (not injective in general), and called the {\it coding map}. By Corollary \ref{contraction_on_U_by_A_preliminaries}, for $i=(i_1, i_2,\dots), j=(j_1,j_2,\dots)\in \I^\N$ and $n\in\N$, we have
\begin{equation*}\label{continuity_of_pi}
	d_{\RP^1}(\pi(i),\pi(j))\leq\frac{C_1\pi}{\|A_{i_1}\cdots A_{i_n}\|^2}\quad\text{if }\ (i_1,\dots,i_n)=(j_1,\dots,j_n),
\end{equation*}
and this implies that $\pi:\I^\N\rightarrow K$ is H\"{o}lder continuous.

\subsection{Properties of stationary measures}\label{subsection_stationary_measures}

We take a non-degenerate probability vector $p=(p_i)_{i\in\I}$ and define the probability measure $\mu=\sum_{i\in\I}p_i\delta_{A_i}$ such that $\supp\ \mu=\A$. Then, as stated in Section \ref{section_main_problem},
from the contraction property of the action of $\A$ and the basic theory of IFSs, we can see that there exists a unique stationary measure $\nu$ of $\mu$ on $\RP^1$:
\begin{equation*}
	\nu=\mu{\bm .}\nu
\end{equation*}
such that $\nu(U_0)=1$, and it follows that $\supp\ \nu=K$\footnote{From $\nu(K)=1$, we can easily see that $\nu$ is independent of an open set $U_0$ obtained from Proposition \ref{proposition_Möbius_uniformly_hyperbolic_SL_2(R)}.}. In this section, we see properties of such the stationary measure $\nu$.
In particular, we will see that the limit in the definition of the $L^q$ spectrum exists for $\nu$.

Let $\pi:\I^\N\rightarrow K$ be the coding map and $P$ be the Bernoulli measure on $\I^\N$ determined from the probability measure $\sum_{i\in\I}p_i\delta_i$ on $\I$. Then, we can see that the push-forward measure $\pi P$ is $\mu$-stationary and $\pi P(K)=1$. Hence, by the uniqueness, we have
\begin{equation*}\label{nu_is_push_forward_of_Bernoulli_measure_preliminaries}
	\nu=\pi P.
\end{equation*}

For $m\in\N$, we define the set of {\it $2^m$ stopping words} by
\begin{equation*}
	\Omega_m=\left\{i=(i_1,\dots,i_n)\in \I^*\left|\ \|A_{i_1}\|^2<2^m,\dots,\|A_{i_1}\cdots A_{i_{n-1}}\|^2<2^m,\|A_{i_1}\cdots A_{i_n}\|^2\geq 2^m\right.\right\}.
\end{equation*}
By Corollary \ref{contraction_on_U_by_A_preliminaries}, for $i\in\Omega_m$ and $x,y\in U$, we have
\begin{equation}\label{contraction_2^{-m}}
	C_1^{-1}2^{-m}d_{\RP^1}(x,y)\leq d_{\RP^1}(A_ix,A_iy)\leq C_12^{-m}d_{\RP^1}(x,y)
\end{equation}
(where we have re-taken $C_1$ large).
We consider the vector $p=(p_i)_{i\in\Omega_m}$. This is a probability vector, because
\begin{align*}
	1&=\sum_{i_1\in\I}p_{i_1}\\&=\sum_{i_1\in \I,\ \|A_{i_1}\|^2\geq 2^m}p_{i_1}+\sum_{i_1\in \I,\ \|A_{i_1}\|^2< 2^m}p_{i_1}\\
	&=\sum_{i_1\in \I,\ \|A_{i_1}\|^2\geq 2^m}p_{i_1}+\sum_{i_1\in \I,\ \|A_{i_1}\|^2< 2^m}\sum_{i_2\in \I}p_{i_1}p_{i_2}\\
	&=\sum_{i_1\in\I,\ \|A_{i_1}\|^2\geq 2^m}p_{i_1}+\sum_{(i_1,i_2)\in \I^2,\  \|A_{i_1}\|^2<2^m,\ \|A_{i_1}A_{i_2}\|^2\geq2^m}p_{i_1}p_{i_2}+\sum_{(i_1,i_2)\in \I^2,\  \|A_{i_1}\|^2<2^m,\ \|A_{i_1}A_{i_2}\|^2<2^m}p_{i_1}p_{i_2}
\end{align*}
\begin{flalign*}
	\ \ &\quad\vdots&\\
	&=\sum_{i\in\Omega_m}p_i.&
\end{flalign*}
We define the finitely supported probability measure $\mu_m$ on $G$ by
\begin{equation*}
	\mu_m=\sum_{i\in\Omega_m}p_i\delta_{A_i}.
\end{equation*}
Then, we can extend $\nu=\mu{\bm .}\nu$ by the same way as above, and obtain that
\begin{equation}\label{nu_Omega_m}
	\nu=\mu_m{\bm.}\nu.
\end{equation}
For $i\in \I^\N$, we write $n(m,i)\in\N$ for the smallest integer such that $\|A_{i_1}\cdots A_{i_{n(m,i)}}\|^2\geq2^m$. That is, $n(m,i)$ is the unique integer such that $(i_1,\dots,i_{n(m,i)})\in\Omega_m$. We define the {\it $2^m$ stopping coding map} $\pi_m: \I^\N\rightarrow K$ by
\begin{equation*}
	\pi_m(i)=A_{i_1}\dots A_{i_{n(m,i)}}x_0,\quad i\in \I^\N.
\end{equation*}
Then, it is easily seen that
\begin{equation*}
	\pi_mP=\mu_m{\bm .}\delta_{x_0}.
\end{equation*}
Furthermore, by Corollary \ref{contraction_on_U_by_A_preliminaries}, we have
\begin{equation}\label{pi_pi_m}
	d_{\RP^1}(\pi(i),\pi_m(i))\leq\frac{C_1\pi}{\|A_{i_1}\dots A_{i_{n(m,i)}}\|^2}\leq C_1\pi 2^{-m}.
\end{equation}

For $m\in\N$, let
\begin{equation*}
	\D_m=\left\{\angle_{[1:0]}^{-1}[\pi k2^{-m},\pi(k+1)2^{-m})\right\}_{k=0}^{2^m-1}
\end{equation*}
be the $2^{-m}$ dyadic partition of $\RP^1$ (via the identification $\angle_{[1:0]}:\RP^1\cong\R/\pi\Z$). We recall that, for $q>1$, the $L^q$ norm of $\nu$ is
\begin{equation*}
	\|\nu^{(m)}\|_q=\left(\sum_{I\in\D_m}\nu(I)^q\right)^{1/q}
\end{equation*}
and the $L^q$ spectrum of $\nu$ is
\begin{equation*}
	\tau(q)=\liminf_{m\to\infty}\left(-\frac{1}{m}\log\|\nu^{(m)}\|_q^q\right)=\liminf_{m\to\infty}\left(-\frac{1}{m}\log\sum_{I\in\D_m}\nu(I)^q\right).
\end{equation*}
Here, we show the following.

\begin{prop}\label{existence_of_limit_of_L^q_dim}
	For any $q>1$ and the stationary measure $\nu$ as above, the limit in the definition of the $L^q$ spectrum exists, that is,
	\begin{equation*}
		\tau(q)=\lim_{m\to\infty}\left(-\frac{1}{m}\log\|\nu^{(m)}\|_q^q\right)=\lim_{m\to\infty}\left(-\frac{1}{m}\log\sum_{I\in\D_m}\nu(I)^q\right).
	\end{equation*}
\end{prop}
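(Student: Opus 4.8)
The plan is to follow the classical almost‑submultiplicativity argument of Peres--Solomyak \cite{PS00}, with the uniform bounded‑distortion estimate of Corollary \ref{contraction_on_U_by_A_preliminaries} as the only genuinely new input. Set $a_m=\|\nu^{(m)}\|_q^q=\sum_{I\in\D_m}\nu(I)^q$ and $b_m=-\log a_m$. H\"{o}lder's inequality gives $1=(\sum_{I\in\D_m}\nu(I))^q\le(2^m)^{q-1}a_m$, so $0\le b_m/m\le q-1$; hence $\tau(q)=\liminf_m b_m/m$, and in order to upgrade this $\liminf$ to a genuine limit it suffices, by Fekete's lemma, to prove
\begin{equation*}
a_{m+n}\le C\,a_m\,a_n\qquad\text{for all }m,n\in\N,
\end{equation*}
with $C=C(\A,q)$ a constant. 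Indeed, this forces $b_{m+n}\ge b_m+b_n-\log C$, so $m\mapsto b_m-\log C$ is superadditive and therefore $b_m/m$ converges (to $\sup_k(b_k-\log C)/k$); the limit is finite since $b_m/m\le q-1$, and being a limit it coincides with $\liminf_m b_m/m=\tau(q)$.

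For the submultiplicativity I use $\nu=\mu_m{\bm .}\nu=\sum_{i\in\Omega_m}p_iA_i\nu$ from (\ref{nu_Omega_m}). The geometric facts needed are all consequences of Corollary \ref{contraction_on_U_by_A_preliminaries}, the inclusion $K\subset U$, and the comparability $2^m\le\|A_i\|^2\le C_02^m$ for $i\in\Omega_m$ (with $C_0=\max_{j\in\I}\|A_j\|^2$): (a) for $i\in\Omega_m$ the set $A_iK\subset K$ has diameter $O_\A(2^{-m})$, so it is contained in a concentric dilate $I^+$ of any $I\in\D_m$ meeting it, with dilation ratio depending only on $\A$; (b) for $i\in\Omega_m$ and $J\in\D_{m+n}$, the set $A_i^{-1}J\cap K$ has diameter $O_\A(2^{-n})$, hence meets $O_\A(1)$ cells of $\D_n$; (c) for $i\in\Omega_m$ and $I'\in\D_n$, the set $A_i(I'\cap K)$ has diameter $O_\A(2^{-(m+n)})$, hence meets $O_\A(1)$ cells of $\D_{m+n}$.

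Now decompose $a_{m+n}=\sum_{I\in\D_m}\sum_{J\in\D_{m+n},\,J\subset I}\nu(J)^q$. For $J\subset I$ only the indices $i\in S(I):=\{i\in\Omega_m:A_iK\cap I\ne\emptyset\}$ contribute, since $\nu(A_i^{-1}J)\ne 0$ forces $J\cap A_iK\ne\emptyset$; writing $P_I=\sum_{i\in S(I)}p_i$ we get $\nu(J)=P_I\sum_{i\in S(I)}(p_i/P_I)\nu(A_i^{-1}J)$ (the term vanishing if $P_I=0$). Applying Jensen's inequality for $t\mapsto t^q$ with the weights $p_i/P_I$, then (b) together with the power‑mean inequality $(\sum_{k\le N}y_k)^q\le N^{q-1}\sum_k y_k^q$, and then summing over $J\subset I$ while using (c) to bound multiplicities, one gets $\sum_{J\subset I}\nu(J)^q\le O_{\A,q}(1)\,P_I^q\,a_n$. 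The second factor $a_m$ re‑enters through the inequality $P_I\le\nu(I^+)$: by (a) we have $(A_i\nu)(I^+)=1$ for every $i\in S(I)$ (because $K\subset A_i^{-1}(I^+)$), hence $\nu(I^+)=\sum_{j\in\Omega_m}p_j(A_j\nu)(I^+)\ge P_I$. Estimating $\nu(I^+)^q\le O_{\A,q}(1)\sum_{I''\in\D_m,\,I''\cap I^+\ne\emptyset}\nu(I'')^q$ by the power‑mean inequality and summing over $I\in\D_m$, in which each $I''$ appears $O_\A(1)$ times, gives $\sum_{I\in\D_m}P_I^q\le O_{\A,q}(1)\,a_m$. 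Combining the last three bounds yields $a_{m+n}\le C(\A,q)\,a_m\,a_n$, completing the proof.

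The one genuinely delicate point is securing the factor $a_m$: feeding the crude bound $\nu(J)\le\sum_{i\in\Omega_m}p_i\nu(A_i^{-1}J)$ into convexity and summing over $J$ produces only $a_{m+n}\le O_{\A,q}(1)\,a_n$, which is vacuous. The remedy is to localise to $i\in S(I)$ and to recognise, via $P_I\le\nu(I^+)$, that the total $p$‑mass of the cylinders meeting a given scale‑$2^{-m}$ cell $I$ is controlled by the $\nu$‑mass near $I$; granting this, the rest is a routine (if somewhat tedious) bookkeeping of overlaps among dilated dyadic cells, in which the uniform distortion bound of Corollary \ref{contraction_on_U_by_A_preliminaries} and the comparability $\|A_i\|^2\asymp 2^m$ on $\Omega_m$ do all the work.
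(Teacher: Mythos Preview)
Your proof is correct and follows essentially the same route as the paper's: both establish almost-submultiplicativity $\|\nu^{(m+n)}\|_q^q\le C\|\nu^{(m)}\|_q^q\|\nu^{(n)}\|_q^q$ via the relation $\nu=\mu_m{\bm.}\nu$, convexity (Jensen/H\"older), and the bounded-distortion estimate of Corollary~\ref{contraction_on_U_by_A_preliminaries}, and then conclude by Fekete. The only difference is cosmetic, in how the factor $a_m$ is recovered: the paper groups the $A_i$'s by the $\D_m$-cell containing $A_ix_0$ and routes through the coding maps $\pi,\pi_m$ and the identity $f_{x_0}\mu_m=\pi_mP$, whereas your direct inequality $P_I\le\nu(I^+)$ captures the same content without the symbolic formalism.
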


This proposition corresponds to \cite[Proposition 4.6]{Shm19} and is a special case of \cite[Theorem 1.1]{PS00}, but we give a proof for completeness. We first see the following basic property of the $L^q$ norm from \cite{Shm19}, which will be repeatedly used in this note.

\begin{lem}[{\cite[Lemma 4.1]{Shm19}}]\label{L^q_norms_of_two_partitions}
	Let $(X,\mathscr{B},\mu)$ be a finite measure space. Suppose $\mathcal{P}, \mathcal{Q}$ are two finite families of measurable subsets of $X$ and $M\geq 1$ such that each element of $\mathcal{P}$ is covered by at most $M$ elements of $\mathcal{Q}$ and each element of $\mathcal{Q}$ intersects with at most $M$ elements of $\mathcal{P}$. Then, for every $q>1$, we have
	\begin{equation*}
		\sum_{P\in\mathcal{P}}\mu(P)^q\leq M^q\sum_{Q\in\mathcal{Q}}\mu(Q)^q.
	\end{equation*}
\end{lem}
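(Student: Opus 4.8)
The plan is to reduce everything to a single application of the convexity of $t\mapsto t^q$ followed by an interchange in the order of summation. First I would discard the empty members of $\mathcal{P}$, which contribute nothing to the left-hand side. For each remaining $P\in\mathcal{P}$, the first hypothesis furnishes a subfamily $\mathcal{Q}_P\subseteq\mathcal{Q}$ with $|\mathcal{Q}_P|\leq M$ and $P\subseteq\bigcup_{Q\in\mathcal{Q}_P}Q$; after deleting from $\mathcal{Q}_P$ those $Q$ with $Q\cap P=\emptyset$ — which does not destroy the covering property — I may assume in addition that $Q\cap P\neq\emptyset$ for every $Q\in\mathcal{Q}_P$. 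This harmless pruning is what lets the second hypothesis be used later.

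Next I would estimate $\mu(P)$. Since $P\subseteq\bigcup_{Q\in\mathcal{Q}_P}Q$, subadditivity of $\mu$ gives $\mu(P)\leq\sum_{Q\in\mathcal{Q}_P}\mu(P\cap Q)\leq\sum_{Q\in\mathcal{Q}_P}\mu(Q)$. Applying the power-mean inequality $\bigl(\sum_{j=1}^k a_j\bigr)^q\leq k^{q-1}\sum_{j=1}^k a_j^q$ (valid for $a_j\geq0$ and $q\geq1$, and itself a consequence of Jensen's inequality for the convex function $t\mapsto t^q$) with $k=|\mathcal{Q}_P|\leq M$, I obtain
\begin{equation*}
\mu(P)^q\leq |\mathcal{Q}_P|^{q-1}\sum_{Q\in\mathcal{Q}_P}\mu(Q)^q\leq M^{q-1}\sum_{Q\in\mathcal{Q}_P}\mu(Q)^q.
\end{equation*}

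Finally I would sum over $P\in\mathcal{P}$ and swap the order of summation:
\begin{equation*}
\sum_{P\in\mathcal{P}}\mu(P)^q\leq M^{q-1}\sum_{P\in\mathcal{P}}\sum_{Q\in\mathcal{Q}_P}\mu(Q)^q=M^{q-1}\sum_{Q\in\mathcal{Q}}\mu(Q)^q\,\bigl|\{P\in\mathcal{P}:Q\in\mathcal{Q}_P\}\bigr|.
\end{equation*}
Because membership $Q\in\mathcal{Q}_P$ now forces $Q\cap P\neq\emptyset$, the cardinality $|\{P\in\mathcal{P}:Q\in\mathcal{Q}_P\}|$ is at most the number of members of $\mathcal{P}$ meeting $Q$, hence at most $M$ by the second hypothesis. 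Therefore $\sum_{P\in\mathcal{P}}\mu(P)^q\leq M^{q-1}\cdot M\cdot\sum_{Q\in\mathcal{Q}}\mu(Q)^q=M^q\sum_{Q\in\mathcal{Q}}\mu(Q)^q$, which is the assertion. There is no genuine obstacle here; the only points requiring care are the pruning step above (so that the incidence bound is applicable after the swap) and the exponent bookkeeping — splitting $M^q$ as $M^{q-1}$ coming from the power-mean step and one further factor $M$ from the multiplicity count, rather than losing an extra power of $M$.
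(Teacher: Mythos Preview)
Your proof is correct; the power-mean inequality followed by the swap of summation and the incidence bound is exactly the standard argument. The paper itself omits the proof entirely (``The proof is simple, so we skip it''), so there is nothing to compare against.
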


The proof is simple, so we skip it.

\begin{proof}[Proof of Proposition \ref{existence_of_limit_of_L^q_dim}]
	Let $q>1$.
	It is sufficient to show that $(\log(C'\|\nu^{(m)}\|_q^q))_{m=1}^\infty$ is subadditive for some constant $C'>0$, that is, there exists a constant $C'>0$ such that, for $m,n\in\N$,
	\begin{equation*}
		\|\nu^{(m+n)}\|_q^q\leq C'\|\nu^{(m)}\|_q^q\|\nu^{(n)}\|_q^q.
	\end{equation*}
	
	We recall $f_{x_0}:G\ni g\mapsto gx_0\in \RP^1$.
	By (\ref{nu_Omega_m}), we have for $m,n\in\N$ that
	\begin{align*}
		\|\nu^{(m+n)}\|_q^q&=\sum_{I\in\D_{m+n}}\nu(I)^q\\
		&=\sum_{I\in\D_{m+n}}\left(\mu_m{\bm .}\nu(I)\right)^q\\
		&=\sum_{I\in\D_{m+n}}\left(\int_G\nu(A^{-1}I)\ d\mu_m(A)\right)^q\\
		&=\sum_{I\in\D_{m+n}}\left(\sum_{J\in\D_m}\int_{f_{x_0}^{-1}J}\nu(A^{-1}I)\ d\mu_m(A)\right)^q.
	\end{align*}
	Here, for each $I\in\D_{m+n}$, assume that $J\in\D_m$ satisfies $\int_{f_{x_0}^{-1}J}\nu(A^{-1}I)\ d\mu_m(A)\neq0$. Since $\supp\ \nu=K$, this tells us that there is $i\in\Omega_m$ such that $f_{x_0}(A_i)=A_ix_0\in J$ and $A_i^{-1}I\cap K\neq\emptyset$. Hence, we have $A_iK\cap J\neq\emptyset$ and $A_iK\cap I\neq\emptyset$. By (\ref{contraction_2^{-m}}), we have $\diam\ A_iK\leq C_1\pi 2^{-m}$. So the number of $J\in\D_m$ such that $A_iK\cap J\neq\emptyset$ and $A_iK\cap I\neq\emptyset$ for some $i\in\Omega_m$ is at most $O_\A(1)$, and hence we have
	\begin{equation*}
		\left(\sum_{J\in\D_m}\int_{f_{x_0}^{-1}J}\nu(A^{-1}I)\ d\mu_m(A)\right)^q\leq O_{\A,q}(1)\sum_{J\in\D_m}\left(\int_{f_{x_0}^{-1}J}\nu(A^{-1}I)\ d\mu_m(A)\right)^q.
	\end{equation*}
	Therefore, we obtain that
	\begin{equation}\label{estimate_1_existence_limit_Lq_norm}
		\|\nu^{(m+n)}\|_q^q\leq O_{\A,q}(1)\sum_{J\in\D_m}\sum_{I\in\D_{m+n}}\left(\int_{f_{x_0}^{-1}J}\nu(A^{-1}I)\ d\mu_m(A)\right)^q.
	\end{equation}
	
	For each $J\in\D_m$ and $I\in\D_{m+n}$, by Hölder's inequality, we have
	\begin{align*}
		\int_{f_{x_0}^{-1}J}\nu(A^{-1}I)\ d\mu_m(A)&\leq\left(\int_{f_{x_0}^{-1}J}\nu(A^{-1}I)^q\ d\mu_m(A)\right)^{1/q}\left(\int_{f_{x_0}^{-1}J}1^{q/(q-1)}\ d\mu_m(A)\right)^{(q-1)/q}\\
		&=\mu_m\left(f_{x_0}^{-1}J\right)^{(q-1)/q}\left(\int_{f_{x_0}^{-1}J}\nu(A^{-1}I)^q\ d\mu_m(A)\right)^{1/q}.
	\end{align*}
	From this and (\ref{estimate_1_existence_limit_Lq_norm}), it follows that
	\begin{align}\label{estimate_2_existence_limit_Lq_spectrum}
		\|\nu^{(m+n)}\|_q^q&\leq O_{\A,q}(1)\sum_{J\in\D_m}\sum_{I\in\D_{m+n}}\mu_m(f_{x_0}^{-1}J)^{q-1}\int_{f_{x_0}^{-1}J}\nu(A^{-1}I)^q\ d\mu_m(A)\nonumber\\
		&=O_{\A,q}(1)\sum_{J\in\D_m}\mu_m(f_{x_0}^{-1}J)^{q-1}\int_{f_{x_0}^{-1}J}\sum_{I\in\D_{m+n}}\nu(A^{-1}I)^q\ d\mu_m(A).
	\end{align}
	
	Here, for each $i\in\Omega_m$, we see $\sum_{I\in\D_{m+n}}\nu(A_i^{-1}I)^q=\sum_{I\in\D_{m+n}}\nu(A_i^{-1}I\cap K)^q=\sum_{I'\in A_i^{-1}\D_{m+n}\cap K}\nu(I')^q$. For each $I\in\D_{m+n}$ and $x,y\in A_i^{-1}I\cap K$, we have from (\ref{contraction_2^{-m}}) that
	\begin{equation*}
		C_1^{-1}2^{-m}d_{\RP^1}(x,y)\leq d_{\RP^1}(A_ix, A_iy)\leq \pi2^{-m-n},
	\end{equation*}
	and hence
	\begin{equation*}
		\diam\ A_i^{-1}I\cap K\leq C_1\pi2^{-n}.
	\end{equation*}
	So the number of elements of $\D_n$ which intersects with $A_i^{-1}I\cap K$ is at most $O_\A(1)$. 
	Furthermore, we notice that $A_i^{-1}\D_{m+n}$ is a partition of $\RP^1$ by intervals and $U$ is a union of finite proper open intervals with disjoint closures. Hence, for every $A_i^{-1}I\ (I\in\D_{m+n})$ but finite ones which contain endpoints of $U$, we have $A_i^{-1}I\subset U$ or $A_i^{-1}I\cap U=\emptyset$. For each $A_i^{-1}I$ which is contained in $U$, if we take $x,y\in A_i^{-1}I$ such that $A_ix$ and $A_iy$ are close to the two endpoints of $I$, we have from (\ref{contraction_2^{-m}}) that
	\begin{equation*}
		2^{-m-n}\leq d_{\RP^1}(A_ix,A_iy)\leq C_12^{-m}d_{\RP^1}(x,y),
	\end{equation*}
	and hence
	\begin{equation*}
		d_{\RP^1}(x,y)\geq C_1^{-1}2^{-n}.
	\end{equation*}
	Therefore, every $A_i^{-1}I\ (I\in\D_{m+n})$ contained in $U$ contains an open interval of length $\Omega_\A(2^{-n})$. So it follows that, for each $I'\in \D_n$, $I'\cap K\ (\subset U)$ intersects at most $O_\A(1)$ elements of $A_i^{-1}\D_{m+n}$. Hence, by Lemma \ref{L^q_norms_of_two_partitions}, we have
	\begin{equation*}
		\sum_{I\in\D_{m+n}}\nu(A_i^{-1}I)^q\leq O_{\A,q}(1)\sum_{I'\in\D_n}\nu(I')^q=O_{\A,q}(1)\|\nu^{(n)}\|_q^q.
	\end{equation*}
	From this and (\ref{estimate_2_existence_limit_Lq_spectrum}), we obtain
	\begin{align}\label{estimate_3_existence_limit_Lq_spectrum}
		\|\nu^{(m+n)}\|_q^q&=O_{\A,q}(1)\sum_{J\in\D_m}\mu_m(f_{x_0}^{-1}J)^{q-1}\int_{f_{x_0}^{-1}J}\sum_{I\in\D_{m+n}}\nu(A^{-1}I)^q\ d\mu_m(A)\nonumber\\
		&\leq O_{\A,q}(1)\|\nu^{(n)}\|_q^q\sum_{J\in\D_m}\mu_m(f_{x_0}^{-1}J)^q.
	\end{align}
	
	Finally, by $\nu=\pi P$, $f_{x_0}\mu_m=\mu_m{\bm .}\delta_{x_0}=\pi_mP$ and (\ref{pi_pi_m}), we have from Lemma \ref{L^q_norms_of_two_partitions} that
	\begin{equation*}
		\sum_{J\in\D_m}\mu_m(f_{x_0}^{-1}J)^q=\sum_{J\in\D_m}P(\pi_m^{-1}J)^q\leq O_{\A,q}(1)\sum_{J\in\D_m}P(\pi^{-1}J)^q =O_{\A,q}(1)\|\nu^{(m)}\|_q^q.
	\end{equation*}
	Hence, by this and (\ref{estimate_3_existence_limit_Lq_spectrum}), we obtain
	\begin{equation*}
		\|\nu^{(m+n)}\|_q^q\leq O_{\A,q}(1)\|\nu^{(n)}\|_q^q\|\nu^{(m)}\|_q^q,
	\end{equation*}
	which is what we want to show. So we complete the proof.
\end{proof}

\subsection{Effective separation of $\SL_2(\R)$ by the action on $\RP^1$}\label{subsection_effective_biLipschitz}

We need to see some effective Lipschitz property of the $G$-action on $\RP^1$. We first see the following fact (omit the proof). Here, we introduce the metric on $(\RP^1)^3$ by $d_{(\RP^1)^3}((x_1,x_2,x_3),(x'_1,x'_2,x'_3))=\sqrt{\sum_{i=1}^3d_{\RP^1}(x_i,x'_i)^2}$,
and, for $g\in G$ and $x=(x_1,x_2,x_3)\in (\RP^1)^3$, we write $gx=(gx_1,gx_2,gx_3)\in(\RP^1)^3$.
For $g\in G$ and $r>0$, $B^G_r(g)$ denotes the open ball in $G$ of radius $r$ and center $g$ with respect to the metric determined by the fixed left-invariant Riemannian metric on $G$.

\begin{prop}[{\cite[Lemma 2.5]{HS17}}]\label{bi_Lipschitz_of_the_action_HS17}
	We fix $0<\eta_0<1$. Then, there exist sufficiently small $0<r_{\eta_0}<1$ and $C_{\eta_0}>1$ such that the following holds. Let $g_0\in G$ and $x=(x_1,x_2,x_3)\in (\RP^1)^3$ be $\eta_0$-separated (that is, $d_{\RP^1}(x_i,x_j)\geq\eta_0$ for $i\neq j$), and assume that $x_i\notin B_{\eta_0}(u_{g_0}^-)$ for $i=1,2,3$. Then, for any $g\in B^G_{r_{\eta_0}}(g_0)$, we have
	\begin{equation*}
		\frac{C_{\eta_0}^{-1}}{\|g_0\|^2}d_G(g,g_0)\leq d_{(\RP^1)^3}(gx,g_0x)\leq\frac{C_{\eta_0}}{\|g_0\|^2}d_G(g,g_0).
	\end{equation*}
\end{prop}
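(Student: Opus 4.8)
The plan is to strip off $g_0$ using left-invariance and then factor the estimate into a ``local diffeomorphism'' contribution near the identity and a ``contraction'' contribution supplied by Lemma \ref{contraction_bounded_distortion}. Writing $g = g_0 h$ with $h = g_0^{-1} g$ and using that the Riemannian metric on $G$ is left-invariant, one has $d_G(g, g_0) = d_G(h, e)$, so $g \in B^G_{r_{\eta_0}}(g_0)$ iff $h \in B^G_{r_{\eta_0}}(e)$, and moreover $gx = g_0(hx)$ while $g_0 x = g_0(ex)$. Hence it suffices to find $r_{\eta_0} > 0$ and $C_{\eta_0} > 1$ depending only on $\eta_0$ with
\begin{equation*}
\frac{C_{\eta_0}^{-1}}{\|g_0\|^2}\, d_G(h, e) \le d_{(\RP^1)^3}\bigl(g_0(hx), g_0 x\bigr) \le \frac{C_{\eta_0}}{\|g_0\|^2}\, d_G(h, e), \qquad h \in B^G_{r_{\eta_0}}(e).
\end{equation*}

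First I would analyze the orbit map $F_x \colon G \to (\RP^1)^3$, $F_x(g) = gx$, near $g = e$. Its differential at $e$ sends $X \in \mathfrak{sl}_2(\R)$ to the triple of values at $x_1, x_2, x_3$ of the vector field on $\RP^1$ generated by $X$; a nonzero element of $\mathfrak{sl}_2(\R)$ is elliptic, parabolic or hyperbolic, so its orbit field vanishes at $0$, $1$ or $2$ points of $\RP^1$, and since the $x_i$ are distinct this differential is injective, hence an isomorphism because $\dim_\R G = \dim_\R (\RP^1)^3 = 3$. The set of $\eta_0$-separated triples is compact and $x \mapsto dF_x|_e$ is continuous (indeed computable from \eqref{derivative_action_of_G_preliminaries}), so $\|dF_x|_e\|$ and $\|(dF_x|_e)^{-1}\|$ are bounded purely in terms of $\eta_0$; the quantitative inverse function theorem then yields $r_{\eta_0} > 0$ and $C_{\eta_0,1} > 1$, depending only on $\eta_0$, such that $F_x$ is $C_{\eta_0,1}$-bi-Lipschitz on $B^G_{r_{\eta_0}}(e)$, and in particular $C_{\eta_0,1}^{-1} d_G(h, e) \le d_{(\RP^1)^3}(hx, x) \le C_{\eta_0,1}\, d_G(h, e)$ there. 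I expect the only genuinely delicate point of the whole argument to be precisely this uniformity: that $r_{\eta_0}$ and the bi-Lipschitz constant can be taken depending on $\eta_0$ alone and not on the individual triple $x$. This is exactly where one invokes compactness of the space of $\eta_0$-separated triples together with uniform $C^2$-control of the family $x \mapsto F_x$ on a fixed neighborhood of $e$.

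Next, after shrinking $r_{\eta_0}$ so that also $C_{\eta_0,1} r_{\eta_0} < \eta_0/2$, for $h \in B^G_{r_{\eta_0}}(e)$ the point $y := hx$ satisfies $d_{\RP^1}(y_i, x_i) < \eta_0/2$ for each $i$, so both $x_i$ and $y_i$ lie outside $B_{\eta_0/2}(u_{g_0}^-)$, using the hypothesis $x_i \notin B_{\eta_0}(u_{g_0}^-)$. Applying Lemma \ref{contraction_bounded_distortion} with $\varepsilon = \eta_0/2$ to each pair $(y_i, x_i)$, recalling $\|g_0\| = \lambda_{g_0}^+$, and then squaring and summing over $i = 1, 2, 3$, gives a constant $C_{\eta_0,2} := C_{\eta_0/2} > 1$ with
\begin{equation*}
\frac{C_{\eta_0,2}^{-1}}{\|g_0\|^2}\, d_{(\RP^1)^3}(y, x) \le d_{(\RP^1)^3}(g_0 y, g_0 x) \le \frac{C_{\eta_0,2}}{\|g_0\|^2}\, d_{(\RP^1)^3}(y, x).
\end{equation*}
Finally I would chain the two displays with $y = hx$: substituting the bi-Lipschitz bound for $d_{(\RP^1)^3}(hx, x)$ into the last estimate and recalling $d_G(h, e) = d_G(g, g_0)$ yields the required two-sided inequality with $C_{\eta_0} = C_{\eta_0,1} C_{\eta_0,2}$ (which is $> 1$). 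The ingredients are thus Lemma \ref{contraction_bounded_distortion}, the left-invariance of $d_G$, the fact that an infinitesimal $\SL_2(\R)$-motion fixing three distinct points of $\RP^1$ vanishes, and a quantitative inverse function theorem with constants uniform over $\eta_0$-separated triples.
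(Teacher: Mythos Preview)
Your proposal is correct. The paper actually omits the proof of this proposition, citing it from \cite{HS17}; however, exactly your factorization $g=g_0h$, combined with Lemma~\ref{contraction_bounded_distortion} for the $\|g_0\|^{-2}$ factor and a near-identity bi-Lipschitz estimate for $h\mapsto hx$ (which the paper records separately as Lemma~\ref{bi_Lipschitz_of_the_action_of_G_proof_of_porosity}, again from \cite{HS17}), is precisely the argument the paper carries out explicitly when deriving the effective version Proposition~\ref{bi_Lipschitz_of_the_action_at_varepsilon_separated_points}. Your sketch of why the near-identity estimate holds---injectivity of $dF_x|_e$ from the fact that a nonzero $\mathfrak{sl}_2(\R)$ vector field vanishes at at most two points of $\RP^1$, plus uniformity via compactness of $\eta_0$-separated triples---is the standard justification and matches the approach in \cite[Section~2.5]{HS17}.
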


As a corollary of this proposition, we immediately obtain the following, which we will use in the proof of Lemma \ref{L^q_norm_porosity}.

\begin{cor}\label{Lipschitz_continuity_of_the_action}
	There exist constants $0<r_1=r_1(\mu)<1$ and $C_2=C_2(\mu)>1$ such that, for any $g_0\in G$ such that $u_{g_0}^-\notin U_1$, $g\in B^G_{r_1}(g_0)$ and $x\in U$, we have
	\begin{equation*}
		d_{\RP^1}(gx,g_0x)\leq \frac{C_2}{\|g_0\|^2}d_G(g,g_0).
	\end{equation*}
\end{cor}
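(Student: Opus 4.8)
The plan is to deduce this from Proposition~\ref{bi_Lipschitz_of_the_action_HS17} by padding the single point $x\in U$ out to a triple in $(\RP^1)^3$ to which that proposition applies, with a separation constant that is uniform in $g_0$ and in $x$.

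First I would fix this constant. Since $\overline{U}\subset U_1\subsetneq\RP^1$ with $\overline{U}$ compact and $\RP^1\setminus U_1$ a nonempty closed set, the number $\eta_1:=d_{\RP^1}(\overline{U},\RP^1\setminus U_1)$ is strictly positive; set $\eta_0:=\min\{\eta_1,\pi/7\}$, which lies in $(0,1)$ and depends only on $\mu$ (through the fixed open sets $U$ and $U_1$). Applying Proposition~\ref{bi_Lipschitz_of_the_action_HS17} with this $\eta_0$ yields constants $0<r_{\eta_0}<1$ and $C_{\eta_0}>1$, and I would simply take $r_1:=r_{\eta_0}$ and $C_2:=C_{\eta_0}$.

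Next, fix $g_0\in G$ with $u_{g_0}^-\notin U_1$ and fix $x\in U$. Because $x\in U\subset U_1$ while $u_{g_0}^-\in\RP^1\setminus U_1$, we automatically have $d_{\RP^1}(x,u_{g_0}^-)\geq\eta_1\geq\eta_0$. Working on the circle $\R/\pi\Z\cong\RP^1$, whose total length is $\pi$, the set of points at distance $<\eta_0$ from $\{x,u_{g_0}^-\}$ has length at most $4\eta_0<\pi$, so one can choose $y_1\in\RP^1$ outside it; then the set of points at distance $<\eta_0$ from $\{x,u_{g_0}^-,y_1\}$ has length at most $6\eta_0<\pi$, so one can choose $y_2\in\RP^1$ outside it. By construction the triple $\mathbf{x}:=(x,y_1,y_2)$ is $\eta_0$-separated and each of its coordinates lies outside $B_{\eta_0}(u_{g_0}^-)$. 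Applying Proposition~\ref{bi_Lipschitz_of_the_action_HS17} to $g_0$ and $\mathbf{x}$, for every $g\in B^G_{r_1}(g_0)=B^G_{r_{\eta_0}}(g_0)$ we obtain $d_{(\RP^1)^3}(g\mathbf{x},g_0\mathbf{x})\leq(C_2/\|g_0\|^2)\,d_G(g,g_0)$, and since $d_{\RP^1}(gx,g_0x)$ is one of the three terms under the square root defining $d_{(\RP^1)^3}$, it is bounded by $d_{(\RP^1)^3}(g\mathbf{x},g_0\mathbf{x})$; the desired inequality follows. The only part that requires any thought is this padding step — the observation that $x$ is automatically well separated from $u_{g_0}^-$ and that two further points with a fixed separation can always be adjoined regardless of where $u_{g_0}^-$ sits in $\RP^1\setminus U_1$; everything else is a direct invocation of Proposition~\ref{bi_Lipschitz_of_the_action_HS17}.
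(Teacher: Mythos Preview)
Your argument is correct and is exactly the approach the paper has in mind: the paper states only that the corollary follows immediately from Proposition~\ref{bi_Lipschitz_of_the_action_HS17}, and your padding construction (choosing $y_1,y_2$ away from $B_{\eta_0}(x)\cup B_{\eta_0}(u_{g_0}^-)$ and $B_{\eta_0}(y_1)$, then projecting the $(\RP^1)^3$-bound to the first coordinate) is the natural way to unpack that word ``immediately''. The uniform choice $\eta_0=\min\{d_{\RP^1}(\overline U,\RP^1\setminus U_1),\pi/7\}$ depends only on $\mu$ through the fixed open sets, so $r_1$ and $C_2$ are indeed constants of the required form.
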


In the following, we want to get some effective estimate on how the constant $C_{\eta_0}$ becomes large if we make the distances between $x_i$ and $x_j$ smaller. To do this, we notice the following lemma, which is from \cite[Section 2.5]{HS17}.

\begin{lem}\label{bi_Lipschitz_of_the_action_of_G_proof_of_porosity}
	We fix $0<\eta_0<1$. Then, there exists $0<r_0<1$ such that, if $(x_1,x_2,x_3)\in (\RP^1)^3$ is $\eta_0$-separated, then, for any $h\in B^G_{r_0}(1_G)$, we have
	\begin{equation*}
		d_{(\RP^1)^3}(hx,x)=\Theta_{\eta_0}\left(d_G(h,1_G)\right).
	\end{equation*}
\end{lem}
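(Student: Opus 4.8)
The plan is to read the statement as the assertion that the orbit map $\Phi_x\colon G\to(\RP^1)^3$, $\Phi_x(g)=gx=(gx_1,gx_2,gx_3)$, which satisfies $\Phi_x(1_G)=x$, is bi-Lipschitz near $1_G$ with constants depending only on $\eta_0$, uniformly over the compact set $X_{\eta_0}=\{x\in(\RP^1)^3:\ d_{\RP^1}(x_i,x_j)\geq\eta_0\ \text{for}\ i\neq j\}$ of $\eta_0$-separated triples (note $X_{\eta_0}$ is nonempty since $\eta_0<1$). The upper bound $d_{(\RP^1)^3}(hx,x)\leq O(1)\,d_G(h,1_G)$ is cheap and does not even use $\eta_0$: the action $G\times\RP^1\to\RP^1$ is smooth, hence locally Lipschitz, and since $\RP^1$ is compact and $1_G$ acts trivially, there are an absolute constant and an absolute radius with $d_{\RP^1}(hx_i,x_i)\leq O(1)\,d_G(h,1_G)$ for all $x_i\in\RP^1$ and all $h$ in a fixed neighbourhood of $1_G$; summing over $i=1,2,3$ gives the bound.

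The content is the lower bound, and here $\eta_0$-separation is essential because a single point of $\RP^1$ has a $2$-dimensional stabiliser in $G$; I would establish it infinitesimally. The differential of $\Phi_x$ at $1_G$ is the linear map $D_x\colon\mathfrak{sl}_2(\R)\to\bigoplus_{i=1}^3 T_{x_i}\RP^1$, $D_x\xi=(\xi_{\RP^1}(x_1),\xi_{\RP^1}(x_2),\xi_{\RP^1}(x_3))$, where $\xi_{\RP^1}$ is the vector field on $\RP^1$ generated by $\xi$; both source and target are $3$-dimensional. In the affine chart $\RP^1\setminus\{\infty\}=\R$ one computes $\xi_{\RP^1}=(-\gamma t^2+2\alpha t+\beta)\,\partial_t$ for $\xi=\begin{pmatrix}\alpha&\beta\\\gamma&-\alpha\end{pmatrix}$, a polynomial vector field of degree $\leq 2$, so a nonzero $\xi$ produces a field vanishing at at most two points of $\RP^1$ (including $\infty$). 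Hence for distinct $x_1,x_2,x_3$ the map $D_x$ is injective, so invertible by a dimension count. Since $x\mapsto D_x$ is continuous and $X_{\eta_0}$ is compact, we get $\sup_{x\in X_{\eta_0}}\|D_x^{-1}\|\leq C_1(\eta_0)<\infty$, while $\sup_{x}\|D_x\|\leq C_2$ is absolute (as $\xi\mapsto\xi_{\RP^1}$ is continuous and $\RP^1$ is compact).

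Finally I would upgrade this to the bi-Lipschitz statement on a ball $B^G_{r_0}(1_G)$ with $r_0=r_0(\eta_0)$ via a quantitative inverse function theorem: the map $(x,g)\mapsto\Phi_x(g)$ is smooth on $X_{\eta_0}\times G$, so its $g$-derivative $D\Phi_x(g)$ is uniformly continuous on the compact set $X_{\eta_0}\times\overline{B^G_1(1_G)}$; combined with the uniform-in-$x$ bounds on $\|D_x\|$ and $\|D_x^{-1}\|$ this yields $r_0(\eta_0)\in(0,1)$ such that, for every $x\in X_{\eta_0}$, $\Phi_x|_{B^G_{r_0}(1_G)}$ is a diffeomorphism onto its image, bi-Lipschitz with the Lipschitz constants of $\Phi_x$ and of its inverse in $[C(\eta_0)^{-1},C(\eta_0)]$. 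Applied at $h\in B^G_{r_0}(1_G)$ and the base point $1_G$ this gives $C(\eta_0)^{-1}d_G(h,1_G)\leq d_{(\RP^1)^3}(hx,x)\leq C(\eta_0)\,d_G(h,1_G)$, i.e.\ $d_{(\RP^1)^3}(hx,x)=\Theta_{\eta_0}(d_G(h,1_G))$. The one place that needs care --- and the main obstacle --- is making this last step \emph{uniform} over all $x\in X_{\eta_0}$, i.e.\ extracting a single $r_0(\eta_0)$; this is exactly what the compactness of $X_{\eta_0}$ together with the continuity of the derivative bounds entering the inverse function theorem provides. (Alternatively, one can avoid the inverse function theorem and obtain the lower bound from Proposition~\ref{bi_Lipschitz_of_the_action_HS17}: given $x\in X_{\eta_0}$ and $h$ close to $1_G$, pick an auxiliary $g_0=g_0(x,h)$ with $\|g_0\|^2\leq 2$, with $u_{g_0}^-\notin\bigcup_i B_{\eta_0}(x_i)$ --- possible after replacing $\eta_0$ by $\min\{\eta_0,c\}$ for an absolute $c>0$, since three $\eta_0$-balls cannot cover $\RP^1$ --- and with $d_G(g_0,1_G)$ a small $\eta_0$-dependent multiple of $\min\{r_{\eta_0},d_G(h,1_G)\}$, then apply Proposition~\ref{bi_Lipschitz_of_the_action_HS17} to the pairs $(1_G,g_0)$ and $(h,g_0)$ and combine by the triangle inequality; but preventing the error term from swamping the main term forces precisely that careful choice of $d_G(g_0,1_G)$, so the infinitesimal argument is cleaner.)
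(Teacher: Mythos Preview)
Your proof is correct. Note that the paper does not actually prove this lemma---it simply cites it from \cite[Section 2.5]{HS17} and uses it as a black box in deriving the effective Proposition~\ref{bi_Lipschitz_of_the_action_at_varepsilon_separated_points}---so there is no in-paper argument to compare against. The route you take (showing that the differential of the orbit map $\Phi_x\colon G\to(\RP^1)^3$ at $1_G$ is invertible for any $\eta_0$-separated triple via the degree-$\leq 2$ structure of $\mathfrak{sl}_2(\R)$-vector fields on $\RP^1$, then upgrading to a uniform bi-Lipschitz estimate by compactness of $X_{\eta_0}$ and a quantitative inverse function theorem) is the natural and essentially self-contained approach to such a statement.
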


We fix $0<\eta_0<\pi/4$ and $0<r_0<1$ as in Lemma \ref{bi_Lipschitz_of_the_action_of_G_proof_of_porosity} and take arbitrary $0<\varepsilon<\eta_0$. For this $\eta_0$ and $\varepsilon$, we take a sufficiently small constant $0<\rho_\varepsilon\ll_{\eta_0,\varepsilon} 1$ which will be specified later.
Let $(x_1,x_2,x_3)\in(\RP^1)^3$ be $\varepsilon$-separated and assume that any two of $x_1,x_2,x_3$ are at least $\pi/4$ close. For simplicity, we assume that $x_3,x_2,x_1$ lie in this order on $\RP^1\cong\R/\pi\Z$.
Let $g_0\in G$ such that $x_i\notin B_{\eta_0}(u_{g_0}^-)$ for $i=1,2,3$. By Lemma \ref{contraction_bounded_distortion}, for any $g\in B^G_{\rho_\varepsilon}(g_0)$, we have
\begin{equation}\label{contraction_effective_bi_Lipschitz_proof_of_porosity}
	d_{(\RP^1)^3}(gx,g_0x)=\Theta_{\eta_0}(\|g_0\|^{-2})d_{(\RP^1)^3}(g_0^{-1}gx,x).
\end{equation}
We write $h=g_0^{-1}g\in B^{G}_{\rho_\varepsilon}(1_G)$.

We take $k\in\SO(2)$ so that $kx_2=[1:0]\in\RP^1$. Then, we can take
\begin{equation*}
	a=
	\begin{pmatrix}
		e^{-t}&\\&e^t
	\end{pmatrix},
	\quad t>0
\end{equation*}
so that $\min\{d_{\RP^1}([1:0], akx_1), d_{\RP^1}([1:0], akx_3)\}=\eta_0$\footnote{Here, we assume $\min\{d_{\RP^1}([1:0],kx_1), d_{\RP^1}([1:0],kx_3)\}<\eta_0$. If not, we can directly apply Proposition \ref{bi_Lipschitz_of_the_action_HS17} and we have nothing to prove.}.
We assume $\min\{d_{\RP^1}([1:0], akx_1), d_{\RP^1}([1:0], akx_3)\}=d_{\RP^1}([1:0], akx_1)=\eta_0$ (see Figure 1).
From this and the same calculation as in Section \ref{subsection_G_action_on_RP^1}\footnote{We replace $\angle_{v^+}\circ g\circ\gamma_{u^+}$ in Section \ref{subsection_G_action_on_RP^1} with $\angle_{v^-}\circ g\circ\gamma_{u^-}$.},
it follows that
\begin{equation*}
	\arctan\left(e^{2t}\tan\varepsilon\right)\leq\arctan\left(e^{2t}\tan d_{\RP^1}(kx_1,[1:0])\right)=\eta_0,
\end{equation*}
and hence
\begin{equation}\label{how_large_diagonal_matrix_a_proof_of_porosity}
	e^{2t}\leq\frac{\tan\eta_0}{\tan\varepsilon}=O_{\eta_0}(\varepsilon^{-1}).
\end{equation}
\begin{figure}[h]\label{Figure_effective_bi_Lipschitz}
	\centering
	\includegraphics[width=120mm]{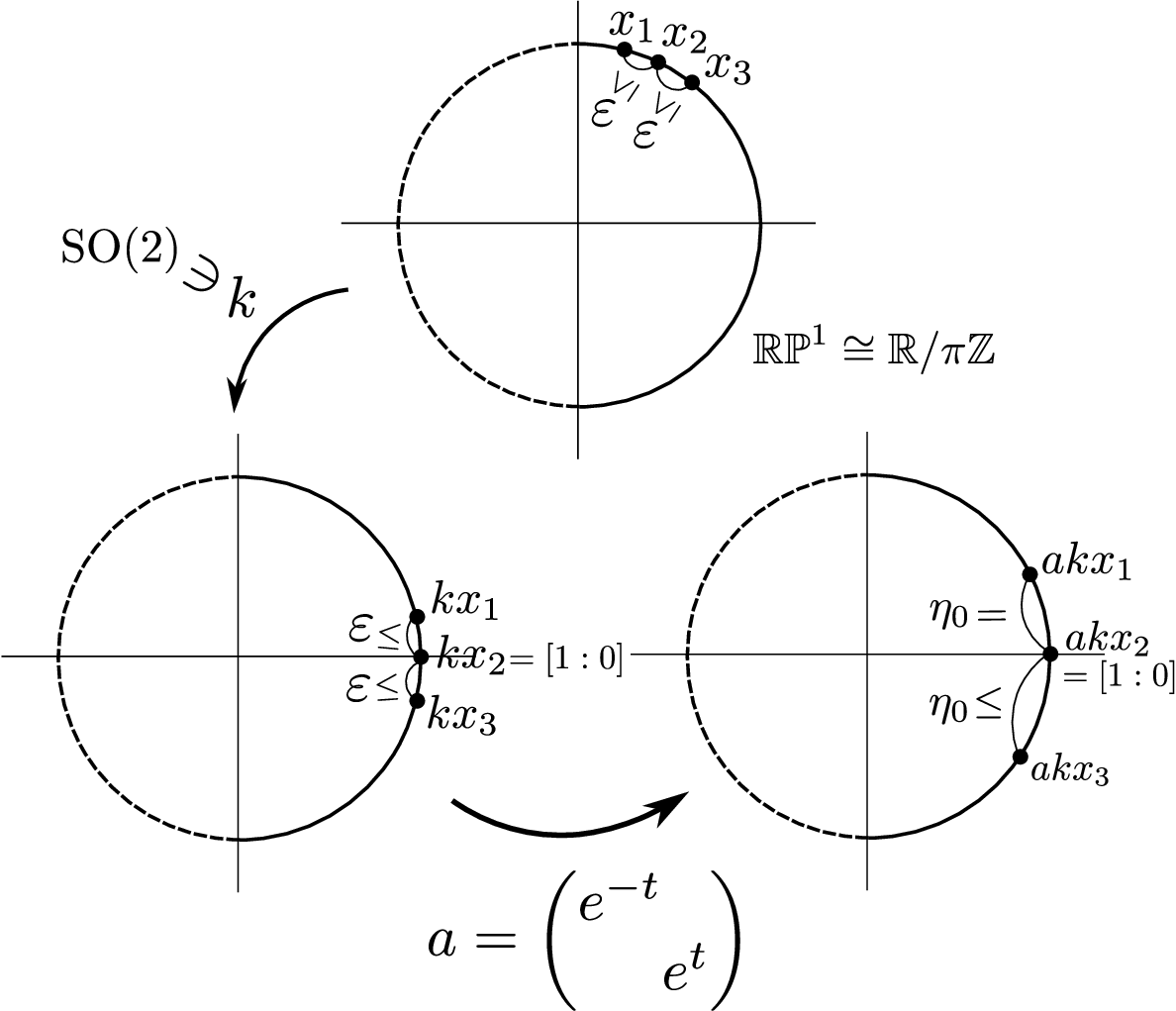}
	\caption{$x_1,x_2,x_3$ mapped by $k$ and $a$}
\end{figure}

We consider two points $akx, akhx\in(\RP^1)^3$. For each $i=1,2,3$, we have $kx_i\in B_{\pi/4}([1:0])$ and $khx_i=khk^{-1}kx_i\in B_{\pi/3}([1:0])$, since $k\in\SO(2)$, $h\in B^G_{\rho_\varepsilon}(1_G)$ and $\rho_\varepsilon\ll1$. So, by the same calculation as in Section \ref{subsection_G_action_on_RP^1} and the mean value theorem, there is $\theta_i\in\R$, $|\theta_i|<\pi/3$ such that\footnote{More precisely, we have to use (\ref{how_large_diagonal_matrix_a_proof_of_porosity}) and $d_{\RP^1}(x_i,hx_i)\leq O(1)d_G(h,1_G)\leq O(1)\rho_\varepsilon=O(c_0\varepsilon)$.}
\begin{equation*}
	d_{\RP^1}(akx_i,akhx_i)=\varphi'(\theta_i)d_{\RP^1}(kx_i,khx_i)=\varphi'(\theta_i)d_{\RP^1}(x_i, hx_i),
\end{equation*}
where
\begin{equation*}
	\varphi(\theta)=\arctan\left(e^{2t}\tan\theta\right).
\end{equation*}
Here, we have
$\varphi'(\theta)=1/(e^{-2t}\cos^2\theta+e^{2t}\sin^2\theta)$, and hence
\begin{equation*}
	e^{-2t}\leq \varphi'(\theta_i)\leq e^{2t}.
\end{equation*}
From these two relations and (\ref{contraction_effective_bi_Lipschitz_proof_of_porosity}), we have
\begin{equation}\label{action_of_the_diagonal_matrix_a_proof_of_porosity}
	\Omega_{\eta_0}\left(\|g_0\|^{-2}e^{-2t}\right)d_{(\RP^1)^3}(akx,akhx)\leq
	d_{(\RP^1)^3}(gx,g_0x)
	\leq O_{\eta_0}\left(\|g_0\|^{-2}e^{2t}\right)d_{(\RP^1)^3}(akx,akhx).
\end{equation}

We can see that
\begin{equation}\label{akx_eta_0_separated_effective_bi_Lipshitz}
	d_{(\RP^1)^3}(akx,akhx)=d_{(\RP^1)^3}(akx,akhk^{-1}a^{-1}\cdot akx),\quad akx\in (\RP^1)^3 \text{\ is $\eta_0$-separated.}
\end{equation}
Furthermore, by (\ref{how_large_diagonal_matrix_a_proof_of_porosity}) and $h\in B^G_{\rho_\varepsilon}(1_G)$, we have
\begin{equation}\label{norm_of_akhk^-1a^-1_bi_effective_bi_Lipschitz}
	\|akhk^{-1}a^{-1}-1_G\|\leq\|a\|\|a^{-1}\|\|h-1_G\|=e^{2t}\|h-1_G\|\leq O_{\eta_0}(\varepsilon^{-1}d_G(h,1_G))\leq O_{\eta_0}(\varepsilon^{-1}\rho_\varepsilon),
\end{equation}
where we have used the fact that the norm metric on $G$ and the metric $d_G$ by the left-invariant Riemannian metric of $G$ is bi-Lipschitz equivalent on a small neighborhood of $G$ (see \cite[Section 2.3]{HS17}). Hence, if we take a sufficiently small constant $c_0>0$ determined by $\eta_0$ and define $\rho_\varepsilon=c_0\varepsilon$, we have
\begin{equation*}
	\|akhk^{-1}a^{-1}-1_G\|\leq c_0^{1/2},
\end{equation*}
and hence, by the bi-Lipschitz equivalence of the norm metric and $d_G$ again,
\begin{equation*}
	d_G(akhk^{-1}a^{-1},1_G)<r_0.
\end{equation*}
By this and (\ref{akx_eta_0_separated_effective_bi_Lipshitz}), we can apply Lemma \ref{bi_Lipschitz_of_the_action_of_G_proof_of_porosity} and obtain that
\begin{equation}\label{bi_Lipschitz_at_akx_effective_bi_Lipschitz}
	d_{(\RP^1)^3}(akhx,akx)=\Theta_{\eta_0}\left(d_G(akhk^{-1}a^{-1},1_G)\right).
\end{equation}
Here, by (\ref{norm_of_akhk^-1a^-1_bi_effective_bi_Lipschitz}) and the bi-Lipschitz equivalence of the norm metric and $d_G$, we have
\begin{equation*}
	d_G(akhk^{-1}a^{-1},1_G)\leq O(e^{2t}d_G(h,1_G))=O(e^{2t}d_G(g,g_0))
\end{equation*}
(we recall that $h=g_0^{-1}g$). By the similar calculation, we also have
\begin{equation*}
	d_G(akhk^{-1}a^{-1},1_G)\geq \Omega\left(\|a\|^{-1}\|a^{-1}\|^{-1}d_G(h,1_G)\right)=\Omega\left(e^{-2t}d_G(g,g_0)\right).
\end{equation*}
From these two equations and (\ref{bi_Lipschitz_at_akx_effective_bi_Lipschitz}), we obtain that
\begin{equation}\label{dist_akhx_akx_effective_bi_Lipschitz}
	\Omega_{\eta_0}\left(e^{-2t}d_G(g,g_0)\right)\leq d_{(\RP^1)^3}(akhx,akx)\leq O_{\eta_0}(e^{2t}d_G(g,g_0)).
\end{equation}

By (\ref{action_of_the_diagonal_matrix_a_proof_of_porosity}), (\ref{dist_akhx_akx_effective_bi_Lipschitz}) and (\ref{how_large_diagonal_matrix_a_proof_of_porosity}), we obtain that
\begin{align*}
	\Omega_{\eta_0}\left(\|g_0\|^{-2}\varepsilon^2\right)d_G(g,g_0)&\leq \Omega_{\eta_0}\left(\|g_0\|^{-2}e^{-4t}\right)d_G(g,g_0)\\&\leq d_{(\RP^1)^3}(gx,g_0x)\\
	&\leq O_{\eta_0}\left(\|g_0\|^{-2}e^{4t}\right)d_G(g,g_0)\leq O_{\eta_0}\left(\|g_0\|^{-2}\varepsilon^{-2}\right)d_G(g,g_0).
\end{align*}
Finally, we obtain the following.
\begin{prop}\label{bi_Lipschitz_of_the_action_at_varepsilon_separated_points}
	We fix $0<\eta_0<\pi/4$. Then, there exist $0<c_0<1$ and $C_0>1$ such that the following holds. Let $0<\varepsilon<\eta_0$ and $x=(x_1,x_2,x_3)\in(\RP^1)^3$ be $\varepsilon$-separated such that any two of $x_1,x_2,x_3$ are at least $\pi/4$ close. Furthermore, let $g_0\in G$ such that $x_i\notin B_{\eta_0}(u^-_{g_0})$ for $i=1,2,3$. Then, for any $g\in B^G_{c_0\varepsilon}(g_0)$, we have
	\begin{equation*}
		\frac{{C_0}^{-1}\varepsilon^2}{\|g_0\|^2}d_G(g,g_0)\leq d_{(\RP^1)^3}(gx,g_0x)\leq\frac{C_0\varepsilon^{-2}}{\|g_0\|^2} d_G(g,g_0).
	\end{equation*}
\end{prop}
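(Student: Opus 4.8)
The plan is to deduce the bound from the bi-Lipschitz estimate of Lemma \ref{bi_Lipschitz_of_the_action_of_G_proof_of_porosity} (equivalently Proposition \ref{bi_Lipschitz_of_the_action_HS17}), which applies only to \emph{$\eta_0$-separated} configurations, by first renormalizing the $\varepsilon$-separated triple $x=(x_1,x_2,x_3)$ to an $\eta_0$-separated one and carefully tracking the loss. First, since the left-invariant metric $d_G$ is unchanged under $g\mapsto g_0^{-1}g$ and since each $x_i$ stays $\eta_0$-away from $u_{g_0}^-$, the contraction and bounded distortion property of Lemma \ref{contraction_bounded_distortion} gives, for $h:=g_0^{-1}g$ in a small ball $B^G_{\rho_\varepsilon}(1_G)$,
\[
  d_{(\RP^1)^3}(gx,g_0x)=\Theta_{\eta_0}\left(\|g_0\|^{-2}\right)\,d_{(\RP^1)^3}(hx,x),
\]
so it remains to compare $d_{(\RP^1)^3}(hx,x)$ with $d_G(h,1_G)$; the difficulty is that Lemma \ref{bi_Lipschitz_of_the_action_of_G_proof_of_porosity} cannot be applied directly because $x$ is only $\varepsilon$-separated.

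To renormalize, I would conjugate: pick $k\in\SO(2)$ sending the middle point $x_2$ to $[1:0]$, then pick a diagonal $a=\mathrm{diag}(e^{-t},e^t)$ with $t>0$ so that the nearer of $akx_1,akx_3$ to $[1:0]$ lies at distance exactly $\eta_0$; then $akx\in(\RP^1)^3$ is $\eta_0$-separated. The explicit action formula $\theta\mapsto\arctan(e^{2t}\tan\theta)$ from Section \ref{subsection_G_action_on_RP^1} forces $\arctan(e^{2t}\tan\varepsilon)\le\eta_0$, whence the key quantitative fact $e^{2t}=O_{\eta_0}(\varepsilon^{-1})$. Using this formula together with the mean value theorem and the bound $e^{-2t}\le 1/(e^{-2t}\cos^2\theta+e^{2t}\sin^2\theta)\le e^{2t}$ for the derivative (cf.\ (\ref{derivative_action_of_G_preliminaries})), the passage from $d_{(\RP^1)^3}(x,hx)$ to $d_{(\RP^1)^3}(akx,akhx)$ costs only a factor in $[e^{-2t},e^{2t}]$; similarly, conjugating $h$ by $a$ and $k$ costs at most $e^{\pm2t}$ in the norm metric, which is bi-Lipschitz to $d_G$ near $1_G$.

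Now $akhx=(akhk^{-1}a^{-1})\cdot akx$ with $akx$ being $\eta_0$-separated, so I would apply Lemma \ref{bi_Lipschitz_of_the_action_of_G_proof_of_porosity} to $akhk^{-1}a^{-1}$. The point to check is that this element is close enough to $1_G$: since $\|akhk^{-1}a^{-1}-1_G\|\le e^{2t}\|h-1_G\|=O_{\eta_0}(\varepsilon^{-1}\rho_\varepsilon)$, setting $\rho_\varepsilon=c_0\varepsilon$ with $c_0\ll_{\eta_0}1$ puts it in the required neighborhood of $1_G$ (this is exactly why the proposition restricts to $g\in B^G_{c_0\varepsilon}(g_0)$). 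Lemma \ref{bi_Lipschitz_of_the_action_of_G_proof_of_porosity} then yields $d_{(\RP^1)^3}(akhx,akx)=\Theta_{\eta_0}(d_G(akhk^{-1}a^{-1},1_G))$, and one bounds $d_G(akhk^{-1}a^{-1},1_G)$ above and below by $e^{\pm2t}d_G(h,1_G)=e^{\pm2t}d_G(g,g_0)$. Chaining the reductions, all exponential factors assemble into $e^{\pm4t}$, and $e^{\pm4t}=O_{\eta_0}(\varepsilon^{\mp2})$ by the bound on $t$, giving precisely the asserted two-sided inequality with $c_0,C_0$ depending only on $\eta_0$.

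I expect the main obstacle to be exactly this bookkeeping of constants under the renormalization: one must verify that after applying $a$ the conjugated element $akhk^{-1}a^{-1}$ is still within the domain of validity of Lemma \ref{bi_Lipschitz_of_the_action_of_G_proof_of_porosity}, which dictates the coupling $\rho_\varepsilon\sim\varepsilon$ between the radius of the ball around $g_0$ and the separation scale, and one must check that no step loses more than a factor $e^{2t}=O_{\eta_0}(\varepsilon^{-1})$, so that the total distortion is $O_{\eta_0}(\varepsilon^{\pm2})$ rather than a higher or uncontrolled power of $\varepsilon$. Everything else (the rotation normalization, the mean value theorem estimate for $\arctan(e^{2t}\tan\theta)$, the bi-Lipschitz equivalence of the norm metric and $d_G$ near the identity) is routine and already available from Section \ref{subsection_G_action_on_RP^1} and \cite{HS17}.
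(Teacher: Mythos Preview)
Your proposal is correct and follows essentially the same argument as the paper: reduce via Lemma~\ref{contraction_bounded_distortion} to the identity neighborhood, renormalize by $k\in\SO(2)$ and a diagonal $a$ so that $akx$ becomes $\eta_0$-separated with $e^{2t}=O_{\eta_0}(\varepsilon^{-1})$, then apply Lemma~\ref{bi_Lipschitz_of_the_action_of_G_proof_of_porosity} to $akhk^{-1}a^{-1}$, with the choice $\rho_\varepsilon=c_0\varepsilon$ ensuring this conjugate stays in the required neighborhood of $1_G$. The chaining of factors $e^{\pm2t}$ from the two distortion steps (the action of $a$ on $\RP^1$ and the conjugation in $G$) into $e^{\pm4t}=O_{\eta_0}(\varepsilon^{\mp2})$ is exactly what the paper does.
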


\subsection{Multifractal properties of stationary measures}\label{subsection_multifractal_structure}

Recall that we are fixing a non-empty finite family $\A\subset G$ which is uniformly hyperbolic and the stationary measure $\nu$ of a probability measure $\mu$ on $G$ such that $\supp\ \mu=\A$.
In this section, we see multifractal properties of $\nu$ regarding its $L^q$ norm. 
Almost all of the contents in this section are from \cite[Section 4.3]{Shm19}. The proofs are also the same, but we include them for completeness.

As stated in \cite[Section 4.3]{Shm19}, there is the heuristic idea for the $L^q$ spectrum $\tau(q)$, $\alpha=\tau'(q)$ and its Legendre transform $\tau^*(\alpha)$ when $\tau'(q)$ exists for $q>1$. That is, we have in this case that $\tau^*(\alpha)=\alpha q-\tau(q)$ and, for sufficiently large $m$,
\begin{equation*}
	\|\nu^{(m)}\|_q^q=\sum_{I\in\D_m}\nu(I)^q\sim 2^{-\tau(q)m}=2^{\tau^*(\alpha)m}2^{-\alpha mq},
\end{equation*}
and we think that most of the contribution to $\|\nu^{(m)}\|_q^q$ comes from $2^{\tau^*(\alpha)m}$ elements of $\D_m$ each of which has $\nu$-mass about $2^{-\alpha m}$. The following lemmas illustrate this idea in some sense.

\begin{lem}[{\cite[Lemma 4.10]{Shm19}}]\label{close_to_typical_tau^*(alpha)}
	Let $q>1$ and $\alpha^-=\tau'^{,-}(q)$. For $0<\varepsilon<1$, if $0<\delta\ll_{\varepsilon,q,\tau} 1$ is sufficiently small and $m\in\N$, $m\gg_{\varepsilon,q,\tau,\delta}1$ is sufficiently large, the following holds. If $\D'\subset\D_m$ satisfies
	\begin{enumerate}
		\renewcommand{\labelenumi}{(\arabic{enumi})}
		\item there is $\alpha'\geq0$ such that $2^{-\alpha'm}\leq\nu(I)\leq 2\cdot2^{-\alpha'm}$ for every $I\in\D'$,
		\item $\sum_{I\in\D'}\nu(I)^q\geq 2^{-(\tau(q)+\delta)m}$,
	\end{enumerate}
	then
	\begin{equation*}
		|\D'|\leq 2^{(\tau^*(\alpha^-)+\varepsilon)m}.
	\end{equation*}
\end{lem}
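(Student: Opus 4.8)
Proof proposal for Lemma 4.10 (\ref{close_to_typical_tau^*(alpha)}).

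The plan is to exploit the asymptotic $\|\nu^{(m)}\|_q^q\approx 2^{-\tau(q)m}$ together with the definition of the Legendre transform, playing off the given value $q$ against a nearby exponent. First I would fix the notation $\alpha^-=\tau'^{,-}(q)$, so that by (\ref{Legendre_transform_at_left_right_derivative}) we have $\tau^*(\alpha^-)=\alpha^-q-\tau(q)$, and recall that $\tau$ being concave and (by Proposition \ref{existence_of_limit_of_L^q_dim}) a genuine limit gives, for any $\varepsilon>0$ and all $m\gg_{\varepsilon,q,\tau}1$, the two-sided bound $2^{-(\tau(s)+\varepsilon')m}\le \|\nu^{(m)}\|_s^s\le 2^{-(\tau(s)-\varepsilon')m}$ for $s$ in a compact neighbourhood of $q$, where $\varepsilon'$ can be taken as small as we like by shrinking the neighbourhood and enlarging $m$. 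The key trick is to pick a slightly smaller exponent $s=q-\beta$ with $\beta=\beta(\varepsilon,q,\tau)>0$ small, chosen so that, since the left derivative of $\tau$ at $q$ is $\alpha^-$ and $\tau$ is concave, $\tau(q)-\tau(q-\beta)\le \alpha^-\beta + (\varepsilon/4)\beta$ (concavity makes the difference quotient from the left at least $\alpha^-$ and, by continuity of one-sided derivatives on the relevant side, at most $\alpha^-+\varepsilon/4$ for $\beta$ small).

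Next I would estimate $|\D'|$ from below using (2). On $\D'$ each atom has $\nu(I)\in[2^{-\alpha'm},2\cdot 2^{-\alpha'm}]$, so
\begin{equation*}
2^{-(\tau(q)+\delta)m}\le \sum_{I\in\D'}\nu(I)^q\le |\D'|\,(2\cdot 2^{-\alpha'm})^q,
\end{equation*}
giving $|\D'|\ge 2^{-q}\,2^{(\alpha' q-\tau(q)-\delta)m}$. On the other hand, to control $|\D'|$ from above I would feed the $\D'$-atoms into the $L^{q-\beta}$ norm: since on $\D'$ we also have $\nu(I)\ge 2^{-\alpha'm}$ and $q-\beta<q$,
\begin{equation*}
2^{-(\tau(q-\beta)-\varepsilon')m}\ge \|\nu^{(m)}\|_{q-\beta}^{q-\beta}\ge \sum_{I\in\D'}\nu(I)^{q-\beta}\ge |\D'|\,2^{-\alpha'(q-\beta)m}.
\end{equation*}
Rearranging, $|\D'|\le 2^{(\alpha'(q-\beta)-\tau(q-\beta)+\varepsilon')m}$. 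Now combine the lower bound on $|\D'|$ with this: from $|\D'|\ge 2^{-q}2^{(\alpha'q-\tau(q)-\delta)m}$ one reads off that the exponent $\alpha' q-\tau(q)$ is essentially bounded by $\tau^*(\alpha^-)+$(small), because the upper estimate forces $\alpha'q-\tau(q)-\delta - o(1)\le \alpha'(q-\beta)-\tau(q-\beta)+\varepsilon'$, i.e. $\alpha'\beta\le \tau(q)-\tau(q-\beta)+\delta+\varepsilon'+o(1)\le \alpha^-\beta+(\varepsilon/4)\beta+\delta+\varepsilon'+o(1)$; dividing by $\beta$ and using $\delta\ll\beta$ and $\varepsilon'\ll\beta$ gives $\alpha'\le \alpha^-+\varepsilon/2$. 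Plugging this back into the upper bound $|\D'|\le 2^{(\alpha'(q-\beta)-\tau(q-\beta)+\varepsilon')m}$ and again using $\tau(q)-\tau(q-\beta)\le\alpha^-\beta+(\varepsilon/4)\beta$ yields
\begin{equation*}
\alpha'(q-\beta)-\tau(q-\beta)\le (\alpha^-+\tfrac{\varepsilon}{2})(q-\beta)-\tau(q)+\alpha^-\beta+\tfrac{\varepsilon}{4}\beta \le \alpha^- q-\tau(q)+\tfrac{\varepsilon}{2}q = \tau^*(\alpha^-)+\tfrac{\varepsilon}{2}q,
\end{equation*}
so that $|\D'|\le 2^{(\tau^*(\alpha^-)+\varepsilon)m}$ once $\varepsilon'$ and the $o(1)$ term are absorbed, which is the claim after relabelling $\varepsilon$.

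The order of quantifiers is the delicate point and the main obstacle: I must first choose $\beta$ depending only on $\varepsilon,q,\tau$ (using concavity of $\tau$ and the behaviour of its one-sided derivative at $q$), then choose $\delta\ll\beta$, and only then choose $m$ large enough that the pressure-limit approximations hold at both $q$ and $q-\beta$ with error $\varepsilon'\ll\beta$; the hypotheses of the lemma, $\delta\ll_{\varepsilon,q,\tau}1$ and $m\gg_{\varepsilon,q,\tau,\delta}1$, are exactly what this nesting needs, so it is important to verify that no circular dependence sneaks in. A secondary technical care is that the one-sided derivative $\alpha^-$, not $\tau'(q)$, is the right object: since $\tau$ need not be differentiable at $q$, I must use the \emph{left} difference quotient $(\tau(q)-\tau(q-\beta))/\beta$, which concavity controls from both sides near $\alpha^-$, and I should double-check the direction of all inequalities when dividing by $\beta>0$. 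Everything else is the routine two-sided pressure estimate of Proposition \ref{existence_of_limit_of_L^q_dim} combined with Hölder, so the real content is this bookkeeping.
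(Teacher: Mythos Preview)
Your proposal is correct and follows essentially the same approach as the paper's proof: both compare the $L^q$ and $L^{q-\text{(small)}}$ norms of $\nu$, use the left-derivative approximation $\tau(q)-\tau(q-\beta)\approx\alpha^-\beta$ to pin down $\alpha'\approx\alpha^-$, and then feed this back into a count of $|\D'|$. The only notable difference is bookkeeping: you separate the step size $\beta$ from the hypothesis parameter $\delta$ (taking $\delta\ll\beta$), whereas the paper uses a single $\delta$ for both roles and compensates by proving condition (2) with $\delta^2$ in place of $\delta$; also, in the final step the paper bounds $|\D'|$ via $\sum_{I\in\D_m}\nu(I)^q<2^{-(\tau(q)-\delta)m}$ at exponent $q$ rather than at $q-\beta$, which avoids your ``relabelling $\varepsilon$'' at the end (your bound comes out as $\tau^*(\alpha^-)+\tfrac{\varepsilon}{2}q$ rather than $\tau^*(\alpha^-)+\varepsilon$).
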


\begin{lem}[{\cite[Lemma 4.12]{Shm19}}]\label{less_than_typical_tau(q)}
	Let $q>1$ and $\alpha^+=\tau'^{,+}(q)$. For $0<\kappa<1$, there exists $\varepsilon=\varepsilon(\kappa,q,\tau)>0$ such that, if $m\in\N$, $m\gg_{\kappa,q,\tau,\varepsilon}1$ is sufficiently large, then the following holds.
	If $\D'\subset\D_m$ satisfies $|\D'|\leq 2^{(\tau^*(\alpha^+)-\kappa)m}$, then
	\begin{equation*}
		\sum_{I\in\D'}\nu(I)^q\leq2^{-(\tau(q)+\varepsilon)m}.
	\end{equation*}
\end{lem}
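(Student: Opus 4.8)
The plan is to stratify $\D'$ according to the dyadic order of magnitude of the $\nu$-masses of its atoms and to bound the two resulting ranges of that order of magnitude by different means. Put $\D'_j=\left\{I\in\D' : 2^{-(j+1)}<\nu(I)\le 2^{-j}\right\}$ and $\alpha_j=j/m$. Since $|\D_m|=2^m$, the atoms with $\nu(I)<2^{-Cm}$ (for a constant $C=C(q)$ chosen large, e.g.\ $C=2$) contribute at most $2^m\cdot 2^{-Cqm}\le 2^{-(\tau(q)+1)m}$ to $\sum_{I\in\D'}\nu(I)^q$, so it suffices to handle the at most $Cm+1$ levels $j$ with $0\le\alpha_j\le C$. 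For each of these $\sum_{I\in\D'_j}\nu(I)^q\le |\D'_j|\,2^{-\alpha_j q m}$, and since $Cm+1\le 2^{\varepsilon m}$ for $m$ large it is enough to show that each level is $\le 2^{-(\tau(q)+2\varepsilon)m}$; everything therefore reduces to two estimates on $|\D'_j|$ covering complementary ranges of $\alpha_j$. Fix $\beta=\kappa/(2q)$, depending only on $\kappa,q$.

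For $\alpha_j\ge\alpha^+-\beta$ I will use nothing but the hypothesis $|\D'_j|\le|\D'|\le 2^{(\tau^*(\alpha^+)-\kappa)m}$ together with the identity $\tau^*(\alpha^+)=\alpha^+q-\tau(q)$ from (\ref{Legendre_transform_at_left_right_derivative}), which give
\[
\sum_{I\in\D'_j}\nu(I)^q\le 2^{(\tau^*(\alpha^+)-\kappa-\alpha_j q)m}=2^{(\alpha^+q-\tau(q)-\kappa-\alpha_j q)m}\le 2^{-(\tau(q)+\kappa/2)m},
\]
since $\alpha_j q\ge(\alpha^+-\beta)q=\alpha^+q-\kappa/2$ (if $\alpha^+-\beta<0$ this range is all nonnegative $\alpha_j$, and then $\alpha_j q\ge 0\ge\alpha^+q-\kappa/2$, so the bound still holds).

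For $0\le\alpha_j<\alpha^+-\beta$ the hypothesis is too weak by itself, and I will invoke the upper (easy) half of the multifractal formalism instead: for each fixed $p>1$, Proposition \ref{existence_of_limit_of_L^q_dim} (or merely the definition of $\tau$) gives $\sum_{I\in\D_m}\nu(I)^p\le 2^{-(\tau(p)-\eta)m}$ for $m\gg_{p,\eta,\tau}1$, whence by Markov's inequality $\left|\{I\in\D_m:\nu(I)\ge 2^{-\alpha m}\}\right|\le 2^{(\alpha p-\tau(p)+\eta)m}$; optimizing over $p$ and covering the compact range $[0,\alpha^+-\beta]$ by finitely many short intervals to make the estimate uniform (a level with $\tau^*(\alpha_j)=-\infty$ being simply empty for $m$ large), one obtains $|\D'_j|\le 2^{(\tau^*(\alpha_j)+\varepsilon_1)m}$ for any prescribed $\varepsilon_1>0$ and $m\gg_{\kappa,q,\tau,\varepsilon_1}1$. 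Hence $\sum_{I\in\D'_j}\nu(I)^q\le 2^{(g(\alpha_j)+\varepsilon_1)m}$, where $g(\alpha):=\tau^*(\alpha)-\alpha q$.

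The decisive point is the shape of $g$: it is concave (a Legendre transform minus a linear function), and since $\tau^*(\alpha)\le\alpha q-\tau(q)$ for all $\alpha$ (take $p=q$ in the infimum defining $\tau^*$) with equality exactly when that infimum is attained at $p=q$, i.e.\ exactly for $\alpha\in[\tau'^{,+}(q),\tau'^{,-}(q)]=[\alpha^+,\alpha^-]$ (where it holds by (\ref{Legendre_transform_at_left_right_derivative})), the maximum value of $g$ equals $-\tau(q)$ and is attained precisely on $[\alpha^+,\alpha^-]$. Consequently $g$ is non-decreasing on $(-\infty,\alpha^+]$, and since $\alpha^+-\beta$ lies strictly to the left of the maximizing segment, $\gamma_0:=-\tau(q)-g(\alpha^+-\beta)>0$ is a strictly positive number depending only on $\kappa,q,\tau$; so every level with $\alpha_j\le\alpha^+-\beta$ satisfies $\sum_{I\in\D'_j}\nu(I)^q\le 2^{(-\tau(q)-\gamma_0+\varepsilon_1)m}$. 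Taking $\varepsilon_1:=\gamma_0/2$ and then $\varepsilon:=\tfrac14\min\{\kappa/2,\gamma_0\}$ — both functions of $\kappa,q,\tau$ only — makes every one of the at most $Cm+1$ levels $\le 2^{-(\tau(q)+2\varepsilon)m}$, and adding these up together with the negligible $\nu(I)<2^{-Cm}$ tail yields $\sum_{I\in\D'}\nu(I)^q\le 2^{-(\tau(q)+\varepsilon)m}$ for $m\gg_{\kappa,q,\tau,\varepsilon}1$. I expect the only nonroutine part to be exactly this: securing the uniform multifractal count bound $\left|\{\nu(I)\ge 2^{-\alpha m}\}\right|\le 2^{(\tau^*(\alpha)+o(1))m}$ and then checking that the concave profile $g=\tau^*-\alpha q$ has dropped by a definite amount $\gamma_0>0$ below its maximum by the time $\alpha$ reaches $\alpha^+-\beta$; this is where the concavity of $\tau$ and the precise location $[\alpha^+,\alpha^-]$ of the slope-$q$ face of $\tau^*$ enter, and it is what forces $\beta$ (hence the final $\varepsilon$) to be comparable to $\kappa$.
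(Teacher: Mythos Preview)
Your argument is correct, and the split at $\alpha^+-\beta$ with $\beta=\kappa/(2q)$ is exactly the paper's threshold. For the range $\alpha_j\ge\alpha^+-\beta$ your bound and the paper's are identical. For the range $\alpha_j<\alpha^+-\beta$, however, the two proofs diverge: the paper quotes the auxiliary Lemma~\ref{sum_of_mass_more_than_typical_2^{-alpham}}, whose proof is a contradiction argument comparing $\|\nu^{(m)}\|_q$ with $\|\nu^{(m)}\|_{q_1}$ for $q_1=q+\delta$ near $q$, whereas you go straight through the Legendre transform --- Markov with an exponent $p_0>q$, then the concavity of $g(\alpha)=\tau^*(\alpha)-\alpha q$ to conclude that $g$ has dropped by a definite amount $\gamma_0$ at $\alpha^+-\beta$. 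Your route is more direct and makes the multifractal picture explicit; the paper's route isolates Lemma~\ref{sum_of_mass_more_than_typical_2^{-alpham}} as a standalone statement (and its companion Lemma~\ref{L^q_norm_strictly_smaller_math_than_2^-alpham} is reused for Proposition~\ref{justification_heuristic_singularity}).

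One small point of presentation: the ``covering $[0,\alpha^+-\beta]$ by finitely many short intervals'' step is shakier than it looks, because the near-optimal exponent $p_\alpha$ blows up as $\alpha$ approaches $\alpha_\infty:=\lim_{p\to\infty}\tau'(p)$, so the interval lengths would have to shrink to zero there. The clean fix (which your own monotonicity observation already contains) is that a \emph{single} exponent $p_0>q$, chosen near-optimal for $\alpha=\alpha^+-\beta$, works uniformly: since $p_0>q$, the bound $\alpha_j(p_0-q)-\tau(p_0)$ is non-decreasing in $\alpha_j$, hence is at most $(\alpha^+-\beta)(p_0-q)-\tau(p_0)\le -\tau(q)-\gamma_0+\varepsilon_1$ for every $\alpha_j\le\alpha^+-\beta$. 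If $g(\alpha^+-\beta)=-\infty$ the same argument gives an arbitrarily negative bound, and all those levels are eventually empty as you note.
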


Assume that $\tau(q)$ is differentiable at $q>1$ and let $\alpha=\tau'(q)$.
Lemma \ref{close_to_typical_tau^*(alpha)} says that, for sufficiently large $m$, if we take a subset $\D'\subset\D_m$ such that $\nu(I)\ (I\in\D')$ are “uniform” and the restricted $L^q$ norm $\sum_{I\in\D'}\nu(I)^q$ is close to $2^{-\tau(q)m}$, then $|\D'|$ is indeed close to $2^{\tau^*(\alpha)m}$.\footnote{Actually, we can see that $\alpha'$ in the condition (1) on $\D'$ is close to $\alpha$.}
Lemma \ref{less_than_typical_tau(q)} says that, for sufficiently large $m$, if we restrict the $2^{-m}$ $L^q$ norm to $\D'\subset\D_m$ such that $|\D'|$ is strictly less than $2^{\tau^*(\alpha)m}$, then $\sum_{I\in\D'}\nu(I)^q$ is strictly less than $2^{-\tau(q)m}$.

As a corollary of Lemma \ref{close_to_typical_tau^*(alpha)}, we obtain that $\tau^*(\alpha)\geq0$ for $\alpha=\tau'(q)$, which is already mentioned in Section \ref{section_L^q_norm_flattening} and also stated in \cite[Proposition 3.5]{LN99}.

\begin{cor}\label{tau^*(alpha)_is_nonnegative}
	Let $q>1$ and $\alpha^-=\tau'^{,-}(q)$. Then, we have
	\begin{equation*}
		\tau^*(\alpha^-)\geq0.
	\end{equation*}
\end{cor}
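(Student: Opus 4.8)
The plan is to argue by contradiction, turning the inequality into a single application of Lemma~\ref{close_to_typical_tau^*(alpha)} after a standard dyadic-scale pigeonhole. By (\ref{Legendre_transform_at_left_right_derivative}) we have $\tau^*(\alpha^-)=\alpha^-q-\tau(q)$; suppose, for contradiction, that $\tau^*(\alpha^-)=-c$ with $c>0$.

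First I would fix, once and for all, $\varepsilon:=\min\{c,1\}/2\in(0,1)$, feed it to Lemma~\ref{close_to_typical_tau^*(alpha)} to obtain the corresponding smallness threshold $\delta_0>0$, and then pick a constant $0<\delta<\min\{\delta_0/2,\ q-1\}$. The point is to produce, for all large $m$, a family $\D'\subset\D_m$ satisfying hypotheses (1) and (2) of the lemma (with its parameter taken to be $2\delta$). To this end: by Proposition~\ref{existence_of_limit_of_L^q_dim}, for $m\gg_{q,\delta}1$ we have $\sum_{I\in\D_m}\nu(I)^q\geq 2^{-(\tau(q)+\delta)m}$. Since $\sum_{I\in\D_m}\nu(I)\leq 1$, the cells with $\nu(I)<2^{-2m}$ contribute at most $2^{-2(q-1)m}$, which by $\tau(q)\leq q-1$ and $\delta<q-1$ is at most $\frac12 2^{-(\tau(q)+\delta)m}$ for $m$ large; hence the cells with $\nu(I)\geq 2^{-2m}$ carry at least $\frac12 2^{-(\tau(q)+\delta)m}$ of the sum. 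Splitting these into the dyadic classes $\D_m^{(j)}=\{I\in\D_m:\ 2^{-j}\leq\nu(I)<2^{-j+1}\}$ with $0\leq j\leq 2m$ and applying the pigeonhole over these $O(m)$ classes, some $j^*$ satisfies $\sum_{I\in\D_m^{(j^*)}}\nu(I)^q\geq 2^{-(\tau(q)+2\delta)m}$ for $m$ large (the $1/O(m)$ loss is swallowed by $2^{-\delta m}$). Then $\D':=\D_m^{(j^*)}$ meets hypothesis (1) with $\alpha'=j^*/m\geq 0$ and hypothesis (2) with parameter $2\delta$.

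Now I would close the argument: Lemma~\ref{close_to_typical_tau^*(alpha)} applied to $\D'$ gives $|\D'|\leq 2^{(\tau^*(\alpha^-)+\varepsilon)m}=2^{(-c+\varepsilon)m}$, and $-c+\varepsilon<0$ (it equals $-c/2$ when $c\leq 1$ and is $<-1/2$ when $c>1$), so $|\D'|<1$ for $m$ large, i.e.\ $\D'=\emptyset$. This contradicts $\sum_{I\in\D'}\nu(I)^q\geq 2^{-(\tau(q)+2\delta)m}>0$. Therefore $\tau^*(\alpha^-)\geq 0$. There is no genuine obstacle here; the only points to watch are the order of quantifiers — $\varepsilon$ must be chosen from the hypothetically negative value $-c$ before $\delta$ and $m$ — and the observation that truncating at scale $2^{-2m}$ leaves only $O(m)$ dyadic classes, so the pigeonhole costs nothing exponential.
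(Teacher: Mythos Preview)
Your proof is correct and follows essentially the same approach as the paper's: argue by contradiction, use a dyadic pigeonhole to produce a nonempty uniform family $\D'\subset\D_m$ meeting hypotheses (1) and (2) of Lemma~\ref{close_to_typical_tau^*(alpha)}, and then derive $|\D'|<1$ from the lemma's conclusion. The only cosmetic differences are that the paper normalizes its dyadic classes relative to $\|\nu^{(m)}\|_q^{q/(q-1)}$ (which absorbs the tail estimate) and uses only the liminf definition of $\tau(q)$ rather than Proposition~\ref{existence_of_limit_of_L^q_dim}, but your absolute truncation at $2^{-2m}$ and the bound $\tau(q)+\delta<2(q-1)$ accomplish the same thing.
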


\begin{proof}
	If $\tau^*(\alpha^-)<0$, we can take $0<\varepsilon<1$ and $0<\delta<1$ so that $\tau^*(\alpha^-)+\varepsilon<0$ and $\delta>0$ is sufficiently small in terms of this $\varepsilon$, $q$ and $\tau$ and satisfies Lemma \ref{close_to_typical_tau^*(alpha)}. For some sufficiently large $m\in\N$, if we can construct a non-empty subset $\D'\subset\D_m$ satisfying (1) and (2) of Lemma \ref{close_to_typical_tau^*(alpha)} for our $\delta$, then we obtain that $1\leq|\D'|\leq 2^{(\tau^*(\alpha^-)+\varepsilon)m}<1$ and a contradiction. Hence, what we have to do is to construct $\D'$. The following pigeonholing argument is from the proof of \cite[lemma 3.3]{Shm19}.
	
	By the definition of $\tau(q)$, for some sufficiently large $m\in\N$, we have
	\begin{equation}\label{tau^*(alpha)_nonnegative_L^q_spectrum_converge}
		\sum_{I\in\D_m}\nu(I)^q\geq 2^{-(\tau(q)+\delta/2)m}.
	\end{equation}
	For this $m$ and each integer $j$, we define
	\begin{equation*}
		\A_j=\left\{I\in\D_m\left|\ 2^{-j-1}\|\nu^{(m)}\|_q^{q/(q-1)}<\nu(I)\leq2^{-j}\|\nu^{(m)}\|_q^{q/(q-1)}\right.\right\}.
	\end{equation*}
	We notice that $\{I\in\D_m\left|\ \nu(I)>0\right.\}=\bigsqcup_{j\in\Z}\A_j$ and $\A_j=\emptyset$ if $j\leq -m-1$, since $\|\nu^{(m)}\|_q\geq 2^{-(q-1)m/q}$. Furthermore, we have
	\begin{equation*}
		\sum_{I\in\bigsqcup_{j\geq(q-1)^{-1}}\A_j}\nu(I)^q\leq\left(2^{-(q-1)^{-1}}\|\nu^{(m)}\|_q^{q/(q-1)}\right)^{q-1}\sum_{I\in\bigsqcup_{j\geq(q-1)^{-1}}\A_j}\nu(I)\leq\frac{\|\nu^{(m)}\|_q^q}{2}.
	\end{equation*}
	From this and (\ref{tau^*(alpha)_nonnegative_L^q_spectrum_converge}), we have
	\begin{equation*}
		\sum_{-m\leq j<(q-1)^{-1}}\sum_{I\in\A_j}\nu(I)^q\geq \frac{1}{2}\sum_{I\in\D_m}\nu(I)^q\geq 2^{-1}2^{-(\tau(q)+\delta/2)m}.
	\end{equation*}
	Hence, by pigeonholing, there is $-m\leq j<(q-1)^{-1}$ such that
	\begin{equation*}
		\sum_{I\in\A_j}\nu(I)^q\geq (2(m+\lfloor(q-1)^{-1}\rfloor+1))^{-1}2^{-(\tau(q)+\delta/2)m}=2^{-(\tau(q)+\delta/2+\log(2(m+\lfloor(q-1)^{-1}\rfloor+1))/m)m}.
	\end{equation*}
	If $m$ is sufficiently large, then $2^{-(\tau(q)+\delta/2+\log(2(m+\lfloor(q-1)^{-1}\rfloor+1))/m)m}>2^{-(\tau(q)+\delta)m}$. We can see that $\D'=\A_j$ satisfies the condition (1) and (2) of Lemma \ref{close_to_typical_tau^*(alpha)}. So we complete the proof.
\end{proof}

We prove Lemmas \ref{close_to_typical_tau^*(alpha)} and \ref{less_than_typical_tau(q)}. We notice that we do not use special properties of the stationary measure $\nu$ for these lemmas.

\begin{proof}[Proof of Lemma \ref{close_to_typical_tau^*(alpha)}]
	Let $0<\varepsilon<1$ and we take a small constant $\eta=\eta(\varepsilon, q, \tau)>0$ specified later.
	Then, since $\alpha^-=\tau'^{,-}(q)$, if we take $0<\delta\ll_{\varepsilon,\eta, q, \tau}1$ sufficiently small and write $q_1=q-\delta$, we have
	\begin{equation}\label{estimate_left_derivative_realize_heuristic}
		\left|\frac{\tau(q)-\tau(q_1)}{\delta}-\alpha^-\right|<\eta\iff \delta\alpha^--\delta\eta<\tau(q)-\tau(q_1)<\delta\alpha^-+\delta\eta
	\end{equation}
	Here, by the definition of $\tau(q_1)=\lim_{m\to\infty}\left(-1/m\cdot \log\sum_{I\in\D_m}\nu(I)^{q_1}\right)$ and $\tau(q)$, if $m\in\N$, $m\gg_{\eta,q,\tau,\delta}1$ is sufficiently large, then we have
	\begin{equation}\label{tau(q_1)_delta^2_realize_heuristic}
		\sum_{I\in\D_m}\nu(I)^{q_1}<2^{-(\tau(q_1)-\delta^2)m}
	\end{equation}
	and
	\begin{equation}\label{tau(q)_delta_realize_heuristic}
		\sum_{I\in\D_m}\nu(I)^q<2^{-(\tau(q)-\delta)m}.
	\end{equation}
	For $\delta^2$ and this $m$, we assume that we have $\D'\subset\D_m$ satisfying the assumptions (1) and (2) (for $\delta^2$) in the statement.
	
	Since we assume that $2\cdot 2^{-\alpha'm}\geq\nu(I)\geq 2^{-\alpha'm}$ for each $I\in\D'$, we can see from (\ref{tau(q_1)_delta^2_realize_heuristic}) that
	\begin{equation}\label{estimate_D'_q_1_realize_heuristic}
		2^{-\alpha'mq_1}|\D'|\leq\sum_{I\in\D'}\nu(I)^{q_1}\leq \sum_{I\in\D_m}\nu(I)^{q_1}< 2^{-(\tau(q_1)-\delta^2)m}.
	\end{equation}
	Since we also assume that $\sum_{I\in\D'}\nu(I)^q\geq 2^{-(\tau(q)+\delta^2)m}$ (we notice that we have assumed (2) for $\delta^2$), we also have
	\begin{equation}\label{estimate_D'_q_realize_heuristic}
		2^{-(\tau(q)+\delta^2)m}\leq\sum_{I\in\D'}\nu(I)^q\leq 2^q2^{-\alpha'mq}|\D'|.
	\end{equation}
	By (\ref{estimate_D'_q_1_realize_heuristic}), (\ref{estimate_left_derivative_realize_heuristic}) and (\ref{estimate_D'_q_realize_heuristic}), we have
	\begin{equation*}
		2^{-\alpha'mq_1}|\D'|<2^{-(\tau(q_1)-\delta^2)m}<2^{-(\tau(q)-\delta\alpha^--\delta\eta-\delta^2)m}\leq
		2^{(\delta\alpha^-+\delta\eta+2\delta^2)m}2^q2^{-\alpha'mq}|\D'|.
	\end{equation*}
	By canceling $|\D'|$ and recalling $q-q_1=\delta$, we have
	\begin{equation*}
		2^{\delta\alpha'm}<2^{(\delta\alpha^-+\delta\eta+2\delta^2)m}2^q,
	\end{equation*}
	and hence,
	\begin{equation}\label{alpha'_close_alpha^-_realize_heuristic}
		\alpha'-\alpha^-<\eta+2\delta+q/(\delta m)<\eta+3\delta,
	\end{equation}
	where we use that $m\gg_{\eta,q,\tau,\delta}1$, so we can assume that $q/(\delta m)<\delta$.
	
	By (\ref{tau(q)_delta_realize_heuristic}) and our assumption that $\nu(I)\geq 2^{-\alpha'm}$ for each $I\in\D'$ again, we have
	\begin{equation*}
		2^{-\alpha'mq}|\D'|\leq\sum_{I\in\D'}\nu(I)^q\leq\sum_{I\in\D_m}\nu(I)^q<2^{-(\tau(q)-\delta)m}.
	\end{equation*}
	From this and (\ref{alpha'_close_alpha^-_realize_heuristic}), we can see that
	\begin{equation*}
		|\D'|\leq2^{\alpha'mq}2^{-(\tau(q)-\delta)m}<2^{(\alpha^-q-\tau(q)+q\eta+3q\delta+\delta)m}
	\end{equation*}
	Here, we can assume for $\eta=\eta(\varepsilon,q,\tau)$ and $0<\delta\ll_{\varepsilon,\eta,q,\tau}1$ that $q\eta+3q\delta+\delta<\varepsilon$. By this and (\ref{Legendre_transform_at_left_right_derivative}): $\alpha^-q-\tau(q)=\tau^*(\alpha^-)$, we obtain
	\begin{equation*}
		|\D'|<2^{(\tau^*(\alpha^-)+\varepsilon)m}
	\end{equation*}
	and complete the proof.
\end{proof}

To prove Lemma \ref{less_than_typical_tau(q)}, we need the following lemma, which can also be seen as an illustration of the heuristic above.

\begin{lem}[{\cite[Lemma 4.11]{Shm19}}]\label{sum_of_mass_more_than_typical_2^{-alpham}}
	Let $q>1$ and $\alpha^+=\tau'^{,+}(q)$. For $0<\sigma<1$, there exists $\varepsilon=\varepsilon(\sigma, q, \tau)>0$ such that, if $m\in\N$, $m\gg_{\sigma,q,\tau,\varepsilon}1$ is sufficiently large, then we have
	\begin{equation*}
		\sum_{I\in\D_m,\ \nu(I)\geq 2^{-(\alpha^+-\sigma)m}}\nu(I)^q\leq 2^{-(\tau(q)+\varepsilon)m}.
	\end{equation*}
\end{lem}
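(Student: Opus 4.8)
The plan is to control the restricted sum by the \emph{full} $L^{q_2}$ norm of $\nu$ for a slightly larger exponent $q_2=q+\eta$, using the hypothesis $\nu(I)\geq 2^{-(\alpha^+-\sigma)m}$ to absorb the discrepancy between the exponents $q$ and $q_2$, and then to close the estimate using only the concavity of $\tau$ at $q$ and the definition of $\tau(q_2)$. No special property of the stationary measure $\nu$ is needed; the point is that restricting to cells of $\nu$-mass at least $2^{-(\alpha^+-\sigma)m}$ is precisely what produces a strict gain.

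First I would fix $q>1$, $0<\sigma<1$, $\alpha^+=\tau'^{,+}(q)$, and invoke concavity of $\tau$: the right difference quotient $\eta\mapsto(\tau(q+\eta)-\tau(q))/\eta$ is non-increasing on $(0,\infty)$ and tends to $\alpha^+$ as $\eta\downarrow 0$, so one can choose $\eta=\eta(\sigma,q,\tau)>0$ with
\[
\tau(q+\eta)-\tau(q)\ \geq\ \eta\Bigl(\alpha^+-\tfrac{\sigma}{4}\Bigr).
\]
Set $q_2=q+\eta\ (>1)$ and $\varepsilon=\delta=\tfrac14\eta\sigma$. For every $I\in\D_m$ with $\nu(I)\geq 2^{-(\alpha^+-\sigma)m}$ one has $\nu(I)^{-\eta}\leq 2^{\eta(\alpha^+-\sigma)m}$, hence $\nu(I)^q=\nu(I)^{q_2}\nu(I)^{-\eta}\leq 2^{\eta(\alpha^+-\sigma)m}\nu(I)^{q_2}$; summing over such $I$ and enlarging to the full dyadic sum gives
\[
\sum_{I\in\D_m,\ \nu(I)\geq 2^{-(\alpha^+-\sigma)m}}\nu(I)^q\ \leq\ 2^{\eta(\alpha^+-\sigma)m}\,\|\nu^{(m)}\|_{q_2}^{q_2}.
\]
By the definition of $\tau(q_2)$ (a genuine limit by Proposition~\ref{existence_of_limit_of_L^q_dim}, though the $\liminf$ suffices here), $\|\nu^{(m)}\|_{q_2}^{q_2}\leq 2^{-(\tau(q_2)-\delta)m}$ for all $m\gg_{\sigma,q,\tau,\varepsilon}1$, so the two displays combine to the bound $2^{(\eta\alpha^+-\eta\sigma-\tau(q_2)+\delta)m}$, which is at most $2^{-(\tau(q)+\varepsilon)m}$ precisely because
\[
\tau(q_2)-\tau(q)\ \geq\ \eta\alpha^+-\tfrac{\eta\sigma}{4}\ \geq\ \eta\alpha^+-\eta\sigma+\delta+\varepsilon,
\]
the last step being $\tfrac34\eta\sigma\geq\delta+\varepsilon=\tfrac12\eta\sigma$. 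Since $\eta$, and hence $\varepsilon$, depends only on $\sigma,q,\tau$, this is the assertion (and if there are no cells with $\nu(I)\geq 2^{-(\alpha^+-\sigma)m}$ the sum is empty and the bound is trivial).

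I do not anticipate a real obstacle; the only thing requiring care is the bookkeeping of the constants. One must reserve a definite fraction of the slack $\eta\sigma$ — produced solely by the restriction to heavy cells — to pay simultaneously for the convergence error $\delta$ in $\|\nu^{(m)}\|_{q_2}^{q_2}\leq 2^{-(\tau(q_2)-\delta)m}$ and for the desired strict gain $\varepsilon$, while still dominating the concavity error $\eta\sigma/4$; the choice $\delta=\varepsilon=\tfrac14\eta\sigma$ does this. Recognizing that this restriction, and not any finer structure of $\nu$, is exactly what makes the inequality strict is the whole content of the lemma.
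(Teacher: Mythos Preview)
Your argument is correct and, in fact, more direct than the paper's. Both proofs pass to a slightly larger exponent $q+\eta$ and use that the right difference quotient of $\tau$ at $q$ is close to $\alpha^+$; from there the routes diverge. The paper argues by contradiction: it assumes the restricted sum exceeds $2^{-(\tau(q)+\delta^2)m}$, pigeonholes the heavy cells into a level set $\A'_j=\{I:\nu(I)\approx 2^{-\alpha'm}\}$ carrying most of the mass, and then compares $\sum_{I\in\A'_j}\nu(I)^{q_1}$ with $\sum_{I\in\A'_j}\nu(I)^{q}$ to force $\alpha'\geq\alpha^+-O(\eta)$, contradicting $\alpha'\leq\alpha^+-\sigma$. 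You instead use the heaviness hypothesis pointwise to write $\nu(I)^q\leq 2^{\eta(\alpha^+-\sigma)m}\nu(I)^{q+\eta}$ and close immediately against the full $L^{q+\eta}$ norm; no pigeonholing, no contradiction, and the constants are explicit. What the paper's longer route buys is structural parallelism with the proof of Lemma~\ref{close_to_typical_tau^*(alpha)} (the author even footnotes that this proof differs from Shmerkin's original and is modeled on that earlier lemma); what your route buys is a one-paragraph proof that makes transparent that only the concavity of $\tau$ and the definition of $\tau(q+\eta)$ are used, with no structural input from $\nu$.
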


\begin{proof}\footnote{The proof here is different from that of \cite[Lemma 4.11]{Shm19}. It is actually the combination of the ideas of the proof of Lemma \ref{close_to_typical_tau^*(alpha)} and of pigeonholing used in the proof of Corollary \ref{tau^*(alpha)_is_nonnegative}.}
	Let $0<\sigma<1$ and we take a small constant $\eta=\eta(\sigma)>0$ specified later.
	Then, since $\alpha^+=\tau'^{,+}(q)$, if we take $0<\delta\ll_{\eta,q,\tau}1$ small and write $q_1=q+\delta$, we have
	\begin{equation}\label{tau_right_derivative_realize_heuristic}
		\left|\frac{\tau(q_1)-\tau(q)}{\delta}-\alpha^+\right|<\eta\iff\delta\alpha^+-\delta\eta<\tau(q_1)-\tau(q)<\delta\alpha^++\delta\eta.
	\end{equation}
	
	Let $m\in\N,\ m\gg_{\sigma,\eta,\delta,q,\tau}1$ be sufficiently large. We prove Lemma \ref{sum_of_mass_more_than_typical_2^{-alpham}} for $\varepsilon=\varepsilon(\sigma,q,\tau)=\delta^2$ by contradiction. We assume that
	\begin{equation*}
		\sum_{I\in\D_m,\ \nu(I)\geq 2^{-(\alpha^+-\sigma)m}}\nu(I)^q>2^{-(\tau(q)+\delta^2)m}.
	\end{equation*}
	As we did in the proof of Corollary \ref{tau^*(alpha)_is_nonnegative} (but in a different manner), for $j\in\N$, we define
	\begin{equation*}
		\A'_j=\left\{I\in\D_m\left|\ 2^{-j}<\nu(I)\leq2^{-j+1}\right.\right\}.
	\end{equation*}
	Then, by the assumption, we have
	\begin{equation*}
		\sum_{1\leq j\leq (\alpha^+-\sigma)m+1}\sum_{I\in\A'_j}\nu(I)^q\geq\sum_{I\in\D_m,\ \nu(I)\geq 2^{-(\alpha^+-\sigma)m}}\nu(I)^q>2^{-(\tau(q)+\delta^2)m}.
	\end{equation*}
	Hence, by pigeonholing, there is $1\leq j\leq (\alpha^+-\sigma)m+1$ such that
	\begin{align}\label{contradiction_L^q_strictly_larger_than_2^-alpham_realize_heuristic}
		\sum_{I\in\A'_j}\nu(I)^q\geq ((\alpha^+-\sigma)m+1)^{-1}2^{-(\tau(q)+\delta^2)m}&=2^{-(\tau(q)+\delta^2+\log((\alpha^+-\sigma)m+1)/m)m}\nonumber\\
		&\geq 2^{-(\tau(q)+2\delta^2)m},
	\end{align}
	where we have used $m\gg_{\sigma,\delta,q,\tau}1$ in the last inequality. If we write $\alpha'=j/m$, then we have $\A'_j=\left\{I\in\D_m\left|\ 2^{-\alpha'm}<\nu(I)\leq 2\cdot2^{-\alpha'm}\right.\right\}$ and
	\begin{equation}\label{estimate_alpha'_key_to_contradiction_realize_heuristic}
		0<\alpha'\leq \alpha^+-\sigma+\frac{1}{m}.
	\end{equation}
	
	By the definition of the $L^q$ spectrum $\tau(q_1)$ for $q=q_1$, if $m\gg_{\delta,q,\tau}1$ is sufficiently large, we have
	\begin{equation*}
		\sum_{I\in\D_m}\nu(I)^{q_1}\leq 2^{-(\tau(q_1)-\delta^2)m}.
	\end{equation*}
	Hence, we have
	\begin{equation}\label{estimate_L^q_1_A'_j_realize_heuristic}
		2^{-\alpha'mq_1}|\A'_j|\leq\sum_{I\in\A'_j}\nu(I)^{q_1}\leq\sum_{I\in\D_m}\nu(I)^{q_1}\leq 2^{-(\tau(q_1)-\delta^2)m}.
	\end{equation}
	Furthermore, we can see from (\ref{contradiction_L^q_strictly_larger_than_2^-alpham_realize_heuristic}) that
	\begin{equation}\label{estimate_from_assump_L^q_A'_j_realize_heuristic}
		2^{-(\tau(q)+2\delta^2)m}\leq\sum_{I\in\A'_j}\nu(I)^q\leq 2^q2^{-\alpha'mq}|\A'_j|.
	\end{equation}
	By (\ref{estimate_L^q_1_A'_j_realize_heuristic}), (\ref{tau_right_derivative_realize_heuristic}) and (\ref{estimate_from_assump_L^q_A'_j_realize_heuristic}), we have
	\begin{equation*}
		2^{-\alpha'mq_1}|\A'_j|\leq 2^{-(\tau(q_1)-\delta^2)m}\leq 2^{-(\tau(q)+\delta\alpha^+-\delta\eta-\delta^2)m}\leq 2^{-(\delta\alpha^+-\delta\eta-3\delta^2)m}2^q2^{-\alpha'mq}|\A'_j|.
	\end{equation*}
	By canceling $|\A'_j|$ and recalling that $q_1-q=\delta$, we have
	\begin{equation*}
		2^{-q}2^{(\delta\alpha^+-\delta\eta-3\delta^2)m}\leq 2^{\delta\alpha'm},
	\end{equation*}
	and hence
	\begin{equation}\label{alpha'_not_so_smaller_than_alpha^+_realize_heuristic}
		\alpha'-\alpha^+\geq-\eta-3\delta-\frac{q}{\delta m}.
	\end{equation}
	However, if we take $\eta=\eta(\sigma)$ and $0<\delta\ll_{\eta,q,\tau}1$ so that $\eta+3\delta<\sigma/2$ and $m\gg_{\sigma,\delta,q}1$ sufficiently large, (\ref{alpha'_not_so_smaller_than_alpha^+_realize_heuristic}) contradicts (\ref{estimate_alpha'_key_to_contradiction_realize_heuristic}). Hence, we get the contradiction and
	\begin{equation*}
		\sum_{I\in\D_m,\ \nu(I)\geq 2^{-(\alpha^+-\sigma)m}}\nu(I)^q\leq2^{-(\tau(q)+\delta^2)m},
	\end{equation*}
	and hence complete the proof.
\end{proof}

By using Lemma \ref{sum_of_mass_more_than_typical_2^{-alpham}}, we prove Lemma \ref{less_than_typical_tau(q)}.

\begin{proof}[Proof of Lemma \ref{less_than_typical_tau(q)}]
	Let $0<\kappa<1$. We take a constant $\sigma=\sigma(\kappa,q,\tau)\in(0,1)$ specified later. For this $\sigma$, we take a constant $\varepsilon=\varepsilon(\sigma,q,\tau)>0$ and sufficiently large $m\in\N$, $m\gg_{\sigma,\kappa,q,\tau,\varepsilon}1$ satisfying Lemma \ref{sum_of_mass_more_than_typical_2^{-alpham}}. Let $\D'\subset\D_m$ such that $|\D'|\leq 2^{(\tau^*(\alpha^+)-\kappa)m}$. Using Lemma \ref{sum_of_mass_more_than_typical_2^{-alpham}}, we have
	\begin{align*}
		\sum_{I\in\D'}\nu(I)^q&\leq\sum_{I\in\D',\ \nu(I)<2^{-(\alpha^+-\sigma)m}}\nu(I)^q+\sum_{I\in\D_m,\ \nu(I)\geq 2^{-(\alpha^+-\sigma)m}}\nu(I)^q\\
		&\leq |\D'|2^{-(\alpha^+-\sigma)qm}+2^{-(\tau(q)+\varepsilon)m}\\
		&\leq 2^{-(\alpha^+q-\tau^*(\alpha^+)-\sigma q+\kappa)m}+2^{-(\tau(q)+\varepsilon)m}\\
		&=2^{-(\tau(q)-\sigma q+\kappa)m}+2^{-(\tau(q)+\varepsilon)m}.
	\end{align*}
	In the last equation, we have used (\ref{Legendre_transform_at_left_right_derivative}): $\tau^*(\alpha^+)=\alpha^+q-\tau(q)$. Here, we take $\sigma=\kappa/(2q)$, then we have
	\begin{equation*}
		\sum_{I\in\D'}\nu(I)^q\leq2^{-(\tau(q)+\kappa/2)m}+2^{-(\tau(q)+\varepsilon)m}\leq2^{-(\tau(q)+\min\{\kappa/2,\varepsilon\}-1/m)m}.
	\end{equation*}
	Since $m\gg_{\kappa,\varepsilon}1$, we can assume that $1/m<\min\{\kappa/2,\varepsilon\}/2$. Therefore, we finally obtain
	\begin{equation*}
		\sum_{I\in\D'}\nu(I)^q\leq2^{-(\tau(q)+\min\{\kappa/2,\varepsilon\}/2)m}
	\end{equation*}
	and complete the proof.
\end{proof}

We notice that, by replacing $\alpha^+$ in Lemma \ref{sum_of_mass_more_than_typical_2^{-alpham}} with $\alpha^-$ and doing the similar argument as the proof of Lemma \ref{sum_of_mass_more_than_typical_2^{-alpham}} (and using Proposition \ref{existence_of_limit_of_L^q_dim}), we can also obtain the following similar statement, but we require this only for Proposition \ref{justification_heuristic_singularity} (iii).

\begin{lem}\label{L^q_norm_strictly_smaller_math_than_2^-alpham}
	Let $q>1$ and $\alpha^-=\tau'^{,-}(q)$. For $0<\sigma<1$, there exists $\varepsilon=\varepsilon(\sigma,q,\tau)>0$ such that, if $m\in\N,\ m\gg_{\sigma,q,\tau,\varepsilon}1$ is sufficiently large, then we have
	\begin{equation*}
		\sum_{I\in\D_m,\ \nu(I)\leq2^{-(\alpha^-+\sigma)m}}\nu(I)^q\leq 2^{-(\tau(q)+\varepsilon)m}.
	\end{equation*}
\end{lem}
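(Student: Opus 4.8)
The plan is to run the proof of Lemma~\ref{sum_of_mass_more_than_typical_2^{-alpham}} ``from below'': one replaces the right derivative $\alpha^+=\tau'^{,+}(q)$ by the left derivative $\alpha^-=\tau'^{,-}(q)$ and the auxiliary exponent $q_1=q+\delta$ by $q_1=q-\delta$, which lies in $(1,q)$ as soon as $\delta<q-1$. So I would fix a small $\eta=\eta(\sigma)>0$, then $0<\delta\ll_{\eta,q,\tau}1$, put $q_1=q-\delta$, and record that by concavity of $\tau$ the secant slope obeys $\alpha^-\le\bigl(\tau(q)-\tau(q_1)\bigr)/\delta\le\alpha^-+\eta$. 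I set $\varepsilon=\delta^2$ and argue by contradiction: assume that for some sufficiently large $m$ one has $\sum_{I\in\D_m,\ \nu(I)\le 2^{-(\alpha^-+\sigma)m}}\nu(I)^q>2^{-(\tau(q)+\delta^2)m}$.

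The only point that does not already appear in Lemma~\ref{sum_of_mass_more_than_typical_2^{-alpham}} is that the masses below $2^{-(\alpha^-+\sigma)m}$ are not confined to finitely many dyadic scales, and I would deal with this by a truncation. Since $\sum_{I\in\D_m,\ \nu(I)\le 2^{-2m}}\nu(I)^q\le 2^{-2(q-1)m}\sum_{I\in\D_m}\nu(I)=2^{-2(q-1)m}$ and $2(q-1)>q-1\ge\tau(q)$, the intervals with $\nu(I)\le 2^{-2m}$ contribute at most $2^{-1}2^{-(\tau(q)+\delta^2)m}$ once $m$ is large; and since $\alpha^-=\tau'^{,-}(q)\le\lim_{q'\searrow1}D(q')=\dim_H\nu\le 1<2$, the window $\nu(I)\le 2^{-(\alpha^-+\sigma)m}$ lies above this threshold. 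Hence $\sum_{I\in\D_m,\ 2^{-2m}<\nu(I)\le 2^{-(\alpha^-+\sigma)m}}\nu(I)^q>2^{-1}2^{-(\tau(q)+\delta^2)m}$, and this sum is spread over only $O_{q,\tau,\sigma}(m)$ dyadic level sets $\A'_j=\{I\in\D_m:\ 2^{-j}<\nu(I)\le 2^{-j+1}\}$ with $(\alpha^-+\sigma)m<j\le 2m$. Pigeonholing just as in the proof of Corollary~\ref{tau^*(alpha)_is_nonnegative} produces one such $j$ with $\sum_{I\in\A'_j}\nu(I)^q\ge 2^{-(\tau(q)+2\delta^2)m}$ (for $m\gg_{\sigma,q,\tau,\delta}1$); writing $\alpha':=j/m$ we then have $\alpha'>\alpha^-+\sigma$ and $\nu(I)\le 2\cdot 2^{-\alpha' m}$ for every $I\in\A'_j$.

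The endgame is then that of Lemma~\ref{sum_of_mass_more_than_typical_2^{-alpham}} with the powers interchanged. For $I\in\A'_j$ one has $\nu(I)^{q_1}=\nu(I)^{-\delta}\nu(I)^q\ge 2^{-\delta}2^{\delta\alpha'm}\nu(I)^q$, hence $\sum_{I\in\D_m}\nu(I)^{q_1}\ge\sum_{I\in\A'_j}\nu(I)^{q_1}\ge 2^{-\delta}2^{\delta\alpha'm}2^{-(\tau(q)+2\delta^2)m}$; on the other hand, since the limit defining $\tau(q_1)$ exists by Proposition~\ref{existence_of_limit_of_L^q_dim} (note $q_1>1$), for $m$ large $\sum_{I\in\D_m}\nu(I)^{q_1}\le 2^{-(\tau(q_1)-\delta^2)m}$. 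Comparing these two bounds, dividing by $\delta m$, and using $\bigl(\tau(q)-\tau(q_1)\bigr)/\delta\le\alpha^-+\eta$ yields $\alpha'\le\alpha^-+\eta+3\delta+1/m$, so that $\sigma<\eta+3\delta+1/m$, which is absurd once $\eta,\delta$ are chosen small in terms of $\sigma$ and $m$ is large. Since $\delta=\delta(\sigma,q,\tau)$, the resulting $\varepsilon=\delta^2$ depends only on $\sigma,q,\tau$, as claimed.

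I expect the only delicate steps to be the truncation — that is, verifying that the fixed threshold $2^{-2m}$ really sits above the relevant window, which is precisely the inequality $\alpha^-\le\dim_H\nu\le 1$ — and keeping careful track of the sign of every concavity estimate, since here $q$ is perturbed downward rather than upward; the rest is a transcription of the proof of Lemma~\ref{sum_of_mass_more_than_typical_2^{-alpham}}.
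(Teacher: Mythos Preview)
Your proof is correct and follows exactly the route the paper indicates: replace $\alpha^+$ by $\alpha^-$, perturb $q$ downward to $q_1=q-\delta$, and rerun the argument of Lemma~\ref{sum_of_mass_more_than_typical_2^{-alpham}}. The paper does not write out a proof beyond that one-line hint, so there is nothing further to compare; the truncation at $\nu(I)\le 2^{-2m}$ that you insert is precisely the extra bookkeeping needed because the mass range $\{\nu(I)\le 2^{-(\alpha^-+\sigma)m}\}$ is now unbounded below, and your handling of it is clean. One small remark: the bound $\sum_{I\in\D_m}\nu(I)^{q_1}\le 2^{-(\tau(q_1)-\delta^2)m}$ already follows from the $\liminf$ definition of $\tau(q_1)$, so Proposition~\ref{existence_of_limit_of_L^q_dim} is not strictly needed at that step (though invoking it does no harm and matches the paper's hint).
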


The following lemma, which may be interpreted as the “local $L^q$ norm lemma”, plays an essential role in the proof of \cite[Theorem 5.1]{Shm19} and also in the proof of Lemma \ref{L^q_norm_porosity} and Theorem \ref{L^q_norm_flattening_theorem}. For $m\in\N$ and a subset $E\subset\RP^1$, we write
\begin{equation*}
	\D_m(E)=\left\{I\in\D_m\left|\ I\cap E\neq\emptyset\right.\right\}.
\end{equation*}

\begin{lem}[Analogy of {\cite[Proposition 4.13]{Shm19}}]\label{local_L^q_norm_lemma}
	Let $q>1$.
	\begin{enumerate}
		\renewcommand{\labelenumi}{(\roman{enumi})}
		\item Let $\alpha^+=\tau'^{,+}(q)$. For $0<\kappa<1$, there exists $\eta=\eta(\kappa,q,\tau,\mu)>0$ such that, if $m\in\N$, $m\gg_{\eta,\kappa,q,\tau,\mu}1$ is sufficiently large, the following holds. Let $s\in\N$,
		$I\in\D_s$ and $\D'\subset\D_{s+m}(I)$ be such that $|\D'|\leq 2^{(\tau^*(\alpha^+)-\kappa)m}$. Then we have
		\begin{equation*}
			\sum_{J\in\D'}\nu(J)^q\leq 2^{-(\tau(q)+\eta)m}\nu(2I)^q,
		\end{equation*}
		where $2I$ is the $2$ times expansion of $I$ with the same center.
		\item For $0<\delta<1$, if $m\in \N$, $m\gg_{\delta,q,\tau,\mu}1$ is sufficiently large, then, for any $s\in\N$
		and any $I\in\D_s$, we have
		\begin{equation*}
			\sum_{J\in\D_{s+m}(I)}\nu(J)^q\leq 2^{-(\tau(q)-\delta)m}\nu(2I)^q.
		\end{equation*}
		
	\end{enumerate}
\end{lem}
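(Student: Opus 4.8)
The plan is to exploit the approximate self-conformality of $\nu$. By (\ref{nu_Omega_m}) we have $\nu=\sum_{i\in\Omega_t}p_iA_i\nu$ for every $t\in\N$, and by Corollary \ref{contraction_on_U_by_A_preliminaries} and (\ref{contraction_2^{-m}}) each $A_i$ with $i\in\Omega_t$, restricted to $U$, is bi-Lipschitz comparable, with uniform constants, to a contraction of ratio $2^{-t}$; so the pushforward $A_i\nu$, measured at scale $2^{-(t+m')}$ near a dyadic cell, is $L^q$-comparable to $\nu$ measured at scale $2^{-m'}$. This is the substitute for the exact rescaling available in the self-similar case of \cite[Proposition 4.13]{Shm19}. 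The key device is an auxiliary scale $t=s+c_1$, with $c_1=c_1(\mu)\in\N$ large enough that $2^{c_1}\ge 100C_1$ ($C_1$ as in Corollary \ref{contraction_on_U_by_A_preliminaries}), chosen so that for all large $m$: (a) any cylinder image $A_iK$ with $i\in\Omega_t$ meeting some $J\in\D_{s+m}(I)$ satisfies $A_iK\subseteq 2I$; and (b) for each such $i$ the sets $\{A_i^{-1}J\cap K:J\in\D_{s+m}(I)\}$ are pairwise disjoint, each of diameter at most $O_\mu(1)2^{-m}$ — hence covered by $O_\mu(1)$ atoms of $\D_m$ — while each atom of $\D_m$ meets at most two of them (seen by pushing forward by $A_i$ and using (\ref{contraction_2^{-m}}) with $2^{c_1}\ge 100C_1$). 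We may assume $2I\subsetneq\RP^1$, the finitely many remaining $s$ following directly from the definition of $\tau(q)$ together with Proposition \ref{existence_of_limit_of_L^q_dim}.

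Granting (a) and (b), put $W=\{i\in\Omega_t:A_iK\subseteq 2I\}$ and $\pi_W=\sum_{i\in W}p_i$. By (a), for every $J\in\D_{s+m}(I)$ only words in $W$ contribute to $\nu(J)$, so $\nu(J)=\sum_{i\in W}p_i\,\nu(A_i^{-1}J)$; and since $A_iK\subseteq 2I$ forces $K\subseteq A_i^{-1}(2I)$, we get $\pi_W\le\nu(2I)$. Applying convexity of $t\mapsto t^q$ with the probability weights $p_i/\pi_W$ gives, for any $\D'\subseteq\D_{s+m}(I)$,
\begin{align*}
\sum_{J\in\D'}\nu(J)^q&\le\pi_W^{q-1}\sum_{i\in W}p_i\sum_{J\in\D'}\nu(A_i^{-1}J)^q\\
&\le\nu(2I)^{q-1}\sum_{i\in W}p_i\sum_{J\in\D'}\nu(A_i^{-1}J)^q.
\end{align*}
Fix $i\in W$ and let $\D'_i\subseteq\D_m$ be the atoms meeting $\bigcup_{J\in\D'}(A_i^{-1}J\cap K)$; since $\nu(A_i^{-1}J)=\nu(A_i^{-1}J\cap K)$, property (b) together with Lemma \ref{L^q_norms_of_two_partitions} yields $\sum_{J\in\D'}\nu(A_i^{-1}J)^q\le O_{\mu,q}(1)\sum_{I''\in\D'_i}\nu(I'')^q$, and (b) also gives $|\D'_i|\le O_\mu(1)\,|\D'|$.

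For part (ii) take $\D'=\D_{s+m}(I)$: then $\sum_{I''\in\D'_i}\nu(I'')^q\le\|\nu^{(m)}\|_q^q\le 2^{-(\tau(q)-\delta/2)m}$ for $m$ large by the definition of $\tau(q)$, and substituting into the display and absorbing the $O_{\mu,q}(1)$ factor into $2^{\delta m/2}$ gives $\sum_{J\in\D_{s+m}(I)}\nu(J)^q\le 2^{-(\tau(q)-\delta)m}\nu(2I)^q$. For part (i) we may assume $\tau^*(\alpha^+)\ge\kappa$ (otherwise $\D'=\emptyset$ for $m$ large and the bound is trivial), so $|\D'_i|\le O_\mu(1)2^{(\tau^*(\alpha^+)-\kappa)m}\le 2^{(\tau^*(\alpha^+)-\kappa/2)m}$ for $m$ large; then Lemma \ref{less_than_typical_tau(q)}, applied with $\kappa/2$ in place of $\kappa$, gives $\sum_{I''\in\D'_i}\nu(I'')^q\le 2^{-(\tau(q)+\varepsilon)m}$ with $\varepsilon=\varepsilon(\kappa/2,q,\tau)>0$. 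Feeding this back and absorbing $O_{\mu,q}(1)$ into $2^{\varepsilon m/2}$ yields $\sum_{J\in\D'}\nu(J)^q\le 2^{-(\tau(q)+\varepsilon/2)m}\nu(2I)^q$; setting $\eta=\varepsilon/2$, which depends only on $\kappa,q,\tau$ and hence on $\mu$, completes part (i).

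The $L^q$-norm bookkeeping through Lemma \ref{L^q_norms_of_two_partitions} and the passage via Lemma \ref{less_than_typical_tau(q)} are routine and essentially as in \cite{Shm19}; the one point needing genuine care is the geometric set-up (a)--(b), i.e.\ fixing the auxiliary scale $t=s+c_1$ and verifying, via the two-sided distortion estimate (\ref{contraction_2^{-m}}), both that the cylinder images sit inside $2I$ and that the families $\{A_i^{-1}J\cap K\}_J$ have uniformly bounded overlap multiplicity at scale $2^{-m}$. This is where $c_1$ is chosen and where the absence of exact rescaling (present in the self-similar case) is absorbed, and I expect it to be the only real obstacle.
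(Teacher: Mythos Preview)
Your proposal is correct and follows essentially the same approach as the paper's proof: decompose $\nu$ via $\Omega_{s+c}$ for a fixed $c=c(\mu)$, use H\"older/Jensen to extract a factor comparable to $\nu(2I)^{q-1}$, then compare each $\sum_{J\in\D'}\nu(A_i^{-1}J)^q$ to a sum over $\D_m$ via Lemma~\ref{L^q_norms_of_two_partitions} and invoke Lemma~\ref{less_than_typical_tau(q)} (for (i)) or the definition of $\tau(q)$ (for (ii)). The only noteworthy difference is how the factor $\nu(2I)$ is obtained: the paper bounds $\mu_{s+s_0}(\{A:I\cap AK\neq\emptyset\})\le \mu_{s+s_0}(f_{x_0}^{-1}((3/2)I))$ and then passes to $\nu(2I)$ via the coding maps $\pi,\pi_{s+s_0}$ and (\ref{pi_pi_m}), whereas you get $\pi_W\le\nu(2I)$ directly from stationarity and $K\subseteq A_i^{-1}(2I)$ --- a slightly cleaner route to the same estimate.
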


The both statements in Lemma \ref{local_L^q_norm_lemma} are heavily depending on the “self-similar-like” structure of the stationary measure $\nu$, and proved by the similar argument as the proof of Proposition \ref{existence_of_limit_of_L^q_dim}.
Furthermore, Lemma \ref{local_L^q_norm_lemma} (i) is based on Lemma \ref{less_than_typical_tau(q)}. The statement (ii) is shown by the definition of $\tau(q)$, instead of Lemma \ref{less_than_typical_tau(q)}.

\begin{proof}
	We first see the consequence of the “self-similar-like” property of the stationary measure $\nu$. We take $s_0=s_0(\mu)\in\N$ depending only on $\mu$ and specified later. Let $q>1$, $s\in\N$, $I\in\D_s$, $m\in\N$, and $\D'$ be any non-empty subset of $\D_{s+m}(I)$. By (\ref{nu_Omega_m}), we have
	\begin{equation*}
		\nu=\mu_{s+s_0}{\bm .}\nu.
	\end{equation*}
	Hence,
	\begin{align}\label{Omega_s+s_0_nu_local_L^q}
		\sum_{J\in\D'}\nu(J)^q&=\sum_{J\in\D'}\left(\mu_{s+s_0}{\bm .}\nu(J)\right)^q\nonumber\\
		&=\sum_{J\in\D'}\left(\int_G\nu(A^{-1}J)\ d\mu_{s+s_0}(A)\right)^q\nonumber\\
		&=\sum_{J\in\D'}\left(\int_{\left\{A\in G\left|I\cap AK\neq\emptyset\right.\right\}}\nu(A^{-1}J)\ d\mu_{s+s_0}(A)\right)^q,
	\end{align}
	where the last equation follows because, for $A\in G$, $\nu(A^{-1}J)\neq0$ only if $A^{-1}I\cap K\supset A^{-1}J\cap K\neq\emptyset$. We notice that $\supp\ \nu=K$.
	
	By Hölder's inequality, we have for each $J\in\D'$ that
	\begin{align*}
		&\left(\int_{\left\{A\in G\left|I\cap AK\neq\emptyset\right.\right\}}\nu(A^{-1}J)\ d\mu_{s+s_0}(A)\right)^q\\
		\leq&\ \mu_{s+s_0}\left(\left\{A\in G\left|\ I\cap AK\neq\emptyset\right.\right\}\right)^{q-1}\int_{\left\{A\in G\left|I\cap AK\neq\emptyset\right.\right\}}\nu(A^{-1}J)^q\ d\mu_{s+s_0}(A).
	\end{align*}
	By this and (\ref{Omega_s+s_0_nu_local_L^q}), it follows that
	\begin{align*}
		&\sum_{J\in\D'}\nu(J)^q\nonumber\\
		\leq&\ \sum_{J\in\D'}\mu_{s+s_0}\left(\left\{A\in G\left|\ I\cap AK\neq\emptyset\right.\right\}\right)^{q-1}\int_{\left\{A\in G\left|I\cap AK\neq\emptyset\right.\right\}}\nu(A^{-1}J)^q\ d\mu_{s+s_0}(A)\nonumber\\
		=&\ \mu_{s+s_0}\left(\left\{A\in G\left|\ I\cap AK\neq\emptyset\right.\right\}\right)^{q-1}\int_{\left\{A\in G\left|I\cap AK\neq\emptyset\right.\right\}}\sum_{J\in\D'}\nu(A^{-1}J)^q\ d\mu_{s+s_0}(A).
	\end{align*}
	
	Here, assume that we can take $M>0$ such that
	\begin{equation}\label{assume_M_local_L^q}
		\sum_{J\in\D'}\nu(A_i^{-1}J)^q\leq M\quad\text{ for every } i\in\Omega_{s+s_0}\text{ such that } I\cap A_iK\neq\emptyset.
	\end{equation}
	Then, by the above inequality, we have
	\begin{equation}\label{M_local_L^q}
		\sum_{J\in\D'}\nu(J)^q\leq M\mu_{s+s_0}\left(\left\{A\in G\left|\ I\cap AK\neq\emptyset\right.\right\}\right)^q.
	\end{equation}
	We recall that we have taken $x_0\in K$ and defined $f_{x_0}: G\ni A\mapsto Ax_0\in \RP^1$. Since $s_0\gg_\mu1$, we can apply (\ref{contraction_2^{-m}}) to $s+s_0$ and obtain that $\diam\ A_iK\leq C_1\pi 2^{-(s+s_0)}<\pi 2^{-s}/4$ for $i\in\Omega_{s+s_0}$\footnote{Since $\supp\ \mu=\A$, that $s_0$ is large in terms of $\mu$ implies that $s_0$ is large in terms of $\A$.}. Hence, if $I\cap A_iK\neq\emptyset$, then $A_ix_0\in(3/2)I$. So we have from (\ref{M_local_L^q}) that
	\begin{equation}\label{M_3/2I_local_L^q}
		\sum_{J\in\D'}\nu(J)^q\leq M\mu_{s+s_0}\left(f_{x_0}^{-1}\left(\frac{3}{2}I\right)\right)^q=M\left(\mu_{s+s_0}{\bm .}\delta_{x_0}\left(\frac{3}{2}I\right)\right)^q.
	\end{equation}
	Here, we also recall that, for the coding map $\pi: \I^\N\ni i\mapsto \lim_{n\to\infty}A_{i_1}\cdots A_{i_n}x_0\in K$ and $\pi_{s+s_0}: \I^\N\ni i\mapsto A_{i_1}\cdots A_{i_{n(s+s_0,i)}}x_0\in K$, $\nu=\pi P$ and $\mu_{s+s_0}{\bm .}\delta_{x_0}=\pi_{s+s_0}P$. Furthermore, since $s_0\gg_\mu1$, we have by (\ref{pi_pi_m}) that
	\begin{equation*}
		d_{\RP^1}(\pi(i),\pi_{s+s_0}(i))\leq C_1\pi2^{-(s+s_0)}<\frac{\pi 2^{-s}}{4},\quad i\in I^\N.
	\end{equation*}
	Therefore, we have $\pi_{s+s_0}^{-1}((3/2)I)\subset\pi^{-1}(2I)$, and hence
	\begin{equation*}
		\mu_{s+s_0}{\bm .}\delta_{x_0}\left(\frac{3}{2}I\right)\leq \nu(2I).
	\end{equation*}
	By this and (\ref{M_3/2I_local_L^q}), we finally obtain that
	\begin{equation}\label{M_nu_2I_local_L^q}
		\sum_{J\in\D'}\nu(J)^q\leq M\nu(2I)^q.
	\end{equation}
	
	Next, we show the statement (i) using (\ref{assume_M_local_L^q}) and (\ref{M_nu_2I_local_L^q}). Let $0<\kappa<1$ and we take $\varepsilon=\varepsilon(\kappa/2, q, \tau)>0$ so that Lemma \ref{less_than_typical_tau(q)} holds for $\kappa/2$ and $\varepsilon$. We write $\alpha^+=\tau'^{,+}(q)$. Assume that $m\gg_{\varepsilon,\kappa,q,\tau,\nu}1$ is sufficiently large and $\D'\subset\D_{s+m}(I)$ such that $|\D'|\leq 2^{(\tau^*(\alpha^+)-\kappa)m}$.
	
	We take each $i\in\Omega_{s+s_0}$ such that $I\cap A_iK\neq\emptyset$. We define
	\begin{equation*}
		\mathcal{C}'_{A_i}=\{J'\in\D_m\left|\ J'\cap A_i^{-1}J\cap K\neq\emptyset\text{ for some } J\in\D'\right.\}.
	\end{equation*}
	For each $J\in\D'\subset\D_{s+m}(I)$ and $x,y\in A_i^{-1}J\cap K$, we have by (\ref{contraction_2^{-m}}) for $s+s_0\gg_\mu 1$ that
	\begin{equation*}
		C_1^{-1}2^{-(s+s_0)}d_{\RP^1}(x,y)\leq d_{\RP^1}(A_ix,A_iy)\leq \pi 2^{-(s+m)}.
	\end{equation*}
	Hence,
	\begin{equation*}
		\diam\ A_i^{-1}J\cap K\leq C_1\pi2^{s_0}2^{-m}.
	\end{equation*}
	From this, we can see that the number of $J'\in\D_m$ which intersect $A_i^{-1}J\cap K$ is $\leq O_{\mu,s_0}(1)$. Therefore, we have
	\begin{equation}\label{less_than_tau(alpha+)_local_L^q}
		|\mathcal{C}'_{A_i}|\leq O_{\mu,s_0}(1)|\D'|\leq O_\mu(1)2^{(\tau^*(\alpha^+)-\kappa)m}\leq 2^{(\tau^*(\alpha^+)-\kappa/2)m},
	\end{equation}
	where, since $m\gg_{\kappa,\mu} 1$, we can assume that $O_\mu(1)\leq 2^{\kappa/2\cdot m}$.
	
	We estimate $\sum_{J\in\D'}\nu(A_i^{-1}J)^q$. For each $J\in\D'$, we have seen that $A_i^{-1}J\cap K$ is covered by at most $O_{\mu,s_0}(1)$ elements of $\mathcal{C}'_{A_i}$. Here, we notice that $A_i^{-1}\D_{s+m}$ is a partition of $\RP^1$ by intervals, and hence, for each $A_i^{-1}J\ (J\in\D_{s+m})$ but finite ones which contain some endpoints of $U$ (at most $O_\mu(1)$\footnote{We notice that we have fixed $U$ for $\A$ at (\ref{domains_the_Möbius_IFS_acts_on_preliminaries}), and hence $\mu$ with $\supp\ \mu=\A$.}), we have $A_i^{-1}J\subset U$ or $A_i^{-1}J\cap U=\emptyset$. For each $J\in\D'$ such that $A_i^{-1}J\subset U$, if we take $x,y\in A_i^{-1}J$ such that $A_ix, A_iy$ are close to the two endpoints of $J$, we have by (\ref{contraction_2^{-m}}) for $s+s_0$ that
	\begin{equation*}
		2^{-(s+m)}\leq d_{\RP^1}(A_ix,A_iy)\leq C_12^{-(s+s_0)} d_{\RP^1}(x,y),
	\end{equation*}
	and hence
	\begin{equation*}
		d_{\RP^1}(x,y)\geq C_1^{-1}2^{-m}.
	\end{equation*}
	Therefore, each $A_i^{-1}J\ (J\in\D')$ such that $A_i^{-1}J\subset U$ contains an open interval of length $\Omega_\mu(2^{-m})$. So it follows that, for each $J'\in\mathcal{C}'_{A_i}$, the number of $A_i^{-1}J\cap K\ (J\in\D')$ which intersect $J'$ is at most $O_\mu(1)$. Hence, by Lemma \ref{L^q_norms_of_two_partitions}, we obtain
	\begin{equation}\label{norm_of_Ai^-1D'_local_L^q}
		\sum_{J\in\D'}\nu(A_i^{-1}J)^q=\sum_{J\in\D'}\nu(A_i^{-1}J\cap K)^q\leq O_{\mu,q}(1)\sum_{J'\in\mathcal{C}'_{A_i}}\nu(J')^q.
	\end{equation}
	
	By (\ref{less_than_tau(alpha+)_local_L^q}), we can apply Lemma \ref{less_than_typical_tau(q)} for $\kappa/2$ to $\mathcal{C}'_{A_i}$. Then, we have
	\begin{equation*}
		\sum_{J'\in\mathcal{C}'_{A_i}}\nu(J')^q\leq 2^{-(\tau(q)+\varepsilon)m}.
	\end{equation*}
	By this and (\ref{norm_of_Ai^-1D'_local_L^q}), we obtain that
	\begin{equation*}
		\sum_{J\in\D'}\nu(A_i^{-1}J)^q\leq O_{\mu,q}(1)2^{-(\tau(q)+\varepsilon)m}\leq 2^{-(\tau(q)+\varepsilon/2)m},
	\end{equation*}
	where, since $m\gg_{\varepsilon,q,\mu}1$, we can assume that $O_{\mu,q}(1)\leq 2^{(\varepsilon/2)m}$. Hence, we showed (\ref{assume_M_local_L^q}) for $M=2^{-(\tau(q)+\varepsilon/2)m}$. So, by (\ref{M_nu_2I_local_L^q}), we finally obtain
	\begin{equation*}
		\sum_{J\in\D'}\nu(J)^q\leq 2^{-(\tau(q)+\varepsilon/2)m}\nu(2I)^q,
	\end{equation*}
	which is the desired consequence for $\eta(\kappa,q,\tau,\mu)=\varepsilon/2$.
	
	Next, we show the statement (ii) using (\ref{assume_M_local_L^q}) and (\ref{M_nu_2I_local_L^q}) for $\D'=\D_{s+m}(I)$. Let $0<\delta<1$. From the same argument as above, we can see that, for each $i\in\Omega_{s+s_0}$ such that $I\cap A_iK\neq\emptyset$,
	\begin{equation*}
		\sum_{J\in\D_{s+m}(I)}\nu(A_i^{-1}J)^q\leq O_{\mu,q}(1)\sum_{J'\in\D_m}\nu(J')^q.
	\end{equation*}
	By the definition of $\tau(q)$, if $m\gg_{\delta,q,\tau}1$ is sufficiently large, then
	\begin{equation*}
		\sum_{J'\in\D_m}\nu(J')^q<2^{-(\tau(q)-\delta/2)m}.
	\end{equation*}
	By this and the above inequality, we have
	\begin{equation*}
		\sum_{J\in\D_{s+m}(I)}\nu(A_i^{-1}J)^q\leq O_{\mu,q}(1)2^{-(\tau(q)-\delta/2)m}\leq 2^{-(\tau(q)-\delta)m},
	\end{equation*}
	where, since $m\gg_{\delta,q,\mu}1$, we can assume that $O_{\mu,q}(1)\leq 2^{(\delta/2)m}$. This is (\ref{assume_M_local_L^q}) for $M=2^{-(\tau(q)-\delta)m}$. Hence, by (\ref{M_nu_2I_local_L^q}), we finally have
	\begin{equation*}
		\sum_{J\in\D_{s+m}(I)}\nu(J)^q\leq2^{-(\tau(q)-\delta)m}\nu(2I)^q,
	\end{equation*}
	and complete the proof.
\end{proof}

In addition to the above lemmas, we also need the following local version of Lemma \ref{close_to_typical_tau^*(alpha)}.
We recall that we have defined $\widehat{\nu_I}=\nu|_I/\nu(2I)=\nu(I)/\nu(2I)\cdot\nu|_I$ for $I\in\D_s$ with $\nu(I)>0$.

\begin{lem}\label{close_to_tau^*(alpha)_local_version}
Let $q>1$ and $\alpha^-=\tau'^{,-}(q)$. For $0<\varepsilon<1$, if $0<\delta\ll_{\varepsilon,q,\tau} 1$ is sufficiently small and $m\in\N$, $m\gg_{\varepsilon,q,\tau,\mu,\delta}1$ is sufficiently large, the following holds.
Let $s\in\N$, $I\in\D_s$ and $\D'\subset\D_{s+m}(I)$ satisfy $\nu(I)>0$ and
\begin{enumerate}
	\renewcommand{\labelenumi}{(\arabic{enumi})}
	\item there is $\alpha'\geq0$ such that $2^{-\alpha'm}\leq\widehat{\nu_I}(J)\leq 2\cdot2^{-\alpha'm}$ for every $J\in\D'$,
	\item $\sum_{J\in\D'}\widehat{\nu_I}(J)^q\geq 2^{-(\tau(q)+\delta)m}$,
\end{enumerate}
then
\begin{equation*}
	|\D'|\leq 2^{(\tau^*(\alpha^-)+\varepsilon)m}.
\end{equation*}
\end{lem}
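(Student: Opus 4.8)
The plan is to copy the proof of Lemma~\ref{close_to_typical_tau^*(alpha)} almost verbatim, replacing its two appeals to the mere existence of the limits defining $\tau(q)$ and $\tau(q_1)$ (namely (\ref{tau(q_1)_delta^2_realize_heuristic}) and (\ref{tau(q)_delta_realize_heuristic})) by the corresponding \emph{local} estimates supplied by Lemma~\ref{local_L^q_norm_lemma}~(ii), and inserting one elementary domination inequality to pass from $\nu$ on $\D_{s+m}(I)$ to $\widehat{\nu_I}$. Concretely, for any exponent $p>1$ and any loss $\delta'>0$, Lemma~\ref{local_L^q_norm_lemma}~(ii) applied with exponent $p$ gives, for $m\gg_{\delta',p,\tau,\mu}1$ and every $s\in\N$, $I\in\D_s$,
\begin{equation*}
\sum_{J\in\D_{s+m}(I)}\nu(J)^p\leq 2^{-(\tau(p)-\delta')m}\nu(2I)^p.
\end{equation*}
Since $\D'\subset\D_{s+m}(I)$ and $\widehat{\nu_I}(J)=\nu(J\cap I)/\nu(2I)\leq\nu(J)/\nu(2I)$ for every $J$, dividing by $\nu(2I)^p$ yields
\begin{equation*}
\sum_{J\in\D'}\widehat{\nu_I}(J)^p\leq 2^{-(\tau(p)-\delta')m}.
\end{equation*}
I would apply this once with $p=q_1:=q-\delta$ and $\delta'=\delta^2$, and once with $p=q$ and $\delta'=\delta$; both require $m\gg_{\varepsilon,q,\tau,\mu,\delta}1$, which is the source of the $\mu$-dependence in the statement.

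With these two substitutes in hand, I would run the same chain of inequalities as in the proof of Lemma~\ref{close_to_typical_tau^*(alpha)}. First fix $\eta=\eta(\varepsilon,q,\tau)>0$ small, and take $\delta\ll_{\varepsilon,\eta,q,\tau}1$ small enough that (\ref{estimate_left_derivative_realize_heuristic}) holds, i.e.\ $\delta\alpha^--\delta\eta<\tau(q)-\tau(q_1)<\delta\alpha^-+\delta\eta$. From hypothesis~(1) and the local $L^{q_1}$ bound,
\begin{equation*}
2^{-\alpha'mq_1}|\D'|\leq\sum_{J\in\D'}\widehat{\nu_I}(J)^{q_1}\leq 2^{-(\tau(q_1)-\delta^2)m},
\end{equation*}
while hypotheses~(1) and~(2) give $2^{-(\tau(q)+\delta)m}\leq\sum_{J\in\D'}\widehat{\nu_I}(J)^q\leq 2^q 2^{-\alpha'mq}|\D'|$. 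Eliminating $|\D'|$ between these, exactly as in the computation leading to (\ref{alpha'_close_alpha^-_realize_heuristic}), gives $\alpha'-\alpha^-<\eta+3\delta$ (using $q/(\delta m)<\delta$ for $m$ large). Feeding this, the local $L^q$ bound $2^{-\alpha'mq}|\D'|\leq\sum_{J\in\D'}\widehat{\nu_I}(J)^q\leq 2^{-(\tau(q)-\delta)m}$, and the identity $\alpha^-q-\tau(q)=\tau^*(\alpha^-)$ from (\ref{Legendre_transform_at_left_right_derivative}) into $|\D'|\leq 2^{\alpha'mq}2^{-(\tau(q)-\delta)m}$, one obtains $|\D'|\leq 2^{(\tau^*(\alpha^-)+q\eta+(3q+1)\delta)m}$; choosing $\eta=\eta(\varepsilon,q,\tau)$ and then $\delta=\delta(\varepsilon,q,\tau)$ so that $q\eta+(3q+1)\delta<\varepsilon$ finishes the proof. (As in Lemma~\ref{close_to_typical_tau^*(alpha)}, one may equivalently read hypothesis~(2) with exponent $\delta^2$ in place of $\delta$; the two formulations differ only by renaming the free small parameter.)

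I do not expect a genuine obstacle here: the whole content is localization, and the only point requiring care is the bookkeeping of error exponents. Because a term of size $\delta\,m$ coming from the differentiability estimate is eventually divided by $\delta\,m$, any loss introduced at the level of the $L^{q_1}$ estimate must be of order $o(\delta)$, which is why it is taken to be $\delta^2$; this is harmless because Lemma~\ref{local_L^q_norm_lemma}~(ii) permits an arbitrarily small loss at the price of enlarging $m$. One should also verify that the dilate ``$2I$'' appearing in Lemma~\ref{local_L^q_norm_lemma}~(ii) is literally the same as the one used in the definition $\widehat{\nu_I}=\nu|_I/\nu(2I)$, so that the factors $\nu(2I)^p$ cancel cleanly; they are, by construction.
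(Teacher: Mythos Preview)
Your proposal is correct and follows exactly the paper's approach: the paper's proof consists of a single sentence saying to rerun the proof of Lemma~\ref{close_to_typical_tau^*(alpha)} with (\ref{tau(q_1)_delta^2_realize_heuristic}) and (\ref{tau(q)_delta_realize_heuristic}) replaced by the local bounds $\sum_{J\in\D_{s+m}(I)}\widehat{\nu_I}(J)^{q_1}\leq 2^{-(\tau(q_1)-\delta^2)m}$ and $\sum_{J\in\D_{s+m}(I)}\widehat{\nu_I}(J)^{q}\leq 2^{-(\tau(q)-\delta)m}$ coming from Lemma~\ref{local_L^q_norm_lemma}~(ii). Your write-up is in fact more detailed than the paper's, spelling out the domination $\widehat{\nu_I}(J)\leq\nu(J)/\nu(2I)$ and the bookkeeping of the $\delta$ versus $\delta^2$ loss, but the argument is the same.
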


\begin{proof}
We can prove this lemma by the same way as the proof of Lemma \ref{close_to_typical_tau^*(alpha)}, by just replacing (\ref{tau(q_1)_delta^2_realize_heuristic}) and (\ref{tau(q)_delta_realize_heuristic}) with
\begin{equation*}
\sum_{J\in\D_{s+m}(I)}\widehat{\nu_I}(J)^{q_1}\leq2^{-(\tau(q_1)-\delta^2)m}
\end{equation*}
and
\begin{equation*}
\sum_{J\in\D_{s+m}(I)}\widehat{\nu_I}(J)^q\leq2^{-(\tau(q)-\delta)m}
\end{equation*}
for $q_1=q-\delta$ and $m\gg_{\delta,q,\tau,\mu}1$, which follow from Lemma \ref{local_L^q_norm_lemma} (ii).
\end{proof}

\section{Counterexamples to Problem \ref{main_problem_L^q_dim_of_Furstenberg_measures}}\label{section_counterexamples}

In this section, we discuss counterexamples to Problem \ref{main_problem_L^q_dim_of_Furstenberg_measures}. We first show the first half of Theorem \ref{counterexample_to_natural_extension}, that is, a uniformly hyperbolic and strongly Diophantine family $\A\subset G$ such that two distinct elements of $\A$ share a common fixed point in the attractor is a counterexample to Problem \ref{main_problem_L^q_dim_of_Furstenberg_measures}.
Then, we see in Section \ref{subsection_counterexamples_specific_examples} that there indeed exists such $\A$.
According to the main Theorem \ref{the_main_theorem_L^q_dim_Mobius_IFS}, counterexamples to Problem \ref{main_problem_L^q_dim_of_Furstenberg_measures} must be the case (II) of Theorem \ref{the_main_theorem_L^q_dim_Mobius_IFS}, and hence we show Proposition \ref{justification_heuristic_singularity} about the “singular” properties of such measures in Section \ref{subsection_singularity}.

\subsection{Proof of the first half of Theorem \ref{counterexample_to_natural_extension}}\label{subsection_counterexamples_proof}

Here, we begin the proof of the first half of Theorem \ref{counterexample_to_natural_extension}.
The proof is not difficult.

\begin{proof}[Proof of the first half of Theorem \ref{counterexample_to_natural_extension}]
	We take a non-empty finite family $\A=\{A_i\}_{i\in\I}$ of elements of $G$ which is uniformly hyperbolic and strongly Diophantine and write $K$ for its attractor.
	For $i_0,j_0\in\I, i_0\neq j_0$, we assume that there is $x_0\in K$ such that $A_{i_0}x_0=A_{j_0}x_0=x_0$. We also take $0<p_0<1/2$ sufficiently close to $1/2$ in terms of $\A$ and a non-degenerate probability vector $p=(p_i)_{i\in\I}$ such that $p_{i_0}=p_{j_0}=p_0$.
	Then, let $\nu$ be the stationary measure of $\mu=\sum_{i\in\I}p_i\delta_{A_i}$.
	
	Since $\A$ is uniformly hyperbolic, we assume (\ref{def_of_uniform_hyperbolicity_of_A_preliminaries}) for $\A$ and some $c=c(\A)>0$ and $r=r(\A)>1$. Let $C=C(\A)>0$ be a constant determined only by $\A$ specified later and, for sufficiently large $m\in\N$, $n=n(\A,m)$ be the smallest integer such that
	\begin{equation*}
		r^{2n}\geq C2^m.
	\end{equation*}
	By applying $\nu=\mu{\bm .}\nu$ $n$ times, we have
	\begin{align}\label{munu_n_times_proof_of_counterexample}
		\nu(B_{\pi2^{-m}}(x_0))&=\sum_{i\in\I^n}p_iA_i\nu(B_{\pi2^{-m}}(x_0))\nonumber\\
		&\geq\sum_{i\in\{i_0,j_0\}^n}p_i\nu(A_i^{-1}B_{\pi2^{-m}}(x_0)).
	\end{align}
	Here, by Corollary \ref{contraction_on_U_by_A_preliminaries}, $A_{i_0}x_0=A_{j_0}x_0=x_0$, (\ref{def_of_uniform_hyperbolicity_of_A_preliminaries}) and the definition of $n$, we have for $i\in\{i_0,j_0\}^n$ and $x\in K$ that
	\begin{equation*}
		d_{\RP^1}(A_ix,x_0)=d_{\RP^1}(A_ix,A_ix_0)\leq \frac{C_1\pi}{\|A_i\|^2}
		\leq\frac{C_1\pi}{c^2r^{2n}}\leq\frac{C_1\pi}{c^2C}\cdot2^{-m}<\pi 2^{-m},
	\end{equation*}
	where we take the constant $C=C(\A)>0$ so that $C>C_1/c^2$. Hence, we have
	\begin{equation*}
		K\subset A_i^{-1}B_{\pi2^{-m}}(x_0).
	\end{equation*}
	From this, $p_{i_0}=p_{j_0}=p_0$ and (\ref{munu_n_times_proof_of_counterexample}), we obtain that
	\begin{equation}\label{bound_overlapped_2^-m_ball_proof_of_cpunterexample}
		\nu(B_{\pi2^{-m}}(x_0))\geq(2p_0)^n.
	\end{equation}
	
	Let $q>1$. Then, for sufficiently large $m\in\N$, we have by (\ref{bound_overlapped_2^-m_ball_proof_of_cpunterexample}) that
	\begin{equation*}
		\|\nu^{(m)}\|_q^q=\sum_{I\in\D_m}\nu(I)^q\geq 3^{-(q-1)}\nu(B_{\pi2^{-m}}(x_0))^q\geq 3^{-(q-1)}(2p_0)^{qn}.
	\end{equation*}
	Hence, we have
	\begin{equation}\label{upper_bound_L^q_spectrum_proof_of_counteexample}
		\tau(q)=\lim_{m\to\infty}\left(-\frac{1}{m}\log\|\nu^{(m)}\|_q^q\right)\leq\lim_{m\to\infty}\left(-\frac{1}{m}\log\left(3^{-(q-1)}(2p_0)^{qn}\right)\right)
		=-\frac{\log(2p_0)}{2\log r}\cdot q.
	\end{equation}
	Here, we have used $r^{2n}=\Theta_\A(2^m)$. Since $0<p_0<1/2$ is sufficiently close to $1/2$, we have
	\begin{equation}\label{smaller_than_q-1_proof_of_counterexample}
		-\frac{\log(2p_0)}{2\log r}\cdot q<q-1
	\end{equation}
	if $q$ is sufficiently large. We take a constant $L=L(\A)>0$ so that $\|A_i\|\leq 2^L$ for any $i\in\I$. Then, we have
	\begin{align*}
		\Psi_q\left(-\frac{\log(2p_0)}{2\log r}\cdot q\right)&=\lim_{n\to\infty}\frac{1}{n}\log\sum_{i\in\I^n}p_i^q\|A_i\|^{2(-\log(2p_0)/(2\log r)\cdot q)}\\
		&\leq \lim_{n\to\infty}\frac{1}{n}\log\sum_{i\in\I^n}p_i^q2^{-nL\log(2p_0)/\log r\cdot q}\\
		&=\log\sum_{i\in\I}p_i^q-\frac{L}{\log r}\cdot q\log (2p_0)\\
		&\leq\log|\I|+q\log\left(\frac{\max_{i\in\I}p_i}{(2p_0)^{L/\log r}}\right).
	\end{align*}
	Since $p_0=p_{i_0}=p_{j_0}$ is sufficiently close to $1/2$, we have $\max_{i\in\I}p_i=p_0<1/2<(2p_0)^{L/\log r}$, and hence the right-hand side of the above inequality is $<0$ if $q$ is sufficiently large. From this and the fact that $\Phi_q(s)\ (s>0)$ is non-decreasing, we obtain that
	\begin{equation}\label{smaller_than_widetildetau_q_proof_of_counterexample}
		-\frac{\log(2p_0)}{2\log r}\cdot q<\widetilde{\tau}(q)
	\end{equation}
	for sufficiently large $q$. By (\ref{upper_bound_L^q_spectrum_proof_of_counteexample}), (\ref{smaller_than_q-1_proof_of_counterexample}) and (\ref{smaller_than_widetildetau_q_proof_of_counterexample}), we have
	\begin{equation*}
		\tau(q)<\min\left\{\widetilde{\tau}(q), q-1\right\}
	\end{equation*}
	for sufficiently large $q$, and, by dividing the both sides by $q-1$, complete the proof.
\end{proof}

\subsection{Specific counterexamples}\label{subsection_counterexamples_specific_examples}

In this section, we see the second half of Theorem \ref{counterexample_to_natural_extension}, that is, we give specific examples satisfying the conditions of Theorem \ref{counterexample_to_natural_extension}. We notice that these examples were considered first in \cite{Sol24}, and Theorem \ref{freeness_of_Solomyak_example} below gives an affirmative answer to Solomyak's question \cite[Question 2]{Sol24}.

Let
\begin{equation*}
	A=
	\begin{pmatrix}
		\frac{1}{2}&0\\2&2
	\end{pmatrix},\quad
	B=\begin{pmatrix}
		\frac{1}{2}&0\\0&2
	\end{pmatrix}\in\SL_2(\Q)
\end{equation*}
and, for a parameter $t>0$,
\begin{equation*}
	C_t=
	\begin{pmatrix}
		\frac{1}{2}&t\\0&2
	\end{pmatrix}\in G=\SL_2(\R).
\end{equation*}
We notice that, via the identification $F^{-1}:\RP^1\stackrel{\sim}{\longrightarrow}\R\sqcup\{\infty\}$ defined in Section \ref{section_main_problem} by
\begin{equation*}
	F^{-1}([x:y])=\frac{x}{y},\quad [x:y]\in\RP^1\setminus\{[1:0]\},\quad F^{-1}([1:0])=\infty,
\end{equation*}
$A,B,C_t$ act on $\R\sqcup\{\infty\}$ as Möbius transformations
\begin{equation*}
	A(x)=\frac{x}{4x+4},\quad B(x)=\frac{x}{4},\quad C_t(x)=\frac{x}{4}+\frac{t}{2}
\end{equation*}
respectively.

The point $0\in\R$ is a common fixed point for $A$ and $B$, and $2t/3\in\R$ is a fixed point for $C_t$. If we define $I_t=[0,2t/3]\subset\R$, we can see that
\begin{equation}\label{invariant_interval_for_ABCt}
	A(I_t)=\left[0,\frac{t}{4t+6}\right],\quad B(I_t)=\left[0,\frac{t}{6}\right],\quad C_t(I_t)=\left[\frac{t}{2},\frac{2t}{3}\right]\subsetneq I_t
\end{equation}
and the actions of $A$, $B$ and $C_t$ are contractive on $I_t$. Hence, $\A_t=\{A,B,C_t\}$ is an Möbius IFS on $I_t$ as a family of Möbius transformations, and a uniformly hyperbolic family of elements of $G$ (Proposition \ref{proposition_Möbius_uniformly_hyperbolic_SL_2(R)}).
Let $K_t\subset I_t$ be the attractor of $\A_t$. Since $0\in I_t$ is a fixed point for $A$ and $B$ but not for $C_t$, we have that $0\in K_t$ and $K_t$ is not a singleton.

From these things, we can say that $\A_t$ is an example for Theorem \ref{counterexample_to_natural_extension} if $\A_t$ is strongly Diophantine. It is not hard to see that, if we can take an algebraic $t>0$ so that the semigroup $\{A,B,C_t\}^+$ generated by $\A_t=\{A,B,C_t\}$ is free, then $\A_t$ is strongly Diophantine (see \cite[Lemma 6.1]{HS17}). Whether this is possible is Solomyak's question \cite[Question 2]{Sol24}.
Here, we give an affirmative answer to this question.

\begin{thm}\label{freeness_of_Solomyak_example}
	For $t=9n\ (n\in\N)$, the semigroup $\{A,B,C_t\}^+$ is free.
\end{thm}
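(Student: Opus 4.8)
I plan to prove freeness by showing directly that the word map from the free semigroup on $\{A,B,C_t\}$ (with $t=9n$) into $\SL_2(\R)$ is injective, via an explicit procedure that reads off the first letter of a word from its matrix and then recurses. Write a word as $w=X_1\cdots X_k$ with $X_j\in\{A,B,C_t\}$ and $w=\left(\begin{smallmatrix}w_{11}&w_{12}\\w_{21}&w_{22}\end{smallmatrix}\right)$. Two complementary pieces of information will do the job: an archimedean one detecting whether $X_1=C_t$, and a $2$-adic one detecting whether $X_1=A$. For the archimedean part I would use the forward-invariant interval $I_t=[0,6n]$ from (\ref{invariant_interval_for_ABCt}): there $A(I_t)\cup B(I_t)=B(I_t)=[0,3n/2]$ while $C_t(I_t)=[9n/2,6n]$, and these are disjoint. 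Since each of $A,B,C_t$ restricts to a homeomorphism of $I_t$ onto its image, $w(I_t)$ is a nondegenerate closed interval contained in $X_1(I_t)$; hence $w(I_t)\subseteq[9n/2,6n]$ if and only if $X_1=C_t$. For the $2$-adic part, note all entries of $A,B,C_{9n}$ lie in $\Z[1/2]$, so the $2$-adic valuation $v_2$ is defined on them.

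The key lemma is: for every word $w$ of length $k\ge1$ one has $v_2(w_{11})=-k$, and $w_{21}=0$ if $w$ contains no letter $A$, while otherwise $v_2(w_{21})=-k+2p_1$, where $p_1$ is the position of the first $A$ in $w$. I would prove this by induction on $k$, writing $w=X_1w'$ with $|w'|=k-1$ and using $w_{11}=\tfrac12 w'_{11}+(X_1)_{12}w'_{21}$ and $w_{21}=(X_1)_{21}w'_{11}+2w'_{21}$, where $(X_1)_{11}=\tfrac12$, $(X_1)_{22}=2$, $(X_1)_{12}\in\{0,9n\}$, $(X_1)_{21}\in\{0,2\}$. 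The only place $t=9n$ is used is in controlling the term $9n\cdot w'_{21}$ when $X_1=C_t$: by the inductive hypothesis $v_2(w'_{21})\ge-(k-1)+2$, so $v_2(9n\cdot w'_{21})\ge v_2(9n)-k+3\ge3-k>-k=v_2(\tfrac12 w'_{11})$ and it does not interfere; all other cases follow immediately from the inductive hypothesis (and the claim $w_{21}=0$ in the $A$-free case is a subcase). The quantities $v_2(w_{11})$ and $v_2(w_{21})$ form a self-contained system here, since their recursions refer back only to $w'_{11}$ and $w'_{21}$.

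Granting the lemma, the decoding reads: if $w=I$, the word is empty; otherwise $X_1=A$ exactly when $v_2(w_{21})=v_2(w_{11})+2$ — indeed, $X_1=A$ forces $p_1=1$ hence this equality, while $X_1\ne A$ forces $w_{21}=0$ or $v_2(w_{21})\ge v_2(w_{11})+4$ — and if $X_1\ne A$ then $X_1=C_t$ exactly when $w(I_t)\subseteq[9n/2,6n]$, and $X_1=B$ otherwise. Since $X_1^{-1}w$ is again the matrix of a word (of length $k-1$), iterating this step recovers the entire word from its matrix; and the lemma gives $v_2(w_{11})=-k\le-1<0=v_2(I_{11})$ for nonempty $w$, so no nonempty word equals $I$ and the recursion terminates at the correct moment. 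Therefore the representation of a semigroup element as a word in $A,B,C_{9n}$ is unique, i.e. $\{A,B,C_{9n}\}^+$ is free.

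I expect the main technical work to be the inductive bookkeeping of the $2$-adic valuations in the key lemma: it is routine, but one must be careful to track the position $p_1$ of the first $A$ through the $X_1=B$ and $X_1=C_t$ steps, and to resist the temptation to also track $v_2(w_{12})$ or $v_2(w_{22})$, which are in fact not well-behaved (e.g. for $n$ odd, $v_2((ABC_{9n})_{22})=0\ne|ABC_{9n}|$) — fortunately the recursions for $w_{11}$ and $w_{21}$ never call on those entries. Everything else (forward-invariance and disjointness on $I_t$, the homeomorphism property, and the termination of the recursion) is straightforward from (\ref{invariant_interval_for_ABCt}) and the basic properties of Möbius maps.
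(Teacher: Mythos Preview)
Your proposal is correct and takes a genuinely different route from the paper. Both arguments share the geometric observation that $C_t(I_t)$ is disjoint from $A(I_t)\cup B(I_t)$, and both ultimately separate $A$ from $B$ by $2$-adic information, but the implementations diverge. The paper argues by contradiction: after normalizing lengths and canceling common left letters, the interval argument forces the leftmost differing letters to be $A$ and $B$; then one takes inverses, conjugates by $R=\left(\begin{smallmatrix}4&0\\4/3&1\end{smallmatrix}\right)$, and multiplies by $2^m$ so that (precisely for $t=9n$) all three generators become integer matrices with a common reduction mod~$4$, and the contradiction drops out of a two-line computation in $M_2(\Z/4\Z)$. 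Your approach is constructive: you give an explicit decoding that reads $X_1$ off from $v_2(w_{11}),v_2(w_{21})$ and the location of $w(I_t)$, and recurse. Your key lemma tracking $v_2(w_{11})=-k$ and $v_2(w_{21})=-k+2p_1$ is correct as stated (the recursions for the first column close up, and the only place the hypothesis $t=9n$ enters is the bound $v_2(9n)\ge0$, which is more than enough to kill the cross-term). What you gain is that no conjugation trick needs to be discovered, and the proof is visibly an algorithm; what the paper gains is that, once $R$ is written down, the endgame is a short finite check rather than an induction with case analysis.

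Two minor remarks. First, your argument actually uses only $v_2(t)\ge0$ (together with $t>0$ for the interval disjointness), so it proves freeness for every positive integer $t$, not just multiples of~$9$; this is a harmless generalization. Second, in the decoding step it is worth saying explicitly that the interval datum $w'(I_t)$ for the shorter word is recomputed directly from the matrix $w'=X_1^{-1}w$ (as a M\"obius image), so the recursion is well-posed; you do this implicitly but it is the one place a reader might pause.
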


\begin{proof}
	Actually, the proof is a simple modification of the arguments in \cite{Sol24}.
	
	Let $t>0$ and assume that the semigroup $\{A,B,C_t\}^+$ is not free. Then, there are two distinct words $X,Y$ consisting of $A, B, C_t$ such that $X=Y$ as matrix products.
	We write $|X|,|Y|$ for the lengths of the words, and we can assume that $|X|\leq |Y|$. Let $Y|_{|X|}$ be the word obtained by restricting $Y$ to its first (from the left) $|X|$ letters. If $|X|<|Y|$ and $X=Y|_{|X|}$ as words, then, by multiplying $X=Y$ by $X^{-1}=Y|_{|X|}^{-1}$ from the left, we obtain a non-empty word $Z$ consisting of $A,B,C_t$ such that $Z=1_G$ as a matrix product.
	However, by (\ref{invariant_interval_for_ABCt}), we have $I_t=1_G(I_t)=Z(I_t)\subsetneq I_t$ as a Möbius transformation and this is a contradiction. Hence, we have $|X|=|Y|$ or $X\neq Y|_{|X|}$ as words. From this, we can see that two words $XY$ and $YX$ are distinct, $|XY|=|YX|=|X|+|Y|$ and $XY=YX$ as matrix products. In the following, by replacing $X$ and $Y$ with $XY$ and $YX$, we assume that $|X|=|Y|$ and write $m=|X|=|Y|$.
	
	By canceling the same letters from $X$ and $Y$ from the left, we can assume that the left-end letters of $X$ and $Y$ are different.
	If either of the left-end letter of $X$ or $Y$ is $C_t$ (if $X$ for example), we have by (\ref{invariant_interval_for_ABCt}) that $X(I_t)\subset[t/2,2t/3]$ and $Y(I_t)\subset[0,t/6]$ as Möbius transformations, and these are disjoint. But this contradicts to $X=Y$ as matrix products. Hence, both of the left-end letters of $X$ and $Y$ are $A$ or $B$, and they are distinct.
	
	Let
	\begin{equation*}
		R=\begin{pmatrix}
			4&0\\\frac{4}{3}&1
		\end{pmatrix}.
	\end{equation*}
	Since $X=Y$ as matrix products and $|X|=|Y|=m$, by taking the inverse of $X=Y$ and the conjugate by $R$ and multiplying $2^m$, we obtain two distinct word $V$ and $W$ consisting of $2RA^{-1}R^{-1}, 2RB^{-1}R^{-1},2RC_t^{-1}R^{-1}$ such that $V=W$ as matrix products and both of the right-end letters of the word $V$ and $W$ are $2RA^{-1}R^{-1}$ and $2RB^{-1}R^{-1}$, and they are distinct. Here, we notice that
	\begin{equation*}
		2RA^{-1}R^{-1}=
		\begin{pmatrix}
			4&0\\0&1
		\end{pmatrix},\quad
		2RB^{-1}R^{-1}=
		\begin{pmatrix}
			4&0\\1&1
		\end{pmatrix},\quad
		2RC_t^{-1}R^{-1}=
		\begin{pmatrix}
			\frac{8}{3}t+4&-8t\\\frac{8}{9}t+1&-\frac{8}{3}t+1
		\end{pmatrix}.
	\end{equation*}
	If we put $t=9n$ for $n\in\N$, we have
	\begin{equation*}
		2RC_t^{-1}R^{-1}=
		\begin{pmatrix}
			24n+4&-72n\\8n+1&-24n+1
		\end{pmatrix}
	\end{equation*}
	and $2RA^{-1}R^{-1},2RB^{-1}R^{-1}, 2RC_t^{-1}R^{-1}\in M_2(\Z)$.
	
	For $a\in\Z$, we write $\overline{a}$ for the image of $a$ in $\Z/4\Z$, and define a map $\phi:M_2(\Z)\to M_2(\Z/4\Z)$ by
	\begin{equation*}
		\phi\left(
		\begin{pmatrix}
			a&b\\c&d
		\end{pmatrix}
		\right)
		=\begin{pmatrix}
			\overline{a}&\overline{b}\\\overline{c}&\overline{d}
		\end{pmatrix}.
	\end{equation*}
	We notice that $\phi$ is a homomorphism between (non-commutative) rings and
	\begin{equation}\label{key_point}
		\phi(2RA^{-1}R^{-1})=
		\begin{pmatrix}
			\overline{0}&\overline{0}\\\overline{0}&\overline{1}
		\end{pmatrix},\quad
		\phi(2RB^{-1}R^{-1})=\phi(2RC_t^{-1}R^{-1})=
		\begin{pmatrix}
			\overline{0}&\overline{0}\\\overline{1}&\overline{1}
		\end{pmatrix}.
	\end{equation}
	Since $V=W$ as matrix products in $M_2(\Z)$, we have $\phi(V)=\phi(W)$ in $M_2(\Z/4\Z)$. We can assume that the right-end letter of $V$ and $W$ are $2RA^{-1}R^{-1}$ and $2RB^{-1}R^{-1}$, respectively. Then, we have from (\ref{key_point}) that
	\begin{equation*}
		\phi(V)=
		\begin{pmatrix}
			\overline{a}&\overline{0}\\\overline{b}&\overline{1}
		\end{pmatrix}
		\begin{pmatrix}
			\overline{0}&\overline{0}\\\overline{0}&\overline{1}
		\end{pmatrix}
		=\begin{pmatrix}
			\overline{0}&\overline{0}\\\overline{0}&\overline{1}
		\end{pmatrix}
		\quad (\overline{a},\overline{b}\in\Z/4\Z)
	\end{equation*}
	and
	\begin{equation*}
		\phi(W)=
		\begin{pmatrix}
			\overline{a}&\overline{0}\\\overline{b}&\overline{1}
		\end{pmatrix}
		\begin{pmatrix}
			\overline{0}&\overline{0}\\\overline{1}&\overline{1}
		\end{pmatrix}=
		\begin{pmatrix}
			\overline{0}&\overline{0}\\\overline{1}&\overline{1}
		\end{pmatrix}
		\quad (\overline{a},\overline{b}\in\Z/4\Z),
	\end{equation*}
	which is a contradiction. Hence, we obtain that $\{A,B,C_t\}^+$ is free for $t=9n$ and complete the proof.
\end{proof}

\subsection{“Singular” properties of counterexamples}\label{subsection_singularity}

As we mentioned above, counterexamples to Problem \ref{main_problem_L^q_dim_of_Furstenberg_measures} are the case (II) of Theorem \ref{the_main_theorem_L^q_dim_Mobius_IFS}.
In this section, we show Proposition \ref{justification_heuristic_singularity} which describes the “singular” properties of measures of the case (II).

We take a non-empty finite family $\A=\{A_i\}_{i\in\I}$ of elements of $G$ which is uniformly hyperbolic and whose attractor $K$ is not a singleton
(actually, we will not use the strongly Diophantine property here) and a probability measure $\mu$ on $G$ such that $\supp\ \mu=\A$.
We write $\nu$ for the stationary measure of $\mu$ and $\tau(q)\ (q>1)$ for its $L^q$ spectrum, and assume that the case (II) holds for $\nu$, that is, there exist $q_0>1$ and $0<\alpha<1$ such that
\begin{equation}\label{assumption_Legendre_transform_is_zero_counterexample}
\tau(q)=
\begin{cases}
	\min\left\{\widetilde{\tau}(q),q-1\right\}&\text{if }1<q<q_0,\\
	\alpha q&\text{if }q\geq q_0.
\end{cases}
\end{equation}

\begin{proof}[Proof of Proposition \ref{justification_heuristic_singularity}]
	We see (i) and (ii). Let $\alpha'>0$ be such that
	\begin{equation*}
		\alpha'\geq\liminf_{r\searrow0}\frac{\log\nu(B_r(x))}{\log r}=\liminf_{m\to\infty}\left(-\frac{\log\nu(B_{\pi2^{-m}}(x))}{m}\right)
	\end{equation*}
	for some $x\in K$ (the right-hand equation can be easily checked). If we take arbitrarily small $\varepsilon>0$, then we have $2^{-(\alpha'+\varepsilon)m}\leq \nu(B_{\pi 2^{-m}}(x))$ for infinitely many $m\in\N$. Hence, we have
	\begin{equation*}
		2^{-(\alpha'+\varepsilon)q_0m}\leq \nu(B_{\pi2^{-m}}(x))^{q_0}\leq 3^{q_0-1}\sum_{I\in\D_m}\nu(I)^{q_0}
	\end{equation*}
	for $q_0$ and infinitely many $m\in\N$. On the other hand, by the assumption (\ref{assumption_Legendre_transform_is_zero_counterexample}), we have $\tau(q_0)=\alpha q_0$, and hence
	\begin{equation*}
		\sum_{I\in\D_m}\nu(I)^{q_0}\leq 2^{-(\alpha q_0-\varepsilon)m}
	\end{equation*}
	for sufficiently large $m\in\N$. By combining these two inequalities and taking $m$ arbitrarily large and $\varepsilon\to 0$, we have $\alpha\leq \alpha'$. Hence, we obtain
	\begin{equation}\label{alpha0_smaller_than_inf_pointwise_dim_counterexample}
		\alpha\leq\inf\left\{\alpha'>0\left|\ \text{there exists some $x\in K$ such that }\liminf_{r\searrow 0}\right.\frac{\log\nu(B_r(x))}{\log r}\leq\alpha'\right\}.
	\end{equation}
	
By the assumption (\ref{assumption_Legendre_transform_is_zero_counterexample}), $\tau(q)=\alpha q$ for any $q>q_0$. Then, by \cite[Theorem 1.1]{Fen07} (more precisely we use Theorem 1.2 and Proposition 2.1 in \cite{Fen07} to ensure $E_\alpha\neq\emptyset$), $E_\alpha$ is non-empty and
	\begin{equation*}
		\dim_H E_\alpha=\tau^*(\alpha)=\alpha q-\tau(q)=0.
	\end{equation*}
	Since $E_\alpha\neq\emptyset$, we can replace the inequality in (\ref{alpha0_smaller_than_inf_pointwise_dim_counterexample}) with the equation and $\inf$ with $\min$, and obtain (i).
	
	To show that $E_\alpha$ is dense in $K$, it is sufficient to show that $E_\alpha$ is $\A$-invariant (e.g., see (\ref{pi_pi_m})). Let $x\in E_\alpha$ and $i\in\I$. Then, by $\nu=\mu{\bm .}\nu$, we have
	\begin{equation*}
		\nu(B_r(A_ix))\geq p_i\nu(A_i^{-1}B_r(A_ix))\geq p_i\nu(B_{c'r}(x))
	\end{equation*}
	for any $r>0$, where $c'=c'(\A)>0$ is a constant. Hence, we have
	\begin{equation*}
		\limsup_{r\searrow0}\frac{\log\nu(B_r(A_ix))}{\log r}\leq \limsup_{r\searrow0}\frac{\log\nu(B_{c'r}(x))}{\log r}=\alpha.
	\end{equation*}
	On the other hand, by the same argument as in the first paragraph, we have
	\begin{equation*}
		\liminf_{r\searrow0}\frac{\log\nu(B_r(A_ix))}{\log r}=\liminf_{m\to\infty}\left(-\frac{1}{m}\log\nu(B_{\pi2^{-m}}(A_ix))\right)\geq\alpha.
	\end{equation*}
	From these inequalities, we have $A_ix\in E_\alpha$, and hence $E_\alpha\neq\emptyset$ is $\A$-invariant. So we complete the proof of (ii).
	
	In the end, we see (iii). We take $q\geq q_0$ and small $\varepsilon>0$. Then, for sufficiently large $m\in\N$, we have $\sum_{I\in\D_m}\nu(I)^q\leq 2^{-(\tau(q)-\varepsilon)m}=2^{-(\alpha q-\varepsilon)m}$. The first statements of (iii) immediately follow from this. The second statement follows from Lemma \ref{L^q_norm_strictly_smaller_math_than_2^-alpham} and Proposition \ref{existence_of_limit_of_L^q_dim}.
\end{proof}

\section{Proof of the $L^q$ norm porosity lemma}\label{section_proof_of_L^q_norm_porosity}

Throughout the following sections, we prove the $L^q$ norm porosity Lemma \ref{L^q_norm_porosity}.
We take a non-empty finite family $\A$ of elements of $G$ which is uniformly hyperbolic. We also take a probability measure $\mu$ on $G$ such that $\supp\ \mu=\A$ and write $\nu$ for the stationary measure of $\mu$. Furthermore, we take open subsets $U\subset U_1\subset U_0\subsetneq\RP^1$ as (\ref{domains_the_Möbius_IFS_acts_on_preliminaries}). We write $\tau(q)$ for the $L^q$ spectrum of $\nu$. Since all of these things are determined by $\mu$, we write $\mu$ for dependence on some of these things all together.

We take $q>1$. In the beginning, we do not use the assumption that $\tau(q)$ is differentiable at $q$ and $\tau^*(\alpha)>0$ for $\alpha=\tau'(q)$.

\subsection{Uniformizing the measure on $G$}\label{subsection_uniformizing_theta}

As the first step of the proof, we uniformize the measure $\theta$ on $G$ in Lemma \ref{L^q_norm_porosity}.
For $m\in\N$ and a subset $E\subset G$, we define
\begin{equation*}
	\D^G_m(E)=\left\{\xi\in\D^G_m\left|\ \xi\cap E\neq\emptyset\right.\right\}.
\end{equation*}
Let $D, l\in\N$ and $(R_i)_{i\in[l]}\ ([l]=\{0,1,\dots,l-1\})$ be a sequence such that $R_0\in\N$, $R_i\in\{1,M,\dots, M^D\}\ (i>0)$. We say that a $\D_{lD}^G$-measurable set $E$ is {\it $(D,l,(R_i)_{i\in[l]})$-uniform} if, for any $i\in[l]$ and $\xi\in\D_{iD}^G$ such that $\xi\cap E\neq\emptyset$, we have
\begin{equation*}
	\left|\D^G_{(i+1)D}(\xi\cap E)\right|=R_i.
\end{equation*}
(Here, we consider $\D_0^G$ as $\{G\}$.)
In this section, we prove the following lemma.

\begin{lem}\label{tree_structure_supp_theta}
	Let $\sigma,\varepsilon>0$ be constants such that $0<\varepsilon<\sigma/100<\sigma<1$ and $C, L>1$. Then, there exists a sufficiently large constant $D=D(M,\mu,q,\sigma,\varepsilon, C)\in\N$ such that the following holds for $l\in\N$, $l\gg_{M,\mu,q,\sigma,\varepsilon,C,L}1$.
	We write $m=lD$.
	Let $\theta$ be a Borel probability measure on $G$ and $r\in\N$ such that $\diam\ \supp\ \theta \leq L$ and $C^{-1}2^r\leq \|g\|^2\leq C2^r$ and $u^-_g\notin U_1$ for every $g\in\supp\ \theta$. Assume that
	\begin{enumerate}
		\renewcommand{\labelenumi}{(\roman{enumi})}
		\item $\|\theta^{(m)}\|_q^q\leq 2^{-\sigma m}$,
		\item $2^{-\varepsilon m}\|\nu^{(m)}\|_q^q\leq \|(\theta{\bm .}\nu)^{(m+r)}\|_q^q$.
	\end{enumerate}
	Then, there exists a $\D_m^G$-measurable set $\widetilde{E}$ such that, if $\widetilde{\theta}$ be the normalized restriction of $\theta$ to $\widetilde{E}$, we have
	\begin{enumerate}
		\renewcommand{\labelenumi}{(\Roman{enumi})}
		\item $\|\widetilde{\theta}^{(m)}\|_q^q\leq 2^{-4\sigma/5\cdot m}$,
		\item $2^{-13\varepsilon/10\cdot m}\|\nu^{(m)}\|_q^q\leq\|(\widetilde{\theta}{\bm .}\nu)^{(m+r)}\|_q^q$,
		\item there exists $a>0$ such that, for every $\xi\in\D_m^G$ with $\widetilde{\theta}(\xi)>0$,
		\begin{equation*}
			\frac{a}{2}\leq\widetilde{\theta}(\xi)\leq a,
		\end{equation*}
		\item there exists a sequence $(R_i)_{i\in[l]}$ with $R_0=O_{M,L,D}(1)$, $R_i\in\{1,M,\dots, M^D\}\ (i>0)$ such that $\widetilde{E}$ is $(D,l,(R_i)_{i\in[l]})$-uniform.
	\end{enumerate}
\end{lem}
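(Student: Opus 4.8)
The plan is to produce $\widetilde E$ by a sequence of pigeonholing steps, each of which keeps most of the mass of $\theta$ (in $L^q$ sense) while gaining one of the four structural properties. Throughout, the parameter $D$ is chosen large at the end so that error terms of the form $O_{M,\mu,q,\sigma,\varepsilon,C}(1)^{1/D}$ and $\log(\text{poly in } D)/D$ are absorbed into the small slack $\sigma/5$ and $3\varepsilon/10$; the choice of $D$ does not depend on $l$, and then $l$ (hence $m=lD$) is taken large. First I would record the trivial but crucial monotonicity: if $E'\subset E$ and we pass from the normalized restriction $\theta_E$ to $\theta_{E'}$, then $\|\theta_{E'}^{(m)}\|_q^q \le (\theta(E)/\theta(E'))^{q-1}\|\theta_E^{(m)}\|_q^q$, and similarly $\|(\theta_{E'}\bm.\nu)^{(m+r)}\|_q^q \le (\theta(E)/\theta(E'))^{q-1}\|(\theta_E\bm.\nu)^{(m+r)}\|_q^q$ (using that $\bm.$ is linear in the $G$-measure and that restriction/renormalization can only concentrate mass); and also a \emph{lower} bound: any sub-collection carrying at least a $2^{-\delta m}$ fraction of $\|(\theta\bm.\nu)^{(m+r)}\|_q^q$ still satisfies (II) with the exponent worsened by $\delta$. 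So the whole game is to arrange each pigeonholing step to lose only a factor $2^{o(m)}$ of mass, hence only $2^{o(m)}$ in the two $L^q$ quantities, while the gain (II) needs and the slack budget easily covers this.

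The steps, in order. \textbf{(1) Achieve (III): dyadic uniformity of $\widetilde\theta$ on $\D^G_m$.} Partition the atoms $\xi\in\D^G_m$ with $\theta(\xi)>0$ into dyadic levels $\A_j=\{\xi: 2^{-j-1}\|\theta^{(m)}\|_q^{q/(q-1)} < \theta(\xi) \le 2^{-j}\|\theta^{(m)}\|_q^{q/(q-1)}\}$, exactly as in the proof of Corollary~\ref{tau^*(alpha)_is_nonnegative}; only $O(m)$ levels matter, so by pigeonholing one level $\A_j$ carries $\gtrsim 1/m$ of $\sum_\xi\theta(\xi)^q$ and hence $\gtrsim 1/m$ of the total mass, and its normalized restriction has $\|\cdot^{(m)}\|_q^q$ at most $m\cdot\|\theta^{(m)}\|_q^q\cdot(\text{const})^{q-1}$-type bound — losing only $2^{o(m)}$. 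This gives $a$ and property (III). \textbf{(2) Achieve (IV): level-by-level uniform branching.} Now work down the tree $\D^G_0\supset\cdots\supset\D^G_{lD}$ restricted to the surviving set. At each of the $l$ scales $iD$, each surviving atom $\xi\in\D^G_{iD}$ has between $1$ and $M^D$ children in $\D^G_{(i+1)D}$ meeting the set; split the surviving atoms of $\D^G_{iD}$ according to which value $R_i\in\{1,M,\dots,M^D\}$ (there are only $D+1$ choices, up to rounding to the nearest power of $M$) this count equals, keep the majority branch. Doing this greedily from the root gives a sub-tree that is $(D,l,(R_i)_{i\in[l]})$-uniform; the mass lost is at most a factor $(D+1)^{-l}$ at worst, i.e. $2^{-l\log(D+1)} = 2^{-m\log(D+1)/D}$, which is absorbed because $\log(D+1)/D$ is small for $D$ large (this is the one place the specific \emph{two-scale} structure $m=lD$ is used, and it forces $D$ to be chosen after $\sigma,\varepsilon$ but before $l$). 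The bound $R_0 = O_{M,L,D}(1)$ holds because $\diam\supp\theta\le L$ forces $\supp\theta$ to meet only $O_{M,L}(1)$ atoms of $\D^G_D$. \textbf{(3) Verify (I) and (II) survive.} After steps (1)–(2) we have removed at most a $2^{-m(\sigma/5)}$-type fraction of the mass; the monotonicity inequalities from the first paragraph then give $\|\widetilde\theta^{(m)}\|_q^q \le 2^{(q-1)\cdot(\text{tiny})m}\cdot 2^{-\sigma m}\cdot(\text{from step 1}) \le 2^{-4\sigma m/5}$ and $\|(\widetilde\theta\bm.\nu)^{(m+r)}\|_q^q \ge 2^{-(q-1)(\text{tiny})m}\cdot 2^{-\varepsilon m}\|\nu^{(m)}\|_q^q \ge 2^{-13\varepsilon m/10}\|\nu^{(m)}\|_q^q$, provided the total fraction removed is $\gg 2^{-\varepsilon m/(10(q-1))}$ and $\gg 2^{-\sigma m/(5(q-1))}$, which holds since each pigeonholing loses only $2^{-m\cdot o_D(1)}$ with $o_D(1)$ driven below these thresholds by enlarging $D$.

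The main obstacle is the bookkeeping in step (2): one must verify that greedily selecting the majority-branch at every node of an $l$-level tree, rather than making independent choices at each scale, still loses only the claimed total factor and does not interact badly with property (III) already secured at the finest scale. The clean way is to do step (2) \emph{first} on the tree down to scale $(l-1)D$ — leaving the last generation untouched — so that the children counts $R_i$ for $i<l-1$ are fixed, and then run step (1)'s dyadic-level pigeonholing \emph{within} the already-uniform tree to pin down $R_{l-1}$ and the mass level $a$ simultaneously; since uniformity at scales $<(l-1)D$ is preserved under any further sub-selection of atoms at scale $(l-1)D$, the two steps do not conflict. The only quantitative point to check carefully is that the number of pigeonhole classes at each stage is $\mathrm{poly}(D)$ (not $\mathrm{poly}(m)$) at the tree scales and $O(m)$ only at the single finest dyadic-mass step, so the cumulative loss is $2^{-m(o_D(1)+O(\log m)/m)}$, negligible against the fixed budgets $\sigma/5$ and $3\varepsilon/10$ once $D\gg_{M,\mu,q,\sigma,\varepsilon,C}1$ and then $l\gg_{M,\mu,q,\sigma,\varepsilon,C,L}1$.
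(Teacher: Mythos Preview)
Your overall architecture---pigeonhole on dyadic mass levels to get (III), then pigeonhole scale-by-scale on branching numbers to get (IV), and check that the accumulated losses fit inside the budgets $\sigma/5$ and $3\varepsilon/10$---is the same as the paper's. But there is a genuine gap in how you preserve (II).

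In step (1) you pigeonhole on $\sum_{\xi}\theta(\xi)^q$ and claim that the selected level ``hence carries $\gtrsim 1/m$ of the total mass''. That implication is false in general (a single large atom can carry almost all of the $L^q$ mass and a negligible fraction of the total mass), and in any case a lower bound on $\theta(E')$ does \emph{not} give a lower bound on $\|(\theta_{E'}{\bm.}\nu)^{(m+r)}\|_q$. Convolution with a probability measure can only shrink the $L^q$ norm, so $\|(\theta_{E'}{\bm.}\nu)^{(m+r)}\|_q^q$ may be far below $\|\nu^{(m)}\|_q^q$ even when $\theta(E')$ is close to~$1$; indeed, the whole flattening theorem is about this phenomenon. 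Your own ``lower bound'' sentence in the first paragraph says exactly what is needed---the subcollection must carry a $2^{-\delta m}$ fraction of $\|(\theta{\bm.}\nu)^{(m+r)}\|_q^q$---but neither your step (1) nor your step (2) (``keep the majority branch'', majority in which quantity?) actually secures that.

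The fix, which is what the paper does, is to pigeonhole on the convolution norm throughout. Write $\theta=\sum_j\theta|_{A_j}$, use Minkowski
\[
\|(\theta{\bm.}\nu)^{(m+r)}\|_q\le\sum_j\|(\theta|_{A_j}{\bm.}\nu)^{(m+r)}\|_q,
\]
and pick a level $A_j$ carrying $\ge 1/(O(m))$ of the left side; this gives (II) directly. To recover (I) one then needs a \emph{mass} lower bound on $\theta(A_j)$, and this comes from the opposite direction: the H\"older/contraction estimate
\[
\|(\theta|_{A_j}{\bm.}\nu)^{(m+r)}\|_q^q\le O_{\mu,q,C}(1)\,\theta(A_j)^q\,\|\nu^{(m)}\|_q^q,
\]
combined with the lower bound just won, forces $\theta(A_j)^q\ge 2^{-O(\varepsilon)m}$, whence $\|\theta_{A_j}^{(m)}\|_q^q=\theta(A_j)^{-q}\|\theta|_{A_j}^{(m)}\|_q^q\le 2^{O(\varepsilon)m}\|\theta^{(m)}\|_q^q$. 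The same Minkowski-based pigeonholing on $\|(\theta'|_{\cdot}{\bm.}\nu)^{(m+r)}\|_q$ is repeated at every scale in the tree-uniformization step (working from the bottom scale up), losing a factor $\le MD$ per scale, hence $(MD)^{l}$ in total---absorbed by taking $D$ large enough that $\log(MD)/D<\varepsilon/(10q)$. So the flow is: Minkowski on convolution $\Rightarrow$ (II); then the H\"older upper bound $\Rightarrow$ mass $\Rightarrow$ (I); not the other way around.
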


\begin{rem} In the proof of Lemma \ref{tree_structure_supp_theta}, $D=D(M,\mu,q,\sigma,\varepsilon,C)$ can be taken as the sufficiently large constant in terms only of $M,q,\varepsilon$. However, throughout Section \ref{section_proof_of_L^q_norm_porosity}, $D$ also must be sufficiently large in terms of $M,\mu,q,\sigma,\delta,\varepsilon,C$. See
	(\ref{diam_xi_proof_of_linearization}), (\ref{L^q_norm_widehatnuI_proof_of_linearization}), (\ref{upper_bound_logYi_proof_of_linearization}), (\ref{diam_of_product_of_two_balls_proof_of_porpsity}), (\ref{widetildeDxi'_L^q_norm_proof_of_porosity}),
	(\ref{each_interval_in_D'_sD+2_proof_of_porosity}),
	(\ref{D'_(s+1)D(I')_strictly_small_L^q_norm_proof_of_porosity}),
	(\ref{estimate_L_s+1_without_condition_proof_of_porosity}),
	(\ref{contradiction_inequality_delta^4/3_proof_of_porosity}).
\end{rem}

The idea of the proof of Lemma \ref{tree_structure_supp_theta} is (1) reducing $\theta$ to make the measure “uniform”, and (2) reducing the measure so that its support has a “uniform tree structure”. We begin with the step (1).

\begin{lem}\label{uniforming_theta}
	Let $\sigma,\varepsilon>0$ be constants such that $0<\varepsilon<\sigma/100<\sigma<1$ and $C, L>1$. Then the following holds for $m\in\N$, $m\gg_{M,\mu,q,\sigma,\varepsilon,C,L}1$. Let $\theta$ be a Borel probability measure on $G$ and $r\in\N$ such that $\diam\ \supp\ \theta \leq L$ and $C^{-1}2^r\leq \|g\|^2\leq C2^r$ and $u^-_g\notin U_1$ for every $g\in\supp\ \theta$. Assume that
	\begin{enumerate}
		\renewcommand{\labelenumi}{(\roman{enumi})}
		\item $\|\theta^{(m)}\|_q^q\leq 2^{-\sigma m}$,
		\item $2^{-\varepsilon m}\|\nu^{(m)}\|_q^q\leq \|(\theta{\bm .}\nu)^{(m+r)}\|_q^q$.
	\end{enumerate}
	Then, there exists a Borel probability measure $\theta'$ on $G$ which is a normalized restriction of $\theta$ to a $\D^G_m$-measurable set such that
	\begin{enumerate}
		\renewcommand{\labelenumi}{(\arabic{enumi})}
		\item $\|{\theta'}^{(m)}\|_q^q\leq 2^{-9\sigma/10\cdot m}$,
		\item $2^{-6\varepsilon/5\cdot m}\|\nu^{(m)}\|_q^q\leq \|(\theta'{\bm .}\nu)^{(m+r)}\|_q^q$,
		\item there exists $a'>0$ such that, for every $\xi\in\D_m^G$ with $\theta'(\xi)>0$,
		\begin{equation*}
			\frac{a'}{2}\leq\theta'(\xi)\leq a'.
		\end{equation*}
	\end{enumerate}
\end{lem}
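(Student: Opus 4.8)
The plan is to perform two rounds of dyadic pigeonholing on the atoms of $\D_m^G$ restricted to $\supp\ \theta$. Write $N$ for the number of atoms $\xi\in\D_m^G$ with $\theta(\xi)>0$; since $\diam\ \supp\ \theta\leq L$ and each atom of $\D_m^G$ contains a ball of radius $M^{-1}2^{-m}$, we have $N\leq O_{M,L}(1)2^{m\cdot\dim G}=O_{M,L}(1)2^{3m}$ (here $\dim G=3$), so $\log N\leq O_{M,L}(1)+3m\leq 4m$ for $m\gg_{M,L}1$. First I would bucket the atoms by their $\theta$-mass: for $j\geq 0$ let $\mathcal{A}_j=\{\xi\in\D_m^G:2^{-j-1}<\theta(\xi)\leq 2^{-j}\}$. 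Since $\sum_j\theta(\bigcup\mathcal{A}_j)=1$ and masses below $2^{-4m}\cdot 2^{-\sigma m}/(\text{something})$ contribute negligibly (the total mass on atoms with $\theta(\xi)\leq 2^{-5m}$ is at most $N2^{-5m}\leq 2^{-m}$), only $O(m)$ values of $j$ carry mass $\geq 1/2$ in aggregate; pigeonholing gives a single $j_0$ with $\theta(\bigcup\mathcal{A}_{j_0})\geq\Omega(1/m)$. Set $a'=2^{-j_0}$ and $E_1=\bigcup\mathcal{A}_{j_0}$, so that $\theta|_{E_1}$ already satisfies the uniform two-sided mass bound (3) after normalization, and $\theta(E_1)\geq 2^{-\varepsilon m/10}$ for $m$ large (since $1/m\geq 2^{-\varepsilon m/10}$).

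The key point is that restricting to $E_1$ barely perturbs (i) and (ii). For (i): $\|(\theta_{E_1})^{(m)}\|_q^q=\theta(E_1)^{-q}\sum_{\xi\in E_1}\theta(\xi)^q\leq \theta(E_1)^{-q}\|\theta^{(m)}\|_q^q\leq 2^{q\varepsilon m/10}2^{-\sigma m}\leq 2^{-19\sigma/20\cdot m}$, using $\varepsilon<\sigma/100$ so that $q\varepsilon/10<\sigma/20$ once we also absorb the $q$ (note $q$ is fixed, so for $m$ large $q\varepsilon m/10\leq\sigma m/20$ provided $\varepsilon\leq\sigma/(2q\cdot 10)$; if one wants $\varepsilon$ independent of $q$ one pigeonholes slightly more carefully, but the stated bound $0<\varepsilon<\sigma/100$ plus $m\gg_{q}1$ suffices via $2^{q\varepsilon m/10}\leq 2^{\sigma m/20}$). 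For (ii) we need a matching lower bound on $\|(\theta_{E_1}{\bm .}\nu)^{(m+r)}\|_q^q$. Here I would use that $\theta{\bm .}\nu$ is an average of the $A{\bm .}\nu=A\nu$ weighted by $\theta$, and the atoms outside $E_1$ contribute mass $\theta(G\setminus E_1)\leq 1$ to $\theta{\bm .}\nu$ but their $L^q$ norm contribution is bounded: writing $\theta=\theta(E_1)\theta_{E_1}+\theta(G\setminus E_1)\theta_{G\setminus E_1}$ and using convexity $\|(\mu_1+\mu_2)^{(m+r)}\|_q^q\leq 2^{q-1}(\|\mu_1^{(m+r)}\|_q^q+\|\mu_2^{(m+r)}\|_q^q)$ together with $\|(\theta'{\bm .}\nu)^{(m+r)}\|_q^q\leq\|\nu^{(m+r)}\|_q^q\leq O_{\mu,q}(1)2^{r(q-1)}\|\nu^{(m)}\|_q^q$ for any probability $\theta'$ (by Corollary 2.9 and Lemma 2.14, the scale-$2^r$ contraction), one gets $\|(\theta{\bm .}\nu)^{(m+r)}\|_q^q\leq 2^{q-1}\theta(E_1)^q\|(\theta_{E_1}{\bm .}\nu)^{(m+r)}\|_q^q+(\text{small})$. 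The term to control is the contribution from $G\setminus E_1$; but this is subtle because a priori it could dominate. This is where the \emph{second} pigeonhole is needed.

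So I would not take $E_1$ directly but iterate: among the buckets $\mathcal{A}_j$, consider the contribution of each to $\|(\theta{\bm .}\nu)^{(m+r)}\|_q^q$. More precisely, decompose $\theta{\bm .}\nu=\sum_j \theta(\bigcup\mathcal{A}_j)\,(\theta_{\bigcup\mathcal{A}_j}{\bm .}\nu)$; by convexity and the fact that there are $O(m)$ relevant $j$, there is $j_0$ with $\theta(\bigcup\mathcal{A}_{j_0})^q\|(\theta_{\bigcup\mathcal{A}_{j_0}}{\bm .}\nu)^{(m+r)}\|_q^q\geq \Omega(m^{-q})\|(\theta{\bm .}\nu)^{(m+r)}\|_q^q\geq 2^{-\varepsilon m/10}2^{-\varepsilon m}\|\nu^{(m)}\|_q^q$ for $m$ large. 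Dividing by $\theta(\bigcup\mathcal{A}_{j_0})^q\leq 1$ gives $\|(\theta_{\bigcup\mathcal{A}_{j_0}}{\bm .}\nu)^{(m+r)}\|_q^q\geq 2^{-11\varepsilon/10\cdot m}\|\nu^{(m)}\|_q^q$, which is stronger than (2) with room to spare; and $\theta(\bigcup\mathcal{A}_{j_0})\geq$ the mass contribution forces, after possibly one more halving-pigeonhole to also retain aggregate mass $\Omega(1/m)$, that $\|(\theta_{\bigcup\mathcal{A}_{j_0}})^{(m)}\|_q^q\leq 2^{q\varepsilon m/10}2^{-\sigma m}\leq 2^{-9\sigma/10\cdot m}$. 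Setting $\theta'=\theta_{\bigcup\mathcal{A}_{j_0}}$, $E=\bigcup\mathcal{A}_{j_0}$, $a'=2^{-j_0}$ gives (1), (2), (3). The main obstacle is exactly reconciling the two pigeonholes — the mass-uniformity bucket and the bucket maximizing the $(m+r)$-scale $L^q$ norm must be arranged to be the \emph{same} bucket; the clean way is to pigeonhole once on the mass level $j$ and observe that all atoms in a fixed bucket automatically satisfy (3), then check that the single surviving bucket, chosen to maximize $\theta(\bigcup\mathcal{A}_j)^q\|(\theta_{\bigcup\mathcal{A}_j}{\bm .}\nu)^{(m+r)}\|_q^q$, simultaneously retains enough total mass (because a bucket with tiny total mass contributes little to both $\theta{\bm .}\nu$ and to $1=\sum_j\theta(\bigcup\mathcal{A}_j)$, so the maximizer cannot be mass-deficient once we know $\|(\theta{\bm .}\nu)^{(m+r)}\|_q^q$ is not too small). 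I would also need to record the elementary inequality $\|\nu^{(m+r)}\|_q^q\leq O_{\mu,q}(1)2^{r(q-1)}\|\nu^{(m)}\|_q^q$ and, conversely, a lower bound of the same shape, both of which follow from Lemma 2.14 (comparison of $L^q$ norms of partitions at scales differing by $r$, since the action of $g\in\supp\ \theta$ contracts by $\Theta(2^{-r})$ on $U$ by Corollary 2.9), to convert freely between scale $m$ and scale $m+r$; these are the only "analytic" inputs beyond pure pigeonholing.
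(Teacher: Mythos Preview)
Your overall strategy is the paper's: discard atoms of very small $\theta$-mass so that only $O(m)$ mass-level buckets remain, pigeonhole among those buckets on $\|(\theta|_{A_j}{\bm .}\nu)^{(m+r)}\|_q$ via Minkowski to obtain (2), and then extract a lower bound on the mass of the surviving bucket to obtain (1). The gap is in this last step.

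You write $\|(\theta'{\bm .}\nu)^{(m+r)}\|_q^q\leq\|\nu^{(m+r)}\|_q^q\leq O(1)\,2^{r(q-1)}\|\nu^{(m)}\|_q^q$ for an arbitrary probability $\theta'$ on $\supp\theta$. The first inequality is false: for $\theta'=\delta_g$ the measure $g\nu$ is $\nu$ contracted by $\Theta(2^{-r})$, so $\|(g\nu)^{(m+r)}\|_q^q$ is comparable to $\|\nu^{(m)}\|_q^q$, not to the much smaller $\|\nu^{(m+r)}\|_q^q$. And the factor $2^{r(q-1)}$ you end up with is fatal, since $r$ is unbounded in $m$: combining it with the pigeonhole bound yields only $\theta(A_{j_0})^q\geq 2^{-O(\varepsilon)m-r(q-1)}$, hence $\|(\theta_{A_{j_0}})^{(m)}\|_q^q\leq 2^{O(\varepsilon)m+r(q-1)-\sigma m}$, which says nothing.

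The correct inequality---proved in the paper by the calculation leading to (\ref{L^q_norm_thetaFnu_uniforming_theta})---is
\[
\|(\theta'{\bm .}\nu)^{(m+r)}\|_q^q\leq O_{\mu,q,C}(1)\,\theta'(G)^q\,\|\nu^{(m)}\|_q^q
\]
with \emph{no} $r$-dependence. By H\"older, $(\theta'{\bm .}\nu)(I)^q\leq\theta'(G)^{q-1}\int\nu(g^{-1}I)^q\,d\theta'(g)$; summing over $I\in\D_{m+r}$ and swapping, one needs $\sum_{I\in\D_{m+r}}\nu(g^{-1}I)^q\leq O_{\mu,q,C}(1)\|\nu^{(m)}\|_q^q$ for each $g\in\supp\theta$. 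This holds because Corollary~\ref{contraction_on_U_by_A_preliminaries} together with $\|g\|^2=\Theta_C(2^r)$ makes the families $\{g^{-1}I\cap K:I\in\D_{m+r}\}$ and $\{J\cap K:J\in\D_m\}$ mutually $O_{\mu,C}(1)$-covered, so Lemma~\ref{L^q_norms_of_two_partitions} applies. With this bound in hand, the bucket $j_0$ chosen by a \emph{single} Minkowski pigeonhole automatically satisfies $\theta(A_{j_0})^q\geq\Omega_{\mu,q,C}(1)\,2^{-O(\varepsilon)m}$, and (1) follows at once; your worry about ``reconciling two pigeonholes'' dissolves. The paper discards atoms with $\theta(\xi)\leq 2^{-3\varepsilon m/q}\,(M'M^{m-1})^{-1}$ so that the discarded mass is $\leq 2^{-3\varepsilon m/q}$, and uses exactly the same $O(1)\|\nu^{(m)}\|_q^q$ bound to show the discard perturbs $\|(\theta{\bm .}\nu)^{(m+r)}\|_q$ negligibly before pigeonholing.
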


\begin{proof}
	The idea of the proof is from \cite[Lemma 3.3]{Shm19}. We first notice that, since $\diam\ \supp\ \theta\leq L$ and each element of $\D_1^G$ contains a ball of radius $1/2M$ and has the diameter $\leq 2M$, there is $M'=M'(L,M)>1$ such that $\supp\ \theta$ intersects at most $M'$ elements of $\D_1^G$.
	For each integer $j$, we define
	\begin{equation*}
		\A_j=\left\{\xi\in\D_m^G\left|\ 2^{-j-1}M'^{-1}M^{-(m-1)}<\theta(\xi)\leq2^{-j}M'^{-1}M^{-(m-1)}\right.\right\},\quad A_j=\bigsqcup_{\xi\in\A_j}\xi.
	\end{equation*}
	We have that each $A_j$ is $\D^G_m$-measurable, $\bigsqcup_{j\in\Z}A_j=\bigsqcup_{\xi\in\D_m^G, \theta(\xi)>0}\xi$ and $A_j=\emptyset$ if $j<-(m-1)\log M-\log M'-1$. We write $\delta=\delta(q,\varepsilon)=3\varepsilon/q$ and
	\begin{equation*}
		E=\bigsqcup_{j=-\infty}^{\lceil\delta m\rceil-1}A_j,\quad F=\bigsqcup_{j=\lceil\delta m\rceil}^\infty A_j.
	\end{equation*}
	Then $\theta=\theta|_E+\theta|_F$, and hence, by Minkowski's inequality,
	\begin{equation}\label{minkowski_1_uniforming_theta}
		\|(\theta{\bm .}\nu)^{(m+r)}\|_q=\|(\theta|_E{\bm .}\nu+\theta|_F{\bm .}\nu)^{(m+r)}\|_q\leq\|(\theta|_E{\bm .}\nu)^{(m+r)}\|_q+\|(\theta|_F{\bm .}\nu)^{(m+r)}\|_q.
	\end{equation}
	
	We estimate $\|(\theta|_F{\bm .}\nu)^{(m+r)}\|_q$. Since $\supp\ \theta$ intersects at most $M'M^{m-1}$ elements of $\D_m^G$, we have
	\begin{equation}\label{mass_F_uniforming_theta}
		\theta|_F(G)\leq M'M^{m-1}\cdot 2^{-\lceil\delta m\rceil}M'^{-1}M^{-(m-1)}\leq2^{-\delta m}.
	\end{equation}
	By Hölder's inequality,
	\begin{align*}
		\|(\theta|_F{\bm .}\nu)^{(m+r)}\|_q^q&=\sum_{I\in\D_{m+r}}\theta|_F{\bm .}\nu(I)^q\\
		&=\sum_{I\in\D_{m+r}}\left(\int_G\nu(g^{-1}I)\ d\theta|_F(g)\right)^q\\
		&\leq\sum_{I\in\D_{m+r}}\theta|_F(G)^{q-1}\int_G\nu(g^{-1}I)^q\ d\theta|_F(g)\\
		&=\theta|_F(G)^{q-1}\int_G\sum_{I\in\D_{m+r}}\nu(g^{-1}I)^q\ d\theta|_F(g).
	\end{align*}
	For each $g\in\supp\ \theta$, we have $C^{-1}2^r\leq \|g\|^2\leq C2^r$ and $u^-_g\notin U_1$. Hence, by Corollary \ref{contraction_on_U_by_A_preliminaries}, we have $\diam\ g^{-1}I\cap K\leq C_1C\pi2^{-m}$ for each $I\in\D_{m+r}$, where $C_1>1$ is a global constant depending only on $\mu$ and $\overline{U}\subset U_1$, so $g^{-1}I\cap K$ intersects at most $O_{\mu,C}(1)$ elements of $\D_m$.
	Furthermore, by Corollary \ref{contraction_on_U_by_A_preliminaries} again, each $g^{-1}I\ (I\in\D_{m+r})$ included in $U$ contains an open interval of length $C_1^{-1}C^{-1}2^{-m}$, so each $J\in\D_m$ intersects at most $O_{\mu,C}(1)$ elements of $g^{-1}I\cap K\ (I\in\D_{m+r})$. Hence, by Lemma \ref{L^q_norms_of_two_partitions}, we have
	\begin{equation*}
		\sum_{I\in\D_{m+r}}\nu(g^{-1}I)^q\leq O_{\mu,q,C}(1)\|\nu^{(m)}\|_q^q.
	\end{equation*}
	Therefore, we obtain that
	\begin{align}\label{L^q_norm_thetaFnu_uniforming_theta}
		\|(\theta|_F{\bm .}\nu)^{(m+r)}\|_q^q&\leq\theta|_F(G)^{q-1}\int_G\sum_{I\in\D_{m+r}}\nu(g^{-1}I)^q\ d\theta|_F(g)\nonumber\\
		&\leq O_{\mu,q,C}(1)\theta|_F(G)^q\|\nu^{(m)}\|_q^q\nonumber\\
		&\leq O_{\mu,q,C}(1)2^{-q\delta m}\|\nu^{(m)}\|_q^q\nonumber\\
		&\leq 2^{-q\delta/2\cdot m}\|\nu^{(m)}\|_q^q,
	\end{align}
	where, in the last two inequalities, we have used (\ref{mass_F_uniforming_theta}) and $m\gg_{\mu,q,\varepsilon,C}1$.
	
	From our assumption (ii), (\ref{minkowski_1_uniforming_theta}) and (\ref{L^q_norm_thetaFnu_uniforming_theta}), we have
	\begin{align}\label{L^q_norm_thetaEnu_uniforming_theta}
		\|(\theta|_E{\bm .}\nu)^{(m+r)}\|_q&\geq \|(\theta{\bm .}\nu)^{(m+r)}\|_q-\|(\theta|_F{\bm .}\nu)^{(m+r)}\|_q\nonumber\\
		&\geq 2^{-\varepsilon/q\cdot m}\|\nu^{(m)}\|_q-2^{-\delta/2\cdot m}\|\nu^{(m)}\|_q\nonumber\\
		&=(1-2^{-\varepsilon/(2q)\cdot m})2^{-\varepsilon/q\cdot m}\|\nu^{(m)}\|_q\nonumber\\
		&\geq2^{-1-\varepsilon/q\cdot m}\|\nu^{(m)}\|_q\nonumber\\
		&\geq2^{-11\varepsilon/(10q)\cdot m}\|\nu^{(m)}\|_q,
	\end{align}
	where, in the last two inequalities, we have used $m\gg_{q,\varepsilon}1$. Furthermore, by the definition of $E$ and Minkowski's inequality, we have
	\begin{equation*}
		\|(\theta|_E{\bm .}\nu)^{(m+r)}\|_q=\left\|\left(\left(\sum_{j\leq\lceil\delta m\rceil-1}\theta|_{A_j}\right){\bm .}\nu\right)^{(m+r)}\right\|_q\leq\sum_{j=\lceil-(m-1)\log M-\log M'-1\rceil}^{\lceil\delta m\rceil-1}\|(\theta|_{A_j}{\bm .}\nu)^{(m+r)}\|_q.
	\end{equation*}
	Hence, by pigeonholing and (\ref{L^q_norm_thetaEnu_uniforming_theta}), there exists an integer $\lceil-(m-1)\log M-\log M'-1\rceil\leq j\leq\lceil\delta m\rceil-1$ such that
	\begin{align}\label{L^q_norm_thetaAjnu_uniforming_theta}
		\|(\theta|_{A_j}{\bm .}\nu)^{(m+r)}\|_q&\geq\frac{1}{\lceil\delta m\rceil-\lceil-(m-1)\log M-\log M'-1\rceil}\cdot
		2^{-11\varepsilon/(10q)\cdot m}\|\nu^{(m)}\|_q\nonumber\\
		&=2^{-(\log(\lceil\delta m\rceil-\lceil-(m-1)\log M-\log M'-1\rceil)/m+11\varepsilon/(10q))m}\|\nu^{(m)}\|_q\nonumber\\
		&\geq2^{-6\varepsilon/(5q)\cdot m}\|\nu^{(m)}\|_q,
	\end{align}
	where, in the last inequality, we have used $m\gg_{M,q,\varepsilon,L}1$. Hence, we obtain
	\begin{equation}\label{conclusion_II_uniforming_theta}
		\|(\theta_{A_j}{\bm .}\nu)^{(m+r)}\|_q^q=\theta(A_j)^{-q}\|(\theta|_{A_j}{\bm .}\nu)^{(m+r)}\|_q^q
		\geq\|(\theta|_{A_j}{\bm .}\nu)^{(m+r)}\|_q^q\geq2^{-6\varepsilon/5\cdot m}\|\nu^{(m)}\|_q^q.
	\end{equation}
	
	From (\ref{L^q_norm_thetaAjnu_uniforming_theta}) and the same calculation as (\ref{L^q_norm_thetaFnu_uniforming_theta}), we have
	\begin{align*}
		2^{-6\varepsilon/5\cdot m}\|\nu^{(m)}\|_q^q&\leq\|(\theta|_{A_j}{\bf .}\nu)^{(m+r)}\|_q^q\\
		&\leq O_{\mu,q,C}(1)\theta(A_j)^q\|\nu^{(m)}\|_q^q,
	\end{align*}
	and hence, by using $m\gg_{\mu,q,\varepsilon,C}1$,
	\begin{equation*}
		\theta(A_j)^q\geq 2^{-13\varepsilon/10\cdot m}.
	\end{equation*}
	By this and our assumption (i), we obtain
	\begin{equation*}
		2^{-\sigma m}\geq\|\theta^{(m)}\|_q^q\geq\|(\theta|_{A_j})^{(m)}\|_q^q=\theta(A_j)^q\|(\theta_{A_j})^{(m)}\|_q^q\geq2^{-13\varepsilon/10\cdot m}\|(\theta_{A_j})^{(m)}\|_q^q,
	\end{equation*}
	and hence, by using $0<\varepsilon<\sigma/100$,
	\begin{equation}\label{conclusion_I_uniforming_theta}
		\|(\theta_{A_j})^{(m)}\|_q^q\leq 2^{-9\sigma/10\cdot m}.
	\end{equation}
	If we define the probability measure $\theta'\ll\theta$ by $\theta'=\theta_{A_j}$, then the conclusion (1) and (2) hold from (\ref{conclusion_I_uniforming_theta}) and (\ref{conclusion_II_uniforming_theta}). The conclusion (3) is clear from the definition of $A_j$, so we complete the proof.
\end{proof}

We will reduce $\theta'$ so that the support has the “uniform tree structure” and prove Lemma \ref{tree_structure_supp_theta}. The idea of the proof is from \cite[Lemma 3.6]{Shm19}, which is originated at least from \cite{Bou10}.

\begin{proof}[Proof of Lemma \ref{tree_structure_supp_theta}]
	We take a large constant $D\in\N$ in terms of $M,q,\varepsilon$ which will be specified later.
	Let $l\in\N$ be sufficiently large and $m=lD$, and $\theta$ be a Borel probability measure on $G$ satisfying the assumption for $m$. If $l\gg_{M,\mu,q,\sigma,\varepsilon,C,L}1$, then so is $m=lD$ and we can apply Lemma \ref{uniforming_theta} to $\theta$ and obtain the normalized restriction $\theta'$ of $\theta$ to some $\D^G_m$-measurable set. We define the $\D_{lD}^G$-measurable set $E'$ by
	\begin{equation*}
		E'=\bigsqcup_{\xi\in\D_{lD}^G,\theta'(\xi)>0}\xi.
	\end{equation*}
	Since $\diam\ \supp\ \theta'\leq\diam\ \supp\ \theta\leq L$, $E'$ is a finite union.
	
	We inductively reduce $\theta'$ from the bottom scale $\D_{lD}^G$, using the “dyadic pigeonhole principle” at each scale. First, for each $j\in\{0,1,\dots, D-1\}$, we define $\A^{(l-1,j)}\subset\D_{(l-1)D}^G$ by
	\begin{align*}
		&\A^{(l-1,j)}=\left\{\zeta\in\D_{(l-1)D}^G\left|\ M^j\leq |\D_{lD}^G(\zeta\cap E')|<M^{j+1}\right.\right\}\quad(j<D-1),\\
		&\A^{(l-1,D-1)}=\left\{\zeta\in\D_{(l-1)D}^G\left|\ M^{D-1}\leq |\D_{lD}^G(\zeta\cap E')|\leq M^D\right.\right\}\quad(j=D-1).
	\end{align*}
	Since each $\zeta\in\D_{(l-1)D}^G$ contains at most $M^D$ elements of $\D_{lD}^G$, we have $\bigsqcup_{0\leq j\leq D-1}\A^{(l-1, j)}=\{\zeta\in\D_{(l-1)D}^G\left|\ \zeta\cap E'\neq\emptyset\right.\}$. We also define $\D_{(l-1)D}^G$-measurable sets $A^{(l-1,j)}=\bigsqcup_{\zeta\in\A^{(l-1,j)}}\zeta\ (0\leq j\leq D-1)$, then
	\begin{equation*}
		\theta'=\sum_{0\leq j\leq D-1}\theta'|_{A^{(l-1,j)}}.
	\end{equation*}
	By using this and Minkowski's inequality, we obtain from the property (2) of $\theta'$ that
	\begin{align*}
		2^{-6\varepsilon/(5q)\cdot m}\|\nu^{(m)}\|_q\leq \|(\theta'{\bm .}\nu)^{(m+r)}\|_q&=\left\|\left(\sum_{0\leq j\leq D-1}\theta'|_{A^{(l-1,j)}}{\bm .\nu}\right)^{(m+r)}\right\|_q\\
		&\leq\sum_{0\leq j\leq D-1}\|(\theta'|_{A^{(l-1,j)}}{\bm .}\nu)^{(m+r)}\|_q.
	\end{align*}
	Hence, by pigeonholing, there is $j$ such that
	\begin{equation}\label{dyadic_pigeonholing_tree_structure_supp_theta}
		\|(\theta'|_{A^{(l-1,j)}}{\bm .}\nu)^{(m+r)}\|_q\geq D^{-1}2^{-6\varepsilon/(5q)\cdot m}\|\nu^{(m)}\|_q.
	\end{equation}
	
	Here, $A^{(l-1,j)}=\bigsqcup_{\zeta\in\A^{(l-1,j)}}\zeta$ and each $\zeta\in\A^{(l-1,j)}$ satisfies $M^j\leq |\D_{lD}^G(\zeta\cap E')|\leq M^{j+1}$. We take $\B^{(l,j)}\subset\left\{\xi\in\D_{lD}^G\left|\ \xi\subset A^{(l-1,j)}\cap E'\right.\right\}$ so that $\left|\left\{\xi\in\B^{(l,j)}\left|\ \xi\subset\zeta\right.\right\}\right|\leq M^j$ for each $\zeta\in\A^{(l-1,j)}$ and
	\begin{align*}
		&\left\|\left(\theta'|_{\bigsqcup_{\xi\in\B^{(l,j)}}\xi}{\bm .}\nu\right)^{(m+r)}\right\|_q
		=\max\left\{\left\|\left(\theta'|_{\bigsqcup_{\xi\in\B}\xi}{\bm .}\nu\right)^{(m+r)}\right\|_q\right.\\
		&\qquad\qquad\left.\Bigg|\ \B\subset\left\{\xi\in\D_{lD}^G\left|\ \xi\subset A^{(l-1,j)}\cap E'\right.\right\}
		,\ \left|\left\{\xi\in\B\left|\ \xi\subset\zeta\right.\right\}\right|\leq M^j
		\text{ for each }\zeta\in\A^{(l-1,j)}
		\right\}.
	\end{align*}
	Then, by the maximality, we can have $\left|\left\{\xi\in\B^{(l,j)}\left|\ \xi\subset\zeta\right.\right\}\right|=M^j$ for each $\zeta\in\A^{(l-1,j)}$. Furthermore, we can divide
	$\left\{\xi\in\D_{lD}^G\left|\ \xi\subset A^{(l-1,j)}\cap E'\right.\right\}\setminus\B^{(l,j)}$ as
	\begin{equation*}
		\left\{\xi\in\D_{lD}^G\left|\ \xi\subset A^{(l-1,j)}\cap E'\right.\right\}\setminus\B^{(l,j)}=\bigsqcup_{k=1}^{M-1}\B_k
	\end{equation*}
	and each $\B_k$ satisfies
	\begin{equation*}
		\left|\left\{\xi\in\B_k\left|\ \xi\subset\zeta\right.\right\}\right|\leq M^j
		\text{ for each }\zeta\in\A^{(l-1,j)}.
	\end{equation*}
	By Minkowski's inequality and the maximality of $\B^{(l,j)}$, we obtain
	\begin{align}\label{uniforming_A^j-1_tree_structure_supp_theta}
		\|(\theta'|_{A^{(l-1,j)}}{\bm .}\nu)^{(m+r)}\|_q&=\left\|\left(\left(\theta'|_{\bigsqcup_{\xi\in\B^{(l,j)}}\xi}+\sum_{k=1}^{M-1}\theta'|_{\bigsqcup_{\xi\in\B_k}\xi}\right){\bm .}\nu\right)^{(m+r)}\right\|_q\nonumber\\
		&\leq\left\|\left(\theta'|_{\bigsqcup_{\xi\in\B^{(l,j)}}\xi}{\bm .}\nu\right)^{(m+r)}\right\|_q+\sum_{k=1}^{M-1}\left\|\left(\theta'|_{\bigsqcup_{\xi\in\B_k}\xi}{\bm .}\nu\right)^{(m+r)}\right\|_q\nonumber\\
		&\leq M\left\|\left(\theta'|_{\bigsqcup_{\xi\in\B^{(l,j)}}\xi}{\bm .}\nu\right)^{(m+r)}\right\|_q.
	\end{align}
	Here, we write $\A^{(l-1)}=\A^{(l-1,j)}$, $A^{(l-1)}=A^{(l-1,j)}$, $\B^{(l)}=\B^{(l,j)}$, $R_{l-1}=M^j$ and
	\begin{equation*}
		B^{(l)}=\bigsqcup_{\xi\in\B^{(l)}}\xi.
	\end{equation*}
	Then, $\left|\left\{\xi\in\B^{(l)}\left|\ \xi\subset\zeta\right.\right\}\right|=R_{l-1}$ for each $\zeta\in\A^{(l-1)}$ and, by (\ref{uniforming_A^j-1_tree_structure_supp_theta}) and (\ref{dyadic_pigeonholing_tree_structure_supp_theta}), we have
	\begin{equation}\label{L^q_norm_thetaBlnu_tree_structure_supp_theta}
		\left\|\left(\theta'|_{B^{(l)}}{\bm .}\nu\right)^{(m+r)}\right\|_q
		\geq (MD)^{-1}2^{-6\varepsilon/(5q)\cdot m}\|\nu^{(m)}\|_q.
	\end{equation}
	
	Next, we consider the scale $\D_{(l-1)D}^G$ and reduce $\theta'|_{B^{(l)}}$. We notice that $\{\zeta\in\D_{(l-1)D}^G\left|\ \theta'|_{B^{(l)}}(\zeta)>0\right.\}=\A^{(l-1)}$. For each $j\in\{0,1,\dots, D-1\}$, we define $\A^{(l-2,j)}\subset\D_{(l-2)D}^G$ by
	\begin{align*}
		&\A^{(l-2,j)}=\left\{\eta\in\D_{(l-2)D}^G\left|\ M^j\leq |\D_{(l-1)D}^G(\eta\cap A^{(l-1)})|<M^{j+1}\right.\right\}\quad(j<D-1),\\
		&\A^{(l-2,D-1)}=\left\{\eta\in\D_{(l-2)D}^G\left|\ M^{D-1}\leq |\D_{(l-1)D}^G(\eta\cap A^{(l-1)})|\leq M^D\right.\right\}\quad(j=D-1).
	\end{align*}
	Then, if we write $A^{(l-2,j)}=\bigsqcup_{\eta\in\A^{(l-2,j)}}\eta$, we have
	\begin{equation*}
		\theta'|_{B^{(l)}}=\sum_{0\leq j\leq D-1}\theta'|_{B^{(l)}\cap A^{(l-2,j)}}.
	\end{equation*}
	Hence, by (\ref{L^q_norm_thetaBlnu_tree_structure_supp_theta}), Minkowski's inequality and pigeonholing, there is $0\leq j\leq D-1$ such that
	\begin{equation*}
		\left\|\left(\theta'|_{B^{(l)}\cap A^{(l-2,j)}}{\bm .}\nu\right)^{(m+r)}\right\|_q
		\geq D^{-1}(MD)^{-1}2^{-6\varepsilon/(5q)\cdot m}\|\nu^{(m)}\|_q.
	\end{equation*}
	As the same way as above, we reduce $A^{(l-1)}$ so that the reduced set $B^{(l-1,j)}$: $\D_{(l-1)D}^G$-measurable satisfies that, for each $\eta\in\A^{(l-2,j)}$, $\eta\cap B^{(l-1,j)}\subset A^{(l-1)}$ and the number of $\D_{(l-1)D}^G$-atoms contained in $\eta\cap B^{(l-1,j)}$ is exactly $M^j$, and
	\begin{equation*}
		\left\|\left(\theta'|_{B^{(l)}\cap B^{(l-1,j)}}{\bm .}\nu\right)^{(m+r)}\right\|_q
		\geq (MD)^{-2}2^{-6\varepsilon/(5q)\cdot m}\|\nu^{(m)}\|_q.
	\end{equation*}
	Hence, by putting
	$\A^{(l-2)}=\A^{(l-2,j)}$, $A^{(l-2)}=A^{(l-2,j)}$, $R_{l-2}=M^j$ and $B^{(l-2)}=B^{(l-2,j)}$,
	we have the desired reduction at the scale $\D_{(l-1)D}^G$.
	
	We continue this process to reach the top scale $\D_D^G$. We notice that $\D_0^G=\{G\}$ and, since $\diam\ \supp\ \theta'\leq L$, the number of $\D_D^G$-atoms intersecting $\supp\ \theta'$ is $O_{M, L, D}(1)$. As the result, we obtain a sequence $(R_i)_{i\in[l]}$ with $R_i\in\{1,M,\dots,M^{D-1}\}\ (i>0)$, $R_0=O_{M,L,D}(1)$ and a $(D,l,(R_i)_{i\in[l]})$-uniform set $\widetilde{E}$ such that
	\begin{equation*}
		\left\|\left(\theta'|_{\widetilde{E}}{\bm .}\nu\right)^{(m+r)}\right\|_q
		\geq (MD)^{-(l-1)}2^{-6\varepsilon/(5q)\cdot m}\|\nu^{(m)}\|_q\geq2^{-(\log(MD)/D+6\varepsilon/(5q))m}\|\nu^{(m)}\|_q.
	\end{equation*}
	Here, if $D=D(M,q,\varepsilon)$ is sufficiently large, then $\log(MD)/D<\varepsilon/(10q)$, and hence
	\begin{equation}\label{L^q_norm_theta'tildeEnu_tree_structure_supp_theta}
		\left\|\left(\theta'|_{\widetilde{E}}{\bm .}\nu\right)^{(m+r)}\right\|_q\geq 2^{-13\varepsilon/(10q)\cdot m}\|\nu^{(m)}\|_q.
	\end{equation}
	Therefore, we obtain
	\begin{equation}\label{conclusion_II_tree_structure_supp_theta}
		\left\|\left(\theta'_{\widetilde{E}}{\bm .}\nu\right)^{(m+r)}\right\|_q^q=\theta'(\widetilde{E})^{-q}\left\|\left(\theta'|_{\widetilde{E}}{\bm .}\nu\right)^{(m+r)}\right\|_q^q\geq 2^{-13\varepsilon/10\cdot m}\|\nu^{(m)}\|_q^q.
	\end{equation}
	
	By (\ref{L^q_norm_theta'tildeEnu_tree_structure_supp_theta}) and the same calculation as (\ref{L^q_norm_thetaFnu_uniforming_theta}), we have
	\begin{align*}
		2^{-13\varepsilon/10\cdot m}\|\nu^{(m)}\|_q^q&\leq\left\|\left(\theta'|_{\widetilde{E}}{\bm .}\nu\right)^{(m+r)}\right\|_q^q\\
		&\leq\sum_{I\in\D_{m+r}}\theta'|_{\widetilde{E}}(G)^{q-1}\int_G\nu(g^{-1}I)^q\ d\theta'|_{\widetilde{E}}(g)\\
		&\leq O_{\mu,q,C}(1)\theta'(\widetilde{E})^q\|\nu^{(m)}\|_q^q.
	\end{align*}
	Here, in the last inequality, we notice that $\theta'\ll\theta$, so $\theta'$ has the property that $C^{-1}2^r\leq \|g\|^2\leq C2^r$ and $u^-_g\notin U_1$ for every $g\in\supp\ \theta'$. By using $m=lD$, $l\gg_{\mu,q,\varepsilon,C} 1$, we obtain
	\begin{equation*}
		\theta'(\widetilde{E})^q\geq 2^{-7\varepsilon/5\cdot m}.
	\end{equation*}
	From this and the property (1) of $\theta'$, it follows that
	\begin{equation*}
		2^{-9\sigma/10\cdot m}\geq \|\theta'^{(m)}\|_q^q\geq\|(\theta'|_{\widetilde{E}})^{(m)}\|_q^q=\theta'(\widetilde{E})^q\|(\theta'_{\widetilde{E}})^{(m)}\|_q^q\geq2^{-7\varepsilon/5\cdot m}\|(\theta'_{\widetilde{E}})^{(m)}\|_q^q,
	\end{equation*}
	and hence, by $0<\varepsilon<\sigma/100$,
	\begin{equation}\label{conclusion_I_tree_structure_supp_theta}
		\|(\theta'_{\widetilde{E}})^{(m)}\|_q^q\leq 2^{-4\sigma/5\cdot m}.
	\end{equation}
	
	Finally, we define $\widetilde{\theta}=\theta'_{\widetilde{E}}$. Then, the conclusion (I) and (II) follow from (\ref{conclusion_I_tree_structure_supp_theta}) and (\ref{conclusion_II_tree_structure_supp_theta}). Since $\widetilde{\theta}$ is the normalized restriction of $\theta'$ on the $\D_m^G$-measurable set $\widetilde{E}$, the conclusion (III) follows from the property (3) of $\theta'$. We have already seen that $\widetilde{E}$ is a $(D,l,(R_i)_{i\in [l]})$-uniform, so the conclusion (IV) holds. So we complete the proof.
\end{proof}

\subsection{Finding a nice small scale}\label{subsection_finding_nice_small_scale}

From here, we consider the situation in Lemma \ref{L^q_norm_porosity} except for the assumption (\ref{condition_tau^*(apha)_is_positive}). Actually, we do not use this assumption until Lemma \ref{key_2^{-iD}_components_are_blanching_proof_of_porosity}.
Let $0<\sigma<1$ be a constant, $0<\varepsilon<\delta<\sigma, \delta\ll_{M,\mu,q,\sigma} 1, \varepsilon\ll_{M,\mu,q,\sigma,\delta}1$ be sufficiently small constants and  $C,L>1$.
We can obtain the constant $D=D(M,\mu,q,\sigma,\varepsilon, C)$ from Lemma \ref{tree_structure_supp_theta} for this $0<\varepsilon\ll\sigma$ and $C,L>1$ so that $D$ is also large in terms of $\delta$.
We take sufficiently large $l\in\N$, $l\gg_{M,\mu,q,\sigma,\delta,\varepsilon,D, C,L}1$ and write $m=lD$ and $n=\lfloor\delta l\rfloor$. We also take a Borel probability measure $\theta$ on $G$ satisfying the assumptions of Lemma \ref{L^q_norm_porosity} for this $m$.

By applying Lemma \ref{tree_structure_supp_theta} to $\theta$, we can take a normalized restriction $\widetilde{\theta}$ of $\theta$ to some $\D^G_m$-measurable set satisfying the properties of Lemma \ref{tree_structure_supp_theta}. We see how this $\widetilde{\theta}$ works.
From the properties (III) and (IV) of $\widetilde{\theta}$ and $\widetilde{E}$ and $\widetilde{\theta}(\widetilde{E})=1$, we have
\begin{equation*}
	1\leq\prod_{j\in[l]}R_j\cdot a\leq 2.
\end{equation*}
Similarly,
\begin{equation*}
	\|\widetilde{\theta}^{(m)}\|_q^q\leq\prod_{j\in[l]}R_j\cdot a^q\leq 2^q\|\widetilde{\theta}^{(m)}\|_q^q.
\end{equation*}
From these inequalities, we have
\begin{equation}\label{L^q_norm_widetildetheta_how_widetildetheta_works}
	\|\widetilde{\theta}^{(m)}\|_q^q=\Theta_q(1)\cdot\left(\prod_{j\in[l]}R_j\right)^{-(q-1)}.
\end{equation}
Let $i\in\N$ such that $1\leq i\leq l-n$ and $\xi$ be any element of $\D^G_{iD}$ such that $\widetilde{\theta}(\xi)>0$. Then the component measure $\widetilde{\theta}_\xi$ is the probability measure on $\widetilde{E}\cap \xi$ with an almost uniform mass about $\widetilde{\theta}(\xi)^{-1}a$ on each $\D^G_m$-atom in $\widetilde{E}\cap\xi$.
From the properties (III) and (IV) again, we have
\begin{equation*}
	\prod_{j=i}^{l-1}R_j\cdot\frac{a}{2}\leq\widetilde{\theta}(\xi)\leq \prod_{j=i}^{l-1}R_j\cdot a,
\end{equation*}
$\widetilde{E}\cap\xi$ intersects exactly $\prod_{j=i}^{i+n-1} R_j$ $\D_{(i+n)D}^G$-atoms, and each $\zeta$ of such $\D_{(i+n)D}^G$-atoms has
\begin{equation*}
	\prod_{j=i+n}^{l-1}R_j\cdot\frac{a}{2\widetilde{\theta}(\xi)}\leq\widetilde{\theta}_\xi(\zeta)\leq\prod_{j=i+n}^{l-1}R_j\cdot\frac{a}{\widetilde{\theta}(\xi)}.
\end{equation*}
Hence, we obtain that
\begin{align}\label{L^q_norm_component_theta_how_widetildetheta_works}
	\|\widetilde{\theta}_\xi^{((i+n)D)}\|_q^q&=\Theta_q(1)\cdot\prod_{j=i}^{i+n-1}R_j\cdot\left(\prod_{j=i+n}^{l-1}R_j\cdot\frac{a}{\widetilde{\theta}(\xi)}\right)^q\nonumber\\
	&=\Theta_q(1)\cdot\prod_{j=i}^{i+n-1}R_j\cdot\left(\frac{1}{\prod_{j=i}^{i+n-1}R_j}\right)^q\nonumber\\
	&=\Theta_q(1)\cdot\left(\prod_{j=i}^{i+n-1}R_j\right)^{-(q-1)}.
\end{align}

For $i\in\N$ with $1\leq i\leq l-n$, we define
\begin{equation*}
	T_n(i)=\left(\prod_{j=i}^{i+n-1}R_j\right)^{-(q-1)}.
\end{equation*}
Then, by (\ref{L^q_norm_component_theta_how_widetildetheta_works}), $T_n(i)=\Theta_q(1)\cdot \|\widetilde{\theta}_\xi^{((i+n)D)}\|_q^q$ for any $\xi\in\D_{iD}^G$ with $\widetilde{\theta}(\xi)>0$. Furthermore, we have
\begin{align*}
	\frac{1}{n}\sum_{i=1}^{l-n}\log T_n(i)&=-\frac{q-1}{n}\sum_{i=1}^{l-n}\sum_{j=i}^{i+n-1}\log R_j\\
	&=-\frac{q-1}{n}\sum_{\substack{1\leq i\leq l-n,\ 1\leq j\leq l-1,\\j-n+1\leq i\leq j}}\log R_j\\
	&=-\frac{q-1}{n}\left(\sum_{j=n}^{l-n}n\log R_j+\sum_{j=1}^{n-1}j\log R_j+\sum_{j=l-n+1}^{l-1}(l-j)\log R_j\right)\\
	&=-(q-1)\sum_{j=1}^{l-1}\log R_j+O(qDn\log M),
\end{align*}
where, in the last equation, we have used $R_j\in\{1,M,\dots, M^D\}$ for $j\geq1$.
From this, $R_0=O_{M,L,D}(1)$, $n=\lfloor\delta l\rfloor$ and (\ref{L^q_norm_widetildetheta_how_widetildetheta_works}), it follows that
\begin{align*}
	\frac{1}{n}\sum_{i=1}^{l-n}\log T_n(i)&=-(q-1)\sum_{j=0}^{l-1}\log R_j+O(\delta q\log M\cdot lD)+O_{M,q,L,D}(1)\\
	&=\log\|\widetilde{\theta}^{(m)}\|_q^q+O(\delta q\log M)m+O_{M,q,L,D}(1).
\end{align*}
Here, we notice $m=lD$ in the last equation.
Hence, by the property (I) of $\widetilde{\theta}$ and $m\gg_{M,q,\sigma,L,D}1$, if we take $\delta\ll\sigma/(q\log M)$, we have
\begin{equation*}
	\frac{1}{n}\sum_{i=1}^{l-n}\log T_n(i)\leq -\frac{7\sigma}{10}\cdot m,
\end{equation*}
so
\begin{equation}\label{average_logTni_how_widetildetheta_works}
	\frac{1}{l}\sum_{i=1}^{l-n}\frac{1}{nD}\log T_n(i)\leq-\frac{7\sigma}{10}.
\end{equation}
On the other hand, since $R_j\in\{1,M,\dots, M^D\}$ for $j\geq1$, we have for each $i$ that
\begin{equation*}
	\frac{1}{nD}\log T_n(i)=-\frac{q-1}{nD}\sum_{j=i}^{i+n-1}\log R_j\geq-(q-1)\log M.
\end{equation*}
Hence, if we define
\begin{equation*}
	S=\left\{1\leq i\leq l-n\left|\ \frac{1}{nD}\log T_n(i)\leq -\frac{3\sigma}{5}\right.\right\},
\end{equation*}
then we have
\begin{align*}
	\frac{1}{l}\sum_{i=1}^{l-n}\frac{1}{nD}\log T_n(i)&=\frac{1}{l}\sum_{i\in S}\frac{1}{nD}\log T_n(i)+\frac{1}{l}\sum_{\substack{1\leq i\leq l-n,\\i\notin S}}\frac{1}{nD}\log T_n(i)\\
	&\geq-\frac{|S|}{l}\cdot(q-1)\log M-\frac{l-n-|S|}{l}\frac{3\sigma}{5}\\
	&\geq-\frac{|S|}{l}\cdot(q-1)\log M-\frac{3\sigma}{5}.
\end{align*}
So, by this and (\ref{average_logTni_how_widetildetheta_works}), we obtain\footnote{One might think that, if $q-1\ll\sigma$, (\ref{logTni_dense_how_widetildetheta_works}) is a contradiction. However, this never happen, because it holds that $2^{-\sigma m}\geq\|\theta^{(m)}\|_q^q\geq (M'M^{m-1})^{-(q-1)}$.}
\begin{equation}\label{logTni_dense_how_widetildetheta_works}
	|S|\geq \frac{\sigma}{10(q-1)\log M}\cdot l.
\end{equation}
Here, we recall that, by (\ref{L^q_norm_component_theta_how_widetildetheta_works}), $T_n(i)=\Theta_q(1)\cdot \|\widetilde{\theta}_\xi^{((i+n)D)}\|_q^q$ for any $\xi\in\D_{iD}^G$ with $\widetilde{\theta}(\xi)>0$. Since $nD\geq n=\lfloor\delta l\rfloor\gg_{\sigma,\delta,q}1$ is sufficiently large, we have
\begin{equation*}\label{logTni_L^q_component_widetildetheta_how_widetildetheta_works}
	\frac{1}{nD}\log T_n(i)\geq \frac{1}{nD}\log\|\widetilde{\theta}_\xi^{((i+n)D)}\|_q^q-\frac{\sigma}{10}
\end{equation*}
for any $\xi\in\D_{iD}^G$ with $\widetilde{\theta}(\xi)>0$. By this and (\ref{logTni_dense_how_widetildetheta_works}), we obtain
\begin{align}\label{widetildetheta_any_component_has_flat_L^q_norm}
	&\left|\left\{1\leq i\leq l-n\left|\ \|\widetilde{\theta}_\xi^{((i+n)D)}\|_q^q\leq 2^{-\sigma/2\cdot nD}\text{ for any }\xi\in\D_{iD}^G\text{ with }\widetilde{\theta}(\xi)>0\right.\right\}\right|\nonumber\\
	\geq&\  \frac{\sigma}{10(q-1)\log M}\cdot l.
\end{align}
This is the key property of $\widetilde{\theta}$ which we will use the following argument.

For the proof of Lemma \ref{L^q_norm_porosity}, we need the following lemma.

\begin{lem}\label{Lemma_many_components_which_increase_L^q_norm_under_comvolutions}
	In the setting above, there exist $i\in\N$ with $n<\sigma/(30(q-1)\log M)\cdot l\leq i\leq l-n$ and $\xi\in\D^G_{iD}$ with $\widetilde{\theta}(\xi)>0$ such that
	\begin{equation*}
		\|\widetilde{\theta}_\xi^{((i+n)D)}\|_q^q\leq 2^{-\sigma/2\cdot nD},
	\end{equation*}
	and, if we define
	\begin{equation*}
		\D_\xi=\left\{I\in\D_{iD}\left|\ \nu(I)>0, \|(\widetilde{\theta}_\xi{\bm .}\widehat{\nu_I})^{((i+n)D+r)}\|_q^q\geq 2^{-(\tau(q)+\sqrt{\delta}/2)nD} \right.\right\},
	\end{equation*}
	then we have
	\begin{equation*}
		\sum_{I\in\D_\xi}\nu(2I)^q\geq2^{-(\tau(q)+2\delta^{4/3})iD}.
	\end{equation*}
\end{lem}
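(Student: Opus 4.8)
The plan is to fix one "good" scale $iD$, decompose the non-linear convolution $\widetilde\theta{\bm .}\nu$ at that scale into the contributions of its $\D^G_{iD}$-components, and combine assumption (ii) (property (II) of $\widetilde\theta$) with a local-$L^q$-norm control of $\widetilde\theta{\bm .}\nu$ so that the decomposition forces many of the $\D_{iD}$-pieces of $\nu$ into $\D_\xi$. For the combinatorial input, (\ref{widetildetheta_any_component_has_flat_L^q_norm}) says that the set
\begin{equation*}
\Sigma=\{\,1\le i\le l-n:\ \|\widetilde\theta_\xi^{((i+n)D)}\|_q^q\le 2^{-\sigma nD/2}\ \text{for every }\xi\in\D^G_{iD}\ \text{with }\widetilde\theta(\xi)>0\,\}
\end{equation*}
has at least $\tfrac{\sigma}{10(q-1)\log M}\,l$ elements, so it meets $[\tfrac{\sigma}{30(q-1)\log M}l,\ l-2n]$ (note $n=\lfloor\delta l\rfloor<\tfrac{\sigma}{30(q-1)\log M}l$ for $\delta\ll_{M,\mu,q,\sigma}1$). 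I fix $i$ in this intersection; then $n<\tfrac{\sigma}{30(q-1)\log M}l\le i\le l-n$, and every component $\widetilde\theta_\xi$, $\xi\in\D^G_{iD}$, has the required flat norm $\|\widetilde\theta_\xi^{((i+n)D)}\|_q^q\le 2^{-\sigma nD/2}$.

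The two analytic ingredients are as follows. \textbf{(a)} A localization estimate: since $\diam\supp\widetilde\theta\le L$, $C^{-1}2^r\le\|g\|^2\le C2^r$ and $u_g^-\notin U_1$ on $\supp\widetilde\theta$, Corollary \ref{contraction_on_U_by_A_preliminaries} shows that for $\xi\in\D^G_{iD}$ and $I\in\D_{iD}$ with $\nu(I)>0$ the measure $\widetilde\theta_\xi{\bm .}(\nu|_I)=\nu(2I)\,\widetilde\theta_\xi{\bm .}\widehat{\nu_I}$ lies in an interval of diameter $\Theta_{\mu,C}(2^{-iD-r})$, and that these intervals have bounded overlap as $I$ varies; with Lemma \ref{L^q_norms_of_two_partitions} this gives
\begin{equation*}
\|(\widetilde\theta_\xi{\bm .}\nu)^{((i+n)D+r)}\|_q^q\le O_{\mu,q,C}(1)\sum_{I\in\D_{iD},\ \nu(I)>0}\nu(2I)^q\,\|(\widetilde\theta_\xi{\bm .}\widehat{\nu_I})^{((i+n)D+r)}\|_q^q,
\end{equation*}
while Jensen's inequality, Corollary \ref{contraction_on_U_by_A_preliminaries} (convolution does not essentially increase the $L^q$ norm, up to the scale shift $r$), and Lemma \ref{local_L^q_norm_lemma}(ii) bound each factor by $\|(\widetilde\theta_\xi{\bm .}\widehat{\nu_I})^{((i+n)D+r)}\|_q^q\le 2^{-(\tau(q)-\sqrt\delta/2)nD}$ for $l$ large. \textbf{(b)} A local $L^q$-norm bound for the non-linear convolution $\sigma:=\widetilde\theta{\bm .}\nu$: for every $\delta_1>0$, every $m_1\gg_{\delta_1,\mu,q,C}1$ and every $s$,
\begin{equation*}
\|\sigma^{(s+m_1+r)}\|_q^q\le O_{\mu,q,C}(1)\,2^{-(\tau(q)-\delta_1)m_1}\,\|\sigma^{(s+r)}\|_q^q,
\end{equation*}
which I would prove exactly as Proposition \ref{existence_of_limit_of_L^q_dim} and Lemma \ref{local_L^q_norm_lemma}(ii): write $\nu=\mu_k{\bm .}\nu$ at a stopping scale, pull the $\mu_k$-factor out of $\sigma$, apply Hölder in the $G$-variable, and invoke the bounded-overlap geometry of the $G$-action on $\RP^1$. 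Applying (b) with $s=(i+n)D$, $m_1=(l-i-n)D$ and $\delta_1=\delta^2$, then combining with assumption (ii), the monotonicity $\|\sigma^{(m+r)}\|_q^q\le\|\sigma^{((i+n)D+r)}\|_q^q$, the estimate $\|\nu^{(m)}\|_q^q\ge 2^{-\tau(q)(m-iD)-o(m)}\|\nu^{(iD)}\|_q^q$ (Proposition \ref{existence_of_limit_of_L^q_dim}), and $\varepsilon,\delta^2\ll_{M,\mu,q,\sigma}\delta^{3/2}$, I get $\|\sigma^{((i+n)D+r)}\|_q^q\ge 2^{-(\tau(q)+\sqrt\delta/4)nD}\|\nu^{(iD)}\|_q^q$.

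Now the conclusion. Since $\sigma=\sum_\xi\widetilde\theta(\xi)\,(\widetilde\theta_\xi{\bm .}\nu)$ and $t\mapsto t^q$ is convex, some $\xi\in\D^G_{iD}$ with $\widetilde\theta(\xi)>0$ satisfies $\|(\widetilde\theta_\xi{\bm .}\nu)^{((i+n)D+r)}\|_q^q\ge\|\sigma^{((i+n)D+r)}\|_q^q$, and this $\xi$ has the flat norm $\|\widetilde\theta_\xi^{((i+n)D)}\|_q^q\le 2^{-\sigma nD/2}$ because $i\in\Sigma$. By ingredient (a), $\sum_{I}\nu(2I)^q\|(\widetilde\theta_\xi{\bm .}\widehat{\nu_I})^{((i+n)D+r)}\|_q^q\ge\Omega_{\mu,q,C}(1)\,2^{-(\tau(q)+\sqrt\delta/4)nD}\|\nu^{(iD)}\|_q^q$. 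Splitting this sum at $\D_\xi$, the part over $I\notin\D_\xi$ is, by the definition of $\D_\xi$ and Lemma \ref{L^q_norms_of_two_partitions}, at most $2^{-(\tau(q)+\sqrt\delta/2)nD}\sum_I\nu(2I)^q\le O(1)\,2^{-(\tau(q)+\sqrt\delta/2)nD}\|\nu^{(iD)}\|_q^q$, which is $o(1)$ times the right-hand side as $nD\to\infty$; hence the part over $I\in\D_\xi$ is $\ge\Omega_{\mu,q,C}(1)\,2^{-(\tau(q)+\sqrt\delta/4)nD}\|\nu^{(iD)}\|_q^q$. Dividing by the upper bound $2^{-(\tau(q)-\sqrt\delta/2)nD}$ on each factor (ingredient (a) again) gives $\sum_{I\in\D_\xi}\nu(2I)^q\ge\Omega_{\mu,q,C}(1)\,2^{-(3\sqrt\delta/4)nD}\|\nu^{(iD)}\|_q^q$. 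Finally $\|\nu^{(iD)}\|_q^q\ge 2^{-(\tau(q)+o(1))iD}$ (Proposition \ref{existence_of_limit_of_L^q_dim}), and since $nD=\delta lD$ and $iD\ge\tfrac{\sigma}{30(q-1)\log M}lD$ we have $(3\sqrt\delta/4)nD=\tfrac34\delta^{3/2}m\le\delta^{4/3}iD$ once $\delta^{1/6}\ll_{M,q,\sigma}1$; absorbing the $\Omega(1)$ and the $o(iD)$ term for $l$ large yields $\sum_{I\in\D_\xi}\nu(2I)^q\ge 2^{-(\tau(q)+2\delta^{4/3})iD}$, as required.

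The step I expect to be the main obstacle is ingredient (b): the local $L^q$-norm upper bound for the non-linear convolution $\sigma=\widetilde\theta{\bm .}\nu$. Unlike $\nu$, $\sigma$ is not a stationary measure, so the decay rate of its dyadic $L^q$ norm across scales is not automatically governed by $\tau(q)$; one must use $\nu=\mu_k{\bm .}\nu$ to "refresh" $\nu$ inside $\sigma$ and then carry out the Hölder-plus-bounded-overlap argument of Proposition \ref{existence_of_limit_of_L^q_dim} and Lemma \ref{local_L^q_norm_lemma}, checking that the $O_{\mu,q,C}(1)$ factors accumulated over the telescoping stay below $2^{\varepsilon m}$ (which is why $D$ must be taken large in terms of $\varepsilon$) and that the normalizations $\widehat{\nu_I}=\nu|_I/\nu(2I)$ with the $2$-fold expansions correctly absorb boundary effects when the localized pieces $g(\nu|_{g^{-1}J})$ are reassembled. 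The remaining subtlety is purely bookkeeping of the small parameters — $\varepsilon\ll\delta_1=\delta^2\ll\delta^{3/2}\ll\delta^{4/3}\ll\delta$ together with the density $\tfrac{\sigma}{(q-1)\log M}$ of $\Sigma$ — which is arranged exactly so that $\delta^{1/6}$ being smaller than the flat-set density constant makes every inequality above close.
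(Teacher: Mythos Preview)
Your overall strategy is correct and, once ingredient~(b) is in hand, arguably cleaner than the paper's: you pick \emph{any} $i\in\Sigma$ above the threshold, use~(b) to lower-bound $\|(\widetilde\theta{\bm.}\nu)^{((i+n)D+r)}\|_q^q$, then find $\xi$ by a single application of Jensen to the convex combination $\widetilde\theta{\bm.}\nu=\sum_\xi\widetilde\theta(\xi)(\widetilde\theta_\xi{\bm.}\nu)$, and finally split ingredient~(a) at $\D_\xi$. The paper instead sets up the two-parameter quantities $X^{(i)}_{n',J}$ and $Y^{(i)}_{n'}$ for $n'\in\{1,n\}$, runs an averaging/pigeonholing argument over scales to produce a dense set $V_n$ of ``good'' $i$ (where $Y_n^{(i)}$ is near $2^{-\tau(q)nD}$), intersects $V_n$ with $\Sigma$, and then works backwards through the $W_J,\Xi_J^{(i)}$ normalisations to extract $\xi$ and $\D_\xi$. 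Your route avoids the $n'=n$ layer entirely; the two approaches give the same conclusion.

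The genuine gap is in your proof sketch of ingredient~(b). Writing $\nu=\mu_k{\bm.}\nu$ and applying H\"older in the $G$-variable, as in Proposition~\ref{existence_of_limit_of_L^q_dim}, yields
\[
\|\sigma^{(s+m_1+r)}\|_q^q\;\le\;O_{\mu,q,C}(1)\,2^{-(\tau(q)-\delta_1)m_1}\,\|\nu^{(s)}\|_q^q,
\]
with $\|\nu^{(s)}\|_q^q$ on the right, \emph{not} $\|\sigma^{(s+r)}\|_q^q$. The reason is that after H\"older one is left with $\widetilde\theta(E_J)^{q-1}\!\int_{E_J}\nu(cg^{-1}J)^q\,d\widetilde\theta(g)$, and there is no inequality $\widetilde\theta(E_J)^{q-1}\!\int a^q\le C(\int a)^q$ in general --- this is the wrong direction of H\"older --- so one cannot recover $\sigma(CJ)^q$. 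The weaker bound does not suffice: you cannot invert it to lower-bound $\|\sigma^{((i+n)D+r)}\|_q^q$ from $\|\sigma^{(lD+r)}\|_q^q$. To get~(b) one must decompose \emph{jointly} in $(\xi,I)\in\D^G_{jD}\times\D_{jD}$ at each intermediate scale and compare the resulting weights $W_J$ to $\widetilde\theta{\bm.}\nu(J)$ via bounded overlap --- precisely the paper's $\Xi_J^{(i)}$, $W_J$, $X^{(i)}_{1,J}$, $Y_1^{(i)}$ machinery culminating in~(\ref{upper_bound_ratio_of_L^q_norms_proof_of_porosity}). So ingredient~(b) is true, but its proof is not a shortcut: it is where the substance of the paper's argument lives, and the ``refresh $\nu$ via $\mu_k$'' heuristic you describe does not by itself close the estimate.
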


\begin{rem}
	\begin{enumerate}
		\renewcommand{\labelenumi}{\rm{(\arabic{enumi})}}
		\item Those $i$ and $\xi$ in Lemma \ref{Lemma_many_components_which_increase_L^q_norm_under_comvolutions} will turn out to be those in Lemma \ref{L^q_norm_porosity}, and $\rho_\xi$ in Lemma \ref{L^q_norm_porosity} will be $\widetilde{\theta}_\xi$.
		\item The term $\delta^{4/3}$ in the last statement will be important for the proof of Lemma \ref{L^q_norm_porosity} (see (\ref{contradiction_inequality_delta^4/3_proof_of_porosity})).
	\end{enumerate}
\end{rem}

In the remaining Section \ref{subsection_finding_nice_small_scale}, we prove Lemma \ref{Lemma_many_components_which_increase_L^q_norm_under_comvolutions}.

We take $n'\in\N$ so that $n'=n=\lfloor \delta l\rfloor$ or $n'=1$, and proceed with the argument simultaneously.
For each $i=1,\dots,l-n'$, we have
\begin{align*}
	\frac{1}{\|(\widetilde{\theta}{\bm .}\nu)^{(iD+r)}\|_q^q}\sum_{J\in\D_{iD+r}}
	\|((\widetilde{\theta}{\bm .}\nu)|_J)^{((i+n')D+r)}\|_q^q
	&=\frac{1}{\|(\widetilde{\theta}{\bm .}\nu)^{(iD+r)}\|_q^q}\sum_{J\in\D_{iD+r}}
	\sum_{H\in\D_{(i+n')D+r}(J)}\widetilde{\theta}{\bm .}\nu(H)^q\\
	&=\frac{\|(\widetilde{\theta}{\bm .}\nu)^{((i+n')D+r)}\|_q^q}{\|(\widetilde{\theta}{\bm .}\nu)^{(iD+r)}\|_q^q},
\end{align*}
and hence
\begin{equation*}
	\log\left(\frac{1}{\|(\widetilde{\theta}{\bm .}\nu)^{(iD+r)}\|_q^q}\sum_{J\in\D_{iD+r}}
	\|((\widetilde{\theta}{\bm .}\nu)|_J)^{((i+n')D+r)}\|_q^q
	\right)
	=\log\|(\widetilde{\theta}{\bm .}\nu)^{((i+n')D+r)}\|_q^q-\log\|(\widetilde{\theta}{\bm .}\nu)^{(iD+r)}\|_q^q.
\end{equation*}
By taking the sum over $1\leq i\leq l-n'$, we obtain
\begin{align*}
	&\sum_{i=1}^{l-n'}\log\left(\frac{1}{\|(\widetilde{\theta}{\bm .}\nu)^{(iD+r)}\|_q^q}\sum_{J\in\D_{iD+r}}
	\|((\widetilde{\theta}{\bm .}\nu)|_J)^{((i+n')D+r)}\|_q^q\right)\\
	=\ &\sum_{i=1}^{l-n'}\left(\log\|(\widetilde{\theta}{\bm .}\nu)^{((i+n')D+r)}\|_q^q-\log\|(\widetilde{\theta}{\bm .}\nu)^{(iD+r)}\|_q^q\right)\\
	=\ &\sum_{i=l-n'+1}^l\log\|(\widetilde{\theta}{\bm .}\nu)^{(iD+r)}\|_q^q-\sum_{i=1}^{n'}\log\|(\widetilde{\theta}{\bm .}\nu)^{(iD+r)}\|_q^q\\
	\geq\ &n'\log\|(\widetilde{\theta}{\bm .}\nu)^{(m+r)}\|_q^q,
\end{align*}
where, in the last inequality, we notice that  $\|(\widetilde{\theta}{\bm .}\nu)^{(iD+r)}\|_q^q\leq1$, $\|(\widetilde{\theta}{\bm .}\nu)^{((i+1)D+r)}\|_q^q\leq\|(\widetilde{\theta}{\bm .}\nu)^{(iD+r)}\|_q^q$ for each $i$ and $m=lD$.
By the property (I) of $\widetilde{\theta}$, we have $\log\|(\widetilde{\theta}{\bm .}\nu)^{(m+r)}\|_q^q\geq -13\varepsilon/10\cdot m+\log\|\nu^{(m)}\|_q^q$. Furthermore, by the definition of $\tau(q)$, we have $\log\|\nu^{(m)}\|_q^q\geq-(\tau(q)+\varepsilon/10)m$ for sufficiently large $m=lD$, $l\gg_{\mu,q,\varepsilon}1$. Hence, we have
\begin{equation}\label{bound_from_below_tau_proof_of_linearization}
	\sum_{i=1}^{l-n'}\log\left(\frac{1}{\|(\widetilde{\theta}{\bm .}\nu)^{(iD+r)}\|_q^q}\sum_{J\in\D_{iD+r}}
	\|((\widetilde{\theta}{\bm .}\nu)|_J)^{((i+n')D+r)}\|_q^q\right)\geq -\left(\tau(q)+\frac{7\varepsilon}{5}\right)n'm.
\end{equation}

We take arbitrary $1\leq i\leq l-n'$ and $J\in\D_{iD+r}$ with $\widetilde{\theta}{\bm .}\nu(J)>0$. 
If we define $f:G\times \RP^1\to\RP^1$ by $f(g,x)=gx$, by the bilinearity of convolution, we have
\begin{equation*}
	(\widetilde{\theta}{\bm .}\nu)|_J=\sum_{\xi\in\D_{iD}^G,\ I\in\D_{iD}}(\widetilde{\theta}|_\xi{\bm .}\nu|_I)|_J=\sum_{(\xi,I)\in\Xi^{(i)}_J,\ \widetilde{\theta}(\xi), \nu(I)>0}\widetilde{\theta}(\xi)\nu(2I)(\widetilde{\theta}_\xi{\bm .}\widehat{\nu_I})|_J,
\end{equation*}
where
\begin{equation*}
	\Xi^{(i)}_J=\left\{(\xi,I)\in\D_{iD}^G\times\D_{iD}\left|\ (\xi\cap\supp\ \widetilde{\theta})\times(I\cap K)\cap f^{-1}J\neq\emptyset\right.\right\}.
\end{equation*}
Hence, if we write
\begin{equation*}
	W_J=\sum_{(\xi,I)\in\Xi^{(i)}_J,\ \widetilde{\theta}(\xi),\nu(I)>0}\widetilde{\theta}(\xi)\nu(2I),
\end{equation*}
we have
\begin{align*}
	\|((\widetilde{\theta}{\bm .}\nu)|_J)^{((i+n')D+r)}\|_q^q&=\sum_{H\in\D_{(i+n')D+r}(J)}(\widetilde{\theta}{\bm .}\nu)|_J(H)^q\\
	&=\sum_{H\in\D_{(i+n')D+r}(J)}\left(\sum_{(\xi,I)\in\Xi^{(i)}_J,\ \widetilde{\theta}(\xi),\nu(I)>0}\widetilde{\theta}(\xi)\nu(2I)(\widetilde{\theta}_\xi{\bm .}\widehat{\nu_I})(H)\right)^q\\
	&=\sum_{H\in\D_{(i+n')D+r}(J)}\left(W_J\right)^q\left(\sum_{(\xi,I)\in\Xi^{(i)}_J,\ \widetilde{\theta}(\xi),\nu(I)>0}\frac{\widetilde{\theta}(\xi)\nu(2I)}{W_J}\cdot(\widetilde{\theta}_\xi{\bm .}\widehat{\nu_I})(H)\right)^q\\
	&\leq\left(W_J\right)^q\sum_{H\in\D_{(i+n')D+r}(J)}
	\sum_{(\xi,I)\in\Xi^{(i)}_J,\ \widetilde{\theta}(\xi),\nu(I)>0}\frac{\widetilde{\theta}(\xi)\nu(2I)}{W_J}\cdot(\widetilde{\theta}_\xi{\bm .}\widehat{\nu_I})(H)^q\\
	&\leq\left(W_J\right)^q\sum_{(\xi,I)\in\Xi^{(i)}_J,\  \widetilde{\theta}(\xi),\nu(I)>0}\frac{\widetilde{\theta}(\xi)\nu(2I)}{W_J}\cdot\|(\widetilde{\theta}_\xi{\bm .}\widehat{\nu_I})^{((i+n')D+r)}\|_q^q,
\end{align*}
where, in the second to the last inequality, we have used the fact that $\left(\widetilde{\theta}(\xi)\nu(2I)/W_J\right)_{(\xi,I)\in\Xi^{(i)}_J,\ \widetilde\theta(\xi),\nu(I)>0}$ is a probability vector and $t\mapsto t^q$ is convex. From this inequality, we obtain for $J\in\D_{iD+r}$ with $\widetilde{\theta}{\bm .}\nu(J)>0$ that
\begin{equation}\label{bound_P_L^q_component_widetildethetanu_from_above_proof_of_linearization}
	\frac{\|((\widetilde{\theta}{\bm .}\nu)|_J)^{((i+n')D+r)}\|_q^q}{\|(\widetilde{\theta}{\bm .}\nu)^{(iD+r)}\|_q^q}\leq \frac{(W_J)^q}{\sum_{J'\in\D_{iD+r}}\widetilde{\theta}{\bm .}\nu(J')^q}\sum_{(\xi,I)\in\Xi^{(i)}_J,\ \widetilde{\theta}(\xi),\nu(I)>0}\frac{\widetilde{\theta}(\xi)\nu(2I)}{W_J}\cdot\|(\widetilde{\theta}_\xi{\bm .}\widehat{\nu_I})^{((i+n')D+r)}\|_q^q.
\end{equation}

Next, we consider $(\sum_{J\in\D_{iD+r}}(W_J)^q)/(\sum_{J'\in\D_{iD+r}}\widetilde{\theta}{\bm .}\nu(J')^q)$ for each $1\leq i\leq l-n'$.
Let $J\in\D_{iD+r}$. For each $I\in\D_{iD}$, we write $I^{(+)}$ and $I^{(-)}$ for the elements of $\D_{iD}$ which are to the right and the left of $I$ (with respect to the canonical direction of $\RP^1\cong\R/\pi\Z$), respectively. Since $2I\subset I^{(-)}\sqcup I\sqcup I^{(+)}$, we have
\begin{equation}\label{WJ_I-_I_I+_proof_of_linearization}
	W_J=\sum_{(\xi,I)\in\Xi^{(i)}_J,\ \widetilde{\theta}(\xi),\nu(I)>0}\widetilde{\theta}(\xi)\nu(2I)\leq\sum_{(\xi,I)\in\Xi_J^{(i)}}\widetilde{\theta}(\xi)\nu(I^{(-)})+\sum_{(\xi,I)\in\Xi_J^{(i)}}\widetilde{\theta}(\xi)\nu(I)+\sum_{(\xi,I)\in\Xi_J^{(i)}}\widetilde{\theta}(\xi)\nu(I^{(+)}).
\end{equation}
Let $(\xi,I)\in\Xi^{(i)}_J$. Then, there exist $g\in\xi\cap\supp\ \widetilde{\theta}$ and $x\in I\cap K$ such that $f(g,x)=gx\in J$. If $\nu(I^{(+)})>0$, then we can take $x'\in I^{(+)}\cap K$. Since $g\in\supp\ \widetilde{\theta}\subset\supp\ \theta$, we have $\|g\|^2\geq C^{-1}2^r$ and $u_g^-\notin U_1$, and we can apply Corollary \ref{contraction_on_U_by_A_preliminaries} to $g$ and $x,x'\in K$. Hence, we have
\begin{equation}\label{dist_xiI_xiI+_proof_of_linearization}
	d_{\RP^1}(gx,gx')\leq \frac{C_1}{\|g\|^2} d_{\RP^1}(x,x')\leq 2C_1C\pi2^{-(iD+r)}.
\end{equation}
Here, for $J\in\D_{iD+r}$ and $k\in\Z$, we write $J^{(k)}$ for the element of $\D_{iD+r}$ which is the $k$-th from $J$ (with respect to the canonical direction of $\RP^1\cong\R/\pi\Z$). Then, (\ref{dist_xiI_xiI+_proof_of_linearization}) and $gx\in J$ tell us that there is $k=O_{\mu, C}(1)$ such that $gx'\in J^{(k)}$, so we have $(\xi\cap\supp\ \widetilde{\theta})\times (I^{(+)}\cap K)\cap f^{-1}J^{(k)}\neq\emptyset$, that is, $(\xi, I^{(+)})\in\Xi^{(i)}_{J^{(k)}}$.
The same argument holds for $I^{(-)}$.
Hence, by (\ref{WJ_I-_I_I+_proof_of_linearization}), we have
\begin{equation*}
	W_J\leq 3\sum_{k=-O_{\mu,C}(1)}^{O_{\mu,C}(1)}\sum_{(\xi, I)\in\Xi^{(i)}_{J^{(k)}}}\widetilde{\theta}(\xi)\nu(I)=3\sum_{k=-O_{\mu,C}(1)}^{O_{\mu,C}(1)}\widetilde{\theta}\times\nu\Bigg(\bigsqcup_{(\xi, I)\in\Xi^{(i)}_{J^{(k)}}}(\xi\cap\supp\ \widetilde{\theta})\times (I\cap K)\Bigg).
\end{equation*}
By taking $(\cdot)^q$ and taking the sum over $J\in\D_{iD+r}$, we have
\begin{align}\label{first_estimate_sum_WJ_proof_of_linearization}
	\sum_{J\in\D_{iD+r}}(W_J)^q&\leq\sum_{J\in \D_{iD+r}}O_{\mu,q,C}(1)\sum_{k=-O_{\mu,C}(1)}^{O_{\mu,C}(1)}\widetilde{\theta}\times\nu\Bigg(\bigsqcup_{(\xi, I)\in\Xi^{(i)}_{J^{(k)}}}(\xi\cap\supp\ \widetilde{\theta})\times (I\cap K)\Bigg)^q\nonumber\\
	&= O_{\mu,q,C}(1)\sum_{J\in\D_{iD+r}}\widetilde{\theta}\times\nu\Bigg(\bigsqcup_{(\xi, I)\in\Xi^{(i)}_J}(\xi\cap\supp\ \widetilde{\theta})\times (I\cap K)\Bigg)^q.
\end{align}

Here, we take $J\in\D_{iD+r}$ and see the mass above. Let $(\xi,I)\in\Xi^{(i)}_J$. We take $(g,x)\in(\xi\cap\supp\ \widetilde{\theta})\times (I\cap K)\cap f^{-1}J$ and arbitrary $(g',x')\in(\xi\cap\supp\ \widetilde{\theta})\times (I\cap K)$. Since
\begin{equation}\label{diam_xi_proof_of_linearization}
	d_G(g,g')\leq\diam\ \xi\leq M2^{-iD}\leq M2^{-D}<r_1=r_1(\mu),
\end{equation}
we can apply Corollary \ref{Lipschitz_continuity_of_the_action} (we notice that $D=D(M,\mu,q,\sigma,\varepsilon,C)$ is a large constant).
We also apply Corollary \ref{contraction_on_U_by_A_preliminaries}. Then, we have
\begin{align}\label{diam_of_xiI_proof_of_porosity}
	d_{\RP^1}(gx,g'x')\leq d_{\RP^1}(gx,g'x)+d_{\RP^1}(g'x,g'x')&\leq\frac{C_2}{\|g\|^2} d_G(g,g')+\frac{C_1}{\|g'\|^2}d_{\RP^1}(x,x')\nonumber\\
	&\leq C_2CM 2^{-(iD+r)}+C_1C\pi 2^{-(iD+r)}.
\end{align}
This inequality and $gx\in J$ tell us that there is $k=O_{M,\mu,C}(1)$ such that $g'x'\in J^{(k)}$, so $(g',x')\in f^{-1}J^{(k)}$. Hence, we obtain
\begin{equation*}
	\bigsqcup_{(\xi, I)\in\Xi^{(i)}_J}(\xi\cap\supp\ \widetilde{\theta})\times (I\cap K)\subset\bigsqcup_{k=-O_{M,\mu,C}(1)}^{O_{M,\mu,C}(1)}f^{-1}J^{(k)}.
\end{equation*}
Since $\widetilde{\theta}{\bm .}\nu=f(\widetilde{\theta}\times\nu)$, we have from this inclusion and (\ref{first_estimate_sum_WJ_proof_of_linearization}) that
\begin{align*}
	\sum_{J\in\D_{iD+r}}(W_J)^q&\leq O_{\mu,q,C}(1)\sum_{J\in\D_{iD+r}}\Bigg(\sum_{k=-O_{M,\mu,C}(1)}^{O_{M,\mu,C}(1)}\widetilde{\theta}\times\nu(f^{-1}J^{(k)})\Bigg)^q\\
	&\leq O_{M,\mu,q,C}(1)\sum_{J\in\D_{iD+r}}\sum_{k=-O_{M,\mu,C}(1)}^{O_{M,\mu,C}(1)}\widetilde{\theta}\times\nu(f^{-1}J^{(k)})^q\\
	&=O_{M,\mu,q,C}(1)\sum_{J\in\D_{iD+r}}\widetilde{\theta}{\bm .}\nu(J)^q.
\end{align*}
Hence, we obtain
\begin{equation}\label{ratio_sum_WJ^q_sum_widetildethetanu^q_bounded_proof_of_linearization}
	\sum_{J\in\D_{iD+r}}\frac{(W_J)^q}{\sum_{J'\in\D_{iD+r}}\widetilde{\theta}{\bm .}\nu(J')^q}\leq O_{M,\mu,q,C}(1).
\end{equation}

For each $1\leq i\leq l-n'$ and $J\in\D_{iD+r}$ with $\widetilde{\theta}{\bm .}\nu(J)>0$, we define
\begin{equation*}
	X^{(i)}_{n',J}=\sum_{(\xi,I)\in\Xi^{(i)}_J,\ \widetilde{\theta}(\xi),\nu(I)>0}\frac{\widetilde{\theta}(\xi)\nu(2I)}{W_J}\cdot\|(\widetilde{\theta}_\xi{\bm .}\widehat{\nu_I})^{((i+n')D+r)}\|_q^q,
\end{equation*}
and, for each $1\leq i\leq l-n'$,
\begin{equation*}
	Y^{(i)}_{n'}=\sum_{J\in\D_{iD+r},\ \widetilde{\theta}{\bm .}\nu(J)>0}\frac{(W_J)^q}{\sum_{J'\in\D_{iD+r}}\widetilde{\theta}{\bm.}\nu(J')^q}\cdot X^{(i)}_{n',J}.
\end{equation*}
Then, by (\ref{bound_from_below_tau_proof_of_linearization}) and (\ref{bound_P_L^q_component_widetildethetanu_from_above_proof_of_linearization}), we have
\begin{equation}\label{average_Yi_tau_proof_of_linearization}
	\frac{1}{l}\sum_{i=1}^{l-n'}\frac{1}{n'D}\log Y^{(i)}_{n'}\geq -\tau(q)-\frac{7\varepsilon}{5}.
\end{equation}
Here, we bound $X^{(i)}_{n',J}$ and $Y^{(i)}_{n'}$ from above. For each $(\xi,I)\in\Xi^{(i)}_J$ with $\widetilde{\theta}(\xi),\nu(I)>0$, we have
\begin{align*}
	\|(\widetilde{\theta}_\xi{\bm.}\widehat{\nu_I})^{((i+n')D+r)}\|_q^q&=\sum_{H\in\D_{(i+n')D+r}}\widetilde{\theta}_\xi{\bm .}\widehat{\nu_I}(H)^q\\
	&=\sum_{H\in\D_{(i+n')D+r}}\left(\int_\xi\widehat{\nu_I}(g^{-1}H)\ d\widetilde{\theta}_\xi(g)\right)^q\\
	&\leq\sum_{H\in\D_{(i+n')D+r}}\int_\xi\widehat{\nu_I}(g^{-1}H)^q\ d\widetilde{\theta}_\xi(g)\\
	&\leq O_{\mu,q,C}(1)\sum_{E\in\D_{(i+n')D}(I)}\widehat{\nu_I}(E)^q,
\end{align*}
where, in the last inequality, we have done the same calculation as (\ref{L^q_norm_thetaFnu_uniforming_theta}).
Since $n'D\geq D$ and $D\gg_{M,\mu,q,\sigma,\varepsilon}1$ is a large constant, we can apply Lemma \ref{local_L^q_norm_lemma} (ii) to $\varepsilon/2$, $n'D$ and $I\in\D_{iD}$. Hence, we obtain
\begin{equation}\label{L^q_norm_widehatnuI_proof_of_linearization}
	\sum_{E\in\D_{(i+n')D}(I)}\widehat{\nu_I}(E)^q=\frac{1}{\nu(2I)^q}\sum_{E\in\D_{(i+n')D}(I)}\nu(E)^q\leq 2^{-(\tau(q)-\varepsilon/2)n'D}.
\end{equation}
By these two inequalities, we obtain
\begin{equation}\label{L^q_norm_component_convolution_proof_of_porosity}
	\|(\widetilde{\theta}_\xi{\bm.}\widehat{\nu_I})^{((i+n')D+r)}\|_q^q\leq O_{\mu,q,C}(1)2^{-(\tau(q)-\varepsilon/2)n'D}.
\end{equation}
Since $\left(\widetilde{\theta}(\xi)\nu(2I)/W_J\right)_{(\xi,I)\in\Xi^{(i)}_J,\ \widetilde\theta(\xi),\nu(I)>0}$ is a probability vector, it follows from (\ref{L^q_norm_component_convolution_proof_of_porosity}) that
\begin{equation*}
	X_{n',J}^{(i)}\leq O_{\mu,q,C}(1)2^{-(\tau(q)-\varepsilon/2)n'D}.
\end{equation*}
By this and (\ref{ratio_sum_WJ^q_sum_widetildethetanu^q_bounded_proof_of_linearization}), we have
\begin{equation*}
	Y_{n'}^{(i)}\leq O_{M,\mu,q,C}(1)2^{-(\tau(q)-\varepsilon/2)n'D}.
\end{equation*}
Since $D\gg_{M,\mu,q,\sigma,\varepsilon,C}1$ is a large constant, we obtain from this that
\begin{equation}\label{upper_bound_logYi_proof_of_linearization}
	\frac{1}{n'D}\log Y_{n'}^{(i)}\leq \frac{1}{n'D}O_{M,\mu,q,C}(1)-\tau(q)+\frac{\varepsilon}{2}\leq-\tau(q)+\varepsilon.
\end{equation}

Here, we take $n'=n=\lfloor\delta l\rfloor$ and define
\begin{equation*}
	V_n=\left\{1\leq i\leq l-n\left|\ \frac{1}{nD}\log Y_n^{(i)}\geq -\tau(q)-\frac{\sqrt{\delta}}{4}\right.\right\}.
\end{equation*}
Then, by (\ref{average_Yi_tau_proof_of_linearization}) and (\ref{upper_bound_logYi_proof_of_linearization}), we have
\begin{align*}
	-\tau(q)-\frac{7\varepsilon}{5}&\leq\frac{1}{l}\sum_{i=1}^{l-n}\frac{1}{nD}\log Y_n^{(i)}\\
	&=\frac{1}{l}\sum_{i\in V_n}\frac{1}{nD}\log Y_n^{(i)}+\frac{1}{l}\sum_{\substack{i=1\\i\notin V_n}}^{l-n}\frac{1}{nD}\log Y_n^{(i)}\\
	&\leq\frac{|V_n|}{l}\left(-\tau(q)+\varepsilon\right)+\frac{l-n-|V_n|}{l}\left(-\tau(q)-\frac{\sqrt{\delta}}{4}\right)\\
	&=\frac{l-n}{l}(-\tau(q))-\frac{l-n}{l}\frac{\sqrt{\delta}}{4}+\frac{|V_n|}{l}\left(\varepsilon+\frac{\sqrt{\delta}}{4}\right),
\end{align*}
and hence\footnote{If $\sqrt{\delta}$ in the definition of $V_n$ is $\sqrt{\varepsilon}$, we can't obtain a meaningful estimate here, because we take $n$ so that $n/l\simeq \delta$.}
\begin{equation*}
	\frac{|V_n|}{l}\left(\varepsilon+\frac{\sqrt{\delta}}{4}\right)\geq\frac{l-n}{l}\frac{\sqrt{\delta}}{4}-\frac{n}{l}\tau(q)-\frac{7\varepsilon}{5}\geq\frac{1-\delta}{4}\sqrt{\delta}-\tau(q)\delta-\frac{7\varepsilon}{5},
\end{equation*} 
where, in the last inequality, we have used $n=\lfloor\delta l\rfloor$. So, we have
\begin{equation*}
	\frac{|V_n|}{l}\geq\frac{(1-\delta)/4-\tau(q)\sqrt{\delta}-7\varepsilon/(5\sqrt{\delta})}{1/4+\varepsilon/\sqrt{\delta}}.
\end{equation*}
From this and $\varepsilon<\delta\ll_{M,\mu,q,\sigma}1$, we obtain that
\begin{equation}\label{large_Yi_dense_proof_of_linearization}
	\frac{|V_n|}{l}>1-\frac{\sigma}{20(q-1)\log M}.
\end{equation}

By (\ref{widetildetheta_any_component_has_flat_L^q_norm}) and (\ref{large_Yi_dense_proof_of_linearization}), we have
\begin{align*}
	&\left|\left\{i\in V_n\left|\ \|\widetilde{\theta}_\xi^{((i+n)D)}\|_q^q\leq 2^{-\sigma/2\cdot nD}\text{ for any }\xi\in\D_{iD}^G\text{ with }\widetilde{\theta}(\xi)>0\right. \right\}\right|\nonumber\\
	\geq&\  \frac{\sigma}{20(q-1)\log M}\cdot l.
\end{align*}
Hence, there exists $i\geq\sigma/(30(q-1)\log M)\cdot l$ such that $i\in V_n$ and 
\begin{equation}\label{L^q_widetildetheta_xi_for_every_xi_proof_of_linearization}
	\|\widetilde{\theta}_\xi^{((i+n)D)}\|_q^q\leq 2^{-\sigma/2\cdot nD}
\end{equation}
for any $\xi\in\D_{iD}^G$ with $\widetilde{\theta}(\xi)>0$.
Since $\delta\ll_{M,q,\sigma}1$, we have
\begin{equation}\label{i_larger_than_i0_proof_of_linearization}
	i\geq\frac{\sigma}{30(q-1)\log M}\cdot l>\lfloor\delta l\rfloor=n.
\end{equation}
Furthermore, since $i\in V_n$, we have
\begin{align}\label{lower_bound_Y^i_proof_of_porosity}
	Y_n^{(i)}&=\sum_{J\in\D_{iD+r},\ \widetilde{\theta}{\bm .}\nu(J)>0}\frac{(W_J)^q}{\sum_{J'\in\D_{iD+r}}\widetilde{\theta}{\bm.}\nu(J')^q}\cdot X^{(i)}_{n,J}\nonumber\\
	&=\sum_{J\in\D_{iD+r},\ \widetilde{\theta}{\bm .}\nu(J)>0}\frac{(W_J)^q}{\sum_{J'\in\D_{iD+r}}\widetilde{\theta}{\bm.}\nu(J')^q}
	\sum_{(\xi,I)\in\Xi^{(i)}_J,\ \widetilde{\theta}(\xi),\nu(I)>0}\frac{\widetilde{\theta}(\xi)\nu(2I)}{W_J}\cdot\|(\widetilde{\theta}_\xi{\bm .}\widehat{\nu_I})^{((i+n)D+r)}\|_q^q\nonumber\\
	&\geq 2^{-(\tau(q)+\sqrt{\delta}/4)nD}.
\end{align}

For each $J\in\D_{iD+r}$ with $\widetilde{\theta}{\bm .}\nu(J)>0$, we define
\begin{equation*}
	Z_J^{(i)}=\left\{(\xi,I)\in\Xi_J^{(i)}\left|\ \widetilde{\theta}(\xi),\nu(I)>0,\|(\widetilde{\theta}_\xi{\bm .}\widehat{\nu_I})^{((i+n)D+r)}\|_q^q\geq 2^{-(\tau(q)+\sqrt{\delta}/2)nD} \right.\right\}.
\end{equation*}
Then, by (\ref{lower_bound_Y^i_proof_of_porosity}), we have
\begin{align*}
	&2^{-(\tau(q)+\sqrt{\delta}/4)nD}\\
	\leq\ &\sum_{J\in\D_{iD+r},\ \widetilde{\theta}{\bm .}\nu(J)>0}\frac{(W_J)^q}{\sum_{J'\in\D_{iD+r}}\widetilde{\theta}{\bm.}\nu(J')^q}
	\sum_{(\xi,I)\in\Xi^{(i)}_J,\ \widetilde{\theta}(\xi),\nu(I)>0}\frac{\widetilde{\theta}(\xi)\nu(2I)}{W_J}\cdot\|(\widetilde{\theta}_\xi{\bm .}\widehat{\nu_I})^{((i+n)D+r)}\|_q^q\\
	=\ &\sum_{J\in\D_{iD+r},\ \widetilde{\theta}{\bm .}\nu(J)>0}\frac{(W_J)^q}{\sum_{J'\in\D_{iD+r}}\widetilde{\theta}{\bm.}\nu(J')^q}
	\sum_{(\xi,I)\in Z^{(i)}_J}\frac{\widetilde{\theta}(\xi)\nu(2I)}{W_J}\cdot\|(\widetilde{\theta}_\xi{\bm .}\widehat{\nu_I})^{((i+n)D+r)}\|_q^q\\
	&+\sum_{J\in\D_{iD+r},\ \widetilde{\theta}{\bm .}\nu(J)>0}\frac{(W_J)^q}{\sum_{J'\in\D_{iD+r}}\widetilde{\theta}{\bm.}\nu(J')^q}
	\sum_{(\xi,I)\in\Xi^{(i)}_J\setminus Z_J^{(i)},\ \widetilde{\theta}(\xi),\nu(I)>0}\frac{\widetilde{\theta}(\xi)\nu(2I)}{W_J}\cdot\|(\widetilde{\theta}_\xi{\bm .}\widehat{\nu_I})^{((i+n)D+r)}\|_q^q\\
	\leq\ &O_{\mu,q,C}(1)\sum_{J\in\D_{iD+r},\ \widetilde{\theta}{\bm .}\nu(J)>0}\frac{(W_J)^q}{\sum_{J'\in\D_{iD+r}}\widetilde{\theta}{\bm.}\nu(J')^q}
	\sum_{(\xi,I)\in Z^{(i)}_J}\frac{\widetilde{\theta}(\xi)\nu(2I)}{W_J}\cdot2^{-(\tau(q)-\varepsilon/2)nD}\\
	&+O_{M,\mu,q,C}(1)2^{-(\tau(q)+\sqrt{\delta}/2)nD},
\end{align*}
where, in the last inequality, we have used (\ref{L^q_norm_component_convolution_proof_of_porosity}) for $n'=n$, the fact that $\left(\widetilde{\theta}(\xi)\nu(2I)/W_J\right)_{(\xi,I)\in\Xi^{(i)}_J,\ \widetilde\theta(\xi),\nu(I)>0}$ is a probability vector and (\ref{ratio_sum_WJ^q_sum_widetildethetanu^q_bounded_proof_of_linearization}). Since $nD\geq n=\lfloor\delta l\rfloor\gg_{M,\mu,q,\delta,C}1$ is sufficiently large, we can see from this inequality that
\begin{equation*}
	\sum_{J\in\D_{iD+r},\ \widetilde{\theta}{\bm .}\nu(J)>0}\frac{(W_J)^q}{\sum_{J'\in\D_{iD+r}}\widetilde{\theta}{\bm.}\nu(J')^q}
	\sum_{(\xi,I)\in Z^{(i)}_J}\frac{\widetilde{\theta}(\xi)\nu(2I)}{W_J}\cdot2^{-(\tau(q)-\varepsilon/2)nD}
	\geq \Omega_{\mu,q,C}(1)2^{-(\tau(q)+\sqrt{\delta}/4)nD},
\end{equation*}
and hence
\begin{equation}\label{mass_of_Z^i_proof_of_porosity}
	\sum_{J\in\D_{iD+r},\ \widetilde{\theta}{\bm .}\nu(J)>0}\frac{(W_J)^q}{\sum_{J'\in\D_{iD+r}}\widetilde{\theta}{\bm.}\nu(J')^q}
	\sum_{(\xi,I)\in Z^{(i)}_J}\frac{\widetilde{\theta}(\xi)\nu(2I)}{W_J}\geq 2^{-\sqrt{\delta}/2\cdot nD}
\end{equation}
(we notice that $\varepsilon\ll\delta$). 

Here, we estimate the left-hand side. We can see that
\begin{align}\label{estimate_mass_of_Z^i_proof_of_porosity}
	&\sum_{J\in\D_{iD+r},\ \widetilde{\theta}{\bm .}\nu(J)>0}\frac{(W_J)^q}{\sum_{J'\in\D_{iD+r}}\widetilde{\theta}{\bm.}\nu(J')^q}
	\sum_{(\xi,I)\in Z^{(i)}_J}\frac{\widetilde{\theta}(\xi)\nu(2I)}{W_J}\nonumber\\
	=\ &\frac{1}{\sum_{J'\in\D_{iD+r}}\widetilde{\theta}{\bm .}\nu(J')^q}\sum_{J\in\D_{iD+r},\  \widetilde{\theta}{\bm .}\nu(J)>0}(W_J)^{q-1}\sum_{(\xi, I)\in Z^{(i)}_J}\widetilde{\theta}(\xi)\nu(2I)\nonumber\\
	\leq\ &\frac{1}{\sum_{J'\in\D_{iD+r}}\widetilde{\theta}{\bm .}\nu(J')^q}\left(\sum_{J\in\D_{iD+r},\  \widetilde{\theta}{\bm .}\nu(J)>0}(W_J)^q\right)^{(q-1)/q}\left(\sum_{J\in\D_{iD+r},\  \widetilde{\theta}{\bm .}\nu(J)>0}\left(\sum_{(\xi, I)\in Z^{(i)}_J}\widetilde{\theta}(\xi)\nu(2I)\right)^q\right)^{1/q}\nonumber\\
	\leq\ &O_{M,\mu,q,C}(1)\left(\frac{1}{\sum_{J'\in\D_{iD+r}}\widetilde{\theta}{\bm .}\nu(J')^q}\sum_{J\in\D_{iD+r},\  \widetilde{\theta}{\bm .}\nu(J)>0}\left(\sum_{(\xi, I)\in Z^{(i)}_J}\widetilde{\theta}(\xi)\nu(2I)\right)^q\right)^{1/q},
\end{align}
where we have used Hölder's inequality and (\ref{ratio_sum_WJ^q_sum_widetildethetanu^q_bounded_proof_of_linearization}). Furthermore, by the convexity of $t\mapsto t^q$, we have
\begin{align}\label{estimate2_mass_of_Z^i_proof_of_porosity}
	\sum_{J\in\D_{iD+r},\  \widetilde{\theta}{\bm .}\nu(J)>0}\left(\sum_{(\xi, I)\in Z^{(i)}_J}\widetilde{\theta}(\xi)\nu(2I)\right)^q&=\sum_{J\in\D_{iD+r},\  \widetilde{\theta}{\bm .}\nu(J)>0}\left(\sum_{\xi\in\D^G_{iD},\ \widetilde{\theta}(\xi)>0}\widetilde{\theta}(\xi)\sum_{I\in\D_{iD},\ (\xi, I)\in Z^{(i)}_J}\nu(2I)\right)^q\nonumber\\
	&\leq \sum_{J\in\D_{iD+r},\  \widetilde{\theta}{\bm .}\nu(J)>0}\sum_{\xi\in\D^G_{iD},\ \widetilde{\theta}(\xi)>0}\widetilde{\theta}(\xi)\left(\sum_{I\in\D_{iD},\ (\xi, I)\in Z^{(i)}_J}\nu(2I)\right)^q.
\end{align}
Here, we fix $J\in\D_{iD+r}$ with $\widetilde{\theta}{\bm .}\nu(J)>0$ and $\xi\in\D^G_{iD}$ with $\widetilde{\theta}(\xi)>0$, and take $I,I'\in\D_{iD}$ such that $(\xi,I),(\xi,I')\in Z_J^{(i)}\subset\Xi_J^{(i)}$. Let $g,g'\in\xi\cap\supp\ \widetilde{\theta}$, $x\in I\cap K$ and $x'\in I'\cap K$ such that $gx, g'x'\in J$. By Corollary \ref{contraction_on_U_by_A_preliminaries}, Corollary \ref{Lipschitz_continuity_of_the_action} and $g,g'\in\supp\ \widetilde{\theta}\subset\supp\ \theta$, we have
\begin{align*}
	d_{\RP^1}(x,x')&\leq C_1\|g\|^2d_{\RP^1}(gx,gx')\\
	&\leq C_1\|g\|^2\left(d_{\RP^1}(gx,g'x')+d_{\RP^1}(g'x',gx')\right)\\
	&\leq C_1C2^r\cdot\pi2^{-(iD+r)}+C_1\|g\|^2\cdot\frac{C_2}{\|g\|^2}d_G(g',g)\\
	&\leq C_1C\pi2^{-iD}+C_1C_2M2^{-iD}.
\end{align*}
This tells us that the number of $I\in\D_{iD}$ such that $(\xi,I)\in Z_J^{(i)}$ is at most $O_{M,\mu,C}(1)$. Hence, we have
\begin{align*}
	&\sum_{J\in\D_{iD+r},\  \widetilde{\theta}{\bm .}\nu(J)>0}\sum_{\xi\in\D^G_{iD},\ \widetilde{\theta}(\xi)>0}\widetilde{\theta}(\xi)\left(\sum_{I\in\D_{iD},\ (\xi, I)\in Z^{(i)}_J}\nu(2I)\right)^q\\
	\leq\ & O_{M,\mu,q,C}(1)\sum_{J\in\D_{iD+r},\  \widetilde{\theta}{\bm .}\nu(J)>0}\sum_{\xi\in\D^G_{iD},\ \widetilde{\theta}(\xi)>0}\widetilde{\theta}(\xi)\sum_{I\in\D_{iD},\ (\xi, I)\in Z^{(i)}_J}\nu(2I)^q\\
	=\ &O_{M,\mu,q,C}(1)\sum_{\xi\in\D^G_{iD},\ \widetilde{\theta}(\xi)>0}\widetilde{\theta}(\xi)\sum_{J\in\D_{iD+r},\  \widetilde{\theta}{\bm .}\nu(J)>0}\sum_{I\in\D_{iD},\ (\xi, I)\in Z^{(i)}_J}\nu(2I)^q.
\end{align*}
We fix $\xi\in\D^G_{iD}$ with $\widetilde{\theta}(\xi)>0$. Then, for each $I\in\D_{iD}$, we can see from the same inequality as (\ref{diam_of_xiI_proof_of_porosity}) that the number of $J\in\D_{iD+r}$ such that $(\xi, I)\in Z_J^{(i)}\subset\Xi_J^{(i)}$ is at most $O_{M,\mu,C}(1)$. Hence, by the definition of $Z_J^{(i)}$, we have
\begin{align*}
	&\sum_{\xi\in\D^G_{iD},\ \widetilde{\theta}(\xi)>0}\widetilde{\theta}(\xi)\sum_{J\in\D_{iD+r},\  \widetilde{\theta}{\bm .}\nu(J)>0}\sum_{I\in\D_{iD},\ (\xi, I)\in Z^{(i)}_J}\nu(2I)^q\\
	\leq\ &O_{M,\mu,C}(1)\sum_{\xi\in\D^G_{iD},\ \widetilde{\theta}(\xi)>0}\widetilde{\theta}(\xi)\sum_{\substack{I\in\D_{iD},\ \nu(I)>0,\\\|(\widetilde{\theta}_\xi{\bm .}\widehat{\nu_I})^{((i+n)D+r)}\|_q^q\geq 2^{-(\tau(q)+\sqrt{\delta}/2)nD}}}\nu(2I)^q.
\end{align*}
So, from (\ref{estimate2_mass_of_Z^i_proof_of_porosity}) and the above two inequalities, we have
\begin{equation*}
	\sum_{J\in\D_{iD+r},\  \widetilde{\theta}{\bm .}\nu(J)>0}\left(\sum_{(\xi, I)\in Z^{(i)}_J}\widetilde{\theta}(\xi)\nu(2I)\right)^q\leq
	O_{M,\mu,q,C}(1)\sum_{\xi\in\D^G_{iD},\ \widetilde{\theta}(\xi)>0}\widetilde{\theta}(\xi)\sum_{\substack{I\in\D_{iD},\ \nu(I)>0,\\\|(\widetilde{\theta}_\xi{\bm .}\widehat{\nu_I})^{((i+n)D+r)}\|_q^q\geq 2^{-(\tau(q)+\sqrt{\delta}/2)nD}}}\nu(2I)^q.
\end{equation*}
By this inequality, (\ref{estimate_mass_of_Z^i_proof_of_porosity}) and (\ref{mass_of_Z^i_proof_of_porosity}), we obtain that
\begin{align*}
	&2^{-\sqrt{\delta}/2\cdot nD}\\
	\leq\ &O_{M,\mu,q,C}(1)\left(\frac{1}{\sum_{J'\in\D_{iD+r}}\widetilde{\theta}{\bm .}\nu(J')^q}\sum_{\xi\in\D^G_{iD},\ \widetilde{\theta}(\xi)>0}\widetilde{\theta}(\xi)\sum_{\substack{I\in\D_{iD},\ \nu(I)>0,\\\|(\widetilde{\theta}_\xi{\bm .}\widehat{\nu_I})^{((i+n)D+r)}\|_q^q\geq 2^{-(\tau(q)+\sqrt{\delta}/2)nD}}}\nu(2I)^q
	\right)^{1/q},
\end{align*}
and hence,
\begin{align*}
	\sum_{\xi\in\D^G_{iD},\ \widetilde{\theta}(\xi)>0}\widetilde{\theta}(\xi)\sum_{\substack{I\in\D_{iD},\ \nu(I)>0,\\\|(\widetilde{\theta}_\xi{\bm .}\widehat{\nu_I})^{((i+n)D+r)}\|_q^q\geq 2^{-(\tau(q)+\sqrt{\delta}/2)nD}}}\nu(2I)^q&\geq\Omega_{M,\mu,q,C}(1)2^{-q\sqrt{\delta}/2\cdot nD}\|(\widetilde{\theta}{\bm .}\nu)^{(iD+r)}\|_q^q\nonumber\\
	&\geq 2^{-q\sqrt{\delta}nD}\|(\widetilde{\theta}{\bm .}\nu)^{(iD+r)}\|_q^q
\end{align*}
(we notice that $nD\geq n=\lfloor\delta l\rfloor\gg_{M,\mu,q,\delta,C}1$ is large).
So, there exists $\xi\in\D^G_{iD}$ with $\widetilde{\theta}(\xi)>0$ such that, for
$
\D_\xi=\left\{I\in\D_{iD}\left|\ \nu(I)>0, \|(\widetilde{\theta}_\xi{\bm .}\widehat{\nu_I})^{((i+n)D+r)}\|_q^q\geq 2^{-(\tau(q)+\sqrt{\delta}/2)nD} \right.\right\},
$
we have
\begin{equation}\label{L^q_norm_of_key_components_wrt_delta_proof_of_porosity}
	\sum_{I\in\D_\xi}\nu(2I)^q\geq 2^{-q\sqrt{\delta}nD}\|(\widetilde{\theta}{\bm .}\nu)^{(iD+r)}\|_q^q.
\end{equation}
Here, from (\ref{i_larger_than_i0_proof_of_linearization}), we can see that
\begin{equation*}
	n\leq\delta l<\frac{30(q-1)\log M}{\sigma}\cdot \delta i,
\end{equation*}
and by this and (\ref{L^q_norm_of_key_components_wrt_delta_proof_of_porosity}), we obtain that
\begin{equation}\label{L^q_norm_of_key_components_proof_of_porosity}
	\sum_{I\in\D_\xi}\nu(2I)^q>2^{-30q(q-1)\log M/\sigma\cdot \delta^{3/2}iD}\|(\widetilde{\theta}{\bm .}\nu)^{(iD+r)}\|_q^q>2^{-\delta^{4/3}iD}\|(\widetilde{\theta}{\bm .}\nu)^{(iD+r)}\|_q^q
\end{equation}
(we notice that $\delta\ll_{M,q,\sigma}1$).

Next, we bound $\|(\widetilde{\theta}{\bm .}\nu)^{(iD+r)}\|_q^q$ from below. To do this, we return to the argument up to (\ref{upper_bound_logYi_proof_of_linearization}) and take $n'=1$.
By (\ref{bound_from_below_tau_proof_of_linearization}) for $n'=1$, we have
\begin{equation}\label{n'_1_bound_from_below_tau_proof_of_porosity}
	\frac{1}{l}\sum_{j=1}^{l-1}\frac{1}{D}\log\left(\frac{\|(\widetilde{\theta}{\bm .}\nu)^{((j+1)D+r)}\|_q^q}{\|(\widetilde{\theta}{\bm .}\nu)^{(jD+r)}\|_q^q}\right)\geq-\tau(q)-\frac{7\varepsilon}{5}.
\end{equation}
Furthermore, by (\ref{bound_P_L^q_component_widetildethetanu_from_above_proof_of_linearization}) and (\ref{upper_bound_logYi_proof_of_linearization}) for $n'=1$ and the definition of $Y_1^{(j)}$, we have
\begin{equation}\label{upper_bound_ratio_of_L^q_norms_proof_of_porosity}
	\frac{1}{D}\log\left(\frac{\|(\widetilde{\theta}{\bm .}\nu)^{((j+1)D+r)}\|_q^q}{\|(\widetilde{\theta}{\bm .}\nu)^{(jD+r)}\|_q^q}\right)\leq -\tau(q)+\varepsilon
\end{equation}
for $1\leq j\leq l-1$.
We define
\begin{equation*}
	V_1=\left\{1\leq j\leq l-1\left|\ \frac{\|(\widetilde{\theta}{\bm .}\nu)^{((j+1)D+r)}\|_q^q}{\|(\widetilde{\theta}{\bm .}\nu)^{(jD+r)}\|_q^q}\geq 2^{-(\tau(q)+\sqrt{\varepsilon})D}\right.\right\}.
\end{equation*}
Then, we can see from (\ref{n'_1_bound_from_below_tau_proof_of_porosity}) and (\ref{upper_bound_ratio_of_L^q_norms_proof_of_porosity}) that
\begin{align*}
	-\tau(q)-\frac{7\varepsilon}{5}&\leq \frac{1}{l}\sum_{j\in V_1}\frac{1}{D}\log\left(\frac{\|(\widetilde{\theta}{\bm .}\nu)^{((j+1)D+r)}\|_q^q}{\|(\widetilde{\theta}{\bm .}\nu)^{(jD+r)}\|_q^q}\right)
	+\frac{1}{l}\sum_{\substack{j=1\\j\notin V_1}}^{l-1}\frac{1}{D}\log\left(\frac{\|(\widetilde{\theta}{\bm .}\nu)^{((j+1)D+r)}\|_q^q}{\|(\widetilde{\theta}{\bm .}\nu)^{(jD+r)}\|_q^q}\right)\\
	&\leq \frac{|V_1|}{l}(-\tau(q)+\varepsilon)+\frac{l-1-|V_1|}{l}(-\tau(q)-\sqrt{\varepsilon})\\
	&\leq \frac{l-1}{l}(-\tau(q))-\frac{l-1}{l}\sqrt{\varepsilon}+\frac{|V_1|}{l}(\varepsilon+\sqrt{\varepsilon}),
\end{align*}
and hence
\begin{equation}\label{density_of_V_1_proof_of_porosity}
	\frac{|V_1|}{l}\geq\frac{(l-1)/l\cdot\sqrt{\varepsilon}-\tau(q)/l-7\varepsilon/5}{\sqrt{\varepsilon}+\varepsilon}=\frac{(l-1)/l-\tau(q)/(\sqrt{\varepsilon}l)-7\sqrt{\varepsilon}/5}{1+\sqrt{\varepsilon}}\geq\frac{1-2\sqrt{\varepsilon}}{1+\sqrt{\varepsilon}}
\end{equation}
(we notice that $l\gg_{\mu,q,\varepsilon}1$). Moreover, for $1\leq j\leq l-1$, we have
\begin{align*}
	\frac{\|(\widetilde{\theta}{\bm .}\nu)^{((j+1)D+r)}\|_q^q}{\|(\widetilde{\theta}{\bm .}\nu)^{(jD+r)}\|_q^q}&=\frac{1}{\|(\widetilde{\theta}{\bm .}\nu)^{jD+r}\|_q^q}\sum_{J\in\D_{jD+r}}\sum_{H\in\D_{(j+1)D+r}(J)}\widetilde{\theta}{\bm .}\nu(H)^q\nonumber\\
	&\geq \frac{1}{\|(\widetilde{\theta}{\bm .}\nu)^{jD+r}\|_q^q}\sum_{J\in\D_{jD+r}}2^{-(q-1)D}\widetilde{\theta}{\bm .}\nu(J)^q\nonumber\\
	&=2^{-(q-1)D}.
\end{align*}
By this inequality, we have for our $i$ that
\begin{align}\label{estimate_L^q_norm_convolution_iD+r_scale_proof_of_porosity}
	\|(\widetilde{\theta}{\bm .}\nu)^{(iD+r)}\|_q^q&=\prod_{j=1}^{i-1}\frac{\|(\widetilde{\theta}{\bm .}\nu)^{((j+1)D+r)}\|_q^q}{\|(\widetilde{\theta}{\bm .}\nu)^{(jD+r)}\|_q^q}\cdot\|(\widetilde{\theta}{\bm .}\nu)^{(D+r)}\|_q^q\nonumber\\
	&=\prod_{\substack{j=1\\j\in V_1}}^{i-1}\frac{\|(\widetilde{\theta}{\bm .}\nu)^{((j+1)D+r)}\|_q^q}{\|(\widetilde{\theta}{\bm .}\nu)^{(jD+r)}\|_q^q}\cdot
	\prod_{\substack{j=1\\j\notin V_1}}^{i-1}\frac{\|(\widetilde{\theta}{\bm .}\nu)^{((j+1)D+r)}\|_q^q}{\|(\widetilde{\theta}{\bm .}\nu)^{(jD+r)}\|_q^q}\cdot
	\|(\widetilde{\theta}{\bm .}\nu)^{(D+r)}\|_q^q\nonumber\\
	&\geq 2^{-(\tau(q)+\sqrt{\varepsilon})|V_1\cap[1,i-1]|D}2^{-(q-1)|[1,i-1]\setminus V_1|D}\|(\widetilde{\theta}{\bm .}\nu)^{(D+r)}\|_q^q\nonumber\\
	&\geq 2^{-(\tau(q)+\sqrt{\varepsilon})iD}2^{-(q-1)|[1,i-1]\setminus V_1|D}\|(\widetilde{\theta}{\bm .}\nu)^{(D+r)}\|_q^q.
\end{align}
Here, it follows that $|[1,i-1]\setminus V_1|\leq |[1,l-1]\setminus V_1|$ and,
by (\ref{density_of_V_1_proof_of_porosity}) and (\ref{i_larger_than_i0_proof_of_linearization}), we have
\begin{equation*}
	|[1,l-1]\setminus V_1|\leq \left(1-\frac{1-2\sqrt{\varepsilon}}{1+\sqrt{\varepsilon}}\right)l\leq\frac{3\sqrt{\varepsilon}}{1+\sqrt{\varepsilon}}\frac{30(q-1)\log M}{\sigma}\cdot i\leq \frac{\varepsilon^{2/5}i}{(q-1)^2}
\end{equation*}
(we notice that $\varepsilon\ll_{M,q,\sigma}1$). Hence, by these two inequalities, we have
\begin{equation}\label{estimate_[1,i-1]_setminus_V_1_proof_of_porosity}
	|[1,i-1]\setminus V_1|\leq\frac{\varepsilon^{2/5}i}{q-1}.
\end{equation}

Here, we see that
\begin{equation}\label{lower_bound_L^q_norm_of_convolution_indep_of_r_proof_of_porosity}
	\|(\widetilde{\theta}{\bm .}\nu)^{(D+r)}\|_q^q\geq\Omega_{\mu,q,C,L,D}(1)
\end{equation}
(the lower bound is independent of $r$).
We notice that
\begin{equation}\label{L^q_norm_of_convolution_at_scale_D+r_proof_of_porosity}
	\|(\widetilde{\theta}{\bm .}\nu)^{(D+r)}\|_q^q=\sum_{J\in\D_{D+r}}\left(\int_G\nu(g^{-1}J)\ d\widetilde{\theta}(g)\right)^q.
\end{equation}
Since $K=\supp\ \nu$ is covered by at most $2^D$ open balls with radius $\pi2^{-D}$, there is $x_1\in K$ such that
\begin{equation}\label{mass_x_1_ball_proof_of_porosity}
	\nu\left(B_{\pi2^{-D}}(x_1)\right)\geq 2^{-D}.
\end{equation}
Furthermore, since $\diam\ \supp\ \widetilde{\theta}\leq\diam\ \supp\ \theta\leq L$, $\supp\ \widetilde{\theta}$ is covered by at most $O_{L,D}(1)$ open balls with radius $2^{-D}$. Hence, there is $g_1\in\supp\ \widetilde{\theta}$ such that
\begin{equation}\label{mass_g_1_ball_proof_of_porosity}
	\widetilde{\theta}\left(B^G_{2^{-D}}(g_1)\right)\geq \Omega_{L,D}(1).
\end{equation}
Then, for any $x\in B_{\pi2^{-D}}(x_1)\cap K$ and $g\in B^G_{2^{-D}}(g_1)\cap\supp\ \widetilde{\theta}$, we can see from the same argument as (\ref{diam_of_xiI_proof_of_porosity}) that
\begin{align}\label{diam_of_product_of_two_balls_proof_of_porpsity}
	d_{\RP^1}(gx,g_1x_1)\leq d_{\RP^1}(gx, g_1x)+d_{\RP^1}(g_1x,g_1x_1)&\leq\frac{C_2}{\|g_1\|^2}d_G(g,g_1)+\frac{C_1}{\|g_1\|^2}d_{\RP^1}(x,x_1)\nonumber\\
	&\leq O_{\mu,C}(2^{-(D+r)})
\end{align}
(we notice that $D\gg_\mu 1$ is a large constant). Hence, there are at most $O_{\mu,C}(1)$ consecutive interval $J^{(k)}\in\D_{D+r}\ (1\leq k\leq O_{\mu,C}(1))$ such that
\begin{equation*}
	g\left(B_{\pi2^{-D}}(x_1)\cap K\right)\subset\bigsqcup_{k=1}^{O_{\mu,C}(1)}J^{(k)}
\end{equation*}
for any $g\in B^G_{2^{-D}}(g_1)\cap\supp\ \widetilde{\theta}$. From (\ref{L^q_norm_of_convolution_at_scale_D+r_proof_of_porosity}), this inclusion, (\ref{mass_x_1_ball_proof_of_porosity}) and (\ref{mass_g_1_ball_proof_of_porosity}), it follows that
\begin{align*}
	\|(\widetilde{\theta}{\bm .}\nu)^{(D+r)}\|_q^q&=\sum_{J\in\D_{D+r}}\left(\int_G\nu(g^{-1}J)\ d\widetilde{\theta}(g)\right)^q\\
	&\geq\sum_{k=1}^{O_{\mu,C}(1)}\left(\int_{B^G_{2^{-D}}(g_1)}\nu(g^{-1}J^{(k)})\ d\widetilde{\theta}(g)\right)^q\\
	&\geq\Omega_{\mu,q,C}(1)\left(\int_{B^G_{2^{-D}}(g_1)}\sum_{k=1}^{O_{\mu,C}(1)}\nu(g^{-1}J^{(k)})\ d\widetilde{\theta}(g)\right)^q\\
	&\geq\Omega_{\mu,q,C}(1)\left(\int_{B^G_{2^{-D}}(g_1)}\nu\left(B_{\pi 2^{-D}}(x_1)\cap K\right)\ d\widetilde{\theta}(g)\right)^q\\
	&\geq\Omega_{\mu,q,C,L,D}(1).
\end{align*}
So, we obtain (\ref{lower_bound_L^q_norm_of_convolution_indep_of_r_proof_of_porosity}).

By (\ref{estimate_L^q_norm_convolution_iD+r_scale_proof_of_porosity}), (\ref{estimate_[1,i-1]_setminus_V_1_proof_of_porosity}) and (\ref{lower_bound_L^q_norm_of_convolution_indep_of_r_proof_of_porosity}), we have
\begin{equation*}
	\|(\widetilde{\theta}{\bm .}\nu)^{(iD+r)}\|_q^q\geq \Omega_{\mu,q,C,L,D}(1)2^{-(\tau(q)+\sqrt{\varepsilon}+\varepsilon^{2/5})iD}.
\end{equation*}
By (\ref{i_larger_than_i0_proof_of_linearization}), we have $i\geq \sigma/(30(q-1)\log M)\cdot l$ and $l\gg_{\mu,q,\sigma,\varepsilon,C,L,D}1$. Hence, from the above inequality, we obtain that
\begin{equation}\label{conclusion_L^q_norm_of_convolution_iD+r_scale_proof_of_porosity}
	\|(\widetilde{\theta}{\bm .}\nu)^{(iD+r)}\|_q^q\geq2^{-(\tau(q)+2\varepsilon^{2/5})iD}.
\end{equation}
By (\ref{L^q_norm_of_key_components_proof_of_porosity}) and (\ref{conclusion_L^q_norm_of_convolution_iD+r_scale_proof_of_porosity}), we obtain that
\begin{equation}\label{conclusion_L^q_norm_of_key_components_proof_of_porosity}
	\sum_{I\in\D_\xi}\nu(2I)^q>2^{-(\tau(q)+\delta^{4/3}+2\varepsilon^{2/5})iD}>2^{-(\tau(q)+2\delta^{4/3})iD}
\end{equation}
(we notice that $\varepsilon\ll\delta$).
From (\ref{L^q_widetildetheta_xi_for_every_xi_proof_of_linearization}), (\ref{i_larger_than_i0_proof_of_linearization}) and (\ref{conclusion_L^q_norm_of_key_components_proof_of_porosity}), we can see that we have obtained the desired $n<i\leq l-n$ and $\xi\in\D^G_{iD}$ with $\widetilde{\theta}(\xi)>0$, so complete the proof of Lemma \ref{Lemma_many_components_which_increase_L^q_norm_under_comvolutions}.

\subsection{Sufficiently separated nice intervals}\label{subsection_sufficiently_separated_nice_intervals}

Using Lemma \ref{Lemma_many_components_which_increase_L^q_norm_under_comvolutions}, we proceed with the proof of Lemma \ref{L^q_norm_porosity}. We define
\begin{equation*}
	\widetilde{\D_\xi}=\left\{I\in\D_{iD}\left|\ I\in\D_\xi\text{ or }I^{(+)}\in\D_\xi\text{ or }I^{(-)}\in\D_\xi\right.\right\}.
\end{equation*}
Here, we recall that, for $I\in\D_{iD}$, $I^{(+)}$ and $I^{(-)}$ are the two neighboring intervals in $\D_{iD}$ of $I$. Then, by Lemma \ref{Lemma_many_components_which_increase_L^q_norm_under_comvolutions}, we have
\begin{equation*}
	2^{-(\tau(q)+2\delta^{4/3})iD}\leq \sum_{I\in\D_\xi}\nu(2I)^q\leq 3^{q-1}\sum_{I\in\D_\xi}\left(\nu(I^{(-)})^q+\nu(I)^q+\nu(I^{(+)})^q\right)\leq 3^q\sum_{I\in\widetilde{\D_\xi}}\nu(I)^q,
\end{equation*}
and hence
\begin{equation}\label{widetildeDix_L^q_norm_proof_of_porosity}
	\sum_{I\in\widetilde{\D_\xi}}\nu(I)^q\geq 3^{-q}2^{-(\tau(q)+2\delta^{4/3})iD}.
\end{equation}
We reduce $\widetilde{\D_\xi}$ into a “uniform tree”. For $u\in\RP^1$ and $m\in\N$, we define the $2^{-m}$ dyadic partition of $\RP^1$ with the base $u$ by\footnote{We notice that $\angle_u=\angle_{[1:0]}-\angle_{[1:0]}(u): \RP^1\stackrel{\sim}{\longrightarrow}\R/\pi\Z$.}
\begin{equation*}
	\D_{u,m}=\left\{\angle_u^{-1}[\pi k2^{-m},\pi(k+1)2^{-m})\right\}_{k=0}^{2^m-1}=\left\{\angle_{[1:0]}^{-1}[\pi k2^{-m}+\angle_{[1:0]}(u),\pi(k+1)2^{-m}+\angle_{[1:0]}(u))\right\}_{k=0}^{2^m-1}.
\end{equation*}
Note that, if $\angle_{[1:0]}(u)\in\pi 2^{-m}\Z/\pi\Z$, then $\D_{u,m}=\D_m$.
For a subset $E\subset\RP^1$, we write
$\D_{u,m}(E)=\left\{I\in\D_{u,m}\left|\ I\cap E\neq\emptyset\right.\right\}$.
Furthermore, for $I\in\D_{u,m}$, we write $1/2\cdot I$ for the $1/2$ contraction of the interval $I$ with the same center.

\begin{lem}\label{wideltildeDix_into_uniform_tree_proof_of_porosity}
	In the setting above, there exist a subset $\widetilde{\D_\xi}'\subset\widetilde{\D_\xi}$, $u\in\RP^1$ with $\angle_{[1:0]}(u)\in\pi2^{-iD}\Z/\pi\Z$ and a sequence $(R'_s)_{s\in[i]}, R'_s\in\{1,2,\dots, 2^{D}\}$ such that
	\begin{equation*}
		\sum_{I\in\widetilde{\D_\xi}'}\nu(I)^q\geq 2^{-(\tau(q)+3\delta^{4/3})iD}
	\end{equation*}
	and, for any $s\in[i]$ and $J\in\D_{u,sD}$,
	\begin{equation*}
		I\in\widetilde{\D_\xi}', I\subset J\implies I\subset \frac{1}{2}J.
	\end{equation*}
	Furthermore, $\widetilde{\D_\xi}'$ is $(D,i,u,(R'_s)_{s\in[i]})$-uniform, that is, for any $s\in [i]$ and $J\in\D_{u,sD}$ such that $I\subset J$ for some $I\in\widetilde{\D_\xi}'$, we have
	\begin{equation*}
		\left|\left\{J'\in\D_{u,(s+1)D}(J)\left|\ I\subset J'\text{ for some }I\in\widetilde{\D_\xi}'\right.\right\}\right|=R'_s.
	\end{equation*}
\end{lem}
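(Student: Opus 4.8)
The plan is to produce $\widetilde{\D_\xi}'$ in two stages. First I choose the base point $u$ (with $\angle_{[1:0]}(u)\in\pi2^{-iD}\Z/\pi\Z$, so $\D_{u,iD}=\D_{iD}$) by a random–translation argument, so that a subfamily of $\widetilde{\D_\xi}$ carrying most of the mass $\sum_I\nu(I)^q$ becomes ``well inside'' its ancestor cell at every intermediate scale $sD$. Second I regularize the tree spanned by that subfamily, by a level‑by‑level dyadic pigeonholing exactly of the kind used in the proof of Lemma~\ref{tree_structure_supp_theta}, to obtain a subfamily whose induced tree has constant branching $R'_s$ at each scale. Both stages only delete intervals, so once arranged the separation and the uniformity persist. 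Every deletion costs a factor of the form $c^{i}$ with $c$ bounded in terms of $D$ only, and since $D=D(M,\mu,q,\sigma,\delta,\varepsilon,C)\gg_\delta1$ all such factors are absorbed into the slack $2^{\delta^{4/3}iD}$ between the bound $3^{-q}2^{-(\tau(q)+2\delta^{4/3})iD}$ from (\ref{widetildeDix_L^q_norm_proof_of_porosity}) and the target $2^{-(\tau(q)+3\delta^{4/3})iD}$.

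For the first stage, write $\angle_{[1:0]}(u)=\pi2^{-iD}a$ with $a$ uniform over $\{0,\dots,2^{iD}-1\}$. Fix $I\in\widetilde{\D_\xi}$ (a cell of $\D_{iD}$, so $I=\angle_{[1:0]}^{-1}[\pi k2^{-iD},\pi(k+1)2^{-iD})$) and $s\in\{1,\dots,i-1\}$, and let $J_s\in\D_{u,sD}$ be the cell containing $I$. The offset of $I$ inside $J_s$ is a bijective function of $a\bmod 2^{(i-s)D}$, and $I\subset\frac12J_s$ holds precisely when that offset lies in an interval of relative size $\ge\frac12-2^{-(i-s)D}\ge\frac12-2^{-D}$. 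The key observation is that for distinct scales $s\neq s'$ the residues $a\bmod 2^{(i-s)D}$ and $a\bmod 2^{(i-s')D}$ differ by at least $D$ binary digits: goodness at scales $i-1,\dots,s+1$ is measurable with respect to $a\bmod 2^{(i-s-1)D}$, and conditioning on that residue leaves $D$ fresh uniform bits governing goodness at scale $s$, so the conditional probability that $I\subset\frac12J_s$ is at least an absolute constant (say $\frac14$ for $D\gg1$). Multiplying over $s=1,\dots,i-1$ gives $\Pr[\,I\subset\frac12J_s\text{ for all }1\le s\le i-1\,]\ge4^{-(i-1)}$, hence $\mathbb{E}_a[\sum_I\nu(I)^q]\ge4^{-(i-1)}\sum_{I\in\widetilde{\D_\xi}}\nu(I)^q$, the left sum being over the $I$ that are well inside at all scales $1\le s\le i-1$. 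Choosing $a$ attaining at least this expectation and calling the resulting base point $u$ and subfamily $\widetilde{\D_\xi}^{(1)}$, we get $\sum_{I\in\widetilde{\D_\xi}^{(1)}}\nu(I)^q\ge4^{-i}3^{-q}2^{-(\tau(q)+2\delta^{4/3})iD}\ge2^{-(\tau(q)+\frac52\delta^{4/3})iD}$ by $D\gg_\delta1$. (The scale $s=0$ costs essentially nothing: with the convention $\frac12\cdot\RP^1=\RP^1$ it is vacuous; alternatively one first restricts $a$ to the subrange placing $u$ inside a fixed gap arc of $K$, which alters the above probabilities only by $O(1/(c_\mu2^D))$ since $D\gg_\mu1$.)

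For the second stage I regularize the tree spanned by $\widetilde{\D_\xi}^{(1)}$ in the nested partitions $\D_{u,sD}$, $0\le s\le i$, proceeding from the finest scale upward as in the proof of Lemma~\ref{tree_structure_supp_theta}, but more simply because here the relevant quantity $\sum_I\nu(I)^q$ (sum over the leaves under a collection of cells) is additive over disjoint collections. At scale $s$, from $s=i-1$ down to $s=0$, I bucket the currently retained cells $J\in\D_{u,sD}$ by $\lfloor\log|\{J'\in\D_{u,(s+1)D}(J):I\subset J'\text{ for some retained }I\}|\rfloor\in\{0,\dots,D\}$, pigeonhole among these $\le D+1$ buckets keeping (with their subtrees) only the cells in the best bucket — losing a factor $\le D+1$ — and then trim the children of each retained $J$ to exactly $R'_s:=2^{j_s}\in\{1,\dots,2^D\}$ of them, keeping the $R'_s$ heaviest with respect to the $\nu^q$‑mass of their leaves — losing a further factor $\le2$. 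Since only whole subtrees are removed, uniformity already established at finer scales survives, and after all scales are treated the leaf family $\widetilde{\D_\xi}'\subset\widetilde{\D_\xi}^{(1)}$ is $(D,i,u,(R'_s)_{s\in[i]})$‑uniform, still satisfies $I\subset\frac12J$ for every $s$ and every $J\in\D_{u,sD}$ with $I\subset J$, and has lost a total factor $\le(2(D+1))^{i}$, whence $\sum_{I\in\widetilde{\D_\xi}'}\nu(I)^q\ge(2(D+1))^{-i}2^{-(\tau(q)+\frac52\delta^{4/3})iD}\ge2^{-(\tau(q)+3\delta^{4/3})iD}$, again by $D\gg_\delta1$.

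The step that needs real care is the first one: separating from the dyadic boundaries simultaneously at all $i-1$ intermediate scales while paying only $c^{i}$, not $c^{iD}$, in $\sum_I\nu(I)^q$ — and this is exactly what the ``disjoint blocks of $D$ binary digits'' argument delivers. The remaining bookkeeping (checking that $4^{i}$, $(2(D+1))^{i}$, the $3^{q}$, and the $O(1/(c_\mu2^D))$ errors all fit inside $2^{\delta^{4/3}iD}$) uses only that $D$ is large in terms of $\delta$ and $\mu$, together with $i\le l$ and $n<i$. The wrap‑around of $\RP^1\cong\R/\pi\Z$ at the coarsest scale $s=0$ is a minor nuisance, handled as indicated above.
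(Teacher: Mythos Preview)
Your two–stage plan matches the paper's, and Stage~2 (level-by-level dyadic pigeonholing to uniformize the branching) is essentially identical to what the paper does, following \cite[Lemma~3.6]{Shm19}.

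Stage~1 is where you genuinely diverge. The paper uses a deterministic iterated pigeonhole (from \cite[Lemma~3.8]{Shm19}): at each scale $s$ from $i-1$ down to $0$, for every $I$ one of the three shifts in $\{-\pi 2^{-sD-2},0,\pi 2^{-sD-2}\}$ puts $I$ into the middle half of its scale-$s$ ancestor, so by pigeonhole one shift retains at least $1/3$ of the current $\sum_I\nu(I)^q$; the accumulated shift is $u$ and the total loss is $3^{i}$. Your random-translation argument is a legitimate and somewhat cleaner alternative: the observation that ``good at scale $s$'' depends precisely on bits $(i-s)D-1$ and $(i-s)D-2$ of $(k-a)$, and that these bit pairs are disjoint across scales (for $D\ge 2$), makes the events exactly independent with probability $\tfrac12$ each --- so your $\tfrac14$ is merely conservative, and the loss is even $2^{i}$. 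Both routes give a loss $c^{i}$ with $c=O(1)$, absorbed into $2^{\delta^{4/3}iD}$ since $D\gg_\delta1$; the paper's version is more elementary, yours exhibits the structural reason more transparently.

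One small point: your handling of $s=0$ is looser than it needs to be. The convention $\tfrac12\cdot\RP^1=\RP^1$ is not the paper's, and the ``gap arc of $K$'' alternative does not obviously deliver the middle-half property (the attractor may have diameter exceeding $\pi/2$). The cleanest fix is simply to include $s=0$ in your independence argument: the condition at $s=0$ reads off bits $iD-1,iD-2$ of $(k-a)$, again disjoint from all other scales, contributing one more factor $\tfrac12$. (Alternatively one can note, as you hint, that the downstream use of the middle-half condition in Lemma~\ref{key_2^{-iD}_components_are_blanching_proof_of_porosity} at $s=0$ only yields $2I'\subset\RP^1$, which is trivial, so an $O(1)$ loss there is harmless --- but that is a statement about the application, not a proof of the lemma as stated.)
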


\begin{proof}
	First, we find a “large” subset $\widetilde{\D_\xi}^{(1)}\subset\widetilde{\D_\xi}$ and $u\in\RP^1$ with $\angle_{[1:0]}(u)\in\pi2^{-iD}\Z/\pi\Z$ satisfying the second condition. The idea is from \cite[Lemma 3.8]{Shm19}. Here, we identify $\RP^1$ with $\R/\pi\Z$ by $\angle_{[1:0]}: \RP^1\stackrel{\sim}{\longrightarrow}\R/\pi\Z$.
	
	We notice that, for any $I\in\D_{iD}$, $u_0\in \pi2^{-iD}\Z/\pi\Z$, $s\in[i]$ and $J=J_{I,u_0,s}\in\D_{u_0,sD}$ such that $I\subset J$, there is $u=u_{I,u_0,s}\in\{-\pi2^{-sD-2},0,\pi2^{-sD-2}\}$ such that $I\subset1/2\cdot(J+u)$.
	We start from $s=i-1$. By pigeonholing for (\ref{widetildeDix_L^q_norm_proof_of_porosity}), there exist a subset $\widetilde{\D_\xi}^{(1,i-1)}\subset\widetilde{\D_\xi}$ and $u_{i-1}\in\{-\pi2^{-(i-1)D-2},0,\pi2^{-(i-1)D-2}\}$ such that
	\begin{equation}\label{widetildeDix^1_i-1_proof_of_porosity}
		\sum_{I\in\widetilde{\D_\xi}^{(1,i-1)}}\nu(I)^q\geq 3^{-1}3^{-q}2^{-(\tau(q)+2\delta^{4/3})iD}
	\end{equation}
	and
	\begin{equation*}
		I\subset \frac{1}{2}(J_{I,0,i-1}+u_{i-1})\text{ for any }I\in\widetilde{\D_\xi}^{(1,i-1)}
	\end{equation*}
	
	Next, let $s=i-2$. By pigeonholing for (\ref{widetildeDix^1_i-1_proof_of_porosity}), we can find a subset $\widetilde{\D_\xi}^{(1,i-2)}\subset\widetilde{\D_\xi}^{(1,i-1)}$ and $u_{i-2}\in\{-\pi2^{-(i-2)D-2},0,\pi2^{-(i-2)D-2}\}$ such that
	\begin{equation*}
		\sum_{I\in\widetilde{\D_\xi}^{(1,i-2)}}\nu(I)^q\geq 3^{-2}3^{-q}2^{-(\tau(q)+2\delta^{4/3})iD}
	\end{equation*}
	and
	\begin{equation*}
		I\subset\frac{1}{2}(J_{I,u_{i-1},i-2}+u_{i-2})\text{ for any }I\in\widetilde{\D_\xi}^{(1,i-2)}.
	\end{equation*}
	Here, since $D\geq2$, we have $u_{i-2}\in\{-\pi2^{-(i-2)D-2},0,\pi2^{-(i-2)D-2}\}\subset\pi2^{-(i-1)D}\Z$. Hence, $J_{I,0,i-1}-u_{i-2}\in\D_{(i-1)D}$ and 
	\begin{equation*}
		I\subset \frac{1}{2}(J_{I,0,i-1}+u_{i-1})=\frac{1}{2}(J_{I,0,i-1}-u_{i-2}+(u_{i-2}+u_{i-1}))\text{ for any }I\in\widetilde{\D_\xi}^{(1,i-1)}.
	\end{equation*}
	
	By continuing this process to the top scale $s=0$, we obtain $\widetilde{\D_\xi}^{(1)}=\widetilde{\D_\xi}^{(1,0)}\subset\widetilde{\D_\xi}$ and $u=u_0+\cdots+u_{i-2}+u_{i-1}$ such that
	\begin{equation}\label{widetildeDxi^1_L^q_norm_proof_of_porosity}
		\sum_{I\in\widetilde{\D_\xi}^{(1)}}\nu(I)^q\geq 3^{-(q+i)}2^{-(\tau(q)+2\delta^{4/3})iD}
	\end{equation}
	and, for any $I\in\widetilde{\D_\xi}^{(1)}$ and each $s\in[i]$, there is $J\in\D_{sD}$ such that
	\begin{equation*}
		I\subset\frac{1}{2}(J+u).
	\end{equation*}
	Hence, we have constructed the desired $\widetilde{\D_\xi}^{(1)}\subset\widetilde{\D_\xi}$ and $u\in\pi2^{-iD}\Z/\pi\Z$.
	
	We next reduce $\widetilde{\D_\xi}^{(1)}$ to a large subset with uniform tree structure. We use the idea of the proof of \cite[Lemma 3.6]{Shm19}, which is an inductive application of pigeonholing like the above argument and the proof of Lemma \ref{tree_structure_supp_theta}. In the following, for $s,m\in\N$ with $s<m$, $\D'\subset\D_{u,m}$ and $J\in\D_{u,s}$, we write $\D'(J)=\left\{I\in\D'\left|\ I\subset J\right.\right\}$.
	
	First, we begin from the bottom scale $s=i-1$. For each $j=0,1,\dots,D-1$, we define $\A^{(i-1,j)}\subset\D_{u,(i-1)D}$ by
	\begin{align*}
		&\A^{(i-1,j)}=\left\{J\in\D_{u,(i-1)D}\left|\ 2^j\leq\left|\widetilde{\D_\xi}^{(1)}(J)\right|<2^{j+1}\right.\right\}\quad(j<D-1),\\
		&\A^{(i-1,D-1)}=\left\{J\in\D_{u,(i-1)D}\left|\ 2^{D-1}\leq\left|\widetilde{\D_\xi}^{(1)}(J)\right|\leq2^D\right.\right\}\quad(j=D-1).
	\end{align*}
	Then, since $\left\{J\in\D_{u,(i-1)D}\left|\ I\subset J\ \text{ for some }I\in\widetilde{\D_\xi}^{(1)}\right.\right\}=\bigsqcup_{j=0}^{D-1}\A^{(i-1,j)}$, it follows from (\ref{widetildeDxi^1_L^q_norm_proof_of_porosity}) that
	\begin{equation*}
		\sum_{j=0}^{D-1}\sum_{I\in\bigsqcup_{J\in\A^{(i-1,j)}}\widetilde{\D_\xi}^{(1)}(J)}\nu(I)^q=
		\sum_{I\in\widetilde{\D_\xi}^{(1)}}\nu(I)^q\geq 3^{-(q+i)}2^{-(\tau(q)+2\delta^{4/3})iD}.
	\end{equation*}
	Hence, by pigeonholing, there is $j_{i-1}\in\{0,1,\dots,D-1\}$ such that
	\begin{equation*}
		\sum_{I\in\bigsqcup_{J\in\A^{(i-1,j_{i-1})}}\widetilde{\D_\xi}^{(1)}(J)}\nu(I)^q\geq D^{-1}3^{-(q+i)}2^{-(\tau(q)+2\delta^{4/3})iD}.
	\end{equation*}
	We write $\A^{(i-1)}=\A^{(i-1,j_{i-1})}$ and $R'_{i-1}=2^{j_{i-1}}$.
	For each $J\in\A^{(i-1)}$, since $R'_{i-1}\leq \left|\widetilde{\D_\xi}^{(1)}(J)\right|\leq 2R'_{i-1}$, we can take a subset $\widetilde{\D_\xi}^{(2,i-1)}(J)\subset\widetilde{\D_\xi}^{(1)}(J)$\footnote{Here, we abuse the notation $\widetilde{\D_\xi}^{(2,i-1)}(J)$, because we do not define $\widetilde{\D_\xi}^{(2,i-1)}\subset\widetilde{\D_\xi}^{(1)}$ yet. But we will see that $\widetilde{\D_\xi}^{(2,i-1)}(J)$ and $\widetilde{\D_\xi}^{(2,i-1)}$ which will be defined right below have the reasonable relation.} such that
	\begin{equation*}
		\left|\widetilde{\D_\xi}^{(2,i-1)}(J)\right|=R'_{i-1},\quad\sum_{I\in\widetilde{\D_\xi}^{(2,i-1)}(J)}\nu(I)^q\geq\frac{1}{2}\sum_{I\in\widetilde{\D_\xi}^{(1)}(J)}\nu(I)^q.
	\end{equation*}
	So, if we define
	\begin{equation*}
		\widetilde{\D_\xi}^{(2,i-1)}=\bigsqcup_{J\in\A^{(i-1)}}\widetilde{\D_\xi}^{(2,i-1)}(J)\subset\widetilde{\D_\xi}^{(1)},
	\end{equation*}
	then, by the above two inequalities, we have
	\begin{equation}\label{widetildeDxi_2_i-1_L^q_norm_proof_of_porosity}
		\sum_{I\in\widetilde{\D_\xi}^{(2,i-1)}}\nu(I)^q\geq\frac{1}{2}\sum_{I\in\bigsqcup_{J\in\A^{(i-1)}}\widetilde{\D_\xi}^{(1)}(J)}\nu(I)^q\geq (2D)^{-1}3^{-(q+i)}2^{-(\tau(q)+2\delta^{4/3})iD}.
	\end{equation}
	We end the process at the scale $s=i-1$ here.
	
	Next, let $s=i-2$. For each $j\in\{0,1,\dots,D-1\}$, we define $\A^{(i-2,j)}\subset\D_{u,(i-2)D}$ by
	\begin{align*}
		&\A^{(i-2,j)}=\left\{U\in\D_{u,(i-2)D}\left|\ 2^j\leq\left|\A^{(i-1)}(U)\right|<2^{j+1}\right.\right\}\quad(j<D-1),\\
		&\A^{(i-2,D-1)}=\left\{U\in\D_{u,(i-2)D}\left|\ 2^{D-1}\leq\left|\A^{(i-1)}(U)\right|\leq 2^D\right.\right\}\quad(j=D-1).
	\end{align*}
	Then, since $\bigsqcup_{j=0}^{D-1}\A^{(i-2,j)}=\left\{U\in\D_{u,(i-2)D}\left|\ J\subset U\text{ for some } J\in\A^{(i-1)}\right.\right\}$\\
	$=\left\{U\in\D_{u,(i-2)D}\left|\ I\subset U\text{ for some } I\in\widetilde{\D_\xi}^{(2,i-1)}\right.\right\}$, it follows from (\ref{widetildeDxi_2_i-1_L^q_norm_proof_of_porosity}) that
	\begin{equation*}
		\sum_{j=0}^{D-1}\sum_{J\in\bigsqcup_{U\in\A^{(i-2,j)}}\A^{(i-1)}(U)}\sum_{I\in\widetilde{\D_\xi}^{(2,i-1)}(J)}\nu(I)^q=\sum_{I\in\widetilde{\D_\xi}^{(2,i-1)}}\nu(I)^q\geq (2D)^{-1}3^{(q+i)}2^{-(\tau(q)+2\delta^{4/3})iD}.
	\end{equation*}
	Hence, by pigeonholing, there is $j_{i-2}\in\{0,1,\dots,D-1\}$ such that
	\begin{equation*}
		\sum_{J\in\bigsqcup_{U\in\A^{(i-2,j_{i-2})}}\A^{(i-1)}(U)}\sum_{I\in\widetilde{\D_\xi}^{(2,i-1)}(J)}\nu(I)^q\geq D^{-1}(2D)^{-1}3^{(q+i)}2^{-(\tau(q)+2\delta^{4/3})iD}.
	\end{equation*}
	We write $\A^{(i-2)}=\A^{(i-2,j_{i-2})}$ and $R'_{i-2}=2^{j_{i-2}}$. For each $U\in\A^{(i-2)}$, since $R'_{i-2}\leq \left|\A^{(i-1)}(U)\right|\leq 2R'_{i-2}$, we can take a subset $\B^{(i-1)}(U)\subset\A^{(i-1)}(U)$ such that
	\begin{equation*}
		\left|\B^{(i-1)}(U)\right|=R'_{i-2},\quad
		\sum_{J\in\B^{(i-1)}(U)}\sum_{I\in\widetilde{\D_\xi}^{(2,i-1)}(J)}\nu(I)^q\geq\frac{1}{2}
		\sum_{J\in\A^{(i-1)}(U)}\sum_{I\in\widetilde{\D_\xi}^{(2,i-1)}(J)}\nu(I)^q.
	\end{equation*}
	So, if we define
	\begin{equation*}
		\B^{(i-1)}=\bigsqcup_{U\in\A^{(i-2)}}\B^{(i-1)}(U)\subset\A^{(i-1)},
	\end{equation*}
	then, by the above two inequality, we have
	\begin{align*}
		\sum_{J\in\B^{(i-1)}}\sum_{I\in\widetilde{\D_\xi}^{(2,i-1)}(J)}\nu(I)^q&\geq\frac{1}{2}
		\sum_{J\in\bigsqcup_{U\in\A^{(i-2)}}\A^{(i-1)}(U)}\sum_{I\in\widetilde{\D_\xi}^{(2,i-1)}(J)}\nu(I)^q\\
		&\geq (2D)^{-2}3^{(q+i)}2^{-(\tau(q)+2\delta^{4/3})iD}.
	\end{align*}
	We write
	\begin{equation*}
		\widetilde{\D_\xi}^{(2,i-2)}=\bigsqcup_{J\in\B^{(i-1)}}\widetilde{\D_\xi}^{(2,i-1)}(J)\subset\widetilde{\D_\xi}^{(2,i-1)},
	\end{equation*}
	and end the process at the scale $s=i-2$.
	
	By continuing this process to the top scale $s=0$, for each scale $s$, we obtain $\A^{(s)}\subset\D_{u,sD}$, $\B^{(s+1)}\subset\A^{(s+1)}$ ($\A^{(i)}=\widetilde{\D_\xi}^{(1)},\B^{(i)}=\widetilde{\D_\xi}^{(2,i-1)}\subset\D_{iD}$ for $s=i-1$)
	, $R'_s\in\{1,2,\dots,2^{D-1}\}$ such that, for $J\in\A^{(s)}$, $\left|\B^{(s+1)}(J)\right|=R'_s$ and, if we take
	\begin{equation*}
		\widetilde{\D_\xi}^{(2,s)}=\bigsqcup_{J\in\B^{(s+1)}}\widetilde{\D_\xi}^{(2,s+1)}(J)\subset\widetilde{\D_\xi}^{(2,s+1)}\subset\widetilde{\D_\xi}^{(1)}
	\end{equation*}
	inductively, we have
	\begin{equation*}
		\sum_{I\in\widetilde{\D_\xi}^{(2,s)}}\nu(I)^q\geq (2D)^{s-i}3^{-(q+i)}2^{-(\tau(q)+2\delta^{4/3})iD}.
	\end{equation*}
	We define $\widetilde{\D_\xi}'=\widetilde{\D_\xi}^{(2,0)}$. Then, we can see that $\widetilde{\D_\xi}'$ is $(D,i,u,(R'_s)_{s\in[i]})$-uniform. Since $\widetilde{\D_\xi}'\subset\widetilde{\D_\xi}^{(1)}$, the second condition of Lemma \ref{wideltildeDix_into_uniform_tree_proof_of_porosity} is preserved. Furthermore, we have
	\begin{align*}
		\sum_{I\in\widetilde{\D_\xi}'}\nu(I)^q&\geq(2D)^{-i}3^{-(q+i)}2^{-(\tau(q)+2\delta^{4/3})iD}\\
		&=2^{-(\tau(q)+2\delta^{4/3}+\log(2D)/D+(q+i)/i\cdot\log 3/D)iD}.
	\end{align*}
	Since $D\gg_\delta 1$ is a sufficiently large constant and $i\geq\sigma/(30(q-1)\log M)\cdot l, l\gg_{M,q,\sigma}1$ are sufficiently large, we have
	\begin{equation}\label{widetildeDxi'_L^q_norm_proof_of_porosity}
		\sum_{I\in\widetilde{\D_\xi}'}\nu(I)^q\geq2^{-(\tau(q)+3\delta^{4/3})iD},
	\end{equation}
	and $\widetilde{\D_\xi}'$ satisfies the first condition of Lemma \ref{wideltildeDix_into_uniform_tree_proof_of_porosity}. So we complete the proof.
\end{proof}

By using Lemma \ref{wideltildeDix_into_uniform_tree_proof_of_porosity} and the idea from the proof of \cite[Theorem 5.1]{Shm19}, we prove the following Lemma \ref{key_2^{-iD}_components_are_blanching_proof_of_porosity}. This is the key lemma for the proof of Lemma \ref{L^q_norm_porosity}.
In the following, we take a small constant $0<\sigma'=\sigma'(q,\sigma)<\sigma$ which is determined only by $q$ and $\sigma$ and will be specified later ((\ref{contradiction_upper_bound_for_A_proof_of_discretized_slicing})). We can assume that $\delta\ll_{q,\sigma}1$ is sufficiently small in terms of $\sigma'$.

\begin{lem}\label{key_2^{-iD}_components_are_blanching_proof_of_porosity}
	In the setting above, assume that
	\begin{equation*}
		\tau^*(\alpha)>0\quad\text{for}\quad\alpha=\tau'(q).
	\end{equation*}
	Then, there exist a constant $0<\sigma''=\sigma''(M,\mu,q,\sigma,\sigma')<\sigma'$ determined only by $M,\mu,q,\sigma$ and $\sigma'$
	and a subset $\D_\xi'\subset\D_\xi$ such that
	\begin{equation*}
		|\D_\xi'|\geq 2^{\sigma''nD},
	\end{equation*}
	all elements of $\D_\xi'$ are contained in the same interval of length $\pi/5$, and any two of the elements of $\D_\xi'$ are $\pi2^{-\sigma'nD}$-separated.
\end{lem}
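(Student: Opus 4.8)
The plan is to pass from the tree‑uniform family $\widetilde{\D_\xi}'$ produced by Lemma~\ref{wideltildeDix_into_uniform_tree_proof_of_porosity} to a well‑separated subfamily, exploiting the middle‑half property of that lemma together with the exponential branching that $\tau^*(\alpha)>0$ forces. First I would identify the $L^q$‑mass‑carrying band: pigeonholing over the $O(iD)$ dyadic ranges of $\nu$‑mass inside the bound $\sum_{I\in\widetilde{\D_\xi}'}\nu(I)^q\geq 2^{-(\tau(q)+3\delta^{4/3})iD}$ gives $\alpha'\geq 0$ and $\D_\xi^{\mathrm{typ}}\subset\widetilde{\D_\xi}'$ with $\nu(I)\in[2^{-\alpha'iD},2^{1-\alpha'iD})$ for $I\in\D_\xi^{\mathrm{typ}}$ and $\sum_{I\in\D_\xi^{\mathrm{typ}}}\nu(I)^q\geq 2^{-(\tau(q)+4\delta^{4/3})iD}$. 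Lemma~\ref{close_to_typical_tau^*(alpha)} (using that $\tau$ is differentiable at $q$, so $\alpha^-=\alpha$) then yields $\alpha'\le\alpha+o_\delta(1)$ and hence $|\D_\xi^{\mathrm{typ}}|\geq 2^{(\alpha'q-\tau(q)-o_\delta(1))iD}\geq 2^{(\tau^*(\alpha)-o_\delta(1))iD}$; this is where $\tau^*(\alpha)>0$ enters, making the typical band exponentially large and ultimately producing a positive $\sigma''$. Equivalently one may bypass the band and apply Lemma~\ref{less_than_typical_tau(q)} directly to conclude $|\widetilde{\D_\xi}'|>2^{(\tau^*(\alpha)/2)iD}$, provided $3\delta^{4/3}$ is small enough in terms of $\tau^*(\alpha),q,\tau$, which is legitimate since $\tau^*(\alpha)$ is fixed once $\mu,q$ are.

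Second I would localize. Fix any $J_0\in\D_{u,D}$ meeting $\widetilde{\D_\xi}'$; since $D\gg 1$ it has length $\pi 2^{-D}\le\pi/5$. Because $\widetilde{\D_\xi}'$ is $(D,i,u,(R'_s)_{s\in[i]})$‑uniform, the number of cells of $\D_{u,\sigma' nD}(J_0)$ occupied by $\widetilde{\D_\xi}'$ equals $P_1:=\prod_{s=1}^{\lceil\sigma' n\rceil-1}R'_s$, independently of $J_0$. The key claim is then
\begin{equation*}
P_1\geq 2^{\sigma'' nD}\quad\text{for some }\sigma''=\sigma''(M,\mu,q,\sigma,\sigma')>0.
\end{equation*}
Granting this, I pick one element of $\widetilde{\D_\xi}'$ in each of $2^{\sigma'' nD}$ of those occupied cells; by the middle‑half conclusion of Lemma~\ref{wideltildeDix_into_uniform_tree_proof_of_porosity}, two such elements lying in distinct $\D_{u,\sigma' nD}$‑cells first separate at some scale $tD$ with $1<t\le\lceil\sigma' n\rceil$, so their middle‑half ancestors at scale $tD$ are at distance $\ge\tfrac12\pi 2^{-tD}\ge\tfrac12\pi 2^{-\sigma' nD}$, whence the elements themselves are $\pi 2^{-\sigma' nD-2}$‑separated (the $O(\pi 2^{-iD})$ widths being negligible). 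Finally I replace each chosen $I\in\widetilde{\D_\xi}'$ by an element of $\D_\xi$ within distance $\pi 2^{-iD}$ of it (which exists since $\widetilde{\D_\xi}\subset\{I:I\text{ or }I^{(\pm)}\in\D_\xi\}$); this perturbs distances by $O(\pi 2^{-iD})$ and keeps everything inside $J_0$, giving the desired $\D_\xi'$.

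Third, the claim $P_1\ge 2^{\sigma'' nD}$ is proved by a two‑scale $L^q$‑mass argument. Split $[1,i)$ into a coarse window $[1,\lceil\sigma' n\rceil)$ and a fine window $[\lceil\sigma' n\rceil,i)$ of relative size $(i-\lceil\sigma' n\rceil)D\approx iD$. For each occupied cell $J\in\D_{u,\sigma' nD}(J_0)$ the subtree of $\widetilde{\D_\xi}'$ below $J$ has the same number $P_2:=\prod_{s=\lceil\sigma' n\rceil}^{i-1}R'_s$ of leaves; if $P_2\le 2^{(\tau^*(\alpha)/2)(i-\sigma' n)D}$ then Lemma~\ref{local_L^q_norm_lemma}(i) bounds that subtree's $L^q$‑mass by $O_q(1)2^{-(\tau(q)+\eta)(i-\sigma' n)D}\nu(2J)^q$ with $\eta=\eta(\tau^*(\alpha)/2,q,\tau,\mu)>0$; summing over the $P_1$ cells and using $\sum_{J}\nu(2J)^q\le O_q(1)2^{-(\tau(q)-\zeta)\sigma' nD}$ (Lemma~\ref{local_L^q_norm_lemma}(ii), or the definition of $\tau(q)$) contradicts the total lower bound $2^{-(\tau(q)+4\delta^{4/3})iD}$ — here $n=\lfloor\delta l\rfloor$ and $i\ge\frac{\sigma}{30(q-1)\log M}l$ are used so that $n/i\lesssim\delta$ turns the error $4\delta^{4/3}iD$ into $O(\delta^{1/3})nD$, which is beaten by the genuine gain $\eta\cdot iD\gg\eta\cdot nD$. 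Hence $P_2>2^{(\tau^*(\alpha)/2)(i-\sigma' n)D}$; combining this with a matching upper bound $P_2\le 2^{(\tau^*(\alpha)+o_\delta(1))(i-\sigma' n)D}$ obtained by applying the local close‑to‑typical Lemma~\ref{close_to_tau^*(alpha)_local_version} inside a generic occupied $J$ (after one more pigeonhole to a mass‑uniform sub‑band of its subtree) and with $|\widetilde{\D_\xi}'|=R'_0P_1P_2>2^{(\tau^*(\alpha)/2)iD}$, one extracts $P_1\ge 2^{\sigma'' nD}$ with $\sigma''=c\,\tau^*(\alpha)\sigma'$ for a small absolute $c>0$.

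I expect the last step — forcing the branching of $\widetilde{\D_\xi}'$ to be spread out already at the coarse scale $\sigma' nD$ rather than concentrated in the fine window — to be the main obstacle, since the crude bound $P_2\le 2^{(i-\sigma' n)D}$ and the $L^q$‑mass lower bound alone are consistent with essentially all branching occurring below scale $\sigma' nD$. Making this rigorous seems to require iterating the local close‑to‑typical and local $L^q$‑norm lemmas across $\approx i/(\sigma' n)$ consecutive windows, each of which must carry a proportional share of both the $L^q$‑mass and the leaf count, followed by a careful bookkeeping that keeps every error exponent in play ($3\delta^{4/3}$, $\varepsilon$, $\sqrt\delta$, and the $\eta$ and $o_\delta(1)$ coming from the multifractal lemmas) below $\tau^*(\alpha)$ times fixed constants. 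The hypothesis $\tau^*(\alpha)>0$ is used precisely to keep $\sigma''$ — equivalently, the output constant $\kappa$ of Lemma~\ref{L^q_norm_porosity} — strictly positive.
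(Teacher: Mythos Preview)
Your overall skeleton—localize $\widetilde{\D_\xi}'$ to a short arc, establish branching already at a coarse scale $\sim\sigma'nD$, select one leaf per coarse cell using the middle–half property, and pass back to $\D_\xi$ via the neighbour relation—is exactly what the paper does. The genuine gap is in your third step. The two–scale argument does not close: Lemma~\ref{close_to_tau^*(alpha)_local_version} only upper–bounds the size of a \emph{mass–uniform sub–band} of the subtree $T_J$ below a generic $J$, not $P_2=|T_J|$ itself (nothing prevents $T_J$ from containing many leaves of negligible $\nu$–mass lying outside the band), so the inequality $P_2\le 2^{(\tau^*(\alpha)+o_\delta(1))(i-\sigma'n)D}$ is simply not available. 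And even granting it, combining with $R_0'P_1P_2>2^{(\tau^*(\alpha)/2)iD}$ yields only $P_1\ge 2^{-(\tau^*(\alpha)/2+o_\delta(1))iD+(\tau^*(\alpha)+o_\delta(1))\sigma'nD}$, whose dominant exponent $-\tfrac12\tau^*(\alpha)\,iD$ is hugely negative: both global bounds carry errors at scale $iD$, which swallow any signal at scale $\sigma'nD$. You correctly flag this as the obstacle, but the proposed repair—that each of $\approx i/(\sigma'n)$ consecutive windows must carry a proportional share of mass and leaves—is not what one can prove; a telescoping only shows that \emph{few} windows are deficient, which a priori says nothing about the first one.

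The paper instead runs the telescoping at the finest available granularity, scale $D$. Setting $S'=\{s\in[i]:R'_s\le 2^{(\tau^*(\alpha)/3)D}\}$ and applying Lemma~\ref{local_L^q_norm_lemma}(i) from scale $sD$ to $(s+1)D$ whenever $s\in S'$ and part~(ii) otherwise (the middle–half clause of Lemma~\ref{wideltildeDix_into_uniform_tree_proof_of_porosity} absorbs the doubling $2I'\subset I$), one obtains after summation $\eta\,|S'|(D-2)\le (3\delta^{4/3}+\varepsilon)iD+O(i)$, hence $|S'|<\sigma'\delta i/10$ once $\delta^{1/3}\ll\eta\sigma'$—this is precisely the threshold recorded in \eqref{contradiction_inequality_delta^4/3_proof_of_porosity}. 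The finish is then purely numerical: among the \emph{first} $\sigma'\delta i/2$ scales at most $\sigma'\delta i/10$ lie in $S'$, so $\prod_{s<\sigma'\delta i/2}R'_s\ge 2^{(\tau^*(\alpha)/3)D\cdot\sigma'\delta i/3}$; since $\delta i=\Theta_{M,q,\sigma}(n)$ by \eqref{i_larger_than_i0_proof_of_linearization} and $n=\lfloor\delta l\rfloor$, this gives $\ge 2^{\sigma''nD}$ occupied cells at a scale $\le\sigma'nD$, after which the $\pi2^{-\sigma'nD}$–separation and the passage to $\D_\xi'\subset\D_\xi$ go through just as you outlined. Your window scheme collapses to this argument once the same bound forces the number of bad windows below $1$; at that point the windows are doing no work and the scale–by–scale version is what remains.
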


\begin{proof}
	We write
	\begin{equation*}
		\delta'=\sigma'\delta.
	\end{equation*}
	We prove that
	\begin{equation}\label{R's_are_much_blanching_proof_of_porosity}
		\left|\left\{s\in[i]\left|\ R'_s\leq 2^{\tau^*(\alpha)/3\cdot D}\right.\right\}\right|<\frac{\delta'}{10}\cdot i
	\end{equation}
	by contradiction. As remarked above, the idea of the proof is from the proof of \cite[Theorem 5.1]{Shm19}.
	
	We write
	\begin{equation*}
		S'=\left\{s\in[i]\left|\ R'_s\leq 2^{\tau^*(\alpha)/3\cdot D}\right.\right\}
	\end{equation*}
	and assume that
	\begin{equation}\label{assumption_R's_are_not_blanching_proof_of_porosity}
		\left|S'\right|\geq\frac{\delta'}{10}\cdot i.
	\end{equation}
	We first see Lemma \ref{local_L^q_norm_lemma}\footnote{We notice that Lemma \ref{local_L^q_norm_lemma} still holds if we replace the normal dyadic partitions with the dyadic partitions with the base $u$, uniformly in terms of $u$.}.
	By Lemma \ref{local_L^q_norm_lemma} (i) for $\tau^*(\alpha)/2>0$, there exists $\eta=\eta(\tau^*(\alpha)/2,\mu,q)=\eta(\mu,q)>0$ such that, for any $s'\in\N$, $I\in\D_{u,s'}$ and sufficiently large $D'\in\N, D'\gg_{\tau^*(\alpha)/2,\eta,\mu,q}1$ (that is, $D'\gg_{\mu,q}1$), if a subset $\D'\subset\D_{u,s'+D'}(I)$ satisfies $|\D'|\leq 2^{\tau^*(\alpha)/2\cdot D'}$, then we have
	\begin{equation}\label{local_L^q_norm_lemma_strictly_less_ver_proof_of_porosity}
		\sum_{J\in\D'}\nu(J)^q\leq 2^{-(\tau(q)+\eta)D'}\nu(2I)^q.
	\end{equation}
	Furthermore, by Lemma \ref{local_L^q_norm_lemma} (ii) for $\varepsilon$, if $D'\in\N, D'\gg_{\varepsilon,\mu,q}1$ is sufficiently large, then, for any $s'\in\N$ and $I\in\D_{u,s'}$, we have
	\begin{equation}\label{local_L^q_norm_lemma_without_condition_proof_of_porosity}
		\sum_{J\in\D_{u,s'+D'}(I)}\nu(J)^q\leq 2^{-(\tau(q)-\varepsilon)D'}\nu(2I)^q.
	\end{equation}
	
	For each $m'\in\Z, 0\leq m'\leq iD$, we define
	\begin{equation*}
		\D'_{m'}=\left\{I\in\D_{u,m'}\left|\ J\subset I \text{ for some }J\in\widetilde{\D_\xi}'\right.\right\},
	\end{equation*}
	and consider the sequence
	\begin{equation*}
		L_s=-\log\sum_{I\in\D_{sD}'}\nu(I)^q,\quad s=0,1,\dots,i.
	\end{equation*}
	We take $s\in[i]$. Since $\widetilde{\D_\xi}'$ is $(D,i,u,(R'_s)_{s\in[i]})$-uniform, for each $I\in\D_{sD}'$, we have $|\D_{(s+1)D}'(I)|=R'_s$. Hence, for each $I'\in\D'_{sD+2}$ with $I'\subset I$, we have $|\D'_{(s+1)D}(I')|\leq |\D'_{(s+1)D}(I)|=R'_s$. Here, assume that $R'_s\leq2^{\tau^*(\alpha)/3\cdot D}$. Since $D\gg_{\mu,q}1$ is a large constant, for each $I'\in\D'_{sD+2}$, we have
	\begin{equation}\label{each_interval_in_D'_sD+2_proof_of_porosity}
		|\D'_{(s+1)D}(I')|\leq R'_s\leq 2^{\tau^*(\alpha)/3\cdot D}\leq 2^{\tau^*(\alpha)/2\cdot (D-2)}.
	\end{equation}
	Then, we apply (\ref{local_L^q_norm_lemma_strictly_less_ver_proof_of_porosity}) for $s'=sD+2$,
	each $I'\in\D'_{sD+2}$, $D'=D-2$ and $\D'=\D'_{(s+1)D}(I')\subset\D_{u,(s+1)D}(I')$ (we notice that $D\gg_{\mu,q,\varepsilon}1$ is a large constant), and have
	\begin{equation}\label{D'_(s+1)D(I')_strictly_small_L^q_norm_proof_of_porosity}
		\sum_{J\in\D'_{(s+1)D}(I')}\nu(J)^q\leq 2^{-(\tau(q)+\eta)(D-2)}\nu(2I')^q.
	\end{equation}
	By taking the sum for $I'\in\D'_{sD+2}$, we have
	\begin{equation}\label{first_estimate_L_s+1_proof_of_porosity}
		L_{s+1}=-\log\sum_{I'\in\D'_{sD+2}}\sum_{J\in\D'_{(s+1)D}(I')}\nu(J)^q\geq(\tau(q)+\eta)(D-2)-\log\sum_{I'\in\D'_{sD+2}}\nu(2I')^q.
	\end{equation}
	We notice that each $I'\in\D'_{sD+2}$ is contained in some $I\in\D'_{sD}$ and, by the second condition of Lemma \ref{wideltildeDix_into_uniform_tree_proof_of_porosity}, in the four subintervals of length $\pi2^{-(sD+2)}$ of $I$, the only possibility for $I'$ is either of the two intervals in the center of $I$. In particular, $2I'\subset I$.
	From this, we can see that
	\begin{equation*}
		\sum_{I'\in\D'_{sD+2}}\nu(2I')^q\leq 2\sum_{I\in\D'_{sD}}\nu(I)^q.
	\end{equation*}
	From this and (\ref{first_estimate_L_s+1_proof_of_porosity}), we obtain that
	\begin{equation}\label{main_estimate_L_s+1_proof_of_porosity}
		L_{s+1}\geq L_s+(\tau(q)+\eta)(D-2)-1.
	\end{equation}
	If $R'_s\leq2^{\tau^*(\alpha)/3\cdot D}$ does not hold, we apply (\ref{local_L^q_norm_lemma_without_condition_proof_of_porosity}) instead of (\ref{local_L^q_norm_lemma_strictly_less_ver_proof_of_porosity}) in the above argument. Then, we obtain that
	\begin{equation}\label{estimate_L_s+1_without_condition_proof_of_porosity}
		L_{s+1}\geq L_s+(\tau(q)-\varepsilon)(D-2)-1.
	\end{equation}
	
	By telescoping, $L_0=0$ (by definition), (\ref{main_estimate_L_s+1_proof_of_porosity}) and (\ref{estimate_L_s+1_without_condition_proof_of_porosity}), we have
	\begin{align*}
		L_i&=\sum_{s=0}^{i-1}\left(L_{s+1}-L_s\right)+L_0\\
		&=\sum_{s\in S'}\left(L_{s+1}-L_s\right)+\sum_{\substack{s=0\\s\notin S'}}^{i-1}\left(L_{s+1}-L_s\right)\\
		&\geq|S'|(\tau(q)+\eta)(D-2)+(i-|S'|)(\tau(q)-\varepsilon)(D-2)-i\\
		&=\tau(q)i(D-2)+\eta|S'|(D-2)-\varepsilon(i-|S'|)(D-2)-i\\
		&\geq\tau(q)i(D-2)+\eta|S'|(D-2)-\varepsilon i(D-2)-i.
	\end{align*}
	From this and the assumption (\ref{assumption_R's_are_not_blanching_proof_of_porosity}), it follows that
	\begin{equation}\label{lower_bound_of_L_i_proof_of_porosity}
		L_i\geq\tau(q)(D-2)i+\frac{1}{10}\cdot\eta\delta'i(D-2)-\varepsilon i(D-2)-i.
	\end{equation}
	On the other hand, by the definition, we have $\D'_{iD}=\widetilde{\D_\xi}'$. Hence, by Lemma \ref{wideltildeDix_into_uniform_tree_proof_of_porosity}, we have
	\begin{equation}\label{upper_bound_of_L_i_proof_of_porosity}
		L_i=-\log\sum_{I\in\widetilde{\D_\xi}'}\nu(I)^q\leq (\tau(q)+3\delta^{4/3})iD.
	\end{equation}
	By (\ref{lower_bound_of_L_i_proof_of_porosity}) and (\ref{upper_bound_of_L_i_proof_of_porosity}), we have
	\begin{equation*}
		-\frac{2\tau(q)}{D}+\frac{1}{10}\cdot\eta\delta'\left(1-\frac{2}{D}\right)-\varepsilon\left(1-\frac{2}{D}\right)-\frac{1}{D} \leq 3\delta^{4/3}.
	\end{equation*}
	Since $D\gg_{\mu,q,\delta}1$ is a large constant and $\varepsilon\ll\delta$, from this and the definition of $\delta'$ we obtain that
	\begin{equation}\label{contradiction_inequality_delta^4/3_proof_of_porosity}
		\eta\delta'\leq 40\delta^{4/3}
		\iff\eta\sigma'\leq 40\delta^{1/3}.
	\end{equation}
	However, $\eta=\eta(\mu,q)>0$ is a constant determined only by $\mu$ and $q$ (see (\ref{local_L^q_norm_lemma_strictly_less_ver_proof_of_porosity})). So, by $\delta\ll_{\mu,q,\sigma'}1$, (\ref{contradiction_inequality_delta^4/3_proof_of_porosity}) is a contradiction. Therefore, we complete the proof of (\ref{R's_are_much_blanching_proof_of_porosity}).
	
	From (\ref{R's_are_much_blanching_proof_of_porosity}), we have
	$\left|\left\{\left.0\leq s\leq\delta'i/2\right| R'_s\leq 2^{\tau^*(\alpha)/3\cdot D}\right\}\right|
	\leq\left|\left\{\left.s\in[i]\right| R'_s\leq 2^{\tau^*(\alpha)/3\cdot D}\right\}\right|\leq\delta'i/10.
	$
	By this inequality, $\delta'=\sigma'\delta$, $i\geq \sigma/(30(q-1)\log M)\cdot l$ and $n=\lfloor\delta l\rfloor$, we have
	\begin{align*}\label{number_large_R'_s_proof_of_porosity}
		\left|\left\{\left. 0\leq s\leq\frac{\delta'i}{2}\right|\ R'_s>2^{\tau^*(\alpha)/3\cdot D}\right\}\right|
		&\geq \frac{\delta'i}{2}-\left|\left\{\left.0\leq s\leq \frac{\delta'i}{2}\right| R'_s\leq 2^{\tau^*(\alpha)/3\cdot D}\right\}\right|\nonumber\\
		&\geq\frac{\delta'i}{3}\nonumber\\
		&\geq\frac{\sigma\sigma'}{100(q-1)\log M}\cdot \delta l\nonumber\\
		&\geq\frac{\sigma\sigma'}{100(q-1)\log M}\cdot n.
	\end{align*}
	Since $(R'_s)_{s\in[i]}$ are the branching numbers of $\widetilde{\D_\xi}'$, this inequality tells us that there are at least $2^{\tau^*(\alpha)\sigma\sigma'/(300(q-1)\log M)\cdot nD}$ elements in $\D_{(\lfloor\delta'i/2\rfloor+1)D}'=\left\{I\in\D_{u,(\lfloor\delta'i/2\rfloor+1)D}\left|\ J\subset I\text{ for some }J\in\widetilde{\D_\xi}'\right.\right\}$.
	Hence, by pigeonholing, there are $8^{-1}2^{\tau^*(\alpha)\sigma\sigma'/(300(q-1)\log M)\cdot nD}$ elements of $\D_{(\lfloor\delta'i/2\rfloor+1)D}'$ contained in an interval $T$ of length $\pi/8$. By this, we have $16^{-1}2^{\tau^*(\alpha)\sigma\sigma'/(300(q-1)\log M)\cdot nD}$ elements of $\D_{(\lfloor\delta'i/2\rfloor+1)D}'$ contained in $T$ any two of which are $\pi 2^{-(\lfloor\delta'i/2\rfloor+1)D}$-separated.
	Therefore, we obtain $16^{-1}2^{\tau^*(\alpha)\sigma\sigma'/(300(q-1)\log M)\cdot nD}$ elements of $\widetilde{\D_\xi}'\subset\widetilde{\D_\xi}$ contained in $T$ any two of which are $\pi2^{-(\lfloor\delta'i/2\rfloor+1)D}$-separated. We write $\widetilde{\D_\xi}''$ for the set of such elements of $\widetilde{\D_\xi}$.
	
	By the definition of $\widetilde{\D_\xi}$, for each $J\in\widetilde{\D_\xi}''$, we can take $I\in\D_\xi$ so that $I$ is either $J$ or one of the two neighboring $\pi2^{-iD}$ interval of $J$. Let $I, I'\in\D_\xi$ be taken as above for distinct $J,J'\in\widetilde{\D_\xi}''$ respectively. Then, since $J$ and $J'$ are
	$\pi2^{-(\lfloor\delta'i/2\rfloor+1)D}$-separated, we have
	\begin{equation}\label{separation_of_D'xi_proof_of_porosity}
		d_{\RP^1}(I,I')\geq d_{\RP^1}(J,J')-2\cdot\pi 2^{-iD}\geq2^{-1}\cdot\pi 2^{-(\lfloor\delta'i/2\rfloor+1)D}\geq \pi 2^{-3\sigma'\delta iD/4}\geq \pi 2^{-\sigma'nD}
	\end{equation}
	(we notice that $\delta'=\sigma'\delta$, $n=\lfloor\delta l\rfloor$ and $l>i\geq \sigma/(30(q-1)\log M)\cdot l\gg_{\sigma',\delta}1$). Let $\D_\xi'$ be the set of $I\in\D_\xi$ taken as above for each $J\in\widetilde{\D_\xi}''$. Then, since each $J\in\widetilde{\D_\xi}''$ is contained in the interval $T$ of length $\pi/8$, all elements of $\D_\xi'$ are contained in the same interval of length $\pi/5$. By the above inequality, any two of the elements of $\D_\xi'$ are $\pi2^{-\sigma'nD}$ -separated. Furthermore, we have
	\begin{equation*}
		|\D_\xi'|=\left|\widetilde{\D_\xi}''\right|\geq \frac{1}{16}\cdot 2^{\tau^*(\alpha)\sigma\sigma'/(300(q-1)\log M)\cdot nD}\geq 2^{\tau^*(\alpha)\sigma\sigma'/(400(q-1)\log M)\cdot nD}
	\end{equation*}
	(we notice that $n=\lfloor\delta l\rfloor\gg_{M,\mu,q,\sigma,\sigma',\delta}1$). Hence, by putting $0<\sigma''=\sigma''(M,\mu,q,\sigma,\sigma')=\min\{\sigma'/2,\tau^*(\alpha)\sigma\sigma'/(400(q-1)\log M)\}<\sigma'$, we complete the proof.
\end{proof}

\subsection{The discretized slicing lemma}\label{subsection_discretized_slicing_lemma}

Under the assumption $\tau^*(\alpha)>0$ for $\alpha=\tau'(q)$, we prove Lemma \ref{L^q_norm_porosity}.
We write
\begin{equation*}
	A=\left|\left\{\zeta\in\D^G_{(i+n)D}(\xi)\left|\ \widetilde{\theta}_\xi(\zeta)>0\right.\right\}\right|.
\end{equation*}
We recall that, for $x\in\RP^1$, we have defined the map $f_x: G\rightarrow\RP^1$ by
\begin{equation*}
	f_x(g)=gx,\quad g\in G.
\end{equation*}
Here, we state the crucial lemma for the proof of Lemma \ref{L^q_norm_porosity}.

\begin{lem}[The discretized slicing lemma]\label{discretized_slicing}
	
	In the setting above, there exist a constant $\kappa=\kappa(M,\mu,q,\sigma)>0$, determined only by $M,\mu,q$ and $\sigma$, and $I_0\in\D_\xi'$ such that, for $x_0\in I_0\cap K$ and every $H\in\D_{(i+n)D+r}$, we have
	\begin{equation*}
		\left|\left\{\zeta\in\D^G_{(i+n)D}(\xi)\left|\ \widetilde{\theta}_\xi(\zeta\cap f_{x_0}^{-1}H)>0\right.\right\}\right|\leq 2^{-\kappa nD}A.
	\end{equation*}
\end{lem}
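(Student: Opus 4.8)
The plan is a double counting over the well-separated base points furnished by $\D_\xi'$, with the effective bi-Lipschitz estimate Proposition \ref{bi_Lipschitz_of_the_action_at_varepsilon_separated_points} as the geometric engine. First I would record two quantitative facts. By Lemma \ref{Lemma_many_components_which_increase_L^q_norm_under_comvolutions} we have $\|\widetilde{\theta}_\xi^{((i+n)D)}\|_q^q\leq 2^{-\sigma nD/2}$, and since $\widetilde{\theta}_\xi$ puts comparable (up to a factor $2$) mass on each of the $A$ atoms of $\D^G_{(i+n)D}$ on which it is positive --- this is exactly (\ref{L^q_norm_component_theta_how_widetildetheta_works}), which gives $\|\widetilde{\theta}_\xi^{((i+n)D)}\|_q^q=\Theta_q(1)\,A^{1-q}$ --- it follows that $A\geq 2^{c_1 nD}$ with $c_1=c_1(q,\sigma)=\sigma/(3(q-1))>0$ (for $l\gg 1$). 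On the other hand, by Lemma \ref{key_2^{-iD}_components_are_blanching_proof_of_porosity}, $\D_\xi'$ has at least $2^{\sigma'' nD}$ elements, all lying in one interval of length $\pi/5$ and pairwise $\pi2^{-\sigma' nD}$-separated; in particular $|\D_\xi'|\leq \frac15 2^{\sigma' nD}$.

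Then I would argue by contradiction. Fix a small $\kappa=\kappa(M,\mu,q,\sigma)>0$, to be pinned down at the end, and suppose that for every $I\in\D_\xi'$ there are $x_I\in I\cap K$ (such points exist since $\D_\xi'\subset\D_\xi$ forces $\nu(I)>0$) and $H_I\in\D_{(i+n)D+r}$ with $|\mathcal{Z}_I|>2^{-\kappa nD}A$, where $\mathcal{Z}_I=\{\zeta\in\D^G_{(i+n)D}(\xi):\widetilde{\theta}_\xi(\zeta\cap f_{x_I}^{-1}H_I)>0\}$. Counting triples $(\zeta,\zeta',I)$ with $\zeta\neq\zeta'$ in $\D^G_{(i+n)D}(\xi)$, $I\in\D_\xi'$, and $\zeta,\zeta'\in\mathcal{Z}_I$, the hypothesis produces at least $\sum_{I\in\D_\xi'}|\mathcal{Z}_I|(|\mathcal{Z}_I|-1)\geq|\D_\xi'|\cdot\frac12(2^{-\kappa nD}A)^2\geq\frac12 2^{(\sigma''-2\kappa)nD}A^2$ of them, using $2^{-\kappa nD}A\geq 2^{(c_1-\kappa)nD}\geq 2$ once $\kappa<c_1$ and $l\gg1$.

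For the matching upper bound I would split the pairs $(\zeta,\zeta')$ according to whether they lie in at most two, or in at least three, of the sets $\mathcal{Z}_I$. Pairs of the first kind contribute at most $2A^2$ triples. For a pair of the second kind I would prove the $G$-separation estimate $d_G(\zeta,\zeta')\leq O_{M,\mu,C}(1)2^{-(i+n)D+2\sigma' nD}$: picking $I_1,I_2,I_3\in\D_\xi'$ with $\zeta,\zeta'\in\mathcal{Z}_{I_1}\cap\mathcal{Z}_{I_2}\cap\mathcal{Z}_{I_3}$ together with witnesses $g^{(j)}\in\zeta\cap\supp\widetilde{\theta}_\xi$, $\tilde g^{(j)}\in\zeta'\cap\supp\widetilde{\theta}_\xi$ satisfying $g^{(j)}x_{I_j},\tilde g^{(j)}x_{I_j}\in H_{I_j}$, one first uses $\diam\zeta,\diam\zeta'\leq M2^{-(i+n)D}$, $C^{-1}2^r\leq\|g\|^2\leq C2^r$, $u_g^-\notin U_1$ and Corollary \ref{Lipschitz_continuity_of_the_action} to replace all witnesses by $g:=g^{(1)}$, $\tilde g:=\tilde g^{(1)}$ at the cost $d_{\RP^1}(gx_{I_j},\tilde gx_{I_j})\leq O_{M,\mu,C}(1)2^{-((i+n)D+r)}$ for $j=1,2,3$; then, since $(x_{I_1},x_{I_2},x_{I_3})$ is $\pi2^{-\sigma' nD}$-separated, lies in an interval of length $<\pi/4$, and avoids $B_{\eta_0}(u_g^-)$ for a constant $\eta_0=\eta_0(\mu)\in(0,\pi/4)$ (because $u_g^-\notin U_1$ while $x_{I_j}\in U$), Proposition \ref{bi_Lipschitz_of_the_action_at_varepsilon_separated_points} with $\varepsilon=\pi2^{-\sigma' nD}$ --- applicable because $d_G(g,\tilde g)\leq\diam\xi\leq M2^{-iD}<c_0\varepsilon$, valid since $i\geq\sigma l/(30(q-1)\log M)$, $n\leq\delta l$, $\delta\ll_{M,\mu,q,\sigma}1$ and $l\gg1$ force $iD>2\sigma' nD$ --- yields $d_G(g,\tilde g)\leq C_0\varepsilon^{-2}\|g\|^2\cdot O_{M,\mu,C}(1)2^{-((i+n)D+r)}\leq O_{M,\mu,C}(1)2^{-(i+n)D+2\sigma' nD}$. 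Consequently each $\zeta$ forms a second-kind pair with at most $O_{M,\mu,C}(1)2^{6\sigma' nD}$ other atoms, so these pairs contribute at most $|\D_\xi'|\cdot O_{M,\mu,C}(1)A\,2^{6\sigma' nD}\leq O_{M,\mu,C}(1)A\,2^{7\sigma' nD}$ triples.

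Combining the two estimates, $\frac12 2^{(\sigma''-2\kappa)nD}A^2\leq 2A^2+O_{M,\mu,C}(1)A\,2^{7\sigma' nD}$. As $A\geq 2^{c_1 nD}$, choosing $\sigma'=\sigma'(q,\sigma)$ small enough that $7\sigma'<c_1$ makes the last term $\leq A^2$ for $l\gg1$, so the right side is $\leq 3A^2$ and hence $2^{(\sigma''-2\kappa)nD}\leq 6$; taking $\kappa=\kappa(M,\mu,q,\sigma):=\frac12\min\{c_1,\sigma''/3\}$, so that $\sigma''-2\kappa>0$, this fails for $l\gg1$. The contradiction shows the desired $I_0$ exists. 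I expect the main obstacle to be the middle step --- upgrading ``$\zeta,\zeta'$ share three common slices $\mathcal{Z}_I$'' to a genuine $d_G$-bound: one must track carefully the scale-dependent distortion $\varepsilon^{\pm2}=2^{\mp2\sigma' nD}$ in Proposition \ref{bi_Lipschitz_of_the_action_at_varepsilon_separated_points} and balance it against the separation exponent $\sigma''$ of the base points, which is precisely why $\sigma'$ (hence $\sigma''\leq\sigma'/2$, hence $\kappa$) must be chosen very small compared with $c_1$; verifying at every relevant scale the hypotheses of that proposition, notably $d_G(g,\tilde g)<c_0\varepsilon$ and the separation/avoidance conditions on the triple $(x_{I_1},x_{I_2},x_{I_3})$, is the delicate bookkeeping.
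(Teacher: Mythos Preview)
Your argument is correct. The geometric core---that three well-separated base points in $K$ force any two atoms $\zeta,\zeta'$ lying in the corresponding three slices to be $O_{M,\mu,C}(1)2^{-(i+n)D+2\sigma'nD}$-close in $G$, via Proposition \ref{bi_Lipschitz_of_the_action_at_varepsilon_separated_points}---is exactly the paper's Lemma \ref{three_Ps_intersect_at_most_small_set_proof_of_discretized_slicing}, and you verify the applicability hypotheses carefully. Where you differ is in the combinatorics. The paper proceeds by proving an inductive inclusion--exclusion estimate (Lemma \ref{cardinality_of_union_of_Ps_proof_of_discretized_slicing}) for $|P_{x_1,H_1}\cup\cdots\cup P_{x_k,H_k}|$, then chooses $k\sim 2^{\sigma''nD/2}$ slices and splits into cases according to whether $\bigl|\bigcup_{j<j'}P_{x_j,H_j}\cap P_{x_{j'},H_{j'}}\bigr|$ is large or small, finally contradicting $|\text{union}|\leq A$. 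You instead run a direct second-moment/double-counting argument on triples $(\zeta,\zeta',I)$, splitting pairs $(\zeta,\zeta')$ by whether they lie in at most two or at least three of the $\mathcal{Z}_I$. This bypasses the inductive Lemma \ref{cardinality_of_union_of_Ps_proof_of_discretized_slicing} entirely and is a cleaner route to the same contradiction; the paper's approach, on the other hand, makes the role of the threshold $k\sim 2^{\sigma''nD/2}$ more explicit and yields the slightly sharper exponent $\kappa=\sigma''/2$ rather than your $\sigma''/6$.
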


\begin{rem}
	In \cite{Bou10} is the well-known discretized projection theorem (\cite[Theorem 5]{Bou10}). In our case, the assumption is much stronger than that of \cite[Theorem 5]{Bou10}. Roughly speaking, our assumption corresponds to the case  in \cite[Theorem 5]{Bou10} that the non-concentration property (0.14) for $\mu_1$ is replaced with some much stronger condition depending on the “dimension” on $E$
	(on the  other hand, there is no non-concentration property like (0.16) for $E$ in our assumption).
\end{rem}

Before we prove Lemma \ref{discretized_slicing}, we see how Lemma \ref{L^q_norm_porosity} follows from Lemma \ref{discretized_slicing}.

\begin{proof}[Proof of Lemma \ref{L^q_norm_porosity}]
	We assume Lemma \ref{discretized_slicing} and take $I_0\in\D_\xi'\subset\D_\xi$ as in Lemma \ref{discretized_slicing} and $x_0\in I_0\cap K$. We take $\rho_\xi=\widetilde{\theta}_\xi$. 
	
	By the definition of $\D_\xi$, we have
	$\nu(I_0)>0$ and
	\begin{equation*}\label{I_0_in_D_xi_proof_of_porosity}
		2^{-(\tau(q)+\sqrt{\delta}/2)nD}\leq\|(\widetilde{\theta}_\xi{\bm .}\widehat{\nu_{I_0}})^{((i+n)D+r)}\|_q^q,
	\end{equation*}
	which is the condition (II) of Lemma \ref{L^q_norm_porosity} for $\rho_\xi=\widetilde{\theta}_\xi$.
	Next, we see the condition (I) of Lemma \ref{L^q_norm_porosity}. By Lemma \ref{discretized_slicing}, we have
	\begin{align}\label{every_slice_is_small_proof_of_porosity}
		&\sum_{H\in\D_{(i+n)D+r}}\left|\left\{\zeta\in\D^G_{(i+n)D}(\xi)\left|\ \widetilde{\theta}_\xi(\zeta\cap f_{x_0}^{-1}H)>0\right.\right\}\right|^q\nonumber\\
		\leq\ & 2^{-(q-1)\kappa nD}A^{q-1}
		\sum_{H\in\D_{(i+n)D+r}}\left|\left\{\zeta\in\D^G_{(i+n)D}(\xi)\left|\ \widetilde{\theta}_\xi(\zeta\cap f_{x_0}^{-1}H)>0\right.\right\}\right|\nonumber\\
		=\ &2^{-(q-1)\kappa nD}A^{q-1}\sum_{H\in\D_{(i+n)D+r}}\sum_{\zeta\in\D^G_{(i+n)D}(\xi),\ \widetilde{\theta}_\xi(\zeta)>0}\mathbbm{1}_{\{(H,\zeta)\left|\ \widetilde{\theta}_\xi(\zeta\cap f_{x_0}^{-1}H)>0\right.\}}(H,\zeta).
	\end{align}
	Here, for a given $\zeta\in\D^G_{(i+n)D}(\xi)$ with $\widetilde{\theta}_\xi(\zeta)>0$, we take $H,H'\in\D_{(i+n)D+r}$ satisfying $\widetilde{\theta}_\xi(\zeta\cap f_{x_0}^{-1}H),\widetilde{\theta}_\xi(\zeta\cap f_{x_0}^{-1}H')>0$. Then, for $g\in\supp\ \widetilde{\theta}_\xi\cap\zeta\cap f_{x_0}^{-1}H$ and $g'\in\supp\ \widetilde{\theta}_\xi\cap\zeta\cap f_{x_0}^{-1}H'$, we have $gx_0\in H$, $g'x_0\in H'$ and, by Corollary \ref{Lipschitz_continuity_of_the_action}, we have
	\begin{equation*}
		d_{\RP^1}(gx_0,g'x_0)\leq C_2C2^{-r}d_G(g,g')\leq MC_2C2^{-(i+n)D-r}.
	\end{equation*}
	This tells us that the number of $H\in\D_{(i+n)D+r}$ such that $\widetilde{\theta}_\xi(\zeta\cap f_{x_0}^{-1}H)>0$ is at most $O_{M,\mu,C}(1)$. Hence, by (\ref{every_slice_is_small_proof_of_porosity}), we have
	\begin{equation*}
		\sum_{H\in\D_{(i+n)D+r}}\left|\left\{\zeta\in\D^G_{(i+n)D}(\xi)\left|\ \widetilde{\theta}_\xi(\zeta\cap f_{x_0}^{-1}H)>0\right.\right\}\right|^q\leq O_{M,\mu,C}(1)2^{-(q-1)\kappa nD}A^q,
	\end{equation*}
	so
	\begin{equation}\label{L^q_distribution_of_slices_proof_of_porosity}
		\sum_{H\in\D_{(i+n)D+r}}\left(A^{-1}\left|\left\{\zeta\in\D^G_{(i+n)D}(\xi)\left|\ \widetilde{\theta}_\xi(\zeta\cap f_{x_0}^{-1}H)>0\right.\right\}\right|\right)^q\leq O_{M,\mu,C}(1)2^{-(q-1)\kappa nD}.
	\end{equation}
	
	We recall that, by Lemma \ref{tree_structure_supp_theta}, $\widetilde{\theta}$ has almost uniform mass on each $\D^G_m$-atom with positive mass and $\supp\ \widetilde{\theta}$ has a uniform tree structure. Hence, we can see that, for our $\xi\in\D^G_{iD}$ with $\widetilde{\theta}(\xi)>0$, there is $b>0$ such that $b/2\leq\widetilde{\theta}_\xi(\zeta)\leq b$ for any $\zeta\in\D^G_{(i+n)D}(\xi)$ with $\widetilde{\theta}_\xi(\zeta)>0$. It follows from this that
	\begin{equation*}
		\widetilde{\theta}_\xi(\zeta)\leq b\leq\frac{2}{A}\quad \text{ for any }\ \zeta\in\D^G_{(i+n)D}(\xi)\ \text{ with }\ \widetilde{\theta}_\xi(\zeta)>0.
	\end{equation*}
	By this and (\ref{L^q_distribution_of_slices_proof_of_porosity}), we obtain that
	\begin{align*}
		O_{M,\mu,C}(1)2^{-(q-1)\kappa nD}&\geq \sum_{H\in \D_{(i+n)D+r}}\left(\frac{1}{2}\sum_{\zeta\in\D^G_{(i+n)D}(\xi),\ \widetilde{\theta}_\xi(\zeta\cap f_{x_0}^{-1}H)>0}\widetilde{\theta}_\xi(\zeta)\right)^q\\
		&\geq\frac{1}{2^q}\sum_{H\in\D_{(i+n)D+r}}\widetilde{\theta}_\xi(f_{x_0}^{-1}H)^q,
	\end{align*}
	and hence,
	\begin{equation}\label{flat_L^q_norm_for_widetildetheta_xi_delta_x_0_proof_of_porosity}
		\sum_{H\in\D_{(i+n)D+r}}\widetilde{\theta}_\xi(f_{x_0}^{-1}H)^q\leq O_{M,\mu,q,C}(1)2^{-(q-1)\kappa nD}\leq 2^{-(q-1)\kappa/2\cdot nD}.
	\end{equation}
	Here, we have used that $\kappa>0$ depends only on $M,\mu,q,\sigma$ and $nD\geq n=\lfloor\delta l\rfloor\gg_{M,\mu,q,\sigma,\delta,C}1$ is sufficiently large.
	If we re-write $\kappa$ for $(q-1)\kappa/2>0$ (still depending only on $M,\mu,q,\sigma$), (\ref{flat_L^q_norm_for_widetildetheta_xi_delta_x_0_proof_of_porosity}) is the condition (I) of Lemma \ref{L^q_norm_porosity} for $\rho_\xi=\widetilde{\theta}_\xi$. Therefore, we complete the proof of Lemma \ref{L^q_norm_porosity}.
\end{proof}

In the following, we prove Lemma \ref{discretized_slicing}. For $x\in\RP^1$ and $H\in\D_{(I+n)D+r}$, we define
\begin{equation*}
	P_{x,H}=\left\{\zeta\in\D^G_{(i+n)D}(\xi)\left|\ \widetilde{\theta}_\xi(\zeta\cap f_x^{-1}H)>0\right.\right\}.
\end{equation*}
To prove Lemma \ref{discretized_slicing}, we need some preparations.

\begin{lem}\label{three_Ps_intersect_at_most_small_set_proof_of_discretized_slicing}
	Let $x_1, x_2, x_3\in K$ and assume that any two of them are $\pi2^{-\sigma'nD}$-separated and $\pi/4$ close. Then, for any $H_1, H_2, H_3\in\D_{(i+n)D+r}$, we have
	\begin{equation*}
		\left|P_{x_1,H_1}\cap P_{x_2,H_2}\cap P_{x_3,H_3}\right|\leq 2^{7\sigma'nD}.
	\end{equation*}
\end{lem}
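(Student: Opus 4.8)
The plan is to exploit the effective bi-Lipschitz estimate of Proposition \ref{bi_Lipschitz_of_the_action_at_varepsilon_separated_points}. If the triple intersection is empty there is nothing to prove, so fix an atom $\zeta_0$ in it. For each $j\in\{1,2,3\}$ and each $\zeta\in P_{x_1,H_1}\cap P_{x_2,H_2}\cap P_{x_3,H_3}$, the positivity $\widetilde{\theta}_\xi(\zeta\cap f_{x_j}^{-1}H_j)>0$ lets me pick a point $g_j^{(\zeta)}\in\zeta\cap\supp\ \widetilde{\theta}_\xi$ with $g_j^{(\zeta)}x_j\in H_j$; set $g_\zeta=g_1^{(\zeta)}$, and fix once and for all $g_0:=g_1^{(\zeta_0)}$ together with the points $g_j^{(\zeta_0)}$. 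I will show that every such $g_\zeta$ is forced into a small ball $B^G_R(g_0)$ with $R=O_{M,\mu,C}(1)\,2^{2\sigma'nD-(i+n)D}$, and then a packing count in the $3$-dimensional group $G$ caps the number of the $\zeta$ by $2^{7\sigma'nD}$.

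First I would compare $g_\zeta$ with $g_0$ through their actions on $x=(x_1,x_2,x_3)$. Since $g_\zeta$ and all the $g_j^{(\zeta)}$ lie in the single $\D^G_{(i+n)D}$-atom $\zeta$, property (iii) gives $d_G(g_\zeta,g_j^{(\zeta)})\le M2^{-(i+n)D}<r_1(\mu)$ for $l\gg1$; moreover every such point lies in $\supp\ \theta$, so $C^{-1}2^r\le\|g_j^{(\zeta)}\|^2\le C2^r$ and $u_{g_j^{(\zeta)}}^-\notin U_1$. Corollary \ref{Lipschitz_continuity_of_the_action} applied to $g_j^{(\zeta)}$ and $g_\zeta$ at $x_j\in K\subset U$ (and likewise for $\zeta_0$), together with $g_j^{(\zeta)}x_j,g_j^{(\zeta_0)}x_j\in H_j$ and $\diam\ H_j\le\pi2^{-(i+n)D-r}$, yields
\[
d_{\RP^1}(g_\zeta x_j,g_0x_j)\le d_{\RP^1}(g_\zeta x_j,g_j^{(\zeta)}x_j)+\diam\ H_j+d_{\RP^1}(g_j^{(\zeta_0)}x_j,g_0x_j)=O_{M,\mu,C}(1)\,2^{-(i+n)D-r}
\]
for each $j$, and hence $d_{(\RP^1)^3}(g_\zeta x,g_0x)=O_{M,\mu,C}(1)\,2^{-(i+n)D-r}$.

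It then remains to feed this into Proposition \ref{bi_Lipschitz_of_the_action_at_varepsilon_separated_points}. Fix $0<\eta_0<\pi/4$ with $\eta_0\le d_{\RP^1}(\overline{U},\RP^1\setminus U_1)$, so $x_j\notin B_{\eta_0}(u_{g_0}^-)$ as $u_{g_0}^-\notin U_1$ and $x_j\in\overline{U}$; by hypothesis $(x_1,x_2,x_3)$ is $\pi2^{-\sigma'nD}$-separated and pairwise $\pi/4$-close. Choosing $\varepsilon=\pi2^{-\sigma'nD}$, one checks for $l\gg1$ that $\varepsilon<\eta_0$ and that $d_G(g_\zeta,g_0)\le\diam\ \xi\le M2^{-iD}<c_0\varepsilon$ — here one uses $i\ge\frac{\sigma}{30(q-1)\log M}l$, $n=\lfloor\delta l\rfloor$, $\sigma'<\sigma$ and $\delta\ll_{M,\mu,q,\sigma}1$, which make $i-\sigma'n=\Omega_{M,q,\sigma}(1)\,l$ large. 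Proposition \ref{bi_Lipschitz_of_the_action_at_varepsilon_separated_points} then gives $C_0^{-1}\varepsilon^2\|g_0\|^{-2}d_G(g_\zeta,g_0)\le d_{(\RP^1)^3}(g_\zeta x,g_0x)$, and combining this with the bound above, $\|g_0\|^2\le C2^r$, and $\varepsilon^{-2}=\pi^{-2}2^{2\sigma'nD}$ yields $d_G(g_\zeta,g_0)\le O_{M,\mu,C}(1)\,2^{2\sigma'nD-(i+n)D}=:R$. Thus every $\zeta$ in the triple intersection lies in $B^G_{R+M2^{-(i+n)D}}(g_0)$; since distinct such atoms are disjoint and each contains a ball of radius $M^{-1}2^{-(i+n)D}$, a volume count in the $3$-dimensional manifold $G$ (using properties (ii)--(iii) of the partitions $\D^G_m$) bounds their number by $O_M\big((R2^{(i+n)D}+M)^3\big)=O_{M,\mu,C}(1)\,2^{6\sigma'nD}\le2^{7\sigma'nD}$, the last step absorbing the constant via $nD\ge n=\lfloor\delta l\rfloor\gg1$. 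The main obstacle is exactly this scale-matching: $\varepsilon$ must be small enough to lie below $\eta_0$ yet large enough relative to $\diam\ \xi$ to keep $g_\zeta\in B^G_{c_0\varepsilon}(g_0)$, which is what forces the gap $i>\sigma'n$ and $l\gg1$; beyond that the argument uses only the packing estimate and the contraction/Lipschitz properties of the $G$-action from Section \ref{section_preliminaries}.
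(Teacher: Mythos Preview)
Your proof is correct and follows essentially the same route as the paper's: both arguments pick representatives in each atom of the triple intersection, use Corollary \ref{Lipschitz_continuity_of_the_action} to show that the actions on $(x_1,x_2,x_3)$ are $O_{M,\mu,C}(2^{-(i+n)D-r})$-close, invoke Proposition \ref{bi_Lipschitz_of_the_action_at_varepsilon_separated_points} with $\varepsilon=\pi2^{-\sigma'nD}$ (checking $\diam\xi\le M2^{-iD}<c_0\varepsilon$ via $i\gg\sigma'n$) to confine all representatives to a ball of radius $O_{M,\mu,C}(1)2^{2\sigma'nD-(i+n)D}$, and finish with the $3$-dimensional packing count in $G$. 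The only cosmetic difference is that you fix a single reference atom $\zeta_0$ and compare every other $\zeta$ to it, whereas the paper compares two arbitrary atoms $\zeta,\zeta'$; these are equivalent.
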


\begin{proof}
	Let $\zeta,\zeta'\in P_{x_1,H_1}\cap P_{x_2,H_2}\cap P_{x_3,H_3}$. Then, for $i=1,2,3$, we can take $g_i\in\zeta\cap\supp\ \widetilde{\theta}_\xi\cap f_{x_i}^{-1}H_i$ and $g_i'\in\zeta'\cap\supp\ \widetilde{\theta}_\xi\cap f_{x_i}^{-1}H_i$. By Corollary \ref{Lipschitz_continuity_of_the_action}, we have for $i=2,3$ that
	\begin{equation*}
		d_{\RP^1}(g_1x_i,g_ix_i)\leq C_2C2^{-r}d_G(g_1,g_i)\leq MC_2C2^{-(i+n)D-r}.
	\end{equation*}
	Since $g_ix_i\in H_i$, we can see from the above inequality that $g_1x_i$ is in a $O_{M,\mu,C}(2^{-(i+n)D-r})$ neighborhood of $H_i$. The same statement holds for $g'_1x_i$. Hence, we have
	\begin{equation}\label{g1x_g'1x_close_proof_of_discretized_slicing_lemma}
		d_{(\RP^1)^3}(g_1x,g'_1x')\leq O_{M,\mu,C}(2^{-(i+n)D-r}).
	\end{equation}
	
	By the assumption, $x=(x_1,x_2,x_3)\in (\RP^1)^3$ is $\pi2^{-\sigma'nD}$-separated and any two of $x_1,x_2,x_3$ are $\pi/4$ close. Furthermore, since $g_1\in\supp\ \widetilde{\theta}_\xi$ and $x_i\in K$ for $i=1,2,3$, if we take $\eta_0=\eta_0(\mu)>0$ so that $d_{\RP^1}(K,\RP^1\setminus U_1)>\eta_0$ for $K\subset U_1$, we have $x_i\notin B_{\eta_0}(u^-_{g_1})$ for $i=1,2,3$. Let $c_0=c_0(\eta_0)=c_0(\mu)>0$ be the constant obtained from Proposition \ref{bi_Lipschitz_of_the_action_at_varepsilon_separated_points} for this $\eta_0=\eta_0(\mu)$. Since $i\geq \sigma/(30(q-1)\log M)\cdot l\gg_{M,\mu} n=\lfloor\delta l\rfloor>\sigma' n$, we can assume $\diam\ \xi\leq M2^{-iD}<c_0\cdot\pi 2^{-\sigma'nD}$.
	Then, we can apply Proposition \ref{bi_Lipschitz_of_the_action_at_varepsilon_separated_points} to $g_1,g'_1\in\xi$ and $x=(x_1,x_2,x_3)$ and obtain from (\ref{g1x_g'1x_close_proof_of_discretized_slicing_lemma}) that
	\begin{align*}\label{zeta_zeta'_are_close_proof_of_discretized_slicing}
		d_G(g_1,g'_1)\leq O_\mu(1)(\pi 2^{-\sigma'nD})^{-2}\|g_1\|^2d_{(\RP^1)^3}(g_1x,g_1'x)&\leq O_{M,\mu,C}(1)2^{2\sigma'nD}2^r2^{-(i+n)D-r}\nonumber\\
		&=O_{M,\mu,C}(1)2^{2\sigma'nD}2^{-(i+n)D}.
	\end{align*}
	This inequality tells us that all elements of $P_{x_1,H_1}\cap P_{x_2, H_2}\cap P_{x_3,H_3}$ are contained in the same ball in $G$ of radius $O_{M,\mu,C}(1)2^{2\sigma'nD}2^{-(i+n)D}$.
	By using this, the property of the dyadic-like partition $\D^G_{(i+n)D}$ and the fact that $m_G(B^G_r(g))=\Theta(r^3)$ for any $g\in G$ and sufficiently small $r>0$ (where $m_G$ is the Haar measure on $G$ with $m_G(B^G_1(1_G))=1$), we can see that
	\begin{equation*}
		\left|P_{x_1,H_1}\cap P_{x_2,H_2}\cap P_{x_3,H_3}\right|\leq O_{M,\mu,C}(1)(2^{2\sigma'nD})^3\leq2^{7\sigma'nD}
	\end{equation*}
	(we notice that $nD\geq n=\lfloor\delta l\rfloor\gg_{M,\mu,\sigma',\delta,C}1$ is sufficiently large). Hence, we complete the proof.
\end{proof}

From Lemma \ref{three_Ps_intersect_at_most_small_set_proof_of_discretized_slicing}, we lead the following lemma, which is the key for the proof of Lemma \ref{discretized_slicing}.

\begin{lem}\label{cardinality_of_union_of_Ps_proof_of_discretized_slicing}
	Let $k\in\N,k\geq 2$ and $x_1,\dots,x_k\in K$ such that any two of them are $\pi2^{-\sigma'nD}$-separated and $\pi/4$ close\footnote{Hence, it is meaningful only if $k\leq O(2^{\sigma'nD})$.}. Furthermore, let $H_1,\dots,H_k\in\D_{(i+n)D+r}$. Then, we have
	\begin{equation*}
		\left|P_{x_1,H_1}\cup\cdots \cup P_{x_k,H_k}\right|\geq\sum_{j=1}^k\left|P_{x_j,H_j}\right|-\left|\bigcup_{1\leq j<j'\leq k}P_{x_j,H_j}\cap P_{x_{j'},H_{j'}}\right|-\binom{k}{3}\cdot 2^{7\sigma'nD}.
	\end{equation*}
\end{lem}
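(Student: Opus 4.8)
The plan is to prove this by a direct multiplicity count, essentially a Bonferroni-type inequality refined using Lemma \ref{three_Ps_intersect_at_most_small_set_proof_of_discretized_slicing}. Write $P_j=P_{x_j,H_j}$ and $W=P_{x_1,H_1}\cup\cdots\cup P_{x_k,H_k}$, and for $\zeta\in W$ set $d(\zeta)=\left|\left\{1\le j\le k : \zeta\in P_j\right\}\right|\ge 1$, its multiplicity. Since $\sum_{j=1}^k|P_j|=\sum_{\zeta\in W}d(\zeta)$, we get $\sum_{j=1}^k|P_j|-|W|=\sum_{\zeta\in W}(d(\zeta)-1)$. I would then split this excess in the form $\sum_{\zeta\in W}(d(\zeta)-1)=\left|\left\{\zeta\in W : d(\zeta)\ge 2\right\}\right|+\sum_{\zeta\in W,\ d(\zeta)\ge 3}(d(\zeta)-2)$, so that the statement will follow by bounding the two pieces separately.

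For the first piece, the identity $\left\{\zeta\in W : d(\zeta)\ge 2\right\}=\bigcup_{1\le j<j'\le k}P_j\cap P_{j'}$ is immediate from the definitions, producing exactly the term $\left|\bigcup_{1\le j<j'\le k}P_j\cap P_{j'}\right|$ of the conclusion. For the second piece, I would use the elementary inequality $d-2\le\binom{d}{3}$, valid for every integer $d\ge 3$ (with $\binom{d}{3}=0$ for $d\le 2$), together with the double-counting identity $\sum_{\zeta\in W}\binom{d(\zeta)}{3}=\sum_{1\le j<j'<j''\le k}\left|P_j\cap P_{j'}\cap P_{j''}\right|$, which holds because $\binom{d(\zeta)}{3}$ counts the triples of indices whose corresponding $P$'s all contain $\zeta$. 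Then Lemma \ref{three_Ps_intersect_at_most_small_set_proof_of_discretized_slicing} applies to each triple $x_j,x_{j'},x_{j''}$ — its hypotheses (pairwise $\pi2^{-\sigma'nD}$-separation and pairwise $\pi/4$-closeness) are inherited from those on $x_1,\dots,x_k$ — and gives $\left|P_j\cap P_{j'}\cap P_{j''}\right|\le 2^{7\sigma'nD}$; since there are $\binom{k}{3}$ such triples, the second piece is at most $\binom{k}{3}\cdot 2^{7\sigma'nD}$. Adding the two bounds and rearranging yields the claimed inequality.

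There is no genuine obstacle here; the one point requiring slight care is the choice of decomposition of the excess $\sum_{\zeta\in W}(d(\zeta)-1)$. Peeling off $\left|\left\{\zeta : d(\zeta)\ge 2\right\}\right|$ first leaves behind only $\sum_{\zeta : d(\zeta)\ge 3}(d(\zeta)-2)$, which is controlled by triple intersections via the inequality $d-2\le\binom{d}{3}$; a cruder grouping would leave the quantity $\sum_{\zeta : d(\zeta)\ge 2}(d(\zeta)-1)$, for which Lemma \ref{three_Ps_intersect_at_most_small_set_proof_of_discretized_slicing} alone gives no bound. Everything else is routine bookkeeping, and the case $k=2$ is covered automatically since then $\binom{k}{3}=0$ and the inequality reduces to the exact two-set inclusion–exclusion identity.
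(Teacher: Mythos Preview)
Your proof is correct and is genuinely different from the paper's. The paper proceeds by induction on $k$, and in order for the inductive step to close it first strengthens the statement to allow arbitrary subsets $P'_j\subset P_{x_j,H_j}$; the key step is an iterated use of Lemma~\ref{three_Ps_intersect_at_most_small_set_proof_of_discretized_slicing} to establish $\sum_{j=1}^k|P'_j\cap P'_{k+1}|\le \bigl|(P'_1\cup\cdots\cup P'_k)\cap P'_{k+1}\bigr|+\binom{k}{2}\cdot 2^{7\sigma'nD}$, which together with the inductive hypothesis and $\binom{k}{2}+\binom{k}{3}=\binom{k+1}{3}$ yields the claim for $k+1$. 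Your argument bypasses all of this: by passing to multiplicities, splitting the excess $\sum_\zeta (d(\zeta)-1)$ as you do, and invoking the elementary bound $d-2\le\binom{d}{3}$ together with the double-counting identity $\sum_\zeta\binom{d(\zeta)}{3}=\sum_{j<j'<j''}|P_j\cap P_{j'}\cap P_{j''}|$, you reduce to a single global application of Lemma~\ref{three_Ps_intersect_at_most_small_set_proof_of_discretized_slicing} summed over all $\binom{k}{3}$ triples. This is shorter and more transparent; the paper's route has the minor advantage of never needing the inequality $d-2\le\binom{d}{3}$, but at the cost of a considerably longer computation.
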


\begin{proof}
	We show the following stronger statement by induction on $k\geq 2$: for any subsets $P'_1\subset P_{x_1,H_1},\dots, P'_k\subset P_{x_k,H_k}$, we have
	\begin{equation}\label{cardinality_of_union_of_P's_proof_of_discretized_slicing}
		\left|P'_1\cup\cdots \cup P'_k\right|\geq\sum_{j=1}^k\left|P'_j\right|-\left|\bigcup_{1\leq j<j'\leq k}P'_j\cap P'_{j'}\right|-\binom{k}{3}\cdot 2^{7\sigma'nD}.
	\end{equation}
	We can easily see that (\ref{cardinality_of_union_of_P's_proof_of_discretized_slicing}) holds for $k=2$ (more precisely, the equality holds). We assume that $k\geq 2$ and (\ref{cardinality_of_union_of_P's_proof_of_discretized_slicing}) hols for this $k$.
	
	Let
	$P'_1\subset P_{x_1,H_1},\dots, P'_k\subset P_{x_k,H_k}, P'_{k+1}\subset P_{x_{k+1},H_{k+1}}$ be subsets. Then, we have
	\begin{align}\label{card_P'1_P'k+1_proof_pf_discretized_slicing}
		\left|P'_1\cup\cdots\cup P'_k\cup P'_{k+1}\right|&=\left|P'_{k+1}\right|+\left|P'_1\cup\cdots\cup P'_k\setminus P'_{k+1}\right|\nonumber\\
		&=\left|P'_{k+1}\right|+\left|\left(P'_1\setminus P'_{k+1}\right)\cup\cdots\cup \left(P'_k\setminus P'_{k+1}\right)\right|.
	\end{align}
	Here, we apply the assumption of induction to $P'_1\setminus P'_{k+1}\subset P_{x_1,H_1},\dots, P'_k\setminus P'_{k+1}\subset P_{x_k,H_k}$. Then, we have
	\begin{align}\label{card_union_P'j_setminus_P'k+1_proof_of_discretized_slicing}
		&\left|\left(P'_1\setminus P'_{k+1}\right)\cup\cdots\cup \left(P'_k\setminus P'_{k+1}\right)\right|\nonumber\\
		\geq\ &
		\sum_{j=1}^k\left|P'_j\setminus P'_{k+1}\right|-\left|\bigcup_{1\leq j<j'\leq k}\left(P'_j\setminus P'_{k+1}\right)\cap\left(P'_{j'}\setminus P'_{k+1}\right)\right|-\binom{k}{3}\cdot2^{7\sigma'nD}\nonumber\\
		=\ &\sum_{j=1}^k\left(\left|P'_j\right|-\left|P'_j\cap P'_{k+1}\right|\right)-\left|\left(\bigcup_{1\leq j<j'\leq k}P'_j\cap P'_{j'}\right)\setminus P'_{k+1}\right|-\binom{k}{3}\cdot2^{7\sigma'nD}\nonumber\\
		=\ &\sum_{j=1}^k\left|P'_j\right|-\left\{\sum_{j=1}^k\left|P'_j\cap P'_{k+1}\right|+\left|\left(\bigcup_{1\leq j<j'\leq k}P'_j\cap P'_{j'}\right)\setminus P'_{k+1}\right|\right\}-\binom{k}{3}\cdot2^{7\sigma'nD}.
	\end{align}
	By using Lemma \ref{three_Ps_intersect_at_most_small_set_proof_of_discretized_slicing} repeatedly, we have
	\begin{align*}
		&\sum_{j=1}^k\left|P'_j\cap P'_{k+1}\right|\\
		=\ &\left|\left(P'_1\cup P'_2\right)\cap P'_{k+1}\right|+\left|\left(P'_1\cap P'_2\right)\cap P'_{k+1}\right|+\sum_{j=3}^k\left|P'_j\cap P'_{k+1}\right|\\
		\leq\ &\left|\left(P'_1\cup P'_2\right)\cap P'_{k+1}\right|+\left|P_{x_1,H_1}\cap P_{x_2,H_2}\cap P_{x_{k+1},H_{k+1}}\right|+\sum_{j=3}^k\left|P'_j\cap P'_{k+1}\right|\\
		\leq\ &\left|\left(P'_1\cup P'_2\right)\cap P'_{k+1}\right|+\sum_{j=3}^k\left|P'_j\cap P'_{k+1}\right|+2^{7\sigma'nD}\\
		=\ &\left|\left(P'_1\cup P'_2\cup P'_3\right)\cap P'_{k+1}\right|+\left|(P'_1\cup P'_2)\cap P'_{k+1}\cap P'_3\right|+\sum_{j=4}^k\left|P'_j\cap P'_{k+1}\right|+2^{7\sigma'nD}\\
		\leq\ &\left|\left(P'_1\cup P'_2\cup P'_3\right)\cap P'_{k+1}\right|+\left|P_{x_1,H_1}\cap P_{x_{k+1},H_{k+1}}\cap P_{x_3,H_3}\right|+\left|P_{x_2,H_2}\cap P_{x_{k+1},H_{k+1}}\cap P_{x_3,H_3}\right|\\
		&+\sum_{j=4}^k\left|P'_j\cap P'_{k+1}\right|+2^{7\sigma'nD}\\
		\leq\ &\left|\left(P'_1\cup P'_2\cup P'_3\right)\cap P'_{k+1}\right|+\sum_{j=4}^k\left|P'_j\cap P'_{k+1}\right|+(1+2)\cdot2^{7\sigma'nD}\\
		&\vdots\\
		\leq\ &\left|\left(P'_1\cup\cdots\cup P'_k\right)\cap P'_{k+1}\right|+\sum_{j=1}^{k-1}j\cdot 2^{7\sigma'nD}\\
		=\ &\left|\left(P'_1\cup\cdots\cup P'_k\right)\cap P'_{k+1}\right|+\binom{k}{2}\cdot 2^{7\sigma'nD}.
	\end{align*}
	From this inequality, (\ref{card_P'1_P'k+1_proof_pf_discretized_slicing}) and (\ref{card_union_P'j_setminus_P'k+1_proof_of_discretized_slicing}), it follows that
	\begin{align*}
		&\left|P'_1\cup\cdots\cup P'_k\cup P'_{k+1}\right|\\
		=\ &\left|P'_{k+1}\right|+\left|\left(P'_1\setminus P'_{k+1}\right)\cup\cdots\cup \left(P'_k\setminus P'_{k+1}\right)\right|\\
		\geq\ &\sum_{j=1}^{k+1}\left|P'_j\right|-\left\{\sum_{j=1}^k\left|P'_j\cap P'_{k+1}\right|+\left|\left(\bigcup_{1\leq j<j'\leq k}P'_j\cap P'_{j'}\right)\setminus P'_{k+1}\right|\right\}-\binom{k}{3}\cdot2^{7\sigma'nD}\\
		\geq\ &\sum_{j=1}^{k+1}\left|P'_j\right|-\left\{\left|\left(P'_1\cup\cdots\cup P'_k\right)\cap P'_{k+1}\right|
		+\left|\left(\bigcup_{1\leq j<j'\leq k}P'_j\cap P'_{j'}\right)\setminus P'_{k+1}\right|\right\}-\left(\binom{k}{2}+\binom{k}{3}\right)\cdot2^{7\sigma'nD}\\
		=\ &\sum_{j=1}^{k+1}\left|P'_j\right|-\left|\left(\left(P'_1\cup\cdots\cup P'_k\right)\cap P'_{k+1}\right)\sqcup\left(\left(\bigcup_{1\leq j<j'\leq k}P'_j\cap P'_{j'}\right)\setminus P'_{k+1}\right)\right|-\binom{k+1}{3}\cdot 2^{7\sigma'nD}\\
		\geq\ &\sum_{j=1}^{k+1}\left|P'_j\right|-\left|\bigcup_{1\leq j<j'\leq k+1}P'_j\cap P'_{j'}\right|-\binom{k+1}{3}\cdot 2^{7\sigma'nD}
	\end{align*}
	Hence, (\ref{cardinality_of_union_of_P's_proof_of_discretized_slicing}) holds for $k+1$, and we complete the induction. Therefore, we have shown that (\ref{cardinality_of_union_of_P's_proof_of_discretized_slicing}) holds for any $k$.
\end{proof}

Now, we prove Lemma \ref{discretized_slicing} using Lemma \ref{three_Ps_intersect_at_most_small_set_proof_of_discretized_slicing} and Lemma \ref{cardinality_of_union_of_Ps_proof_of_discretized_slicing}.

\begin{proof}[Proof of Lemma \ref{discretized_slicing}]
	First, we notice the following. By Lemma \ref{Lemma_many_components_which_increase_L^q_norm_under_comvolutions}, we have $\|\widetilde{\theta}_\xi^{((i+n)D)}\|_q^q=\sum_{\zeta\in\D^G_{(i+n)D}(\xi)}\widetilde{\theta}_\xi(\zeta)^q\leq 2^{-\sigma/2\cdot nD}$. From this and Hölder's inequality, we obtain that
	\begin{align*}
		1=\sum_{\zeta\in\D^G_{(i+n)D}(\xi),\ \widetilde{\theta}_\xi(\zeta)>0}\widetilde{\theta}_\xi(\zeta)
		&\leq\left(\sum_{\zeta\in\D^G_{(i+n)D}(\xi),\ \widetilde{\theta}_\xi(\zeta)>0}\widetilde{\theta}_\xi(\zeta)^q\right)^{1/q}\left(\sum_{\zeta\in\D^G_{(i+n)D}(\xi),\ \widetilde{\theta}_\xi(\zeta)>0}1^{q/(q-1)}\right)^{(q-1)/q}\\
		&\leq 2^{-\sigma/(2q)\cdot nD}A^{(q-1)/q},
	\end{align*}
	and hence
	\begin{equation}\label{A_has_large_dim_proof_of_discretized_slicing}
		A\geq2^{\sigma/(2(q-1))\cdot nD}.
	\end{equation}
	
	We prove Lemma \ref{discretized_slicing} for $\kappa=\kappa(M,\mu,q,\sigma)=\sigma''/2>0$ (we notice that $\sigma''=\sigma''(M,\mu,q,\sigma,\sigma')$ and $\sigma'$ will be determined only by $q$ and $\sigma$). We will do this by contradiction. Assume that, for each $I\in\D'_\xi$, there are $x_I\in I\cap K$ and $H_I\in\D_{(i+n)D+r}$, we have
	\begin{equation}\label{contradiction_assumption_proof_of_discretixed_slicing}
		\left|P_{x_I,H_I}\right|=\left|\left\{\zeta\in\D^G_{(i+n)D}(\xi)\left|\ \widetilde{\theta}_\xi(\zeta\cap f_{x_I}^{-1}H_I)>0\right.\right\}\right|>2^{-\sigma''/2\cdot nD}A.
	\end{equation}
	By Lemma \ref{key_2^{-iD}_components_are_blanching_proof_of_porosity}, we have $|\D_\xi'|\geq 2^{\sigma''nD}$. Hence, if we write $k=4\lceil2^{\sigma''/2\cdot nD}\rceil<2^{\sigma''nD}$ (we notice that $nD\geq n=\lfloor\delta l\rfloor\gg_{M,\mu,q,\sigma,\delta}$ is large), we can take distinct $k$ elements $I_1,\dots,I_k$ from $\D'_\xi$. For each $j=1,\dots,k$ we write $x_j=x_{I_j}$ and $H_j=H_{I_j}$.
	
	Since, by Lemma \ref{key_2^{-iD}_components_are_blanching_proof_of_porosity}, any two of the elements of $\D'_\xi$ are $\pi2^{-\sigma'nD}$-separated and $\pi/5$ close and $x_j\in I_j\cap K$, any two of $x_1,\dots,x_k\in K$ are $\pi2^{-\sigma'nD}$-separated and $\pi/4$ close.
	Here, we estimate $\left|P_{x_1,H_1}\cup\cdots\cup P_{x_k,H_k}\right|$ from below.
	
	First, we assume that
	\begin{equation}\label{case_union_of_intersections_of_Ps_is_large_proof_of_discretized_slicing}
		\left|\bigcup_{1\leq j<j'\leq k-1}P_{x_j,H_j}\cap P_{x_{j'},H_{j'}}\right|\geq\frac{k-2}{2}\cdot 2^{-\sigma''/2\cdot nD}A.
	\end{equation}
	We have
	\begin{align}\label{union_of_intersections_of_Ps_is_large_proof_of_discretized_slicing}
		&\left|P_{x_1,H_1}\cup\cdots\cup P_{x_k,H_k}\right|\nonumber\\
		\geq\ &\left|P_{x_k,H_k}\cup\left(\bigcup_{1\leq j<j'\leq k-1}P_{x_j,H_j}\cap P_{x_{j'},H_{j'}}\right)\right|\nonumber\\
		=\ &\left|P_{x_k,H_k}\right|+\left|\bigcup_{1\leq j<j'\leq k-1}P_{x_j,H_j}\cap P_{x_{j'},H_{j'}}\right|
		-\left|P_{x_k,H_k}\cap\left(\bigcup_{1\leq j<j'\leq k-1}P_{x_j,H_j}\cap P_{x_{j'},H_{j'}}\right)\right|\nonumber\\
		\geq\ &\left|P_{x_k,H_k}\right|+\left|\bigcup_{1\leq j<j'\leq k-1}P_{x_j,H_j}\cap P_{x_{j'},H_{j'}}\right|-\sum_{1\leq j<j'\leq k-1}\left|P_{x_k,H_k}\cap P_{x_j,H_j}\cap P_{x_{j'},H_{j'}}\right|
	\end{align}
	By Lemma \ref{three_Ps_intersect_at_most_small_set_proof_of_discretized_slicing}, wa have for $1\leq j<j'\leq k-1$ that
	\begin{equation*}
		\left|P_{x_k,H_k}\cap P_{x_j,H_j}\cap P_{x_{j'},H_{j'}}\right|\leq 2^{7\sigma'nD}.
	\end{equation*}
	From this, (\ref{contradiction_assumption_proof_of_discretixed_slicing}), (\ref{case_union_of_intersections_of_Ps_is_large_proof_of_discretized_slicing}) and (\ref{union_of_intersections_of_Ps_is_large_proof_of_discretized_slicing}), we obtain that
	\begin{align*}\label{conclusion_union_intersections_of_Ps_is_large_proof_of_discretized_slicing}
		\left|P_{x_1,H_1}\cup\cdots\cup P_{x_k,H_k}\right|&\geq 2^{-\sigma''/2\cdot nD}A+\frac{k-2}{2}\cdot 2^{-\sigma''/2\cdot nD}A-\binom{k-1}{2}\cdot 2^{7\sigma'nD}\nonumber\\
		&=\frac{k}{2}\cdot 2^{-\sigma''/2\cdot nD}A-\binom{k-1}{2}\cdot 2^{7\sigma'nD}.
	\end{align*}
	If
	\begin{equation*}\label{case_union_intersections_of_Ps_is_small_proof_of_discretized_slicing}
		\left|\bigcup_{1\leq j<j'\leq k-1}P_{x_j,H_j}\cap P_{x_{j'},H_{j'}}\right|<\frac{k-2}{2}\cdot 2^{-\sigma''/2\cdot nD}A,
	\end{equation*}
	by Lemma \ref{cardinality_of_union_of_Ps_proof_of_discretized_slicing} and (\ref{contradiction_assumption_proof_of_discretixed_slicing}), we have
	\begin{align*}\label{conclusion_union_intersections_of_Ps_is_small_proof_of_discretized_slicing}
		\left|P_{x_1,H_1}\cup\cdots\cup P_{x_k,H_k}\right|&\geq\left|P_{x_1,H_1}\cup\cdots\cup P_{x_{k-1},H_{k-1}}\right|\nonumber\\
		&\geq\sum_{j=1}^{k-1}\left|P_{x_j,H_j}\right|-\left|\bigcup_{1\leq j<j'\leq k-1}P_{x_j,H_j}\cap P_{x_{j'},H_{j'}}\right|-\binom{k-1}{3}\cdot 2^{7\sigma'nD}\nonumber\\
		&\geq (k-1)2^{-\sigma''/2\cdot nD}A-\frac{k-2}{2}\cdot 2^{-\sigma''/2\cdot nD}A-\binom{k-1}{3}\cdot 2^{7\sigma'nD}\nonumber\\
		&\geq \frac{k}{2}\cdot 2^{-\sigma''/2\cdot nD}A-\binom{k-1}{3}\cdot 2^{7\sigma'nD}.
	\end{align*}
	
	Hence, in either of the two cases, we have
	\begin{equation*}
		\left|P_{x_1,H_1}\cup\cdots\cup P_{x_k,H_k}\right|\geq\frac{k}{2}\cdot 2^{-\sigma''/2\cdot nD}A-k^32^{7\sigma'nD}.
	\end{equation*}
	Since $\left\{\zeta\in\D^G_{(i+n)D}(\xi)\left|\ \widetilde{\theta}_\xi(\zeta)>0\right.\right\}\supset P_{x_1,H_1}\cup\cdots\cup P_{x_k,H_k}$ and $k=4\lceil2^{\sigma''/2\cdot nD}\rceil$, it follows that
	\begin{equation*}
		A\geq\left|P_{x_1,H_1}\cup\cdots\cup P_{x_k,H_k}\right|\geq 2A-100\cdot2^{(3\sigma''/2+7\sigma')nD},
	\end{equation*}
	and hence
	\begin{equation}\label{contradiction_upper_bound_for_A_proof_of_discretized_slicing}
		A\leq 100\cdot2^{(3\sigma''/2+7\sigma')nD}.
	\end{equation}
	However, we have by (\ref{A_has_large_dim_proof_of_discretized_slicing}) that $2^{\sigma/(2(q-1))\cdot nD}\leq A$. Hence, if we take $\sigma'=\sigma'(q,\sigma)$ sufficiently small in terms of $q$ and $\sigma$, (\ref{contradiction_upper_bound_for_A_proof_of_discretized_slicing}) is a contradiction (we notice that $\sigma''<\sigma'$). So, we complete the proof of Lemma \ref{discretized_slicing}.
\end{proof}

\section{Proof of the $L^q$ norm flattening theorem}\label{section_proof_of_L^q_norm_flattening_theorem}

In this section, we prove the $L^q$ norm flattening Theorem \ref{L^q_norm_flattening_theorem} using the $L^q$ norm porosity Lemma \ref{L^q_norm_porosity}. As stated in Section \ref{section_L^q_norm_porosity}, the strategy for the proof is by contradiction, doing linearization by Lemma \ref{L^q_norm_porosity} and then applying the similar argument as the proof of Theorem \ref{L^q_norm_flattening_theorem_for_self_similar_measure} in \cite{Shm19}.

\subsection{Linear approximation of the $G$-action on $\RP^1$}\label{subsection_linear_approximation}

We first show the local linear approximation of the $G$-action on $\RP^1$ which is fundamental to the linearization argument. All of the contents in this section are from \cite[Section 5.4]{HS17}, but we give the proofs for completeness.
We recall that, for $u\in\RP^1$, $\RP^1$ is identified with $\R/\pi\Z$ by $\angle_u:\RP^1\stackrel{\sim}{\longrightarrow}\R/\pi\Z$: the map associating each $x\in\RP^1$ with the angle mod $\pi$ from $u$ to $x$ in the counterclockwise orientation. In the following argument, for $g\in G$, we write $\widehat{g}:\R/\pi\Z\to\R/\pi\Z$ for
\begin{equation*}
\widehat{g}=\angle_{v_g^+}\circ g\circ \angle_{u_g^+}^{-1}.
\end{equation*}

\begin{lem}[{\cite[Lemma 5.8]{HS17}}]\label{approximation_near_identity_proof_of_flattening}
Let $0<r<1$ be sufficiently small and $x_0\in\RP^1$. Then, for $h\in B^G_r(1_G)$ and $x\in B_r(x_0)$, we have
\begin{equation*}
\angle_v(hx)-\angle_v(hx_0)=\angle_u(x)-\angle_u(x_0)+O(r^2),
\end{equation*}
where $u,v\in\RP^1$ are arbitrary and $O(r^2)$ is independent of $u,v$ in particular.
\end{lem}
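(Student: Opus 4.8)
The plan is to reduce the statement to a one-variable mean value estimate. First I would observe that the quantity $\angle_v(y)-\angle_v(y')$ does not depend on the base point $v$: since angles add, $\angle_v(y)=\angle_v(v')+\angle_{v'}(y)$ for any $v,v'\in\RP^1$, so $\angle_v(y)-\angle_v(y')=\angle_{v'}(y)-\angle_{v'}(y')$, and the same holds for the left-hand base $u$. Hence both sides of the asserted identity are base-point free (they are signed angular displacements on $\R/\pi\Z$), and it suffices to prove it for one convenient choice, say $u=v=w$ for an arbitrary fixed $w\in\RP^1$. Writing $\widehat{h}_w=\angle_w\circ h\circ\angle_w^{-1}\colon\R/\pi\Z\to\R/\pi\Z$, which is a real-analytic diffeomorphism equal to the identity when $h=1_G$, and setting $\theta=\angle_w(x)$, $\theta_0=\angle_w(x_0)$ (so $|\theta-\theta_0|=d_{\RP^1}(x,x_0)<r$, taking the canonical small representative), the claim becomes $\widehat{h}_w(\theta)-\widehat{h}_w(\theta_0)=(\theta-\theta_0)+O(r^2)$.

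The key step is the uniform derivative bound $\sup_{\theta}|\widehat{h}_w'(\theta)-1|=O(r)$ for all $h\in B^G_r(1_G)$, with the implied constant independent of $h$, $w$ and $x_0$. For this I would use the explicit formula (\ref{derivative_action_of_G_preliminaries}) of Section \ref{subsection_G_action_on_RP^1}: since the operator-norm metric and $d_G$ are bi-Lipschitz equivalent near $1_G$, the condition $d_G(h,1_G)<r$ forces $\|h-1_G\|=O(r)$, hence $\lambda_h^+=\|h\|\in[1,1+O(r)]$ and $\lambda_h^-=(\lambda_h^+)^{-1}\in[1-O(r),1]$, so that $(\angle_{v_h^+}\circ h\circ\angle_{u_h^+}^{-1})'(\theta)=\bigl({\lambda_h^+}^2\cos^2\theta+{\lambda_h^+}^{-2}\sin^2\theta\bigr)^{-1}=1+O(r)$ uniformly in $\theta$. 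Passing from the coordinates $\angle_{u_h^+},\angle_{v_h^+}$ to $\angle_w$ on both domain and target only pre- and post-composes with translations of $\R/\pi\Z$, which have derivative exactly $1$; thus the same bound holds for $\widehat{h}_w'$, still uniformly in $\theta$ (so trivially uniformly in $x_0$), and uniformity in $w$ is automatic because different choices of $w$ give translates of one and the same function. Since $\widehat{h}_w'$ is then bounded by $1+O(r)<2$, for $r$ small enough all points $\theta,\theta_0,\widehat{h}_w(\theta),\widehat{h}_w(\theta_0)$ lie in arcs of length $O(r)<\pi/2$, so the mean value theorem on $\R/\pi\Z$ applies without ambiguity of representatives.

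Finally, by the mean value theorem there is $\theta^{*}$ between $\theta_0$ and $\theta$ with $\widehat{h}_w(\theta)-\widehat{h}_w(\theta_0)=\widehat{h}_w'(\theta^{*})(\theta-\theta_0)=(\theta-\theta_0)+(\widehat{h}_w'(\theta^{*})-1)(\theta-\theta_0)$, and since $|\widehat{h}_w'(\theta^{*})-1|=O(r)$ and $|\theta-\theta_0|<r$ the error term is $O(r^2)$. Undoing the reduction (both sides are base-point free), this yields the claimed identity for arbitrary $u,v$ with the same $O(r^2)$ and an absolute implied constant. I do not anticipate a genuine obstacle: the only points that need a little care are the base-point-free reformulation that legitimizes taking $u=v$, and verifying that the $O(r)$ derivative bound is uniform in all the parameters — both settled by the conjugation-by-rotations observation above.
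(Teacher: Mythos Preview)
Your proof is correct and follows essentially the same route as the paper's: reduce via base-point independence to convenient angular coordinates, use the explicit derivative formula (\ref{derivative_action_of_G_preliminaries}) together with $\|h\|=1+O(r)$ to get $\widehat{h}'=1+O(r)$ uniformly, and conclude. The only cosmetic difference is that you use the mean value theorem directly, whereas the paper uses a second-order Taylor expansion and thus also records the (here unnecessary) bound $|\widehat{h}''|=O(1)$; your version is marginally more economical.
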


We notice that, since $\angle_u=\angle_{[1:0]}-\angle_{[1:0]}(u)$ for $u\in\RP^1$, $\angle_v(hx)-\angle_v(hx_0)$ and $\angle_u(x)-\angle_u(x)\in\R/\pi\Z$ are independent of choices of $u,v\in\RP^1$.

\begin{proof}
It is sufficient to show the statement for $u=u^+_h$ and $v=v^+_h$. By Taylor's theorem, we have
\begin{align}\label{Taylor_expansion_for_h_proof_of_flattening}
\angle_{v^+_h}(hx)-\angle_{v^+_h}(hx_0)&=\widehat{h}\left(\angle_{u^+_h}(x)\right)-\widehat{h}\left(\angle_{u^+_h}(x_0)\right)\nonumber\\
&=\widehat{h}'\left(\angle_{u^+_h}(x_0)\right)\left(\angle_{u^+_h}(x)-\angle_{u^+_h}(x_0)\right)+\frac{\widehat{h}''(\theta)}{2}\left(\angle_{u^+_h}(x)-\angle_{u^+_h}(x_0)\right)^2
\end{align}
for some $\theta\in\R/\pi\Z$. We notice that, in the right-most side, we locally identify $\R/\pi\Z$ with $\R$ around $0$.

Here, by (\ref{derivative_action_of_G_preliminaries}), we have
\begin{equation*}
\widehat{h}'(\theta)=\frac{1}{\|h\|^2\cos^2\theta+\|h\|^{-2}\sin^2\theta},\quad\widehat{h}''(\theta)=\frac{2\cos\theta\sin\theta(\|h\|^2-\|h\|^{-2})}{\left(\|h\|^2\cos^2\theta+\|h\|^{-2}\sin^2\theta\right)^2},
\end{equation*}
and hence $\|h\|^{-2}\leq\widehat{h}'(\theta)\leq\|h\|^2$ and $|\widehat{h}''(\theta)|\leq O(1)$ for any $h\in B^G_r(1_G)\ (0<r<1)$ and $\theta\in\R/\pi\Z$. Furthermore, by the bi-Lipschitz equivalence of the norm metric and $d_G$ on a small neighborhood of $1_G$ on $G$ (see \cite[Section 2.3]{HS17}), we have $\|h\|\leq 1+\|h-1_G\|\leq1+O(r)$, and hence
\begin{equation*}
\widehat{h}'(\theta)=1+O(r)
\end{equation*}
for any $\theta\in\R/\pi\Z$. From (\ref{Taylor_expansion_for_h_proof_of_flattening}), these facts and $x\in B_r(x_0)$, we have
\begin{align*}
\angle_{v^+_h}(hx)-\angle_{v^+_h}(hx_0)&=\left(1+O(r)\right)\left(\angle_{u^+_h}(x)-\angle_{u^+_h}(x_0)\right)+O(1)\left(\angle_{u^+_h}(x)-\angle_{u^+_h}(x_0)\right)^2\\
&=\angle_{u^+_h}(x)-\angle_{u^+_h}(x_0)+O(r^2),
\end{align*}
and complete the proof.
\end{proof}

\begin{lem}[{\cite[Corollary 5.9]{HS17}}]\label{approximation_near_1G_and_x0_proof_of_flattening}
Let $0<r<1$ be sufficiently small and $x_0\in\RP^1$. Then, for $h\in B^G_r(1_G)$ and $x\in B_r(x_0)$, we have
\begin{equation*}
d_{\RP^1}(hx,x_0)=O(r).
\end{equation*}
\end{lem}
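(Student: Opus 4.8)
The statement to prove is Lemma \ref{approximation_near_1G_and_x0_proof_of_flattening}: for sufficiently small $0<r<1$ and $x_0\in\RP^1$, if $h\in B^G_r(1_G)$ and $x\in B_r(x_0)$, then $d_{\RP^1}(hx,x_0)=O(r)$.

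The plan is to derive this directly from Lemma \ref{approximation_near_identity_proof_of_flattening}, which is the substantive estimate; the corollary is essentially an immediate consequence combined with a triangle-inequality bookkeeping. First I would apply Lemma \ref{approximation_near_identity_proof_of_flattening} with the point $x$ in place of the generic point and with $x_0$ itself playing the role of the base point, choosing the coordinates $u=v$ to be, say, $[1:0]$ (the statement of Lemma \ref{approximation_near_identity_proof_of_flattening} allows $u,v$ to be arbitrary and the $O(r^2)$ error is uniform in them). This gives
\begin{equation*}
\angle_{[1:0]}(hx)-\angle_{[1:0]}(hx_0)=\angle_{[1:0]}(x)-\angle_{[1:0]}(x_0)+O(r^2)
\end{equation*}
in $\R/\pi\Z$. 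Since $x\in B_r(x_0)$, the right-hand side has size $O(r)$ in $\R/\pi\Z$, hence $d_{\RP^1}(hx,hx_0)=|\angle_{[1:0]}(hx)-\angle_{[1:0]}(hx_0)|=O(r)$.

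Next I would bound $d_{\RP^1}(hx_0,x_0)$. For this, apply Lemma \ref{approximation_near_identity_proof_of_flattening} again, this time with the point $x_0$ in place of the generic point and $x_0$ as the base point as well, so that $\angle_u(x)-\angle_u(x_0)=0$; this yields $\angle_v(hx_0)-\angle_v(hx_0)=O(r^2)$, which is vacuous. Instead the cleaner route is: Lemma \ref{approximation_near_identity_proof_of_flattening} is a statement about differences, so to control $hx_0$ against $x_0$ one uses that $h$ is $O(r)$-close to $1_G$ in $d_G$ together with the fact that the $G$-action on $\RP^1$ is Lipschitz on compact pieces — more precisely, for $h\in B^G_r(1_G)$ and any $y\in\RP^1$ one has $d_{\RP^1}(hy,y)=O(r)$ by the standard compactness/smoothness of the action map $G\times\RP^1\to\RP^1$ near $\{1_G\}\times\RP^1$ (this is exactly the kind of estimate used implicitly throughout \cite[Section 2]{HS17}, e.g. in the proof of Lemma \ref{bi_Lipschitz_of_the_action_of_G_proof_of_porosity}). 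Applying this with $y=x_0$ gives $d_{\RP^1}(hx_0,x_0)=O(r)$.

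Finally, combine the two bounds by the triangle inequality:
\begin{equation*}
d_{\RP^1}(hx,x_0)\leq d_{\RP^1}(hx,hx_0)+d_{\RP^1}(hx_0,x_0)=O(r)+O(r)=O(r),
\end{equation*}
which is the claim. The only point requiring a little care — the "main obstacle", though it is minor — is making sure the $O$-constants are genuinely uniform in $x_0\in\RP^1$ and independent of the coordinate choices; this follows because $\RP^1$ is compact and the action map is smooth, so the first derivatives of $\widehat h$ appearing in Lemma \ref{approximation_near_identity_proof_of_flattening} are bounded uniformly. No quantitative input beyond Lemma \ref{approximation_near_identity_proof_of_flattening} and the continuity of the $G$-action is needed.
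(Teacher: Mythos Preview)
Your proof is correct, but it takes a slightly different route from the paper's. You decompose via the triangle inequality $d_{\RP^1}(hx,x_0)\le d_{\RP^1}(hx,hx_0)+d_{\RP^1}(hx_0,x_0)$, bounding the first term by Lemma~\ref{approximation_near_identity_proof_of_flattening} and the second by smoothness of the $G$-action near $1_G$ (a soft compactness argument). The paper instead works directly in the singular-value coordinates $u_h^+,v_h^+$: it shows explicitly via the mean value theorem and the formula for $\widehat{h}'$ that $\angle_{v_h^+}(hx)=\angle_{u_h^+}(x)+O(r)$, then proves from $\|h-1_G\|=O(r)$ that $d_{\RP^1}(u_h^+,v_h^+)=O(r)$, so $\angle_{u_h^+}(x)=\angle_{v_h^+}(x)+O(r)$; combining these gives $d_{\RP^1}(hx,x)=O(r)$ for \emph{any} $x$, and then $d_{\RP^1}(hx,x_0)\le d_{\RP^1}(hx,x)+d_{\RP^1}(x,x_0)=O(r)$. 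In other words, the paper's explicit computation is exactly a hands-on verification of the Lipschitz bound $d_{\RP^1}(hy,y)=O(r)$ that you invoke abstractly. Your approach is shorter but treats smoothness as a black box; the paper's is self-contained. Note also that once you grant the Lipschitz bound $d_{\RP^1}(hy,y)=O(r)$ uniformly in $y$, your appeal to Lemma~\ref{approximation_near_identity_proof_of_flattening} in the first step becomes superfluous: $d_{\RP^1}(hx,x_0)\le d_{\RP^1}(hx,x)+d_{\RP^1}(x,x_0)=O(r)$ already finishes the argument.
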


\begin{proof}
We have
\begin{align}\label{intermediate_anglex_proof_of_flattening}
\angle_{v_h^+}(hx)-\angle_{v_h^+}(x_0)&=\angle_{v_h^+}(hx)-\angle_{v_h^+}(x)+\angle_{v_h^+}(x)-\angle_{v_h^+}(x_0)\nonumber\\
&=\angle_{v_h^+}(hx)-\angle_{v_h^+}(x)+O(r).
\end{align}
By the mean value theorem, we have\footnote{Here, the product $\widehat{h}'(\theta)\angle_{u_h^+}(x)$ of the scalar $\widehat{h}'(\theta)\in\R$ and the element $\angle_{u_h^+}(x)\in\R/\pi\Z$ is defined as stated just below Proposition \ref{linear_approximation_of_G_action_proof_flattening}.}
\begin{equation*}
\angle_{v_h^+}(hx)=\widehat{h}\left(\angle_{u_h^+}(x)\right)=\widehat{h}\left(\angle_{u_h^+}(x)\right)-\widehat{h}(0)=\widehat{h}'(\theta)\angle_{u_h^+}(x)
\end{equation*}
for some $\theta\in\R/\pi\Z$. As we have seen in the proof of Lemma \ref{approximation_near_identity_proof_of_flattening}, it holds that $\widehat{h}'(\theta)=1+O(r)$, and hence
\begin{equation}\label{h_close_identity_up_to_r_proof_of_flattening}
\angle_{v_h^+}(hx)=\angle_{u_h^+}(x)+O(r).
\end{equation}

By $h\in B_r(1_G)$, $\|h\|=1+O(r)$ and the definition of $u_h^+,v_h^+\in\R^2$, we have
$(1+O(r))v_h^+=\|h\|v_h^+=hu_h^+=u_h^++(h-1_G)u_h^+$, and hence $\|v_h^+-u_h^+\|\leq O(r)$. Therefore, as elements of $\RP^1$, we have $d_{\RP^1}(v_h^+,u_h^+)\leq O(r)$ and
\begin{equation}\label{uh+_vh+_are_close_proof_of_flattening}
\angle_{u_h^+}(x)=\angle_{v_h^+}(x)+O(r).
\end{equation}
By (\ref{intermediate_anglex_proof_of_flattening}), (\ref{h_close_identity_up_to_r_proof_of_flattening}) and (\ref{uh+_vh+_are_close_proof_of_flattening}), we obtain that
\begin{equation*}
\angle_{v_h^+}(hx)-\angle_{v_h^+}(x_0)=O(r)
\end{equation*}
and complete the proof.
\end{proof}

By using Lemmas \ref{approximation_near_identity_proof_of_flattening} and \ref{approximation_near_1G_and_x0_proof_of_flattening}, we show the following local linear approximation of the $G$-action.

\begin{prop}[{\cite[Proposition 5.10]{HS17}}]\label{linear_approximation_of_G_action_proof_flattening}
Let $0<\eta<1$ and $0<r\ll_\eta 1$, and let $g_0\in G$ and $x_0\in\RP^1$. Assume that $x_0\notin B_\eta(u_{g_0}^-)$. Then, for any $g\in B^G_r(g_0)$ and $x\in B_r(x_0)$, we have
\begin{equation*}
\angle_v(gx)=\angle_v(gx_0)+\widehat{g_0}'\left(\angle_{u_{g_0}^+}(x_0)\right)\left(\angle_u(x)-\angle_u(x_0)\right)+O_\eta\left(\frac{r^2}{\|g_0\|^2}\right),
\end{equation*}
where $u,v\in\RP^1$ are arbitrary and $O_\eta(r^2/\|g_0\|^2)$ is independent of $u,v$ in particular.
\end{prop}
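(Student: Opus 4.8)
\textbf{Proof proposal for Proposition \ref{linear_approximation_of_G_action_proof_flattening}.}

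The plan is to reduce the statement to the two preceding lemmas by factoring the action of $g$ near $x_0$ through the identity, the singular-value coordinates of $g_0$, and a left translation. First I would write $g = g_0 h$ with $h = g_0^{-1} g \in B^G_{r'}(1_G)$ for some $r' = O(r)$ (using left-invariance of $d_G$ and the comparability of $d_G$ with the norm metric near $1_G$, as in \cite[Section 2.3]{HS17}), and also $x \in B_r(x_0)$. The idea is that $gx = g_0(hx)$, and by Lemma \ref{approximation_near_1G_and_x0_proof_of_flattening} the point $hx$ lies in $B_{O(r)}(x_0)$, so $g_0$ is applied to two nearby points $hx$ and $hx_0$ (note $hx_0 = g_0^{-1}gx_0$, so $g_0(hx_0) = gx_0$), both of which stay away from $u^-_{g_0}$ by a definite amount once $r \ll_\eta 1$. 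Then I would apply the bounded-distortion/derivative description of $g_0$ on $\RP^1$ from Section \ref{subsection_G_action_on_RP^1}: passing to the coordinate $\angle_{u^+_{g_0}}$ and $\angle_{v^+_{g_0}}$, the map $\widehat{g_0}$ has derivative $\widehat{g_0}'$ bounded on the relevant arc, and a Taylor expansion of $\widehat{g_0}$ around $\angle_{u^+_{g_0}}(x_0)$ gives
\begin{equation*}
\angle_{v^+_{g_0}}(g_0 y) - \angle_{v^+_{g_0}}(g_0 x_0) = \widehat{g_0}'\!\left(\angle_{u^+_{g_0}}(x_0)\right)\left(\angle_{u^+_{g_0}}(y) - \angle_{u^+_{g_0}}(x_0)\right) + O\!\left(\|\widehat{g_0}''\|_{\infty,\text{arc}}\cdot \delta^2\right)
\end{equation*}
for $y$ within distance $\delta = O(r)$ of $x_0$; here I would take $y = hx$. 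The second-order term of $\widehat{g_0}''$ on an arc bounded away from the singular directions is $O_\eta(\|g_0\|^{-2})$ by the explicit formula (\ref{derivative_action_of_G_preliminaries}) for $\widehat{g_0}'$ and its derivative (the $\|g_0\|^2$ in the numerator of $\widehat{g_0}''$ is dominated by the quartic denominator once $\cos^2\theta$ is bounded below), which produces the claimed error $O_\eta(r^2/\|g_0\|^2)$.

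Next I would replace $\angle_{u^+_{g_0}}(hx) - \angle_{u^+_{g_0}}(x_0)$ by $\angle_u(x) - \angle_u(x_0) + O(r^2)$ using Lemma \ref{approximation_near_identity_proof_of_flattening} (applied with the matrix $h$, the basepoint $x_0$, and the arbitrary coordinate replaced by $u^+_{g_0}$ on one side and $u$ on the other — note the lemma's conclusion is coordinate-independent). Combining, and using that differences of angles $\angle_v(\cdot) - \angle_v(\cdot)$ and $\angle_u(\cdot)-\angle_u(\cdot)$ are independent of the base point $u, v$ (since $\angle_u = \angle_{[1:0]} - \angle_{[1:0]}(u)$), I can rewrite the left-hand side $\angle_{v^+_{g_0}}(gx) - \angle_{v^+_{g_0}}(gx_0)$ as $\angle_v(gx) - \angle_v(gx_0)$ for arbitrary $v$, and absorb the $O(r^2)$ from Lemma \ref{approximation_near_identity_proof_of_flattening} into $O_\eta(r^2/\|g_0\|^2)$ only after checking it also carries a $\|g_0\|^{-2}$ factor — here the point is that $\widehat{g_0}'$ on the arc near $\angle_{u^+_{g_0}}(x_0)$ is itself $O_\eta(\|g_0\|^{-2})$ when $x_0$ is bounded away from $u^-_{g_0}$ (equivalently $\angle_{u^+_{g_0}}(x_0)$ is bounded away from $\pm\pi/2$), so multiplying the $O(r^2)$ error from the $h$-approximation by $\widehat{g_0}'$ gives $O_\eta(r^2/\|g_0\|^2)$. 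Rearranging yields exactly the stated formula $\angle_v(gx) = \angle_v(gx_0) + \widehat{g_0}'(\angle_{u^+_{g_0}}(x_0))(\angle_u(x) - \angle_u(x_0)) + O_\eta(r^2/\|g_0\|^2)$.

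The main obstacle I anticipate is bookkeeping the various error terms so that each one genuinely carries the $\|g_0\|^{-2}$ factor rather than merely being $O_\eta(r^2)$: one must be careful that the Taylor remainder for $\widehat{g_0}$ is controlled by $\|\widehat{g_0}''\|_\infty$ on the \emph{specific} short arc near $\angle_{u^+_{g_0}}(x_0)$ (not globally, where $\widehat{g_0}''$ can be as large as $\|g_0\|^2$), and that the hypothesis $x_0 \notin B_\eta(u^-_{g_0})$ is exactly what keeps this arc away from $\pm\pi/2$ so that $\widehat{g_0}'$ and $\widehat{g_0}''$ there are both $O_\eta(\|g_0\|^{-2})$. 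The other mild subtlety is the choice of $r \ll_\eta 1$: it must be small enough (depending on $\eta$) that after applying $h$ the point $hx$ is still, say, $\eta/2$-away from $u^-_{g_0}$, which follows from Lemma \ref{approximation_near_1G_and_x0_proof_of_flattening}. Once these are pinned down, the rest is a direct substitution.
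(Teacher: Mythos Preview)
Your proposal is essentially the paper's own proof: factor $g = g_0 h$, Taylor-expand $\widehat{g_0}$ at $\angle_{u_{g_0}^+}(x_0)$ using $\widehat{g_0}'' = O_\eta(\|g_0\|^{-2})$ on the arc, invoke Lemma~\ref{approximation_near_identity_proof_of_flattening} for the $h$-perturbation, and absorb the resulting $O(r^2)$ via $\widehat{g_0}' = O_\eta(\|g_0\|^{-2})$. The one slip to fix is that the Taylor step must be applied to \emph{both} $y = hx$ and $y = hx_0$ and then subtracted, so that the left-hand side is truly $\angle_v(gx) - \angle_v(gx_0)$ and the linear factor is $\angle_{u_{g_0}^+}(hx) - \angle_{u_{g_0}^+}(hx_0)$ (to which Lemma~\ref{approximation_near_identity_proof_of_flattening} applies directly), rather than $\angle_{u_{g_0}^+}(hx) - \angle_{u_{g_0}^+}(x_0)$ as you wrote.
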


In the above, the product $\widehat{g_0}'(\angle_{u_{g_0}^+}(x_0))(\angle_u(x)-\angle_u(x_0))$ of the scalar $\widehat{g_0}'(\angle_{u_{g_0}^+}(x_0))\in\R$ and the element $\angle_u(x)-\angle_u(x_0)\in\R/\pi\Z$ close to $0$ is defined as follows. We write $\Pi:\R\to\R/\pi\Z$ for the canonical map, that is, $\Pi(x)=x\mod\pi$ for $x\in\R$. Then, for $a\in\R$ and $\overline{x}\in\R/\pi\Z$, we define $a\overline{x}=\Pi(ax)$, where $x\in\R$ is the unique element of $\R$ such that $-\pi/2<x\leq \pi/2$ and $\Pi(x)=\overline{x}$.

\begin{proof}
We take $g_0\in G$.
By the same reason as above, it is sufficient to show the statement for $u=u^+_{g_0}$ and $v=v^+_{g_0}$. We first notice that, for $0<\eta<1$ sufficiently small and $x_0,y\in\RP^1$ such that $x_0\notin B_\eta(u_{g_0}^-)$ and $d_{\RP^1}(y,x_0)<\eta/2$, we have by Taylor's theorem that
\begin{equation*}
\widehat{g_0}\left(\angle_{u_{g_0}^+}(y)\right)-\widehat{g_0}\left(\angle_{u_{g_0}^+}(x_0)\right)=\widehat{g_0}'\left(\angle_{u_{g_0}^+}(x_0)\right)\left(\angle_{u_{g_0}^+}(y)-\angle_{u_{g_0}^+}(x_0)\right)+\frac{\widehat{g_0}''(\theta)}{2}\left(\angle_{u_{g_0}^+}(y)-\angle_{u_{g_0}^+}(x_0)\right)^2
\end{equation*}
for some $\theta\in\R/\pi\Z$ such that $|\theta|\leq \pi/2-\eta/2$. Since
\begin{equation*}
\widehat{g_0}''(\theta)=\frac{2\cos\theta\sin\theta(\|g_0\|^2-\|g_0\|^{-2})}{\left(\|g_0\|^2\cos^2\theta+\|g_0\|^{-2}\sin^2\theta\right)^2}\leq O_\eta\left(\frac{1}{\|g_0\|^2}\right),
\end{equation*}
we obtain
\begin{equation}\label{second_order_approximation_of_g_proof_of_flattening}
\widehat{g_0}\left(\angle_{u_{g_0}^+}(y)\right)-\widehat{g_0}\left(\angle_{u_{g_0}^+}(x_0)\right)=\widehat{g_0}'\left(\angle_{u_{g_0}^+}(x_0)\right)\left(\angle_{u_{g_0}^+}(y)-\angle_{u_{g_0}^+}(x_0)\right)+O_\eta\left(\frac{d_{\RP^1}(y,x_0)^2}{\|g_0\|^2}\right).
\end{equation}

Let $0<r<1$ be sufficienlty small in terms of $\eta$, $g\in B^G_r(g_0)$, $x_0\in\RP^1$ such that $x_0\notin B_\eta(u_{g_0}^-)$ and $x\in B_r(x_0)$.
We write $h=g_0^{-1}g\in B^G_r(1_G)$. Then, from $0<r\ll_\eta 1$, we can see that $hx_0,hx\in B_{\eta/2}(x_0)$. By applying (\ref{second_order_approximation_of_g_proof_of_flattening}) to $y=hx,hx_0$, we have
\begin{align*}
&\angle_{v_{g_0}^+}(gx)-\angle_{v_{g_0}^+}(gx_0)\\
=\ &\angle_{v_{g_0}^+}(g_0hx)-\angle_{v_{g_0}^+}(g_0hx_0)\\
=\ & \widehat{g_0}\left(\angle_{u_{g_0}^+}(hx)\right)-\widehat{g_0}\left(\angle_{u_{g_0}^+}(hx_0)\right)\\
=\ &\left(\widehat{g_0}\left(\angle_{u_{g_0}^+}(hx)\right)-\widehat{g_0}\left(\angle_{u_{g_0}^+}(x_0)\right)\right)-\left(\widehat{g_0}\left(\angle_{u_{g_0}^+}(hx_0)\right)-\widehat{g_0}\left(\angle_{u_{g_0}^+}(x_0)\right)\right)\\
=\ & \widehat{g_0}'\left(\angle_{u_{g_0}^+}(x_0)\right)\left(\angle_{u_{g_0}^+}(hx)-\angle_{u_{g_0}^+}(x_0)\right)-\widehat{g_0}'\left(\angle_{u_{g_0}^+}(x_0)\right)\left(\angle_{u_{g_0}^+}(hx_0)-\angle_{u_{g_0}^+}(x_0)\right)\\
&+O_\eta\left(\frac{d_{\RP^1}(hx,x_0)^2+d_{\RP^1}(hx_0,x_0)^2}{\|g_0\|^2}\right)\\
=\ & \widehat{g_0}'\left(\angle_{u_{g_0}^+}(x_0)\right)\left(\angle_{u_{g_0}^+}(hx)-\angle_{u_{g_0}^+}(hx_0)\right)+O_\eta\left(\frac{d_{\RP^1}(hx,x_0)^2+d_{\RP^1}(hx_0,x_0)^2}{\|g_0\|^2}\right),
\end{align*}
where, in the last equation, we have used $|\angle_{u_{g_0}^+}(hx)-\angle_{u_{g_0}^+}(x_0)|,|\angle_{u_{g_0}^+}(hx_0)-\angle_{u_{g_0}^+}(x_0)|<\eta/2$ and $|\angle_{u_{g_0}^+}(hx)-\angle_{u_{g_0}^+}(hx_0)|<\eta\ll 1$.
By Lemma \ref{approximation_near_1G_and_x0_proof_of_flattening}, we have
\begin{equation*}
\angle_{u_{g_0}^+}(hx)-\angle_{u_{g_0}^+}(hx_0)=\angle_{u_{g_0}^+}(x)-\angle_{u_{g_0}}(x_0)+O(r^2).
\end{equation*}
By (\ref{derivative_action_of_G_preliminaries}) and $x_0\notin B_{\eta}(u_{g_0}^-)$, we also have
\begin{equation*}
\widehat{g_0}'\left(\angle_{u_{g_0}^+}(x_0)\right)=O_{\eta}\left(\frac{1}{\|g_0\|^2}\right).
\end{equation*}
Furthermore, by Lemma \ref{approximation_near_1G_and_x0_proof_of_flattening}, we have
\begin{equation*}
d_{\RP^1}(hx,x_0)^2+d_{\RP^1}(hx_0,x_0)^2=O(r^2).
\end{equation*}
By combining these equations, we obtain that
\begin{equation*}
\angle_{v_{g_0}^+}(gx)-\angle_{v_{g_0}^+}(gx_0)=\widehat{g_0}'\left(\angle_{u_{g_0}^+}(x_0)\right)\left(\angle_{u_{g_0}^+}(x)-\angle_{u_{g_0}}(x_0)\right)+O_\eta\left(\frac{r^2}{\|g_0\|^2}\right)
\end{equation*}
and complete the proof.
\end{proof}

\subsection{The inverse theorem for $L^q$ norms of linear convolutions}\label{subsection_inverse_theorem}

We present the inverse theorem for $L^q$ norms of linear convolutions. As we noticed in Section \ref{section_L^q_norm_porosity}, this theorem plays an essential role in the proof of Theorem \ref{L^q_dim_of_self_similar_measures} (more precisely, Theorem \ref{L^q_norm_flattening_theorem_for_self_similar_measure}) in \cite{Shm19}, and also in our proof of Theorem \ref{L^q_norm_flattening_theorem}

In general, for two probability measures $\nu$ and $\rho$ on $\R/\pi\Z$, the $L^q$ norm of the linear convolution $\rho*\nu$ (almost) decreases from that of $\nu$, that is, $\|(\rho*\nu)^{(m)}\|_q\leq O(1)\|\nu^{(m)}\|_q$ holds. However, if $\nu$ is the uniform measure on $\R/\pi\Z$ or $\rho$ is a one-point mass, then we have $\|(\nu*\rho)^{(m)}\|_q=\Theta(1)\|\nu^{(m)}\|_q$.
The inverse theorem describes “the inverse” of this implication.
Roughly speaking, the inverse theorem says that, if the $L^q$ norm of $\nu$ does not drop so much under the linear convolution with $\rho$, then we can take two sets $A$ and $B$ which capture large enough parts of $\nu$ and $\rho$ respectively and have a nice “tree structure” such that either $A$ is almost fully branching or $B$ is not branching at all at each scale.

To state the theorem, we prepare some notions.
For $m\in\N$, a $2^{-m}$ measure is a (discrete) probability measure on $\pi2^{-m}\Z/\pi\Z\subset\R/\pi\Z$. For a $2^{-m}$ measure $\nu$ and $q>1$, we write 
\begin{equation*}
\|\nu\|_q=\left(\sum_{x\in\pi2^{-m}\Z/\pi\Z}\nu(x)^q\right)^{1/q}
\end{equation*}
for the $L^q$ norm of $\nu$.

As we did in Section \ref{subsection_sufficiently_separated_nice_intervals} (on $\RP^1$), for $u\in\R/\pi\Z$ and $m\in\N$, we define the $2^{-m}$ dyadic partition of $\R/\pi\Z$ with the base $u$ by
\begin{equation*}
\D_{u,m}=\left\{[\pi k2^{-m}+u,\pi (k+1)2^{-m}+u)\right\}_{k=0}^{2^m-1}
\end{equation*}
and $\D_m=\D_{0,m}$. We notice that, if $u\in\pi 2^{-m}\Z/\pi\Z$, then $\D_{u,m}=\D_m$. For a subset $E\subset\R/\pi\Z$, we write $\D_{u,m}(E)=\left\{I\in\D_{u,m}\left|\ I\cap E\neq\emptyset\right.\right\}$. For $x\in\R/\pi\Z$, we write $\D_{u,m}(x)$ for the unique element of $\D_{u,m}(\{x\})$, that is, the unique element $I$ of $\D_{u,m}$ such that $x\in I$.
Furthermore, for $I\in\D_{u,m}$, we write $1/2\cdot I$ for the $1/2$ contraction of the interval $I$ with the same center.

Let $d\in\N$, $l\in\N$, $u\in\pi2^{-ld}\Z/\pi\Z$ and $(R_s)_{s\in[l]}$ be a sequence of length $l$ such that $R_s\in\{1,2,\dots,2^d\}\ (s\in [l])$. For a subset $A\subset\pi2^{-ld}\Z/\pi\Z$, we say that $A$ is $(d,l,u,(R_s)_{s\in [l]})$-uniform if, for any $s\in [l]$ and $I\in\D_{u,sd}(A)$, we have
\begin{equation*}
	\left|\D_{u,(s+1)d}(A\cap I)\right|=R_s.
\end{equation*}

Here, we state the inverse theorem for $L^q$ norms of linear convolution.

\begin{thm}[The inverse theorem for $L^q$ norms of linear convolution, {\cite[Theorem 2.1]{Shm19}}]\label{the_inverse_theorem_for_L^q_norms}
Let $q>1$, $d_0\in\N$ and $\delta_0>0$. Then, there exist $d\in\N$ with $d>d_0,d\gg_{q,\delta_0}1$ and $0<\delta\ll_{q,\delta_0,d}1$ such that the following holds for sufficiently large $l\in\N, l\gg_{q,\delta_0,d,\delta}1$. Let $m=ld$ and $\nu,\rho$ be $2^{-m}$ measures on $\R/\pi\Z$, and assume that
\begin{equation*}
\|\nu*\rho\|_q\geq 2^{-\delta m}\|\nu\|_q.
\end{equation*}
Then, there exist subsets $A,B\subset \pi2^{-m}\Z/\pi\Z$, $u,v\in\pi2^{-m}\Z/\pi\Z$ and sequences $(R'_s)_{s\in[l]},(R''_s)_{s\in[l]}\in\{1,2,\dots,2^d\}^l$ such that
\begin{itemize}
\item[(A-i)] $\|\nu|_A\|_q\geq 2^{-\delta_0m}\|\nu\|_q$,
\item[(A-ii)] $\nu(y)\leq2\nu(x)$ for any $x,y\in A$,
\item[(A-iii)] $A$ is $(d,l,u,(R'_s)_{s\in [l]})$-uniform,
\item[(A-iv)] for any $x\in A$ and $s\in[l]$, $x\in 1/2\cdot\D_{u,sd}(x)$,
\item[(B-i)] $\rho(B)\geq 2^{-\delta_0m}$,
\item[(B-ii)] $\rho(y)\leq2\rho(x)$ for any $x,y\in B$,
\item[(B-iii)] $B$ is $(d,l,v,(R''_s)_{s\in[l]})$-uniform,
\item[(B-iv)] for any $x\in B$ and $s\in[l]$, $x\in 1/2\cdot\D_{v,sd}(x)$,
\item[(v)] for each $s\in[l]$, either
\begin{equation*}
R''_s=1
\end{equation*}
or
\begin{equation*}
R'_s\geq 2^{(1-\delta_0)d},
\end{equation*}
\item[(vi)] if we write $S=\left\{s\in[l]\left|\ R'_s\geq 2^{(1-\delta_0)d}\right.\right\}$, we have
\begin{equation*}
\log(\|\rho\|_q^{-q/(q-1)})-\delta_0m\leq d|S|\leq \log(\|\nu\|_q^{-q/(q-1)})+\delta_0 m.
\end{equation*}
\end{itemize}
\end{thm}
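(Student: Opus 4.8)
The plan is to follow the strategy of Shmerkin's proof of this very statement (\cite[Sections 2--3]{Shm19}); the architecture is as follows. First I would \emph{regularize} both $2^{-m}$ measures. Using the multiscale dyadic pigeonhole argument — the same device as in Lemma \ref{tree_structure_supp_theta}, originating in \cite{Bou10} — one passes from $\nu$ and $\rho$ to normalized restrictions supported on subsets $A$ and $B$ on which the densities are constant up to a factor $2$ (this gives (A-ii) and (B-ii)) and whose supports have prescribed branching numbers $R'_s,R''_s\in\{1,\dots,2^d\}$ along the scales $d,2d,\dots,ld$ (this gives (A-iii) and (B-iii)); a further pigeonhole over a bounded family of base shifts produces the bases $u,v$ with the centering properties (A-iv) and (B-iv). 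Every one of these reductions costs at most a factor $2^{O(\delta_0 m)}$ in $L^q$ norm and in total mass at scale $m=ld$, so the hypothesis $\|\nu*\rho\|_q\ge 2^{-\delta m}\|\nu\|_q$ survives with $\delta$ replaced by a slightly larger quantity that is still $\ll_{q,\delta_0,d}1$, and (A-i), (B-i) follow.

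After regularization the $L^q$ norms become pure cardinalities: $\|\nu\|_q^{-q/(q-1)}=\Theta(|A|)=\Theta(\prod_s R'_s)$, and similarly for $\rho$, while $\|\nu*\rho\|_q^q$ is governed by the representation function of the sumset $A+B$ once everything is discretized at scale $2^{-m}$. The combinatorial heart is a \emph{single-scale asymmetric inverse theorem}: if at a scale $s$ the $L^q$ norm of the convolution does not drop by more than $2^{-\delta_0 d}$ upon refining $\D_{u,sd}$ to $\D_{u,(s+1)d}$, then either $\rho$ is essentially concentrated on one atom of $\D_{v,sd}$ (so $R''_s=1$) or $\nu$ is nearly uniformly spread at that scale (so $R'_s\ge 2^{(1-\delta_0)d}$). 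This dichotomy is the genuinely nontrivial input; it is extracted from an asymmetric Balog--Szemer\'edi--Gowers theorem together with Freiman's theorem (equivalently, from the Pl\"unnecke--Ruzsa sumset estimates) applied to the discretized sumset. Iterating it over all $l$ scales and telescoping the multiplicative $L^q$ drops yields (v): since the total drop is at most $\delta m=\delta l d$, on all but a negligible fraction of scales the per-scale drop lies below the threshold $2^{-\delta_0 d}$, and at the remaining scales one absorbs the loss — the crucial point being that the per-scale error is controlled by $d$ and $\delta_0$ alone, so that $l$ may be taken enormous afterwards.

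Finally, (vi) is a dimension count. Writing $S=\{s\in[l]:R'_s\ge 2^{(1-\delta_0)d}\}$, the upper bound $d|S|\le \log(\|\nu\|_q^{-q/(q-1)})+\delta_0 m$ is immediate from $\sum_s\log R'_s=\log|A|+O(1)$ together with $R'_s\le 2^d$. For the lower bound, (v) gives $R''_s=1$ for $s\notin S$, hence $\log|B|=\sum_s\log R''_s=\sum_{s\in S}\log R''_s\le d|S|$; identifying $\log|B|$ with $\log(\|\rho\|_q^{-q/(q-1)})$ up to $\delta_0 m$ gives one half, and the reverse inequality — that $B$ must actually branch on essentially all of $S$ — is forced by the assumed lower bound on $\|\nu*\rho\|_q$ against the upper bound coming from the uniform product structure of $\nu|_A$ and $\rho|_B$.

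The main obstacle is the single-scale asymmetric inverse theorem, and, just as importantly, the quantitative bookkeeping that legitimizes the $l$-fold iteration: one must arrange the dependence of the per-scale parameters on $d$ and $\delta_0$ to be uniform, so that $\delta\ll_{q,\delta_0,d}1$ can be chosen independently of $l$ and the error accumulated over $l$ scales is still only $\delta m$. A secondary difficulty is that the heuristic identity $\|\nu*\rho\|_q^q\sim|A+B|^{-(q-1)}$ fails on the nose because the convolution is not uniform on $A+B$; turning it into honest inequalities requires the additive-energy / Balog--Szemer\'edi--Gowers passage rather than a bare application of H\"older's inequality, and this is where the bulk of the technical work in \cite{Shm19} lies. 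For this reason the statement is quoted from \cite[Theorem 2.1]{Shm19} rather than reproved here.
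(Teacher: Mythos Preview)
Your proposal matches the paper's treatment: the paper does not prove this theorem either but simply cites \cite[Theorem 2.1]{Shm19} and remarks that the proof rests on Bourgain's result \cite{Bou10} and the asymmetric Balog--Szemer\'edi--Gowers theorem. Your outline of Shmerkin's argument (regularization via multiscale pigeonholing, the single-scale dichotomy from asymmetric BSG and Pl\"unnecke--Ruzsa, telescoping over scales, and the cardinality count for (vi)) is an accurate summary of what is in \cite[Sections 2--3]{Shm19}, and your closing sentence --- that the result is quoted rather than reproved --- is exactly what the paper does as well.
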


For more details and the proof, see \cite[Section 2 and 3]{Shm19}. We emphasize that the inverse Theorem \ref{the_inverse_theorem_for_L^q_norms} is shown by using sophisticated results in additive combinatorics, such as the result of Bourgain from \cite{Bou10} and the (asymmetric) Balog-Szemerédi-Gowers theorem.

\subsection{Linearization}\label{subsection_linearization_for_proof_of_L^q_norm_flattening_theorem}

In the whole Sections \ref{subsection_linearization_for_proof_of_L^q_norm_flattening_theorem} and \ref{subsection_Shmerkin_argument}, we prove the $L^q$ norm flattening Theorem \ref{L^q_norm_flattening_theorem}.
Let $\A$ be a non-empty finite family of elements of $G$ which is uniformly hyperbolic and $K$ be its attractor on $\RP^1$. We take a probability measure $\mu$ on $G$ such that $\supp\ \mu=\A$, and write $\nu$ for the stationary measure of $\mu$ on $\RP^1$. For $\A$, we take an open subset $U_1\subsetneq \RP^1$ as in (\ref{domain_U1_the_IFS_acts_on}) (and (\ref{domains_the_Möbius_IFS_acts_on_preliminaries}), equivalently). We write $\tau(q)\ (q>1)$ for the $L^q$ spectrum of $\nu$. Let $q>1$ and assume that $\tau(q)$ is differentiable at $q>1$,
\begin{equation}\label{assumption_tau_smaller_than_q-1_proof_of_flattening}
\tau(q)<q-1
\end{equation}
and, for $\alpha=\tau'(q)$,
\begin{equation}\label{assumption_Legendre_transform_positive_proof_of_flattening}
\tau^*(\alpha)>0.
\end{equation}
Let $C,L>1$ be constants. We take arbitrary small $\sigma>0$. Then, we take a constant $\varepsilon=\varepsilon(M,\mu,q,\sigma)>0$ determined only by $M,\mu,q$ and $\sigma$ and specified later. Actually, we will take a sufficiently small constant $0<\delta=\delta(M,\mu,q,\sigma)<\sigma$ which will be determined only by $M,\mu,q$ and $\sigma$ and specified through the following argument, and take $\varepsilon$ sufficiently small in terms of $M,\mu,q,\sigma$ and $\delta$. To see how small we take $\delta$, see (\ref{how_to_take_delta_L^q_norm_porosity_applicable_proof_of_flattening}), (\ref{delta_is_so_small_for_inverse_theorem_proof_of_flattening}), (\ref{how_to_take_delta0_local_tau*_lemma_applicable_proof_of_flattening}), (\ref{delta0'_sufficiently_small_Mmuqsigma_1_proof_of_flattening}),
(\ref{delta0'_sufficiently_small_Mmuqsigma_2_proof_of_flattening}), (\ref{local_L^q_lemma_for_kappa'_proof_of_flettening}), (\ref{local_L^q_norm_lemma_for_delta0_proof_of_flattening}), (\ref{1/d0_smaller_than_delta0_proof_of_flattening}), (\ref{delta0_much_smaller_than_betagamma_proof_of_flattening}).

We prove Theorem \ref{L^q_norm_flattening_theorem} by contradiction. Let $m\in\N$ be a  sufficiently large number so that $m\gg_{M,\mu,q,\sigma,\varepsilon,C,L}1$. We take a Borel probability measure $\theta$ on $G$ and $r\in\N$ such that $\diam\ \supp\ \theta\leq L$, $C^{-1}2^r\leq\|g\|^2\leq C2^r$ and $u_g^-\notin U_1$ for every $g\in\supp\ \theta$ and
\begin{equation}\label{theta_has_flat_L^q_norm_proof_of_flattening}
\|\theta^{(m)}\|_q^q\leq 2^{-\sigma m}.
\end{equation}
Until the end of Section \ref{subsection_Shmerkin_argument}, we assume that
\begin{equation}\label{contradiction_assumption_proof_of_flattening}
\|(\theta{\bm .}\nu)^{(m+r)}\|_q^q\geq 2^{-(\tau(q)+\varepsilon)m},
\end{equation}
and will derive the contradiction.

We first notice that, by the assumption (\ref{assumption_Legendre_transform_positive_proof_of_flattening}), we can apply the $L^q$ norm porosity Lemma \ref{L^q_norm_porosity} to our $\A$, $\mu$ and $q$.
We take $0<\delta\ll_{M,\mu,q,\sigma}1$ so small that
\begin{equation}\label{how_to_take_delta_L^q_norm_porosity_applicable_proof_of_flattening}
\text{we can apply Lemma \ref{L^q_norm_porosity} to $\sigma/2$ and $\delta$ in place of $\sigma$ and $\delta$, respectively}
\end{equation}
(we will make $\delta$ much smaller only in terms of $M,\mu,q,\sigma$ later). Then, we take a constant $\varepsilon=\varepsilon(M,\mu,q,\sigma/2,\delta)>0$ so small that we can apply Lemma \ref{L^q_norm_porosity} to $\sigma/2$, $\delta$ and $2\varepsilon$ in place of $\sigma,\delta$ and $\varepsilon$, respectively. For these $\sigma/2,\delta,2\varepsilon$ and the constant $C>1$ taken above, we obtain the large constant $D=D(M,\mu,q,\sigma/2,\delta,2\varepsilon,C)\in\N$ from Lemma \ref{L^q_norm_porosity}.
We also obtain a small constant $\kappa=\kappa(M,\mu,q,\sigma/2)>0$ which is determined only by $M,\mu,q,\sigma/2$ from Lemma \ref{L^q_norm_porosity}.

Here, we can assume $m\gg_{M,\mu,q,\sigma/2,\kappa,\delta,2\varepsilon,D,C,L}1$. Let $l=\lceil m/D\rceil$. Then, we have $l\gg_{M,\mu,q,\sigma/2,\kappa,\delta,2\varepsilon,D,C,L}1$.
We also have $lD-D\leq m\leq lD$ and $1-1/l\leq m/(lD)\leq 1$. From these inequalities and (\ref{theta_has_flat_L^q_norm_proof_of_flattening}), we obtain
\begin{equation}\label{flat_L^q_norm_at_lD_scale_proof_of_flattening}
\|\theta^{(lD)}\|_q^q\leq\|\theta^{(m)}\|_q^q\leq 2^{-\sigma m}=2^{-\sigma m/(lD)\cdot lD}\leq 2^{-\sigma/2\cdot lD}.
\end{equation}
Furthermore, by Lemma \ref{L^q_norms_of_two_partitions}, we obtain
\begin{equation*}
\|(\theta{\bm .}\nu)^{(m+r)}\|_q^q\leq 2^{(lD-m)q}\|(\theta{\bm .}\nu)^{(lD+r)}\|_q^q\leq 2^{Dq}\|(\theta{\bm .}\nu)^{(lD+r)}\|_q^q,
\end{equation*}
and hence, by (\ref{contradiction_assumption_proof_of_flattening}), $m\gg_{q,\varepsilon,D}1$ and $lD\gg_{\mu,q,\varepsilon}1$,
\begin{equation}\label{non_decreasing_L^q_norm_under_convolution_at_lD_scale_proof_of_flattening}
\|(\theta{\bm .}\nu)^{(lD+r)}\|_q^q\geq 2^{-Dq}2^{-(\tau(q)+\varepsilon)m}\geq 2^{-(\tau(q)+3\varepsilon/2)m}\geq 2^{-(\tau(q)+3\varepsilon/2)lD}\geq 2^{-2\varepsilon\cdot lD}\|\nu^{(lD)}\|_q^q.
\end{equation}
From $l\gg_{M,\mu,q,\sigma/2,\kappa,\delta,2\varepsilon,D,C,L}1$, (\ref{flat_L^q_norm_at_lD_scale_proof_of_flattening}) and (\ref{non_decreasing_L^q_norm_under_convolution_at_lD_scale_proof_of_flattening}), we can apply Lemma \ref{L^q_norm_porosity} to $\sigma/2, \delta, 2\varepsilon, l$ and $\theta$.
Hence, we obtain the following.
We write $n=\lfloor \delta l\rfloor$. Then, there exist $i\in\N$ with $n<i\leq l-n$, $\xi\in \D^G_{iD}$ with $\theta(\xi)>0$, $I_0\in\D_{iD}$ with $\nu(I_0)>0$, a Borel probability measure $\rho_\xi$ on $\xi$ with $\rho_\xi\ll\theta_\xi$ and $x_0\in I_0\cap K$ such that
\begin{equation}\label{result_of_L^q_norm_porosity_fraction_of_theta_proof_of_flattening}
\|(f_{x_0}\rho_\xi)^{((i+n)D+r)}\|_q^q\leq 2^{-\kappa nD}
\end{equation}
and
\begin{equation}\label{result_of_L^q_norm_porosity_convolution_proof_of_flattening}
2^{-(\tau(q)+\sqrt{\delta}/2)nD}\leq\|(\rho_\xi{\bm .}\widehat{\nu_{I_0}})^{((i+n)D+r)}\|_q^q.
\end{equation}

In the following, we write $\angle=\angle_{[1:0]}:\RP^1\stackrel{\sim}{\longrightarrow}\R/\pi\Z$.
We take $g_0\in \supp\ \rho_\xi$ and fix it. Since $\supp\ \rho_\xi\subset \supp\ \theta$, we have $u_{g_0}^-\notin U_1$, and hence $d_{\RP^1}(x_0,u_{g_0}^-)>\Omega_\mu(1)$ for $x_0\in K$.
We also have $iD>nD>\delta l\gg_{M,\mu}1$. Hence, by Proposition \ref{linear_approximation_of_G_action_proof_flattening}, we have for any $g\in\supp\ \rho_\xi\subset \overline{B}^G_{M2^{-iD}}(g_0)$\footnote{Here, $\overline{B}^G_r(g_0)=\left\{g\in G\left|\ d_G(g,g_0)\leq r\right.\right\}$ is a closed ball in G.} and $x\in I_0\cap K\subset B_{\pi2^{-iD}}(x_0)$ that
\begin{align*}
\angle(gx)&=\angle(gx_0)+\widehat{g_0}'\left(\angle_{u_{g_0}^+}(x_0)\right)(\angle(x)-\angle(x_0))+O_{M,\mu}\left(\frac{2^{-2iD}}{\|g_0\|^2}\right)\nonumber\\
&=\angle(gx_0)+g_0'(x_0)(\angle(x)-\angle(x_0))+O_{M,\mu,C}\left(2^{-2iD-r}\right),
\end{align*}
where we have used that $g_0\in\supp\ \theta$ and $\|g_0\|^2\geq C^{-1}2^r$ and wrote $g_0'(x_0)=\widehat{g_0}'\left(\angle_{u_{g_0}^+}(x_0)\right)$.
If we define $\widetilde{f}:G\times \RP^1\to\R/\pi\Z$ by $\widetilde{f}(g,x)=\angle(gx_0)+g_0'(x_0)(\angle(x)-\angle(x_0))$, the above equation and $n<i$ say that
\begin{equation}\label{linear_approximation_on_small_component_of_theta_nu_proof_of_flattening}
\left|\angle(f(g,x))-\widetilde{f}(g,x)\right|\leq O_{M,\mu,C}\left(2^{-2iD-r}\right)\leq O_{M,\mu,C}\left(2^{-(i+n)D-r}\right)
\end{equation}
for any $g\in\supp\ \rho_\xi$ and $x\in I_0\cap K$. Here, it is clear that
\begin{equation}\label{definition_of_G_convolution_proof_of_flattening}
f(\rho_\xi\times\widehat{\nu_{I_0}})=\rho_\xi{\bm .}\widehat{\nu_{I_0}}.
\end{equation}
For $a\in\R$, we write $S_a:\R/\pi\Z\to\R/\pi\Z$ for the multiplication on $\R/\pi\Z$ by $a$, that is,
\begin{equation*}
S_a\overline{x}=a\overline{x}=\Pi\left(ax\right),\quad \overline{x}\in\R/\pi\Z,
\end{equation*}
where $x\in\R$ is a unique real number such that $\Pi(x)=\overline{x}$ and $-\pi/2<x\leq\pi/2$.
For $u\in\R/\pi\Z$, we also write $T_u:\R/\pi\Z\to\R/\pi\Z$ for the translation on $\R/\pi\Z$ by $u$: $T_u(x)=x+u\ (x\in\R/\pi\Z)$.
Then, we have
\begin{equation}\label{definition_of_linear_convolution_proof_of_flattening}
\widetilde{f}(\rho_\xi\times\widehat{\nu_{I_0}})=\angle(f_{x_0}\rho_\xi)*S_{g_0'(x_0)}\left(T_{-\angle(x_0)}\angle(\widehat{\nu_{I_0}})\right).
\end{equation}
From (\ref{linear_approximation_on_small_component_of_theta_nu_proof_of_flattening}), (\ref{definition_of_G_convolution_proof_of_flattening}), (\ref{definition_of_linear_convolution_proof_of_flattening}) and Lemma \ref{L^q_norms_of_two_partitions}, we can see that\footnote{Of course, we define the $2^{-m}$ $L^q$ norm of a finite Borel measure $\nu$ on $\R/\pi\Z$ by $\|\nu^{(m)}\|_q=\left(\sum_{I\in\D_m}\nu(I)^q\right)^{1/q}$.}
\begin{equation*}
\|(\rho_\xi{\bm .}\widehat{\nu_{I_0}})^{((i+n)D+r)}\|_q^q\leq O_{M,\mu,q,C}(1)
\left\|\left(\angle(f_{x_0}\rho_\xi)*S_{g_0'(x_0)}\left(T_{-\angle(x_0)}\angle(\widehat{\nu_{I_0}})\right)\right)^{((i+n)D+r)}\right\|_q^q.
\end{equation*}
From this estimate and (\ref{result_of_L^q_norm_porosity_convolution_proof_of_flattening}), we obtain that
\begin{align}\label{linear_convolution_on_small_components_L^q_norm_proof_of_porosity}
\left\|\left(\angle(f_{x_0}\rho_\xi)*S_{g_0'(x_0)}\left(T_{-\angle(x_0)}\angle(\widehat{\nu_{I_0}})\right)\right)^{((i+n)D+r)}\right\|_q^q&\geq\Omega_{M,\mu,q,C}(1)2^{-(\tau(q)+\sqrt{\delta}/2)nD}\nonumber\\
&\geq2^{-(\tau(q)+3\sqrt{\delta}/5)nD}
\end{align}
(we notice that $nD>\delta l\gg_{M,\mu,q,\delta,C}1$).

We have $\supp\ T_{-\angle(x_0)}\angle(\widehat{\nu_{I_0}})\subset\overline{B}_{\pi2^{-iD}}(0)$\footnote{We represent open or closed balls in $\R/\pi\Z$ by the same notations as those in $\RP^1$.}.
By $x_0\in K$, $g_0\in\supp\ \theta$, Corollary \ref{Lipschitz_continuity_of_the_action} and (\ref{derivative_action_of_G_preliminaries}), we also have that $\supp\ \angle(f_{x_0}\rho_\xi)\subset B_{O_{M,\mu,C}(2^{-(iD+r)})}(\angle(g_0x_0))$ (we have used that $iD>n\geq\delta l\gg_{M,\mu}1$) and
\begin{equation}\label{contraction_ratio_of_g0_around_x0_proof_of_flattening}
g_0'(x_0)=\Theta_\mu\left(\frac{1}{\|g_0\|^2}\right)=\Theta_{\mu,C}(2^{-r}).
\end{equation}
We can see from these facts that, if we define $S_a^{\angle(g_0x_0)}=T_{\angle(g_0x_0)}\circ S_a\circ T_{-\angle(g_0x_0)}:\R/\pi\Z\to\R/\pi\Z$ for $a>0$, that is, the expansion or contraction by $a$ centering on $\angle(g_0x_0)$, we have
\begin{equation*}
y+S_{g_0'(x_0)}z=S_{g_0'(x_0)}^{\angle(g_0x_0)}\left(S_{g_0'(x_0)^{-1}}^{\angle(g_0x_0)}y+z\right)
\end{equation*}
for any $y\in B_{O_{M,\mu,C}(2^{-(iD+r)})}(\angle(g_0x_0))$ and $z\in\overline{B}_{\pi2^{-iD}}(0)$ (we are using that $iD>n\geq \delta l\gg_{M,\mu,C}1$).
Hence, we have
\begin{equation*}
\angle(f_{x_0}\rho_\xi)*S_{g_0'(x_0)}\left(T_{-\angle(x_0)}\angle(\widehat{\nu_{I_0}})\right)
=S_{g_0'(x_0)}^{\angle(g_0x_0)}\left(S_{g_0'(x_0)^{-1}}^{\angle(g_0x_0)}\angle(f_{x_0}\rho_\xi)*T_{-\angle(x_0)}\angle(\widehat{\nu_{I_0}})\right)
\end{equation*}
and
\begin{equation*}
\supp\ S_{g_0'(x_0)^{-1}}^{\angle(g_0x_0)}\angle(f_{x_0}\rho_\xi)*T_{-\angle(x_0)}\angle(\widehat{\nu_{I_0}})\subset \overline{B}_{O_{M,\mu,C}(2^{-iD})}(\angle(g_0x_0)).
\end{equation*}
From these facts, (\ref{contraction_ratio_of_g0_around_x0_proof_of_flattening}) and Lemma \ref{L^q_norms_of_two_partitions}, we obtain that
\begin{align*}
&\left\|\left(\angle(f_{x_0}\rho_\xi)*S_{g_0'(x_0)}\left(T_{-\angle(x_0)}\angle(\widehat{\nu_{I_0}})\right)\right)^{((i+n)D+r)}\right\|_q^q\\
=\ & \left\|\left(S_{g_0'(x_0)}^{\angle(g_0x_0)}\left(S_{g_0'(x_0)^{-1}}^{\angle(g_0x_0)}\angle(f_{x_0}\rho_\xi)*T_{-\angle(x_0)}\angle(\widehat{\nu_{I_0}})\right)\right)^{((i+n)D+r)}\right\|_q^q\\
\leq\ &O_{\mu,q,C}(1)\left\|\left(S_{g_0'(x_0)^{-1}}^{\angle(g_0x_0)}\angle(f_{x_0}\rho_\xi)*T_{-\angle(x_0)}\angle(\widehat{\nu_{I_0}})\right)^{((i+n)D)}\right\|_q^q.
\end{align*}
Similarly, we obtain that
\begin{equation*}
\left\|\left(S_{g_0'(x_0)^{-1}}^{\angle(g_0x_0)}\angle(f_{x_0}\rho_\xi)\right)^{((i+n)D)}\right\|_q^q\leq O_{\mu,q,C}(1)\|(f_{x_0}\rho_\xi)^{((i+n)D+r)}\|_q^q.
\end{equation*}
In conclusion, if we define a Borel probability measure $\eta$ on $\R/\pi\Z$ by
\begin{equation*}
\eta=S_{g_0'(x_0)^{-1}}^{\angle(g_0x_0)}\angle(f_{x_0}\rho_\xi),
\end{equation*}
then we have
\begin{equation}\label{supp_of_eta_proof_of_flattening}
\supp\ \eta\subset \overline{B}_{O_{M,\mu,C}(2^{-iD})}(\angle(g_0x_0))
\end{equation}
and, by the above estimates, (\ref{result_of_L^q_norm_porosity_fraction_of_theta_proof_of_flattening}) and (\ref{linear_convolution_on_small_components_L^q_norm_proof_of_porosity}),
\begin{equation}\label{eta_L^q_norm_proof_of_flattening}
\|\eta^{((i+n)D)}\|_q^q\leq O_{\mu,q,C}(1)2^{-\kappa nD}\leq 2^{-2\kappa/3\cdot nD}
\end{equation}
(we notice that $nD>\delta l\gg_{\mu,q,\kappa,C}1$) and
\begin{equation*}
\left\|\left(\eta*T_{-\angle(x_0)}\angle(\widehat{\nu_{I_0}})\right)^{((i+n)D)}\right\|_q^q\geq\Omega_{\mu,q,C}(1)2^{-(\tau(q)+3\sqrt{\delta}/5)nD}.
\end{equation*}
Since $\eta*T_{-\angle(x_0)}\angle(\widehat{\nu_{I_0}})=T_{-\angle(x_0)}\left(\eta*\angle(\widehat{\nu_{I_0}})\right)$, the last inequality and Lemma \ref{L^q_norms_of_two_partitions} imply that
\begin{equation}\label{convolution_with_eta_L^q_norm_proof_of_flattening}
\|(\eta*\angle(\widehat{\nu_{I_0}}))^{((i+n)D)}\|_q^q\geq \Omega_{\mu,q,C}(1)2^{-(\tau(q)+3\sqrt{\delta}/5)nD}\geq 2^{-(\tau(q)+7\sqrt{\delta}/10)nD}
\end{equation}
(we notice that $nD>\delta l\gg_{\mu,q,\delta,C}1$).

Finally, we consider the Borel probability measures $S_{2^{iD}}\eta$ and $S_{2^{iD}}\angle(\widehat{\nu_{I_0}})$ on $\R/\pi\Z$. We notice that $S_{2^{iD}}:\R/\pi\Z\to\R/\pi\Z$ is the multiplication by the positive integer, and hence an additive group homomorphism. By (\ref{supp_of_eta_proof_of_flattening}), $S_{2^{iD}}$ has at most $O_{M,\mu,C}(1)$ branches on $\supp\ \eta$. Hence, by (\ref{eta_L^q_norm_proof_of_flattening}) and Lemma \ref{L^q_norms_of_two_partitions}, we have
\begin{equation}\label{flat_L^q_norm_of_Seta_proof_of_flattening}
\|(S_{2^{iD}}\eta)^{(nD)}\|_q^q\leq O_{M,\mu,q,C}(1)\|\eta^{((i+n)D)}\|_q^q\leq O_{M,\mu,q,C}(1)2^{-2\kappa/3\cdot nD}\leq 2^{-\kappa/2\cdot nD}
\end{equation}
(we have used that $nD>\delta l\gg_{M,\mu,q,\kappa,C}1$).
Furthermore, since $S_{2^{iD}}$ is an additive group homomorphism on $\R/\pi\Z$, we have $S_{2^{iD}}\eta*S_{2^{iD}}\angle(\widehat{\nu_{I_0}})=S_{2^{iD}}(\eta*\angle(\widehat{\nu_{I_0}}))$. Hence, we obtain from (\ref{convolution_with_eta_L^q_norm_proof_of_flattening}) that
\begin{align}\label{non_decreasing_L^q_norm_under_convolution_with_Seta_Snu_proof_of_flattening}
\|(S_{2^{iD}}\eta*S_{2^{iD}}\angle(\widehat{\nu_{I_0}}))^{(nD)}\|_q^q&=\|(S_{2^{iD}}(\eta*\angle(\widehat{\nu_{I_0}})))^{(nD)}\|_q^q\nonumber\\
&\geq\|(\eta*\angle(\widehat{\nu_{I_0}}))^{((i+n)D)}\|_q^q\nonumber\\
&\geq 2^{-(\tau(q)+7\sqrt{\delta}/10)nD}.
\end{align}

\subsection{Application of Shmerkin's argument}\label{subsection_Shmerkin_argument}

In this section, we apply Shmerkin's argument in the proof of Theorem \ref{L^q_norm_flattening_theorem_for_self_similar_measure} to our case using (\ref{flat_L^q_norm_of_Seta_proof_of_flattening}), (\ref{non_decreasing_L^q_norm_under_convolution_with_Seta_Snu_proof_of_flattening}), the inverse Theorem \ref{the_inverse_theorem_for_L^q_norms} and multifractal properties of $\nu$, lead a contradiction to (\ref{contradiction_assumption_proof_of_flattening}), and complete the proof of the $L^q$ norm flattening Theorem \ref{L^q_norm_flattening_theorem}.

We first recall that the constant $\kappa=\kappa(M,\mu,q,\sigma/2)>0$ depends only on $M,\mu,q,\sigma/2$, and is independent of $\delta$. Hence, we can take $\delta>0$ so small also in terms of $\kappa$. Here, we take a small constant $0<\delta_0\ll_{M,\mu,q,\sigma,\kappa}1$ determined only by $M,\mu,q,\sigma$ and $\kappa$, and a large constant $d_0\in\N, d_0\gg_{M,\mu,q,\sigma,\kappa,\delta_0}1$ determined only by $M,\mu,q,\sigma,\kappa$ and $\delta_0$, both of which will be specified later.
We take a large constant $d=d(q,\delta_0,d_0)\in\N, d\gg_{q,\delta_0,d_0}1$ so that $d>d_0$ and we can apply the inverse Theorem \ref{the_inverse_theorem_for_L^q_norms} to $q$, $\delta_0$ and $d$. Then, we assume $0<\delta\ll_{M,\mu,q,\sigma,\delta_0,d}1$ is sufficiently small so that $\sqrt{\delta}<\delta_0$ and
\begin{equation}\label{delta_is_so_small_for_inverse_theorem_proof_of_flattening}
\text{we can apply Theorem \ref{the_inverse_theorem_for_L^q_norms} to $q$, $\delta_0$, $d$ and $\sqrt{\delta}/q$ in place of $q$, $\delta$, $d$ and $\delta$, respectively.}
\end{equation}

In the following, for $m\in\N$ and a Borel probability measure $\nu$ on $\R/\pi\Z$, we write $\nu^{(m)}$ for the $2^{-m}$ measure defined by\footnote{This notation does not conflict others, because the $L^q$ norm of the $2^{-m}$ measure $\nu^{(m)}$ is $\left(\sum_{x\in\pi 2^{-m}\Z/\pi\Z}\nu^{(m)}(x)^q\right)^{1/q}=\left(\sum_{I\in\D_m}\nu(I)^q\right)^{1/q}=\|\nu^{(m)}\|_q$}.
\begin{equation*}
\nu^{(m)}(\pi k2^{-m})=\nu\left([\pi k2^{-m},\pi(k+1)2^{-m})\right),\quad k=0,1,\dots,2^m-1.
\end{equation*}

For the above $nD$, let $l'=\lceil nD/d\rceil$. Since $nD>\delta l\gg_{M,\mu,q,\sigma,\kappa,\delta,\varepsilon}1$, we can assume that $nD$ is also sufficiently large in terms of $\delta_0,d$, and hence $l'\gg_{M,\mu,q,\sigma,\kappa,\delta_0,d,\delta,\varepsilon}1$.
We also have $l'd-d\leq nD\leq l'd$ and $1-1/l'\leq nD/(l'd)\leq 1$. Hence, by (\ref{flat_L^q_norm_of_Seta_proof_of_flattening}), we have
\begin{equation}\label{flat_L^q_norm_of_Seta_l'd_scale_proof_of_flattening}
\|(S_{2^{iD}}\eta)^{(l'd)}\|_q^q\leq \|(S_{2^{iD}}\eta)^{(nD)}\|_q^q\leq 2^{-\kappa/2\cdot nD}\leq 2^{-\kappa/3\cdot l'd}.
\end{equation}
Furthermore, by Lemma \ref{L^q_norms_of_two_partitions}, we obtain
\begin{equation*}
\|(S_{2^{iD}}\eta*S_{2^{iD}}\angle(\widehat{\nu_{I_0}}))^{(nD)}\|_q^q\leq 2^{(l'd-nD)q}\|(S_{2^{iD}}\eta*S_{2^{iD}}\angle(\widehat{\nu_{I_0}}))^{(l'd)}\|_q^q\leq
2^{dq}\|(S_{2^{iD}}\eta*S_{2^{iD}}\angle(\widehat{\nu_{I_0}}))^{(l'd)}\|_q^q,
\end{equation*}
and hence, by (\ref{non_decreasing_L^q_norm_under_convolution_with_Seta_Snu_proof_of_flattening}) and $l'\gg_{q,\delta}1$,
\begin{align}\label{L^q_norm_under_convolution_with_Seta_Snu_l'd_scale_proof_of_flattening}
\|(S_{2^{iD}}\eta*S_{2^{iD}}\angle(\widehat{\nu_{I_0}}))^{(l'd)}\|_q^q&\geq 2^{-dq}\|(S_{2^{iD}}\eta*S_{2^{iD}}\angle(\widehat{\nu_{I_0}}))^{(nD)}\|_q^q\nonumber\\
&\geq 2^{-dq}2^{-(\tau(q)+7\sqrt{\delta}/10)nD}\nonumber\\
&\geq 2^{-(\tau(q)+4\sqrt{\delta}/5)l'd}.
\end{align}

Here, by Lemma \ref{local_L^q_norm_lemma} (ii) for $\sqrt{\delta}/10$, $l'd\gg_{q,\mu,\delta}1$ and $I_0\in\D_{iD}$, we have
\begin{equation*}
\sum_{J\in\D_{iD+l'd}(I_0)}\nu(J)^q\leq 2^{-(\tau(q)-\sqrt{\delta}/10)l'd}\nu(2I_0)^q.
\end{equation*}
By this estimate, $\widehat{\nu_{I_0}}=\nu|_{I_0}/\nu(2I_0)$ for $I_0\in\D_{iD}$ and (\ref{L^q_norm_under_convolution_with_Seta_Snu_l'd_scale_proof_of_flattening}), we obtain that
\begin{equation*}
\|(S_{2^{iD}}\eta*S_{2^{iD}}\angle(\widehat{\nu_{I_0}}))^{(l'd)}\|_q^q\geq 2^{-9\sqrt{\delta}/10\cdot l'd}\|\widehat{\nu_{I_0}}^{(iD+l'd)}\|_q^q=2^{-9\sqrt{\delta}/10\cdot l'd}\|(S_{2^{iD}}\angle(\widehat{\nu_{I_0}}))^{(l'd)}\|_q^q,
\end{equation*}
and hence, by multiplying both sides by $\nu(2I_0)^q/\nu(I_0)^q$,
\begin{equation*}
\|(S_{2^{iD}}\eta*S_{2^{iD}}\angle(\nu_{I_0}))^{(l'd)}\|_q^q\geq 2^{-9\sqrt{\delta}/10\cdot l'd}\|(S_{2^{iD}}\angle(\nu_{I_0}))^{(l'd)}\|_q^q,
\end{equation*}
Furthermore, for two $2^{-l'd}$ measures $(S_{2^{iD}}\eta)^{(l'd)}$ and $(S_{2^{iD}}\angle(\nu_{I_0}))^{(l'd)}$, it is easily seen from Lemma \ref{L^q_norms_of_two_partitions} that $\|(S_{2^{iD}}\eta*S_{2^{iD}}\angle(\nu_{I_0}))^{(l'd)}\|_q^q=\Theta_q(1)\|(S_{2^{iD}}\eta)^{(l'd)}*(S_{2^{iD}}\angle(\nu_{I_0}))^{(l'd)}\|_q^q$ (or, see \cite[Lemma 4.3]{Shm19}). By combining this and the above inequality, we obtain that
\begin{equation}\label{non_decreasing_L^q_norm_convolution_with_Seta_and_Snu_l'd_scale_proof_of_flattening}
\|(S_{2^{iD}}\eta)^{(l'd)}*(S_{2^{iD}}\angle(\nu_{I_0}))^{(l'd)}\|_q^q\geq 2^{-\sqrt{\delta}l'd}\|(S_{2^{iD}}\angle(\nu_{I_0}))^{(l'd)}\|_q^q
\end{equation}
(we have used that $l'd\gg_{q,\delta}1$).

By the way of taking $\sqrt{\delta}$ (\ref{delta_is_so_small_for_inverse_theorem_proof_of_flattening}), $l'\gg_{q,\delta_0,d,\delta}1$ and (\ref{non_decreasing_L^q_norm_convolution_with_Seta_and_Snu_l'd_scale_proof_of_flattening}), we can apply the inverse Theorem \ref{the_inverse_theorem_for_L^q_norms} to $q,\delta_0,d,l'$ and two $2^{-l'd}$ measures $(S_{2^{iD}}\eta)^{(l'd)}$ and $(S_{2^{iD}}\angle(\nu_{I_0}))^{(l'd)}$.
Hence, there exist subsets $A,B\subset\pi2^{-l'd}\Z/\pi\Z$, $u,v\in\pi2^{-l'd}\Z/\pi\Z$ and sequences $(R'_s)_{s\in[l']}, (R''_s)_{s\in[l']}\in\{1,2,\dots, 2^d\}^{l'}$ satisfying (A-i)-(vi) of Theorem \ref{the_inverse_theorem_for_L^q_norms} for $(S_{2^{iD}}\angle(\nu_{I_0}))^{(l'd)}$ and $(S_{2^{iD}}\eta)^{(l'd)}$ in place of $\nu$ and $\rho$, respectively.
As Shmerkin did in the proof of Theorem \ref{L^q_norm_flattening_theorem_for_self_similar_measure}, we derive a contradiction by studying the branching numbers $(R'_s)_{s\in[l']}$ of $A$ (in the following argument, $B,v,(R''_s)_{s\in[l']}$ do not appear explicitly).

We first notice that, by the assumption (\ref{assumption_tau_smaller_than_q-1_proof_of_flattening}) and concavity of $\tau(q)$, for $\alpha=\tau'(q)$, we have
\begin{equation*}
\alpha\leq\lim_{q'\searrow1}\frac{\tau(q)-\tau(q')}{q-q'}=\frac{\tau(q)}{q-1}<1,
\end{equation*}
and hence
\begin{equation}\label{tau^*(alpha)_strictly_smaller_than_1_proof_of_flattening}
\tau^*(\alpha)=\inf_{q'\in (1,\infty)}(\alpha q'-\tau(q'))\leq \alpha<1.
\end{equation}
Here, we have used that $\tau(q')\in[0,q'-1]$ for $q'>1$ and hence $\tau(q')\to 0$ as $q'\searrow 1$.

By (A-ii), there is $\alpha'\geq0$ such that
\begin{equation}\label{widehatnu_is_flat_on_A_proof_of_flattening}
2^{-\alpha'l'd}\leq (S_{2^{iD}}\angle(\widehat{\nu_{I_0}}))^{(l'd)}(x)\leq 2\cdot 2^{-\alpha'l'd}\quad\text{for any }x\in A
\end{equation}
(we have multiplied each side by $\nu(I_0)/\nu(2I_0)$). Furthermore, by (A-i) and multiplying both sides by $\nu(I_0)^q/\nu(2I_0)^q$, we have
\begin{equation*}
\|(S_{2^{iD}}\angle(\widehat{\nu_{I_0}}))^{(l'd)}|_A\|_q^q\geq 2^{-q\delta_0l'd}\|(S_{2^{iD}}\angle(\widehat{\nu_{I_0}}))^{(l'd)}\|_q^q.
\end{equation*}
By (\ref{L^q_norm_under_convolution_with_Seta_Snu_l'd_scale_proof_of_flattening}) and the fact that an $L^q$ norm of a measure does not increase under a convolution with a probability measure, we have
\begin{equation}\label{Swidehatnu_L^q_and_2^-tau(q)_proof_of_flattening}
\|(S_{2^{iD}}\angle(\widehat{\nu_{I_0}}))^{(l'd)}\|_q^q\geq\Omega_q(1)\|(S_{2^{iD}}\eta*S_{2^{iD}}\angle(\widehat{\nu_{I_0}}))^{(l'd)}\|_q^q\geq 2^{-(\tau(q)+\sqrt{\delta})l'd}
\end{equation}
(we have used that $l'd\gg_{q,\delta}1$).
From these two inequalities, it follows that
\begin{equation}\label{restricted_L^q_norm_to_A_proof_of_flattening}
\|(S_{2^{iD}}\angle(\widehat{\nu_{I_0}}))^{(l'd)}|_A\|_q^q\geq 2^{-(\tau(q)+q\delta_0+\sqrt{\delta})l'd}\geq 2^{-(\tau(q)+(q+1)\delta_0)l'd}
\end{equation}
(we notice that $\sqrt{\delta}\leq\delta_0$).

Once again, we take a small constant $0<\delta_0'\ll_{M,\mu,q,\sigma,\kappa}1$ determined only by $M,\mu,q,\sigma$ and $\kappa$ and specified later, and we assume $0<\delta_0\ll_{M,\mu,q,\sigma,\kappa}1$ is sufficiently small in terms of $\delta_0'$ so that $\delta_0<\delta_0'$ and
\begin{equation}\label{how_to_take_delta0_local_tau*_lemma_applicable_proof_of_flattening}
\text{we can apply Lemma \ref{close_to_tau^*(alpha)_local_version} to $\nu$, $q$, $\delta_0'$ and $(q+1)\delta_0$ in place of $\varepsilon$ and $\delta$, respectively.}
\end{equation}
By this way of taking $\delta_0$, (\ref{widehatnu_is_flat_on_A_proof_of_flattening}), (\ref{restricted_L^q_norm_to_A_proof_of_flattening}) and $l'd\gg_{\mu,q,\delta_0',\delta_0}1$, we can apply Lemma \ref{close_to_tau^*(alpha)_local_version} to $s=iD$, $I_0\in\D_{iD}$, $m=l'd$ and $\D'=A$, and obtain that\footnote{Here, we abuse the statement of Lemma \ref{close_to_tau^*(alpha)_local_version} a little, but the justification is clear.}
\begin{equation}\label{the_number_of_elements_of_A_proof_of_flattening}
|A|\leq 2^{(\tau^*(\alpha)+\delta'_0)l'd}.
\end{equation}

We write
\begin{equation*}
\kappa'=\kappa'(\mu,q,\kappa)=\kappa'(M,\mu,q,\sigma)=\frac{(1-\tau^*(\alpha))\kappa}{12(q-1)}.
\end{equation*}
Then, by (\ref{tau^*(alpha)_strictly_smaller_than_1_proof_of_flattening}), we have $\kappa'>0$. Furthermore, by (\ref{assumption_tau_smaller_than_q-1_proof_of_flattening}) and (\ref{tau^*(alpha)_strictly_smaller_than_1_proof_of_flattening}), we can assume for $0<\delta'_0\ll_{M,\mu,q,\sigma,\kappa}1$ that
\begin{equation}\label{delta0'_sufficiently_small_Mmuqsigma_1_proof_of_flattening}
1-\frac{\tau(q)+\delta_0'}{q-1}-\delta_0'>\frac{1}{2}\left(1-\frac{\tau(q)}{q-1}\right)>0
\end{equation}
and
\begin{equation}\label{delta0'_sufficiently_small_Mmuqsigma_2_proof_of_flattening}
2\delta_0'<1-\tau^*(\alpha),\quad
\delta_0'<\frac{\kappa}{3(q-1)},\quad
\left(1-\tau^*(\alpha)-2\delta_0'\right)\left(\frac{\kappa}{3(q-1)}-\delta_0'\right)-\delta_0'>2\kappa'.
\end{equation}
Here, we show the following key claim for completing the proof of Theorem \ref{L^q_norm_flattening_theorem}.

\begin{claim}
We have $\tau^*(\alpha)\geq2\kappa'$.\footnote{This is not mysterious. In fact, we can recall that $\kappa$ was taken as $(q-1)\sigma''/4\ll\tau^*(\alpha)$ in Section \ref{subsection_discretized_slicing_lemma}.}
Furthermore, if we write
\begin{equation*}
S_1=\left\{s\in[l']\left|\ R_s'\leq 2^{(\tau^*(\alpha)-\kappa')d}\right.\right\}
\end{equation*}
and\footnote{It may seem that, since $\tau^*(\alpha)-\kappa'>0$ has been shown above, taking the absolute value of the denominator of $\gamma$ is unnecessary. However, in our proof of $\tau^*(\alpha)-\kappa'>0$, we use $A$ and its branching numbers $(R'_s)_{s\in[l']}$, which are defined only after we determine $\sqrt{\delta}<\delta_0$ completely, and we actually have to take $\delta_0$ sufficiently small in terms of $\gamma>0$ in the following argument. Hence, to ensure the positivity of $\gamma=\gamma(M,\mu,q,\sigma)$ without $\delta_0$, we take the absolute value of the denominator.}
\begin{equation*}
\gamma=\gamma(\mu,q,\kappa')=\gamma(M,\mu,q,\sigma)=\frac{\kappa'\left(1-\tau(q)/(q-1)\right)}{2|\tau^*(\alpha)-\kappa'|}>0,
\end{equation*}
then we have
\begin{equation*}
|S_1|\geq \gamma l'.
\end{equation*}
\end{claim}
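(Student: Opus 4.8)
The plan is to contradict the lower bound on the $L^q$-norm restricted to $A$, namely $\|(S_{2^{iD}}\angle(\widehat{\nu_{I_0}}))^{(l'd)}|_A\|_q^q\geq 2^{-(\tau(q)+(q+1)\delta_0)l'd}$ from (\ref{restricted_L^q_norm_to_A_proof_of_flattening}), using the information that $A$ is $(d,l',u,(R'_s)_{s\in[l']})$-uniform together with the flat $L^q$-norm property (\ref{flat_L^q_norm_of_Seta_l'd_scale_proof_of_flattening}) of $S_{2^{iD}}\eta$ passed through the inverse theorem's conclusions (v) and (vi). First I would translate (\ref{flat_L^q_norm_of_Seta_l'd_scale_proof_of_flattening}) via Hölder's inequality into a lower bound on $|\supp (S_{2^{iD}}\eta)^{(l'd)}|$ (as in (\ref{A_has_large_dim_proof_of_discretized_slicing})), giving $|\supp(S_{2^{iD}}\eta)^{(l'd)}|\geq 2^{\kappa/(3(q-1))\cdot l'd}$. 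By the uniformity of $B$ and conclusion (v), the branching numbers $R''_s$ are either $1$ or $\geq 2^{(1-\delta_0)d}$; since $\log|\supp(S_{2^{iD}}\eta)^{(l'd)}|=\sum_s\log R''_s\approx d\cdot|\{s: R''_s\neq 1\}|$, we get $|\{s: R''_s\geq 2^{(1-\delta_0)d}\}|\geq \frac{\kappa}{3(q-1)}l'-O(\delta_0 l')$. On the complement of this set $R''_s=1$, so by (v) we have $R'_s\geq 2^{(1-\delta_0)d}$ there.

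Next I would use conclusion (vi): $d|S|\leq\log(\|(S_{2^{iD}}\angle(\widehat{\nu_{I_0}}))^{(l'd)}\|_q^{-q/(q-1)})+\delta_0 l'd$ where $S=\{s:R'_s\geq 2^{(1-\delta_0)d}\}$. Combining with the bound $\|(S_{2^{iD}}\angle(\widehat{\nu_{I_0}}))^{(l'd)}\|_q^q\geq 2^{-(\tau(q)+\sqrt{\delta})l'd}$ from (\ref{Swidehatnu_L^q_and_2^-tau(q)_proof_of_flattening}) gives $|S|\leq\big(\frac{\tau(q)+\sqrt\delta}{q-1}+\delta_0\big)l'$; since $\tau(q)<q-1$ and $\delta_0,\sqrt\delta$ are tiny, $(1-|S|/l')$ is bounded below by $\frac12(1-\tau(q)/(q-1))>0$ (this is where (\ref{delta0'_sufficiently_small_Mmuqsigma_1_proof_of_flattening}) enters). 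So on the set $T=[l']\setminus S$, of density $\geq\frac12(1-\tau(q)/(q-1))$, we have $R'_s<2^{(1-\delta_0)d}$; intersecting $T$ with the set of $s$ with $R''_s\neq 1$ (density $\geq\frac{\kappa}{3(q-1)}-O(\delta_0)$, and on this set $R'_s\geq 2^{(1-\delta_0)d}$ would force $s\notin T$ unless—wait) requires care: on $\{R''_s\neq 1\}$ conclusion (v) gives $R'_s\geq 2^{(1-\delta_0)d}$, so $\{R''_s\neq 1\}\subset S$. Hence the relevant counting is: $|S|\geq|\{R''_s\neq1\}|\geq\frac{\kappa}{3(q-1)}l'-O(\delta_0 l')$, and on $S\setminus\{R''_s\neq1\}$—no. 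I would instead run the argument of Shmerkin's proof of (\ref{L^q_norm_flattening_theorem_for_self_similar_measure}) directly: estimate $|A|$ from below by $\prod_{s} R'_s = \prod_{s\in S}R'_s\cdot\prod_{s\notin S}R'_s$, bound $R'_s\geq 2^{(1-\delta_0)d}$ on $S$, and bound $R'_s\geq 1$ off $S$; but to beat the upper bound $|A|\leq 2^{(\tau^*(\alpha)+\delta_0')l'd}$ from (\ref{the_number_of_elements_of_A_proof_of_flattening}) I instead need: on the $s$ where $R'_s$ is \emph{small}, namely $R'_s\leq 2^{(\tau^*(\alpha)-\kappa')d}$, the restricted $L^q$-norm drops by Lemma \ref{local_L^q_norm_lemma}(i); so if $|S_1|$ were too small, the measure $(S_{2^{iD}}\angle(\widehat{\nu_{I_0}}))^{(l'd)}$ restricted along the tree $A$ would have $L^q$-norm much below $2^{-\tau(q)l'd}$, contradicting (\ref{restricted_L^q_norm_to_A_proof_of_flattening}).

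Concretely, I would set up the telescoping sequence $L_s=-\log\sum_{I\in\D'_{sd}}\nu_{I_0}(\cdots I\cdots)^q$ along the scales $s=0,\dots,l'$ exactly as in the proof of Lemma \ref{key_2^{-iD}_components_are_blanching_proof_of_porosity}: at scales $s\in S_1$ (small branching) apply Lemma \ref{local_L^q_norm_lemma}(i) to get $L_{s+1}\geq L_s+(\tau(q)+\eta')(d-2)-1$ for a fixed $\eta'=\eta'(\mu,q)>0$ coming from $R'_s\leq 2^{(\tau^*(\alpha)-\kappa')d}\leq 2^{(\tau^*(\alpha)+\delta_0')(d-2)/2\cdot(\text{small})}$—here I use $\tau^*(\alpha)-\kappa'<\tau^*(\alpha)$ and $\kappa'$ fixed to apply the strict-decrease version $\sum_{J\in\D'}\nu(J)^q\leq 2^{-(\tau(q)+\eta')(d-2)}\nu(2I)^q$ for $|\D'|\leq 2^{\tau^*(\alpha)/2\cdot(d-2)}$; one must check $2^{(\tau^*(\alpha)-\kappa')d}\leq 2^{\tau^*(\alpha)/2\cdot(d-2)}$, which holds once $\kappa'$ is not too small relative to $\tau^*(\alpha)$ and $d$ large—but if $\tau^*(\alpha)$ were $<2\kappa'$ this fails, which is precisely why the claim first asserts $\tau^*(\alpha)\geq 2\kappa'$, itself proved by the same telescoping run giving $\tau^*(\alpha)\geq$ (something positive involving $\kappa$) once $\delta_0'$ chosen per (\ref{delta0'_sufficiently_small_Mmuqsigma_2_proof_of_flattening})); at scales $s\notin S_1$ apply Lemma \ref{local_L^q_norm_lemma}(ii) for $\varepsilon'=\delta_0'$ to get $L_{s+1}\geq L_s+(\tau(q)-\delta_0')(d-2)-1$. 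Telescoping from $L_0=0$ to $L_{l'}=-\log\|(S_{2^{iD}}\angle(\widehat{\nu_{I_0}}))^{(l'd)}|_A\|_q^q/\nu(2I_0)^q\cdot\nu(\text{norm})$—more simply $L_{l'}\geq\sum$—and comparing with the upper bound $L_{l'}\leq(\tau(q)+(q+1)\delta_0)l'd+O(\log\nu(2I_0))$ forces $|S_1|\eta'(d-2)\lesssim\big((q+1)\delta_0+\delta_0'\big)l'd+\delta_0' l'd(l'- \cdots)$, i.e., dividing by $l'd$ and using $d\gg 1$, $\delta_0,\delta_0'\ll_{M,\mu,q,\sigma}1$: $\frac{|S_1|}{l'}\eta'\leq O(\delta_0'+\delta_0)$—wrong direction. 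I therefore instead argue by contradiction: \emph{suppose} $|S_1|<\gamma l'$ with $\gamma$ as defined; then at least $(1-\gamma)l'$ scales have $R'_s> 2^{(\tau^*(\alpha)-\kappa')d}$, forcing $|A|\geq 2^{(\tau^*(\alpha)-\kappa')d(1-\gamma)l'+(\text{small})}$; balanced against $|A|\leq 2^{(\tau^*(\alpha)+\delta_0')l'd}$ this gives $(\tau^*(\alpha)-\kappa')(1-\gamma)\leq\tau^*(\alpha)+\delta_0'$, i.e. $\gamma\geq\frac{\kappa'-\delta_0'-\gamma\kappa'}{\tau^*(\alpha)-\kappa'}$, contradicting the choice $\gamma=\frac{\kappa'(1-\tau(q)/(q-1))}{2|\tau^*(\alpha)-\kappa'|}$ once $\delta_0'$ is small enough and $1-\tau(q)/(q-1)<1$—this is the clean route and matches (\ref{delta0'_sufficiently_small_Mmuqsigma_1_proof_of_flattening})--(\ref{delta0'_sufficiently_small_Mmuqsigma_2_proof_of_flattening}).

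The main obstacle will be the bookkeeping of the many small constants and ensuring the chain $\sqrt\delta\ll\delta_0\ll\delta_0'\ll\kappa,\kappa',\gamma\ll\tau^*(\alpha)$ is consistent, in particular proving $\tau^*(\alpha)\geq 2\kappa'$ \emph{before} invoking $A$ (so it cannot circularly depend on $\delta_0$): this is done by first running the telescoping-$L_s$ argument with the crude branching dichotomy from (v) (on $\{R''_s=1\}$ one has $R'_s\geq 2^{(1-\delta_0)d}$, which is $\geq 2^{\tau^*(\alpha)/2\cdot(d-2)}$, so Lemma \ref{local_L^q_norm_lemma}(i) does \emph{not} apply there and one only gets the (ii)-bound, while on $\{R''_s\neq 1\}$, combined with the lower bound $|\{R''_s\neq1\}|\geq\frac{\kappa}{3(q-1)}l'-O(\delta_0 l')$ from the support-size estimate and conclusion (vi) controlling $|S|$, one extracts enough small-branching scales for $A$) to yield $\tau^*(\alpha)\geq\big(1-\tau^*(\alpha)-2\delta_0'\big)\big(\tfrac{\kappa}{3(q-1)}-\delta_0'\big)-\delta_0'>2\kappa'$ by (\ref{delta0'_sufficiently_small_Mmuqsigma_2_proof_of_flattening}). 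Once that positivity is in hand, the $\gamma$-density bound follows from the counting contradiction above, and both halves of the claim are established. I would keep the exposition at the level of these counting identities and the two applications of Lemma \ref{local_L^q_norm_lemma}, relegating the constant-chasing to the inequalities already recorded in (\ref{delta0'_sufficiently_small_Mmuqsigma_1_proof_of_flattening})--(\ref{delta0'_sufficiently_small_Mmuqsigma_2_proof_of_flattening}).
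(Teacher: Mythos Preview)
Your proposal conflates two different arguments and the ``clean route'' you settle on has a genuine gap.

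\textbf{The counting contradiction does not close.} You assume $|S_1|<\gamma l'$, bound $|A|\geq 2^{(\tau^*(\alpha)-\kappa')(1-\gamma)l'd}$ from the scales outside $S_1$, and compare with $|A|\leq 2^{(\tau^*(\alpha)+\delta_0')l'd}$. But the resulting inequality $(\tau^*(\alpha)-\kappa')(1-\gamma)\leq \tau^*(\alpha)+\delta_0'$ is \emph{always} satisfied (it simplifies to $\gamma(\tau^*(\alpha)-\kappa')\geq -\kappa'-\delta_0'$, which is vacuous when $\tau^*(\alpha)>\kappa'$). The missing ingredient is that on $S=\{s:R'_s\geq 2^{(1-\delta_0)d}\}$ the branching is \emph{nearly full}, which is far more than $2^{\tau^*(\alpha)d}$ since $\tau^*(\alpha)<1$. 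The paper's proof uses exactly this overcontribution: from $|A|=\prod_s R'_s\geq 2^{(1-\delta_0)|S|d}\prod_{s\in S'}R'_s$ and $|A|\leq 2^{(\tau^*(\alpha)+\delta_0')l'd}$ one gets $\prod_{s\in S'}R'_s\leq 2^{(\tau^*(\alpha)-2\kappa')|S'|d}$ after feeding in the lower bound $|S|\geq(\kappa/(3(q-1))-\delta_0)l'$ from (vi) and the choice (\ref{delta0'_sufficiently_small_Mmuqsigma_2_proof_of_flattening}). Since the product is $\geq 1$ and $|S'|>0$ (from the upper bound on $|S|$ in (vi) and (\ref{delta0'_sufficiently_small_Mmuqsigma_1_proof_of_flattening})), this yields $\tau^*(\alpha)\geq 2\kappa'$ directly. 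Then, since on $S'\setminus S_1$ each $R'_s>2^{(\tau^*(\alpha)-\kappa')d}$, one gets $|S'\setminus S_1|\leq\frac{\tau^*(\alpha)-2\kappa'}{\tau^*(\alpha)-\kappa'}|S'|$, hence $|S_1|\geq\frac{\kappa'}{\tau^*(\alpha)-\kappa'}|S'|\geq\gamma l'$. No contradiction, no telescoping --- pure counting.

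\textbf{The telescoping $L_s$ argument is misplaced.} The $L_s$-telescoping with Lemma \ref{local_L^q_norm_lemma}(i)--(ii) that you describe is precisely what the paper runs \emph{after} the Claim (culminating in (\ref{resulting_contradiction_proof_of_flattening})), using the conclusion $|S_1|\geq\gamma l'$ as input. It is not used to prove the Claim itself, and your attempt to extract $\tau^*(\alpha)\geq 2\kappa'$ from it is circular: that argument requires knowing $S_1$ is large, which is the second half of the Claim.
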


The heuristic idea for Claim is the following. We can see from Theorem \ref{the_inverse_theorem_for_L^q_norms} and (\ref{flat_L^q_norm_of_Seta_l'd_scale_proof_of_flattening}) that $A$ almost fully branches at an appropriately large proportion of scales. On the other hand, by (\ref{the_number_of_elements_of_A_proof_of_flattening}) and $\tau^*(\alpha)<1$ ((\ref{tau^*(alpha)_strictly_smaller_than_1_proof_of_flattening})), we have $|A|\lesssim 2^{\tau^*(\alpha)l'd}$ and this is essentially smaller than the entire $2^{l'd}$.
Hence, there must be an appropriately large proportion of scales at which $A$ has branches essentially smaller that $2^{\tau^*(\alpha)d}$.

\begin{proof}
We first consider
\begin{equation*}
S=\left\{s\in[l']\left|\ R'_s\geq 2^{(1-\delta_0)d}\right.\right\}\quad\text{and}\quad S'=[l']\setminus S.
\end{equation*}
Since $A$ is $(d,l',u,(R'_s)_{s\in[l']})$-uniform, we have
\begin{equation*}
|A|=\prod_{s\in[l']}R'_s\geq 2^{(1-\delta_0)|S|d}\prod_{s\in S'}R'_s.
\end{equation*}
From this and (\ref{the_number_of_elements_of_A_proof_of_flattening}), we obtain that
\begin{equation}\label{product_of_branching_numbers_at_S'_proof_of_flattening}
\prod_{s\in S'}R'_s\leq 2^{-(1-\delta_0)|S|d}2^{(\tau^*(\alpha)+\delta_0')l'd}=2^{-(1-\tau^*(\alpha)-2\delta_0')|S|d}2^{(\tau^*(\alpha)+\delta_0')|S'|d}
\end{equation}
(we have used that $\delta_0<\delta_0'$). Here, it follows from (vi) of Theorem \ref{the_inverse_theorem_for_L^q_norms} and (\ref{flat_L^q_norm_of_Seta_l'd_scale_proof_of_flattening}) that
\begin{equation}\label{many_scales_at_which_A_almost_fully_branches_proof_of_flattening}
|S|d\geq\log\left(\|(S_{2^{iD}}\eta)^{(l'd)}\|_q^{-q/(q-1)}\right)-\delta_0l'd\geq \frac{\kappa}{3(q-1)}\cdot l'd-\delta_0l'd.
\end{equation}
In addition, by (\ref{delta0'_sufficiently_small_Mmuqsigma_2_proof_of_flattening}), we have $1-\tau^*(\alpha)-2\delta_0'>0$. Hence, by combining (\ref{product_of_branching_numbers_at_S'_proof_of_flattening}) and (\ref{many_scales_at_which_A_almost_fully_branches_proof_of_flattening}) and using (\ref{delta0'_sufficiently_small_Mmuqsigma_2_proof_of_flattening}), we have
\begin{align}\label{estimate_blanching_numbers_at_S'_proof_of_flattening}
\log\prod_{s\in S'}R'_s&\leq-(1-\tau^*(\alpha)-2\delta_0')\left(\frac{\kappa}{3(q-1)}-\delta_0\right)l'd+(\tau^*(\alpha)+\delta_0')|S'|d\nonumber\\
&\leq-(1-\tau^*(\alpha)-2\delta_0')\left(\frac{\kappa}{3(q-1)}-\delta_0'\right)|S'|d+(\tau^*(\alpha)+\delta_0')|S'|d\nonumber\\
&\leq (\tau^*(\alpha)-2\kappa')|S'|d
\end{align}
(we notice that $\delta_0<\delta_0'$ and $|S'|\leq l'$).

Furthermore, by (\ref{Swidehatnu_L^q_and_2^-tau(q)_proof_of_flattening}), $\nu_{I_0}\geq\widehat{\nu_{I_0}}=\nu(I_0)/\nu(2I_0)\cdot\nu_{I_0}$ and $\sqrt{\delta}<\delta_0<\delta_0'$, it holds that
\begin{equation*}
	\|(S_{2^{iD}}\angle(\nu_{I_0}))^{(l'd)}\|_q^q\geq\|(S_{2^{iD}}\angle(\widehat{\nu_{I_0}}))^{(l'd)}\|_q^q\geq2^{-(\tau(q)+\sqrt{\delta})l'd}\geq 2^{-(\tau(q)+\delta_0')l'd}.
\end{equation*}
By this inequality and (vi) of Theorem \ref{the_inverse_theorem_for_L^q_norms}, we have
\begin{equation*}
	d|S|\leq\log\left(\|(S_{2^{iD}}\angle(\nu_{I_0}))^{(l'd)}\|_q^{-q/(q-1)}\right)+\delta_0l'd
	\leq\frac{\tau(q)+\delta_0'}{q-1}\cdot l'd+\delta_0'l'd,
\end{equation*}
and hence, by (\ref{delta0'_sufficiently_small_Mmuqsigma_1_proof_of_flattening}),
\begin{equation}\label{lower_bound_S'_proof_of_flattening}
	|S'|=l'-|S|\geq\left(1-\frac{\tau(q)+\delta_0'}{q-1}-\delta_0'\right)l'>\frac{1}{2}\left(1-\frac{\tau(q)}{q-1}\right)l'.
\end{equation}
In particular, $S'$ is non-empty.
Hence, we obtain from (\ref{estimate_blanching_numbers_at_S'_proof_of_flattening}) that $\tau^*(\alpha)-2\kappa'\geq0$.

We estimate $|S_1|$. By the definition of $S_1$ and (\ref{estimate_blanching_numbers_at_S'_proof_of_flattening}), we have\footnote{This argument holds even if $S'\setminus S_1=\emptyset$.}
\begin{align*}
|S'|-|S_1|\leq |S'\setminus S_1|&\leq \frac{1}{(\tau^*(\alpha)-\kappa')d}\sum_{s\in S'\setminus S_1}\log R'_s\\
&\leq\frac{1}{(\tau^*(\alpha)-\kappa')d}\log\prod_{s\in S'}R'_s\\
&\leq\frac{\tau^*(\alpha)-2\kappa'}{\tau^*(\alpha)-\kappa'}\cdot |S'|,
\end{align*}
and hence
\begin{equation*}
|S_1|\geq \frac{\kappa'}{\tau^*(\alpha)-\kappa'}\cdot |S'|.
\end{equation*}
From this estimate and (\ref{lower_bound_S'_proof_of_flattening}), we obtain that
\begin{equation*}
|S_1|\geq\frac{\kappa'(1-\tau(q)/(q-1))}{2(\tau^*(\alpha)-\kappa')}\cdot l'
\end{equation*}
and complete the proof.
\end{proof}

Finally, we derive a contradiction from Claim, and complete the proof of Theorem \ref{L^q_norm_flattening_theorem}.
We use Lemma \ref{local_L^q_norm_lemma}\footnote{We notice that Lemma \ref{local_L^q_norm_lemma} still holds if we replace $\D_s$ with $\D_{u,s}$ with the base $u$, uniformly in terms of $u$.}.
Here, we recall that $d_0\in\N$ is a sufficiently large constant determined only by $M,\mu,q,\sigma,\kappa$ and $\delta_0$.
By Lemma \ref{local_L^q_norm_lemma} (i) for $\kappa'/2=\kappa'(\mu,q,\kappa)/2>0$ defined above,
there exists a constant $\beta=\beta(\mu,q,\kappa'/2)>0$ depending only on $\mu,q,\kappa'/2$ such that,
if we assume that $d_0\gg_{\mu,q,\kappa'/2,\beta}1$ (that is, $d_0\gg_{\mu,q,\kappa}1$) is sufficiently large, the following holds.
For any $s'\in\N$, $w\in\RP^1$, $I\in\D_{w,s'}$ and $d'\in\N$ with $d'\geq d_0-2$, if a subset $\D'\subset\D_{w,s'+d'}(I)$ satisfies $|\D'|\leq 2^{(\tau^*(\alpha)-\kappa'/2)d'}$, we have
\begin{equation}\label{local_L^q_lemma_for_kappa'_proof_of_flettening}
\sum_{J\in\D'}\nu(J)^q\leq 2^{-(\tau(q)+\beta)d'}\nu(2I)^q.
\end{equation}
Furthermore, by Lemma \ref{local_L^q_norm_lemma} (ii) for $\delta_0>0$, if we assume that $d_0\gg_{\mu,q,\delta_0}1$ is sufficiently large, we obtain for any $s'\in\N$, $w\in\RP^1$, $I\in\D_{w,s'}$ and $d'\in\N$ with $d'\geq d_0-2$ that
\begin{equation}\label{local_L^q_norm_lemma_for_delta0_proof_of_flattening}
\sum_{J\in\D_{w,s'+d'}(I)}\nu(J)^q\leq 2^{-(\tau(q)-\delta_0)d'}\nu(2I)^q.
\end{equation}
We also assume that\footnote{The reason for taking the absolute values is the same as stated in the footnote in Claim.}
\begin{equation}\label{1/d0_smaller_than_delta0_proof_of_flattening}
\frac{1}{d_0}<\min\left\{\frac{1}{10}, \delta_0,\frac{\kappa'}{3}\right\},\quad \frac{|\tau^*(\alpha)-2\kappa'/3|}{|\tau^*(\alpha)-2\kappa'/3|+\kappa'/6}<1-\frac{2}{d_0}.
\end{equation}

In the following argument, we identify $\RP^1$ with $\R/\pi\Z$ by $\angle$ and often omit it.
Let
\begin{equation*}
A'=S_{2^{iD}}^{-1}A\cap I_0\subset\pi2^{-(iD+l'd)}\Z/\pi\Z\cap I_0
\end{equation*}
and, for $u\in\pi2^{-l'd}\Z/\pi\Z$, $u'$ be the unique element of $\pi2^{-(iD+l'd)}\Z/\pi\Z\cap I_0$ such that
\begin{equation*}
S_{2^{iD}}u'=u.
\end{equation*}
We consider the sequence
\begin{equation*}
L'_s=-\log \sum_{I\in\D_{u',iD+sd}(A')}\nu(I)^q,\quad s=0,1,\dots,l'.
\end{equation*}
It can be seen from the definitions that
\begin{equation}\label{def_of_L'l'_proof_of_flattening}
L'_{l'}=-\log\sum_{x\in A'}\nu^{(iD+l'd)}(x)^q=-\log \|(S_{2^{iD}}\angle(\nu|_{I_0}))^{(l'd)}|_A\|_q^q.
\end{equation}

We take $s\in[l']$. For each $I\in\D_{u',iD+sd}(A')$, we notice that $S_{2^{iD}}I\in\D_{u,sd}(A)$ and each $J\in\D_{u',iD+(s+1)d}(I\cap A')$ is mapped by $S_{2^{iD}}$ into $\D_{u,(s+1)d}(S_{2^{iD}}I\cap A)$. Furthermore, for two distinct elements $J=[\pi k2^{-(iD+(s+1)d)}+u',\pi(k+1)2^{-(iD+(s+1)d)}+u'),J'=[\pi k'2^{-(iD+(s+1)d)}+u',\pi(k'+1)2^{-(iD+(s+1)d)}+u')$ ($k,k'\in\Z$) of $\D_{u',iD+(s+1)d}(I\cap A')$, if $S_{2^{iD}}J=S_{2^{iD}}J'$, then $\pi k2^{-(iD+(s+1)d)}=\pi k'2^{-(iD+(s+1)d)}+\pi p2^{-iD}$ for some $p\in\Z$. From this fact and $J\cap I_0, J'\cap I_0\neq\emptyset$, it follows that $J$ and $J'$ must be two extreme elements of $\D_{u',iD+(s+1)d}(I_0)$ containing one of the endpoints of $I_0$. Hence, we have
\begin{equation*}
\left|\D_{u',iD+(s+1)d}(I\cap A')\right|\leq \left|\D_{u,(s+1)d}(S_{2^{iD}}I\cap A)\right|+1.
\end{equation*}
Since $S_{2^{iD}}I\in\D_{u,sd}(A)$ and $A$ is $(d,l',u,(R'_s)_{s\in[l']})$-uniform, we have  $\left|\D_{u,(s+1)d}(S_{2^{iD}}I\cap A)\right|=R'_s$, and hence
\begin{equation}\label{branching_numbers_of_A'_proof_of_flattening}
\left|\D_{u',iD+(s+1)d}(I\cap A')\right|\leq R'_s+1.
\end{equation}

Here, assume that $s\in S_1$. Then, by (\ref{branching_numbers_of_A'_proof_of_flattening}), (\ref{1/d0_smaller_than_delta0_proof_of_flattening})  and $d>d_0$, we have
\begin{equation*}
\left|\D_{u',iD+(s+1)d}(I\cap A')\right|\leq 2^{(\tau^*(\alpha)-\kappa')d}+1\leq 2^{(\tau^*(\alpha)-2\kappa'/3)d}
\end{equation*}
for any $I\in\D_{u',iD+sd}(A')$, and hence
\begin{equation*}
\left|\D_{u',iD+(s+1)d}(I'\cap A')\right|\leq 2^{(\tau^*(\alpha)-2\kappa'/3)d}\leq 2^{(\tau^*(\alpha)-\kappa'/2)(d-2)}
\end{equation*}
for any $I'\in\D_{u',iD+sd+2}(A')$. Then, we can apply (\ref{local_L^q_lemma_for_kappa'_proof_of_flettening}) for $s'=iD+sd+2$, $w=u'$, each $I'\in\D_{u',iD+sd+2}(A')$, $d-2>d_0-2$ and $\D_{u',iD+(s+1)d}(I'\cap A')\subset\D_{u',iD+(s+1)d}(I')$
and have
\begin{equation*}
\sum_{J\in\D_{u',iD+(s+1)d}(I'\cap A')}\nu(J)^q\leq 2^{-(\tau(q)+\beta)(d-2)}\nu(2I')^q.
\end{equation*}
By taking the sum for $I'\in\D_{u',iD+sd+2}(A')$, we have
\begin{align}\label{L's+1_lower_bound_proof_of_flattening}
L'_{s+1}&=-\log\sum_{I'\in\D_{u',iD+sd+2}(A')}\sum_{J\in\D_{u',iD+(s+1)d}(I'\cap A')}\nu(J)^q\nonumber\\
&\geq (\tau(q)+\beta)(d-2)-\log\sum_{I'\in\D_{u',iD+sd+2}(A')}\nu(2I')^q.
\end{align}

Let $s>0$.
For each $I'\in\D_{u',iD+sd+2}(A')$, we take a unique $I\in\D_{u',iD+sd}(A')$ such that $I'\subset I$.
Then, we have $S_{2^{iD}}I'\in\D_{u,sd+2}(A)$, $S_{2^{iD}}I\in\D_{u,sd}(A)$ and $S_{2^{iD}}I'\subset S_{2^{iD}}I$. However, by (A-iv) of Theorem \ref{the_inverse_theorem_for_L^q_norms}, in the four subintervals of length $\pi2^{-(sd+2)}$ of $S_{2^{iD}}I$, the only possibility of $S_{2^{iD}}I'$ is either of the two intervals in the center. Hence, the same thing holds for $I'\subset I$, and this fact implies that
\begin{equation*}
\sum_{I'\in\D_{u',iD+sd+2}(A')}\nu(2I')^q\leq 2\sum_{I\in\D_{u',iD+sd}(A')}\nu(I)^q.
\end{equation*}
From this inequality and (\ref{L's+1_lower_bound_proof_of_flattening}), we obtain that
\begin{equation}\label{L's+1_L's_estimate_s_in_S1_proof_of_flattening}
L'_{s+1}\geq L'_s+(\tau(q)+\beta)(d-2)-1
\end{equation}
for $s\in\left\{1,\dots,l'-1\right\}\cap S_1$.
When $s=0$, each $2I'$ for $I'\in\D_{u',iD+2}(A')$ is contained in $2I_0$, and the number of $I'\in\D_{u',iD+2}(A')$ is at most $5$. Hence, by (\ref{L's+1_lower_bound_proof_of_flattening}), we have
\begin{equation*}
L'_1\geq -\log\nu(2I_0)^q+(\tau(q)+\beta)(d-2)-\log5.
\end{equation*}

If $s\notin S_1$, we apply (\ref{local_L^q_norm_lemma_for_delta0_proof_of_flattening}) instead of (\ref{local_L^q_lemma_for_kappa'_proof_of_flettening}) in the above argument, and obtain that
\begin{equation}\label{L's+1_L's_estimate_s_notin_S1_proof_of_flattening}
L'_{s+1}\geq L'_s+(\tau(q)-\delta_0)(d-2)-1
\end{equation}
for $s\in\{1,\dots,l'-1\}\setminus S_1$. For $s=0$, we have
\begin{equation}\label{L'1_nu(2I0)_estimate_proof_of_flattening}
L'_1\geq
\begin{cases}
-\log\nu(2I_0)^q+(\tau(q)+\beta)(d-2)-\log 5&0\in S_1,\\
-\log\nu(2I_0)^q+(\tau(q)-\delta_0)(d-2)-\log 5&0\notin S_1.
\end{cases}
\end{equation}

By (\ref{def_of_L'l'_proof_of_flattening}), (\ref{L's+1_L's_estimate_s_in_S1_proof_of_flattening}), (\ref{L's+1_L's_estimate_s_notin_S1_proof_of_flattening}), (\ref{L'1_nu(2I0)_estimate_proof_of_flattening}) and telescoping, we have
\begin{align*}
&-\log \|(S_{2^{iD}}\angle(\nu|_{I_0}))^{(l'd)}|_A\|_q^q=L'_{l'}\\
=\ &\sum_{s=1}^{l'-1}(L'_{s+1}-L'_s)+L'_1\\
=\ &\sum_{\substack{s=1,\dots,l'-1,\\s\in S_1}}(L'_{s+1}-L'_s)+\sum_{\substack{s=1,\dots,l'-1,\\s\notin S_1}}(L'_{s+1}-L'_s)+L'_1\\
\geq\ &|S_1|(\tau(q)+\beta)(d-2)+(l'-|S_1|)(\tau(q)-\delta_0)(d-2)-l'\log 5-\log\nu(2I_0)^q\\
=\ &\tau(q)l'(d-2)+\beta|S_1|(d-2)-\delta_0(l'-|S_1|)(d-2)-l'\log 5-\log\nu(2I_0)^q.
\end{align*}
From this estimate, $\widehat{\nu_{I_0}}=\nu|_{I_0}/\nu(2I_0)$ and Claim, we obtain that
\begin{equation*}
-\log \|(S_{2^{iD}}\angle(\widehat{\nu_{I_0}}))^{(l'd)}|_A\|_q^q\geq \tau(q)l'(d-2)+\beta\gamma l'(d-2)-\delta_0l'd-l'\log 5.
\end{equation*}
On the other hand, by (\ref{restricted_L^q_norm_to_A_proof_of_flattening}), we have
\begin{equation*}
-\log \|(S_{2^{iD}}\angle(\widehat{\nu_{I_0}}))^{(l'd)}|_A\|_q^q\leq (\tau(q)+(q+1)\delta_0)l'd.
\end{equation*}
Hence, by combining these two inequalities and (\ref{1/d0_smaller_than_delta0_proof_of_flattening}), we obtain that
\begin{equation}\label{resulting_contradiction_proof_of_flattening}
\frac{\beta\gamma}{2}\leq\beta\gamma\left(1-\frac{2}{d}\right)\leq \frac{2\tau(q)}{d}+(q+2)\delta_0+\frac{\log 5}{d}\leq (2\tau(q)+q+2+\log 5)\delta_0
\end{equation}
However, we recall that $\gamma=\gamma(M,\mu,q,\sigma)>0$ and $\beta=\beta(\mu,q,\kappa'/2)=\beta(M,\mu,q,\sigma)>0$ are determined only on $M,\mu,q,\sigma$. Hence, we can assume that $0<\delta_0\ll_{M,\mu,q,\sigma}1$ satisfies
\begin{equation}\label{delta0_much_smaller_than_betagamma_proof_of_flattening}
\delta_0<\frac{\beta\gamma}{3(2\tau(q)+q+2+\log 5)}.
\end{equation}
Then, (\ref{resulting_contradiction_proof_of_flattening}) contradicts (\ref{delta0_much_smaller_than_betagamma_proof_of_flattening}). Therefore, we derived a contradiction from the assumption (\ref{contradiction_assumption_proof_of_flattening}), and hence completed the proof of Theorem \ref{L^q_norm_flattening_theorem}.

\section{Proof of the main theorem}\label{section_proof_of_the_main_theorem}

In this final section, using the $L^q$ norm flattening Theorem \ref{L^q_norm_flattening_theorem}, we prove our main Theorem \ref{the_main_theorem_L^q_dim_Mobius_IFS}.

\subsection{The $L^q$ norm of $\mu_m$ at a finer scale}\label{subsection_L^q_norm_of_mum_finer scale}

In this section, we show the following proposition corresponding to \cite[Proposition 5.2]{Shm19}.
Here is the point at which we use the $L^q$ form flattening Theorem \ref{L^q_norm_flattening_theorem}, and this proposition leads the essential part of the main Theorem \ref{the_main_theorem_L^q_dim_Mobius_IFS} immediately.

\begin{prop}\label{the_L^q_morm_of_mum_at_a_finer_scale_proof_of_the_main_theorem}
Let $\A=\{A_i\}_{i\in\I}$ be a non-empty finite family of elements of $G$ which is uniformly hyperbolic. We take a probability measure $\mu=\sum_{i\in\I}p_i\delta_{A_i}$ on $G$ such that $\supp\ \mu=\A$ and write $\nu$ for the stationary measure of $\mu$ on $\RP^1$. We also write $\tau(q)\ (q>1)$ for the $L^q$ spectrum of $\nu$.
Let $q>1$ be such that $\tau(q)$ is differentiable at $q$ and assume that
\begin{equation*}
\tau(q)<q-1
\end{equation*}
and, for $\alpha=\tau'(q)$,
\begin{equation*}
\tau^*(\alpha)>0.
\end{equation*}
Then, for any $R\in\N$, we have
\begin{equation*}
\lim_{m\to\infty}\left(-\frac{1}{m}\log\|\mu_m^{(Rm)}\|_q^q\right)=\tau(q).
\end{equation*}
\end{prop}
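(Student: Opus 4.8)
The plan is to prove the two inequalities separately. For the easy direction, namely $\liminf_{m\to\infty}\left(-\frac1m\log\|\mu_m^{(Rm)}\|_q^q\right)\geq\tau(q)$, I would first recall that $\mu_m{\bm .}\delta_{x_0}=\pi_mP$ and $\nu=\pi P$, with $d_{\RP^1}(\pi(i),\pi_m(i))\leq C_1\pi 2^{-m}$ by (\ref{pi_pi_m}). Using Lemma \ref{L^q_norms_of_two_partitions}, the $L^q$ norm of $\mu_m$ at the coarse scale $2^{-m}$ is comparable to $\|\nu^{(m)}\|_q^q$, which decays like $2^{-\tau(q)m}$. To control the finer scale $2^{-Rm}$, I would run the same argument as in the proof of Proposition \ref{existence_of_limit_of_L^q_dim} (the subadditivity estimate), iterating the relation $\nu=\mu_{m}{\bm .}\nu$ and splitting at intermediate scales $2^{-jm}$ for $j=1,\dots,R$; each step contributes a factor comparable to $\|\nu^{(m)}\|_q^q\leq 2^{-(\tau(q)-o(1))m}$, so altogether $\|\mu_m^{(Rm)}\|_q^q\leq 2^{-(\tau(q)-o(1))Rm}\cdot(\text{something})$. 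Actually the cleaner route is to note directly $\|\mu_m^{(Rm)}\|_q^q=\Theta(1)\|(\pi_m P)^{(Rm)}\|_q^q$ and compare $\pi_m P$ to $\nu$ at scale $2^{-Rm}$ via (\ref{pi_pi_m}) and Lemma \ref{L^q_norms_of_two_partitions}: since $\pi_m$ and $\pi$ agree up to $O(2^{-m})\gg 2^{-Rm}$ only coarsely, I instead bound $\|(\pi_m P)^{(Rm)}\|_q^q$ by decomposing according to $\Omega_m$-cylinders and using $\nu=\mu_m{\bm.}\nu$ together with the local $L^q$ norm Lemma \ref{local_L^q_norm_lemma} applied at scale $(R-1)m$ inside each $2^{-m}$ piece, giving the factor $2^{-(\tau(q)-\delta)(R-1)m}$ times $\|\nu^{(m)}\|_q^q\leq 2^{-(\tau(q)-\delta)m}$. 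This yields the $\liminf$ bound.

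For the hard direction, $\limsup_{m\to\infty}\left(-\frac1m\log\|\mu_m^{(Rm)}\|_q^q\right)\leq\tau(q)$, the idea following Shmerkin is to argue by contradiction: suppose along a subsequence $\|\mu_m^{(Rm)}\|_q^q\leq 2^{-(\tau(q)+\varepsilon_0)m}$ for some $\varepsilon_0>0$. I would then use the self-replicating structure $\nu=\mu_m{\bm.}\nu$ to write $\nu$ (at scale roughly $2^{-(R+1)m}$, say) as an average over $\Omega_m$ of pushforwards $A_i\nu$, i.e. $\nu=\int_G g{\bm.}\nu\,d\mu_m(g)$, and observe that $\|\nu^{((R+1)m)}\|_q^q$ is, up to the usual $\Theta(1)$ and partition-comparison factors, controlled by an average of $\|(g{\bm.}\nu)^{((R+1)m)}\|_q^q$, which by the contraction property (Corollary \ref{contraction_on_U_by_A_preliminaries}, $\|A_i\|^2\sim 2^m$ for $i\in\Omega_m$) is comparable to $\|\nu^{(Rm)}\|_q^q$. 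The point is that $\mu_m$ having a very flat $L^q$ norm at scale $2^{-Rm}$ should, via the $L^q$ norm flattening Theorem \ref{L^q_norm_flattening_theorem}, force $\|(\mu_m{\bm.}\nu)\|_q^q$ to be strictly smaller than the expected $2^{-\tau(q)\cdot(\cdot)}$ value; but $\mu_m{\bm.}\nu=\nu$, contradicting the definition of $\tau(q)$ (and the existence of the limit, Proposition \ref{existence_of_limit_of_L^q_dim}). Concretely, I would apply Theorem \ref{L^q_norm_flattening_theorem} with $\theta=\mu_m$ (which satisfies the hypotheses: $\diam\supp\mu_m$ is bounded, $\|A_i\|^2\asymp 2^m$ so $r=m$, and $u^-_{A_i}\notin U_1$ for $m$ large by Lemma \ref{center_of_expansion_notin_U}), and with the flatness parameter $\sigma=\varepsilon_0$: the hypothesis $\|\mu_m^{(?)}\|_q^q\leq 2^{-\sigma\cdot?}$ must be arranged at the correct scale, which is where the factor $R$ enters — I would take the scale in Theorem \ref{L^q_norm_flattening_theorem} to be $m'$ with $m'+r=m'+m=(R)m$ roughly, i.e.\ $m'=(R-1)m$, so that flatness of $\mu_m$ at scale $2^{-Rm}$ translates to the required flatness at scale $2^{-m'}$ up to passing from $\mu_m$ to $\mu_{m'}$-type objects.

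The cleanest packaging is probably: fix $R$, let $m\to\infty$, set $k=(R-1)m$; then by the $\liminf$ direction and the hypothesis-for-contradiction, $\mu_{?}$ has flat $L^q$ norm, and Theorem \ref{L^q_norm_flattening_theorem} applied to $\theta=\mu_k$ (with $r\asymp k$, constants $C,L$ absolute) gives $\|(\mu_k{\bm.}\nu)^{(k+r)}\|_q^q\leq 2^{-(\tau(q)+\varepsilon)k}$ for some $\varepsilon>0$; but $\mu_k{\bm.}\nu=\nu$ and $k+r\asymp$ the right scale, so $\|\nu^{(\text{that scale})}\|_q^q\leq 2^{-(\tau(q)+\varepsilon')\cdot(\text{scale})}$, contradicting $\lim_{M\to\infty}\left(-\frac1M\log\|\nu^{(M)}\|_q^q\right)=\tau(q)$ from Proposition \ref{existence_of_limit_of_L^q_dim}. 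The main obstacle I anticipate is the bookkeeping of scales: matching the scale $Rm$ at which $\mu_m$ is measured, the scale $m$ (i.e.\ the effective contraction ratio $r$) of the action of $\Omega_m$-words, and the scale $m'+r$ appearing in the conclusion of Theorem \ref{L^q_norm_flattening_theorem}, while simultaneously verifying that $\mu_m$ (or $\mu_{m'}$) genuinely satisfies the flatness hypothesis $\|\theta^{(m')}\|_q^q\leq 2^{-\sigma m'}$ — this requires care because the flatness of $\mu_m$ is only assumed at scale $Rm$, not at scale $m'$, and one must use Lemma \ref{L^q_norms_of_two_partitions} and the structure of $\Omega_m$ (each $\Omega_m$-word giving $\diam A_iK\leq C_12^{-m}$) to transfer it correctly. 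A secondary subtlety is ensuring the hypotheses $\tau(q)<q-1$ and $\tau^*(\alpha)>0$ (needed for Theorem \ref{L^q_norm_flattening_theorem}) are exactly the ones assumed here, which they are, so no extra work is needed there.
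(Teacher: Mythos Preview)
Your hard direction has a genuine gap: you claim that Theorem \ref{L^q_norm_flattening_theorem} can be applied with $\theta=\mu_m$ (or $\theta=\mu_k$), asserting that ``$\diam\supp\mu_m$ is bounded''. This is false. The support of $\mu_m$ is $\{A_i:i\in\Omega_m\}$; although $\|A_i\|^2\asymp 2^m$ for all $i\in\Omega_m$, in the left-invariant metric $d_G$ the diameter of this set is \emph{not} bounded as $m\to\infty$ (for instance, if $A_1,A_2\in\A$ have distinct attracting fixed points, then $d_G(A_1^{n_1},A_2^{n_2})=d_G(1_G,A_1^{-n_1}A_2^{n_2})$ grows with $m$). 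Thus the hypothesis $\diam\supp\theta\leq L$ in Theorem \ref{L^q_norm_flattening_theorem} is violated, and the theorem cannot be invoked globally.

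The paper handles this by decomposing $\mu_m$ according to the $\D_m$-cell containing $f_{x_0}(A_i)=A_ix_0$, and proving (Lemma \ref{bounded_supp_of_restrictions_of_mum_proof_of_the_main_theorem}) that each \emph{restriction} $(\mu_m)_{f_{x_0}^{-1}I}$ has support diameter bounded by a constant $L=L(\A)$ independent of $m$. The flattening theorem is then applied to each restricted measure $(\mu_m)_{f_{x_0}^{-1}I}$ separately, with $r=m$ and scale parameter $Rm$, but only on the set $\D'$ of those $I$ for which $\|((\mu_m)_{f_{x_0}^{-1}I})^{(Rm)}\|_q^q\leq 2^{-\sigma m}$. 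On the complement of $\D'$ the restricted measures are \emph{not} flat, and it is precisely this non-flatness that yields the desired lower bound on $\|\mu_m^{(Rm)}\|_q^q$ via
\[
\|\mu_m^{(Rm)}\|_q^q\geq\sum_{I\notin\D'}\mu_m(f_{x_0}^{-1}I)^q\|((\mu_m)_{f_{x_0}^{-1}I})^{(Rm)}\|_q^q\geq 2^{-\sigma m}\sum_{I\notin\D'}\mu_m(f_{x_0}^{-1}I)^q,
\]
combined with a lower bound on $\sum_{I\notin\D'}\mu_m(f_{x_0}^{-1}I)^q$ extracted from $\|\nu^{((R+1)m)}\|_q^q\sim 2^{-\tau(q)(R+1)m}$ via (\ref{estimate_1_existence_limit_Lq_norm}). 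Your proposal misses this localisation and the flat/non-flat dichotomy entirely. A smaller issue: in the easy direction you write $\|\mu_m^{(Rm)}\|_q^q=\Theta(1)\|(\pi_m P)^{(Rm)}\|_q^q$, which conflates the $L^q$ norm on $G$ (with respect to $\D^G_{Rm}$) with one on $\RP^1$; the paper's argument is simpler and one-sided, bounding $\|\mu_m^{(Rm)}\|_q^q$ above by $\|(f_{x_0}\mu_m)^{(m)}\|_q^q$ at the coarse scale $m$ directly.
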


In the statement above, we recall that, for $m\in\N$, we have defined
\begin{equation*}
\Omega_m=\left\{i=(i_1,\dots,i_n)\in \I^*\left|\ \|A_{i_1}\|^2<2^m,\dots,\|A_{i_1}\cdots A_{i_{n-1}}\|^2<2^m,\|A_{i_1}\cdots A_{i_n}\|^2\geq 2^m\right.\right\}
\end{equation*}
and
\begin{equation*}
\mu_m=\sum_{i\in\Omega_m}p_i\delta_{A_i}
\end{equation*}
in Section \ref{subsection_stationary_measures}. It is easily seen that there is $C=C(\A)>1$ such that
\begin{equation}\label{norm_Omegam_proof_of_the_main_theorem}
2^m\leq\|A_i\|^2\leq C2^m
\end{equation}
for any $i\in\Omega_m$ (we are using the finiteness of $\A$).
We also notice that the $L^q$ norm $\|\mu_m^{(Rm)}\|_q$ in the statement is that with respect to the $2^{-Rm}$ dyadic-like partition $\D^G_{Rm}$ of $G$.

Let $K\subset\RP^1$ be the attractor of $\A$ and we fix one point $x_0\in K$. We recall that we have defined a map $f_{x_0}:G\ni g\mapsto gx_0\in\RP^1$. To prove Proposition \ref{the_L^q_morm_of_mum_at_a_finer_scale_proof_of_the_main_theorem}, we need the following lemma.

\begin{lem}\label{bounded_supp_of_restrictions_of_mum_proof_of_the_main_theorem}
In the setting of Proposition \ref{the_L^q_morm_of_mum_at_a_finer_scale_proof_of_the_main_theorem}, there exists a constant $L=L(\A)>1$ determined only by $\A$ such that, for any $m\in\N$ and $I\in\D_m$ with $\mu_m(f_{x_0}^{-1}I)>0$, we have
\begin{equation*}
\diam\ \supp\ (\mu_m)_{f_{x_0}^{-1}I}\leq L.
\end{equation*}
\end{lem}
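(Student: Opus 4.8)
The plan is to show that for any word $i\in\Omega_m$ contributing to $\mu_m(f_{x_0}^{-1}I)$, the matrix $A_i$ is pinned down up to a bounded error in $G$, so that the support of $(\mu_m)_{f_{x_0}^{-1}I}$ lies in a set of diameter $O_\A(1)$. First I would recall the polar decomposition $A_i = k_i a_i k_i'$ (Cartan/KAK), where $k_i,k_i'\in\SO(2)$ and $a_i=\mathrm{diag}(\lambda^+_{A_i},\lambda^-_{A_i})$. The middle piece $a_i$ is determined up to a bounded factor by $\|A_i\|^2=(\lambda^+_{A_i})^2$, which by (\ref{norm_Omegam_proof_of_the_main_theorem}) satisfies $2^m\le\|A_i\|^2\le C2^m$ for all $i\in\Omega_m$; hence $d_G(a_i,a_j)$ is controlled whenever $\|A_i\|,\|A_j\|$ agree up to a bounded ratio, once we conjugate/translate so that $a_i$ sits in a fixed fundamental piece. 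Actually it is cleaner to parametrize $A_i$ by the triple $(u^+_{A_i}, v^+_{A_i}, \lambda^+_{A_i})\in\RP^1\times\RP^1\times\R_{>0}$: since the left-invariant metric $d_G$ is, locally, comparable to a product metric in these coordinates after normalizing the scale, bounding the variation of each coordinate over $\{A_i : i\in\Omega_m,\ A_ix_0\in I\}$ bounds $\diam\ \supp\ (\mu_m)_{f_{x_0}^{-1}I}$.

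Next I would control the two projective parameters. The key geometric input is that $\A$ is uniformly hyperbolic, so by Lemma \ref{center_of_expansion_notin_U} we may assume $m\gg_\A 1$ and then $u^-_{A_i}\notin U_1$ for all $i\in\Omega_m$ (the finitely many short words are absorbed into the constant $L$). For the contracting direction: $v^+_{A_i}$ is the ``direction of contraction'' of the image; by Lemma \ref{contraction_bounded_distortion} (with $\varepsilon$ fixed by $U_1\subset U_0$) the whole of $\RP^1\setminus B_\varepsilon(u^-_{A_i})\supset K$ is mapped by $A_i$ into a ball of radius $O_\A(\|A_i\|^{-2})$ around $v^+_{A_i}$; since $x_0\in K$ and $A_ix_0\in I$, an interval of length $\pi2^{-m}$, and $\|A_i\|^{-2}=\Theta_\A(2^{-m})$ by (\ref{norm_Omegam_proof_of_the_main_theorem}), we get $d_{\RP^1}(v^+_{A_i},I)=O_\A(2^{-m})$, hence $v^+_{A_i}$ lies in a fixed $O_\A(2^{-m})$-neighborhood of the center of $I$, which has diameter $O_\A(2^{-m})$ — much smaller than what we need. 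For the expanding direction $u^+_{A_i}$: here I would use the invariant interval. Writing $i=(i_1,\dots,i_n)$ with prefix $i'=(i_1,\dots,i_{n-1})\in\I^*$, we have $\|A_{i'}\|^2<2^m$, and $A_i = A_{i'}A_{i_n}$ maps $\overline{U_0}$ into $A_{i'}U_0\subset$ a bounded region; more to the point, $u^+_{A_i}$ is the $A_i^{-1}$-image direction, i.e.\ $u^+_{A_i}$ is close to where $A_i^{-1}$ (equivalently the transpose) expands. The clean statement is: $u^+_{A_i} = v^+_{A_i^{-1}}$ up to the $\SO(2)$-ambiguity, and $A_i^{-1}$ is again a product of uniformly contracting maps \emph{of the inverse IFS}, which is uniformly hyperbolic with its own invariant open set $U_0^*$; applying Lemma \ref{contraction_bounded_distortion} to $A_i^{-1}$ shows $u^+_{A_i}$ lies in a fixed compact subset $\overline{U_0^*}\subsetneq\RP^1$ bounded away from $\RP^1\setminus U_0^*$, uniformly in $i\in\Omega_m$. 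In particular $u^+_{A_i}$ ranges over a set of diameter at most $\pi$, trivially bounded — the point is only that it stays in a compact coordinate chart so that the parametrization of $G$ by $(u^+,v^+,\log\lambda^+)$ is bi-Lipschitz there with constants depending only on $\A$.

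Assembling: over $\{i\in\Omega_m : A_ix_0\in I\}$ the parameter $\lambda^+_{A_i}$ varies within a factor $\sqrt{C}$ (so $\log\lambda^+_{A_i}$ varies by $O_\A(1)$), the parameter $v^+_{A_i}$ varies by $O_\A(2^{-m})$, and $u^+_{A_i}$ lies in a fixed compact chart of $\RP^1$; transporting back through the bi-Lipschitz coordinates on $G$ (valid on the relevant compact region with constant $O_\A(1)$, using the bi-Lipschitz equivalence of $d_G$ with the norm metric near each point) gives $\diam\{A_i : i\in\Omega_m,\ A_ix_0\in I\}\le L$ for a constant $L=L(\A)$, and since $\supp\ (\mu_m)_{f_{x_0}^{-1}I}\subset\{A_i:i\in\Omega_m,\ f_{x_0}(A_i)\in I\}$ this is exactly the claim. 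The main obstacle I anticipate is making the coordinate argument for $u^+_{A_i}$ genuinely uniform: one must verify that the inverse family $\{A_i^{-1}\}$ is uniformly hyperbolic (which follows from Proposition \ref{proposition_Möbius_uniformly_hyperbolic_SL_2(R)} applied to $\RP^1\setminus\overline{U}$, an open set mapped into itself by the $A_i^{-1}$) and then that Lemma \ref{contraction_bounded_distortion}, applied to arbitrarily long products $A_i^{-1}$, confines $u^+_{A_i}=v^+_{A_i^{-1}}$ to a set bounded away from the boundary of that chart — i.e.\ the same argument as Lemma \ref{center_of_expansion_notin_U} run for the inverse IFS. Everything else is bookkeeping with the constants already fixed for $\A$ in (\ref{domains_the_Möbius_IFS_acts_on_preliminaries}).
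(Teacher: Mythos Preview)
Your strategy of parametrizing $A_i$ by singular data $(u^+,v^+,\lambda^+)$ is sound in principle, but there is a real gap in the transport step: the claim that these coordinates are bi-Lipschitz with constant $O_\A(1)$ on ``the relevant compact region'' is false as stated. The matrices $A_i$ with $i\in\Omega_m$ go to infinity in $G$ as $m\to\infty$, so there is no fixed compact region, and the left-invariant metric is \emph{not} uniformly comparable to a product metric in these coordinates. Concretely, if $g=k_1\,\mathrm{diag}(\lambda,\lambda^{-1})\,k_2$ and one perturbs $k_1$ (equivalently $v_g^+$) by a rotation $r_\theta$, then $d_G(g,r_\theta g)=d_G(1_G,g^{-1}r_\theta g)$ and $g^{-1}r_\theta g=k_2^{-1}a^{-1}r_\theta a\,k_2$ has $\|a^{-1}r_\theta a-1_G\|\asymp\lambda^2|\theta|$. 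With $\lambda^2=\Theta_\A(2^m)$, the Lipschitz constant in the $v^+$-direction blows up like $2^m$. Your remark that $v^+_{A_i}$ varies by $O_\A(2^{-m})$, ``much smaller than what we need'', is therefore exactly backwards: that $O(2^{-m})$ bound is \emph{precisely} what is needed to cancel the $2^m$ scaling, and without identifying this cancellation explicitly the argument does not close.

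The paper's proof does this computation directly and never touches $u^+$. Fixing $g_0\in E_I$, it writes $v_g^\pm$ in the orthonormal basis $\{v_{g_0}^+,v_{g_0}^-\}$ (off-diagonal coefficients $O_\A(2^{-m})$, since both $v^+$'s lie within $O_\A(2^{-m})$ of $I$) and then evaluates $g_0^{-1}gu$ for an arbitrary unit vector $u$; the dangerous term is $\|g_0\|\cdot\|g\|\cdot O_\A(2^{-m})=O_\A(1)$, which is exactly the cancellation above made explicit in matrix form. No control on $u_g^+$ is used, so your inverse-IFS digression is unnecessary --- and it contains a slip: one has $v^+_{A_i^{-1}}=u^-_{A_i}$, not $u^+_{A_i}$. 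If you did want $u^+_{A_i}$ confined to a fixed compact set, Lemma~\ref{center_of_expansion_notin_U} already gives $u^-_{A_i}\notin U_1$, hence $u^+_{A_i}$ lies in the $\pi/2$-rotate of $\RP^1\setminus U_1$, without invoking the inverse system.
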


\begin{proof}
It is sufficient to show that there is a constant $L=L(\A)>1$ satisfying the following. For any sufficiently large $m\in\N, m\gg_\A1$ in terms of $\A$ and any $I\in\D_m$, if we define
\begin{equation*}
E_I=\left\{A_i\left|\ i\in\Omega_m, A_ix_0\in I\right.\right\},
\end{equation*}
then we have
\begin{equation}\label{bounded_diam_of_EI_proof_of_the_main_theorem}
\diam\ E_I\leq L.
\end{equation}

We fix $g_0\in E_I$ and take open sets $U\subset U_1\subsetneq\RP^1$ as in (\ref{domains_the_Möbius_IFS_acts_on_preliminaries}) for $\A$. We have $x_0\in K\subset U$ and, by Lemma \ref{center_of_expansion_notin_U}, if $m\gg_{\A}1$ is sufficiently large, $u_g^-\notin U_1$ for any $g\in E_I$. These facts, (\ref{norm_Omegam_proof_of_the_main_theorem}) and (\ref{derivative_action_of_G_preliminaries}) tell us that
\begin{equation*}
d_{\RP^1}(gx_0,v_g^+)\leq O_\A(2^{-m})
\end{equation*}
for any $g\in E_I$. Since $gx_0, g_0x_0\in I$ and $\diam\ I=\pi 2^{-m}$, the above inequalities for $g\in E_I$ and $g_0$ imply
\begin{equation}\label{vg+_vg0+_2^-m_close_proof_or_the_main_theorem}
d_{\RP^1}(v_g^+,v_{g_0}^+)\leq O_\A(2^{-m})
\end{equation}
for any $g\in E_I$.

Let $g\in E_I$ and
we identify $u_g^+,u_g^-,u_{g_0}^+,u_{g_0}^-,v_g^+,v_g^-,v_{g_0}^+,v_{g_0}^-\in\RP^1$ with the unit vectors in $\R^2$ with each direction. As we have seen in Section \ref{subsection_G_action_on_RP^1}, each $\pm$ pair forms an orthonormal basis of $\R^2$ and
\begin{equation}\label{contraction_expasion_g_g_0_proof_of_the_main_theorem}
	gu_g^+=\|g\|v_g^+,\quad gu_g^-=\|g\|^{-1}v_g^-,\quad g_0u_{g_0}^+=\|g_0\|v_{g_0}^+,\quad g_0u_{g_0}^-=\|g_0\|^{-1}v_{g_0}^-.
\end{equation}
We write
\begin{equation*}
	v_g^+=a^+v_{g_0}^++b^+v_{g_0}^-,\quad v_g^-=a^-v_{g_0}^++b^-v_{g_0}^-,\quad (a^+)^2+(b^+)^2=(a^-)^2+(b^-)^2=1.
\end{equation*}
Then, by the definition of the metric on $\RP^1$ and (\ref{vg+_vg0+_2^-m_close_proof_or_the_main_theorem}), we have\footnote{For $u,v\in\R^2$, we write $\langle u,v\rangle$ for the Euclidean inner product on $\R^2$}
\begin{equation}\label{estimate_a_+_proof_of_the_main_theorem}
	|a^+|=\left|\langle v_g^+,v_{g_0}^+\rangle\right|=|\cos( d_{\RP^1}(v_g^+,v_{g_0}^+))|=1-O_\A(2^{-2m}),
\end{equation}
and hence
\begin{equation}\label{estimate_b_+_proof_of_the_main_theorem}
	|b^+|=\sqrt{1-|a^+|^2}=O_\A(2^{-m}).
\end{equation}
Since $v_g^+$ and $v_g^-$, $v_{g_0}^+$ and $v_{g_0}^-$ are orthogonal, we have by (\ref{vg+_vg0+_2^-m_close_proof_or_the_main_theorem}) that $d_{\RP^1}(v_g^-,v_{g_0}^-)=d_{\RP^1}(v_g^+,v_{g_0}^+)\leq O_\A(2^{-m})$. Hence, from the same argument, we obtain that
\begin{equation}\label{estimate_a^-_b^-_proof_of_the_main_theorem}
	|b^-|=1-O_\A(2^{-2m}),\quad |a^-|=O_\A(2^{-m}).
\end{equation}

Here, we estimate $\|g_0^{-1}g\|$. Let $u$ be an arbitrary element in $\R^2$ such that $\|u\|=1$. By using the orthonormal basis $\{u_g^+,u_g^-\}$, we write $u=su_g^++tu_g^-, s^2+t^2=1$. From (\ref{contraction_expasion_g_g_0_proof_of_the_main_theorem}), it follows that
\begin{equation*}
	gu=sgu_g^++tgu_g^-=\|g\|sv_g^++\|g\|^{-1}tv_g^-=(\|g\|a^+s+\|g\|^{-1}a^-t)v_{g_0}^++(\|g\|b^+s+\|g\|^{-1}b^-t)v_{g_0}^-,
\end{equation*}
and hence
\begin{align*}
	g_0^{-1}gu=&\ (\|g\|a^+s+\|g\|^{-1}a^-t)g_0^{-1}v_{g_0}^++(\|g\|b^+s+\|g\|^{-1}b^-t)g_0^{-1}v_{g_0}^-\\
	=&\ (\|g\|\|g_0\|^{-1}a^+s+\|g\|^{-1}\|g_0\|^{-1}a^-t)u_{g_0}^++(\|g\|\|g_0\|b^+s+\|g\|^{-1}\|g_0\|b^-t)u_{g_0}^-.
\end{align*}
So, by using (\ref{norm_Omegam_proof_of_the_main_theorem}), (\ref{estimate_a_+_proof_of_the_main_theorem}), (\ref{estimate_b_+_proof_of_the_main_theorem}) and (\ref{estimate_a^-_b^-_proof_of_the_main_theorem}), we have
\begin{align*}
	\|g_0^{-1}gu\|=&\ O\left(|\|g\|\|g_0\|^{-1}a^+s+\|g\|^{-1}\|g_0\|^{-1}a^-t|+|\|g\|\|g_0\|b^+s+\|g\|^{-1}\|g_0\|b^-t|\right)\\
	\leq&\ O_\A\left(|s|+|t|+2^m2^{-m}|s|+|t|\right)\\
	\leq&\ O_\A(1).
\end{align*}
Hence, we obtain that
\begin{equation*}
	\|g_0^{-1}g\|\leq O_\A(1).
\end{equation*}
for any $g\in E_I$. This fact tells us that $E_I\subset g_0\left\{h\in G\left|\ \|h\|\leq O_\A(1)\right.\right\}$ and, since the metric $d_G$ on $G$ is left-invariant, $\diam\ E_I\leq\diam\ \left\{h\in G\left|\ \|h\|\leq O_\A(1)\right.\right\}\leq O_\A(1)$. This is (\ref{bounded_diam_of_EI_proof_of_the_main_theorem}) and we complete the proof.
\end{proof}

Here, we begin the proof of Proposition \ref{the_L^q_morm_of_mum_at_a_finer_scale_proof_of_the_main_theorem}.

\begin{proof}[Proof of Proposition \ref{the_L^q_morm_of_mum_at_a_finer_scale_proof_of_the_main_theorem}]
Let $R\in\N$. We first see $\liminf_{m\to\infty}\left(-m^{-1}\log\|\mu_m^{(Rm)}\|_q^q\right)\geq \tau(q)$, which is the trivial part.
For sufficiently large $m\in\N$, we can see from Lemma \ref{center_of_expansion_notin_U} and Corollary \ref{Lipschitz_continuity_of_the_action} that, for each $\xi\in\D^G_{Rm}$ (or any subset $\xi\subset G$ such that $\diam\ \xi$ is sufficiently small in terms of $\A$), there is $I\in\D_m$ such that $\supp\ \mu_m\cap \xi\subset f_{x_0}^{-1}(I^{(-)}\sqcup I\sqcup I^{(+)})$, where $I^{(+)}$ and $I^{(-)}$ are the two neighboring intervals in $\D_m$ of $I$. Hence, we have
\begin{align*}
\sum_{I\in\D_m}\mu_m(f_{x_0}^{-1}I)^q&\geq 3^{-q}\sum_{I\in\D_m}\mu_m\left(f_{x_0}^{-1}(I^{(-)}\sqcup I\sqcup I^{(+)})\right)^q\\
&\geq 3^{-q}\sum_{I\in\D_m}\left(\sum_{\xi\in\D^G_{Rm},\ \supp\ \mu_m\cap\xi\subset f_{x_0}^{-1}(I^{(-)}\sqcup I\sqcup I^{(+)})}\mu_m(\xi)\right)^q\\
&\geq 3^{-q}\sum_{I\in\D_m}\sum_{\xi\in\D^G_{Rm},\ \supp\ \mu_m\cap\xi\subset f_{x_0}^{-1}(I^{(-)}\sqcup I\sqcup I^{(+)})}\mu_m(\xi)^q\\
&\geq 3^{-q}\|\mu_m^{(Rm)}\|_q^q.
\end{align*}
Furthermore, for the coding map $\pi:\I^\N\to K$, the $2^m$ stopping coding map $\pi_m:\I^\N\to K$ and the Bernoulli measure $P$ on $\I^\N$ associated to $(p_i)_{i\in\I}$, we have $\nu=\pi P$ and $f_{x_0}\mu_m=\pi_m P$. From these facts, (\ref{pi_pi_m}) and Lemma \ref{L^q_norms_of_two_partitions}, it follows that
\begin{equation}\label{fx0mum_nu_2^-m_L^q_norm_comparing_proof_of_the_main_theorem}
\sum_{I\in\D_m}\mu_m(f_{x_0}^{-1}I)^q=\sum_{I\in\D_m}P(\pi_m^{-1}I)^q\leq O_{\A,q}(1)\sum_{I\in\D_m}P(\pi^{-1}I)^q=O_{\A,q}(1)\|\nu^{(m)}\|_q^q.
\end{equation}
By combining these two inequalities, we obtain $\|\mu_m^{(Rm)}\|_q^q\leq O_{\A,q}(1)\|\nu^{(m)}\|_q^q$, and hence
\begin{equation}\label{L^q_spectrum_mum_finer_scale_trivial_bound_proof_of_the_main_theorem}
\liminf_{m\to\infty}\left(-\frac{1}{m}\log\|\mu_m^{(Rm)}\|_q^q\right)\geq\lim_{m\to\infty}\left(-\frac{1}{m}\log\|\nu^{(m)}\|_q^q\right)=\tau(q).
\end{equation}

We show $\limsup_{m\to\infty}\left(-m^{-1}\log\|\mu_m^{(Rm)}\|_q^q\right)\leq \tau(q)$. We take arbitrarily small $\sigma>0$.
First, we recall (\ref{estimate_1_existence_limit_Lq_norm}) in the proof of Proposition \ref{existence_of_limit_of_L^q_dim}. For sufficiently large $m\in\N$, we apply (\ref{estimate_1_existence_limit_Lq_norm}) to $m$ and $Rm$ in place of $m$ and $n$, respectively, and obtain that
\begin{align}\label{self_conformality_of_nu_at_a_finer_scale_proof_of_the_main_theorem}
\|\nu^{((R+1)m)}\|_q^q&\leq O_{\A,q}(1)\sum_{I\in\D_m}\sum_{J\in\D_{(R+1)m}}\left(\int_{f_{x_0}^{-1}I}\nu(A^{-1}J)\ d\mu_m(A)\right)^q\nonumber\\
&=O_{\A,q}(1)\sum_{I\in\D_m}\|(\mu_m|_{f_{x_0}^{-1}I}{\bm .}\nu)^{((R+1)m)}\|_q^q\nonumber\\
&=O_{\A,q}(1)\sum_{I\in\D_m,\  \mu_m(f_{x_0}^{-1}I)>0}\mu_m(f_{x_0}^{-1}I)^q\|((\mu_m)_{f_{x_0}^{-1}I}{\bm .}\nu)^{((R+1)m)}\|_q^q.
\end{align}

Here, we define
\begin{equation*}
\D'=\left\{I\in\D_m\left|\ \mu_m(f_{x_0}^{-1}I)>0, \|((\mu_m)_{f_{x_0}^{-1}I})^{(Rm)}\|_q^q\leq 2^{-\sigma m}\right.\right\}.
\end{equation*}
We notice that, by the assumption that $\tau(q)$ is differentiable at $q$, $\tau(q)<q-1$ and $\tau^*(\alpha)>0$, we can apply the $L^q$ norm flattening Theorem \ref{L^q_norm_flattening_theorem} to our $\mu$ and $q$. We take a constant $\varepsilon=\varepsilon(M,\mu,q,\sigma/R)>0$ obtained from Theorem \ref{L^q_norm_flattening_theorem} for $\mu$, $q$ and $\sigma/R>0$. Let $C=C(\A), L=L(\A)>1$ be the constants in (\ref{norm_Omegam_proof_of_the_main_theorem}) and Lemma \ref{bounded_supp_of_restrictions_of_mum_proof_of_the_main_theorem}, respectively, and assume that $m\gg_{M,\mu,q,\sigma/R,\varepsilon,C,L}1$, that is, $m$ is sufficiently large in terms only of $M,\mu,q$ and $\sigma/R$.
For any $I\in\D_m$ with $\mu_m(f_{x_0}^{-1}I)>0$, by Lemma \ref{bounded_supp_of_restrictions_of_mum_proof_of_the_main_theorem}, (\ref{norm_Omegam_proof_of_the_main_theorem}) and Lemma \ref{center_of_expansion_notin_U}, we have $\diam\ \supp\ (\mu_m)_{f_{x_0}^{-1}I}\leq L$, $2^m\leq \|g\|^2\leq C2^m$ and $u_g^-\notin U_1$ for every $g\in\supp\ (\mu_m)_{f_{x_0}^{-1}I}$.
Hence, for each $I\in\D'$, we can apply Theorem \ref{L^q_norm_flattening_theorem} to $\theta=(\mu_m)_{f_{x_0}^{-1}I}$, $r=m$ and $Rm$ in place of $\theta$, $r$ and $m$, respectively.
Therefore, we obtain that
\begin{equation}\label{application_of_the_L^q_norm_flattening_proof_of_the_main_theorem}
\|((\mu_m)_{f_{x_0}^{-1}I}{\bm .}\nu)^{((R+1)m)}\|_q^q\leq 2^{-(\tau(q)+\varepsilon)Rm}
\end{equation}
for each $I\in\D'$.

By (\ref{self_conformality_of_nu_at_a_finer_scale_proof_of_the_main_theorem}), (\ref{application_of_the_L^q_norm_flattening_proof_of_the_main_theorem}) and (\ref{fx0mum_nu_2^-m_L^q_norm_comparing_proof_of_the_main_theorem}), we have
\begin{align}\label{flattened_L^q_norms_application_proof_of_the_main_theorem}
&\|\nu^{((R+1)m)}\|_q^q\nonumber\\
\leq\ &
O_{\A,q}(1)\sum_{I\in\D_m,\  \mu_m(f_{x_0}^{-1}I)>0}\mu_m(f_{x_0}^{-1}I)^q\|((\mu_m)_{f_{x_0}^{-1}I}{\bm .}\nu)^{((R+1)m)}\|_q^q\nonumber\\
\leq\ &O_{\A,q}(1)\sum_{I\in\D'}\mu_m(f_{x_0}^{-1}I)^q\|((\mu_m)_{f_{x_0}^{-1}I}{\bm .}\nu)^{((R+1)m)}\|_q^q\nonumber\\
&+O_{\A,q}(1)\sum_{I\in\D_m\setminus\D',\  \mu_m(f_{x_0}^{-1}I)>0}\mu_m(f_{x_0}^{-1}I)^q\|((\mu_m)_{f_{x_0}^{-1}I}{\bm .}\nu)^{((R+1)m)}\|_q^q\nonumber\\
\leq\ &O_{\A,q}(1)2^{-(\tau(q)+\varepsilon)Rm}\sum_{I\in\D'}\mu_m(f_{x_0}^{-1}I)^q
+O_{\A,q}(1)\|\nu^{(Rm)}\|_q^q\sum_{I\in\D_m\setminus\D',\  \mu_m(f_{x_0}^{-1}I)>0}\mu_m(f_{x_0}^{-1}I)^q\nonumber\\
\leq\ &O_{\A,q}(1)2^{-(\tau(q)+\varepsilon)Rm}\sum_{I\in\D_m}\mu_m(f_{x_0}^{-1}I)^q
+O_{\A,q}(1)\|\nu^{(Rm)}\|_q^q\sum_{I\in\D_m\setminus\D',\  \mu_m(f_{x_0}^{-1}I)>0}\mu_m(f_{x_0}^{-1}I)^q\nonumber\\
\leq\ &O_{\A,q}(1)2^{-(\tau(q)+\varepsilon)Rm}\|\nu^{(m)}\|_q^q+O_{\A,q}(1)\|\nu^{(Rm)}\|_q^q\sum_{I\in\D_m\setminus\D',\  \mu_m(f_{x_0}^{-1}I)>0}\mu_m(f_{x_0}^{-1}I)^q.
\end{align}
In the third inequality, we also used the fact that
\begin{align*}
\|((\mu_m)_{f_{x_0}^{-1}I}{\bm .}\nu)^{((R+1)m)}\|_q^q&=\sum_{J\in\D_{(R+1)m}}\left(\int_G\nu(g^{-1}J)\ d(\mu_m)_{f_{x_0}^{-1}I}(g)\right)^q\\
&\leq \int_G\sum_{J\in\D_{(R+1)m}}\nu(g^{-1}J)^q\ d(\mu_m)_{f_{x_0}^{-1}I}(g)\\
&\leq O_{\A,q}(1)\|\nu^{(Rm)}\|_q^q,
\end{align*}
which follows from the argument we have repeatedly used (e.g., in the proof of Proposition \ref{existence_of_limit_of_L^q_dim} or Lemma \ref{local_L^q_norm_lemma}).
Furthermore, by Proposition \ref{existence_of_limit_of_L^q_dim}, we have
\begin{equation*}
2^{-(\tau(q)+\varepsilon/10)m'}<\|\nu^{(m')}\|_q^q<2^{-(\tau(q)-\varepsilon/10)m'}
\end{equation*}
for sufficiently large $m'\in\N, m'\gg_{\mu,q,\varepsilon}1$. By applying this estimate to $m'=m, Rm, (R+1)m$ ($m\gg_{\mu,q,\varepsilon}1$) and (\ref{flattened_L^q_norms_application_proof_of_the_main_theorem}), we obtain that
\begin{align*}
&2^{-(\tau(q)+\varepsilon/10)(R+1)m}\\
\leq\ & O_{\A,q}(1)2^{-(\tau(q)+\varepsilon)Rm}2^{-(\tau(q)-\varepsilon/10)m}+O_{\A,q}(1)2^{-(\tau(q)-\varepsilon/10)Rm}\sum_{I\in\D_m\setminus\D',\  \mu_m(f_{x_0}^{-1}I)>0}\mu_m(f_{x_0}^{-1}I)^q\\
\leq\ &O_{\A,q}(1)2^{-(\tau(q)+R\varepsilon/(R+1)-\varepsilon/(10(R+1)))(R+1)m}+2^{-(\tau(q)-\varepsilon/5)Rm}\sum_{I\in\D_m\setminus\D',\  \mu_m(f_{x_0}^{-1}I)>0}\mu_m(f_{x_0}^{-1}I)^q\\
\leq\ &2^{-(\tau(q)+\varepsilon/4)(R+1)m}+2^{-(\tau(q)-\varepsilon/5)Rm}\sum_{I\in\D_m\setminus\D',\  \mu_m(f_{x_0}^{-1}I)>0}\mu_m(f_{x_0}^{-1}I)^q
\end{align*}
(we have used $m\gg_{\A,q,\varepsilon}1$ to estimate $O_{\A,q}(1)$). Since $m\gg_\varepsilon1$, we have $2^{-(\tau(q)+\varepsilon/4)(R+1)m}<2^{-1}2^{-(\tau(q)+\varepsilon/10)(R+1)m}$. By combining this and the above inequality, we obtain that
\begin{align}\label{the_L^q_norm_of_fx0mun_restricted_to_complement_D'_proof_of_the_main_theorem}
\sum_{I\in\D_m\setminus\D',\  \mu_m(f_{x_0}^{-1}I)>0}\mu_m(f_{x_0}^{-1}I)^q&\geq 2^{(\tau(q)-\varepsilon/5)Rm}2^{-1}2^{-(\tau(q)+\varepsilon/10)(R+1)m}\nonumber\\
&= 2^{-1}2^{-(\tau(q)+(3R+1)\varepsilon/10)m}\nonumber\\
&>2^{-(\tau(q)+\sigma)m}
\end{align}
(we can assume $\varepsilon<10\sigma/(3R+1)$ and $m\gg_{R,\sigma,\varepsilon}1$).

By the definition of $\D'$ and (\ref{the_L^q_norm_of_fx0mun_restricted_to_complement_D'_proof_of_the_main_theorem}), we have
\begin{align*}
\|\mu_m^{(Rm)}\|_q^q&=\sum_{\xi\in\D^G_{Rm}}\left(\sum_{I\in\D_m}\mu_m|_{f_{x_0}^{-1}I}(\xi)\right)^q\\
&\geq \sum_{\xi\in\D^G_{Rm}}\sum_{I\in\D_m}\mu_m|_{f_{x_0}^{-1}I}(\xi)^q\\
&=\sum_{I\in\D_m,\ \mu_m(f_{x_0}^{-1}I)>0}\mu_m(f_{x_0}^{-1}I)^q\|((\mu_m)_{f_{x_0}^{-1}I})^{(Rm)}\|_q^q\\
&\geq\sum_{I\in\D_m\setminus\D',\ \mu_m(f_{x_0}^{-1}I)>0}\mu_m(f_{x_0}^{-1}I)^q\|((\mu_m)_{f_{x_0}^{-1}I})^{(Rm)}\|_q^q\\
&\geq 2^{-\sigma m}\sum_{I\in\D_m\setminus\D',\ \mu_m(f_{x_0}^{-1}I)>0}\mu_m(f_{x_0}^{-1}I)^q\\
&>2^{-(\tau(q)+2\sigma)m}
\end{align*}
for sufficiently large $m\gg_{M,\mu,q,\sigma,R,\varepsilon}1$. Hence, we have $\limsup_{m\to\infty}(-m^{-1}\log\|\mu_m^{(Rm)}\|_q^q)\leq \tau(q)+2\sigma$. Since $\sigma>0$ is arbitrarily small, we obtain
\begin{equation}\label{L^q_spectrum_mum_finer_scale_nontrivial_bound_proof_of_the_main_theorem}
\limsup_{m\to\infty}\left(-\frac{1}{m}\log\|\mu_m^{(Rm)}\|_q^q\right)\leq \tau(q).
\end{equation}
By (\ref{L^q_spectrum_mum_finer_scale_trivial_bound_proof_of_the_main_theorem}) and (\ref{L^q_spectrum_mum_finer_scale_nontrivial_bound_proof_of_the_main_theorem}), we complete the proof.
\end{proof}

\subsection{The essential part of the main theorem}\label{subsection_essential_part_main_theorem}

In this section, we prove the essential part of the main Theorem \ref{the_main_theorem_L^q_dim_Mobius_IFS}, that is, for a stationary measure as in Theorem \ref{the_main_theorem_L^q_dim_Mobius_IFS}, the case (I) or (II) occurs (“only one of them” part will be discussed in the later Section \ref{subsection_incompatibility}). We take a non-empty finite family $\A=\{A_i\}_{i\in\I}\ (|\I|\geq2)$ of elements of $G$ which is uniformly hyperbolic and whose attractor $K$ is not a singleton (when we further assume the strongly Diophantine condition, we notice that). We also take a probability measure $\mu=\sum_{i\in\I}p_i\delta_{A_i}$ on $G$ such that $\supp\ \mu=\A$ and write $\nu$ for the stationary measure of $\mu$ on $\RP^1$. We write $\tau(q)\ (q>1)$ for the $L^q$ spectrum of $\nu$. Throughout the remaining sections, we fix them.

First, we show some preliminary lemmas. We recall that, for $q>1$, we have defined the pressure function
\begin{equation*}
\Psi_q(s)=\lim_{n\to\infty}\frac{1}{n}\log \sum_{i\in\I^n}p_i^q\|A_i\|^{2s},\quad s>0
\end{equation*}
and its zero $\widetilde{\tau}(q)$ in Section \ref{section_main_problem}. The first lemma gives another description of $\widetilde{\tau}(q)$.

\begin{lem}\label{another_description_of_widetildetau_proof_of_the_main_theorem}
In the setting above, for $q>1$, we have\footnote{Actually, we can show that the limit of the left-hand side exists. However, it is not necessary for us and we do not prove it.}
\begin{equation*}
\liminf_{m\to\infty}\left(-\frac{1}{m}\log\sum_{i\in\Omega_m}p_i^q\right)=\widetilde{\tau}(q).
\end{equation*}
\end{lem}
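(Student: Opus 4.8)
The plan is to relate the quantity $\sum_{i\in\Omega_m}p_i^q$ to the pressure function $\Psi_q$ via a subadditive / submultiplicative comparison across consecutive stopping sets, and then use the characterization of $\widetilde{\tau}(q)$ as the unique zero of $\Psi_q$. The key structural fact is that $\Omega_m$ is a stopping set for the norm cocycle: every $i\in\Omega_m$ satisfies $2^m\le\|A_i\|^2\le C2^m$ for a constant $C=C(\A)$ (using finiteness of $\A$ and uniform hyperbolicity), and $\bigcup_m\Omega_m$ organizes $\I^*$ into a tree. So I would first fix $q>1$ and set $s_+=\widetilde{\tau}(q)$, and compare $\sum_{i\in\Omega_m}p_i^q$ with $\sum_{i\in\Omega_m}p_i^q\|A_i\|^{2s}$ for $s$ near $s_+$, where the latter is controlled by the definition of $\Psi_q$.

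Concretely, first I would prove the upper bound $\liminf_m\left(-\frac1m\log\sum_{i\in\Omega_m}p_i^q\right)\le\widetilde{\tau}(q)$. For $s>\widetilde{\tau}(q)=s_+$ we have $\Psi_q(s)>0$. Since each $i\in\Omega_m$ has $\|A_i\|^{2s}\ge 2^{ms}$ (as $s>0$ and $\|A_i\|^2\ge2^m$), we get $\sum_{i\in\Omega_m}p_i^q\le 2^{-ms}\sum_{i\in\Omega_m}p_i^q\|A_i\|^{2s}$. Now I would bound $\sum_{i\in\Omega_m}p_i^q\|A_i\|^{2s}$ by a bounded multiple of $\sum_{i\in\I^n}p_i^q\|A_i\|^{2s}$ summed over the relatively few lengths $n$ that contribute to $\Omega_m$ — or, more cleanly, decompose $\Omega_m$ by the predecessor word and use submultiplicativity of $n\mapsto\sum_{i\in\I^n}p_i^q\|A_i\|^{2s}$ together with the fact that $\|A_i\|^{2s}$ for $i\in\Omega_m$ is within a constant factor of $2^{ms}$: this gives $\sum_{i\in\Omega_m}p_i^q\|A_i\|^{2s}=\Theta_{\A,q,s}(2^{ms})\cdot\text{(something of subexponential growth in $m$)}$ controlled by $e^{o(m)}$ iterates of $\Psi_q$, hence $\le 2^{(\Psi_q(s)+o(1))m}\cdot 2^{ms}$ up to constants — wait, one has to be careful: the right comparison is $\sum_{i\in\Omega_m}p_i^q\|A_i\|^{2s}\le O_{\A,q,s}(1)\sup_{n}2^{\Psi_q(s)n+o(n)}$ restricted to $n$ with $2^m\le (r^{\max})^{2n}$, etc. The upshot I want: $-\frac1m\log\sum_{i\in\Omega_m}p_i^q\le s+o(1)$ for every $s>s_+$, giving the $\le\widetilde\tau(q)$ side.

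For the lower bound $\liminf_m\left(-\frac1m\log\sum_{i\in\Omega_m}p_i^q\right)\ge\widetilde{\tau}(q)$, I would run the symmetric argument with $s<s_+$, where $\Psi_q(s)<0$. Using $\|A_i\|^{2s}\le C^s 2^{ms}$ for $i\in\Omega_m$ (now $s<0$ would flip this, so I keep $0<s<s_+$), we get $\sum_{i\in\Omega_m}p_i^q\ge C^{-s}2^{-ms}\sum_{i\in\Omega_m}p_i^q\|A_i\|^{2s}$, and then bound $\sum_{i\in\Omega_m}p_i^q\|A_i\|^{2s}$ from below by comparison to a single-length sum $\sum_{i\in\I^{n(m)}}p_i^q\|A_i\|^{2s}$ for an appropriate $n(m)=\Theta(m)$ — using that the word-length of elements of $\Omega_m$ is $\Theta(m)$ by uniform hyperbolicity and the upper norm bound $\|A_j\|\le 2^L$ — which is $\ge 2^{(\Psi_q(s)-o(1))n(m)}$. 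Balancing the $2^{-ms}$ factor against $2^{\Psi_q(s)n(m)}$ and sending $s\nearrow s_+$ (so $\Psi_q(s)\nearrow0$) yields $-\frac1m\log\sum_{i\in\Omega_m}p_i^q\ge s_+-o(1)$.

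\textbf{Main obstacle.} The delicate point is the two-sided comparison between the ``stopping-time'' sum $\sum_{i\in\Omega_m}p_i^q\|A_i\|^{2s}$ and the ``fixed-length'' sums $\sum_{i\in\I^n}p_i^q\|A_i\|^{2s}$ whose exponential rate is $\Psi_q(s)$: because $\Omega_m$ mixes words of many lengths $n\asymp m$, and because the norm cocycle is only submultiplicative (not multiplicative, unlike the linear case), one must control how $\|A_{i}\|^2$ for $i\in\Omega_m$ relates to $2^m$ and how the number of contributing lengths grows. The clean way around it is the predecessor-decomposition: write $\Omega_{m+k}$ in terms of $\Omega_m$ and the sub-words hanging below each $i\in\Omega_m$, and exploit $2^m\le\|A_i\|^2\le C2^m$ to show that $m\mapsto -\log\sum_{i\in\Omega_m}p_i^q$ is subadditive up to an additive constant, so its normalized limit exists and equals $\inf_{s}\{s:\Psi_q(s)\ge 0\}=\widetilde\tau(q)$ by a standard Legendre-type duality; but since the statement only asks for the $\liminf$, I would instead take the shortcut of squeezing directly between $s<s_+$ and $s>s_+$ as sketched, avoiding the need to prove existence of the limit. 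I expect this squeezing computation — getting the $o(m)$ error terms uniform — to be the only real work; everything else is bookkeeping with the constants $C=C(\A)$ and $L=L(\A)$ already in play.
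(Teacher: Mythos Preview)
Your squeezing strategy is right and matches the paper's, but you have the two directions systematically reversed. An \emph{upper} bound on $\sum_{i\in\Omega_m}p_i^q$ --- which is what your inequality $\sum_{i\in\Omega_m}p_i^q\le 2^{-ms}\sum_{i\in\Omega_m}p_i^q\|A_i\|^{2s}$ gives --- produces a \emph{lower} bound on $-m^{-1}\log(\cdots)$, so it feeds the inequality $\liminf\ge\widetilde\tau(q)$, not $\le$. Symmetrically, the lower bound $\sum_{i\in\Omega_m}p_i^q\ge C^{-s}2^{-ms}\sum_{i\in\Omega_m}p_i^q\|A_i\|^{2s}$ goes toward $\liminf\le\widetilde\tau(q)$. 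So your first paragraph is really the outline of the $\ge$ direction and your second the $\le$ direction; in each case the ``upshot'' sentence draws the wrong conclusion from the preceding inequality.

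After swapping the labels the proof can be completed. For $\liminf\ge\widetilde\tau(q)$ the paper's version is slightly cleaner than yours: take $0<s<\widetilde\tau(q)$ so $\Psi_q(s)<0$, observe that $\sum_{n\ge n_0}\sum_{i\in\I^n}p_i^q\|A_i\|^{2s}<\infty$, hence $\sum_{i\in\Omega_m}p_i^q\|A_i\|^{2s}$ is bounded independently of $m$, and combine with $\|A_i\|^{2s}\ge 2^{sm}$. The direction $\liminf\le\widetilde\tau(q)$ is the real work, and the paper's route differs from yours: it takes $0<s$ strictly below the liminf, sums the resulting bound $\sum_{i\in\Omega_m}p_i^q\|A_i\|^{2s}<C^s2^{-\varepsilon m}$ over all $m$, and then --- this is the ingredient you are missing --- invokes a separate lemma giving almost-multiplicativity $\|A_{i'}A_i\|\ge\rho\|A_{i'}\|\|A_i\|$ (a genuine consequence of uniform hyperbolicity, not just the trivial submultiplicativity) to embed each word of $\I^n$ into some $\Omega_{m}$ by appending boundedly many copies of a fixed letter $i_0$. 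That yields $\sum_{i\in\I^n}p_i^q\|A_i\|^{2s}\le\text{const}$, hence $\Psi_q(s)\le0$ and $s\le\widetilde\tau(q)$. Your prefix-decomposition idea can also be made rigorous (factor $\sum_{j\in\I^n}p_j^q\|A_j\|^{2s}$ through the antichain $\Omega_m$ using only submultiplicativity, and use $\sup_k\sum_{i\in\I^k}p_i^q\|A_i\|^{2s}<\infty$ when $\Psi_q(s)<0$), but as written your sketch does not supply this step --- it is precisely the ``main obstacle'' you flag and then leave unresolved.
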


To prove Lemma \ref{another_description_of_widetildetau_proof_of_the_main_theorem}, we need the following fact on the uniformly hyperbolic family $\A$. This seems non-trivial directly from the definition of the uniform hyperbolicity (Definition \ref{definition_uniformly_hyperbolic}), but is easily understood from the viewpoint of its action on $\RP^1$ and Proposition \ref{proposition_Möbius_uniformly_hyperbolic_SL_2(R)}.

\begin{lem}\label{norm_product_of_two_matrices_proof_of_the_main_theorem}
In the setting above, there exists a constant $\rho=\rho(\A)>0$ determined only by $\A$ such that, for any $i,i'\in\I^*$, we have
\begin{equation*}
\|A_{i'}A_i\|\geq\rho\|A_{i'}\|\|A_i\|.
\end{equation*}
\end{lem}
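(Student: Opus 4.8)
\textbf{Proof proposal for Lemma \ref{norm_product_of_two_matrices_proof_of_the_main_theorem}.}
The plan is to pass to the action on $\RP^1$ and exploit Proposition \ref{proposition_Möbius_uniformly_hyperbolic_SL_2(R)}. Recall that $\|g\|=\lambda_g^+$ and that, for $g\in G$, the image $g(\RP^1\setminus B_\varepsilon(u_g^-))$ is squeezed near $v_g^+$ while the contracting singular direction $u_g^-$ is the unique point of maximal expansion. The key geometric fact I would isolate first: by Proposition \ref{proposition_Möbius_uniformly_hyperbolic_SL_2(R)} there is a non-empty open $U\subsetneq\RP^1$ with finitely many components having disjoint closures and $A\overline{U}\subset U$ for all $A\in\A$; hence for every $i\in\I^*$ we have $v_{A_i}^+\in A_iK\subset A_i\overline{U}\subset U$, and moreover (by the argument of Lemma \ref{center_of_expansion_notin_U}, or directly) there is $\eta_0=\eta_0(\A)>0$ such that $d_{\RP^1}(v_{A_i}^+,\RP^1\setminus U)\geq\eta_0$ and, for all but boundedly many short words, $u_{A_i}^-\notin U_1$, so that $d_{\RP^1}(u_{A_i}^-, U)\geq\eta_0$. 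The point is that $v_{A_i}^+$ stays deep inside $U$ while $u_{A_{i'}}^-$ stays deep outside $U_1$, uniformly over words (after absorbing finitely many exceptional short words into the constant, using that $\A$ is finite).

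Next I would quantify the norm of a product in terms of an angle. For $g,h\in G$ one has the elementary identity relating $\|gh\|$ to $\|g\|$, $\|h\|$, and the angle between $v_h^+$ (the expanding direction of $h$) and $u_g^-$ (the contracting direction of $g$): writing $v_h^+=\cos\varphi\cdot u_g^+ +\sin\varphi\cdot u_g^-$ where $\varphi=\angle(v_h^+,u_g^-)$ up to $\pi/2$, one computes $\|gh u_h^+\|=\|h\|\cdot\|g v_h^+\|=\|h\|\sqrt{\lambda_g^{+2}\cos^2\varphi+\lambda_g^{-2}\sin^2\varphi}\geq \|h\|\lambda_g^+|\cos\varphi| = \|g\|\|h\|\,|\cos\varphi|$, hence $\|gh\|\geq \|g\|\|h\|\cos\big(d_{\RP^1}(v_h^+,u_g^-)\big)$. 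Applying this with $g=A_{i'}$, $h=A_i$ and combining with the previous paragraph gives $d_{\RP^1}(v_{A_i}^+, u_{A_{i'}}^-)\geq \eta_0$ (since $v_{A_i}^+\in U$ and $u_{A_{i'}}^-\notin U_1\supset U$, they are separated by at least the gap between $U$ and $\RP^1\setminus U_1$), and therefore $\|A_{i'}A_i\|\geq \cos(\pi/2-\eta_0')\|A_{i'}\|\|A_i\|$ for a suitable $\eta_0'>0$; set $\rho=\rho(\A)=\sin\eta_0'>0$. Finally, handle the finitely many exceptional pairs where $i$ or $i'$ is among the boundedly many short words for which $u_{A_{i'}}^-\notin U_1$ might fail: there are only finitely many such $A_{i'}$ (by finiteness of $\A$ and a uniform bound $n_0$ on the exceptional length from Lemma \ref{center_of_expansion_notin_U}), and for each of them $\|A_{i'}A_i\|\geq \|A_{i'}\|^{-1}\|A_i\|\geq \big(\max_{|j|\le n_0}\|A_j\|^{-2}\big)\|A_{i'}\|\|A_i\|$, so shrinking $\rho$ to accommodate this finite set completes the proof.

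The main obstacle I anticipate is the bookkeeping around the "all but boundedly many short words" caveat: one must verify that the separation $d_{\RP^1}(v_{A_i}^+,u_{A_{i'}}^-)\geq\eta_0'$ holds \emph{uniformly} in both words simultaneously, which requires knowing $u_{A_{i'}}^-\notin U_1$ (equivalently, outside a fixed neighbourhood of $\overline U$). Lemma \ref{center_of_expansion_notin_U} gives this for $|i'|\geq n_0$; the remaining finitely many $i'$ are absorbed into the constant as above. A second minor point is choosing the nested open sets correctly (one needs a genuine gap $d_{\RP^1}(U,\RP^1\setminus U_1)>0$, which is provided by (\ref{domains_the_Möbius_IFS_acts_on_preliminaries})), and checking that $A_iK\subset U$ so that $v_{A_i}^+$, which lies in $A_i(\RP^1)$ but should be located using $v_{A_i}^+ = \lim A_i A_{i''} x_0$-type reasoning or simply $v_{A_i}^+\in \overline{A_iU}\subset U$, indeed sits inside $U$. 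Neither of these is deep, but they are where the argument could go wrong if stated carelessly.
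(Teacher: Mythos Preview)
Your approach is essentially the paper's: both exploit the invariant open sets $U\subset U_1$ from Proposition~\ref{proposition_Möbius_uniformly_hyperbolic_SL_2(R)} and Lemma~\ref{center_of_expansion_notin_U} to separate the expanding output direction of $A_i$ from the contracting singular direction $u_{A_{i'}}^-$ of $A_{i'}$, and then absorb the finitely many short words into the constant. The paper implements this via the chain rule for the derivative on $\RP^1$ (equation~(\ref{derivative_action_of_G_preliminaries})), evaluated at a point $x\in U$: one uses directly that $x\in U$ is far from $u_{A_i}^-$ and $A_ix\in U$ is far from $u_{A_{i'}}^-$, which avoids having to locate $v_{A_i}^+$ at all. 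Your linear-algebra computation of $\|A_{i'}A_iu_{A_i}^+\|$ is an equivalent route, arguably a bit more direct.

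One genuine slip to fix: neither of your claims $v_{A_i}^+\in A_iK$ nor $v_{A_i}^+\in\overline{A_iU}$ is correct in general, since $v_{A_i}^+=A_iu_{A_i}^+$ and there is no reason for $u_{A_i}^+$ to lie in $K$ or in $\overline U$ (knowing $u_{A_i}^-\notin U_1$ says nothing about the perpendicular direction $u_{A_i}^+$). The right statement, which suffices, is that for any $x_0\in K$ one has $d_{\RP^1}(A_ix_0,v_{A_i}^+)=O_\A(\|A_i\|^{-2})$ by~(\ref{derivative_action_of_G_preliminaries}), so $v_{A_i}^+$ lies within $O_\A(\|A_i\|^{-2})$ of $A_iK\subset U$; for $|i|$ large this is less than half the gap $d_{\RP^1}(U,\RP^1\setminus U_1)$, giving the separation you need. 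Two smaller points: the intermediate inequality should read $\|gh\|\geq\|g\|\|h\|\sin\!\big(d_{\RP^1}(v_h^+,u_g^-)\big)$ rather than $\cos$ (your final constant $\sin\eta_0'$ is nonetheless correct), and the same trivial bound $\|A_{i'}A_i\|\geq\|A_i\|^{-1}\|A_{i'}\|=\|A_i\|^{-2}\|A_{i'}\|\|A_i\|$ handles short $i$ symmetrically with short $i'$.
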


\begin{proof}
Let $i,i'\in \I^*$. Suppose that $i\in \I^n,i'\in \I^{n'}$ and $n,n'\gg_\A1$ are sufficiently large. In terms of the $G$-action on $\RP^1$, we have
\begin{align*}
	&\angle_{v_{A_{i'}A_i}^+}\circ (A_{i'}A_i)\circ\angle_{u_{A_{i'}A_i}^+}^{-1}\\
	=&\left(\angle_{v_{A_{i'}A_i}^+}\circ\angle_{v_{A_{i'}}^+}^{-1}\right)\circ\left(\angle_{v_{A_{i'}}^+}\circ A_{i'}\circ\angle_{u_{A_{i'}}^+}^{-1}\right)\circ
	\left(\angle_{u_{A_{i'}}^+}\circ\angle_{v_{A_i}^+}^{-1}\right)\circ\left(\angle_{v_{A_i}^+}\circ A_i\circ\angle_{u_{A_i}^+}^{-1}\right)\circ
	\left(\angle_{u_{A_i}^+}\circ\angle_{u_{A_{i'}A_i}^+}^{-1}\right).
\end{align*}
Since we have (\ref{derivative_action_of_G_preliminaries}) and $\angle_{v_{A_{i'}A_i}^+}\circ\angle_{v_{A_{i'}}^+}^{-1}, \angle_{u_{A_{i'}}^+}\circ\angle_{v_{A_i}^+}^{-1}$ and $\angle_{u_{A_i}^+}\circ\angle_{u_{A_{i'}A_i}^+}^{-1}$ are translations on $\R/\pi\Z$ by some elements of $\R/\pi\Z$, by taking the derivative of both sides, we obtain for $\theta\in\R/\pi\Z$ that
\begin{equation*}
	\frac{1}{(\lambda_{A_{i'}A_i}^+)^2\cos^2\theta+(\lambda_{A_{i'}A_i}^+)^{-2}\sin^2\theta}
	=\frac{1}{(\lambda_{A_{i'}}^+)^2\cos^2\theta''+(\lambda_{A_{i'}}^+)^{-2}\sin^2\theta''}\cdot
	\frac{1}{(\lambda_{A_i}^+)^2\cos^2\theta'+(\lambda_{A_i}^+)^{-2}\sin^2\theta'},
\end{equation*}
where $\theta'=\angle_{u_{A_i}^+}(\angle_{u_{A_{i'}A_i}^+}^{-1}(\theta))$ and $\theta''=\angle_{u_{A_{i'}}^+}(A_i(\angle_{u_{A_{i'}A_i}^+}^{-1}(\theta)))$.

We take open sets $U\subset U_1\subsetneq\RP^1$ as in (\ref{domains_the_Möbius_IFS_acts_on_preliminaries}) for $\A$, and $\theta\in \angle_{u_{A_{i'}A_i}^+}(U)$. Then, by $A\overline{U}\subset U$ for any $A\in\A$, we have $\angle_{u_{A_{i'}A_i}^+}^{-1}(\theta)\in U$ and $A_i(\angle_{u_{A_{i'}A_i}^+}^{-1}(\theta))\in U$. Furthermore, since $n,n'\gg_\A1$, we obtain from Lemma \ref{center_of_expansion_notin_U} that $u_{A_i}^-, u_{A_{i'}}^-\notin U_1$. Hence, we can see that there exists a constant $\varepsilon>0$ determined from $\overline{U}\subset U_1$ such that $\theta',\theta''\in \R/\pi\Z\setminus(\pi/2-\varepsilon,\pi/2+\varepsilon)$. From this and the above equation, we obtain that
\begin{align*}
	\frac{1}{(\lambda_{A_{i'}A_i}^+)^2}\leq&\ \frac{1}{(\lambda_{A_{i'}A_i}^+)^2\cos^2\theta+(\lambda_{A_{i'}A_i}^+)^{-2}\sin^2\theta}\\
	=&\ \frac{1}{(\lambda_{A_{i'}}^+)^2\cos^2\theta''+(\lambda_{A_{i'}}^+)^{-2}\sin^2\theta''}\cdot
	\frac{1}{(\lambda_{A_i}^+)^2\cos^2\theta'+(\lambda_{A_i}^+)^{-2}\sin^2\theta'}\\
	\leq&\ \frac{1}{(\lambda_{A_{i'}}^+)^2\cos^2(\pi/2-\varepsilon)}\cdot\frac{1}{(\lambda_{A_i}^+)^2\cos^2(\pi/2-\varepsilon)}.
\end{align*}
Since $\lambda_A^+=\|A\|$ for $A\in G$, we have $\|A_{i'}A_i\|\geq \cos^2(\pi/2-\varepsilon)\|A_{i'}\|\|A_i\|$.

If $n$ or $n'$ are not sufficiently large, we can use $\|A_{i'}\|\|A_i\|=\|A_{i'}A_iA_i^{-1}\|\|A_i\|\leq\|A_i^{-1}\|\|A_i\|\|A_{i'}A_i\|$ and $\bigcup_{n\leq n_0}\I^n$ is finite. Hence, we complete the proof.
\end{proof}

Here, we begin the proof of Lemma \ref{another_description_of_widetildetau_proof_of_the_main_theorem}.

\begin{proof}
We write $\underline{s}=\liminf_{m\to\infty}\left(-m^{-1}\log\sum_{i\in\Omega_m}p_i^q\right)$.
We first show $\underline{s}\geq\widetilde{\tau}(q)$.
We take arbitrary $0<s<\widetilde{\tau}(q)$. Then, we have $\Psi_q(s)<\Psi_q(\widetilde{\tau}(q))=0$. Hence, if we take $\varepsilon>0$ sufficiently small, we have
\begin{equation*}
	\frac{1}{n}\log\sum_{i\in \I^n}p_i^q\|A_i\|^{2s}<-\varepsilon
\end{equation*}
for sufficiently large $n\in\N$ (say $n\geq n_0=n_0(\mu,q,s,\varepsilon)$). By taking $2^x$ and the sum for $n\geq n_0$, we have
\begin{equation*}
	\sum_{i\in \I^*,\ |i|\geq n_0}p_i^q\|A_i\|^{2s}<\sum_{n=n_0}^\infty 2^{-\varepsilon n}=\frac{2^{-\varepsilon n_0}}{1-2^{-\varepsilon}}.
\end{equation*}

Since $\A=\{A_i\}_{i\in \I}$ is finite, we can see that, if $m$ is sufficiently large, then $\Omega_m\subset \bigcup_{n\geq n_0}\I^n$. Hence, by the above inequality, we have
\begin{equation*}
	\sum_{i\in\Omega_m}p_i^q\|A_i\|^{2s}\leq\sum_{i\in \I^*,\ |i|\geq n_0}p_i^q\|A_i\|^{2s}<\frac{2^{-\varepsilon n_0}}{1-2^{-\varepsilon}}.
\end{equation*}
We have $\|A_i\|^2\geq 2^m$ for $i\in\Omega_m$ and $s>0$. So it follows that
\begin{equation*}
	2^{sm}\sum_{i\in\Omega_m}p_i^q<\frac{2^{-\varepsilon n_0}}{1-2^{-\varepsilon}}.
\end{equation*}
By taking the $\log$ and $\liminf_{m\to\infty}$, we obtain that
\begin{equation*}
	\underline{s}=\liminf_{m\to\infty}\left(-\frac{1}{m}\log\sum_{i\in\Omega_m}p_i^q\right)\geq\liminf_{m\to\infty}\left(-\frac{1}{m}\log\left(2^{-sm}\cdot \frac{2^{-\varepsilon n_0}}{1-2^{-\varepsilon}}\right)\right)=s.
\end{equation*}
Since $0<s<\widetilde{\tau}(q)$ is arbitrary, we have $\underline{s}\geq\widetilde{\tau}(q)$.

Next, we see $\widetilde{\tau}(q)\geq \underline{s}$. The strategy is the same as above. We first notice that, since we have already shown that $\underline{s}\geq \widetilde{\tau}(q)>0$, we have $\underline{s}>0$. We take arbitrary $0<s<\underline{s}$. If we take $\varepsilon>0$ sufficiently small, then $0<s+\varepsilon<\underline{s}$, and hence
\begin{equation*}
	-\frac{1}{m}\log\sum_{i\in\Omega_m}p_i^q>s+\varepsilon\iff\sum_{i\in\Omega_m}p_i^q2^{sm}<2^{-\varepsilon m}
\end{equation*}
for sufficiently large $m\in \N$ (say $m\geq m_0=m_0(\mu,q,s,\varepsilon)$). If we write $C=\max_{i\in \I}\|A_i\|^2\geq 1$, then, for each $i\in\Omega_m$, we have $\|A_i\|^2<C2^m$. From this, the above inequality and $s>0$, we have
\begin{equation*}
	\sum_{i\in\Omega_m}p_i^q\|A_i\|^{2s}<C^s2^{-\varepsilon m}
\end{equation*}
for $m\geq m_0$. By taking the sum for $m\geq m_0$, we obtain
\begin{equation}\label{key_bound_zero_of_pressure_function_proof_of_the_main_theorem}
	\sum_{m=m_0}^\infty\sum_{i\in\Omega_m}p_i^q\|A_i\|^{2s}<C^{s}\sum_{m=m_0}^\infty 2^{-\varepsilon m}=\frac{C^s2^{-\varepsilon m_0}}{1-2^{-\varepsilon}}.
\end{equation}

We fix $i_0\in \I$. For $i=(i_1,\dots,i_n)\in \I^*$, let $m_i\in\N$ be the smallest integer such that $\|A_{i_1}\|^2<2^m,\dots, \|A_{i_1}\cdots A_{i_n}\|^2<2^m$ and $k_i\in\N$ be the smallest integer such that $\|A_iA_{i_0}^k\|^2\geq 2^{m_i}$.
By (\ref{def_of_uniform_hyperbolicity_of_A_preliminaries}), if $n\in\N$ is sufficiently large and $i\in \I^n$, then $m_i\geq m_0$.
For $i=(i_1,\dots,i_n)\in \I^n$, let $n'\in\{1,\dots,n\}$ be such that $\|A_{i_1}\cdots A_{i_{n'}}\|^2\geq2^{m_i-1}$ (by the minimality of $m_i$). Then, for $k\in \N$, we have by Lemma \ref{norm_product_of_two_matrices_proof_of_the_main_theorem} and (\ref{def_of_uniform_hyperbolicity_of_A_preliminaries}) that
\begin{align*}
	\|A_iA_{i_0}^k\|^2\geq&\ \rho^2\|A_i\|^2\|A_{i_0}^k\|^2\\
	\geq&\ \rho^4\|A_{i_1}\cdots A_{i_{n'}}\|^2\|A_{i_{n'+1}}\cdots A_{i_n}\|^2\|A_{i_0}^k\|^2\\
	\geq&\ c^2\rho^42^{m_i-1}r^{2k}.
\end{align*}
So we have
\begin{equation}\label{upper_bound_ki_zero_of_pressure_function_proof_of_the_main_theorem}
	k_i\leq\left\lceil\frac{\log(2c^{-2}\rho^{-4})}{2\log r}\right\rceil.
\end{equation}
Let $n\in \N$ be sufficiently large. Then, for $i\in \I^n$, we have
\begin{equation*}
	ii_0^{k_i}=(i,\underbrace{i_0,\dots ,i_0}_{k_i})\in\Omega_{m_i},\quad m_i\geq m_0.
\end{equation*}
Hence, by (\ref{key_bound_zero_of_pressure_function_proof_of_the_main_theorem}), we have
\begin{equation*}
	\sum_{i\in \I^n}p_{ii_0^{k_i}}^q\|A_{ii_0^{k_i}}\|^{2s}\leq\sum_{m=m_0}^\infty\sum_{i\in\Omega_m}p_i^q\|A_i\|^{2s}<\frac{C^s2^{-\varepsilon m_0}}{1-2^{-\varepsilon}}.
\end{equation*}
Furthermore, by $s>0$, Lemma \ref{norm_product_of_two_matrices_proof_of_the_main_theorem} and (\ref{upper_bound_ki_zero_of_pressure_function_proof_of_the_main_theorem}), we have for $i\in \I^n$ that
\begin{equation*}
	p_{ii_0^{k_i}}^q\|A_{ii_0^{k_i}}\|^{2s}\geq p_{i_0}^{qk_i}p_i^q\cdot\rho^{2s}\|A_i\|^{2s}\|A_{i_0}^{k_i}\|^{2s}\geq \rho^{2s}p_{i_0}^{q\lceil\log(2c^{-2}\rho^{-4})/(2\log r)\rceil}p_i^q\|A_i\|^{2s}.
\end{equation*}
Therefore, we obtain that
\begin{equation*}
	\sum_{i\in \I^n}p_i^q\|A_i\|^{2s}<\rho^{-2s}p_{i_0}^{-q\lceil\log(2c^{-2}\rho^{-4})/(2\log r)\rceil}\cdot
	\frac{C^s2^{-\varepsilon m_0}}{1-2^{-\varepsilon}}.
\end{equation*}
The right-hand side is a constant independent of $n$, so we have
\begin{equation*}
	\Psi_q(s)=\lim_{n\to\infty}\frac{1}{n}\log\sum_{i\in I^n}p_i^q\|A_i\|^{2s_1'}\leq 0=\Psi_q(\widetilde{\tau}(q)).
\end{equation*}
Since $\Psi_q$ is non-decreasing, we have $s\leq \widetilde{\tau}(q)$. Since $0<s<\underline{s}$ is arbitrary, we finally obtain $\underline{s}\leq \widetilde{\tau}(q)$, and complete the proof.
\end{proof}

The second preliminary lemma gives us a further consequence of the strongly Diophantine condition.

\begin{lem}\label{strongly_Diophantine_Omegam_proof_of_the_main_theorem}
In addition to the setting above, we assume that $\A$ is strongly Diophantine.
Then, there exists a constant $a'=a'(\A)>0$ such that, for any $m\in\N$, we have
\begin{equation*}
i,i'\in\Omega_m, i\neq i'\implies d_G(A_i,A_{i'})\geq a'^m.
\end{equation*}
\end{lem}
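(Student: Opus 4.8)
The plan is to reduce the assertion, using the combinatorial structure of $\Omega_m$ and the left-invariance of $d_G$, to the strongly Diophantine condition for words of one fixed length, transferring the resulting separation back through the bi-Lipschitz geometry of the $G$-action on $\RP^1$.

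First I would record the structural facts about $\Omega_m$. It is an antichain for the prefix order: if $i$ were a proper prefix of $i''$ with both in $\Omega_m$, then membership of $i''$ forces $\|A_i\|^2<2^m$, contradicting $\|A_i\|^2\geq 2^m$ from membership of $i$. By the finiteness of $\A$ and (\ref{def_of_uniform_hyperbolicity_of_A_preliminaries}) there are constants $0<N_1<N_2$, depending only on $\A$, with $N_1 m\leq|i|\leq N_2 m$ for every $i\in\Omega_m$ and every large $m$, and together with (\ref{norm_Omegam_proof_of_the_main_theorem}) one has $2^m\leq\|A_i\|^2\leq C2^m$. By Lemma \ref{center_of_expansion_notin_U}, for $m$ large $u_{A_i}^-\notin U_1$ for all $i\in\Omega_m$; fixing once and for all $\eta_0=\eta_0(\A)\in(0,\pi/4)$ with $\eta_0<d_{\RP^1}(K,\RP^1\setminus U_1)$ and an $\eta_0$-separated triple $x=(x_1,x_2,x_3)\in K^3$ (possible since $K$ is not a singleton, hence infinite), the hypotheses $x_k\notin B_{\eta_0}(u_{A_i}^-)$ of Proposition \ref{bi_Lipschitz_of_the_action_HS17} then hold for every $i\in\Omega_m$ and $m$ large. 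Finally, $A_i\neq A_{i'}$ whenever $i\neq i'$ in $\I^*$: otherwise $A_{ii'}=A_i^2=A_{i'i}$, and either $ii'\neq i'i$ (contradicting the strongly Diophantine bound at length $|i|+|i'|$) or $i,i'$ are powers of a common nonempty word $z$, forcing $A_z^k=1_G$ for some $k\geq1$, impossible since $\|A_z^k\|>1$ by uniform hyperbolicity. Replacing $c$ by $\min\{c,\tfrac12\}$ we may assume $0<c<1$; this together with the previous point lets us absorb the finitely many small values of $m$ into the final constant.

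Now take distinct $i,i'\in\Omega_m$, let $p$ be their longest common prefix and write $i=p\hat\imath$, $i'=p\hat\imath'$, so $d_G(A_i,A_{i'})=d_G(A_{\hat\imath},A_{\hat\imath'})$ by left-invariance and $\hat\imath,\hat\imath'$ begin with distinct letters. If $|\hat\imath|=|\hat\imath'|$ then $d_G(A_{\hat\imath},A_{\hat\imath'})\geq c^{|\hat\imath|}\geq c^{N_2 m}$ directly from Definition \ref{dfn_strongly_Diophantine_condition}, so the lemma holds with $a'=c^{N_2}$; thus assume $|\hat\imath|<|\hat\imath'|$. Using Lemma \ref{norm_product_of_two_matrices_proof_of_the_main_theorem} and $\|A_p\|^2<2^m$, put $m_1:=m-\lfloor\log_2\|A_p\|^2\rfloor\geq1$; then $\|A_{\hat\imath}\|^2=\Theta_\A(1)\,2^{m_1}=\Theta_\A(1)\,\|A_{\hat\imath'}\|^2$, and every proper prefix of $\hat\imath$ or of $\hat\imath'$ has operator norm squared at most $O_\A(1)\,2^{m_1}$, i.e.\ $\hat\imath,\hat\imath'$ are stopping words at level $m_1$ up to bounded factors. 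If $m_1$ is bounded in terms of $\A$, then $\hat\imath,\hat\imath'$ lie in the finite set of words whose operator norm squared is $O_\A(1)$ (finite by uniform hyperbolicity), so $d_G(A_{\hat\imath},A_{\hat\imath'})$ is bounded below by a constant depending only on $\A$, and the lemma follows. This leaves the case $m_1$ large.

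In that case the argument would run: either $d_G(A_i,A_{i'})\geq r_{\eta_0}$, the constant of Proposition \ref{bi_Lipschitz_of_the_action_HS17}, and we are done for $m$ large; or $A_{i'}\in B^G_{r_{\eta_0}}(A_i)$, and Proposition \ref{bi_Lipschitz_of_the_action_HS17} with $g_0=A_i$ gives
\begin{equation*}
d_G(A_i,A_{i'})\ \geq\ \frac{\|A_i\|^2}{C_{\eta_0}}\,d_{(\RP^1)^3}(A_ix,A_{i'}x)\ \geq\ \frac{2^m}{C_{\eta_0}}\,d_{(\RP^1)^3}(A_ix,A_{i'}x),
\end{equation*}
so it suffices to bound $d_{(\RP^1)^3}(A_ix,A_{i'}x)$ below by a constant depending only on $\A$ times $\beta^m$ for some $\beta\in(0,1)$ depending only on $\A$. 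Writing $A_{i'}x_k=A_{\hat\imath'|_{|\hat\imath|}}(A_wx_k)$ with $w$ the tail of $\hat\imath'$ and $\hat\imath'|_{|\hat\imath|}$ the length-$|\hat\imath|$ prefix of $\hat\imath'$, one estimates the renormalised correction $d_{(\RP^1)^3}(A_{\hat\imath'|_{|\hat\imath|}}x,A_{i'}x)$ by Corollary \ref{contraction_on_U_by_A_preliminaries} (it is $O_\A(1)\,2^{-m_1}$, since at level $m_1$ the prefix $\hat\imath'|_{|\hat\imath|}$ has norm squared $\Theta_\A(1)\,2^{m_1}$), and bounds the signal $d_{(\RP^1)^3}(A_ix,A_{\hat\imath'|_{|\hat\imath|}}x)$ from below by combining the lower estimate in Proposition \ref{bi_Lipschitz_of_the_action_HS17} with $d_G(A_{\hat\imath},A_{\hat\imath'|_{|\hat\imath|}})\geq c^{|\hat\imath|}$, valid because $\hat\imath$ and $\hat\imath'|_{|\hat\imath|}$ already differ in their first letter. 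The main obstacle is exactly this balancing: after renormalising by the common prefix the correction is only \emph{comparable} to the signal scale $2^{-m_1}$, so to close the loop one must extract the extra separation $c^{|\hat\imath|}$ cleanly and track the constants $N_1,N_2,\rho,C,C_{\eta_0},r_{\eta_0}$ carefully (iterating the common-prefix renormalisation if necessary). The outcome is $d_G(A_i,A_{i'})\geq a'^m$ with $a'=a'(\A)$ of the form $c^{O_\A(1)}$.
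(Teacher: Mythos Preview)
Your reduction to $\hat\imath,\hat\imath'$ with no common prefix is fine, and the equal-length case is handled correctly. The genuine gap is in the unequal-length case. When $|\hat\imath|<|\hat\imath'|$ you try to bound $d_{(\RP^1)^3}(A_{\hat\imath}x,A_{\hat\imath'}x)$ from below by a triangle inequality through $A_{\hat\imath'|_{|\hat\imath|}}x$. The ``signal'' term satisfies
\[
d_{(\RP^1)^3}(A_{\hat\imath}x,A_{\hat\imath'|_{|\hat\imath|}}x)\ \geq\ C_{\eta_0}^{-1}\|A_{\hat\imath}\|^{-2}\,c^{|\hat\imath|}\ \sim\ 2^{-m_1}c^{|\hat\imath|},
\]
while the ``correction'' term satisfies only
\[
d_{(\RP^1)^3}(A_{\hat\imath'|_{|\hat\imath|}}x,A_{\hat\imath'}x)\ \leq\ C_1\|A_{\hat\imath'|_{|\hat\imath|}}\|^{-2}\,d_{(\RP^1)^3}(x,A_wx)\ \leq\ O_\A(1)\,2^{-m_1},
\]
since the only upper bound you have on $\|A_{\hat\imath'|_{|\hat\imath|}}\|^{-2}$ is $O_\A(2^{-m_1})$ (it is a proper prefix at the renormalised level). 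Because $c<1$, the signal is \emph{smaller} than the correction by the factor $c^{|\hat\imath|}$, so the triangle inequality yields nothing. This is not a matter of tracking constants: the sign is wrong. Your suggested ``iteration of the common-prefix renormalisation'' cannot help either, since $\hat\imath$ and $\hat\imath'$ already differ in their first letter and there is no further prefix to peel off.

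The paper avoids this obstacle entirely by a purely algebraic commutator trick. From the antichain property one gets $ii'\neq i'i$ as words of the \emph{same} length $|i|+|i'|$; the strongly Diophantine condition (transferred to the norm metric) then gives $\|A_iA_{i'}-A_{i'}A_i\|\geq a'^{|i|+|i'|}$. Writing $B=A_{i'}-A_i$ one has $A_iA_{i'}-A_{i'}A_i=A_iB-BA_i$, hence $\|B\|\geq (2\|A_i\|)^{-1}a'^{|i|+|i'|}$, and converting back to $d_G$ via the bi-Lipschitz equivalence near $1_G$ and the linear length bound $|i|+|i'|\leq 2bm$ finishes the proof. No geometric balancing on $\RP^1$ is needed.
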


\begin{proof}
We recall (Definition \ref{dfn_strongly_Diophantine_condition}) that $\A=\{A_i\}_{i\in\I}$ is strongly Diophantine means that there is a constant $a>0$ such that, for any $n\in\N$,
\begin{equation*}
i,i'\in\I^n, i\neq i'\implies d_G(A_i,A_{i'})\geq a^n.
\end{equation*}
Here, by the left-invariance of the metric $d_G$, we have $d_G(A_i,A_{i'})=d_G(A_{i'}^{-1}A_i,1_G)\geq a^n$, and, by the bi-Lipschitz equivalence of $d_G$ and the norm metric on a small neighborhood of $1_G$ (see \cite[Section 2.3]{HS17}),
\begin{equation*}
\|A_{i'}^{-1}A_i-1_G\|\geq \eta a^n
\end{equation*}
for a constant $\eta=\eta(G)>0$. We also have
\begin{equation*}
\|A_{i'}^{-1}A_i-1_G\|=\|A_{i'}^{-1}(A_i-A_{i'})\|\leq \|A_{i'}^{-1}\|\|A_i-A_{i'}\|\leq R^n\|A_i-A_{i'}\|,
\end{equation*}
where $R=R(\A)=\max_{i\in\I}\{\|A_i\|, \|A_i^{-1}\|\}\geq1$. By combining these two inequalities, we can see that there is a constant $a'>0$ such that, for any $n\in\N$
\begin{equation}\label{another_def_of_strongly_Diophantine_proof_of_the_main_theorem}
i,i'\in\I^n,i\neq i'\implies \|A_i-A_{i'}\|\geq a'^n.
\end{equation}

For $m\in\N$, we see the length of any $i=(i_1,\dots,i_n)\in\Omega_m$. By (\ref{def_of_uniform_hyperbolicity_of_A_preliminaries}), we have
\begin{equation*}
2^m>\|A_{i_1}\cdots A_{i_{n-1}}\|^2\geq c^2r^{2(n-1)},\quad\text{and hence }n<\frac{m-2\log c}{2\log r}+1.
\end{equation*}
Therefore, there is a constant $b=b(\A)>1$ such that, for any $m\in\N$ and $i\in\Omega_m$,
\begin{equation}\label{length_of_elements_of_Omegam_proof_of_the_main_theorem}
|i|\leq bm.
\end{equation}

We take $m\in\N$ and $i=(i_1,\dots,i_n),i'=(i'_1,\dots,i'_{n'})\in\Omega_m$ such that $i\neq i'$. We assume $n\leq n'$. Then, we have
\begin{equation*}
i\neq (i'_1,\dots,i'_n)\in\I^n.
\end{equation*}
In fact, if $i= (i'_1,\dots,i'_n)\in\I^n$, we have by $i\neq i'$ that $n<n'$ and, by $i\in\Omega_m$, $\|A_i\|^2=\|A_{i'_1}\cdots A_{i'_n}\|^2\geq 2^m$, but this contradicts $i'=(i'_1,\dots,i'_n,i'_{n+1},\dots, i'_{n'})\in\Omega_m$. From this fact, we can see that $ii'\neq i'i\in\I^{n+n'}$. Hence, by applying (\ref{another_def_of_strongly_Diophantine_proof_of_the_main_theorem}) to $ii'$ and $i'i$, we have
\begin{equation}\label{ii'_i'i_distinct_Diohantine_proof_of_the_main_theorem}
\|A_iA_{i'}-A_{i'}A_i\|\geq a'^{n+n'}.
\end{equation}

Let $B=A_{i'}-A_i$. Then, we have
\begin{align*}
\|A_iA_{i'}-A_{i'}A_i\|&=\|A_i(B+A_i)-(B+A_i)A_i\|\\
&=\|A_iB-BA_i\|\\
&\leq \|A_iB\|+\|BA_i\|\\
&\leq 2R^n\|B\|.
\end{align*}
From this inequality and (\ref{ii'_i'i_distinct_Diohantine_proof_of_the_main_theorem}), we have
\begin{equation*}
\|B\|=\|A_{i'}-A_i\|\geq \frac{R^{-n}a'^{n+n'}}{2}.
\end{equation*}
Since $\|A_{i'}-A_i\|=\|A_{i'}(1_G-A_{i'}^{-1}A_i)\|\leq R^{n'}\|1_G-A_{i'}^{-1}A_i\|$, we obtain from the above inequality that
\begin{equation*}
\|1_G-A_{i'}^{-1}A_i\|\geq\frac{R^{-(n+n')}a'^{n+n'}}{2}.
\end{equation*}
From this inequality, the bi-Lipschitz equivalence of $d_G$ and the norm metric on a neighborhood of $1_G$ and (\ref{length_of_elements_of_Omegam_proof_of_the_main_theorem}), we obtain that
\begin{equation*}
d_G(A_i,A_{i'})=d_G(A_{i'}^{-1}A_i,1_G)\geq\frac{\eta R^{-(n+n')}a'^{n+n'}}{2}\geq \frac{\eta(R^{-1}a')^{2bm}}{2}
\end{equation*}
(where $0<\eta=\eta(G)<1$ and we assume $a'<1$). Hence, by re-writing $a'=a'(\A)>0$ for a constant $\eta(R^{-1}a')^{2b}/2$, we complete the proof.
\end{proof}

As the third preliminary lemma, we notice the fact that, once we get $\tau^*(\alpha_0)=0$ for some $q_0>1$ and $\alpha_0=\tau'(q_0)$, $\tau(q)$ coincides the line $\alpha_0q$ through the origin for $q\geq q_0$.

\begin{lem}\label{tau(q)_coincides_the_line_through_the_origin_proof_of_the_main_theorem}
In the setting above,
let $q_0>1$ and assume that $\tau(q)$ is differentiable at $q_0$ and, if we write $\alpha_0=\tau'(q_0)$, we have
\begin{equation*}
\tau^*(\alpha_0)=0.
\end{equation*}
Then, we have
\begin{equation*}
\tau(q)=\alpha_0 q\quad\text{for any }q\geq q_0.
\end{equation*}
\end{lem}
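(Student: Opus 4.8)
The plan is to prove Lemma \ref{tau(q)_coincides_the_line_through_the_origin_proof_of_the_main_theorem} using only the concavity of $\tau$ together with Corollary \ref{tau^*(alpha)_is_nonnegative} (non-negativity of $\tau^*$ along one-sided derivatives); no further dynamical input is needed. It is worth stressing at the outset that concavity, monotonicity and the bounds $\tau(q)\in[0,q-1]$ alone are \emph{not} enough — one can build a concave, non-decreasing $\tau$ with $D(q)$ non-increasing, differentiable at $q_0$, $\tau^*(\tau'(q_0))=0$, yet with $\tau$ leaving the line $\alpha_0 q$ past some $q_1>q_0$; what rules this out is precisely that $\tau^*$ must be non-negative at \emph{every} value of $\tau'^{,-}$, which is the content of Corollary \ref{tau^*(alpha)_is_nonnegative}.

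First I would record the two easy halves. Since $\tau$ is differentiable at $q_0$, (\ref{Legendre_transform_at_derivative}) gives $\tau^*(\alpha_0)=\alpha_0 q_0-\tau(q_0)$, so the hypothesis $\tau^*(\alpha_0)=0$ forces $\tau(q_0)=\alpha_0 q_0$. Moreover $\tau^*(\alpha_0)=0$ means $\alpha_0 q-\tau(q)\ge\tau^*(\alpha_0)=0$ for every $q>1$, i.e. $\tau(q)\le\alpha_0 q$ for all $q>1$; in particular this is the desired upper bound for $q\ge q_0$. It remains to prove the lower bound $\tau(q)\ge\alpha_0 q$ for $q\ge q_0$.

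I would argue by contradiction. Suppose $\tau(q_1)<\alpha_0 q_1$ for some $q_1>q_0$. The map $q\mapsto\alpha_0 q-\tau(q)$ is continuous (concavity of $\tau$), non-negative, vanishes at $q_0$, and is strictly positive at $q_1$; hence $\{q\in(q_0,q_1):\tau(q)<\alpha_0 q\}$ is a nonempty open set, and since a concave function is differentiable off a countable set, I can pick $q_*$ in it at which $\tau$ is differentiable, with $\tau(q_*)<\alpha_0 q_*$. Set $\alpha_*=\tau'(q_*)$. By concavity $\alpha_*\le\tau'(q_0)=\alpha_0$, and equality cannot hold: if $\tau'(q_*)=\tau'(q_0)$ then, the one-sided derivatives of a concave function being monotone, $\tau$ is affine of slope $\alpha_0$ on $[q_0,q_*]$, whence $\tau(q_*)=\tau(q_0)+\alpha_0(q_*-q_0)=\alpha_0 q_*$, contradicting the choice of $q_*$. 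Thus $\alpha_*<\alpha_0$.

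Now I apply Corollary \ref{tau^*(alpha)_is_nonnegative} at $q_*$: differentiability gives $\tau'^{,-}(q_*)=\alpha_*$, so $\tau^*(\alpha_*)\ge0$, which by the definition of $\tau^*$ as an infimum means $\alpha_* q-\tau(q)\ge0$ for every $q>1$. Evaluating at $q=q_0$ yields $\tau(q_0)\le\alpha_* q_0<\alpha_0 q_0=\tau(q_0)$ (using $q_0>0$ and $\alpha_*<\alpha_0$), a contradiction. Hence $\tau(q)\ge\alpha_0 q$, and therefore $\tau(q)=\alpha_0 q$, for all $q\ge q_0$. The only point requiring a little care is the selection of the differentiability point $q_*$ strictly inside the region where $\tau$ lies below the line $\alpha_0 q$; beyond that the argument is a short formal consequence of concavity and Corollary \ref{tau^*(alpha)_is_nonnegative}.
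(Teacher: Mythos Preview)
Your proof is correct and uses essentially the same ingredients as the paper's: concavity of $\tau$, the identity $\tau(q_0)=\alpha_0 q_0$, and Corollary~\ref{tau^*(alpha)_is_nonnegative} applied at a differentiability point past $q_0$, all combined by evaluating the tangent-line inequality at $q=q_0$. The only difference is organizational---the paper argues directly (for any differentiability point $q_1>q_0$ it shows $\tau^*(\alpha_1)\le(\alpha_1-\alpha_0)q_0\le 0$, forcing $\alpha_1=\alpha_0$ and $\tau^*(\alpha_1)=0$), while you split into an upper bound from $\tau^*(\alpha_0)=0$ and a lower bound by contradiction; the core inequality is the same.
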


\begin{proof}
Let $q_1>q_0$ be such that $\tau(q)$ is differentiable at $q_1$ and $\alpha_1=\tau'(q_1)$. Then, by (\ref{Legendre_transform_at_derivative}), we have $\tau(q_1)=\alpha_1q_1-\tau^*(\alpha_1)$, and hence the line $t=\alpha_1q-\tau^*(\alpha_1)$ in the $(q,t)$-plane is the tangent to $t=\tau(q)$ at $q=q_1$. By the concavity of $\tau(q)$, the graph of $\tau(q)$ is below this tangent. Hence, by the assumption of $\tau^*(\alpha_0)=\alpha_0q_0-\tau(q_0)=0$, we have for $q=q_0$ that
\begin{equation}\label{graph_of_tau_is_below_the_tangent}
\tau(q_0)=\alpha_0q_0\leq\alpha_1q_0-\tau^*(\alpha_1)\iff\tau^*(\alpha_1)\leq(\alpha_1-\alpha_0)q_0.
\end{equation}

Since $\tau(q)$ is concave and $q_1>q_0$, we have $\alpha_1\leq\alpha_0$. Furthermore, by Corollary \ref{tau^*(alpha)_is_nonnegative}, we have $\tau^*(\alpha_1)\geq 0$. From these facts and (\ref{graph_of_tau_is_below_the_tangent}), we obtain that
\begin{equation*}
	\tau^*(q_1)=\alpha_1-\alpha_0=0.
\end{equation*}
Hence, we have $\tau(q_1)=\alpha_0q_1$. Since the points $q_1>q_0$ such that $\tau(q)$ is differentiable at $q_1$ are dense in $(q_0,\infty)$ and $\tau(q)$ is continuous, this equation holds for any $q_1\geq q_0$.
\end{proof}

Here, we begin the proof of the essential part of the main Theorem \ref{the_main_theorem_L^q_dim_Mobius_IFS}.
In the proof, we use the fact that $\widetilde{\tau}(q)\ (q>1)$ is continuous, which is included in Proposition \ref{prop_analyticity_of_zero_of_pressure_function}\footnote{Actually, we can give a short proof only of the continuity, but we do not do it.}. We prove Proposition \ref{prop_analyticity_of_zero_of_pressure_function} in the later Section \ref{subsection_incompatibility}.

\begin{proof}[Proof of the essential part of Theorem \ref{the_main_theorem_L^q_dim_Mobius_IFS}]
We assume that $\A$ is strongly Diophantine. We show that either the following (I) or (II) holds (“only one of them” will be shown in the later Section \ref{subsection_incompatibility}):
\begin{enumerate}
\renewcommand{\labelenumi}{(\Roman{enumi})}
\item we have
\begin{equation*}
\tau(q)=\min\left\{\widetilde{\tau}(q),q-1\right\}\quad\text{for any }q>1,
\end{equation*}
\item there exist $q_0>1$ and $0<\alpha<1$ such that
\begin{equation*}
\tau(q)=
\begin{cases}
\min\left\{\widetilde{\tau}(q),q-1\right\}&\text{if }1<q<q_0,\\
\alpha q&\text{if }q\geq q_0.
\end{cases}
\end{equation*}
\end{enumerate}

First, without using the assumption of the strongly Diophantine condition, we show the trivial bound:
\begin{equation}\label{trivial_bound_for_L^q_spectrum_proof_of_the_main_theorem}
\tau(q)\leq\min\{\widetilde{\tau}(q),q-1\}
\end{equation}
for any $q>1$. Since we have noticed that $\tau(q)\leq q-1$ in Definition \ref{dfn_L^d_dimension}, it is sufficient to show $\tau(q)\leq\widetilde{\tau}(q)$.
We take $x_0\in K$ and fix it. For sufficiently large $m\in\N$, by (\ref{fx0mum_nu_2^-m_L^q_norm_comparing_proof_of_the_main_theorem}), we have
\begin{equation*}
\|\nu^{(m)}\|_q^q\geq\Omega_{\A,q}(1)\sum_{I\in\D_m}\mu_m(f_{x_0}^{-1}I)^q.
\end{equation*}
However, the sum on the right-hand side can be made smaller by breaking it up into atoms of $\mu_m$. Hence, we obtain from the definition of $\mu_m$ that
\begin{equation*}
\sum_{I\in\D_m}\mu_m(f_{x_0}^{-1}I)^q\geq\sum_{i\in\Omega_m}p_i^q.
\end{equation*}
By combining these two inequalities and Lemma \ref{another_description_of_widetildetau_proof_of_the_main_theorem}, we have
\begin{equation*}
\tau(q)=\liminf_{m\to\infty}\left(-\frac{1}{m}\log\|\nu^{(m)}\|_q^q\right)\leq\liminf_{m\to\infty}\left(-\frac{1}{m}\log\sum_{i\in\Omega_m}p_i^q\right)=\widetilde{\tau}(q),
\end{equation*}
and hence obtain (\ref{trivial_bound_for_L^q_spectrum_proof_of_the_main_theorem}).

We write
\begin{equation*}
S=\left\{q>1\left|\ \text{$\tau(q)$ is differentiable at $q$}\right.\right\}.
\end{equation*}
As we have seen right after (\ref{Legendre_transform_at_derivative}), $S$ is dense in $(1,\infty)$.
We take $q\in S$ and assume that
\begin{equation}\label{assumption_tau*alpha_positive_proof_of_the_main_theorem}
\tau^*(\alpha)>0\quad\text{for }\alpha=\tau'(q).
\end{equation}
If $\tau(q)=q-1$, we have nothing to prove. So we also assume $\tau(q)<q-1$. Let $a'=a'(\A)>0$ be the constant given by Lemma \ref{strongly_Diophantine_Omegam_proof_of_the_main_theorem}. Then, we can take a constant $R\in\N$ so large that $a'^m>M2^{-Rm}$ holds for any $m\in\N$. By the above assumptions, we can apply Proposition \ref{the_L^q_morm_of_mum_at_a_finer_scale_proof_of_the_main_theorem} to our $q$ and $R$, and obtain that
\begin{equation}\label{tau_equals_finer_L^q_spectrum_of_mum_proof_of_the_main_theorem}
\tau(q)=\lim_{m\to\infty}\left(-\frac{1}{m}\log\|\mu_m^{(Rm)}\|_q^q\right).
\end{equation}
However, since $\diam\ \xi\leq M2^{-Rm}<a'^m$ for any $\xi\in\D^G_{Rm}$, by Lemma \ref{strongly_Diophantine_Omegam_proof_of_the_main_theorem} and the definition of $\mu_m$, we have that each $\xi\in\D^G_{Rm}$ can contain at most one atom of $\mu_m$, and hence
\begin{equation*}
\|\mu_m^{(Rm)}\|_q^q=\sum_{\xi\in\D^G_{Rm}}\mu_m(\xi)^q=\sum_{i\in\Omega_m}p_i^q
\end{equation*}
for any $m\in\N$. Therefore, we obtain from (\ref{tau_equals_finer_L^q_spectrum_of_mum_proof_of_the_main_theorem}) and Lemma \ref{another_description_of_widetildetau_proof_of_the_main_theorem} that
\begin{equation*}
\tau(q)=\lim_{m\to\infty}\left(-\frac{1}{m}\log\sum_{i\in\Omega_m}p_i^q\right)=\widetilde{\tau}(q).
\end{equation*}

In the above argument, we showed the following: for $q\in S$, if we have (\ref{assumption_tau*alpha_positive_proof_of_the_main_theorem}), then it holds that $\tau(q)=q-1$ or $\widetilde{\tau}(q)$, and hence, together with (\ref{trivial_bound_for_L^q_spectrum_proof_of_the_main_theorem}),
\begin{equation}\label{desired_equation_for_L^q_spectrum_proof_of_the_main_theorem}
\tau(q)=\min\{\widetilde{\tau}(q), q-1\}.
\end{equation}
Therefore, if we assume that (\ref{assumption_tau*alpha_positive_proof_of_the_main_theorem}) holds for any $q\in S$, then we have (\ref{desired_equation_for_L^q_spectrum_proof_of_the_main_theorem}) for any $q\in S$.
Furthermore, $S$ is dense in $(1,\infty)$ and both sides of (\ref{desired_equation_for_L^q_spectrum_proof_of_the_main_theorem}) are continuous in $q>1$. Hence, (\ref{desired_equation_for_L^q_spectrum_proof_of_the_main_theorem}) holds for any $q>1$, and we obtain (I).

We assume that there is $q_1\in S$ such that (\ref{assumption_tau*alpha_positive_proof_of_the_main_theorem}) does not hold for $q=q_1$. We write $\alpha_1=\tau'(q_1)$. Then, by Lemma \ref{tau(q)_coincides_the_line_through_the_origin_proof_of_the_main_theorem}, we have
\begin{equation}\label{tau(q)_coincides_alphaq_qgeqq1_proof_of_the_main_theorem}
\tau(q)=\alpha_1 q\quad\text{for any }q\geq q_1.
\end{equation}
Since $\tau(q)$ is non-decreasing, concave, $\tau(q)\in[0,q-1]\ (q>1)$ and (\ref{tau(q)_coincides_alphaq_qgeqq1_proof_of_the_main_theorem}) holds, we can see that $0\leq\alpha_1<1$.

Furthermore, we show $\alpha_1>0$.
If $\alpha_1=0$, we have $-\tau(q_1)=\lim_{m\to\infty}m^{-1}\log\|\nu^{(m)}\|_{q_1}^{q_1}=0$. We recall the proof of Proposition \ref{existence_of_limit_of_L^q_dim} where we have shown that the sequence $\left(\log(C'\|\nu^{(m)}\|_{q_1}^{q_1})\right)_{m=1}^\infty$ is subadditive for some constant $C'>0$. Then, we have
\begin{equation*}
	\inf_{m\in\N}\frac{1}{m}\log\left(C'\|\nu^{(m)}\|_{q_1}^{q_1}\right)=\lim_{m\to\infty}\frac{1}{m}\log\left(C'\|\nu^{(m)}\|_{q_1}^{q_1}\right)=0.
\end{equation*}
This tells us that
\begin{equation*}
	\|\nu^{(m)}\|_{q_1}^{q_1}=\sum_{I\in\D_m}\nu(I)^{q_1}\geq C'^{-1}
\end{equation*}
for every $m\in\N$. Hence, we have
\begin{equation*}
	C'^{-1}\leq\sum_{I\in\D_m}\nu(I)^{q_1}\leq\max_{I\in\D_m}\nu(I)^{q_1-1}\sum_{I\in\D_m}\nu(I)=\max_{I\in\D_m}\nu(I)^{q_1-1}
\end{equation*}
for every $m\in\N$. This implies that $\nu$ is atomic. We can see from $\nu=\mu{\bm .}\nu$ and $\supp\ \mu=\A$ that the set $X$ of points in $K$ with the maximal $\nu$-mass is finite and $\A$-invariant. However, since the action of $\A$ on $K$ is contracting, this implies that $X$ consists of one point which is fixed by every element of $\A$, and this contradicts that the attractor is not a singleton. Hence, we obtain that $\alpha_1>0$.

If we take another point $q\in S$ such that (\ref{assumption_tau*alpha_positive_proof_of_the_main_theorem}) does not hold for $q$, then we can apply Lemma \ref{tau(q)_coincides_the_line_through_the_origin_proof_of_the_main_theorem} to this $q$ and obtain that $\tau(q')=\alpha q'$ for any $q'\geq q$, where $\alpha=\tau'(q)$. However, this and (\ref{tau(q)_coincides_alphaq_qgeqq1_proof_of_the_main_theorem}) hold simultaneously, and hence we have $\alpha=\alpha_1$.
Therefore, together with (\ref{desired_equation_for_L^q_spectrum_proof_of_the_main_theorem}), we obtain for any $q\in S$ that
\begin{equation}\label{either_desired_or_alpha1q_proof_of_the_main_theorem}
\tau(q)=
\begin{cases}
\min\{\widetilde{\tau}(q),q-1\}&\text{if }\tau^*(\alpha)>0,\\
\alpha_1 q&\text{if }\tau^*(\alpha)=0,
\end{cases}
\quad\text{where }\alpha=\tau'(q),
\end{equation}
and, once we have the latter case, $\tau(q')=\alpha_1q'$ for any $q'\geq q$.

We define
\begin{equation*}
q_0=\inf\left\{q\in S\left|\ \tau^*(\alpha)=0\text{ for }\alpha=\tau'(q)\right.\right\}.
\end{equation*}
We show $q_0>1$. If $q_0=1$, then there is a sequence $(q_k)_{k=1}^\infty\subset S$ such that $q_k\to 1$ as $k\to\infty$ and, for each $k$, $\tau^*(\alpha_k)=0$ for $\alpha_k=\tau'(q_k)$. By (\ref{either_desired_or_alpha1q_proof_of_the_main_theorem}), we have $\tau(q_k)=\alpha_1q_k$ for each $k$. However, $\tau(q_k)\to 0$ and $\alpha_1q_k\to\alpha_1>0$ as $k\to\infty$ and this is a contradiction. Hence, we obtain that $q_0>1$.

Let $q\in S$. If $q<q_0$, then $\tau^*(\alpha)>0$ for $\alpha=\tau'(q)$ and, by (\ref{either_desired_or_alpha1q_proof_of_the_main_theorem}), we have $\tau(q)=\min\{\widetilde{\tau}(q),q-1\}$. If $q>q_0$, then there is $q'\in S$ such that $q>q'$ and $\tau(\alpha')=0$ for $\alpha'=\tau'(q')$. Hence, by (\ref{either_desired_or_alpha1q_proof_of_the_main_theorem}), we have $\tau(q)=\alpha_1q$.
Therefore, we obtain that
\begin{equation*}
\tau(q)=
\begin{cases}
\min\left\{\widetilde{\tau}(q),q-1\right\}&\text{if }1<q<q_0,\\
\alpha_1 q&\text{if }q> q_0.
\end{cases}
\end{equation*}
for any $q\in S$. Since $S$ is dense in $(1,\infty)$ and $\tau(q)$, $\min\{\widetilde{\tau}(q),q-1\}$ and $\alpha_1q$ are continuous in $q>1$, we obtain (II) for $\alpha_1$ in place of $\alpha$. Then, we complete the proof.
\end{proof}

\subsection{Incompatibility of the two cases}\label{subsection_incompatibility}

In this section, we show that the two cases (I) and (II) of the main Theorem 1.8 do not hold simultaneously.
To do this, we use thermodynamic formalism on the full shift space and deduce that the pressure function $\Psi_q(s)$ is analytic in $(q,s)\in\R^2$.

We consider the space $\I^\N$ as the one-sided full shift space and write $\sigma:\I^\N\to\I^\N$ for the shift map. For a real-valued function $\phi:\I^\N\to\R$, we say that $\phi$ is {\it Hölder continuous} if there exist $0<\theta<1$ and $L>0$ such that, for any $n\in\N$,
\begin{equation*}
\omega,\omega'\in\I^\N, \omega|_n=\omega'|_n\implies |\phi(\omega)-\phi(\omega')|\leq L\theta^n,
\end{equation*}
where $\omega|_n=(\omega_1,\dots,\omega_n)\in\I^n$ for $\omega=(\omega_1,\omega_2,\dots)\in\I^\N$. For such $\phi$, the {\it pressure} $P(\phi)$ of $\phi$ is defined by
\begin{equation*}
P(\phi)=\lim_{n\to\infty}\frac{1}{n}\log\sum_{i\in\I^n}\exp\left(\sup_{\omega\in [i]}\left(\sum_{k=0}^{n-1}\phi(\sigma^k\omega)\right)\right)\in\R,
\end{equation*}
where, for $i\in\I^n$, $[i]=\left\{\omega\in\I^\N\left|\ \omega|_n=i\right.\right\}$ is the associated cylinder set.

Here, we define two functions $\varphi,\psi: \I^\N\to\R$ by
\begin{equation*}
\varphi(\omega)=\log p_{\omega_1},\quad\psi(\omega)=-\log A_{\omega_1}'(\pi(\sigma\omega)),\quad \omega\in\I^\N,
\end{equation*}
where $\pi:\I^\N\to K$ is the coding map associated to $\A$ and, for $A\in G$ and $x\in\RP^1$, $A'(x)=(\angle_{v_A^+}\circ A\circ\angle_{u_A^+}^{-1})'(\angle_{u_A^+}(x))$.
It is obvious that $\varphi:\I^\N\to\R$ is Hölder continuous. Furthermore, we have the following.

\begin{lem}\label{Holder_continuity_geometric_potential_proof_of_the_main_theorem}
The function $\psi:\I^\N\to\R$ is Hölder continuous.
\end{lem}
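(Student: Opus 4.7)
The plan is to reduce the Hölder continuity of $\psi$ to two ingredients: (a) a uniform $C^1$ bound on $\log A_i'(x)$ over $x\in K$ and $i\in\I$, and (b) the already-established Hölder continuity of the coding map $\pi:\I^\N\to K$. For $\omega,\omega'\in\I^\N$ with $\omega|_n=\omega'|_n$, the first coordinates agree, so
\begin{equation*}
\psi(\omega)-\psi(\omega')=\log A_{\omega_1}'(\pi(\sigma\omega'))-\log A_{\omega_1}'(\pi(\sigma\omega)),
\end{equation*}
and moreover $\sigma\omega|_{n-1}=\sigma\omega'|_{n-1}$. I would then invoke the estimate derived right after Corollary~\ref{contraction_on_U_by_A_preliminaries}, which gives
\begin{equation*}
d_{\RP^1}(\pi(\sigma\omega),\pi(\sigma\omega'))\leq \frac{C_1\pi}{\|A_{\omega_2}\cdots A_{\omega_n}\|^2}\leq \frac{C_1\pi}{c^2r^{2(n-1)}},
\end{equation*}
using the uniform hyperbolicity constants $c>0$, $r>1$ from (\ref{def_of_uniform_hyperbolicity_of_A_preliminaries}).

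The crucial step is therefore step (a): to produce a constant $L=L(\A)>0$ such that $|(\log A_i')'(x)|\leq L$ for every $x\in K$ and $i\in\I$. I would use the explicit formula (\ref{derivative_action_of_G_preliminaries}), which gives
\begin{equation*}
\log A_i'(x)=-\log\bigl(\|A_i\|^2\cos^2\theta+\|A_i\|^{-2}\sin^2\theta\bigr)\quad\text{for }\theta=\angle_{u_{A_i}^+}(x),
\end{equation*}
so that
\begin{equation*}
\frac{d}{dx}\log A_i'(x)=\frac{2\cos\theta\sin\theta(\|A_i\|^2-\|A_i\|^{-2})}{\|A_i\|^2\cos^2\theta+\|A_i\|^{-2}\sin^2\theta}.
\end{equation*}
This ratio is $O(1)$ provided $\theta$ stays bounded away from $\pm\pi/2$, equivalently $x$ stays bounded away from $u_{A_i}^-$. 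To establish this separation, I would apply Corollary~\ref{contraction_on_U_by_A_preliminaries} to the single-letter word $i\in\I\subset\I^*$: the upper bound $A_i'(x)\leq C_1/\|A_i\|^2$ for $x\in K\subset U$ rearranges to
\begin{equation*}
\cos^2\theta+\|A_i\|^{-4}\sin^2\theta\geq C_1^{-1},
\end{equation*}
which, together with the finiteness of $\A$, forces $\cos^2\theta\geq\eta_0$ uniformly in $i\in\I$ and $x\in K$ for some $\eta_0=\eta_0(\A)>0$. The potential subtlety is dealing with $A_i\in\A$ for which $\|A_i\|$ is not already large; here the uniform hyperbolicity applies only to long words, but single letters are handled since there are only finitely many of them and $A_i'(x)>0$ on $K$ by continuity.

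Putting the two ingredients together, the mean value theorem yields
\begin{equation*}
|\psi(\omega)-\psi(\omega')|\leq L\cdot d_{\RP^1}(\pi(\sigma\omega),\pi(\sigma\omega'))\leq \frac{LC_1\pi}{c^2}\cdot(r^{-2})^{n-1},
\end{equation*}
which is exactly the Hölder condition with exponent parameter $\theta=r^{-2}\in(0,1)$ and a constant depending only on $\A$. The main obstacle is really just the careful verification that $x\in K$ is uniformly separated from each $u_{A_i}^-$ with $i\in\I$; once that separation is in hand, everything else is a direct calculation.
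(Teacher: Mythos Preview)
Your proposal is correct and follows the same overall strategy as the paper: combine the H\"older continuity of $\pi$ (via Corollary~\ref{contraction_on_U_by_A_preliminaries} and uniform hyperbolicity) with a Lipschitz bound for $x\mapsto\log A_i'(x)$ on $K$, uniformly in $i\in\I$.

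The only difference is in how that Lipschitz bound is obtained, and the paper's route is simpler. Rather than bounding $(\log A_i')'$ directly and arguing separation of $K$ from $u_{A_i}^-$, the paper observes from (\ref{derivative_action_of_G_preliminaries}) that $A_i'(x)\geq\|A_i\|^{-2}$ for \emph{every} $x\in\RP^1$, hence $A_i'\geq\Omega_\A(1)$ globally since $\I$ is finite; then $\log$ is Lipschitz on this range, and $A_i'$ is smooth on the compact $\RP^1$ and hence Lipschitz. No separation argument is needed. Your Corollary~\ref{contraction_on_U_by_A_preliminaries} rearrangement $\cos^2\theta+\|A_i\|^{-4}\sin^2\theta\geq C_1^{-1}$ actually fails to force $\cos^2\theta\geq\eta_0$ when $\|A_i\|^4\leq C_1$, but your own fallback (``single letters are handled since there are only finitely many of them and $A_i'(x)>0$'') is precisely the paper's point and already suffices.
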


\begin{proof}
Let $n\in\N$ and $\omega,\omega'\in\I^\N$ be such that $\omega|_n=\omega'|_n$. We write $\omega_1=\omega'_1=i\in\I$. By (\ref{derivative_action_of_G_preliminaries}), we have $A_i'(x)\geq \|A_i\|^{-2}\geq\Omega_{\A}(1)$ for any $x\in\RP^1$. Therefore, by the mean value theorem for $\log t\ (t\geq\Omega_\A(1))$, we have
\begin{equation*}
\left|\psi(\omega)-\psi(\omega')\right|=\left|\log A_i'(\pi(\sigma\omega))-\log A_i'(\pi(\sigma\omega'))\right|
\leq O_{\A}(1)\left|A_i'(\pi(\sigma\omega))-A_i'(\pi(\sigma\omega'))\right|.
\end{equation*}
By this estimate and the Lipschitz continuity of $A_i':\RP^1\to\R$, we have
\begin{equation*}
\left|\psi(\omega)-\psi(\omega')\right|\leq O_{\A}(1)d_{\RP^1}(\pi(\sigma\omega),\pi(\sigma\omega')).
\end{equation*}
Since $\sigma\omega|_{n-1}=\sigma\omega'|_{n-1}$ (here we assume $n\geq 2$), if we write $(j_1,\dots,j_{n-1})=\sigma\omega|_{n-1}=\sigma\omega'|_{n-1}\in\I^{n-1}$, we obtain from Corollary \ref{contraction_on_U_by_A_preliminaries} and (\ref{def_of_uniform_hyperbolicity_of_A_preliminaries}) that
\begin{equation*}
d_{\RP^1}(\pi(\sigma\omega),\pi(\sigma\omega'))\leq\frac{C_1\pi}{\|A_{j_1}\cdots A_{j_{n-1}}\|^2}\leq C_1c^{-2}\pi r^{-2(n-1)}.
\end{equation*}
The above two inequalities show the Hölder continuity of $\psi$.
\end{proof}

For the two Hölder continuous functions $\varphi,\psi:\I^\N\to\R$, it is well-known that the function
$\R^2\ni(q,s)\mapsto P(q\varphi+s\psi)\in\R$ is real analytic on $\R^2$ (see \cite{Rue04}). Here, for any $n\in\N$ sufficiently large, $i\in\I^n$ and $\omega\in [i]$, we have
\begin{equation}\label{ergodic_sum_of_varphi_proof_of_the_main_theorem}
q\sum_{k=0}^{n-1}\varphi(\sigma^k\omega)=\log(p_i^q)
\end{equation}
and, by $A_{i_k}(\pi(\sigma^k\omega))=\pi(\sigma^{k-1}\omega)\ (2\leq k\leq n)$,
\begin{align*}
s\sum_{k=0}^{n-1}\psi(\sigma^k\omega)&=-s\log\left(A_{i_1}'(\pi(\sigma\omega))A_{i_2}'(\pi(\sigma^2\omega))\cdots A_{i_n}'(\pi(\sigma^n\omega))\right)\\
&=-s\log\left((A_{i_1}A_{i_2}\cdots A_{i_n})'(\pi(\sigma^n\omega))\right)\\
&=-s\log(A_i'(\pi(\sigma^n\omega))).
\end{align*}
Since $\pi(\sigma^n\omega)\in K$, by Lemma \ref{center_of_expansion_notin_U} and (\ref{derivative_action_of_G_preliminaries}), we also have
\begin{equation*}
A_i'(\pi(\sigma^n\omega))=\Theta_\A\left(\|A_i\|^{-2}\right).
\end{equation*}
From the last two equations, we obtain that
\begin{equation}\label{ergodic_sum_of_psi_proof_of_the_main_theorem}
s\sum_{k=0}^{n-1}\psi(\sigma^k\omega)=\log\left(\|A_i\|^{2s}\right)+O_\A(s).
\end{equation}
By (\ref{ergodic_sum_of_varphi_proof_of_the_main_theorem}) and (\ref{ergodic_sum_of_psi_proof_of_the_main_theorem}), we have
\begin{align*}
P(q\varphi+s\psi)&=
\lim_{n\to\infty}\frac{1}{n}\log\sum_{i\in\I^n}\exp\left(\sup_{\omega\in [i]}\left(q\sum_{k=0}^{n-1}\varphi(\sigma^k\omega)+s\sum_{k=0}^{n-1}\psi(\sigma^k\omega)\right)\right)\\
&=\lim_{n\to\infty}\frac{1}{n}\log\sum_{i\in\I^n}p_i^q\|A_i\|^{2s}.
\end{align*}
This equation and the analyticity stated above tell us the following.

\begin{prop}\label{the_pressure_function_is_analytic_on_R^2_proof_of_the_main_theorem}
The pressure function
\begin{equation*}
\Psi_q(s)=\lim_{n\to\infty}\frac{1}{n}\log\sum_{i\in\I^n}p_i^q\|A_i\|^{2s}
\end{equation*}
is well-defined for any $(q,s)\in\R^2$ and real analytic in $(q,s)\in\R^2$.
\end{prop}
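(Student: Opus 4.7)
The plan is to realize $\Psi_q(s)$ as the topological pressure of a two-parameter family of Hölder potentials on the full shift $(\I^\N,\sigma)$, and then invoke the classical analyticity of pressure for Hölder potentials from thermodynamic formalism (e.g.\ the transfer-operator / spectral-gap perturbation theory in Ruelle's book \cite{Rue04}). Concretely, I would introduce the two functions
\begin{equation*}
\varphi(\omega)=\log p_{\omega_1},\qquad \psi(\omega)=-\log A_{\omega_1}'(\pi(\sigma\omega)),
\end{equation*}
and verify that each is Hölder continuous on $\I^\N$ (trivially so for $\varphi$, and for $\psi$ by combining Corollary \ref{contraction_on_U_by_A_preliminaries}, the uniform hyperbolicity bound (\ref{def_of_uniform_hyperbolicity_of_A_preliminaries}), and the Lipschitz continuity of $A_i'$ on $\RP^1$, which is exactly the content of Lemma \ref{Holder_continuity_geometric_potential_proof_of_the_main_theorem}).

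Once $\varphi,\psi$ are Hölder, the standard theorem says that $(q,s)\mapsto P(q\varphi+s\psi)$ extends to a real analytic function on all of $\R^2$. The next step is the identification $P(q\varphi+s\psi)=\Psi_q(s)$. For a cylinder $[i]$ with $i\in\I^n$ and $\omega\in[i]$, the ergodic sum of $\varphi$ collapses telescopically to $\log p_i$, so $q\sum_{k=0}^{n-1}\varphi(\sigma^k\omega)=\log p_i^q$. The ergodic sum of $\psi$ becomes $-\log (A_i)'(\pi(\sigma^n\omega))$ by the chain rule applied to $A_i=A_{i_1}\cdots A_{i_n}$, and the formula (\ref{derivative_action_of_G_preliminaries}) together with the fact that $\pi(\sigma^n\omega)\in K$ is uniformly bounded away from $u_{A_i}^-$ (via Lemma \ref{center_of_expansion_notin_U}) gives $(A_i)'(\pi(\sigma^n\omega))=\Theta_\A(\|A_i\|^{-2})$. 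Hence $s\sum_{k=0}^{n-1}\psi(\sigma^k\omega)=\log\|A_i\|^{2s}+O_\A(s)$, which is uniform over $\omega\in[i]$ and so survives passage through the supremum and the $\frac1n\log$ limit defining $P(q\varphi+s\psi)$.

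Combining the two computations gives
\begin{equation*}
P(q\varphi+s\psi)=\lim_{n\to\infty}\frac{1}{n}\log\sum_{i\in\I^n}p_i^q\|A_i\|^{2s},
\end{equation*}
which is exactly the definition of $\Psi_q(s)$ (and in particular the limit exists for every $(q,s)\in\R^2$, since the pressure does). Analyticity of $\Psi_q(s)$ on $\R^2$ then follows immediately from analyticity of $(q,s)\mapsto P(q\varphi+s\psi)$.

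The only genuinely delicate point in this plan is confirming that the bounded-distortion error $O_\A(s)$ on each cylinder really is uniform in $\omega\in[i]$ and harmless in the pressure limit; this relies on $\pi(\sigma^n\omega)$ staying in $K\subset U$ uniformly, which is why invoking Lemma \ref{center_of_expansion_notin_U} (so that $u_{A_i}^-\notin U_1$ and the formula (\ref{derivative_action_of_G_preliminaries}) gives the sharp two-sided bound on $(A_i)'$) is crucial. Everything else is a direct appeal to known results about Hölder potentials on the full shift, and no additional hypothesis on $\A$ (such as strong Diophantine) is needed.
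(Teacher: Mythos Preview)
Your proposal is correct and follows essentially the same approach as the paper: you introduce the same potentials $\varphi$ and $\psi$, invoke Lemma \ref{Holder_continuity_geometric_potential_proof_of_the_main_theorem} for the H\"older continuity of $\psi$, cite the analyticity of pressure from \cite{Rue04}, and identify $P(q\varphi+s\psi)$ with $\Psi_q(s)$ via the same ergodic-sum computation using Lemma \ref{center_of_expansion_notin_U} and (\ref{derivative_action_of_G_preliminaries}). The paper's argument is laid out in the paragraphs immediately preceding the proposition, and your write-up matches it step for step.
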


Using Proposition \ref{the_pressure_function_is_analytic_on_R^2_proof_of_the_main_theorem}, we give the proof of the incompatibility of the cases (I) and (II) of Theorem \ref{the_main_theorem_L^q_dim_Mobius_IFS}.

\begin{proof}[Proof of the incompatibility of (I) and (II) of Theorem \ref{the_main_theorem_L^q_dim_Mobius_IFS}]
Assume that the cases (I) and (II) hold simultaneously. Then, there is $q_0>1$ such that
\begin{equation*}
\min\{\widetilde{\tau}(q),q-1\}=\alpha q\quad\text{for any }q\geq q_0.
\end{equation*}
Since $\alpha<1$ and $q-1>\alpha q$ for sufficiently large $q$, this equation says that $\widetilde{\tau}(q)=\alpha q$ for sufficiently large $q$. Hence, by the definition of $\widetilde{\tau}(q)$, we have
\begin{equation}\label{alphaq_is_the_zero_of_the_pressure_function_proof_of_the_main_theorem}
\Psi_q(\alpha q)=0
\end{equation}
for sufficiently large $q$. However, by Proposition \ref{the_pressure_function_is_analytic_on_R^2_proof_of_the_main_theorem}, this fact implies that (\ref{alphaq_is_the_zero_of_the_pressure_function_proof_of_the_main_theorem}) holds for any $q\in\R$. In particular, by letting $q=0$ in (\ref{alphaq_is_the_zero_of_the_pressure_function_proof_of_the_main_theorem}), we have
\begin{equation*}
0=\Psi_0(0)=\lim_{n\to\infty}\frac{1}{n}\log|\I^n|=\log |\I|,
\end{equation*}
but this contradicts $|\I|\geq 2$. Hence, we complete the proof.
\end{proof}

As an appendix, we give a proof of Proposition \ref{prop_analyticity_of_zero_of_pressure_function}.

\begin{proof}[Proof of Proposition \ref{prop_analyticity_of_zero_of_pressure_function}]
For a Hölder continuous function $\phi:\I^\N\to\R$, we write $P_\phi$ for the equilibrium measure for $\phi$ on $\I^\N$. Then, it is known that
\begin{equation}\label{derivative_of_pressure_function_proof_of_the_main_theorem}
\frac{\partial P(q\varphi+s\psi)}{\partial s}=\int_{\I^\N}\psi\ dP_{q\varphi+s\psi}
\end{equation}
for any $(q,s)\in\R^2$ (see \cite{Rue04}). Since $P_{q\varphi+s\psi}$ is $\sigma$-invariant, by (\ref{ergodic_sum_of_psi_proof_of_the_main_theorem}) and (\ref{def_of_uniform_hyperbolicity_of_A_preliminaries}), we have for sufficiently large $n\in\N$ that
\begin{align*}
\int_{\I^\N}\psi\ dP_{q\varphi+s\psi}&=\frac{1}{n}\int_{\I^\N}\sum_{k=0}^{n-1}\psi(\sigma^k\omega)\ dP_{q\varphi+s\psi}(\omega)\\
&=\frac{1}{n}\int_{\I^\N}\left(\log(\|A_{\omega_1}\cdots A_{\omega_n}\|^2)+O_{\A}(1)\right)\ dP_{q\varphi+s\psi}(\omega)\\
&\geq 2\log r+O_{\A}\left(\frac{1}{n}\right).
\end{align*}
By taking $n\gg_\A1$ sufficiently large, we can see that the above value is positive. Hence, by (\ref{derivative_of_pressure_function_proof_of_the_main_theorem}), we have
\begin{equation}\label{partial_derivative_nonzero_proof_of_the_main_theorem}
\frac{\partial P(q\varphi+s\psi)}{\partial s}>0
\end{equation}
for any $(q,s)\in\R^2$.

We notice that, for $q>1$, $s=\widetilde{\tau}(q)$ is the unique $s>0$ such that $\Psi_q(s)=P(q\varphi+s\psi)=0$. Then, by (\ref{partial_derivative_nonzero_proof_of_the_main_theorem}), we can apply the implicit function theorem to $P(q\varphi+s\psi)$ and obtain that $\widetilde{\tau}(q)$ is real analytic in $q>1$.
\end{proof}



\begin{thebibliography}{99}
\bibitem[Aus22]{Aus22} T. Austin, A new dynamical proof of the Shmerkin-Wu theorem, {\it J. Mod. Dyn.} {\bf 18} (2022), 1-11.
\bibitem[ABY10]{ABY10} A. Avila, J. Bochi, J. Yoccoz, Uniformly hyperbolic finite-valued $\SL(2,\R)$-cocycles, {\it Comment. Math. Helv.} {\bf 85} (2010), no. 4, 813-884.
\bibitem[Bou10]{Bou10} J. Bourgain, The discretized sum-product and projection theorems, {\it J. Anal. Math.} {\bf 112} (2010), 193-236.
\bibitem[CS24]{CS24} E. Corso, P. Shmerkin, Dynamical self-similarity, $L^q$-dimensions and Furstenberg slicing in $\R^d$, {\it preprint}, arXiv:2409.04608.
\bibitem[FLR02]{FLR02} A.-H. Fan, K.-S. Lau, H. Rao, Relationships between different dimensions of a measure, {\it Monatsh. Math.} {\bf 135} (2002), no. 3, 191-201.
\bibitem[Fen07]{Fen07} D.-J. Feng, Gibbs properties of self-conformal measures and the multifractal formalism, {\it Ergodic Theory Dynam. Systems} {\bf 27} (2007), no. 3, 787-812.
\bibitem[FF24]{FF24} D.-J. Feng, Z. Feng, Dimension of homogeneous iterated function systems with algebraic translations, {\it preprint}, arXiv:2403.03124.
\bibitem[FH09]{FH09} D.-J. Feng, H. Hu, Dimension theory of iterated function systems, {\it Comm. Pure Appl. Math.} {\bf 62} (2009), no. 11, 1435-1500.
\bibitem[Hoc14]{Hoc14} M. Hochman, On self-similar sets with overlaps and inverse theorems for entropy, {\it Ann. of Math.} (2) {\bf 180} (2014), no. 2, 773-822.
\bibitem[Hoc15]{Hoc15} M. Hochman, On self-similar sets with overlaps and inverse theorems for entropy in $\R^d$, to appear in {\it Mem. Amer. Math. Soc.}, arXiv:1503.09043.
\bibitem[HS17]{HS17} M. Hochman, B. Solomyak, On the dimension of Furstenberg measure for $SL_2(\R)$ random matrix products, {\it Invent. Math.} {\bf 210} (2017), no. 3, 815-875.
\bibitem[LN99]{LN99}  K.-S. Lau, S.-M. Ngai, Multifractal measures and a weak separation condition, {\it Adv. Math.} {\bf 141} (1999), no. 1, 45-96.
\bibitem[Pat97]{Pat97} N. Patzschke, Self-conformal multifractal measures, {\it Adv. in Appl. Math.} {\bf 19} (1997), no. 4, 486-513.
\bibitem[PS00]{PS00} Y. Peres, B. Solomyak, Existence of $L^q$ dimensions and entropy dimension for self-conformal measures, {\it Indiana Univ. Math. J.} {\bf 49} (2000), no. 4, 1603-1621.
\bibitem[Rap22]{Rap22}A. Rapaport, Proof of the exact overlaps conjecture for systems with algebraic contractions, {\it Ann. Sci. Éc. Norm. Supér.} (4) {\bf 55} (2022), no. 5, 1357-1377.
\bibitem[Rap24]{Rap24} A. Rapaport, Dimension of self-conformal measures associated to an exponentially separated analytic IFS on $\R$, {\it preprint}, arXiv:2412.16753.
\bibitem[RV24]{RV24} A. Rapaport, P. Varjú, Self-similar measures associated to a homogeneous system of three maps, {\it Duke Math. J.} {\bf 173} (2024), no. 3, 513-602.
\bibitem[Shm19]{Shm19} P. Shmerkin, On Furstenberg's intersection conjecture, self-similar measures, and the $L^q$ norms of convolutions, {\it Ann. of Math.} (2) {\bf 189} (2019), no. 2, 319-391.
\bibitem[Rue04]{Rue04} D. Ruelle, {\it Thermodynamic formalism. The mathematical structures of equilibrium statistical mechanics. Second edition}, Cambridge Math. Lib., Cambridge University Press, Cambridge, 2004.
\bibitem[SS16]{SS16} P. Shmerkin, B. Solomyak, Absolute continuity of self-similar measures, their projections and convolutions, {\it Trans. Amer. Math. Soc.} {\bf 368} (2016), no. 7, 5125-5151.
\bibitem[Sol24]{Sol24}B. Solomyak, On nonlinear iterated function systems with overlaps, {\it J. Fractal Geom.} {\bf 11} (2024), no. 3-4, 373-387.
\bibitem[ST21]{ST21}B. Solomyak, Y. Takahashi, Diophantine properties of matrices and attractors of projective iterated function systems in $\RP^1$, {\it Int. Math. Res. Not. IMRN} (2021), no. 16, 12639-12669.
\bibitem[Var19]{Var19}P. Varjú, On the dimension of Bernoulli convolutions for all transcendental parameters, {\it Ann. of Math.} (2) {\bf 189} (2019), no. 3, 1001-1011.
\bibitem[Wu19]{Wu19}M. Wu, A proof of Furstenberg's conjecture on the intersections of $\times p$- and $\times q$-invariant sets, {\it Ann. of Math.} (2) {\bf 189} (2019), no. 3, 707-751.
\end{thebibliography}
\end{document}